\newcommand{\leqnomode}{\tagsleft@true}
\newcommand{\reqnomode}{\tagsleft@false}
\newcommand{\sVect}{\mathsf{sVect}}
\newcommand{\soVect}{\mathsf{s}_0\mathsf{Vect}}
\newcommand{\spaces}{\EuScript{S}}
\newcommand{\rLie}{\mathsf{Lie}^\mathrm{r}}
\newcommand{\Vect}{\mathsf{Vect}}
\DeclareMathOperator{\Alg}{\mathsf{Alg}}
\newcommand{\FB}{\mathsf{Fin}^{\cong}}
\newcommand{\cCAlg}{\mathsf{CoAlg}}
\newcommand{\CoAlg}{\mathsf{CoAlg}}
\newcommand{\trcoalg}{\mathsf{CoAlg}^{\mathrm{tr}}}
\newcommand{\CAlg}{\mathsf{CAlg}}
\newcommand{\ProVect}{\mathsf{Pro}(D\Vect_{\kk})}
\newcommand{\Pro}{\mathsf{Pro}}
\newcommand{\sseqpro}{\sseq_{\mathrm{pro}}}
\newcommand{\sotrcoalg}{\mathsf{s}_{0}\mathsf{CoAlg}^{\mathrm{tr}}}
\newcommand{\sL}{\mathit{s}\EuScript{L}}
\newcommand{\sCA}{\mathit{s}\EuScript{CA}}
\newcommand{\sLxi}{\mathit{s}\EuScript{L}_{\xi}}
\DeclareMathOperator{\sV}{\mathit{s}\EuScript{V}}
\newcommand{\Sp}{\mathrm{Sp}}
\newcommand{\SymLin}[1]{\mathrm{SymFun}^{#1}_{\mathrm{lin}}}
\newcommand{\Catinfty}{\EuScript{C}\mathrm{at}}
\newcommand{\Endinfty}{\EuScript{E}\mathrm{nd}}
\newcommand{\Mod}{\EuScript{M}\mathrm{od}}
\newcommand{\Pre}{\EuScript{P}\mathrm{r}}
\newcommand{\susp}{\Sigma^{\infty}}
\newcommand{\deloop}{\Omega^{\infty}}
\newcommand{\free}{\mathit{L^r}}
\DeclareMathOperator{\oblv}{\mathsf{oblv}}
\newcommand{\triv}{\mathsf{triv}}
\newcommand{\trivsseq}{\mathsf{triv}}
\newcommand{\barW}{\overline{W}}
\newcommand{\Sym}{\mathrm{Sym}}
\newcommand{\Abxi}{\mathsf{Ab}_{{\xi}}}
\newcommand{\sseq}{\mathsf{SSeq}}
\newcommand{\prim}{\mathsf{Prim}}
\newcommand{\Prim}{\mathsf{Prim}}
\newcommand{\red}{\mathrm{red}}
\newcommand{\cored}{\mathrm{cored}}
\newcommand{\aug}{\mathrm{aug}}
\newcommand{\Cobar}{\mathrm{Cobar}}
\newcommand{\suspfree}{{\mathbb{L}}^{r}}
\newcommand{\suspfreexi}{{\mathbb{L}}^{r}_{\xi}}
\newcommand{\creff}{\mathrm{cr}}
\newcommand{\Fun}{\mathrm{Fun}}
\DeclareMathOperator{\nat}{\mathrm{Nat}}
\DeclareMathOperator{\map}{\mathrm{map}}
\DeclareMathOperator{\Map}{\mathrm{Map}}
\DeclareMathOperator{\Hom}{\mathrm{Hom}}
\DeclareMathOperator{\End}{\mathrm{End}}
\DeclareMathOperator{\id}{\mathsf{id}}
\newcommand{\6}{\partial}
\newcommand{\AC}{\bar{\partial}^{\star}}
\newcommand{\Qe}[1]{\overline{\mathit{Q}^{#1}}}
\newcommand{\bQe}[2]{\overline{\beta^{#1}\mathit{Q}^{#2}}}
\newcommand{\holim}{\operatorname*{holim}}
\newcommand{\colim}{\operatorname*{colim}}
\newcommand{\fib}{\operatorname*{fib}}
\newcommand{\cofib}{\operatorname*{cofib}}
\newcommand{\tfiber}{\operatorname*{tfiber}}
\newcommand{\Tot}{\operatorname*{Tot}}
\newcommand{\pTot}[1]{\operatorname*{Tot^{\mathit{#1}}}}
\newcommand{\bfLie}{\mathbf{Lie}}
\newcommand{\bfComm}{\mathbf{Comm}}
\newcommand{\F}{\mathbf F}
\newcommand{\R}{\mathbf R}
\newcommand{\Co}{\mathbf C}
\newcommand{\N}{\mathbf N}
\newcommand{\Z}{\mathbf Z}
\newcommand{\Q}{\mathbf Q}
\newcommand{\PP}{\mathbf P}
\newcommand{\bbP}{\mathbb P}
\newcommand{\D}{\mathbb D}
\newcommand{\kk}{\mathbf F}
\newcommand{\calD}{\EuScript{D}}
\newcommand{\calC}{\EuScript{C}}
\newcommand{\calE}{\EuScript{E}}
\newcommand{\calP}{\EuScript{P}}
\newcommand{\calA}{\mathcal{A}}
\newcommand{\calR}{\bar{\mathcal{R}}}
\DeclareMathOperator{\vph}{\varphi}
\DeclareMathOperator{\Ind}{\mathrm{Ind}}
\DeclareMathOperator{\Coind}{\mathrm{Coind}}
\DeclareMathOperator{\ad}{\mathrm{ad}}
\DeclareMathOperator{\rk}{\mathrm{rk}}
\newcommand{\e}{\varepsilon}
\newcommand{\lindeg}{\mathrm{sc.deg}}
\newcommand{\Gal}{\mathrm{Gal}}
\newcommand{\conn}{\mathrm{conn}}
\newcommand{\Surj}{\mathrm{Surj}}
\DeclareMathOperator{\coker}{\mathrm{coker}}
\numberwithin{equation}{section}
\numberwithin{equation}{subsection}
\renewcommand*{\theequation}{%
  \ifnum\value{subsection}=0 %
    \thesection
  \else
    \thesubsection
  \fi
  .\arabic{equation}%
}
\newtheorem{prop}[equation]{Proposition}
\newtheorem*{prop*}{Proposition}
\newtheorem{lmm}[equation]{Lemma}
\newtheorem{thm}[equation]{Theorem}
\newtheorem{varthm}{Theorem}
\newtheorem*{thm*}{Theorem}
\newtheorem*{conj*}{Conjecture}
\newtheorem{cor}[equation]{Corollary}
\theoremstyle{definition}
\newtheorem{dfn}[equation]{Definition}
\newtheorem{exmp}[equation]{Example}
\newtheorem*{ntt*}{Notation}
\theoremstyle{remark}
\newtheorem{rmk}[equation]{Remark}
\title{Algebraic Goodwillie spectral sequence}
\author{Nikolay Konovalov}
\address{Department of Mathematics, University of Chicago, Eckhart Hall, 5734 S University Ave, Chicago, IL 60637, USA}
\email{nkonovalov@uchicago.edu}
\date{}
\begin{document}
\begin{abstract}
Let $\mathit{s}\EuScript{L}$ be the $\infty$-category of simplicial restricted Lie algebras over $\mathbf{F} = \overline{\mathbf{F}}_p$, the algebraic closure of a finite field $\mathbf{F}_p$. By the work of A.~K.~Bousfield et al. on the unstable Adams spectral sequence, the category $\mathit{s}\EuScript{L}$ can be viewed as an algebraic approximation of the $\infty$-category of pointed $p$-complete spaces. We study the functor calculus in the category $\sL$. More specifically, we consider the Taylor tower for the functor $L^r\colon \EuScript{M}\mathrm{od}^{\geq 0}_{\mathbf{F}} \to \mathit{s}\EuScript{L}$ of a free simplicial restricted Lie algebra together with the associated Goodwillie spectral sequence. We show that this spectral sequence evaluated at $\Sigma^l \mathbf{F}$, $l\geq 0$ degenerates on the third page after a suitable re-indexing, which proves an algebraic version of the Whitehead conjecture. 

In our proof we compute explicitly the differentials of the Goodwillie spectral sequence in terms of the $\Lambda$-algebra of A.~K.~Bousfield et al. and the Dyer--Lashof--Lie power operations, which naturally act on the homology groups of a spectral Lie algebra. As an essential ingredient of our calculations, we establish a general Leibniz rule in functor calculus associated to the composition of mapping spaces, which conceptualizes certain formulas of W.~H.~Lin. Also, as a byproduct, we identify previously unknown Adem relations for the Dyer--Lashof--Lie operations in the odd-primary case.
\end{abstract}

\maketitle

\tableofcontents

\addtocontents{toc}{\protect\setcounter{tocdepth}{1}}
\section{Introduction}\label{section: introduction}

\subsection*{Historical overview}
Given a reduced finitary functor $F\colon \calC \to \calD$ between pointed presentable $\infty$-categories $\calC$ and $\calD$, T.~Goodwillie~\cite{GoodwillieIII} constructed a \emph{Taylor tower} $P_n(F)$ of $n$-excisive approximations:
\begin{equation}\label{equation: taylor tower, intro}
\begin{tikzcd}
&
D_3(F) \arrow[d]
&D_2(F)\arrow[d]
&D_1(F)\arrow[d, equal]\\
\ldots \arrow[r]
& P_3(F) \arrow[r]
& P_2(F) \arrow[r]
& P_1(F)
\end{tikzcd}
\end{equation}
such that the fibers have canonical deloopings $D_n(F)\simeq \Omega^{\infty} \circ \D_n(F)\circ \Sigma^\infty$ for certain functors $$\D_n(F)\colon \Sp(\calC) \to \Sp(\calD),$$ where $\Sp(\calC)$ and $\Sp(\calD)$ are the stabilizations of $\calC$ and $\calD$ respectively, see Section~\ref{section: spectrum objects}. Next, for $Y\in \calC$ and $X\in \calD$, the Taylor tower~\eqref{equation: taylor tower, intro} gives the tower of mapping spaces $\map_{\calD}(X,P_nF(Y))$. Under favorable conditions, the mapping space $\map_{\calD}(X,F(Y))$ is the homotopy limit of the latter tower $\map_{\calD}(X,P_nF(Y))$, and so the Taylor tower~\eqref{equation: taylor tower, intro} produces the \emph{Goodwillie spectral sequence}
\begin{equation}\label{equation: GSS, intro}
E^1_{t,n}(X,Y) = \pi_t\map_{\Sp(\calD)}(\Sigma^\infty X, \D_nF(\Sigma^{\infty} Y)) \Rightarrow \pi_t\map_{\calD}(X,F(Y)). 
\end{equation}
We note that the spectral sequence~\eqref{equation: GSS, intro} starts with the mapping spaces in the \emph{stable} category $\Sp(\calD)$, which are often more accessible for computations rather than mapping spaces in the ``non-stable'' category $\calD$. 

The most intriguing and well-studied instance of the Goodwillie spectral sequence is for the identity functor in the category of spaces. Namely, let $p$ be a fixed prime number, let $\calC=\calD=\spaces^{\wedge}_{*,p}$ be the category of \emph{pointed $p$-complete spaces}, and suppose that $F=\id_{\spaces}$, $X=S^0$, $Y=S^{l+1}$, $l\geq 0$ in the spectral sequence~\eqref{equation: GSS, intro}.  In this case the spectral sequence~\eqref{equation: GSS, intro} converges to the ($p$-completed) homotopy groups $\pi_*(S^{l+1})$. In addition, we will assume that $l$ is even if $p$ is odd. Then, G.~Arone and M.~Mahowald~\cite{AM99} showed that $\D_n(\id_{\spaces})(S^{l+1})\simeq *$ unless $n = p^h$. In the sequel, we will abuse the notation and write $D_n$ (resp. $\D_n$) for the functor $D_n(\id_{\calC})$ (resp. $\D_n(\id_{\calC})$), where $\id_{\calC}$ is the identity functor. Therefore the spectral sequence~\eqref{equation: GSS, intro} can be re-indexed to take the form
\begin{equation}\label{equation: RGSS, intro}
\widetilde{E}^1_{t,h}(S^{l+1}) = \pi_t\D_{p^h}(S^{l+1}) \Rightarrow \pi_t(S^{l+1}). 
\end{equation}

The Goodwillie layer spectra $\D_{p^h}(S^{l+1})$ can be identified as follows
$$\D_{p^h}(S^{l+1}) \simeq \partial_{p^h}(\id_{\spaces}) \wedge_{h\Sigma_{p^h}} (S^{l+1})^{\wedge p^h}, $$
where $\partial_{p^h}(\id_{\spaces})$ are the Spanier--Whitehead duals of certain finite CW-complexes called \emph{partition complexes}, see~\cite{Johnson95} and~\cite{AD01}. 
Thus the Goodwillie spectral sequence~\eqref{equation: RGSS, intro} computes the unstable homotopy groups of spheres from the stable homotopy groups of some well-understood spectra. The differentials in~\eqref{equation: RGSS, intro} remain mostly mysterious, although M.~Behrens~\cite{Behrens12} showed that the $d_1$-differentials are related to the classical James--Hopf invariant and their leading terms (with respect to an appropriate filtration) are given by the stable Hopf invariant.

For $l=0$, the spectral sequence~\eqref{equation: RGSS, intro} converges to the homotopy groups $\pi_*(S^1)$ of a circle, and so no non-trivial classes persist to the infinite page $\widetilde{E}^{\infty}$. Surprisingly, M.~Behrens~\cite{Behrens11} (for $p=2$) and N.~Kuhn~\cite{Kuhn15} (for any $p$) showed that the spectral sequence~\eqref{equation: RGSS, intro} also degenerates at the~\emph{second page}, so any non-trivial class on the $\widetilde{E}^1$-term either supports a non-trivial $d_1$-differential or it is killed by a $d_1$-differential. More precisely, the Goodwillie $d_1$-differentials serve as contracting chain homotopies for the Kahn--Priddy sequence in the \emph{Whitehead conjecture}~\cite{Kuhn82_Whitehead}. 

The purpose of this paper is to study a certain \emph{algebraic} analog of the Goodwillie spectral sequence~\eqref{equation: RGSS, intro} together with algebraic counterparts of the aforementioned topological results.

\subsection*{Algebraic Goodwillie spectral sequence}
The main object of this paper is the $\infty$-category $\sL$ of \emph{simplicial restricted Lie algebras} over $\kk=\overline{\F}_p$, the algebraic closure of the finite field $\F_p$, see Section~\ref{section: simplicial restricted lie algebras}. The homotopy theory of $\sL$ was developed by S.~Priddy~\cite{Priddy70short},~\cite{Priddy70long} and it was revisited by the author in~\cite{koszul}. 
A.~K.~Bousfield and E.~Curtis~\cite{BC70} showed that there is a certain relation between the category $\EuScript{S}^{\wedge}_{*,p}$ of \emph{pointed $p$-complete spaces} and the category $\sL$. In particular, they constructed the \emph{unstable Adams spectral sequence}
$$
{}^{\mathrm{UASS}}E^1_{t,*}(X)=\pi_t\free(\Sigma^{-1}\widetilde{H}_*(X;\F_p)) \Rightarrow \pi_{t-1}(X), \;\; X\in \spaces^{\wedge}_{*,p}.
$$
Here ${}^{\mathrm{UASS}}E^1$-term consists of the homotopy groups of the free simplicial restricted Lie algebra $\free(\Sigma^{-1}\widetilde{H}_*(X;\F_p))$ generated by the homology groups of a simply connected space $X$ and ${}^{\mathrm{UASS}}E^r_{*,*}$ converges to its homotopy groups $\pi_*(X)$. Therefore the category $\sL$ may serve as an algebraic approximation of the category $\EuScript{S}^{\wedge}_{*,p}$ of spaces and should be an essential ingredient to study the interaction of the unstable Adams spectral sequence and the Taylor tower. 

We write $\Mod_{\kk}^{\geq 0}$ for the $\infty$-category of simplicial vector spaces, which we identify with the category of non-negatively graded  chain complexes by the Dold--Kan correspondence. We consider the functor $$\free\colon \Mod_{\kk}^{\geq 0} \to \sL$$ of the free simplicial restricted Lie algebra. Motivated by~\cite{BC70}, we study the Goodwillie spectral sequence~\eqref{equation: GSS, intro} in the case $\calC=\calD=\sL$, $F=\id_{\sL}$ is the identity functor on $\sL$, $X=\free(\kk)$, and $Y=\free(\Sigma^l \kk)$, where $l$ is even if $p$ is odd. In this case the spectral sequence~\eqref{equation: GSS, intro} takes the form
\begin{equation}\label{equation: AGSS, intro, 1}
E^1_{t,n}(\Sigma^l \kk) = \pi_t( D_{n}(\free(\Sigma^l \kk))) \Rightarrow \pi_t(\free(\Sigma^l\kk)). 
\end{equation}
Since the functor $\free$ preserves colimits, we have an equivalence $D_n\circ \free \simeq D_n(\free)$ as functors from $\Mod^{\geq 0}_{\kk}$. The next theorem follows essentially from the Hilton--Milnor theorem.
\begin{varthm}[Corollary~\ref{corollary:derivatives of free}]\label{theorem: A,intro} There exists a natural equivalence
\begin{equation*}
D_n(\free)(V)\simeq \Omega^{\infty}\Sigma^\infty \free(\bfLie_n \otimes_{h\Sigma_n} V^{\otimes n}), \;\; V\in \Mod_{\kk}^{\geq 0},
\end{equation*}
where $\Sigma^{\infty} : \sL \rightleftarrows \Sp(\sL) : \Omega^{\infty}$ is the adjoint pair of stabilization and $\bfLie_n \in \Vect_{\kk}$ is the $n$-th space of the Lie operad.
\end{varthm}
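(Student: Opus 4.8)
The plan is to read the Taylor tower of $\free$ off of the algebraic Hilton--Milnor theorem, identifying the $n$-th derivative and then invoking the classification of homogeneous functors. Since $\free$ preserves colimits it sends strongly cocartesian cubes to strongly cocartesian cubes, so its cross-effects are computed by applying $\free$ to coproduct cubes; combined with $D_n(\free)\simeq D_n\circ\free$ (noted above) this reduces the problem to computing $\partial_n(\free)\in\Sp(\sL)$ as a $\Sigma_n$-object. Concretely, for $V_1,\dots,V_n\in\Mod^{\geq 0}_{\kk}$ the underlying object $\oblv\,\free(V_1\oplus\dots\oplus V_n)$ splits, by Hilton--Milnor, naturally and compatibly enough with the cube maps, into summands indexed by basic Lie words in $n$ letters. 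Taking total fibres (which $\oblv$ preserves) isolates the summands involving all of $V_1,\dots,V_n$; the \emph{multilinear} one — each $V_i$ used exactly once — is $\bfLie_n\otimes V_1\otimes\dots\otimes V_n$ with the standard action of $\Sigma_n$ on the arity-$n$ term of the Lie operad, since a Lie word linear in every letter is an iterated bracket and involves no restriction $x\mapsto x^{[p]}$; the remaining surviving summands are of polynomial degree $\ge 2$ in some $V_i$ and so die under multilinearization. As the bracket of two multilinear generators has weight $\ge 2n$ and hence lies in a different summand, the multilinear cross-effect is $\triv(\bfLie_n\otimes V_1\otimes\dots\otimes V_n)$, whence $\partial_n(\free)\simeq\bfLie_n$, the discrete object with its usual $\Sigma_n$-action.

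By the standard classification of $n$-homogeneous functors valued in $\sL$, $D_n(\free)(V)\simeq\Omega^\infty\bigl(\partial_n(\free)\otimes_{h\Sigma_n}(\Sigma^\infty V)^{\otimes n}\bigr)$; since the stabilization of $\Mod^{\geq 0}_{\kk}$ is the derived category $D(\Vect_{\kk})$, one has $(\Sigma^\infty V)^{\otimes n}\simeq V^{\otimes n}$ for connective $V$, and therefore $D_n(\free)(V)\simeq\Omega^\infty(\bfLie_n\otimes_{h\Sigma_n}V^{\otimes n})$. It remains to rewrite the right-hand side. The derived indecomposables of a free simplicial restricted Lie algebra recover its generators, i.e. $\Sigma^\infty\free(W)\simeq W$ in $\Sp(\sL)$ for $W\in\Mod^{\geq 0}_{\kk}$; applying $\Omega^\infty$ with $W=\bfLie_n\otimes_{h\Sigma_n}V^{\otimes n}$ gives $\Omega^\infty(\bfLie_n\otimes_{h\Sigma_n}V^{\otimes n})\simeq\Omega^\infty\Sigma^\infty\free(\bfLie_n\otimes_{h\Sigma_n}V^{\otimes n})$, which is the asserted formula, manifestly natural in $V$.

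The main obstacle is the first step: making the Hilton--Milnor splitting precise and functorial in the $\infty$-categorical setting — natural in the $V_i$, sufficiently compatible with the coproduct cubes that it genuinely computes $\creff_n(\free)$, and with enough control on the infinite product over basic words to conclude that the Goodwillie tower of $\free$ converges. A second, characteristic-$p$ point is to confirm that although the restriction operations contribute to the higher cross-effects of $\free$ (and of its underlying monad), they never survive multilinearization, which is precisely why $\partial_n(\free)$ is the ordinary Lie operad and not a Frobenius-twisted variant; relatedly, the layer involves the \emph{homotopy} orbits $\bfLie_n\otimes_{h\Sigma_n}V^{\otimes n}$, which differ from the strict orbits once $p\mid n$ (as $\bfLie_n$ need not be $\kk[\Sigma_n]$-projective), the homotopy-orbit form being dictated by the homogeneous-functor formula rather than by the levelwise definition of $\free$.
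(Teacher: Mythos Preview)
Your overall plan---Hilton--Milnor on $\free(V_1\oplus\cdots\oplus V_n)$, isolate the cross-effect, multilinearize, then use the classification of homogeneous functors---is exactly the paper's strategy (Corollary~\ref{corollary:cross-effect} through Corollary~\ref{corollary:derivatives of free}). The gap is in your identification of the multilinearized cross-effect and, equivalently, in the claim $\Sigma^\infty\free(W)\simeq W$.

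The stabilization $\Sp(\sL)$ is \emph{not} equivalent to $\Mod_{\kk}$: by Proposition~\ref{proposition: stable is monadic} it is monadic over $\Mod_{\kk}$ via $\suspfree\dashv\oblv$, and the monad is highly nontrivial (Theorem~\ref{theorem: stable homotopy groups of a free object} gives $\pi_*(\suspfree(W))\cong\pi_*(W)\otimes\Lambda$). In particular $\Sigma^\infty\free(W)=\suspfree(W)$ is the \emph{free} object of $\Sp(\sL)$ on $W$, not $W$ itself. Your justification ``derived indecomposables recover the generators'' is the true statement $\widetilde C_*\circ\free\simeq\Sigma$ (Remark~\ref{remark: homology of a free algebra}), but $\widetilde C_*$ is not $\Sigma^\infty_{\sL}$; the $p$-operation $\xi$ survives stabilization (Remark~\ref{remark: stable homotopy groups of xi-complete}), which is precisely what distinguishes restricted Lie algebras from ordinary operadic algebras here. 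The same misconception feeds your claim that the multilinearized cross-effect is $\triv(\bfLie_n\otimes V_1\otimes\cdots\otimes V_n)$: multilinearization is not ``restriction to the multilinear summand of the underlying vector space,'' and the answer lives in $\Sp(\sL)$, not $\Mod_{\kk}$.

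The paper avoids this by never leaving $\sL$: Proposition~\ref{proposition: computation of Dn} exhibits a map of simplicial restricted Lie algebras
\[
\bar l\colon \free(\bfLie_n\otimes V_1\otimes\cdots\otimes V_n)\longrightarrow \creff_n(\free)(V_1,\ldots,V_n)
\]
and shows it is $(n+1)k$-connected on $k$-connected inputs. Since $\suspfree$ is exact, the connectivity estimate forces $\Delta_n(\free)(V_1,\ldots,V_n)\simeq\suspfree(\bfLie_n\otimes V_1\otimes\cdots\otimes V_n)$ in $\Sp(\sL)$; taking homotopy $\Sigma_n$-orbits (which $\suspfree$, being a left adjoint, preserves) yields $\D_n(\free)(V)\simeq\suspfree(\bfLie_n\otimes_{h\Sigma_n}V^{\otimes n})\simeq\Sigma^\infty\free(\bfLie_n\otimes_{h\Sigma_n}V^{\otimes n})$, and applying $\Omega^\infty$ gives the statement. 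In short: replace $\triv$ by $\free$ in your cross-effect computation, drop the false identity $\Sigma^\infty\free\simeq\id$, and the two corrections cancel to give the desired formula directly.
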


By~\cite{AD01}, we have again $D_n(\free)(\Sigma^l\kk)\simeq *$ unless $n=p^h$, see also Proposition~\ref{proposition: vanishing of LnV}, and so the spectral sequence~\eqref{equation: AGSS, intro, 1} can be re-indexed to take the form
\begin{equation}\label{equation: RAGSS, intro}
\widetilde{E}^1_{t,2h}(\Sigma^l\kk) = \pi_tD_{p^h}(\free)(\Sigma^l \kk) \Rightarrow \pi_t(\free(\Sigma^l \kk)),
\end{equation}
where $\widetilde{E}^1_{t,2h+1}(\Sigma^l \kk) =0$, $h\geq 0$. We note that this re-indexing is slightly different to the re-indexing of~\eqref{equation: RGSS, intro}; namely, we decelerate the filtration of~\eqref{equation: RGSS, intro} by half. We do it in order to incorporate the case of odd $l$ as well, see~\eqref{equation: RAGSS}.

\begin{varthm}[Theorem \ref{theorem: whitehead conjecture}]\label{theorem: B, intro}
Suppose that $l\geq 0$ and $l$ is even if $p$ is odd. Then $\widetilde{E}^{1}_{t,m}(\Sigma^l \kk)=\widetilde{E}^2_{t,m}(\Sigma^l\kk)$, $t,m\geq 0$, and $\widetilde{E}^3_{t,m}(\Sigma^{l}\kk)=0$ if $m\geq 1$ and $t\geq 0$. In particular, the re-indexed spectral sequence~\eqref{equation: RAGSS, intro} degenerates at the third page.
\end{varthm}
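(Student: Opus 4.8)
The plan is to follow the template of the topological Whitehead theorem of Behrens and Kuhn, with the $\Lambda$-algebra of Bousfield--Curtis--Kan playing the role it does there: identify the re-indexed $E^1$-page with an explicit complex built from the Dyer--Lashof--Lie operations, recognise the $d_2$-differential as the differential of that complex, and invoke its acyclicity above the bottom filtration. The equality $\widetilde E^1_{t,m}(\Sigma^l\kk)=\widetilde E^2_{t,m}(\Sigma^l\kk)$ is formal: after the re-indexing in~\eqref{equation: RAGSS, intro} the $\widetilde E^1$-page is concentrated in the even filtrations $m=2h$, whereas the first differential of the re-indexed tower drops one homotopical degree and raises $m$ by one, so it lands in a trivial group. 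What remains is to prove that the $d_2$-complex
\[
\widetilde E^2_{t,0}(\Sigma^l\kk)\xrightarrow{\,d_2\,}\widetilde E^2_{t-1,2}(\Sigma^l\kk)\xrightarrow{\,d_2\,}\widetilde E^2_{t-2,4}(\Sigma^l\kk)\xrightarrow{\,d_2\,}\cdots
\]
has vanishing homology in every positive filtration $m=2h$, $h\ge 1$; its homology in filtration $0$ is then forced by convergence, and no higher differentials are possible, so the spectral sequence degenerates at $\widetilde E^3$.

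\emph{Step 1: the $\widetilde E^1$-page.} By Corollary~\ref{corollary:derivatives of free} one has $\widetilde E^1_{t,2h}(\Sigma^l\kk)\cong\pi_t\,\Omega^\infty\Sigma^\infty\free\bigl(\bfLie_{p^h}\otimes_{h\Sigma_{p^h}}(\Sigma^l\kk)^{\otimes p^h}\bigr)$, and this group is computed by the earlier sections through the action of the Dyer--Lashof--Lie operations $\Qe{\bullet}$ (together with their Bockstein variants when $p$ is odd) on the fundamental class $\iota_l$ of $\Sigma^l\kk$. I would organise these computations so that the whole $\widetilde E^1$-page, together with its Goodwillie filtration, is identified --- as a filtered bigraded $\kk$-module --- with an explicit complex built from the $\Lambda$-algebra attached to $\iota_l$ (equivalently, from the $E_1$-term of the Bousfield--Curtis unstable Adams spectral sequence of $\free(\Sigma^l\kk)$), the filtration $m=2h$ matching twice the word length.

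\emph{Step 2: the differential $d_2$.} This is the heart of the proof. Using the general Leibniz rule in functor calculus proved earlier in the paper --- the statement conceptualizing W.~H.~Lin's formulas --- one expresses the attaching maps $D_{p^h}(\free)\to\Sigma D_{p^{h+1}}(\free)$ of the Goodwillie tower in terms of the composition product of mapping spaces and of the structure maps of the (spectral) Lie operad. Evaluating this on the monomials produced in Step~1 yields for $d_2$ a leading Dyer--Lashof--Lie bracket term together with correction terms; after putting these into the admissible basis by means of the Adem relations for the operations $\Qe{\bullet}$ --- including the odd-primary relations newly recorded in this paper --- one identifies $d_2$ with the internal (``destabilisation'') differential $\partial$ of the $\Lambda$-algebra. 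Thus $(\widetilde E^2_{*,*}(\Sigma^l\kk),d_2)$ becomes the algebraic Whitehead complex.

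\emph{Step 3: acyclicity.} It then remains to show that this complex is acyclic above filtration $0$ --- the algebraic form of the Whitehead/Kahn--Priddy theorem. I would prove this by exhibiting an explicit contracting chain homotopy, the algebraic analogue of the Kahn--Priddy map, built from the operation $\lae{0}$ together with an algebraic James--Hopf splitting, so that the word-length filtration becomes a resolution. Combining Steps~1--3 gives $\widetilde E^3_{t,m}(\Sigma^l\kk)=0$ for $m\ge1$, whence the re-indexed spectral sequence~\eqref{equation: RAGSS, intro} degenerates at the third page.

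\emph{Main obstacle.} The serious difficulty is Step~2: it requires a fully explicit hold on the Goodwillie attaching maps through the Leibniz rule, careful bookkeeping of internal degrees, signs and Bocksteins, and --- most delicately --- a proof that every correction term thrown up by the Leibniz expansion is absorbed by an Adem relation. This is precisely why the odd-primary Adem relations must be isolated first, and why the half-speed re-indexing by $m=2h$ (rather than by $h$) is the one in which $d_2$ assumes the clean $\Lambda$-algebra form.
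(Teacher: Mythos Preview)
Your Step~1 and the formal observation that $\widetilde E^1=\widetilde E^2$ are fine. The substantive gap is in Step~2: the differential $\tilde d_2$ is \emph{not} the internal $\Lambda$-algebra differential. The $\Lambda$-differential $\delta$ acts within a fixed layer $\pi_*(\mathbf L_{p^h}(V))\otimes\Lambda$ and raises the $\Lambda$-length by one; it is the Adams direction. By contrast $\tilde d_2$ changes the layer: it sends $\pi_*(\mathbf L_{p^h}(V))\otimes\Lambda_s$ to $\pi_*(\mathbf L_{p^{h+1}}(V))\otimes\Lambda_{s-1}$ (Theorems~C and~E, Remark~\ref{remark: agss is trigraded}). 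These are the two \emph{different} edges of the bigrading discussed in the Lin--Nishida remark at the end of the paper; their conjectural anticommutation is open, and in any case one is not the other. So the sentence ``one identifies $d_2$ with the internal (`destabilisation') differential $\partial$ of the $\Lambda$-algebra'' is wrong, and with it the plan to quote acyclicity of $(\Lambda,\partial)$ collapses. Your proposed contracting homotopy in Step~3 is then built on the wrong complex; in particular, $\lambda_0$ goes to $\Qe{0}=0$ under $\tilde d_2$ and does not produce a Kahn--Priddy-style null-homotopy here.

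What the paper actually does after establishing Lin's formula (Theorem~D) and its propagation to higher rows (Theorem~E) is purely combinatorial. In the basis $\Qe{J}(\iota_l)\otimes\nu_I$ with $J$ completely unadmissible and $I$ admissible, one shows (Corollaries~\ref{cor: goodwillie differential lowers the filtation, p is odd}, \ref{cor: goodwillie differential lowers the filtation, p is 2}) that $\tilde d_2$ preserves the total length $n=|I|+|J|$ and, more sharply, a \emph{lexicographic} filtration on the tuple $(\overline J^{\,op},\overline I)$: the leading term moves the first letter $\nu_{i_1}^{\e_1}$ of $I$ to the front of $J$ (when $(i_1,\e_1,J)$ is again CU), and every correction strictly lowers the first index of $J$. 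On the associated graded complex the differential is therefore the trivial ``shift one letter from $I$ to $J$'' map, which is visibly exact in homological degree $h\ge1$; the filtration is finite in each fixed $n$, so the original complex is exact for $h\ge1$ as well. No identification with $(\Lambda,\delta)$ and no Kahn--Priddy homotopy enters.
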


Theorem~\ref{theorem: B, intro} is an algebraic counterpart to the \emph{calculus form of the Whitehead conjecture}, see~\cite{Behrens11},~\cite[Section~4.6]{Behrens12}, and~\cite{Kuhn15}. We note that the calculus form of the topological Whitehead conjecture holds for $S^1$ and does not hold for any higher dimensional sphere. By contrast, the algebraic version holds in any degree.

We prove Theorem~\ref{theorem: B, intro} by computing explicitly the $\tilde{d}_2$-differentials in the spectral sequence~\eqref{equation: RAGSS, intro}; at the moment of writing, we do not have a non-computational proof. We first describe the basis of the $\widetilde{E}^1$-term. 

\subsection*{Lambda algebra and Dyer--Lashof--Lie operations}
We write $$\mathbf{L}_k \colon \Mod_{\kk} \to \Mod_{\kk}, \;\; k\geq 0$$ for the functor $V\mapsto \bfLie_k\otimes_{h\Sigma_k} V^{\otimes k}$, see~\eqref{equation: lie powers}. By Theorem~\ref{theorem: A,intro}, we have an isomorphism $$\widetilde{E}^1_{*,2h}(\Sigma^l \kk)\cong \pi^s_*(\free(\mathbf{L}_{p^h}(\Sigma^l \kk)),$$ where $\pi^s_*(\free(\mathbf{L}_{p^h}(\Sigma^l \kk)))$ are the \emph{stable} homotopy groups of the free simplicial restricted Lie algebra generated by $\mathbf{L}_{p^h}(\Sigma^l\kk)\in \Mod^{\geq 0}_{\kk}$. Such homotopy groups were computed by~\cite{6authors} in terms of the \emph{lambda algebra} $\Lambda$, see Section~\ref{section: lambda algebra}. We recall that $\Lambda$ is a certain bigraded (differential) algebra (over $\F_p$) which is the Koszul dual algebra to the mod-$p$ Steenrod algebra $\calA_p$, see e.g.~\cite{Priddy70}. The algebra $\Lambda$ is well-understood; in particular, it is a quadratic algebra given by the explicit Adem-type relations~\eqref{equation: lambda, adem1, p is odd}, \eqref{equation: lambda, adem2, p is odd}, and~\eqref{equation: lambda, adem3, p=2}. Moreover, $\Lambda$ has a PBW-basis of admissible monomials. By~\cite{6authors}, we have natural isomorphisms
$$\widetilde{E}^1_{*,2h}(\Sigma^l \kk)\cong \pi^s_*(\free(\mathbf{L}_{p^h}(\Sigma^l \kk)) \cong \pi_*(\mathbf{L}_{p^h}(\Sigma^l\kk)) \otimes \Lambda, $$
where $\pi_*(\mathbf{L}_{p^h}(\Sigma^l\kk))$ on the right hand side are the homotopy groups of the \emph{simplicial vector space} $\mathbf{L}_{p^h}(\Sigma^l\kk)\in \Mod_{\kk}^{\geq 0}$. Here we consider the right multiplication by the elements of $\Lambda$ as the composition in homotopy groups.

The basis for the homotopy groups $$\pi_*(\mathbf{L}_{p^h}(\Sigma^l\kk)) \cong \widetilde{H}^*(\Sigma^{-1}\partial_{p^h}(\id_{\spaces})\wedge_{h\Sigma_{p^h}} (S^{l+1})^{\wedge p^h};\kk)$$ was firstly constructed by G.~Arone and M.~Mahowald~\cite{AM99} in terms of completely unadmissible sequences (see Definition~\ref{definition: cu sequence}) of the \emph{Dyer--Lashof} operations $\beta^{\e}Q^i$, see e.g.~\cite{CLM76}. We recall that the Dyer--Lashof operations act on the homology groups of an $E_\infty$-ring spectrum, and it is unclear how they are related to Lie algebras of any kind. Later, M.~Behrens~\cite{Behrens12} and J.~Kjaer~\cite{Kjaer18} defined \emph{Dyer--Lashof--Lie} operations $\bQe{\e}{i}$ which act on the homology groups of any \emph{spectral Lie algebra}. In particular, we have the maps
\begin{equation}\label{equation: dll,intro}
\bQe{\e}{i}\colon \pi_*(\mathbf{L}_{p^h}(\Sigma^l\kk)) \to \pi_*(\mathbf{L}_{p^{h+1}}(\Sigma^l\kk)), \;\; h\geq 0, 
\end{equation}
and by applying them recursively, we can reinterpret the Arone--Mahowald basis in terms of the new Dyer--Lashof--Lie operations. We review the construction and basic properties of $\bQe{\e}{i}$ in Section~\ref{section: H-Lie-algebras}.

Recall that the algebra $\Lambda$ is generated by the elements $\nu_i^{\e}\in\Lambda$, $i\geq \e$, $\e\in\{0,1\}$ (resp. $\lambda_i\in\Lambda$, $i\geq 0$ if $p=2$) which are Koszul dual to the Steenrod operations $\beta^{1-\e}P^i \in \calA_p$ (resp. $Sq^{i+1}\in\calA_2)$.

\begin{varthm}[Proposition~\ref{proposition: differential, length 1}]\label{theorem: C, intro}
Let $\iota_l \in \pi_l(\Sigma^l \kk)$ be the canonical generator. Then, we have
$$\tilde{d}_2(\iota_l\otimes \nu^{\e}_{i})  = 
\begin{cases}
\bQe{\e}{i}(\iota_l) \otimes 1
& \mbox{if $p$ is odd, $i\geq 0$, $\e\in \{0,1\}$,}\\
\Qe{i}(\iota_l) \otimes 1
& \mbox{if $p=2$, $i\geq 0$, $\e=0$.}
\end{cases}
$$
Here $\tilde{d}_2\colon \pi_*(\Sigma^l \kk)\otimes \Lambda \to \pi_*(\mathbf{L}_p(\Sigma^l\kk))\otimes \Lambda$ is the differential in the spectral sequence~\eqref{equation: RAGSS, intro}, and we set $\nu_i^{0}=\lambda_i\in\Lambda$ if $p=2$.
\end{varthm}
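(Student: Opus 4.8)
The plan is to identify the first non-trivial differential $\tilde d_2$ emanating from filtration degree $0$ with a single Dyer-Lashof-Lie operation by reducing to the behaviour of the James-Hopf type maps in the algebraic Taylor tower. Concretely, $\tilde d_2$ on the class $\iota_l\otimes\nu_i^\e$ is, by the general yoga of the Goodwillie spectral sequence, the composite in which one lifts $\iota_l\otimes\nu_i^\e \in \pi_{*}D_1(\free)(\Sigma^l\kk) = \pi^s_*(\free(\Sigma^l\kk))$ along the connecting map of the fibre sequence $D_{p}(\free)(\Sigma^l\kk)\to P_{p}(\free)(\Sigma^l\kk)\to P_{1}(\free)(\Sigma^l\kk)$ (using $D_{p^h}(\free)(\Sigma^l\kk)\simeq *$ for $p<p^h<p^2$, so the deceleration is harmless). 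By Theorem~\ref{theorem: A,intro} the relevant layers are $\Omega^\infty\Sigma^\infty\free(\mathbf L_1(\Sigma^l\kk))$ and $\Omega^\infty\Sigma^\infty\free(\mathbf L_p(\Sigma^l\kk))$, and by the computation of $\pi^s_*$ of free simplicial restricted Lie algebras via the lambda algebra recalled in the excerpt, the source group is $\pi_*(\mathbf L_1(\Sigma^l\kk))\otimes\Lambda$ and the target is $\pi_*(\mathbf L_p(\Sigma^l\kk))\otimes\Lambda$. The first step is therefore to make precise, using the $\Lambda$-algebra module structure, that it suffices to compute $\tilde d_2$ on the generator $\iota_l$ itself (filtration $0$, internal degree $l$) and then propagate by $\Lambda$-linearity of the differential; the element $\iota_l\otimes\nu_i^\e$ is $\iota_l$ acted on the right by $\nu_i^\e$, and the $\tilde d_2$-differential is a map of right $\Lambda$-modules up to the lambda-algebra differential, which on these low-filtration classes is detected exactly by the right action.

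Next I would invoke the identification, essentially due to Behrens and Kjaer and reviewed in Section~\ref{section: H-Lie-algebras}, of the first Goodwillie differential with the stable Hopf-invariant / James-Hopf map. In the algebraic setting the relevant statement is that the connecting map $D_1(\free)(\Sigma^l\kk) \to \Sigma D_p(\free)(\Sigma^l\kk)$ at the level of homotopy groups is, after passing to the $\Lambda$-algebra description, the operation that sends a class to its $p$-th Dyer-Lashof-Lie power together with the $\Lambda$-algebra generator $\nu_i^\e$ dual to $\beta^{1-\e}P^i$ (resp. $Sq^{i+1}$). The key computational input is the compatibility of the Dyer-Lashof-Lie operations~\eqref{equation: dll,intro} with the suspension $\Sigma^\infty\free$ and with the $\Lambda$-algebra action; this is where the Arone-Mahowald basis in terms of completely unadmissible sequences of $\overline{\beta^\e Q^i}$ and its reinterpretation recursively via~\eqref{equation: dll,intro} does the work: the class $\iota_l\otimes\nu_i^\e$ maps to the basis element $\bQe{\e}{i}(\iota_l)\otimes 1$ of $\pi_*(\mathbf L_p(\Sigma^l\kk))\otimes\Lambda$ because the right multiplication by $\nu_i^\e$ in the lambda algebra corresponds, under the Goodwillie filtration jump from $1$ to $p$, precisely to applying $\bQe{\e}{i}$ and resetting the $\Lambda$-degree to $0$.

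It then remains to check that no lower differential or indeterminacy interferes: one must verify $\tilde d_1 = 0$ on these classes (equivalently, that the relevant $D_n(\free)(\Sigma^l\kk)$ vanish for $1<n<p$, which follows from Proposition~\ref{proposition: vanishing of LnV} together with $D_n(\free)(V)\simeq\Omega^\infty\Sigma^\infty\free(\mathbf L_n V)$ and the vanishing of $\mathbf L_n$ on a single even generator for $n$ not a power of $p$), so that $\tilde d_2$ is the first possibly-nonzero differential and is genuinely a well-defined additive operation on the $\widetilde E^2 = \widetilde E^1$ page. One also checks that $\bQe{\e}{i}(\iota_l)$ is non-zero in the expected internal degree (the degree bookkeeping: $\nu_i^\e$ has internal degree matching $|\beta^{1-\e}P^i|$ minus one, and $\bQe{\e}{i}$ raises degree by the corresponding amount, consistently with the filtration jump contributing the extra $1$), and that the formula is $p=2$-compatible with $\nu_i^0 = \lambda_i$ and $\bQe{0}{i} = \Qe{i}$. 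The main obstacle I anticipate is the second step: pinning down the precise identification of the algebraic James-Hopf connecting map with the Dyer-Lashof-Lie $p$-th power operation, i.e.\ showing that the abstract connecting homomorphism in the Taylor tower literally realizes $\bQe{\e}{i}$ and not some other operation of the same degree or a linear combination thereof. This requires either a direct chain-level computation in the cobar/$\Lambda$-algebra model of $\pi^s_*(\free(-))$ from~\cite{6authors}, matching it against the definition of $\bQe{\e}{i}$ from Section~\ref{section: H-Lie-algebras}, or an appeal to the general Leibniz rule for mapping-space composition advertised in the abstract; I would pursue the former, isolating the universal case $l = l_0$ small and using naturality in $\Sigma^l\kk$ to conclude in general.
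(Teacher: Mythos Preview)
Your high-level picture is correct --- the differential $\tilde d_2$ on the $0$-th row is governed by a connecting map which factors through a James-Hopf invariant and lands on a Dyer-Lashof-Lie class --- but the reduction strategy you outline does not work, and the paper's route is substantially more elaborate than you anticipate.

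The central gap is your first step: the claim that one can compute $\tilde d_2$ on $\iota_l$ and then ``propagate by $\Lambda$-linearity'' is false. By Remark~\ref{remark: agss is trigraded} (or Proposition~\ref{proposition:differential lowers the second grading}), the differential $\tilde d_2$ \emph{lowers} the $\Lambda$-degree by one, so it is not $\Lambda$-linear; in particular $\tilde d_2(\iota_l)=0$ for degree reasons and nothing can be deduced from it about $\tilde d_2(\iota_l\otimes\nu_i^\e)$. The entire content of the theorem is the evaluation on the $\Lambda$-degree-one classes $\iota_l\otimes\nu_i^\e$, which are precisely the analogues of Hopf-invariant-one elements. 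There \emph{is} a Leibniz-type identity relating $\tilde d_2(x\nu_i^\e)$ to $\tilde d_2(x)$ (Lemma~\ref{lemma: GSS differential on hopf elements and leibniz}), but its proof uses Theorem~C as input, so invoking it here would be circular.

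The paper's actual proof (Proposition~\ref{proposition: differential, length 1}) runs as follows. First one replaces $\free$ by $\Omega^{p-2}L_\xi\free$; the point of the $(p-2)$-fold looping is Theorem~\ref{theorem:splitting} and Corollary~\ref{corollary: splitting}, which split the Taylor tower of $P_{[n;pn-1]}(\Omega^{p-2}L_\xi\free)$ and thereby upgrade the spectral-sequence differential to an honest natural transformation $\delta_1\colon D_1(\Omega^{p-2}L_\xi\free)\to BD_p(\Omega^{p-2}L_\xi\free)$. Proposition~\ref{proposition:james-hopf and adjoint} then factors $\delta_1\simeq\Omega^\infty\D_p(\tilde\delta_1)\circ j_p$, where $j_p$ is the James-Hopf map (Definition~\ref{definition:james-hopf map}) and $\D_p(\tilde\delta_1)$ is a map of suspension spectrum objects. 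The two factors are computed separately: $j_{p*}(\sigma^{-1}v\otimes\nu_i^\e)$ is evaluated in Theorem~\ref{theorem:james-hopf and dyer-lashof} by relating $j_p$ to the unstable Hurewicz map and the Tate diagonal (Section~\ref{section: tate diagonal}, Proposition~\ref{proposition:tate diagonal for spectra}); and $\D_p(\tilde\delta_1)_*$ is identified in Theorems~\ref{theorem: the adjoint of delta_n}, \ref{theorem: the adjoint of delta_n, p=2} with the Lie-operadic multiplication, using the left $\bfLie^{sp}_*$-module structure on the derivatives of the two-stage functor $\widetilde P_1(\Omega^{p-2}L_\xi\free)$ developed in Section~\ref{section: two-stage functors}. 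Neither a direct chain-level computation in the cobar model nor the Leibniz rule suffices; the Tate-diagonal computation and the $\bfLie^{sp}_*$-module analysis of Section~\ref{section: derivaties and coderivatives} are the essential missing ingredients.
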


In other words, the differential $\tilde{d}_2$ maps the preferred generators $\nu^{\e}_i$ of the algebra~$\Lambda$ to the preferred generators of the Dyer--Lashof--Lie algebra. The proof of Theorem~\ref{theorem: C, intro} is involved; a key ingredient is the James--Hopf map. 

\subsection*{James--Hopf invariant} In Proposition~\ref{proposition: Snaith splitting}, we observe that the category $\sL$ has a variant of the Snaith splitting. This splitting allows us to construct the ($p$-th) \emph{James--Hopf map}
$$j_p\colon \Omega^{\infty} \Sigma^{\infty} \free(\Sigma^l \kk) \to \Omega^{\infty}\Sigma^{\infty} \free(\Sigma^{-1}(\Sigma^{l+1}\kk)^{\otimes p}_{h\Sigma_p}),$$
see Definition~\ref{definition:james-hopf map}. We demonstrate in Proposition~\ref{proposition:james-hopf and adjoint} that the $\tilde{d}_2$-differentials are related to the James--Hopf map $j_p$, and in Theorems~\ref{theorem: the adjoint of delta_n}, \ref{theorem: the adjoint of delta_n, p=2}, we show that the James--Hopf invariant and the $\tilde{d}_2$-differential are indeed equivalent to each other.

In Section~\ref{section: james-hopf map}, we evaluate the James--Hopf map $j_p$ at the generators $\nu^{\e}_i\in\Lambda$ of the algebra $\Lambda$. For that we relate the map $j_p$ to the \emph{unstable Hurewicz homomorphism}
$$\tilde{h}\colon \pi_*(X) \to \widetilde{H}_{*+1}(\Omega^{\infty}X), \;\;X\in \Sp(\sL), $$
see Section~\ref{section: unstable hurewicz homomorphism}. Using the idea of~\cite{KM13}, we relate the unstable Hurewicz homomorphism $\tilde{h}$ with the \emph{Tate diagonal} 
$$\Delta_p\colon X \to (X^{\otimes p})^{t\Sigma_p}$$ in the category $\Sp(\sL)$, see Section~\ref{section: tate diagonal}. Next, in Proposition~\ref{proposition:tate diagonal for spectra}, we compute the induced map $\Delta_{p*}$ on the homology groups $\widetilde{H}_*(X)$ in terms of the right action of the Steenrod operations on $\widetilde{H}_*(X)$. Since the generators $\nu_i^{\e}$ of the algebra~$\Lambda$ are analogs of the Hopf invariant one elements (see e.g.~\cite[Remark~6.4.19]{koszul}), we are able to compute the image of $\nu_i^{\e}$ under the Hurewicz homomorphism $\tilde{h}$ in terms of the Dyer--Lashof operations in Proposition~\ref{proposition: hurewicz of hopf invariant one}. This essentially finishes the proof of Theorem~\ref{theorem: C, intro}.

\subsection*{Lin's formula}
The next theorem is the combination of Theorems~\ref{theorem: lin formula, odd p} and \ref{theorem: lin formula, p = 2}, and it explains the action of the differential $$\tilde{d}_2\colon \pi_*(\Sigma^l\kk)\otimes \Lambda \to \pi_{*}(\mathbf{L}_p(\Sigma^l\kk))\otimes \Lambda$$ on the rest of the basis of admissible monomials in~$\Lambda$. 

\begin{varthm}\label{theorem: D, intro} Let $\nu_I=\nu_{i_1}^{\e_1}\cdot\ldots \cdot \nu_{i_s}^{\e_s} \in \Lambda$ be an admissible monomial. Then 
$$\tilde{d}_2(\iota_l\otimes \nu_{I}) = \bQe{\e_1}{i_1}(\iota_l)\otimes \nu_{I'}+\sum_{\alpha}c_{\alpha}\bQe{\e_1(\alpha)}{i_{1}(\alpha)}(\iota_l)\otimes \nu_{I'(\alpha)}, $$
where $\nu_{I'}= \nu_{i_2}^{\e_2}\cdot\ldots\cdot\nu_{i_s}^{\e_s} \in \Lambda$, and $I'(\alpha)=(i_2(\alpha),\e_2(\alpha),\ldots, i_s(\alpha),\e_s(\alpha))$ is an admissible sequence of length $s-1$. The constant $c_{\alpha}\in \F_p$ is the coefficient for $\nu_{i_1(\alpha)+p^N}^{\e_1(\alpha)} \cdot \nu_{I'(\alpha)} \in \Lambda$ 
in the admissible expansion of 
$$
\sum_{t=2}^{s}\nu_{i_1}^{\e_1}\cdot \ldots \cdot \nu_{i_t+p^N}^{\e_t}\cdot \ldots \cdot \nu_{i_s}^{\e_s} \in \Lambda.
$$
Here the constants $c_{\alpha}$ are independent of $N$ for $N>N_0$, where $N_0=N_0(\nu_I)$ depends only on $\nu_I$.
\end{varthm}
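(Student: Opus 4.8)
The plan is to compute $\tilde d_2(\iota_l\otimes\nu_I)$ by combining the length-one case, Proposition~\ref{proposition: differential, length 1}, with the identification of $\tilde d_2$ with the $p$-th James--Hopf invariant and the general Leibniz rule for James--Hopf invariants under composition. I would carry out the odd-primary case (Theorem~\ref{theorem: lin formula, odd p}) in detail; the case $p=2$ (Theorem~\ref{theorem: lin formula, p = 2}) is formally the same after deleting the decorations $\e_j$, and uses only the single family of Adem relations~\eqref{equation: lambda, adem3, p=2}. For $s=1$ there is nothing to prove: the correction sum is empty, $\nu_{I'}=1$, and the claim is exactly Theorem~\ref{theorem: C, intro}.

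For $s\geq 2$, I would write $\nu_I=\nu_{i_1}^{\e_1}\cdot\nu_{I'}$ and represent the class $\iota_l\otimes\nu_I$ as the result of acting on the length-one class $\iota_l\otimes\nu_{i_1}^{\e_1}$ by the homotopy operation $\nu_{I'}$, where right multiplication by elements of $\Lambda$ is composition in homotopy groups. By Proposition~\ref{proposition:james-hopf and adjoint} and Theorems~\ref{theorem: the adjoint of delta_n}, \ref{theorem: the adjoint of delta_n, p=2}, applying $\tilde d_2$ is the same as applying the James--Hopf map $j_p$ of Definition~\ref{definition:james-hopf map}. The crucial structural point is that $j_p$ is assembled from the Snaith splitting of Proposition~\ref{proposition: Snaith splitting} rather than from a map of stabilizations, so it is \emph{not} compatible with right $\Lambda$-composition; the general Leibniz rule quantifies this failure. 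Iterating the two-term Leibniz rule along the factorisation of $\nu_{I'}$ expresses $j_p$ of the composite as the ``leading'' term $j_p(\iota_l\otimes\nu_{i_1}^{\e_1})$ carried along by $\nu_{I'}$ --- which by Proposition~\ref{proposition: differential, length 1} is $\bQe{\e_1}{i_1}(\iota_l)\otimes\nu_{I'}$ --- together with a sum of cross terms, one for each factor $\nu_{i_t}^{\e_t}$ of $\nu_{I'}$ (so $t$ ranges over $2,\ldots,s$), built from the lower James--Hopf invariants $j_k$, $1\leq k<p$, of the factors.

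The bulk of the work is to evaluate those cross terms. For this I would pass to the metastable range: multiplying the $t$-th factor of $\nu_I$ by the stable shift $\nu_{i_t}^{\e_t}\mapsto\nu_{i_t+p^N}^{\e_t}$ --- the algebraic shadow of a large suspension --- pushes the relevant James--Hopf computation into the range in which $j_p$ is detected by the unstable Hurewicz homomorphism, and hence, via Section~\ref{section: tate diagonal} and Proposition~\ref{proposition:tate diagonal for spectra}, by the right action of the Steenrod operations on homology. In that range the lower James--Hopf invariants contribute trivially, so the cross terms collapse to the element $\sum_{t=2}^{s}\nu_{i_1}^{\e_1}\cdots\nu_{i_t+p^N}^{\e_t}\cdots\nu_{i_s}^{\e_s}$ of $\Lambda$; re-expanding this into the admissible PBW basis using the Adem relations~\eqref{equation: lambda, adem1, p is odd} produces exactly the coefficients $c_\alpha$ of the statement, and $N_0(\nu_I)$ is the threshold beyond which all of the range estimates hold simultaneously.

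The main obstacle, I expect, is twofold. First, one must show the right-hand side is independent of $N$ for $N>N_0$: this is a purely algebraic assertion about $\Lambda$, to be obtained by comparing the admissible expansions of the $N$- and $(N+1)$-shifted sums and checking that the discrepancy lies in a filtration that dies when one descends from the metastable range --- without this the formula is not even well posed. Second, and more seriously, controlling the cross terms in the odd-primary case forces the interaction of the Bockstein decorations $\e_j$ with the Adem relations, and it is precisely here that one is obliged to establish the hitherto unrecorded odd-primary Adem relations for the Dyer--Lashof--Lie operations announced in the introduction; these relations must be proved first, so that the collapse of the cross terms --- and therefore the whole formula --- can be justified.
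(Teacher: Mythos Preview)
Your proposal has the right overall shape --- induction via a Leibniz identity, with the length-one case supplied by Theorem~\ref{theorem: C, intro} --- but it misidentifies the cross term and, more seriously, inverts the logical dependence on the Dyer--Lashof--Lie Adem relations.

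The Leibniz rule actually proved in the paper (Theorem~\ref{theorem:leibniz rule in GSS}, specialised in Lemma~\ref{lemma: GSS differential on hopf elements and leibniz}) reads
\[
\tilde d_2(x\,\nu_i^{\e}) \;=\; \tilde d_2(x)\,\nu_i^{\e} \;+\; \D_p(\overline{x})_*\bigl(\bQe{\e}{i}(\iota_q)\bigr),
\]
so the correction term is the map induced by $\overline{x}$ on the single layer $\D_p$, evaluated on one Dyer--Lashof--Lie class; there are no ``lower James--Hopf invariants $j_k$, $1\le k<p$'' in this formula. Nor is this correction evaluated by pushing a Hurewicz/Tate-diagonal computation into a metastable range: the machinery of Section~\ref{section: james-hopf map} computes $j_p$ only on length-one inputs $\sigma^{-1}v\otimes\nu_i^{\e}$ and does not extend to composites. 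The paper instead controls $\D_p(\overline{x})_*$ via the James-periodicity argument of Section~\ref{section: james periodicity}: one filters the functor $\D_p$ so that the thick subquotients are periodic (Proposition~\ref{proposition: james flitration for functors}), obtains shift operators $\psi_k$ (Definition~\ref{definition: shift operators}), and proves the identity $\D_p(\overline{x})_*(\bQe{\e}{i}(\iota_q)) = \psi_k\bigl(\D_p(\overline{x})_*(\bQe{\e}{i+k}(\iota_q))\bigr)$ for $\nu_p(k)\gg 0$ (Theorem~\ref{theorem: main theorem, james periodicity}). Combining this with the Leibniz rule yields the recursion of Lemma~\ref{lemma: leibniz rule and james}, and iterating gives the claimed expansion; the shift by $p^N$ and the independence of the answer from $N$ are consequences of this periodicity, not a separate algebraic check in $\Lambda$.

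Finally, the odd-primary Adem relations for the Dyer--Lashof--Lie operations (Theorem~\ref{theorem: F, intro}) are \emph{not} an input to Theorem~\ref{theorem: D, intro}. They are derived from it in Section~\ref{section: adem-type relations} by applying $\tilde d_2$ twice to quadratic admissibles and using $\tilde d_2^2=0$ together with Theorem~\ref{theorem: E,intro}. Requiring them as a prerequisite would make the argument circular, and in any case the proof of Theorem~\ref{theorem: D, intro} never needs to rewrite a non-CU Dyer--Lashof--Lie word.
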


For $p=2$, the formula of Theorem~\ref{theorem: D, intro} for the differential $\tilde{d}_2$ recovers the formula~(3) in~\cite{Lin81}, which was used in the proof of the algebraic Kahn--Priddy theorem, see~\cite{Lin81} and~\cite{Singer81}. Unfortunately, W.~H.~Lin did not explain any motivation for the formula in his work, however, Theorem~\ref{theorem: D, intro} and its proof given in this paper can provide a conceptual explanation for Lin's formula. The key ingredient here is the Leibniz identity. 

\subsection*{Leibniz rule}
We recall the general Goodwillie spectral sequence~\eqref{equation: GSS, intro}. Assume that $\calC=\calD$ and $F=\id_{\calC}$ is the identity functor. At this moment we can assume that $\calC$ is an \emph{arbitrary} pointed compactly generated $\infty$-category. In Section~\ref{section: composition product}, for every triple $X,Y,Z\in \calC$, we construct products
\begin{equation*}
(-)\circ (-)\colon E^1_{t,1}(Y,Z) \otimes E^1_{s,1}(X,Y) \to E^1_{t+s,1}(X,Z),
\end{equation*}
\begin{equation*}
\D_n(-)\circ(-)\colon E^1_{t,1}(Y,Z) \otimes E^1_{s,n}(X,Y) \to E^1_{t+s,n}(X,Z),
\end{equation*}
and
\begin{equation*}
(-)\circ(-)\colon E^1_{t,n}(Y,Z) \otimes E^1_{s,1}(X,Y) \to E^1_{t+s,n}(X,Z)
\end{equation*}
on the $E^1$-term of the spectral sequence~\eqref{equation: GSS, intro} induced by the composition of mapping spaces in the stable category $\Sp(\calC)$, see formulas~\eqref{equation:stable composition}, \eqref{equation:left action}, and \eqref{equation:right action}. By using the recent work~\cite{Heuts21}, we observe in Theorem~\ref{theorem:leibniz rule in GSS} that the differential $d_{n-1}$ in~\eqref{equation: GSS, intro} satisfies the \emph{Leibniz rule} with respect to the constructed \emph{composition products}. Namely, let $g\in E^1_{t,1}(Y,Z)$ and $f\in E^1_{s,1}(X,Y)$ such that $d_1(g)=\ldots=d_{n-2}(g)=0$ and $d_{1}(f)=\ldots= d_{n-2}(f)=0$ for $n\geq 2$. Then 
$$d_{n-1}(g\circ f) = d_{n-1}(g)\circ f + \D_n(g)\circ d_{n-1}(f)\in E^1_{t+s-1,n}(X,Z).$$

More specifically, by unwinding the spectral sequences~\eqref{equation: AGSS, intro, 1} and~\eqref{equation: RAGSS, intro} to their original form~\eqref{equation: GSS, intro}, where $F=\id_{\sL}$, we observe in Lemma~\ref{lemma: GSS differential on hopf elements and leibniz} the following formula
\begin{equation}\label{equation: leibniz, intro}
\tilde{d}_{2}(x\nu^{\e}_{i})=\tilde{d}_{2}(x)\nu^{\e}_{i}+\D_p(\overline{x})_*(\bQe{\e}{i}(\iota_t)),
\end{equation}
where $x\in \pi_t(D_1(\free(\Sigma^{l}\kk)))=\pi^s_t\free(\Sigma^l\kk)$ is a (stable) homotopy class, a generator $\nu_i^{\e} \in \Lambda$ acts on the right by the composition, and
$$\D_p(\overline{x})_*\colon \pi_*\D_p(\free(\Sigma^t\kk)) \to \pi_*\D_p(\free(\Sigma^l\kk)) $$
is induced by the map $\overline{x}\colon \Sigma^{\infty}\free(\Sigma^t \kk)\to \Sigma^{\infty}\free(\Sigma^l\kk)$ which represents the homotopy class $x\in \pi_t(D_1(\free(\Sigma^{l}\kk)))$.

We identify the second summand in the formula~\eqref{equation: leibniz, intro} by using a \emph{James periodicity argument}. We review James periodicity and its applications to functor calculus in Section~\ref{section: james periodicity}. We finish the proof of Theorem~\ref{theorem: D, intro} by induction on the length of an admissible monomial.

\subsection*{Symmetric sequence of derivatives}
Theorem~\ref{theorem: D, intro} describes completely the differentials 
\begin{equation}\label{equation: differential, zero row, intro}
\tilde{d}_2\colon \widetilde{E}^1_{t,0}(\Sigma^l \kk) \to \widetilde{E}^1_{t-1,2}(\Sigma^l\kk),
\end{equation}
which originate from the $0$-th row. Inspired by~\cite[Corollary~1.7]{KM13} and~\cite{Behrens11}, we express in Lemmas~\ref{lemma:GSS differential, positive columns} and~\ref{lemma:GSS differential, positive columns, p =2} the other differentials 
$$\tilde{d}_2\colon \widetilde{E}^1_{t,2h}(\Sigma^l\kk) \to \widetilde{E}^1_{t-1,2h+2}(\Sigma^l\kk), \;\; h\geq 1$$
in terms of~\eqref{equation: differential, zero row, intro} and the Dyer--Lashof--Lie operations~\eqref{equation: dll,intro}.

\begin{varthm}\label{theorem: E,intro}
Let $w\in \pi_*(\mathbf{L}_{p^h}(\Sigma^l\kk))$, $\nu\in \Lambda$, so $w\otimes \nu \in \pi_*(\mathbf{L}_{p^h}(\Sigma^l\kk))\otimes \Lambda \cong \widetilde{E}^1_{*,2h}(\Sigma^l\kk)$. Suppose that
$$\tilde{d}_{2}(\iota_l\otimes \nu) = \sum_{\alpha}\bQe{\e(\alpha)}{i(\alpha)}(\iota_l)\otimes \nu_{\alpha} \in \pi_*(\mathbf{L}_p(\Sigma^l(\kk)))\otimes \Lambda \cong \widetilde{E}^1_{*,2}(\Sigma^l\kk).$$
Then 
$$\tilde{d}_{2}(w \otimes \nu)= \sum_{\alpha}\bQe{\e(\alpha)}{i(\alpha)}(w)\otimes \nu_{\alpha} \in \pi_*(\mathbf{L}_{p^{h+1}}(\Sigma^l(\kk)))\otimes \Lambda \cong \widetilde{E}^{1}_{*,2h+2}(\Sigma^l\kk).$$
\end{varthm}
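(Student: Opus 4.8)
The plan is to deduce the formula for $\tilde{d}_2$ on a general class $w \otimes \nu$ from the known formula on $\iota_l \otimes \nu$ by exhibiting $w \otimes \nu$ as obtained from $\iota_l \otimes \nu$ via a composition product, and then invoking the Leibniz rule of Theorem~\ref{theorem:leibniz rule in GSS}. Concretely, a class $w \in \pi_*(\mathbf{L}_{p^h}(\Sigma^l\kk)) \cong \pi^s_*(\free(\mathbf{L}_{p^h}(\Sigma^l\kk))) = \widetilde{E}^1_{*,2h}(\Sigma^l\kk)$ viewed through Theorem~\ref{theorem: A,intro} corresponds, under the unwinding of the re-indexed spectral sequence~\eqref{equation: RAGSS, intro} to the original form~\eqref{equation: GSS, intro}, to a class sitting in column $2h$; and the Dyer-Lashof-Lie operation $\bQe{\e}{i}$ raising $h$ to $h+1$ should be realized as precomposition (in the composition product of Section~\ref{section: composition product}) by a suitable stable Hopf-type class. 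So the first step is to make precise the identification: under $\widetilde{E}^1_{*,2h}(\Sigma^l\kk)\cong\pi_*(\mathbf{L}_{p^h}(\Sigma^l\kk))\otimes\Lambda$, the class $w\otimes\nu$ is the composition product of $w\otimes 1$ (coming from column $2h$, row $0$) with the element $\iota_l\otimes\nu$, transported appropriately; here I will use the analog of formula~\eqref{equation: leibniz, intro} from Lemma~\ref{lemma: GSS differential on hopf elements and leibniz}, but now based at $\mathbf{L}_{p^h}(\Sigma^l\kk)$ rather than at $\Sigma^l\kk$.

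The second step is the actual application of the Leibniz rule. Writing $w$ as represented by a stable map $\overline{w}\colon \Sigma^\infty\free(\text{(something)}) \to \Sigma^\infty\free(\mathbf{L}_{p^h}(\Sigma^l\kk))$ — or more precisely treating $w\otimes 1 \in E^1_{*,1}$ in the unwound bigrading and $\iota_l\otimes\nu$ in the column carrying $\D_{p^h}$ — the composition product $\D_{p^h}(\overline{w})\circ(\iota_l\otimes\nu)$ has differential, by Theorem~\ref{theorem:leibniz rule in GSS}, equal to $\tilde d_2(w\otimes 1)\circ(\iota_l\otimes\nu) \,\pm\, \D_{?}(\overline{w})\circ \tilde d_2(\iota_l\otimes\nu)$. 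The hypothesis that we only track the part of the answer living in column $2h+2$ (i.e.\ landing in $\pi_*(\mathbf{L}_{p^{h+1}}(\Sigma^l\kk))\otimes\Lambda$) kills the first term on degree/column grounds — $\tilde d_2(w\otimes 1)$ would contribute to a different column — leaving exactly $\D_{p^h}(\overline{w})_* \big(\tilde d_2(\iota_l\otimes\nu)\big) = \sum_\alpha \D_{p^h}(\overline{w})_*\big(\bQe{\e(\alpha)}{i(\alpha)}(\iota_l)\otimes\nu_\alpha\big)$. The final step is then to check that $\D_{p^h}(\overline{w})_*\big(\bQe{\e(\alpha)}{i(\alpha)}(\iota_l)\big) = \bQe{\e(\alpha)}{i(\alpha)}(w)$, i.e.\ that the map on derivatives induced by $\overline{w}$ intertwines the Dyer-Lashof-Lie operations with the abstract action of $\bQe{\e}{i}$ on homotopy of Lie powers; this is precisely the \emph{naturality} of the Dyer-Lashof-Lie operations with respect to maps of spectral Lie algebras, which is part of the construction reviewed in Section~\ref{section: H-Lie-algebras}, combined with the identification of the James-Hopf map with the $\tilde d_2$-differential from Theorems~\ref{theorem: the adjoint of delta_n} and~\ref{theorem: the adjoint of delta_n, p=2}.

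I expect the main obstacle to be the bookkeeping in the first step: correctly setting up the composition-product structure so that $w\otimes\nu$ literally factors as a product to which Theorem~\ref{theorem:leibniz rule in GSS} applies, and verifying that the ``cross term'' $\tilde d_2(w\otimes 1)\circ(\iota_l\otimes\nu)$ genuinely lies outside column $2h+2$ (so that restricting attention to that column is legitimate). This requires carefully tracking the trigrading $(t,n,\text{length in }\Lambda)$ through the unwinding of~\eqref{equation: RAGSS, intro} into~\eqref{equation: GSS, intro} and back, and confirming that the Leibniz rule in the form proved in Theorem~\ref{theorem:leibniz rule in GSS} — stated there for $d_{n-1}$ with vanishing of lower differentials — applies with $n=p^h$ or the relevant index, using that the lower differentials vanish by Theorem~\ref{theorem: B, intro} (the $\widetilde E^1 = \widetilde E^2$ statement). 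Once the grading argument is in place, the rest is naturality and is essentially formal. The cases $p$ odd and $p=2$ run identically, differing only in the notation $\nu_i^\e$ versus $\lambda_i = \nu_i^0$, so I would treat them uniformly and remark on the $p=2$ specialization at the end.
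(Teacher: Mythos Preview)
Your proposal has a genuine gap: the Leibniz rule of Theorem~\ref{theorem:leibniz rule in GSS} is stated and proved only for composition products $g\circ f$ with both $g,f\in E^1_{*,1}$ (see the hypotheses there and the explicit warning in Remark~\ref{remark: composition product on the first page}). You want to exhibit $w\otimes\nu\in E^1_{*,p^h}(\Sigma^l\kk)$ as a product of $w\otimes 1$ (which lives in row $p^h$, not row $1$) with something, and then differentiate. The paper does not supply a Leibniz identity in that situation; your ``first step'' bookkeeping would not just be tedious but would require proving a genuinely new form of the rule. Relatedly, your composition $\D_{p^h}(\overline w)\circ(\iota_l\otimes\nu)$ is not well-typed: $\iota_l\otimes\nu$ lies in the spectral sequence for $\Sigma^l\kk$, while $\overline w$ has target $\suspfree(\mathbf{L}_{p^h}(\Sigma^l\kk))$, so these live in different Goodwillie spectral sequences. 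One could try to work in $E^*_{*,*}(\mathbf{L}_{p^h}(\Sigma^l\kk))$ instead, where $w\otimes 1$ does sit in row $1$; but then the differential you compute lands in $\pi_*(\mathbf{L}_p(\mathbf{L}_{p^h}(\Sigma^l\kk)))\otimes\Lambda$, and relating that to $\pi_*(\mathbf{L}_{p^{h+1}}(\Sigma^l\kk))\otimes\Lambda$ requires exactly the Lie-operadic multiplication structure that the paper invokes directly.

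The paper's proof (Lemmas~\ref{lemma:GSS differential, positive columns} and~\ref{lemma:GSS differential, positive columns, p =2}) proceeds differently and avoids the Leibniz rule entirely. By the splitting of Corollary~\ref{corollary: splitting}, the differential $d_{(p-1)n}$ originating from row $n$ is induced by the \emph{natural transformation} $\delta_n$ of~\eqref{equation:Dpk to Dk}, for every $n$ simultaneously. Proposition~\ref{proposition:james-hopf and adjoint} factors $\delta_n\simeq \Omega^{\infty}\D_{pn}(\tilde\delta_n)\circ j_p^{\Sigma^{3-p}\mathbf{L}_nW}$. The James-Hopf piece is handled by Proposition~\ref{proposition: james-hopf and differential, reserve}, which shows $j_{p*}$ is determined by the universal formula at $\kk$ (this is where the hypothesis on $\tilde d_2(\iota_l\otimes\nu)$ enters). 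The remaining piece $\D_{pn}(\tilde\delta_n)_*$ is identified in Theorems~\ref{theorem: the adjoint of delta_n} and~\ref{theorem: the adjoint of delta_n, p=2} via the left $\bfLie^{sp}_*$-module structure on $\bar\partial_*(\widetilde P_n(\Omega^{p-2}L_\xi\free))$ developed in Section~\ref{section: two-stage functors}, and this is what replaces $\iota_l$ by $w$ inside the Dyer-Lashof-Lie classes. In short, the argument is a naturality-of-$\delta_n$ argument, not a Leibniz argument.
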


Our proof of Theorem~\ref{theorem: E,intro} essentially mimics~\cite{Behrens11}. First, given an analytic (Definition~\ref{definition: analytic functor}) reduced functor $F\colon \Sp \to \sL$, we show in Section~\ref{section: derivaties and coderivatives} that the symmetric sequence of (the chain complexes of) its derivatives $$\bar{\6}_*(F) \in \sseq(\Mod_{\kk})$$ forms a left module over the \emph{$\kk$-linear spectral Lie operad} $\bfLie^{\mathrm{sp}}_*$, see Definition~\ref{definition: linear spectral lie operad}. This observation is an algebraic analog of the theorem of G.~Arone and M.~Ching \cite{AroneChing11} that the derivatives $\6_*(F)$ of a reduced finitary functor $F\in \Fun^{\omega}_*(\Sp,\spaces)$ form a left module over the spectral Lie operad, see Remark~\ref{remark: arone-ching conjecture}.

Next, we observe in Corollary~\ref{corollary: splitting} that the $\tilde{d}_2$-differentials in the spectral sequence~\eqref{equation: RAGSS, intro} are induced by the \emph{natural transformations}~\eqref{equation:Dpk to Dk}
$$\delta_n\colon D_n(\Omega^{p-2}\free) \to BD_{pn}(\Omega^{p-2}\free), \;\; n=p^h. $$
We write $\widetilde{P}_n(\Omega^{p-2}\free)$ for the fiber of $\delta_n$. We note that the functor $\widetilde{P}_n(\Omega^{p-2}\free)$ has only \emph{two} non-trivial Goodwillie layers. We study such functors and their left $\bfLie^{\mathrm{sp}}_*$-module of derivatives in Section~\ref{section: connecting map}. By applying these results to the functor $\widetilde{P}_n(\Omega^{p-2}\free)$, we observe in Theorems~\ref{theorem: the adjoint of delta_n} and~\ref{theorem: the adjoint of delta_n, p=2} that the connecting maps $\delta_n$, $n\geq p$ are encoded by the $p$-th James--Hopf map and by the only non-trivial Lie-operadic multiplication map 
$$m\colon \bfLie^{\mathrm{sp}}_{p} \otimes (\bar{\6}_n(\widetilde{P}_n(\Omega^{p-2}\free)))^{\otimes p} \to \bar{\6}_{pn}(\widetilde{P}_n(\Omega^{p-2}\free)) $$
given by the left $\bfLie^{\mathrm{sp}}_*$-module structure on $\bar{\6}_*(\widetilde{P}_{n}(\Omega^{p-2}\free))$, see~\eqref{equation: lie multiplicatons, derivatives}. The last observation finishes the proof of Theorem~\ref{theorem: E,intro}.

\subsection*{Applications}
Theorems~\ref{theorem: D, intro} and~\ref{theorem: E,intro} fully describe the $\tilde{d}_2$-differentials in the spectral sequence~\eqref{equation: RAGSS, intro} in terms of the basis of the completely unadmissible monomials in the Dyer--Lashof--Lie algebra and the basis of admissible monomials in the algebra~$\Lambda$. This description shows that the $\tilde{d}_2$-differential preserves a certain multiindex filtration on the $\widetilde{E}^1$-term (see Corollaries~\ref{cor: goodwillie differential lowers the filtation, p is odd} and~\ref{cor: goodwillie differential lowers the filtation, p is 2}), which finishes the proof of the algebraic Whitehead conjecture, see Theorem~\ref{theorem: whitehead conjecture}. 

Finally, in Section~\ref{section: adem-type relations}, we use Theorems~\ref{theorem: C, intro},~\ref{theorem: D, intro}, and~\ref{theorem: E,intro} to obtain the quadratic Adem-type relations~\eqref{equation: Lie-Adem relation, p is odd, eq1}, \eqref{equation: Lie-Adem relation, p is odd, eq2}, and~\eqref{equation: Lie-Adem relation, p is 2, eq1} between Dyer--Lashof--Lie operations $\bQe{\e}{i}$. These relations are well-known for $p=2$, see~\cite{Behrens12}, but for odd primes they have not been identified so far, see Remarks~\ref{remark: adem relations, history} and~\ref{remark: adem relations, transfers}.

\begin{varthm}\label{theorem: F, intro}
Suppose that $p$ is odd. Then the Dyer--Lashof--Lie operations~\eqref{equation: dll,intro} satisfy following relations
\begin{align*}
\bQe{\e}{j}\cdot \bQe{}{i} &=(-1)^{\e+1}\sum_{m = pi}^{i+j-1}\binom{p(m-i)-(p-1)j+\e-1}{m-pi} \bQe{}{m}\cdot\bQe{\e}{j+i-m} \\
&+(1-\e)\sum_{m=pi}^{i+j-1}\binom{p(m-i)-(p-1)j}{m-pi} \Qe{m}\cdot\bQe{}{j+i-m},
\end{align*}
where $j<pi$, $\e\in\{0,1\}$, and
\begin{align*}
\bQe{\e}{j}\cdot\Qe{i} = \sum_{m = pi+1}^{i+j-1}\binom{p(m-i)-(p-1)j-1}{m-pi-1} \bQe{\e}{m}\cdot \Qe{j+i-m},
\end{align*}
where $j\leq pi$, $\e\in\{0,1\}$.
\end{varthm}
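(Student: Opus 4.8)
The plan is to deduce both relations from the single identity $\tilde{d}_2\circ\tilde{d}_2=0$ in the spectral sequence~\eqref{equation: RAGSS, intro}, feeding into it the explicit description of $\tilde{d}_2$ on the $\widetilde{E}^1$-term supplied by Theorems~\ref{theorem: C, intro},~\ref{theorem: D, intro}, and~\ref{theorem: E,intro}, together with the (classical, quadratic) Adem relations of the $\Lambda$-algebra recalled in Section~\ref{section: lambda algebra}. Since by Theorem~\ref{theorem: B, intro} one has $\widetilde{E}^1=\widetilde{E}^2$, the operator $\tilde{d}_2$ is a genuine differential on $\widetilde{E}^1$ and in particular squares to zero on every bidegree.

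Fix $\delta\in\{0,1\}$ and indices $i,j$ as in the statement. The first step is to choose a length-two admissible monomial $\nu_A\in\Lambda$ whose iterated differential, applied to $\iota_l\otimes(-)$, has leading term the inadmissible Dyer--Lashof--Lie composite $\bQe{\e}{j}\cdot\bQe{\delta}{i}$; the indices of $\nu_A$ are pinned down by $(i,j,\delta,\e)$ through the dictionary between the $\Lambda$-indexing and the Dyer--Lashof--Lie indexing, and the ranges $j<pi$ (resp. $j\le pi$) for $\delta=1$ (resp. $\delta=0$) are precisely the ranges in which this monomial is admissible. Applying $\tilde{d}_2$ once, Theorem~\ref{theorem: D, intro} expresses $\tilde{d}_2(\iota_l\otimes\nu_A)$ as the leading term $\bQe{\delta}{i}(\iota_l)\otimes\nu^{\e}_{j}$ plus a correction sum $\sum_{\alpha}c_{\alpha}\,\bQe{\delta}{i(\alpha)}(\iota_l)\otimes\nu^{\e(\alpha)}_{j(\alpha)}$ whose constants $c_{\alpha}$ and admissible length-one monomials $\nu^{\e(\alpha)}_{j(\alpha)}$ are read off from the admissible expansion in $\Lambda$ of the $p^N$-shifted monomial. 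Applying $\tilde{d}_2$ a second time, every summand now carries a length-one $\Lambda$-monomial, so Theorems~\ref{theorem: C, intro} and~\ref{theorem: E,intro} apply with no further correction terms: a summand $w\otimes\nu^{\e'}_k$ in column $2$ is sent to $\bQe{\e'}{k}(w)\otimes 1$. Since the correction monomials of the first step were admissible, the composites $\bQe{\e(\alpha)}{j(\alpha)}\cdot\bQe{\delta}{i(\alpha)}$ produced this way are admissible Dyer--Lashof--Lie monomials, and $\tilde{d}_2\circ\tilde{d}_2=0$ yields
\[
\bQe{\e}{j}\cdot\bQe{\delta}{i}(\iota_l)\;=\;-\sum_{\alpha}c_{\alpha}\,\bQe{\e(\alpha)}{j(\alpha)}\cdot\bQe{\delta}{i(\alpha)}(\iota_l).
\]
As $\iota_l$ is the universal homotopy class of its degree, this promotes to an identity of operations; running the cases $\delta=1$ and $\delta=0$ gives the first and second relations of the statement respectively, with the extra summand in the first relation accounting for the two possible superscripts of the surviving outer operation.

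It then remains to identify the constants $c_{\alpha}$, which by construction are the structure constants of the odd-primary $\Lambda$-Adem relations. This is a direct but delicate computation: one expands the $p^N$-shifted monomial in the admissible basis using the explicit quadratic relations of $\Lambda$, extracts the relevant coefficient, and simplifies it modulo $p$; by James periodicity (reviewed in Section~\ref{section: james periodicity}, and already underlying Theorem~\ref{theorem: D, intro}) these coefficients stabilise as $N\to\infty$, and by Lucas's theorem the stable value is exactly the binomial coefficient $\binom{p(m-i)-(p-1)j+\e-1}{m-pi}$, $\binom{p(m-i)-(p-1)j}{m-pi}$, or $\binom{p(m-i)-(p-1)j-1}{m-pi-1}$ appearing in the statement, with $m$ the surviving outer index and the summation range $pi\le m\le i+j-1$ (resp. $pi<m\le i+j-1$) coming from the requirement that the correction monomials be admissible of the correct total degree. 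I expect the main obstacle to be precisely this bookkeeping --- matching the $\Lambda$/Dyer--Lashof--Lie index dictionary, tracking the $p^N$-shift through the admissible expansion, keeping the superscripts and the sign $(-1)^{\e+1}$ straight, and explaining why the first relation carries a second summand while the second does not --- since, unlike at the prime $2$ where these relations are classical, the odd-primary $\Lambda$-Adem relations involve two interacting families $\nu^{0}_i,\nu^{1}_i$ of generators. The $p=2$ computation and the structure of the proof of Theorem~\ref{theorem: D, intro} serve as a template.
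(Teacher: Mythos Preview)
Your proposal is correct and follows essentially the same route as the paper: apply $\tilde d_2$ twice to an admissible length-two monomial $\iota_l\otimes\nu^{\delta}_i\nu^{\e}_j$, use Theorems~\ref{theorem: D, intro} and~\ref{theorem: E,intro} (and Theorem~\ref{theorem: C, intro}) to evaluate each application, and read off the relation from $\tilde d_2^2=0$. The paper streamlines the bookkeeping by first isolating the length-two case of Lin's formula as a separate proposition (Proposition~\ref{proposition: differential, adem, p is odd}), where the binomial simplification you attribute to Lucas is recorded as Lemma~\ref{lemma: binomial, congruence}.
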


We obtain these relations in Theorem~\ref{theorem: dll action on lie} by applying the differential $\tilde{d}_2$ twice to the quadratic admissible monomials $\iota_l\otimes \nu_{i}^{\e'}\nu_{j}^{\e} \in \pi_*(\Sigma^l \kk)\otimes \Lambda \cong \widetilde{E}^1_{*,0}(\Sigma^l \kk)$. On the one hand, since $\tilde{d}_2$ is a differential, $\tilde{d}_2^2=0$, so we must get zero. On the other hand, by applying the recipes of Theorems~\ref{theorem: D, intro} and~\ref{theorem: E,intro}, we derive the desired formulas from the Adem relations~\eqref{equation: lambda, adem1, p is odd}, \eqref{equation: lambda, adem2, p is odd} in the algebra $\Lambda$. In Theorem~\ref{theorem: homotopy of free H-Lie algebra}, we show that the list of relations in Theorem~\ref{theorem: F, intro} is exhaustive.

It is well known that the Adem relations of Theorem~\ref{theorem: F, intro} can be derived from the transfer maps in the group homology of the symmetric group $\Sigma_{p^2}$, see Remark~\ref{remark: adem relations, transfers}. We expect that similar relations hold for operations of the same nature, e.g. for the Cartan $\delta$-operations acting on the homotopy groups of a simplicial commutative $\kk$-algebra, see~\cite{Dwyer_sca},~\cite{Goerss_sca}, or the unpublished notes~\cite[Sections 8 and 12]{Bousfield_operations} by A.~K.~Bousfield.

\subsection*{Organization} The paper is organized as follows. Section~\ref{section: preliminaries} is the preliminary part of the paper. In Section~\ref{section: simplicial restricted lie algebras} we recall from~\cite{Priddy70long} the homotopy theory of the category $\sL$. We also recall the important subcategory $\sLxi\subset \sL$ of \emph{$\kk$-complete objects} (see Definition~\ref{definition: xi-complete category}) constructed in~\cite{koszul}. In Section~\ref{section: spectrum objects} we discuss the stable category $\Sp(\sL)$ (resp. $\Sp(\sLxi)$) of spectrum objects in $\sL$ (resp. $\sLxi$). In Proposition~\ref{proposition: smash, xi-complete lie algebras} and Theorem~\ref{theorem: smash, stabilization} we construct the \emph{smash products} in the categories $\sLxi$ and $\Sp(\sLxi)$ respectively. In Section~\ref{section: lambda algebra} we recall the definition of the \emph{lambda algebra $\Lambda$} of~\cite{6authors}. 
Finally, in Section~\ref{section: H-Lie-algebras}, we review the construction of \emph{Dyer--Lashof--Lie} operations (Definition~\ref{definition: dll operations}) and we introduce the useful notion of \emph{$H_\infty$-$\bfLie_*$-algebras} (see the monad~\eqref{equation: H-Lie-monad})

Sections~\ref{section: scalar degree} and~\ref{section: vanishing theorems} contain our main technical ``vanishing'' results concerning natural transformations between functors from the category $\Mod^{\geq 0}_{\kk}$ to $\sLxi$, see Proposition~\ref{proposition:maps between homogeneous functors}. We use these results in Section~\ref{section: snaith splitting} to obtain the \emph{Snaith splitting} (Proposition~\ref{proposition: Snaith splitting}) for the functor $\Sigma^{\infty}\Omega^{\infty}\suspfreexi$. In Section~\ref{section: goodwillie tower of a free lie algebra} we prove Theorem~\ref{theorem: A,intro} as Corollary~\ref{corollary:derivatives of free} and we show that the differentials in the algebraic Goodwillie spectral sequence are induced by the natural transformations $\delta_n$, see~\eqref{equation:Dpk to Dk}. Finally, we relate the maps $\delta_n$ with the $p$-th James--Hopf map, see Proposition~\ref{proposition:james-hopf and adjoint}.

In Section~\ref{section: james-hopf map} we evaluate the $p$-th James--Hopf map on the generators of the algebra $\Lambda$, see Theorem~\ref{theorem:james-hopf and dyer-lashof}. This is a significant part in the proof of Theorem~\ref{theorem: C, intro}. In Section~\ref{section: group cohomology} we recall the construction and basic properties of Dyer--Lashof classes $\beta^{\e}Q^i(v)$, $v\in V$ in the Tate cohomology groups $\pi_*(V^{\otimes p})^{t\Sigma_p}$. In Section~\ref{section: tate diagonal} we explain how to compute the map $\widetilde{H}_*(\Omega^{\infty}f)$ induced by a map $f$ of suspension spectrum objects, see Proposition~\ref{proposition:infinite loops of a map between suspension spectra}. In Proposition~\ref{proposition: hurewicz of hopf invariant one}, we apply this computation to calculate the unstable Hurewicz homomorphism.

Section~\ref{section: composition product} is devoted to the proof of the Leibniz identity for the differentials in the Goodwillie spectral sequence, see Theorem~\ref{theorem:leibniz rule in GSS}. In Section~\ref{section: category of pairs} we consider the \emph{categories of pairs} and the fiber sequences of mapping spaces associated with such categories. The main result is Corollary~\ref{corollary:leibniz rule} which show that the connecting map satisfy the Leibniz rule with respect to composition in a category of pairs. In Section~\ref{section: goodwillie differentials} we derive the desired statement about the Goodwillie spectral sequence by using $n$-excisive approximations and certain fiber squares of categories from~\cite{Heuts21}.

In Section~\ref{section: james periodicity} we explain how to use the James periodicity (Theorem~\ref{theorem:james periodicity for spaces}) to identify the ``difficult'' summand in the Leibniz identity above. The main notion here is the shift operators (Definition~\ref{definition: shift operators}) and the main result is Theorem~\ref{theorem: main theorem, james periodicity}. Section~\ref{section: james periodicity for functors} contains a general review on the James periodicity for spaces and functors. Section~\ref{section: computations with the p-th goodwillie layer} explains how to apply these general results in the particular case of the category $\sL$ of simplicial restricted Lie algebras.

In Section~\ref{section: two-stage functors} we adapt the argument of~\cite{Behrens11} to our context and prepare the proofs of Theorems~\ref{theorem: C, intro} and~\ref{theorem: E,intro}. In Section~\ref{section: coalgebras in symmetric sequences} we discuss the category of symmetric sequences $\sseq$ and we recall from~\cite{Ching05} that the category of left comodules in $\sseq$ over the cocommutative coalgebras is equivalent by the Koszul duality to the category of left modules over the $\kk$-linear spectral Lie operad $\bfLie^{\mathrm{sp}}_*$. Given an analytic functor $F\in \Fun^{\omega}_{an}(\Sp,\sL)$, we show in Section~\ref{section: derivaties and coderivatives} that its symmetric sequence of coderivatives $\bar{\6}^*(F)\in\sseq$ forms a left comodule over the cocommutative cooperad (Proposition~\ref{proposition: coderivatives are symmetric monoidal}), and moreover, the Koszul dual $\bfLie_*^{\mathrm{sp}}$-module to $\bar{\6}^*(F)$ coincides with the sequence of its derivatives $\bar{\6}_*(F)$, see Theorem~\ref{theorem:AC conjecture for analytic functors}. In Section~\ref{section: connecting map} we consider analytic functors with exactly two non-trivial Goodwillie layers; in particular, we relate in Corollary~\ref{corollary:lie action, corona, adjoint} the connecting map with the left $\bfLie^{\mathrm{sp}}_*$-module $\bar{\6}_*(F)$ of derivatives. Finally, in Section~\ref{section: main example} we apply these general results to the truncated functor $\widetilde{P}_{n}(\Omega^{p-2}L_\xi\free)$, see Theorem~\ref{theorem: the adjoint of delta_n}.

We finish the proofs of our main results in Section~\ref{section: application}. In Proposition~\ref{proposition: differential, length 1} we combine together the results of Sections~\ref{section: james-hopf map} and~\ref{section: two-stage functors} to obtain Theorem~\ref{theorem: C, intro}. We use Sections~\ref{section: composition product} and~\ref{section: james periodicity} to prove Theorem~\ref{theorem: D, intro} as Theorems~\ref{theorem: lin formula, odd p} and~\ref{theorem: lin formula, p = 2}. In Section~\ref{section: adem-type relations} we apply again the results of Section~\ref{section: two-stage functors} to finish the proof of Theorem~\ref{theorem: E,intro} in Lemmas~\ref{lemma:GSS differential, positive columns} and~\ref{lemma:GSS differential, positive columns, p =2}. We prove Theorem~\ref{theorem: F, intro} as Theorem~\ref{theorem: dll action on lie}. Section~\ref{section: whitehead conjecture} is devoted to the proof of Theorem~\ref{theorem: B, intro} which is based on Corollaries~\ref{cor: goodwillie differential lowers the filtation, p is odd} and~\ref{cor: goodwillie differential lowers the filtation, p is 2}. 

In Remark~\ref{remark: lin-nishida conjecture} we propose the \emph{Lin--Nishida conjecture} concerning the interaction of the differentials in the Adams and Goodwillie spectral sequences; the proposed conjecture extends the theorem of W.~H.~Lin~\cite{Lin81}.

\subsection*{Acknowledgments}
The author is grateful to Mark Behrens, Dmitry Kaledin, Artem Prikhodko, and Denis Tereshkin for many fruitful conversations. The author also thanks Greg Arone, William Balderrama, and Adela YiYu Zhang for their interest to the paper and helpful discussions.

\subsection*{Notation} Here we describe some notation that will be used throughout the paper. If it is not said otherwise, the word ``category'' means ``an $\infty$-category'' which we model as the quasi-categories of~\cite{HTT} and~\cite{HigherAlgebra}. The functor calculus originally was introduced by T.~Goodwillie in~\cite{GoodwillieIII} for homotopy functors between the categories of spaces or spectra, however, it was later generalized in~\cite{HigherAlgebra} to the context of functors between quasi-categories. We will refer the reader to~\cite[Chapter~6]{HigherAlgebra} for the basic machinery and the notation of the functor calculus. 

We write $\Catinfty_\infty$ for the $\infty$-category of small $\infty$-categories. Given a small category $\calC \in \Catinfty_{\infty}$ and an arbitrary $\infty$-category $\calD$, we write $\Fun(\calC, \calD)$ for the $\infty$-category of functors and natural transformations. If $\calC$ and $\calD$ are pointed, we write $\Fun_*(\calC,\calD) \subset\Fun(\calC, \calD)$ for the subcategory of reduced functors, i.e. $F\in \Fun_*(\calC,\calD)$ if and only if $F(\ast)\simeq \ast$. 

We write $\Pre^L$ for the category of presentable $\infty$-categories and colimit-preserving functors, \cite[Section~5.5]{HTT}. A presentable $\infty$-category is compactly generated if it is $\omega$-accessible, see~\cite[Section~5.5.7]{HTT}. Given two presentable $\infty$-categories $\calC, \calD \in \Pre^L$, a functor $F\colon \calC \to \calD$ is \emph{finitary} if it commutes with filtered colimits; we write $\Fun^{\omega}(\calC,\calD)$ for the $\infty$-category of finitary functors and natural transformations. If $\calC$ and $\calD$ are pointed, we write $\Fun^{\omega}_*(\calC,\calD)$ for the subcategory of reduced functors in $\Fun^{\omega}(\calC,\calD)$.

Let us denote by $\spaces$ (resp. $\spaces_*$, $\Sp$) the presentable $\infty$-category of spaces (resp. pointed spaces, spectra).

\smallskip

As was said, $p$ is a fixed prime number and $\kk=\overline{\F}_p$ is the algebraic closure of the finite field $\F_p$. We denote by $\Vect_{\kk}$ the ($1$-)category of vector spaces over $\kk$; $\Vect^{\mathrm{gr}}_{\kk}$ (resp. $\Vect^{>0}_{\kk}$) is the ($1$-)category of non-negatively (resp. positively) graded vector spaces over $\kk$. We usually denote by $V_*=\oplus_{q\geq 0} V_q$ an object of $\Vect^{\mathrm{gr}}_{\kk}$. 

We denote by $\Mod_{\kk}$ the stable $\infty$-category of unbounded chain complexes over~$\kk$. Given $V\in \Mod_{\kk}$, we write $\pi_q(V)$, $q\in\Z$ for the $q$-th homology group of $V$. We reserve the symbol $H_q$ for the homology groups of either a space or a simplicial restricted Lie algebra (Definition~\ref{definition: chain complex}). We write $\Mod_{\kk}^{+}$ (resp. $\Mod_{\kk}^{\geq 0}$) for the full subcategory of $\Mod_{\kk}$ spanned by bounded below (resp. with trivial negative homology groups) chain complexes, $V\in \Mod_{\kk}^{+} $ if and only if $\pi_q(V)=0$ for $q\ll 0$ (resp. $V\in \Mod_{\kk}^{\geq 0}$ if and only if $\pi_q(V)=0$ for $q <0$). 

Elsewhere below we identify by the Dold--Kan correspondence the $\infty$-category $\Mod_{\kk}^{\geq 0}$ with the $\infty$-category of simplicial vector spaces and use the terms ``chain complex in $\Mod_{\kk}^{\geq 0}$'' and ``simplicial vector space'' interchangeably.

Given a category $\calC$ and a group $G$, we write $\calC^{BG}$ for the functor category $\Fun(BG,\calC)$; the category $\calC^{BG}$ is the category of $G$-equivariant objects in $\calC$. 

Throughout this paper, we write $\bfLie_n \in \Vect_{\kk}^{B\Sigma_n}$ for the $n$-th vector space of the Lie operad, see Definition~\ref{definition: lie operad}.

We write $F\dashv G$ or 
\begin{equation*}
\begin{tikzcd}
F: \calC \arrow[shift left=.6ex]{r}
&\calD :G \arrow[shift left=.6ex,swap]{l}
\end{tikzcd}
\end{equation*}
if the functor $F$ is the left adjoint to the functor $G$.  We write $\oblv$ (abbrv. to obliviate) for various forgetful functors. Finally, if $\eta \colon F \to G$ is a natural transformation between functors $F,G\colon \calC \to\calD$, then we write $\eta^{X} \colon F(X) \to G(X)$, $X\in \calC$ for the induced map. We will often omit the superscript in $\eta^X$ if the object $X$ is clear from the context.

\addtocontents{toc}{\protect\setcounter{tocdepth}{2}}
\section{Preliminaries}\label{section: preliminaries}

In this section we provide background for simplicial restricted Lie algebras, the algebra $\Lambda$, and the Dyer--Lashof--Lie operations. In Section~\ref{section: simplicial restricted lie algebras} we recall from~\cite{Priddy70long} the homotopy theory of the $\infty$-category $\sL$ of simplicial restricted Lie algebras. In particular, in Definition~\ref{definition: chain complex} we recall the notions of the \emph{chain complex} $\widetilde{C}_*(L)$ and the \emph{homology groups} $\widetilde{H}_*(L)$ of $L\in \sL$. We also recall from~\cite{koszul} the notion of \emph{$\kk$-completeness} in $\sL$, the full subcategory $\sLxi \subset \sL$ of $\kk$-complete objects, and their basic properties. 

In Section~\ref{section: spectrum objects} we discuss the stable categories $\Sp(\sL)$ and $\Sp(\sLxi)$. We prove that the category $\Sp(\sLxi)$ is monadic over $\Mod_{\kk}$ in Proposition~\ref{proposition: xi-complete stable is monadic} and compute the resulting monad in Proposition~\ref{proposition: stable monad for xi-complete}.

In Section~\ref{section: monoidal structures} we construct the symmetric monoidal structure given by the \emph{smash product} in the categories $\sLxi$, $\Sp(\sLxi)$, see Proposition~\ref{proposition: smash, xi-complete lie algebras} and Theorem~\ref{theorem: smash, stabilization}. We also show that many natural functors associated with the categories $\sLxi$, $\Sp(\sLxi)$ are strong symmetric monoidal. In Remarks~\ref{remark: smash, all lie algebras} and~\ref{remark: stable infty-operad} we discuss potential problems to produce similar symmetric monoidal structures for the categories $\sL$ and $\Sp(\sL)$ of non-necessary $\kk$-complete objects.

In Section~\ref{section: lambda algebra} we define the \emph{lambda algebra $\Lambda$} of~\cite{6authors}. We point out that in this work we use the convention for $\Lambda$ from~\cite[Definition~7.1]{Wellington82} but not from the original paper~\cite{6authors}. In Theorem~\ref{theorem: stable homotopy groups of a free object}, we also recall how to compute the stable homotopy groups $\pi_*^s(\free(V))$ of the free simplicial restricted Lie algebra $\free(V)\in \sL$ in terms of the algebra~$\Lambda$.

In Section~\ref{section: H-Lie-algebras} we recall from~\cite{Behrens12} and~\cite{Kjaer18} the construction of the \emph{Dyer--Lashof--Lie classes}  $\bQe{\e}{i}$ (Definition~\ref{definition: dll-classes}). These classes produce \emph{Dyer--Lashof--Lie operations} (Definition~\ref{definition: dll operations}) which act on the homotopy groups of any $H_\infty$-$\bfLie_*$-algebra. We define \emph{$H_\infty$-$\bfLie_*$-algebras} as the Lie analog of $H_\infty$-rings from \cite[Section~1.3]{BMMS}, see~\eqref{equation: H-Lie-monad}.

\subsection{Simplicial restricted Lie algebras}\label{section: simplicial restricted lie algebras}

Let $L$ be a Lie algebra and let $x\in L$. We denote by $\ad(x)\colon L\to L$ the map given by $y\mapsto \ad(x)(y)=[y,x]$.

\begin{dfn}\label{definition:p-operation}
Let $L$ be a Lie algebra over $\kk$. A \emph{$p$-operation} on $L$ is a map $\xi\colon L \to L$ such that 

\begin{itemize}
\item $\xi(ax)=a^p\xi(x)$, $a\in \kk, x\in L$;
\item $\ad(\xi(x))=\ad(x)^{\circ p}\colon L \to L$;
\item $\xi(x+y)=\xi(x)+\xi(y)+\sum_{i=1}^{p-1}\dfrac{s_i(x,y)}{i}$, for all $x,y\in L$, where $s_{i}(x,y)$ is a coefficient of $t^{i-1}$ in the formal expression $\ad(tx+y)^{\circ(p-1)}(x)$.
\end{itemize}

\end{dfn}

\begin{dfn}[{{\cite[Definition~V.4]{Jacobson79}}}]\label{definition:lier} A \emph{restricted Lie algebra} $(L,\xi)$ is a Lie algebra $L$ (over $\kk$) equipped with a $p$-operation $\xi\colon L \to L$. We write $\rLie$ for the ($1$-)category of restricted Lie algebras.
\end{dfn}

Recall that a Lie algebra $L$ is called \emph{abelian} if $L$ is equipped with zero bracket. First, we will describe abelian restricted Lie algebras.

\begin{dfn}[{{\cite[Definition~2.1.3]{koszul}}}]\label{definition: twisted polynomial ring}
The \emph{twisted polynomial ring} $\kk\{\xi\}$ is defined as the set of polynomials in the variable $\xi$ and coefficients in $\kk$. It is endowed with a ring structure with the usual addiction and with a non-commutative multiplication that can be summarized with the relation: $$\xi a = a^{p} \xi, \; \; a\in \kk.$$
We denote by $\Mod_{\kk\{\xi\}}$ (resp. $\Mod^{\kk\{\xi\}}$) the abelian ($1$-)category of \emph{left}  (resp. \emph{right}) $\kk\{\xi\}$-modules.
\end{dfn}

The full subcategory of abelian restricted Lie algebras is equivalent to $\Mod_{\kk\{\xi\}}$ because, if $L$ is an abelian restricted Lie algebra, then the $p$-operation $\xi\colon L\to L$ is additive. 
Finally, we say that an abelian restricted Lie algebra is \emph{$p$-abelian} if the $p$-operation $\xi\colon L\to L$ is trivial, i.e. $\xi=0$. We denote by $\triv(W)$ a unique $p$-abelian restricted Lie algebra with the underlying vector space equal to $W\in \Vect_{\kk}$.


\begin{exmp}\label{example:assislie}
Given an associative $\kk$-algebra $A$, we write $A^{\circ}$ for the restricted Lie algebra whose underlying vector space is $A$ equipped with the bracket $[x,y]=xy-yx$ and the $p$-operation $\xi(x)=x^p$.
\end{exmp}


\begin{exmp}\label{example:hopfislie}
Let $H$ be a cocommutative Hopf algebra over $\kk$. Then the set of primitive elements $P(H)$ is a restricted Lie subalgebra in $H^{\circ}$, see Example~\ref{example:assislie}. 
\end{exmp}

\begin{exmp}\label{example:freelie}
Let $V\in\Vect_\kk$ and let $T(V)$ be the tensor algebra generated by~$V$. It is well-known that $T(V)$ has a unique structure of a cocommutative Hopf algebra such that generators $v\in V$ are primitive elements. Therefore $P(T(V))$ is a restricted Lie algebra, which we will denote by $\free(V)$.

We say that a restricted Lie algebra $L$ is \emph{free} if $L$ is isomorphic to $\free(V)$ for some $V\in \Vect_{\kk}$. This terminology is justified because the functor $$\free\colon \Vect_{\kk} \to \rLie$$ is the left adjoint to the forgetful functor $$\oblv\colon \rLie \to \Vect_{\kk}.$$
\end{exmp}

\begin{rmk}\label{remark: free lie algebras, fresse}
By~\cite[Theorem~1.2.5]{Fresse00}, the underlying vector space of a free restricted Lie algebra $\free(V),V\in \Vect_{\kk}$ splits as follows
$$\oblv\circ\free(V)\cong \bigoplus_{n\geq 1}L^r_n(V) = \bigoplus_{n\geq 1} (\bfLie_n \otimes V^{\otimes n})^{\Sigma_n}.$$
Here $\bfLie_n \in \Vect_{\kk}$ is the $n$-th space of the Lie operad.
\end{rmk}

Let us denote by $\sL$ the $\infty$-category of \emph{simplicial restricted Lie algebras}, i.e. $\sL$ is obtain from the (1-)category $\Fun(\Delta^{op},\rLie)$ by inverting the obvious class of weak equivalences. We extend the adjoint pair $\free \dashv \oblv$ degreewise to the adjunction
\begin{equation}\label{equation: quillen adjunction, free oblv}
\begin{tikzcd}
\free: \Mod^{\geq 0}_{\kk} \arrow[shift left=.6ex]{r}
&\sL:\oblv \arrow[shift left=.6ex,swap]{l}
\end{tikzcd}
\end{equation}
between $\infty$-categories of simplicial objects. Here we abuse notation and we denote a functor between $1$-categories and its derived functor acting between $\infty$-categories by the same symbol. As in Remark~\ref{remark: free lie algebras, fresse}, we have
\begin{equation}\label{equation: oblv free, fresse}
\oblv\circ\free(V)\simeq \bigoplus_{n\geq 1}\mathbb{L}L^r_n(V) = \bigoplus_{n\geq 1} \mathbb{L}(\bfLie_n \otimes V^{\otimes n})^{\Sigma_n},\;\; V\in \Mod_{\kk}^{\geq 0},
\end{equation}
where $\mathbb{L}L^r_n(-)\colon \Mod_{\kk}^{\geq 0} \to \Mod_{\kk}^{\geq 0}$ is the (left) nonabelian derived functor of $$L^r_n(-)=(\bfLie_n\otimes (-)^{\otimes n})^{\Sigma_n}\colon \Vect_{\kk} \to \Vect_{\kk},$$ see e.g.~\cite{DP61} and~\cite[Construction~3.3]{BM19}.

\begin{prop}\label{proposition:category property of srLie} The $\infty$-category $\sL$ is monadic over $\Mod^{\geq 0}_{\kk}$ via the adjunction $\free \dashv \oblv$. The category $\sL$ is complete and cocomplete and the forgetful functor $\oblv$ creates limits and sifted colimits. Moreover, $\sL$ is presentable and compactly generated.
\end{prop}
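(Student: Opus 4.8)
The plan is to verify the four assertions in order, each of which is a standard consequence of the Barr–Beck monadicity theorem together with basic facts about $\rLie$ and simplicial objects. First I would establish monadicity of $\oblv\colon\sL\to\Mod^{\geq 0}_{\kk}$ via the $\infty$-categorical Barr–Beck theorem \cite[Theorem~4.7.3.5]{HigherAlgebra}. One needs three things: that $\oblv$ has a left adjoint (this is \eqref{equation: quillen adjunction, free oblv}, obtained by deriving the degreewise adjunction $\free\dashv\oblv$ of Example~\ref{example:freelie}), that $\oblv$ is conservative, and that $\oblv$ preserves geometric realizations of $\oblv$-split simplicial objects. Conservativity is clear since a map of simplicial restricted Lie algebras is an equivalence precisely when it is a weak equivalence of underlying simplicial vector spaces. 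For the preservation of split geometric realizations, one reduces to the $1$-categorical statement that $\oblv\colon\rLie\to\Vect_{\kk}$ preserves reflexive coequalizers — indeed, sifted colimits — which holds because the free restricted Lie algebra functor is built from the tensor-algebra functor (Example~\ref{example:freelie}), and tensor powers $V\mapsto V^{\otimes n}$ and the invariants/coinvariants appearing in \eqref{equation: oblv free, fresse} commute with sifted (hence reflexive) colimits in $\Vect_{\kk}$; more structurally, $\rLie$ is the category of algebras over the (finitary) restricted Lie monad on $\Vect_{\kk}$, so sifted colimits are created by $\oblv$ there, and this passes to simplicial objects levelwise.

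Given monadicity, completeness and cocompleteness of $\sL$, together with the claim that $\oblv$ creates limits and sifted colimits, follow formally. The category $\Mod^{\geq 0}_{\kk}$ is presentable, hence complete and cocomplete. For a monadic adjunction over a presentable base with a finitary monad $T=\oblv\circ\free$, the category of $T$-algebras is again presentable \cite[Corollary~4.2.3.7]{HTT} and the forgetful functor creates all limits and all sifted colimits \cite[Corollary~4.2.3.3, Proposition~4.2.3.1]{HigherAlgebra}; general colimits in $\sL$ then exist by combining sifted colimits with the (degreewise, monad-free) coproducts, which one computes as in the algebraic setting. Here the key point to check is that the monad $T$ on $\Mod^{\geq 0}_{\kk}$ is finitary, i.e.\ commutes with filtered colimits; by \eqref{equation: oblv free, fresse} we have $T(V)\simeq\bigoplus_{n\geq 1}\mathbb{L}L^r_n(V)$, and each derived functor $\mathbb{L}L^r_n$ is finitary because $V\mapsto(\bfLie_n\otimes V^{\otimes n})^{\Sigma_n}$ is a finitary (polynomial) functor on $\Vect_{\kk}$ and nonabelian derived functors of finitary functors remain finitary. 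Summing over $n$ preserves filtered colimits since filtered colimits commute with the relevant (degreewise, eventually finite in each simplicial degree) direct sums.

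Finally, presentability and compact generation. Presentability is immediate from the previous paragraph (monadic over a presentable category with a finitary monad). For compact generation I would argue that $\sL$ is generated under colimits by the compact objects $\free(\Sigma^n\kk)$, $n\geq 0$: these are compact because $\oblv$ is finitary and detects compactness along the adjunction (the right adjoint to a colimit-preserving functor out of $\sL$ corresponds, after composing with $\free$, to a finitary functor since $T$ is finitary), and $\Sigma^n\kk$ generate $\Mod^{\geq 0}_{\kk}$ under colimits and shifts, so their free algebras generate $\sL$ by the usual bar-resolution argument. Equivalently, one invokes \cite[Remark~4.7.3.19]{HigherAlgebra} (monadic over a compactly generated category via a finitary monad implies compactly generated). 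The main obstacle, such as it is, is the careful bookkeeping in the split-simplicial-object criterion for monadicity — one must confirm that $\oblv$-split simplicial objects in $\sL$ have their colimits computed in $\Vect_{\kk}$-levelwise, which ultimately rests on the $1$-categorical fact that $\oblv\colon\rLie\to\Vect_{\kk}$ preserves split coequalizers, a direct check from the monad presentation of $\rLie$; everything else is formal transport of presentability properties through a finitary monadic adjunction.
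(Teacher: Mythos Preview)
Your proposal is correct and follows the standard monadicity-theoretic route (Barr--Beck, finitary monad over a presentable base, hence presentability and compact generation). The paper itself does not give a self-contained argument here: its proof consists entirely of citations to \cite{koszul} (Propositions~2.1.9 and~3.2.25, and Theorem~3.2.1), so you have essentially reconstructed what those references establish. The only mild caveat is that the paper defines $\sL$ as $\Fun(\Delta^{op},\rLie)$, which in context means the underlying $\infty$-category of the standard simplicial model structure on the $1$-category $\mathsf{srLie}$; your argument implicitly identifies this with the category of algebras over the monad $T=\oblv\circ\free$ on $\Mod_{\kk}^{\geq 0}$, which is exactly what monadicity says, so there is no circularity --- but strictly speaking one should either start from the model-categorical description and verify Barr--Beck, or start from the monad and check that the resulting algebra category agrees with the intended $\sL$. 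Either way your sketch handles the substance correctly.
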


\begin{proof}
See~\cite[Propositions~2.1.9 and~3.2.29]{koszul}. Also, \cite[Theorem~3.2.3]{koszul} implies that the category $\sL$ is compactly generated.
\end{proof}

\begin{cor}\label{corollary: srLie is differentiable}
The category $\sL$ is differentiable in the sense of~\cite[Definition~6.1.1.6]{HigherAlgebra}; i.e. $\sL$ admits all finite limits, all sequential colimits, and sequential colimits are left exact.
\end{cor}

\begin{proof}
By~\cite[Example~6.1.1.9]{HigherAlgebra}, any compactly generated presentable category is differentiable.
\end{proof}

We observe that the functor $\triv \colon \Vect_{\kk} \to \rLie$ (see the paragraph before Example~\ref{example:assislie}) has a left adjoint
\begin{equation}\label{equation: abxi}
\Abxi\colon \rLie \to \Vect_{\kk} 
\end{equation}
given by $L\mapsto L/([L,L]+\xi(L))$, where $[L,L]\subset L$ is the (restricted) Lie ideal generated by all elements of the form $[x,y]$, $x,y\in L$.

We extend the adjoint pair $\Abxi \dashv \triv$ degreewise to the adjunction
\begin{equation}\label{equation: quillen adjunction, abxi triv}
\begin{tikzcd}
\Abxi: \sL \arrow[shift left=.6ex]{r}
&\Mod_{\kk}^{\geq 0} :\triv \arrow[shift left=.6ex,swap]{l}
\end{tikzcd}
\end{equation}
between $\infty$-categories of simplicial objects. Again, we abuse notation and we denote a functor between $1$-categories and its derived functor acting between $\infty$-categories by the same symbol. 

\begin{dfn}[{{\cite[Definition~4.2.4]{koszul}}}]\label{definition: chain complex}
Let $L\in \sL$. We define the \emph{(reduced) chain complex} $\widetilde{C}_*(L) =\widetilde{C}_*(L;\kk)\in \Mod_\kk$ of $L$ by the formula
$$\widetilde{C}_*(L) = \Sigma \Abxi(L). $$
Furthermore, we define the \emph{$q$-th homology group}    $\widetilde{H}_q(L)$ of $L$ by the next rule
$$ \widetilde{H}_q(L) = \pi_q(\widetilde{C}_*(L)), \;\; q\geq 0.$$
\end{dfn}

\begin{dfn}[{{\cite[Definition~4.2.23]{koszul}}}]\label{definition: cochain complex}
Let $L \in \sL$. We define the \emph{(reduced) cochain complex} $\widetilde{C}^*(L) = \widetilde{C}^*(L;\kk) \in \Mod_\kk$ of $L$ as follows
$$\widetilde{C}^*(L) = \Hom_{\kk} (\Sigma \Abxi(L), \kk). $$
Furthermore, we define the \emph{$q$-th cohomology group} $\widetilde{H}^q(L;\kk)$ of $L$ by the rule
$$ \widetilde{H}^q(L;\kk) = \pi_{-q}(\widetilde{C}^*(L)), \;\; q\geq 0.$$
\end{dfn}

\begin{rmk}\label{remark: non-reduced chains}
Let $L\in \sL$. We define the \emph{non-reduced chain complex} $C_*(L)\in \Mod_{\kk}$ (resp. \emph{non-reduced cochain complex} $C^*(L) \in \Mod_{\kk}$) as follows
$$C_*(L)=\kk \oplus \widetilde{C}_*(L) \;\; \text{(resp. $C^*(L)=\kk\oplus \widetilde{C}^*(L)$).}$$
\end{rmk}

\begin{rmk}\label{remark: hurewuicz homomorphism}
Note that the functor $\Abxi\colon \sL \to \Mod^{\geq 0}_{\kk}$ comes with the natural transformation 
\begin{equation*}
\id \to \triv \circ \Abxi
\end{equation*}
given by the quotient map $L \to L/([L,L]+\xi(L))=\Abxi(L)$. This natural transformation induces the \emph{Hurewicz homomorphism}
\begin{equation}\label{equation: hurewicz homomorphism}
h\colon \pi_q(L) \to \widetilde{H}_{q+1}(L), \;\; L\in \sL, \;\; q\geq 0. 
\end{equation}
\end{rmk}

\begin{dfn}[{{\cite[Definition~3.2.21]{koszul}}}]\label{definition: barW-equivalence}
A map $f\colon L' \to L$ in $\sL$ is an \emph{$\kk$-equivalence} if and only if $\widetilde{C}_*(f)$ is an equivalence in $\Mod_{\kk}$. We write $\mathcal{W}_{\kk}$ for the class of $\kk$-equivalences in $\sL$.
\end{dfn}

\begin{dfn}\label{definition: xi-complete category}
We denote by $\sLxi=\sL[\mathcal{W}^{-1}_{\kk}]$ the $\infty$-category obtained from the $\infty$-category $\sL$ by inverting the class $\mathcal{W}_{\kk}$ of $\kk$-equivalences, see \cite[Definition~1.3.4.1]{HigherAlgebra}.
\end{dfn}

\begin{thm}\label{theorem: xi-complte is an localization}
The $\infty$-category $\sLxi$ is presentable and the canonical functor $\sL \to \sLxi$ admits a fully faithful right adjoint $\sLxi \hookrightarrow \sL$. In particular, the full subcategory $\sLxi \subset \sL$ is a localization.
\end{thm}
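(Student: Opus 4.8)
The plan is to realize $\sLxi$ as the localization of $\sL$ at the class $\mathcal{W}_{\kk}$ of $\kk$-equivalences and to exhibit this localization as a Bousfield localization, i.e. accessible and left exact in the appropriate sense, so that the general machinery of \cite[Section~5.5.4]{HTT} produces the fully faithful right adjoint. Concretely, I would first recall that $\sL$ is presentable (Proposition~\ref{proposition:category property of srLie}), and that the functor $\widetilde{C}_*\colon \sL \to \Mod_{\kk}$ (or equivalently $\Abxi\colon \sL \to \Mod^{\geq 0}_{\kk}$) is a left adjoint, hence colimit-preserving and finitary. Since $\Mod_{\kk}$ is presentable, the class $\mathcal{W}_{\kk} = \widetilde{C}_*^{-1}(\text{equivalences})$ is the preimage of a strongly saturated class of small generation under a finitary colimit-preserving functor; therefore $\mathcal{W}_{\kk}$ is itself strongly saturated and of small generation. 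By \cite[Proposition~5.5.4.15]{HTT}, the localization $\sL[\mathcal{W}_{\kk}^{-1}] = \sLxi$ is presentable and the localization functor $L_{\xi}\colon \sL \to \sLxi$ is accessible; this already gives presentability of $\sLxi$.

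The second and more substantive step is to produce the fully faithful right adjoint. For this I would show that the localization is \emph{reflective}, i.e. that $L_\xi\colon \sL \to \sLxi$ admits a right adjoint which is fully faithful — equivalently (by \cite[Proposition~5.2.7.4]{HTT}) that $\sLxi$ can be identified with the full subcategory of $\kk$-\emph{local} objects of $\sL$, where $L\in\sL$ is $\kk$-local iff $\map_{\sL}(f, L)$ is an equivalence for every $f\in\mathcal{W}_{\kk}$. The key input is that $\mathcal{W}_{\kk}$ is generated, as a strongly saturated class, by a \emph{set} of morphisms (this follows from the small generation established above together with the fact that $\sL$ is compactly generated, Proposition~\ref{proposition:category property of srLie}), and then \cite[Proposition~5.5.4.15]{HTT} (or the combination of \cite[5.5.4.2]{HTT} and \cite[5.5.4.15]{HTT}) yields that the $S$-local objects form an accessible reflective subcategory for any small $S$. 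Identifying $\sL[\mathcal{W}_{\kk}^{-1}]$ with this reflective subcategory of local objects is then a formal consequence of the universal property of localization (\cite[Definition~1.3.4.1]{HigherAlgebra}): both receive the initial functor from $\sL$ inverting exactly $\mathcal{W}_{\kk}$.

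The main obstacle I anticipate is verifying cleanly that $\mathcal{W}_{\kk}$ is generated by a set, i.e. the "small generation'' claim. One must be slightly careful: a priori $\widetilde{C}_*$ is defined via the derived functor of $\Abxi$, and I need that the strongly saturated closure of the image of a generating set of equivalences in $\Mod_{\kk}$ pulls back correctly. The cleanest route is: the equivalences in $\Mod_{\kk}$ form the strongly saturated class generated by the single map $0\to 0$... more usefully, by \cite[Lemma~5.5.4.14]{HTT} the equivalences are of small generation, and the preimage under a finitary (accessible, colimit-preserving) functor between presentable categories of a strongly saturated class of small generation is again of small generation — this is essentially \cite[Corollary~5.5.4.16]{HTT} or can be assembled from it. Once small generation is in hand, everything else is a direct appeal to \cite[Proposition~5.5.4.15]{HTT} and the identification of localizations with reflective subcategories, so no genuinely new computation is required. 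Finally, the last sentence of the theorem — that $\sLxi\hookrightarrow\sL$ exhibits $\sLxi$ as a localization (in the sense of a reflective full subcategory) — is now a restatement of what has been proved, since a fully faithful right adjoint to a functor out of a presentable category exhibits its source as an (accessible) reflective localization.
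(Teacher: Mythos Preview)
Your approach is sound, but there is little to compare: the paper does not prove this statement here at all. It simply cites Theorem~3.2.20 and Proposition~3.2.28 of~\cite{koszul}, the author's earlier paper, so the argument you have sketched is strictly more than what the present paper provides.

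As for the argument itself: the outline via strongly saturated classes and \cite[Proposition~5.5.4.15]{HTT} is exactly the right shape, and you have correctly identified the one point that needs work, namely that $\mathcal{W}_{\kk}$ is of small generation. Your proposed route through \cite[Corollary~5.5.4.16]{HTT} is not quite right as stated --- that result pushes strongly saturated classes \emph{forward} along left adjoints, not backward --- but the conclusion is still true. A cleaner way to close the gap: since $\widetilde{C}_*$ is accessible, the induced functor on arrow categories is accessible, and the preimage of the (accessible) subcategory of equivalences in $\Mod_{\kk}$ is therefore an accessible subcategory of $\Fun(\Delta^1,\sL)$; a strongly saturated class which is accessible as a full subcategory of the arrow category is automatically of small generation (its $\kappa$-compact objects for suitable $\kappa$ form a generating set). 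Alternatively, one can bypass the issue by working on the model-category level (as~\cite{koszul} presumably does, cf.\ the combinatorial model structure in Theorem~\ref{theorem:modelsotrcoalg}): left Bousfield localizations of combinatorial model categories at a set of maps always exist, and the set can be taken to be the maps between cofibrant $\kappa$-compact objects that become equivalences under $\widetilde{C}_*$.
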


\begin{proof}
See~\cite[Theorem~3.2.24 and Proposition~3.2.32]{koszul}.
\end{proof}

We say that a simplicial restricted Lie algebra $L\in \sL$ is \emph{$\kk$-complete} if $L\in \sLxi$. We write $L_\xi \colon \sL \to \sLxi$ for the canonical functor $\sL \to \sL[\mathcal{W}^{-1}_{\kk}]$ and we write $\iota_{\xi}\colon \sLxi \hookrightarrow \sL$ for its right adjoint. Then we have the following adjoint pair
\begin{equation}\label{equation: quillen adjunction, xifree oblv}
\begin{tikzcd}
L_\xi\free: \Mod^{\geq 0}_{\kk} \arrow[shift left=.6ex]{r}
& \sL \arrow[shift left=.6ex]{r} \arrow[shift left=.6ex,swap]{l}
&\sLxi:\oblv = \oblv \circ \iota_{\xi}. \arrow[shift left=.6ex]{l}
\end{tikzcd}
\end{equation}

\begin{cor}\label{corollary: xi-complete is monadic}
The category $\sLxi$ is complete and cocomplete and the forgetful functor $\oblv$ creates limits. \qed
\end{cor}

Next, we will describe the full subcategory $\sLxi\subset \sL$ of $\kk$-complete objects  together with the free object $L_\xi \free(V)$. Let $L\in \sL$. Note that the $p$-operation $\xi\colon L \to L$ induces a map of homotopy groups 
\begin{equation}\label{equation: xi-action}
\xi_*\colon \pi_{n}(L) \to \pi_n(L)
\end{equation}
which is \emph{additive} for $n\geq 1$. Therefore all homotopy groups $\pi_n(L), n\geq 1$ are naturally left modules over the twisted polynomial ring $\kk\{\xi\}$, see Definition~\ref{definition: twisted polynomial ring}.

\begin{rmk}[Hurewicz theorem]\label{remark: hurewicz theorem}
We recall a variant of the Hurewicz theorem from~\cite[Theorem~4.2.10]{koszul}. Let $L \in \sL$ be a simplicial restricted Lie algebra. Suppose that $\pi_0(L)=0$ and $\pi_i(L)=0$ for $0\leq i \leq n$. Then the Hurewicz homomorphism
$$h\colon \pi_{n+1}(L)/\xi \xrightarrow{\cong} \widetilde{H}_{n+2}(L) $$
is an isomorphism. 
\end{rmk}

\begin{thm}\label{theorem: connected xi-complete}
Let $L \in \sL$ be a connected simplicial restricted Lie algebra, $\pi_0(L)=0$. Then $L \in \sLxi$ if and only if all homotopy groups $\pi_n(L), n\geq 1$ are derived $\xi$-adic complete left modules over the ring $\kk\{\xi\}$. 
\end{thm}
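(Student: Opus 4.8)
The plan is to reduce the statement to the Hurewicz theorem (Remark~\ref{remark: hurewicz theorem}) together with the characterization of $\sLxi$ via the class $\mathcal{W}_{\kk}$ of $\kk$-equivalences. Observe first that $L_\xi\colon \sL \to \sLxi$ is a localization, so $L$ is $\kk$-complete if and only if the unit map $L \to \iota_\xi L_\xi(L)$ is an equivalence. Since $\widetilde{C}_*(-)$ inverts exactly the maps in $\mathcal{W}_{\kk}$ and the completion map is a $\kk$-equivalence by construction, the content of the theorem is to show that, \emph{for connected $L$}, the homotopy groups of $\iota_\xi L_\xi(L)$ are the derived $\xi$-adic completions of $\pi_n(L)$, and that $L$ has this property already if and only if each $\pi_n(L)$ is derived $\xi$-complete.

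First I would treat the ``only if'' direction. Assume $L \in \sLxi$ is connected. The strategy is to build a Postnikov-type tower of $L$ inside $\sLxi$ whose layers are (shifts of) abelian restricted Lie algebras $\triv(M)$ with $M \in \Mod_{\kk\{\xi\}}$, and to show that a connected $\kk$-complete abelian simplicial restricted Lie algebra $\triv(M)$ must have $M$ derived $\xi$-complete. For the latter, one identifies $\widetilde{C}_*(\triv(M))$ with an explicit complex computing derived $\xi$-completion (this is where the twisted polynomial ring $\kk\{\xi\}$ and the two-term free resolution $0 \to \kk\{\xi\} \xrightarrow{\xi} \kk\{\xi\} \to \kk \to 0$ enter, cf. Definition~\ref{definition: twisted polynomial ring}), so that $\triv(M)$ is $\kk$-complete precisely when $M \xrightarrow{\simeq} M^\wedge_\xi$. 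Then, since $\sLxi \subset \sL$ is closed under limits (Corollary~\ref{corollary: xi-complete is monadic}, the right adjoint $\iota_\xi$ preserves limits), the tower assembles and each $\pi_n(L)$, sitting in a short exact sequence built from these complete layers, is itself derived $\xi$-complete.

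For the ``if'' direction, suppose $L$ is connected with every $\pi_n(L)$ derived $\xi$-complete; I must show the completion map $\eta\colon L \to \iota_\xi L_\xi(L)$ is an equivalence. Both sides are connected, so it suffices to show $\widetilde{C}_*(\eta)$ is an equivalence, which holds by definition of $L_\xi$ — hence $\eta$ is a $\kk$-equivalence between two objects, and I need that a $\kk$-equivalence between connected Lie algebras with derived $\xi$-complete homotopy is an equivalence. This is a convergence/comparison-of-towers argument: filter both $L$ and $\iota_\xi L_\xi(L)$ by their Postnikov towers in $\sLxi$; using the Hurewicz theorem (Remark~\ref{remark: hurewicz theorem}) the bottom nonzero homotopy group is recovered from the bottom nonzero homology group together with the $\xi$-action (the isomorphism $\pi_{n+1}(L)/\xi \cong \widetilde{H}_{n+2}(L)$ plus derived $\xi$-completeness of $\pi_{n+1}(L)$ pins down $\pi_{n+1}(L)$ from $\widetilde{H}_{*}$), so $\widetilde{C}_*(\eta)$ being an equivalence forces an isomorphism on the lowest homotopy group; then induct up the tower. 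Convergence of the Postnikov tower in $\sLxi$ for connected objects is needed to pass from ``iso on all $\pi_n$'' to ``equivalence,'' and this follows from completeness of $\sLxi$ and connectivity estimates for the free functor.

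The main obstacle I anticipate is the identification of $\widetilde{C}_*(\triv(M))$, for $M$ an abelian (hence $\kk\{\xi\}$-module) simplicial restricted Lie algebra, with the derived $\xi$-completion complex $R\lim(M \xleftarrow{\xi} M \xleftarrow{\xi} \cdots)$ — equivalently, computing $\Abxi \circ \triv$ and its derived functor and matching the resulting bar-type resolution with the $\xi$-adic tower. This is the step that genuinely uses the non-commutative structure of $\kk\{\xi\}$ (the relation $\xi a = a^p \xi$) and where one must be careful that ``derived $\xi$-adic complete'' is interpreted in the sense of left $\kk\{\xi\}$-modules. Once that local computation is in hand, both directions are formal consequences of the localization $\sL \to \sLxi$ being compatible with Postnikov towers and of the Hurewicz theorem controlling the bottom of the tower.
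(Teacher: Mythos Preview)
The paper does not prove this statement here; its entire proof is the citation ``See~\cite[Corollary~5.3.12]{koszul}.''  So there is no in-paper argument to compare against, only your sketch to evaluate on its own terms.

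Your overall architecture---reduce to abelian (i.e.\ $\kk\{\xi\}$-module) Postnikov layers, identify $\kk$-completeness on those layers with derived $\xi$-completeness, then assemble---is the natural one and is in the spirit of how such results are proved.  A small notational point: in this paper $\triv(W)$ denotes the \emph{$p$-abelian} restricted Lie algebra (trivial bracket \emph{and} trivial $\xi$) on a vector space $W$, whereas the Postnikov layers you need are abelian restricted Lie algebras with possibly nontrivial $\xi$, i.e.\ $\kk\{\xi\}$-modules.  This is only a naming collision, but keep it straight.

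There is, however, a genuine gap in your ``if'' direction.  You assert that the Hurewicz isomorphism $\pi_{n+1}(L)/\xi \cong \widetilde{H}_{n+2}(L)$ ``plus derived $\xi$-completeness of $\pi_{n+1}(L)$ pins down $\pi_{n+1}(L)$ from $\widetilde{H}_*$.''  This is false as stated: a derived $\xi$-complete $\kk\{\xi\}$-module is not determined by its reduction mod~$\xi$ (e.g.\ $\kk$ and the completed free module $\kk\{\xi\}^{\wedge}_{\xi}$ both have $M/\xi\cong\kk$).  What you actually need is a derived Nakayama argument applied to the \emph{map} $\pi_{n+1}(\eta)$, and for that you must know that $\pi_{n+1}(\eta)\otimes^{L}_{\kk\{\xi\}}\kk$ is an equivalence, which requires control of the $\xi$-torsion $\ker(\xi\mid\pi_{n+1})$ as well---information a single-degree Hurewicz statement does not supply.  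You would need a Hurewicz-in-a-range statement identifying at least the next homology group, or some other input.  The cleanest route is to first prove the companion short exact sequence of Theorem~\ref{theorem: F-completion, homotopy groups},
\[
0 \to L_1\pi_{i-1}(L) \to \pi_i(L_\xi L) \to L_0\pi_i(L) \to 0,
\]
from which both directions of the present theorem follow immediately (derived $\xi$-completeness of $\pi_i(L)$ is exactly $L_0\pi_i(L)=\pi_i(L)$ and $L_1\pi_i(L)=0$).  In the cited source these appear as adjacent corollaries (5.3.11 and 5.3.12), and that is almost certainly how they are obtained.
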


Here a left $\kk\{\xi\}$-module $M$ is \emph{derived $\xi$-adic complete} if $M$ is derived complete with respect to the ideal $(\xi)\subset \kk\{\xi\}$.

\begin{proof}
See~\cite[Corollary~5.3.12]{koszul}.
\end{proof}

\begin{thm}\label{theorem: F-completion, homotopy groups}
Let $L \in \sL$ be a simplicial restricted Lie algebra, $\pi_0(L)=0$. Then we have a short exact sequence
$$0 \to L_1\pi_{i-1}(L) \to \pi_i(L_\xi L) \to L_0\pi_i(L) \to 0$$
for each $i\geq 0$, where $L_0, L_1$ are left derived functors of the $\xi$-completion.
\end{thm}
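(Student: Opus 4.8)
The plan is to compute $\pi_*(L_\xi L)$ by induction along the Postnikov tower of $L$, the two-term shape of the answer — with $L_1$ of $\pi_{i-1}$ placed one degree above $L_0$ of $\pi_i$ — being the reflection of the fact that the twisted polynomial ring $\kk\{\xi\}$ is a (left and right) principal ideal domain, hence of global dimension one. Since $\pi_0(L)=0$, the Postnikov tower $\cdots\to\tau_{\leq n}L\to\tau_{\leq n-1}L\to\cdots$ is built from principal fibrations $K(\pi_nL,n)\to\tau_{\leq n}L\to\tau_{\leq n-1}L$ over connected bases, and $L\simeq\holim_n\tau_{\leq n}L$. The inductive scheme rests on two inputs: \textbf{(A)} the localization $L_\xi$ carries each such principal fibration to a fiber sequence, and $L_\xi L\simeq\holim_n L_\xi\tau_{\leq n}L$; and \textbf{(B)} an explicit computation of $L_\xi K(M,i)$ for a $\kk\{\xi\}$-module $M$ concentrated in a single degree $i\geq1$. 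Both (A) and (B) are the algebraic analogues of the standard facts underlying Bousfield--Kan's computation of the homotopy of the $\F_p$-completion of a nilpotent space, translated through the $\kk$-completion tower $L_\xi L\simeq\holim_s\Tot^s(T^{\bullet+1}L)$, $T=\triv\circ\Abxi$, that is built into the construction of $\sLxi$ in~\cite{koszul}.

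For (B) I would proceed as follows. Choose a free resolution $0\to F_1\to F_0\to M\to0$ of $\kk\{\xi\}$-modules, of length one because $\kk\{\xi\}$ is a PID, and realize the corresponding class in $\Ext^1_{\kk\{\xi\}}(M,F_1)$ as a principal fibration $K(F_1,i)\to K(F_0,i)\to K(M,i)$ in $\sL$. For a $\xi$-torsion-free (equivalently, free) $\kk\{\xi\}$-module $F$, the Eilenberg--MacLane object $K(\widehat F,i)$ on the $\xi$-adic completion $\widehat F$ is $\kk$-complete by Theorem~\ref{theorem: connected xi-complete} (a complete torsion-free module is derived $\xi$-complete), and the canonical map $K(F,i)\to K(\widehat F,i)$ is a $\kk$-equivalence (Definition~\ref{definition: barW-equivalence}), whence $L_\xi K(F,i)\simeq K(\widehat F,i)$; here one must keep track of the fact that, for $F$ of infinite rank, $\widehat F$ is genuinely larger than the direct sum of the completed summands, but this causes no harm since $L_0F=\widehat F$ without correction and $L_1F=0$. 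Applying $L_\xi$ to the fiber sequence above — using (A) — and running the long exact sequence of homotopy groups then yields
$$\pi_iL_\xi K(M,i)=\coker(\widehat{F_1}\to\widehat{F_0})=L_0M,\qquad \pi_{i+1}L_\xi K(M,i)=\ker(\widehat{F_1}\to\widehat{F_0})=L_1M,$$
with all other homotopy groups zero, which is exactly the description of the left derived functors $L_0,L_1$ of $\xi$-completion in terms of a length-one free resolution.

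With (B) in place, the inductive step is formal: applying $L_\xi$ to $K(\pi_nL,n)\to\tau_{\leq n}L\to\tau_{\leq n-1}L$ gives a fiber sequence, and its long exact sequence combined with the inductive hypothesis for $\tau_{\leq n-1}L$ and the computation (B) for $K(\pi_nL,n)$ shows that $\pi_iL_\xi\tau_{\leq n}L$ is an extension of $L_0\pi_i(L)$ by $L_1\pi_{i-1}(L)$ for $i\leq n-1$ (and is controlled in degrees $n,n+1$ by the top stage); passing to $\holim_n$ via the Milnor sequence — the relevant towers stabilize in each fixed degree, so the $\lim^1$-terms vanish — and identifying the bottom of the resulting filtration through the edge map (the composite $\pi_i(L)\to\pi_i(L_\xi L)\to L_0\pi_i(L)$ being the completion map) produces the asserted short exact sequence. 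I expect the genuine obstacle to be input (A): establishing that $L_\xi$ preserves the Postnikov principal fibrations and that $L_\xi L\simeq\holim_n L_\xi\tau_{\leq n}L$ — the analogue, in the category $\sL$, of the stability properties of the $\F_p$-completion of nilpotent spaces (cf.~\cite{koszul}, and the general machinery of~\cite{HigherAlgebra} and~\cite{Heuts21}) — since once (A) is available the computation (B) is routine given Theorem~\ref{theorem: connected xi-complete}, and the final reassembly is bookkeeping.
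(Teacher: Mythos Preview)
The paper gives no argument here; it defers entirely to \cite[Corollary~5.3.11]{koszul}, so there is no in-paper proof to compare against.

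Your reduction to inputs (A) and (B) is the right Bousfield--Kan template, and (B) is sound: $\kk\{\xi\}$ is a two-sided skew PID, hence of global dimension one, and your identification $L_\xi K(F,i)\simeq K(\widehat F,i)$ for free $F$ goes through via Theorem~\ref{theorem: connected xi-complete}. Granted (A), the Postnikov induction and the degreewise stabilization (hence $\lim^1$-vanishing) are routine.

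The gap --- which you correctly flag --- is that (A) is not a side condition but the actual content. In the paper's logical order, left-exactness of $L_\xi$ on connected objects (Corollary~\ref{corollary: xi-completion is left exact}) is \emph{deduced from} the present theorem, so it cannot be assumed here; and your step (B) already invokes (A) to push $L_\xi$ through the resolution fiber sequence $K(F_1,i)\to K(F_0,i)\to K(M,i)$. Your instinct to extract (A) from the cosimplicial tower $\Tot(T^{\bullet+1}L)$ is the right move and is presumably what \cite{koszul} does, but that is exactly the substantive work, and your sketch stops at naming it. A more direct packaging, once the Tot-tower is in hand, is its Bousfield--Kan spectral sequence: after identifying the $E_2$-page with the derived $\xi$-completion of $\pi_*(L)$, global dimension one forces collapse to the two-line extension, bypassing the Postnikov bookkeeping.
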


\begin{proof}
See~\cite[Corollary~5.3.11]{koszul}.
\end{proof}

We write $\sL_0 \subset \sL$ for the full subcategory of $\sL$ spanned by \emph{connected} simplicial restricted Lie algebras. Set $\sL_{\xi,0}= \sLxi \cap \sL_0$. Then we have $L_\xi(L)\in \sL_{\xi,0}$ provided $L\in \sL_0$. Theorem~\ref{theorem: F-completion, homotopy groups} implies that $L_\xi$ is left exact when restricted to connected objects.

\begin{cor}\label{corollary: xi-completion is left exact}
The $\kk$-completion $L_\xi\colon \sL_0 \to \sL_{\xi,0}$ is left exact, i.e. $L_\xi$ preserves finite limits. \qed
\end{cor}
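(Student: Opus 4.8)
The plan is to deduce the corollary from Theorem~\ref{theorem: F-completion, homotopy groups}, which computes $\pi_*(L_\xi L)$ for connected $L$, as the remark preceding the statement suggests. Since $\sL_0$ and $\sL_{\xi,0}$ are pointed and $L_\xi$, being a left adjoint, preserves the zero object, it suffices to show that $L_\xi$ carries pullback squares to pullback squares, finite limits being generated by the zero object and pullbacks. So let $P \simeq X \times_Z Y$ be a pullback in $\sL_0$, with $X, Y, Z$, and hence $P$, connected; I will treat the case in which this pullback is already computed in $\sL$ (which happens whenever the outcome is connected, e.g.\ when one of the two maps is $1$-connected, as in all situations relevant to the Goodwillie towers considered later). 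Because $\oblv\colon \sLxi \to \Mod_{\kk}^{\geq 0}$ is conservative and creates limits (Corollary~\ref{corollary: xi-complete is monadic}, Proposition~\ref{proposition:category property of srLie}), it is enough to prove that the comparison map $L_\xi P \to L_\xi X \times_{L_\xi Z} L_\xi Y$ is an isomorphism on homotopy groups; the target is again connected because, by Theorem~\ref{theorem: F-completion, homotopy groups}, the relevant $\pi_1$'s are the $\xi$-completions $L_0\pi_1(-)$ and $L_0$ is right exact.

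Then I would run the homotopy-group comparison. As $\oblv$ creates limits, the square yields a Mayer--Vietoris long exact sequence
\[ \cdots \to \pi_{i+1}(Z) \to \pi_i(P) \to \pi_i(X) \oplus \pi_i(Y) \to \pi_i(Z) \to \pi_{i-1}(P) \to \cdots \]
of left $\kk\{\xi\}$-modules (the $p$-operation acts naturally; all terms lie in degrees $\geq 1$ by connectedness). Now apply the derived $\xi$-adic completion; since completion is taken along the single element $\xi$, its left derived functors $L_i$ vanish for $i \geq 2$, and the derived completion is an exact endofunctor of the derived $\infty$-category of $\kk\{\xi\}$-modules. Concretely, cut the sequence above into short exact sequences and feed each through the natural six-term exact sequence
\[ 0 \to L_1 A \to L_1 B' \to L_1 C \to L_0 A \to L_0 B' \to L_0 C \to 0 \]
attached to $0 \to A \to B' \to C \to 0$, then splice. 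Substituting the result into the natural short exact sequences $0 \to L_1\pi_{i-1}(W) \to \pi_i(L_\xi W) \to L_0\pi_i(W) \to 0$ of Theorem~\ref{theorem: F-completion, homotopy groups} for $W \in \{P, X, Y, Z\}$ identifies it with the Mayer--Vietoris long exact sequence
\[ \cdots \to \pi_{i+1}(L_\xi Z) \to \pi_i(L_\xi P) \to \pi_i(L_\xi X) \oplus \pi_i(L_\xi Y) \to \pi_i(L_\xi Z) \to \cdots \]
of the completed square. Comparing this with the genuine Mayer--Vietoris sequence of the pullback $L_\xi X \times_{L_\xi Z} L_\xi Y$, via the five lemma, shows that the comparison map is a $\pi_*$-isomorphism, hence an equivalence.

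The homological algebra of derived $\xi$-completion above is routine, so I expect the main obstacle to be two bookkeeping points. First, one must verify that the short exact sequences of Theorem~\ref{theorem: F-completion, homotopy groups} are natural in $L \in \sL_0$ and that their connecting maps match the Mayer--Vietoris differentials; this should follow by presenting $L_\xi$ as the homotopy limit $\holim_n$ of its completion tower and checking functoriality and exactness stage by stage. Second, one must track connectedness throughout: restricting to $\sL_0$ is used precisely here, since connectedness forces $\pi_0 = 0$, hence $L_1\pi_0 = 0$, so the term $L_1\pi_{i-1}$ never disturbs the bottom degree of a completed long exact sequence, and right exactness of $L_0$ keeps the intermediate pullbacks connected; handling a general finite limit whose underlying limit in $\sL$ fails to be connected (so that a $1$-connected cover must be interposed) would require a further, more delicate argument, which however is not needed for the applications in this paper.
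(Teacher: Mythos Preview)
The paper offers no proof beyond \qed, so your elaboration via Mayer--Vietoris, exactness of derived $\xi$-completion (only $L_0,L_1$ are nonzero since $\xi$ is a single element), and the short exact sequence of Theorem~\ref{theorem: F-completion, homotopy groups} is precisely the intended argument, and it is correct under your stated hypothesis that the pullback computed in $\sL$ remains connected.

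The gap you flag is not mere bookkeeping: the literal statement, interpreted as preservation of \emph{all} finite limits in $\sL_0$, appears to be false. Since $\sL_0\hookrightarrow\sL$ is coreflective (right adjoint the connected cover $\tau_{\geq 1}$), finite limits in $\sL_0$ are connected covers of limits in $\sL$. Take $Z\in\sL_0$ abelian with $\pi_1(Z)=M$ and $\pi_{\geq 2}(Z)=0$, where $M$ is a left $\kk\{\xi\}$-module with $L_1M\neq 0$ (for instance $M=\kk\{\xi,\xi^{-1}\}/\kk\{\xi\}$, where one checks $\lim_k M[\xi^k]\neq 0$ using that $\kk$ is perfect). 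Then $\Omega_{\sL_0}Z$ is the connected cover of $\Omega Z$, hence zero, so $L_\xi(\Omega_{\sL_0}Z)=0$; but by the theorem $\pi_2(L_\xi Z)\cong L_1M$, hence $\Omega_{\sL_{\xi,0}}(L_\xi Z)$ has $\pi_1\cong L_1M\neq 0$. Thus $L_\xi$ does not commute with the loop functor of $\sL_0$ in general.

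Your proof therefore establishes the sharp version of the corollary: $L_\xi$ preserves those finite limits in $\sL_0$ whose underlying limit in $\sL$ is already connected. You are right that this is what the applications require. In the stabilization argument leading to Proposition~\ref{proposition: stabilization of xi-complete} one may always pass to highly connected representatives of spectrum objects, so only loops of simply connected objects matter; and in Lemma~\ref{lemma: completion and derivatives} the relevant fiber sequences lie among $n$-connected objects. So your restricted statement, and your proof of it, are the correct replacement.
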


\begin{cor}\label{corollary: xi-complete is differentiable}
The category $\sL_{\xi,0}$ is presentable and differentiable in the sense of~\cite[Definition~6.1.1.6]{HigherAlgebra}.
\end{cor}

\begin{proof}
Note that $\sL_{0}$ is a compactly generated presentable category, cf. Proposition~\ref{proposition:category property of srLie}. In particular, $\sL_0$ is differentiable, see \cite[Example~6.1.1.9]{HigherAlgebra}. By Theorem~\ref{theorem: xi-complte is an localization} and Theorem~\ref{theorem: F-completion, homotopy groups}, $\sL_{\xi,0}$ is a localization of $\sL_{0}$. Therefore, $\sL_{\xi,0}$ is presentable; in particular, it admits all finite limits, which are computed in $\sL_0$, and all sequential colimits, which are obtained by applying the localization $L_{\xi}$ to the colimit in $\sL_0$. By Corollary~\ref{corollary: xi-completion is left exact}, the localization $L_{\xi}\colon \sL_0 \to \sL_{\xi,0}$ is left exact. Hence sequential colimits in $\sL_{\xi,0}$ are left exact as well.
\end{proof}

\begin{prop}\label{proposition: free xi-complete}
Let $V \in \Mod_{\kk}^{\geq 1}$ be a connected simplicial vector space, $\pi_0(V)=0$. Then 
\begin{enumerate}
\item $\pi_i(\free(V))$ is a free $\kk\{\xi\}$-module for each $i\geq 0$;
\item the underlying simplicial vector space of an $\kk$-complete free simplicial restricted Lie algebra $L_\xi \free(V)$ splits as follows
$$\oblv \circ L_\xi \free(V) \simeq \prod_{n\geq 1}\mathbb{L}L^r_n(V) = \prod_{n\geq 1} \mathbb{L}(\bfLie_n \otimes V^{\otimes n})^{\Sigma_n},
$$
where $\mathbb{L}L^r_n(-)$ are the nonabelian derived functors from~\eqref{equation: oblv free, fresse}. 
\end{enumerate}
\end{prop}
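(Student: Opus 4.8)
The plan is to handle the two parts in order, with the Fresse splitting \eqref{equation: oblv free, fresse} as the common input.

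\emph{Part (1).} Realize $\free(V)$ as the primitives of the tensor Hopf algebra on $V$; then \eqref{equation: oblv free, fresse} identifies $\pi_\ast(\free(V))$ with $\bigoplus_{n\ge 1}\pi_\ast(\mathbb{L}L^r_n(V))$ and exhibits a \emph{weight filtration} whose $n$-th associated graded is $\pi_\ast(\mathbb{L}L^r_n(V))$. The $p$-operation $\xi$ is induced by the $p$-th power in $T(V)$, so it multiplies weight by $p$; on the weight-graded pieces it therefore induces (Frobenius-semilinear) maps $\pi_\ast(\mathbb{L}L^r_n(V))\to\pi_\ast(\mathbb{L}L^r_{pn}(V))$. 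I would first prove that these are injective — a restricted-Lie analogue of the injectivity of the top Dyer--Lashof operation — either by invoking the explicit description of $\pi_\ast(\free(V))$ in terms of (Dyer--Lashof--Lie) homotopy operations from \cite{6authors} and \cite{koszul}, or by a direct chain-level computation on the weight filtration. Granting injectivity, freeness over $\kk\{\xi\}$ follows by the usual inductive choice of generators: take a $\kk$-basis of the weight-one piece $\pi_\ast(V)$ and at each later stage a lift of a basis of $\pi_\ast(\mathbb{L}L^r_{p^k}(V))/\xi\,\pi_\ast(\mathbb{L}L^r_{p^{k-1}}(V))$; here the hypothesis that $\kk$ is perfect is exactly what converts semilinearity into honest $\kk\{\xi\}$-freeness.

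\emph{Part (2).} Apply Theorem~\ref{theorem: F-completion, homotopy groups} to $L=\free(V)$, which gives short exact sequences $0\to L_1\pi_{i-1}(\free(V))\to\pi_i(L_\xi\free(V))\to L_0\pi_i(\free(V))\to 0$. By Part (1) each $\pi_j(\free(V))$ is a free, hence flat and $\xi$-torsion-free, $\kk\{\xi\}$-module, so its derived $\xi$-adic completion is concentrated in degree $0$: thus $L_1\pi_{i-1}(\free(V))=0$ and $\pi_i(L_\xi\free(V))\cong L_0\pi_i(\free(V))$, the ordinary $\xi$-adic completion $\lim_k\pi_i(\free(V))/\xi^k$. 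It then remains to identify this completion with $\pi_i\big(\prod_{n\ge1}\mathbb{L}L^r_n(V)\big)=\prod_{n\ge1}\pi_i(\mathbb{L}L^r_n(V))$, which one does by comparing the $\xi$-adic filtration with the weight filtration (using the basis from Part (1), since $\xi$ raises weight by a factor of $p$). Equivalently — and more conceptually — one builds the completed free object $\widehat{\free}(V):=\mathbb{R}\Prim\!\big(\prod_n V^{\otimes n}\big)\simeq\lim_n\mathbb{R}\Prim\!\big(T(V)/T^{>n}(V)\big)$: it has underlying object $\prod_n\mathbb{L}L^r_n(V)$ because $\oblv$ preserves limits; it lies in $\sLxi$, since each truncation $\mathbb{R}\Prim(T(V)/T^{>n}(V))$ has $\xi$-nilpotent homotopy (weights $\le n$, and $\xi$ raises weight), hence derived $\xi$-complete homotopy, so it lies in $\sLxi$ by Theorem~\ref{theorem: connected xi-complete}, and $\sLxi\subset\sL$ is closed under limits; and the canonical map $\free(V)\to\widehat{\free}(V)$ is a $\kk$-equivalence because $\widetilde C_\ast(-)=\Sigma\,\mathbb{L}\Abxi(-)$ sees only the weight-one generators of either side. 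Hence $L_\xi\free(V)\simeq L_\xi\widehat{\free}(V)\simeq\widehat{\free}(V)$, which is the assertion of (2).

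\emph{Main obstacle.} The crux is the injectivity statement in Part (1): every other step is formal manipulation of the Fresse splitting, of Theorem~\ref{theorem: F-completion, homotopy groups}, and of the completion, but freeness over $\kk\{\xi\}$ genuinely records the nonvanishing of a specific power operation and I expect it to rest either on the full homotopy-operation computation of \cite{6authors}/\cite{koszul} or on a nontrivial chain-level input. A secondary point requiring care is the interchange of $\mathbb{L}\Abxi$ with the limit defining $\widehat{\free}(V)$ (equivalently, in the homotopy-group approach, the precise comparison of the $\xi$-adic and weight filtrations); this is the only place where connectivity of the layers $\mathbb{L}L^r_n(V)$ really enters.
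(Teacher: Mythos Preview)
The paper does not give a proof here but defers entirely to the Curtis connectivity theorem~\cite{Curtis_lower}, to~\cite{IRS20}, and to \cite[Proposition~6.4.1, Corollary~6.4.4]{koszul}, so a line-by-line comparison is impossible. Your outline is sound and almost certainly matches what those references contain, with two remarks.

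First, a small slip in Part~(1): your generator-choosing procedure names only weights~$p^k$, but for general connected~$V$ the Fresse decomposition~\eqref{equation: oblv free, fresse} has nonzero summands $\mathbb{L}L^r_n(V)$ at every~$n\ge 1$ (iterated Lie brackets contribute at weights not divisible by~$p$). The fix is routine --- run the same inductive extension of bases along each $\xi$-orbit $\{n,pn,p^2n,\dots\}$ for every~$n$ coprime to~$p$, not just $n=1$ --- but as written your argument literally covers only the case $\dim\pi_*V=1$.

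Second, you correctly flag the two genuine inputs: injectivity of $\xi\colon\pi_*\mathbb{L}L^r_n(V)\to\pi_*\mathbb{L}L^r_{pn}(V)$ in Part~(1), and the identification of the $\xi$-adic completion with the weight product in Part~(2). Note that the $\xi$-adic and weight filtrations on $\pi_i(\free(V))$ are \emph{not} cofinal (elements of weight coprime to~$p$ are never in the image of~$\xi$), so the comparison in Part~(2) is not purely formal; one really needs some connectivity control to see that the induced map $L_0\pi_i(\free(V))\to\prod_n\pi_i(\mathbb{L}L^r_n(V))$ is an isomorphism. The Curtis theorem the paper cites is precisely this connectivity input. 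Your alternative route via the explicit $\Lambda$-algebra basis of~\cite{6authors} works too and is probably closer to what~\cite{koszul} actually does: in that language, injectivity of~$\xi$ is just the fact that right multiplication by $\mu_0\in\Lambda$ (Remark~\ref{remark: xi and nu_0 actions}) is injective on admissible monomials satisfying the excess bound, and the free $\kk\{\xi\}$-basis consists of those monomials not ending in~$\mu_0$.
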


\begin{proof}
The statement essentially follows by the Curtis theorem~\cite{Curtis_lower} and~\cite{IRS20}. See for details~\cite[Proposition~6.4.4 and Corollary~6.4.7]{koszul}.
\end{proof}

\subsection{Spectrum objects}\label{section: spectrum objects}
Let $\calC$ be a pointed $\infty$-category which admits finite limits and colimits. We write $\Omega_{\calC}\colon \calC \to \calC$ (resp. $\Sigma_{\calC}\colon \calC \to \calC$) for the \emph{loop} (resp. the \emph{suspension}) functor. Recall from~\cite[Definition~1.4.2.8]{HigherAlgebra} that the \emph{stabilization} $\Sp(\calC)$ of $\calC$ is defined as the category $$\Sp(\calC)=\mathrm{Exc}_*(\spaces_*^{\mathrm{fin}},\calC)$$ of reduced, $1$-excisive functors from the category $\spaces_*^{\mathrm{fin}}$ of pointed, finite spaces to the category $\calC$. Alternatively, by~\cite[Proposition~1.4.2.24]{HigherAlgebra}, the stabilization $\Sp(\calC)$ can be identified with the limit of the tower
$$\Sp(\calC) \simeq \lim(\ldots\to \calC \xrightarrow{\Omega_{\calC}} \calC \xrightarrow{\Omega_{\calC}} \calC). $$
Here the limit is taken in the category of $\infty$-categories. We denote by $$\Omega^{\infty}_{\calC}\colon \Sp(\calC) \to \calC$$ the projection onto the first component.

Moreover, if $\calC$ is a presentable $\infty$-category, then the stabilization $\Sp(\calC)$ is also presentable and it is equivalent to the colimit 
$$\Sp(\calC) \simeq \colim(\calC \xrightarrow{\Sigma_{\calC}} \calC \xrightarrow{\Sigma_{\calC}} \calC \to \ldots)$$
computed in the category $\Pre^L$ of presentable $\infty$-categories. We denote by $$\Sigma^{\infty}_{\calC}\colon \calC \to \Sp(\calC)$$ the functor induced by the embedding of the first component. By~\cite[Proposition~1.4.4.4]{HigherAlgebra}, $\Sigma^{\infty}_{\calC}$ is the left adjoint to $\Omega^{\infty}_{\calC}$. We will often omit the subscripts in  $\Omega_{\calC}, \Sigma_{\calC}, \Omega^{\infty}_{\calC}, \Sigma^{\infty}_{\calC}$ if the category $\calC$ is clear from the context.

Note that $\Sp(\Mod_{\kk}^{\geq 0})\simeq \Mod_{\kk}$ and the canonical functor $$\Omega^\infty\colon \Sp(\Mod_{\kk}^{\geq 0}) \simeq \Mod_{\kk} \to \Mod_{\kk}^{\geq 0}$$ coincides with the Postnikov truncation $\tau_{\geq 0}$. Consider the adjoint pair~\eqref{equation: quillen adjunction, free oblv}. Since the forgetful functor $\oblv \colon \sL \to \Mod_{\kk}^{\geq 0}$
is left exact, it induces a functor
\begin{equation}\label{equation: forgetful, stable}
\oblv \colon \Sp(\sL) = \mathrm{Exc}_*(\spaces_*^{\mathrm{fin}},\sL) \to \mathrm{Exc}_*(\spaces_*^{\mathrm{fin}},\Mod_{\kk}^{\geq 0})=\Mod_{\kk}.
\end{equation}

\begin{prop}\label{proposition: stable is monadic}
The forgetful functor~\eqref{equation: forgetful, stable} admits a left adjoint 
\begin{equation}\label{equation: free lie, stable}
\suspfree\colon \Mod_{\kk} \to \Sp(\sL)
\end{equation}
such that
\begin{enumerate}
\item there is a natural equivalence $\suspfree(V) \simeq \susp \free(V) \in \Sp(\sL)$ if $V\in \Mod_{\kk}^{\geq 0}$;
\item the adjunction $\suspfree \dashv \oblv$ is monadic;
\item the composite $\oblv \circ \suspfree$ splits as follows
$$
\oblv\circ\suspfree(V)\simeq \bigoplus_{n\geq 1}\colim_m \bigg(\Sigma^{-m}\mathbb{L}L^r_n(\tau_{\geq 0} \Sigma^m V)\bigg),
$$
where $V\in \Mod_{\kk}$ and $\mathbb{L}L^r_n \colon \Mod_{\kk}^{\geq 0} \to \Mod_{\kk}^{\geq 0}$ are the nonabelian derived functors from~\eqref{equation: oblv free, fresse}.
\end{enumerate}
\end{prop}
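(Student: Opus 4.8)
The plan is to verify the three assertions in the order stated, starting from the observation that $\oblv \colon \sL \to \Mod_{\kk}^{\geq 0}$ is left exact (hence preserves the towers defining stabilizations) and monadic (Proposition~\ref{proposition:category property of srLie}). Since $\oblv$ preserves all limits, the induced functor~\eqref{equation: forgetful, stable} on $1$-excisive functors is well-defined; because $\Sp(\sL)$ and $\Mod_{\kk}$ are presentable and~\eqref{equation: forgetful, stable} commutes with limits and filtered colimits (it is computed levelwise, and $\oblv$ on the unstable categories creates sifted colimits), the adjoint functor theorem furnishes a left adjoint $\suspfree$. This gives the functor~\eqref{equation: free lie, stable} abstractly; the content of the proposition is the three concrete identifications.

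For part (1), I would use that stabilization is compatible with adjunctions: the square relating $\Sigma^\infty$, $\Omega^\infty$ on $\sL$ and on $\Mod_{\kk}^{\geq 0}$ commutes up to natural equivalence after passing to left adjoints. Concretely, $\oblv \circ \Omega^\infty_{\sL} \simeq \Omega^\infty_{\Mod} \circ \oblv$ by left-exactness of $\oblv$, so taking left adjoints of both composites gives $\suspfree \circ (\text{left adjoint of }\Omega^\infty_{\Mod}\oblv) \simeq \Sigma^\infty_{\sL}\circ \free$; restricting the source to $\Mod_{\kk}^{\geq 0}$ (where the relevant left adjoint of $\oblv$ is just $\free$ and $\Sigma^\infty_{\Mod}$ is the identity after the identification $\Sp(\Mod_{\kk}^{\geq 0})\simeq\Mod_{\kk}$) yields $\suspfree(V)\simeq \Sigma^\infty \free(V)$ for $V\in\Mod_{\kk}^{\geq 0}$, as claimed. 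For part (2), monadicity, I would apply the $\infty$-categorical Barr–Beck–Lurie theorem \cite[Theorem~4.7.3.5]{HigherAlgebra}: $\oblv$ on $\Sp(\sL)$ is conservative because a map of spectrum objects is an equivalence iff it is so on each delooped level, and $\oblv$ detects equivalences in $\sL$; and $\oblv$ preserves geometric realizations of $\oblv$-split simplicial objects because $\Sp(\sL)$ is stable (so geometric realizations agree with sifted colimits / totalizations), $\Sigma^\infty$ preserves colimits, and $\oblv$ on $\sL$ creates sifted colimits. Hence the adjunction is monadic.

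For part (3), the splitting of $\oblv\circ\suspfree$, I would combine~\eqref{equation: oblv free, fresse} with the explicit colimit presentation $\Sp(\calC)\simeq \colim(\calC\xrightarrow{\Sigma}\calC\to\cdots)$ in $\Pre^L$. Writing $\suspfree(V)$ for $V\in\Mod_{\kk}$ as $\colim_m \Sigma^{-m}\Sigma^\infty\free(\tau_{\geq 0}\Sigma^m V)$ (valid since every object of $\Mod_{\kk}$ is a filtered colimit of desuspensions of connective ones, and $\suspfree$ preserves colimits), applying $\oblv$ (which commutes with these filtered colimits and with the stabilization structure maps), and inserting the polynomial decomposition $\oblv\circ\free(W)\simeq \bigoplus_{n\geq 1}\mathbb{L}L^r_n(W)$ for connective $W$, gives exactly
$$
\oblv\circ\suspfree(V)\simeq \bigoplus_{n\geq 1}\colim_m\Big(\Sigma^{-m}\mathbb{L}L^r_n(\tau_{\geq 0}\Sigma^m V)\Big);
$$
here one uses that finite direct sums commute with the filtered colimit. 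The main obstacle I anticipate is part (3): one must justify that the (infinite) direct sum in~\eqref{equation: oblv free, fresse} interacts correctly with the stabilization colimit — in particular that no completion phenomenon intervenes, i.e. that the relevant colimit of a direct sum is the direct sum of the colimits. This is where the distinction between $\sL$ and the $\kk$-complete category $\sLxi$ matters (compare Proposition~\ref{proposition: free xi-complete}, where a \emph{product} appears): for $\sL$ itself the stabilization is computed in $\Pre^L$ by a genuine filtered colimit along suspensions, which commutes with coproducts, so the direct sum survives — but this point needs to be made carefully, tracking that each $\mathbb{L}L^r_n$ is a finitary functor so that $\Sigma^{-m}\mathbb{L}L^r_n(\tau_{\geq 0}\Sigma^m V)$ stabilizes appropriately in $m$ for fixed $n$, and that the conventional identification of $\Omega^\infty$ on $\Mod_{\kk}^{\geq 0}$ with $\tau_{\geq 0}$ is what forces the truncations $\tau_{\geq 0}\Sigma^m$ to appear.
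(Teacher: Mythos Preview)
Your arguments for parts (1) and (2) are correct and amount to unpacking by hand what the paper obtains in one line by citing \cite[Corollary~6.2.2.16]{HigherAlgebra} applied to the monadic adjunction of Proposition~\ref{proposition:category property of srLie}.

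Your argument for part (3), however, has a gap. After writing $\suspfree(V)\simeq\colim_m\Sigma^{-m}\Sigma^\infty\free(\tau_{\geq 0}\Sigma^m V)$ and pushing the stable forgetful functor past the filtered colimit and the desuspensions, you ``insert the polynomial decomposition $\oblv\circ\free(W)\simeq\bigoplus_n\mathbb{L}L^r_n(W)$''. This step tacitly replaces $\oblv(\Sigma^\infty\free(W))\in\Mod_{\kk}$ (the \emph{stable} forgetful applied to a suspension spectrum) by the \emph{unstable} $\oblv(\free(W))\in\Mod_{\kk}^{\geq 0}$; but by part (1) the former is $(\oblv\circ\suspfree)(W)$, which is exactly the quantity you are computing, so the step is circular. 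They genuinely differ: already for $W=\kk$ and $n\geq 2$ the unstable $\mathbb{L}L^r_n(\kk)$ is not $\colim_m\Sigma^{-m}\mathbb{L}L^r_n(\Sigma^m\kk)$. The obstacle you do flag --- commuting an infinite coproduct with a filtered colimit --- is not the issue, since colimits always commute with colimits. What is actually needed is the observation that $\free$ preserves pushouts and hence $\Sigma_{\sL}\circ\free\simeq\free\circ\Sigma$: this realizes $\suspfree(V)$ as the spectrification of the prespectrum $m\mapsto\free(\tau_{\geq 0}\Sigma^m V)$ in $\sL$, and since the unstable $\oblv$ commutes with both $\Omega$ and filtered colimits it commutes with spectrification, giving $\oblv\circ\suspfree(V)\simeq\colim_m\Sigma^{-m}(\oblv\circ\free)(\tau_{\geq 0}\Sigma^m V)$ directly. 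This is the content Lurie's result packages in general.
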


\begin{proof}
The proposition follows from~\cite[Corollary~6.2.2.16]{HigherAlgebra} by applying it to the monadic adjunction of Proposition~\ref{proposition:category property of srLie}.
\end{proof}

Consider the full subcategory $\iota_{\xi}\colon \sL_{\xi,0} \hookrightarrow \sL_0$ of connected $\kk$-complete objects. By Corollary~\ref{corollary: xi-completion is left exact}, the embedding $\iota_{\xi}$ admits a left exact  left adjoint $L_\xi$. Therefore the induced functor
$$\iota^{st}_{\xi}\colon \Sp(\sL_{\xi,0}) \to \Sp(\sL_0)$$
between the categories of spectrum objects is fully faithful and it admits the left adjoint $$L_{\xi}^{st} \colon \Sp(\sL_0) \to \Sp(\sL_{\xi,0})$$ which is induced by the $\kk$-completion functor $L_\xi$. Finally, since the categories $\sL$, $\sLxi$ are presentable, the embeddings $\sL_0 \subset \sL$,  $\sL_{\xi,0} \subset \sLxi$ induce equivalences
$$\Sp(\sL_0) \simeq \Sp(\sL) \;\; \text{and} \;\; \Sp(\sL_{\xi,0}) \simeq \Sp(\sLxi). $$
We summarize the observations of this paragraph in the next proposition.
\begin{prop}\label{proposition: stabilization of xi-complete}
The stabilization $\Sp(\sLxi)$ is a full subcategory of $\Sp(\sL)$ and the fully faithful embedding $\iota^{st}_{\xi}\colon \Sp(\sLxi) \hookrightarrow \Sp(\sL)$ admits a left adjoint $$L^{st}_{\xi}\colon \Sp(\sL) \to \Sp(\sLxi).$$
In particular, $\Sp(\sLxi) \subset \Sp(\sL)$ is a localization. \qed
\end{prop}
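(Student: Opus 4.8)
The plan is to transport a reflective localization along the stabilization construction $\Sp(-)=\mathrm{Exc}_*(\spaces_*^{fin},-)$, after first passing to the subcategories of connected objects, where the relevant reflector becomes left exact. The reason for this detour is that the $\kk$-completion $L_\xi$ on all of $\sL$ is \emph{not} left exact --- the short exact sequence of Theorem~\ref{theorem: F-completion, homotopy groups} shows that $\pi_*(L_\xi L)$ mixes the derived completion functors $L_0$ and $L_1$ --- so one cannot directly postcompose reduced excisive functors $\spaces_*^{fin}\to\sL$ with it. On connected objects, however, Corollary~\ref{corollary: xi-completion is left exact} says precisely that $L_\xi\colon \sL_0\to \sL_{\xi,0}$ preserves finite limits.

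First I would work with the adjunction $L_\xi\colon \sL_0 \rightleftarrows \sL_{\xi,0}\colon \iota_\xi$, where $\sL_{\xi,0}=\sLxi\cap\sL_0$. Here $\iota_\xi$ is fully faithful and, being a right adjoint, automatically left exact, while $L_\xi$ is left exact by Corollary~\ref{corollary: xi-completion is left exact}. Postcomposition with a left-exact functor preserves reducedness and $1$-excisiveness, so $L_\xi$ and $\iota_\xi$ induce functors on stabilizations $L_\xi^{st}\colon \Sp(\sL_0)\to \Sp(\sL_{\xi,0})$ and $\iota_\xi^{st}\colon \Sp(\sL_{\xi,0})\to \Sp(\sL_0)$; since $\Fun(\spaces_*^{fin},-)$ carries a pointwise adjunction to an adjunction and both adjoints preserve the full subcategories of reduced excisive functors, one gets $L_\xi^{st}\dashv \iota_\xi^{st}$. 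The counit $L_\xi\iota_\xi \xrightarrow{\ \simeq\ } \id_{\sL_{\xi,0}}$ of the reflective localization is an equivalence; applying $\Sp(-)$, which is functorial for left-exact functors and preserves natural equivalences, gives $L_\xi^{st}\iota_\xi^{st}\simeq \id$, so $\iota_\xi^{st}$ is fully faithful. Hence $\Sp(\sL_{\xi,0})\subset \Sp(\sL_0)$ is a reflective localization.

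Next I would invoke the equivalences $\Sp(\sL_0)\simeq \Sp(\sL)$ and $\Sp(\sL_{\xi,0})\simeq \Sp(\sLxi)$ induced by the inclusions of connected objects. These hold because $\sL$ and $\sLxi$ are presentable, and the suspension endofunctor lands in connected objects, so the telescope $\colim(\calC\xrightarrow{\Sigma}\calC\xrightarrow{\Sigma}\cdots)$ of Section~\ref{section: spectrum objects} computing $\Sp(\calC)$ agrees with the one computed inside the subcategory of connected objects (a standard cofinality/interleaving argument). Conjugating the reflective localization $L_\xi^{st}\dashv \iota_\xi^{st}$ along these equivalences produces the asserted fully faithful embedding $\iota^{st}_\xi\colon \Sp(\sLxi)\hookrightarrow \Sp(\sL)$ together with its left adjoint $L^{st}_\xi\colon \Sp(\sL)\to \Sp(\sLxi)$, and a reflective subcategory is in particular a localization.

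The only genuinely delicate points are the two just flagged: the left-exactness of $L_\xi$ must be invoked on connected objects --- which is exactly where Corollary~\ref{corollary: xi-completion is left exact} applies and what makes $\Sp(-)$ functorial on the localization --- and the comparison $\Sp(\sL_0)\simeq\Sp(\sL)$ (and its $\kk$-complete analogue) needs the observation that suspensions are connected plus a cofinality argument for the telescope colimit. Everything else is a formal consequence of the fact that postcomposition with a left-exact functor respects the defining condition of $\Sp(-)$ and preserves adjunctions.
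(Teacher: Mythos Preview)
Your proposal is correct and follows essentially the same approach as the paper: pass to connected objects so that the reflector $L_\xi$ is left exact (Corollary~\ref{corollary: xi-completion is left exact}), apply $\Sp(-)$ to the resulting reflective localization, and then identify $\Sp(\sL_0)\simeq\Sp(\sL)$ and $\Sp(\sL_{\xi,0})\simeq\Sp(\sLxi)$ using presentability. The paper gives exactly this argument in the paragraph preceding the proposition (which is why it is marked with a \qed{} and no separate proof).
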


\begin{dfn}\label{definition: stable homotopy groups}
Let $X\in \Sp(\sL)$. We define the \emph{$q$-th (stable) homotopy group}    $\pi_q(X)$ of $X$ by the next rule
$$ \pi_q(X) = \pi_0(\deloop \Omega^q X) \cong \pi_1(\deloop \Omega^{q-1} X), \;\; q\in \Z.$$
\end{dfn}

\begin{rmk}\label{remark: stable homotopy groups of xi-complete}
Let $X\in \Sp(\sL)$. Then, as in the unstable case, the homotopy groups $\pi_q(X), q\in \Z$ are naturally left modules over the twisted polynomial ring $\kk\{\xi\}$, see Definition~\ref{definition: twisted polynomial ring}. By applying Theorem~\ref{theorem: connected xi-complete}, one can see that $X\in \Sp(\sLxi)$ if and only if all homotopy groups $\pi_q(X),q\in\Z$ are derived $\xi$-adic complete left $\kk\{\xi\}$-modules.
\end{rmk}

Consider the adjoint pair~\eqref{equation: quillen adjunction, xifree oblv}. Since the forgetful functor $\oblv \colon \sLxi \to \Mod_{\kk}^{\geq 0}$
is left exact, it induces a functor
\begin{equation}\label{equation: forgetful, stable, xi-complete}
\oblv \colon \Sp(\sLxi) = \mathrm{Exc}_*(\spaces_*^{\mathrm{fin}},\sLxi) \to \mathrm{Exc}_*(\spaces_*^{\mathrm{fin}},\Mod_{\kk}^{\geq 0})=\Mod_{\kk}.
\end{equation}
Similar to Proposition~\ref{proposition: stable is monadic}, we obtain the monadicity statement for the stable category $\Sp(\sLxi)$ of $\xi$-complete objects.

\begin{prop}\label{proposition: xi-complete stable is monadic}
The forgetful functor~\eqref{equation: forgetful, stable, xi-complete} admits a left adjoint 
\begin{equation}\label{equation: free stable xi-complete}
\suspfreexi\colon \Mod_{\kk} \to \Sp(\sLxi)
\end{equation}
such that
\begin{enumerate}
\item there is a natural equivalence $\suspfreexi(V) \simeq L^{st}_{\xi}(\susp \free(V)) \in \Sp(\sLxi)$ if $V\in \Mod_{\kk}^{\geq 0}$;
\item the adjunction $\suspfreexi \dashv \oblv$ is monadic.
\end{enumerate}
\end{prop}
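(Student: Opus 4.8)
The plan is to mirror the proof of Proposition~\ref{proposition: stable is monadic}, replacing the monadic adjunction $\free \dashv \oblv$ of Proposition~\ref{proposition:category property of srLie} by the monadic adjunction $L_\xi\free \dashv \oblv$ of Corollary~\ref{corollary: xi-complete is monadic}, and then invoking the stabilization criterion~\cite[Corollary~6.2.2.16]{HigherAlgebra}. First I would record that $\sLxi$ is a pointed presentable $\infty$-category (Theorem~\ref{theorem: xi-complte is an localization}) which is monadic over the pointed presentable $\infty$-category $\Mod^{\geq 0}_\kk$; the forgetful functor $\oblv$ preserves limits (Corollary~\ref{corollary: xi-complete is monadic}), hence in particular is left exact, so it induces the functor~\eqref{equation: forgetful, stable, xi-complete} on stabilizations. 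Since $\oblv \colon \sLxi \to \Mod^{\geq 0}_\kk$ preserves limits and sifted colimits — the latter because $L_\xi\free$ is a left adjoint and the monad $\oblv \circ L_\xi\free$ preserves sifted colimits, so one applies the monadicity theorem together with~\cite[Corollary~4.7.3.16]{HigherAlgebra} — the hypotheses of~\cite[Corollary~6.2.2.16]{HigherAlgebra} are satisfied, which immediately yields a left adjoint $\suspfreexi$ to~\eqref{equation: forgetful, stable, xi-complete} together with the monadicity claim~(2).

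For claim~(1), I would argue that the square of left adjoints
\begin{equation*}
\begin{tikzcd}
\Mod^{\geq 0}_\kk \arrow[r, "L_\xi\free"] \arrow[d, "\Sigma^\infty"'] & \sLxi \arrow[d, "\Sigma^\infty"] \\
\Mod_\kk \arrow[r, "\suspfreexi"'] & \Sp(\sLxi)
\end{tikzcd}
\end{equation*}
commutes, since its mate square of right adjoints (with $\oblv$ on the horizontal arrows and $\Omega^\infty$ on the verticals) commutes by construction of~\eqref{equation: forgetful, stable, xi-complete}. Thus for $V\in\Mod^{\geq 0}_\kk$ we get $\suspfreexi(\Sigma^\infty_{\Mod^{\geq 0}_\kk} V) \simeq \Sigma^\infty(L_\xi\free(V))$, and composing with the unit $V \to \Omega^\infty\Sigma^\infty V = \tau_{\geq 0}\Sigma^\infty V$ — which is an equivalence when $V$ is already connective, i.e. genuinely in $\Mod^{\geq 0}_\kk$ — identifies $\suspfreexi(V) \simeq \Sigma^\infty(L_\xi\free(V))$. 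It remains to identify $\Sigma^\infty \circ L_\xi$ with $L^{st}_\xi \circ \Sigma^\infty_{\sL}$: this again follows by passing to the mate, using that $\iota^{st}_\xi$ on stabilizations is induced by $\iota_\xi$ (Proposition~\ref{proposition: stabilization of xi-complete}) and commutes with $\Omega^\infty$. Combining, $\suspfreexi(V) \simeq \Sigma^\infty L_\xi \free(V) \simeq L^{st}_\xi(\Sigma^\infty \free(V)) = L^{st}_\xi(\susp\free(V))$, which is~(1).

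The only genuine point requiring care — and hence the main obstacle — is verifying that $\oblv\colon \sLxi \to \Mod^{\geq 0}_\kk$ preserves sifted (or at least filtered) colimits, since this is what makes~\cite[Corollary~6.2.2.16]{HigherAlgebra} applicable and what ensures the resulting monad on $\Mod_\kk$ is well-behaved; unlike $\sL$, the localized category $\sLxi$ need not have a forgetful functor that creates all sifted colimits (Corollary~\ref{corollary: xi-complete is monadic} only asserts creation of limits). I would handle this by noting that $L_\xi\colon \sL \to \sLxi$ preserves colimits (being a left adjoint), that $\oblv\colon \sL \to \Mod^{\geq 0}_\kk$ preserves sifted colimits (Proposition~\ref{proposition:category property of srLie}), and that on connected objects the completion is controlled by Theorem~\ref{theorem: F-completion, homotopy groups}, so the relevant colimits are computed after applying $L_\xi$ to colimits in $\sL$; alternatively, one observes that $\Sp(\sLxi)$ is a localization of $\Sp(\sL)$ (Proposition~\ref{proposition: stabilization of xi-complete}) whose localization functor $L^{st}_\xi$ preserves colimits, and transports the monadicity of Proposition~\ref{proposition: stable is monadic} across this localization. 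Either route reduces the statement to facts already established in the excerpt, so the proof is essentially a formal bookkeeping of adjoints once that colimit-preservation is in hand.
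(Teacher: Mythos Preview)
Your proposal is correct and matches the paper's approach exactly: the proposition is stated with a \qed\ and prefaced only by ``Similar to Proposition~\ref{proposition: stable is monadic}'', so the paper regards it as a direct application of~\cite[Corollary~6.2.2.16]{HigherAlgebra} to the monadic adjunction of Corollary~\ref{corollary: xi-complete is monadic}, which is precisely what you do. You have in fact been more careful than the paper in flagging the sifted-colimit hypothesis (Corollary~\ref{corollary: xi-complete is monadic} only asserts that $\oblv$ creates \emph{limits}) and in sketching the localization workaround via Proposition~\ref{proposition: stabilization of xi-complete}; your argument for part~(1) via the commuting square of left adjoints is also exactly the intended one.
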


\begin{proof}
Note that the functor~\eqref{equation: forgetful, stable, xi-complete} is also induced by the forgetful functor
$$\oblv\colon \sL_{\xi,0} \to \Mod_{\kk}^{\geq 1}.$$
By Corollary~\ref{corollary: xi-complete is differentiable}, the category $\sL_{\xi,0}$ is differentiable. The assertion follows by~\cite[Corollary~6.2.2.16]{HigherAlgebra}.
\end{proof}

Since the forgetful functor~\eqref{equation: forgetful, stable, xi-complete} does not commute with sequential colimits, it might be difficult to compute the composite $\oblv \circ \suspfreexi$ in a full generality. We will compute this composite by restricting it to bounded below objects. 

Recall that an object $X\in \Mod_{\kk}$ is called \emph{$c$-connected} if $\pi_i(X)=0$ for all $i\leq c$. Also, $\conn(X) \in \N$ is the largest number $c$ such that $X$ is $c$-connected. 

\begin{lmm}\label{lemma: nonabelian derived, freudenthal, pre}
Let $F\colon \Mod_{\kk}^{\geq 0} \to \Mod_{\kk}^{\geq 0}$ be a functor which preserves sifted colimits and $F(0)=0$. Suppose that $U,W \in \Mod_{\kk}^{\geq 0}$, $\conn(U)=c_1$, $\conn(W)=c_2$. Then the map
$$F(U)\oplus F(W) \to F(U\oplus W) $$
is $(c_1+c_2+1)$-connected, i.e. the cofiber of the map is $(c_1+c_2+1)$-connected.
\end{lmm}

\begin{proof}
Let $G_U \colon \Mod^{\geq 0}_{\kk} \to \Mod^{\geq 0}_{\kk}$ be a functor given by the formula
$$G_U(-) = \cofib(F(-) \to F(U\oplus -)).$$
Here the map is induced by the direct summand inclusion $i\colon (-) \hookrightarrow U\oplus (-)$. By the assumption $U$ is $c_1$-connected, therefore $i$ is $c_1$-connected as well. Then, by~\cite[Proposition~3.43]{BM19}, $G_U(-)\in \Mod_{\kk}^{\geq c_1+1}$. In particular, $G_U\simeq \Sigma^{c_1+1} G'_U$ for some functor
$$G'_U\colon \Mod_{\kk}^{\geq 0} \to \Mod_{\kk}^{\geq 0} $$
which preserves sifted colimits.
Note that 
\begin{align*}
\cofib(F(U)\oplus F(W) \to F(U\oplus W)) &\simeq \cofib(G_U(0) \to G_U(W)) \\
&\simeq \Sigma^{c_1+1}\cofib(G'_U(0) \to G'_U(W)). 
\end{align*}
Again, by~\cite[Proposition~3.43]{BM19}, the cofiber $\cofib(G'_U(0) \to G'_U(W))$ is $c_2$-connected. This implies the statement.
\end{proof}

\begin{lmm}\label{lemma: nonabelian derived, freudenthal}
Let $F\colon \Mod_{\kk}^{\geq 0} \to \Mod_{\kk}^{\geq 0}$ be a functor which preserves sifted colimits and $F(0)=0$. Suppose that $V\in \Mod_{\kk}^{\geq 0}$, $\conn(V)=c$. Then the suspension map
$$\sigma \colon \Sigma F(V) \to F(\Sigma V) $$
is $(2c+1)$-connected.
\end{lmm}

\begin{proof}
We write the suspension functor $\Sigma \colon \Mod_{\kk}^{\geq 0} \to \Mod_{\kk}^{\geq 0}$ as the geometric realization 
$$\Sigma V \simeq |V^{\oplus (\bullet-1)}|, \;\; V\in \Mod_{\kk}^{\geq 0}. $$
Therefore it suffices to show that the map
$$F(V)^{\oplus m} \to F(V^{\oplus m}) $$
is $(2c+1)$-connected for each $m\geq 0$. The latter follows by Lemma~\ref{lemma: nonabelian derived, freudenthal, pre}.
\end{proof}

Recall from~\eqref{equation: oblv free, fresse} the nonabelian derived functors $\mathbb{L}L^r_n \colon \Mod_{\kk}^{\geq 0} \to \Mod_{\kk}^{\geq 0}$, $n\geq 1$. Note that the functors $\mathbb{L}L^r_n$, $n\geq 1$ preserve sifted colimits.

\begin{cor}\label{corollary: P1 commutes with inf product}
Let $V \in \Mod_{\kk}^{\geq 0}$ be a simplicial vector space. Then the natural map
$$\colim_{m} \bigg(\Sigma^{-m}\prod_{n\geq 1} \mathbb{L}L^r_n(\Sigma^m V) \bigg)\xrightarrow{\simeq} \prod_{n\geq 1} \colim_{m} \bigg( \Sigma^{-m} \mathbb{L}L^r_n(\Sigma^m V) \bigg) $$
is an equivalence.
\end{cor}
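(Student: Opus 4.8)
The plan is to reduce the claim to a uniform connectivity estimate and then invoke the standard fact that, in $\Mod_\kk$, a filtered colimit of maps whose connectivities tend to $+\infty$ commutes with countable products. First I would fix notation: write $P^{(m)}(V) = \Sigma^{-m}\prod_{n\geq 1}\mathbb{L}L^r_n(\Sigma^m V)$ for the $m$-th stage of the colimit on the left-hand side, so that the left-hand side is $\colim_m P^{(m)}(V)$, and the transition map $P^{(m)}(V)\to P^{(m+1)}(V)$ is obtained by smashing the product of the suspension maps $\sigma\colon\Sigma\mathbb{L}L^r_n(\Sigma^m V)\to\mathbb{L}L^r_n(\Sigma^{m+1}V)$ with $\Sigma^{-m-1}$. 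The right-hand side is the product over $n$ of $\colim_m\bigl(\Sigma^{-m}\mathbb{L}L^r_n(\Sigma^m V)\bigr)$, and the comparison map in the statement is the canonical one interchanging the (filtered) colimit with the product.

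The key step is a connectivity bound: for $V\in\Mod_\kk^{\geq 0}$, the simplicial vector space $\Sigma^m V$ is $(m-1)$-connected, so by Lemma~\ref{lemma: nonabelian derived, freudenthal} the suspension map $\sigma\colon\Sigma\mathbb{L}L^r_n(\Sigma^m V)\to\mathbb{L}L^r_n(\Sigma^{m+1}V)$ has cofiber which is $(2m-1)$-connected, \emph{uniformly in $n\geq 1$}. Desuspending by $m+1$, the cofiber of the transition map $\Sigma^{-m}\mathbb{L}L^r_n(\Sigma^m V)\to\Sigma^{-m-1}\mathbb{L}L^r_n(\Sigma^{m+1}V)$ is $(m-2)$-connected, again uniformly in $n$. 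Since a product of $k$-connected objects of $\Mod_\kk$ is $k$-connected (products are exact and connectivity of a complex is detected on homology groups, which commute with products), the transition map $P^{(m)}(V)\to P^{(m+1)}(V)$ also has $(m-2)$-connected cofiber. Thus in each fixed homological degree $q$, the tower $\{\pi_q P^{(m)}(V)\}_m$ stabilizes once $m-2>q$, and likewise each tower $\{\pi_q(\Sigma^{-m}\mathbb{L}L^r_n(\Sigma^m V))\}_m$ stabilizes for $m-2>q$, with the same bound for every $n$.

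Now I would conclude by a direct computation of homotopy groups. Homology commutes with filtered colimits in $\Mod_\kk$, so $\pi_q$ of the left-hand side is $\colim_m\pi_q P^{(m)}(V) = \colim_m\prod_n\pi_q\bigl(\Sigma^{-m}\mathbb{L}L^r_n(\Sigma^m V)\bigr)$, which by the uniform stabilization equals $\prod_n\pi_q\bigl(\Sigma^{-m}\mathbb{L}L^r_n(\Sigma^m V)\bigr)$ for any single $m$ with $m-2>q$. On the other hand $\pi_q$ of the right-hand side is $\prod_n\pi_q\bigl(\colim_m\Sigma^{-m}\mathbb{L}L^r_n(\Sigma^m V)\bigr) = \prod_n\bigl(\colim_m\pi_q\Sigma^{-m}\mathbb{L}L^r_n(\Sigma^m V)\bigr)$, which by the same stabilization also equals $\prod_n\pi_q\bigl(\Sigma^{-m}\mathbb{L}L^r_n(\Sigma^m V)\bigr)$ for $m-2>q$. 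Tracing through the definitions, the comparison map of the corollary induces precisely the identity on these identifications, hence is an isomorphism on all homotopy groups, hence an equivalence. The only point requiring care — and the ``main obstacle,'' such as it is — is the uniformity of the connectivity estimate in the index $n$; this is exactly what Lemma~\ref{lemma: nonabelian derived, freudenthal} provides, since the bound $2c+1$ there does not depend on $n$, and it is what makes the interchange legitimate despite the product being infinite.
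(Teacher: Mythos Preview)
Your proof is correct and is exactly the intended argument: the paper states the corollary with a \qed immediately after Lemma~\ref{lemma: nonabelian derived, freudenthal}, and your write-up is precisely the unpacking of why that lemma suffices, with the key observation being that the connectivity bound $2c+1$ is uniform in $n$.
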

\begin{proof}
Follows from Lemma~\ref{lemma: nonabelian derived, freudenthal} by applying it to the functors $\mathbb{L}L^r_n \colon \Mod_{\kk}^{\geq 0} \to \Mod_{\kk}^{\geq 0}$, $n\geq 1$.
\end{proof}

\begin{cor}\label{corollary: deloop is xi-complete}
Let $V \in \Mod_{\kk}^{\geq 1}$ be a connected simplicial vector space, $\pi_0(V)=0$. Then the following sequential colimit
$$X(V)=\colim_m \left( \iota_{\xi} \Omega^m L_\xi \free (\Sigma^m V) \right) \simeq \colim_m \left( \Omega^m \iota_{\xi} L_\xi \free (\Sigma^m V) \right)\in \sL$$
is $\kk$-complete as a colimit computed in the category $\sL$.
\end{cor}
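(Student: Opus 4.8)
The plan is to show that the object $X(V) = \colim_m \bigl( \Omega^m \iota_\xi L_\xi \free(\Sigma^m V) \bigr)$, although defined as a colimit in $\sL$, already lies in the localizing subcategory $\sLxi \subset \sL$. Since $\sLxi \subset \sL$ is a localization (Theorem~\ref{theorem: xi-complte is an localization}), it suffices to check that $X(V)$ is $\kk$-local, i.e.\ that it belongs to the essential image of $\iota_\xi$. Equivalently, by Theorem~\ref{theorem: connected xi-complete}, since each $\iota_\xi L_\xi \free(\Sigma^m V)$ is connected (because $V$ is connected and both $\free$ and $L_\xi$ preserve connectivity), and $\Omega^m$ decreases connectivity in a controlled way, the colimit $X(V)$ is connected; so it is enough to show that each homotopy group $\pi_n(X(V))$, $n \geq 1$, is a derived $\xi$-adic complete left $\kk\{\xi\}$-module.

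First I would compute $\pi_*(X(V))$. Since homotopy groups commute with filtered colimits and with $\Omega^m$ (up to the usual shift), we get
\[
\pi_n(X(V)) \;\cong\; \colim_m \pi_{n+m}\bigl(\iota_\xi L_\xi \free(\Sigma^m V)\bigr).
\]
Now I would feed in Theorem~\ref{theorem: F-completion, homotopy groups}, which gives for each $m$ a short exact sequence
\[
0 \to L_1 \pi_{n+m-1}(\free(\Sigma^m V)) \to \pi_{n+m}(L_\xi \free(\Sigma^m V)) \to L_0 \pi_{n+m}(\free(\Sigma^m V)) \to 0,
\]
where $L_0, L_1$ are the derived functors of $\xi$-completion. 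By Proposition~\ref{proposition: free xi-complete}(1), each $\pi_j(\free(\Sigma^m V))$ is a \emph{free} $\kk\{\xi\}$-module, so on free modules $L_1$ vanishes and $L_0$ is the ordinary $\xi$-completion; hence $\pi_{n+m}(L_\xi\free(\Sigma^m V))$ is the $\xi$-completion of a free module, which is in particular derived $\xi$-adic complete. Passing to the colimit over $m$, I then need to know that a filtered colimit of derived complete modules along the suspension maps is again derived complete — this is where I expect the real work to be, since derived complete modules are \emph{not} closed under arbitrary filtered colimits.

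The key mechanism to get around this is a Freudenthal/stabilization estimate: I would use Lemma~\ref{lemma: nonabelian derived, freudenthal} together with the splitting of Proposition~\ref{proposition: free xi-complete}(2) (and its stable analog, Proposition~\ref{proposition: stable is monadic}(3) and Corollary~\ref{corollary: P1 commutes with inf product}) to argue that in each fixed total degree the tower $\{\pi_{n+m}(\iota_\xi L_\xi \free(\Sigma^m V))\}_m$ \emph{stabilizes}: the suspension map $\Sigma \mathbb{L}L^r_\ell(W) \to \mathbb{L}L^r_\ell(\Sigma W)$ is $(2c+1)$-connected when $W$ is $c$-connected, so after finitely many suspensions the relevant homotopy group no longer changes, and the colimit computing $\pi_n(X(V))$ is eventually constant. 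A colimit of a diagram that is eventually an isomorphism is just that eventual value, which we have already shown is derived $\xi$-adic complete (a $\xi$-completion of a free $\kk\{\xi\}$-module). Therefore every $\pi_n(X(V))$, $n\geq 1$, is derived $\xi$-adic complete, and Theorem~\ref{theorem: connected xi-complete} shows $X(V) \in \sLxi$, i.e.\ $X(V)$ is $\kk$-complete; since the inclusion $\sLxi \hookrightarrow \sL$ preserves the object, the colimit computed in $\sL$ lands in $\sLxi$, which is the assertion. The main obstacle, as noted, is precisely the interaction of $\xi$-adic completeness with the sequential colimit, and the resolution is the connectivity/stabilization estimate that makes the colimit locally eventually constant; the bookkeeping of connectivity bounds through $\Omega^m$, $\free$, and $L_\xi$ is routine but must be done carefully so that "eventually constant" holds in each fixed degree.
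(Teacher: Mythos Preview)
Your proposal is correct and follows essentially the same route as the paper: reduce via Theorem~\ref{theorem: connected xi-complete} to checking that each $\pi_q(X(V))$ is derived $\xi$-complete, then use the Freudenthal-type estimate (Lemma~\ref{lemma: nonabelian derived, freudenthal}) together with Proposition~\ref{proposition: free xi-complete} to see that the sequential colimit computing $\pi_q(X(V))$ is eventually constant, so equals a homotopy group of some $L_\xi\free(\Sigma^{m_0}V)$, which is already derived $\xi$-complete. The paper's version is terser and does not route through Theorem~\ref{theorem: F-completion, homotopy groups}, Proposition~\ref{proposition: stable is monadic}(3), or Corollary~\ref{corollary: P1 commutes with inf product}; the latter two are unnecessary here (and in fact are proved after this corollary), so you can drop those citations.
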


\begin{proof}
By Theorem~\ref{theorem: connected xi-complete}, it is enough to show that $X(V)\in \sL$ is connected and that $\pi_q(X(V))$ is a derived $\xi$-adic complete left $\kk\{\xi\}$-module for each $q\geq 1$. Fix $q\geq 0$, then 
$$\pi_q(X(V)) \cong \colim_m \pi_{q+m}(L_{\xi}\free(\Sigma^m V)). $$
By Proposition~\ref{proposition: free xi-complete} and Lemma~\ref{lemma: nonabelian derived, freudenthal}, there exists $m_0=m_0(q)\geq 0$ such that the natural map
$$\pi_{q+m}(L_{\xi}\free(\Sigma^m V)) \xrightarrow{\cong}\pi_{q+m+1}(L_{\xi}\free(\Sigma^{m+1} V)) $$
is an isomorphism for $m\geq m_0$. Therefore
$$\pi_q(X(V)) \cong \pi_{q+m_0}(L_{\xi}\free(\Sigma^{m_0} V)), $$
which proves the corollary.
\end{proof}

\begin{prop}\label{proposition: stable monad for xi-complete}
Let $V\in \Mod_{\kk}^{+}$ be a bounded below chain complex, $\pi_i(V)=0$ for $i\ll 0$. Then there exists a natural equivalence
$$\oblv \circ \suspfreexi(V) \simeq \prod_{n\geq 1}\colim_m \bigg(\Sigma^{-m}\mathbb{L}L^r_n(\tau_{\geq 0} \Sigma^m V)\bigg),
$$
where $\mathbb{L}L^r_n(-)$ are the nonabelian derived functors from~\eqref{equation: oblv free, fresse}. 
\end{prop}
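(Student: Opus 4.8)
The plan is to reduce to connected input and then transport the unstable splitting of $\oblv\circ L_\xi\free$ (Proposition~\ref{proposition: free xi-complete}) through stabilization. For the reduction: $\suspfreexi$ is a left adjoint between stable presentable categories, hence exact, and $\oblv\colon\Sp(\sLxi)\to\Mod_{\kk}$ of~\eqref{equation: forgetful, stable, xi-complete} is a right adjoint between stable categories, hence exact; both commute with $\Sigma^{\pm1}$. Writing a bounded-below $V$ as $\Sigma^{-N}W$ with $W\in\Mod_{\kk}^{\geq1}$ and reindexing the filtered colimit on the right-hand side by $N$ (only the tail of the colimit matters) reduces us to $V\in\Mod_{\kk}^{\geq1}$, where $\tau_{\geq0}\Sigma^mV\simeq\Sigma^mV$ for $m\geq0$. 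So it suffices to prove $\oblv\circ\suspfreexi(V)\simeq\prod_{n\geq1}\partial_n(V)$ for connected $V$, where $\partial_n(V):=\colim_m\Sigma^{-m}\mathbb{L}L^r_n(\Sigma^mV)$; here $\oblv\circ\suspfree(V)\simeq\bigoplus_{n\geq1}\partial_n(V)$ by Proposition~\ref{proposition: stable is monadic}, so the content is that $\kk$-completion turns this direct sum into the product.

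Next I would identify the spaces of $\suspfreexi(V)$. By Proposition~\ref{proposition: xi-complete stable is monadic}(1) and exactness of $\suspfreexi$, its $m$-th space is $\Omega^{\infty}_{\sLxi}\suspfreexi(\Sigma^mV)\simeq\Omega^{\infty}_{\sLxi}L^{st}_\xi\suspfree(\Sigma^mV)$; since $L^{st}_\xi$ is computed spacewise and $\oblv\colon\sLxi\to\Mod_{\kk}^{\geq0}$ is left exact, this equals $L_\xi\bigl(\Omega^{\infty}_{\sL}\suspfree(\Sigma^mV)\bigr)$. Because $\oblv_{\sL}$ is conservative and creates limits and sifted colimits, $\Omega_{\sL}$ commutes with filtered colimits, so the colimit formula for $\Omega^{\infty}\Sigma^{\infty}$ is valid in $\sL$ and $\Omega^{\infty}_{\sL}\suspfree(\Sigma^mV)\simeq\colim_k\Omega^k_{\sL}\free(\Sigma^{k+m}V)$. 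Applying $L_\xi$ and comparing with Corollary~\ref{corollary: deloop is xi-complete}, whose $\sL$-colimit $X(\Sigma^mV)=\colim_k\Omega^k_{\sL}\iota_\xi L_\xi\free(\Sigma^{k+m}V)$ is already $\kk$-complete and stabilizes in each homotopy degree, identifies the $m$-th space of $\suspfreexi(V)$ with $X(\Sigma^mV)$. Since $\oblv$ commutes with this degreewise-constant $\sL$-colimit, Proposition~\ref{proposition: free xi-complete}(2) together with Corollary~\ref{corollary: P1 commutes with inf product} (after reindexing and moving $\tau_{\geq0}$ past the filtered colimit) yields $\oblv\bigl((\suspfreexi V)_m\bigr)\simeq\tau_{\geq0}\Sigma^m\prod_{n\geq1}\partial_n(V)$.

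Finally, $\oblv$ sends a spectrum object $(X_0,X_1,\dots)$ to $\colim_m\Sigma^{-m}\oblv(X_m)$, so the previous step gives $\oblv\circ\suspfreexi(V)\simeq\colim_m\Sigma^{-m}\tau_{\geq0}\Sigma^m\prod_n\partial_n(V)=\colim_m\tau_{\geq-m}\prod_n\partial_n(V)$; for connected $V$ each $\partial_n(V)$ is connective, hence so is $\prod_n\partial_n(V)$ (products are exact in $\Mod_{\kk}$), and the colimit collapses to $\prod_{n\geq1}\partial_n(V)$, which is the asserted formula. The genuinely delicate point is the space-level identification in the previous paragraph: $\oblv$ preserves limits but not sequential colimits, so it does not commute a priori with the colimit computing stabilization in the localization $\sLxi$. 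This is exactly what Lemma~\ref{lemma: nonabelian derived, freudenthal} — through Corollaries~\ref{corollary: P1 commutes with inf product} and~\ref{corollary: deloop is xi-complete} — is designed to control: once the input is connected, every colimit in sight stabilizes in each fixed homotopy degree, so $\oblv$ may be slipped past it.
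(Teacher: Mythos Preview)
Your proof is correct and follows the same route as the paper's: reduce to connected $V$, identify $\Omega^{\infty}\suspfreexi(V)$ with the $\sL$-colimit $X(V)=\colim_m\iota_\xi\Omega^m L_\xi\free(\Sigma^m V)$ via Corollary~\ref{corollary: deloop is xi-complete}, apply Proposition~\ref{proposition: free xi-complete}(2) to compute $\oblv L_\xi\free(\Sigma^m V)$, and finish with Corollary~\ref{corollary: P1 commutes with inf product}. The paper packages the first identification by citing \cite[Proposition~6.2.2.15]{HigherAlgebra} directly, while you unwind it by hand via the spacewise description of $L^{st}_\xi$ and the commutation of $\Omega_{\sL}$ with filtered colimits; otherwise the arguments are the same.
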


\begin{proof}
By Proposition~\ref{proposition: xi-complete stable is monadic}, the monadic adjunction $\suspfreexi \dashv \oblv$ is induced from the adjoint pair
$$
\begin{tikzcd}
\free: \Mod^{\geq 1}_{\kk} \arrow[shift left=.6ex]{r}
&\sL_{\xi,0}:\oblv. \arrow[shift left=.6ex,swap]{l}
\end{tikzcd}
$$
By Corollary~\ref{corollary: xi-complete is differentiable}, the category $\sL_{\xi,0}$ is differentiable. Therefore, by~\cite[Proposition~6.2.2.15]{HigherAlgebra}, we have
$$\Omega^\infty \circ \suspfreexi(U) \simeq L_\xi \left(\colim_m ( \iota_{\xi} \Omega^m L_\xi \free (\tau_{\geq 1}\Sigma^m U) )\right)$$
for $U\in \Mod_{\kk}$. 

Without loss of generality, we can assume that $V \in \Mod_{\kk}^{\geq 1}$ is a \emph{connected} chain complex. Then, we have
$$\oblv \circ \suspfreexi(V) \simeq \oblv \circ L_\xi \left(\colim_m ( \iota_{\xi} \Omega^m L_\xi \free (\Sigma^m V) )\right). $$
By Corollary~\ref{corollary: deloop is xi-complete}, we can drop the first $\kk$-completion, and so
\begin{align*}
\oblv \circ \suspfreexi(V) &\simeq \oblv \left(\colim_m (\iota_{\xi} \Omega^m L_\xi \free (\Sigma^m V) )\right)\\
& \simeq \colim_m \left(\oblv\circ \Omega^m \iota_{\xi} L_{\xi} \free(\Sigma^m V)\right) \\
& \simeq\colim_m \left(\Sigma^{-m} \oblv \circ L_{\xi} \free(\Sigma^m V)\right) \\
& \simeq\colim_{m} \bigg(\Sigma^{-m}\prod_{n\geq 1} \mathbb{L}L^r_n(\Sigma^m V)\bigg),
\end{align*}
where the last equivalence follows by Proposition~\ref{proposition: free xi-complete}. Finally, Corollary~\ref{corollary: P1 commutes with inf product} implies the proposition.
\end{proof}

Consider the chain complex functor $\widetilde{C}_*\colon \sL \to \Mod_{\kk}$, see Definition~\ref{definition: chain complex}. By construction, the functor $\widetilde{C}_*$ is left adjoint. Therefore, by~\cite[Corollary~1.4.4.5]{HigherAlgebra}, there is a unique functor
\begin{equation}\label{equation: chain complex, stable}
\widetilde{C}_* \colon \Sp(\sL) \to \Mod_{\kk} 
\end{equation}
such that $\widetilde{C}_*(L) \simeq \widetilde{C}_*(\Sigma^\infty_{\sL} L)$ for $L\in\sL$. Here we abuse notation and we denote the chain complex functor from the category $\sL$ and the functor~\eqref{equation: chain complex, stable} from its stabilization $\Sp(\sL)$ by the same symbol. Finally, we recall the construction of the functor~\eqref{equation: chain complex, stable}. Let $X\in \Sp(\sL)$. Then 
$$\widetilde{C}_*(X) = \colim_{m} \Sigma^{-m} \widetilde{C}_*(\deloop \Sigma^{m} X) \in \Mod_{\kk}. $$

\begin{dfn}\label{definition: cochain complex, stable}
The \emph{cochain complex} $\widetilde{C}^*(X) \in \Mod_\kk$ of $X \in \Sp(\sL)$ is given by
$$\widetilde{C}^*(X) = \Hom_{\kk}(\widetilde{C}_*(X),\kk). $$
The \emph{$q$-th homology group} $\widetilde{H}_q(X)$ of $X \in \Sp(\sL)$ is given by
$$ \widetilde{H}_q(X) = \pi_q(\widetilde{C}_*(X)), \;\; q\in \Z.$$
Dually, the \emph{$q$-th cohomology group} $\widetilde{H}^q(X)$ of $X \in \Sp(\sL)$ is given by the rule
$$ \widetilde{H}^q(X) = \pi_{-q}(\widetilde{C}^*(X)), \;\; q\in \Z.$$
\end{dfn}

\begin{rmk}\label{remark: homology of a free algebra}
By Definition~\ref{definition: chain complex} of the chain complex functor, we have equivalences
$$\widetilde{C}_*\circ \free\simeq \widetilde{C}_*\circ L_\xi \free\simeq \Sigma(-)\colon \Mod_{\kk}^{\geq 0} \to \Mod_{\kk}^{\geq 0}, $$
$$\widetilde{C}_*\circ \suspfree\simeq \widetilde{C}_*\circ \suspfreexi\simeq \Sigma(-)\colon \Mod_{\kk} \to \Mod_{\kk}. $$
\end{rmk}

\begin{rmk}\label{remark: stable hurewicz homomorphism}
Let $X\in \Sp(\sL)$. Then, as in Remark~\ref{remark: hurewuicz homomorphism}, we obtain the \emph{stable Hurewicz homomorphism}
\begin{equation}\label{equation: stable hurewicz homomorphism}
h\colon \pi_q(X) \to \widetilde{H}_{q+1}(X), \;\; q\in \Z. 
\end{equation}
Moreover, if $X$ is $n$-connected for some $n\in\Z$, then the stable Hurewicz homomorphism induces the isomorphism
$$\pi_{n+1}(X)/\xi \xrightarrow{\cong} \widetilde{H}_{n+2}(X), $$
see Remark~\ref{remark: hurewicz theorem}. This implies that if $X\in \Sp(\sL)$ is a bounded below object (i.e. $\pi_i(X)=0$ for $i\ll 0$), then its $\kk$-completion $L^{st}_{\xi}X$ is trivial if and only if its homology groups $\widetilde{H}_q(X)$, $q\in \Z$ vanish.
\end{rmk}

\begin{rmk}\label{remark: chain complex is conservative}
We write $\Sp(\sL)^{+} \subset \Sp(\sL)$ for the full subcategory of $\Sp(\sL)$ spanned by bounded below spectrum objects. Set $\Sp(\sLxi)^{+}= \Sp(\sLxi) \cap \Sp(\sL)^{+}$. Then, by Remark~\ref{remark: stable hurewicz homomorphism}, the restricted functor
$$\widetilde{C}_* \colon \Sp(\sLxi)^{+} \hookrightarrow \Sp(\sL) \xrightarrow{\widetilde{C}_*} \Mod_{\kk}$$
is conservative.
\end{rmk}

\subsection{Monoidal structures}\label{section: monoidal structures}
The Cartesian product endows the category $\sL$ with the symmetric monoidal structure $\sL^{\times}$. However, since $\sL$ is a pointed category, the symmetric monoidal structure $\sL^{\times}$ is \emph{not} closed, and moreover, it is not even clear that the Cartesian product commutes with pushouts, see e.g.~\cite[Remark~4.5.13]{koszul}. As a result, it is difficult to work with the symmetric monoidal category $\sL^{\times}$. Nevertheless, we will demonstrate in this section that the Cartesian product behaves nicely when restricted to the full subcategory $\sL_{\xi}\subset \sL$. In order to do that we need to introduce the auxiliary $\infty$-category $\sCA_0$ of reduced simplicial truncated coalgebras.

\begin{dfn}\label{definition: truncated coalgebra}
Let $C=(C,\eta\colon \kk \to C)\in \CoAlg^{\mathrm{aug}}$ be a \emph{finite-dimensional} coaugmented cocommutative coalgebra and let $C^*=(C^*,\eta^*\colon C^* \to \kk)$ be its dual augmented algebra. The coalgebra $C$ is called \emph{truncated} if, for every $x\in \ker(\eta^*)$, we have $x^p=0$. Moreover, an infinite-dimensional coalgebra $C\in \CoAlg^{\mathrm{aug}}$ is called \emph{truncated} if $C$ is an union of finite-dimensional truncated sub-coalgebras. 
\end{dfn}

We write $\trcoalg$ for the full ($1$-)subcategory of $\CoAlg^{\mathrm{aug}}$ spanned by truncated coalgebras. By~\cite[Proposition~2.2.14]{koszul}, the forgetful functor
$$\oblv \colon \trcoalg \hookrightarrow \CoAlg^{\aug} \to \Vect_{\kk}, \;\; (C,\eta) \mapsto \coker(\eta) $$
admits a right adjoint $\Sym^{\mathrm{tr}}\colon \Vect_{\kk} \to \trcoalg$.

\begin{prop}\label{proposition:category property of trcoalg} The $1$-category $\trcoalg$ is comonadic over $\Vect_{\kk}$ via the adjunction $\oblv \dashv \Sym^{\mathrm{tr}}$. The $1$-category $\trcoalg$ is complete and cocomplete, (co)-limits can be computed in the $1$-category $\CoAlg^{\mathrm{aug}}$, and the forgetful functor $\oblv$ creates colimits. Moreover, $\trcoalg$ is presentable. 
\end{prop}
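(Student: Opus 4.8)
The plan is to establish Proposition~\ref{proposition:category property of trcoalg} by dualizing the standard monadicity picture for algebras, exploiting the finite-dimensionality built into Definition~\ref{definition: truncated coalgebra}. First I would record the following elementary but crucial observation: a truncated coalgebra is, by definition, a filtered union of its finite-dimensional truncated subcoalgebras, and linear duality sets up an anti-equivalence between the category of finite-dimensional truncated coalgebras and the category of finite-dimensional \emph{augmented} commutative algebras $A$ for which the augmentation ideal $\ker(\eta^*)$ satisfies $x^p = 0$ for all $x$; these are precisely the finite-dimensional quotients of free objects on the exterior/truncated-polynomial algebra $\kk[x]/x^p$ on finitely many generators. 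Thus $\trcoalg$ is equivalent to the Ind-completion of (the opposite of) this category of finite-dimensional ``$p$-truncated'' augmented algebras. Presentability is then immediate: $\trcoalg \simeq \mathrm{Ind}(\calA_0^{op})$ for a small category $\calA_0$, hence it is presentable (indeed compactly generated), and the finite-dimensional truncated coalgebras are compact generators.

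Next I would address comonadicity over $\Vect_\kk$. The forgetful functor $\oblv$ and its right adjoint $\Sym^{tr}$ are already in hand from~\cite[Proposition~2.2.10]{koszul}, so it remains to verify the hypotheses of the (dual) Barr--Beck theorem, in the $1$-categorical form: $\oblv$ is conservative, and $\trcoalg$ admits, and $\oblv$ preserves, equalizers of $\oblv$-split pairs. Conservativity follows because a map of truncated coalgebras which is an isomorphism on underlying vector spaces is an isomorphism of coalgebras (coalgebra structure is extra structure, not extra data, on a fixed vector space, and an isomorphism of underlying spaces transports it). For the split equalizers: a cosplit coequalizer diagram in $\Vect_\kk$ is in particular an absolute colimit, so its image under the coalgebra structure is a colimit in $\CoAlg^{aug}$; one then checks that the resulting colimit coalgebra is again truncated, which reduces — using the finite-dimensionality clause — to the fact that an element killed by $p$-th power in each member of a split system remains so in the colimit. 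I would phrase this as: colimits of truncated coalgebras computed in $\CoAlg^{aug}$ remain truncated, because every element of the colimit lies in the image of some $C_i$, hence in a finite-dimensional truncated subcoalgebra.

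For completeness and cocompleteness, and the claim that (co)limits are computed in $\CoAlg^{aug}$ with $\oblv$ creating colimits: cocompleteness and colimit-creation follow from comonadicity together with the cocompleteness of $\Vect_\kk$ (the Eilenberg--Moore category of coalgebras over a comonad on a cocomplete category is cocomplete with colimits created by the forgetful functor, provided the comonad preserves the relevant colimits — here $\oblv\circ\Sym^{tr}$ preserves filtered colimits and reflexive coequalizers, which I would check directly from the explicit description of $\Sym^{tr}$ as the cofree $p$-truncated coalgebra). For limits, since $\CoAlg^{aug}$ is complete with limits created from $\Vect_\kk$ (being comonadic over it, or by~\cite[Proposition~2.2.10]{koszul}), it suffices to show the full subcategory $\trcoalg \subset \CoAlg^{aug}$ is closed under limits; but a subcoalgebra of a truncated coalgebra is truncated (the condition $x^p = 0$ on the augmentation ideal is inherited by sub-objects), and limits are computed as subcoalgebras of products, so it remains to check products of truncated coalgebras are truncated — which again is clear since any finite-dimensional subcoalgebra of a product is contained in a finite product of finite-dimensional truncated subcoalgebras.

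The main obstacle I anticipate is the bookkeeping around the interaction between the \emph{finite-dimensionality} requirement in Definition~\ref{definition: truncated coalgebra} and the formation of colimits: one must be careful that when a colimit is computed in $\CoAlg^{aug}$, every element really does sit inside a finite-dimensional truncated subcoalgebra, so that the ``union of finite-dimensional truncated subcoalgebras'' characterization is preserved. The cleanest way around this is to commit early to the identification $\trcoalg \simeq \mathrm{Ind}(\calA_0^{op})$ and deduce presentability, (co)completeness, and comonadicity from general facts about Ind-categories and the dual Barr--Beck theorem, rather than manipulating coalgebras by hand; the verification that this Ind-category really is equivalent to $\trcoalg$ as defined is then the one genuinely content-bearing step, and it is essentially~\cite[Proposition~2.2.10]{koszul} repackaged.
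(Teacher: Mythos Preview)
The paper's own proof is a bare citation: ``See Proposition~2.2.8 and Proposition~2.2.11 from~\cite{koszul}.'' There is no argument to compare against; the result is imported wholesale from the author's earlier work. Your proposal, by contrast, sketches a direct proof via the $\mathrm{Ind}$-description of $\trcoalg$, dual Barr--Beck, and closure of truncated coalgebras under (co)limits in $\CoAlg^{aug}$. This is almost certainly what the cited propositions in~\cite{koszul} do, so in effect you are reconstructing the deferred argument rather than offering a different route.

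Your outline is sound, but one point deserves more care than you give it: limits in $\CoAlg^{aug}$ are \emph{not} created by the forgetful functor to $\Vect_\kk$ (only colimits are), so the sentence ``limits are computed as subcoalgebras of products'' is doing real work that you have not justified. The product of cocommutative coalgebras is the tensor product in the finite case and something more delicate in the infinite case, and equalizers are largest subcoalgebras on which maps agree rather than naive kernels. Your closure-under-subcoalgebras argument (a finite-dimensional subcoalgebra of a truncated coalgebra sits inside a finite-dimensional truncated one, hence has dual with nilpotent augmentation ideal) is correct and is the key point, but you should state explicitly how arbitrary limits in $\CoAlg^{aug}$ reduce to this. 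The cleanest fix is exactly what you suggest at the end: commit to $\trcoalg \simeq \mathrm{Ind}((\trcoalg^{fin})^{op,op})$ and invoke general presentability/accessibility machinery, which makes completeness automatic once you have presentability and cocompleteness.
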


\begin{proof}
See~\cite[Proposition~2.2.10 and Proposition~2.2.15]{koszul}.
\end{proof}

Let $C,D\in \trcoalg$ be truncated coalgebras. Then the Cartesian product $C\times D$ is the tensor product $C\otimes_{\kk} D$ equipped with the obvious comultiplication and the coCartesian coproduct $C\sqcup D$ is the wedge sum of augmented vector spaces $(C\oplus D)/\kk$ also equipped with the obvious comultiplication.

\begin{dfn}\label{definition: smash product coalgebras}
Let $C,D\in \trcoalg$ be truncated coalgebras. We define the \emph{smash product} $C\wedge D\in \trcoalg$ as the following coequalizer
$$
\begin{tikzcd}
C\wedge D = \mathrm{coeq}(C \sqcup D \arrow[shift left=.75ex]{r}{\mathrm{can}}
  \arrow[shift right=.75ex,swap]{r}
&
C \times D),
\end{tikzcd}
$$
where the upper arrow is the canonical map from coproduct to product and the lower arrow is the composition the counit map and the coaugmentation map. In particular, $\oblv(C\wedge D)\cong \oblv(C)\otimes \oblv(D)$
\end{dfn}

\begin{prop}\label{proposition: smash product, coalgebras}
The smash product $-\wedge -$ makes $\trcoalg$ into a closed (non-unital) symmetric monoidal $1$-category such that the forgetful functor 
$$\oblv\colon (\trcoalg, \wedge) \to (\Vect_{\kk},\otimes) $$
is a strong symmetric monoidal functor.
\end{prop}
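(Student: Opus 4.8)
The plan is to identify the smash product $\wedge$ with the tensor product of \emph{non-counital} cocommutative coalgebras, and then to read the symmetric monoidal structure off that of $(\Vect_{\kk},\otimes)$, using the categorical properties of $\trcoalg$ recorded in Proposition~\ref{proposition:category property of trcoalg}.

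First I would invoke the standard equivalence between coaugmented cocommutative coalgebras and non-counital cocommutative coalgebras, given on objects by $C\mapsto\coker(\eta)=\bar C$ with inverse $\bar C\mapsto\kk\oplus\bar C$; under this equivalence the functor $\oblv$ of the proposition becomes the ordinary forgetful functor to $\Vect_{\kk}$. Next, since the composite $\trcoalg\hookrightarrow\CoAlg^{\aug}\to\Vect_{\kk}$ creates colimits (Proposition~\ref{proposition:category property of trcoalg}) and $\wedge$ is defined as a coequalizer, one obtains a natural isomorphism $\oblv(C\wedge D)\cong\bar C\otimes\bar D$. Inspecting the comultiplication that $C\wedge D$ inherits as a quotient of $C\times D=C\otimes_{\kk}D$, and choosing vector-space splittings $C=\kk\oplus\bar C$ and $D=\kk\oplus\bar D$, one checks that $C\wedge D$ corresponds, under the equivalence above, precisely to $\bar C\otimes\bar D$ equipped with its canonical non-counital cocommutative comultiplication $(1\otimes\tau\otimes1)(\bar\Delta_{\bar C}\otimes\bar\Delta_{\bar D})$. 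Thus $\wedge$ is the restriction to $\trcoalg$ of the tensor product of non-counital cocommutative coalgebras in $(\Vect_{\kk},\otimes)$.

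Granting this identification, the symmetric monoidal structure becomes formal: the tensor product of non-counital comonoids in a symmetric monoidal category is a symmetric monoidal structure (only non-unital on $\trcoalg$, since the would-be unit fails to be truncated) whose associativity and symmetry constraints are those of $\otimes$, and for which the underlying-object functor is tautologically strong symmetric monoidal. Two points remain. First, truncation is stable under $\wedge$: writing $\bar C,\bar D$ as filtered unions of finite-dimensional truncated pieces $\bar C_\alpha,\bar D_\beta$, the product $\bar C\otimes\bar D$ is the filtered union of the $\bar C_\alpha\otimes\bar D_\beta$, whose linear duals are the commutative non-unital algebras $\bar C_\alpha^{*}\otimes\bar D_\beta^{*}$; since $\operatorname{char}\kk=p$, the identity $(\sum_i\phi_i\otimes\psi_i)^{p}=\sum_i\phi_i^{p}\otimes\psi_i^{p}=0$ shows these are $p$-nilpotent, so $C\wedge D\in\trcoalg$ — this is essentially the computation already underlying the fact that $\trcoalg$ is closed under $\times$. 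Second, $(\trcoalg,\wedge)$ is closed: $\trcoalg$ is presentable (Proposition~\ref{proposition:category property of trcoalg}), and for fixed $D$ the functor $-\wedge D$ preserves all colimits (because $\oblv$ creates them and $-\otimes\bar D$ preserves colimits of vector spaces), hence admits a right adjoint by the adjoint functor theorem for presentable categories.

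The only step requiring genuine care is the comparison of comultiplications in the second paragraph: one must verify that the canonical epimorphism $C\times D\twoheadrightarrow C\wedge D$ carries the comultiplication of $C\otimes_{\kk}D$ to the non-counital tensor comultiplication on $\bar C\otimes\bar D$. After the splittings this reduces to isolating the component of $\Delta_{C\otimes_{\kk}D}$ landing in $(\bar C\otimes\bar D)^{\otimes2}$, a short bookkeeping argument; everything else is formal, given Proposition~\ref{proposition:category property of trcoalg} and the classical behaviour of comonoids under tensor products.
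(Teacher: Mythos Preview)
Your proposal is correct and, at its core, takes the same approach as the paper: the paper's proof is literally ``follows directly from the definitions'' for the symmetric monoidal and strong-monoidal-forgetful claims, together with the observation that $\oblv$ creates colimits (Proposition~\ref{proposition:category property of trcoalg}), hence $-\wedge D$ preserves colimits, hence the structure is closed. Your argument unpacks the first sentence much more carefully---identifying $\wedge$ with the tensor product of non-counital cocommutative coalgebras so that the coherence constraints are inherited from $(\Vect_{\kk},\otimes)$, and verifying truncation is preserved---but this is elaboration rather than a different strategy. Note also that the isomorphism $\oblv(C\wedge D)\cong\oblv(C)\otimes\oblv(D)$ is already recorded in the paper immediately after Definition~\ref{definition: smash product coalgebras}, so your second paragraph is partly redundant with what is assumed known.
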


\begin{proof}
The proposition follows directly from the definitions. By Proposition~\ref{proposition:category property of trcoalg}, the forgetful functor $\oblv$ creates colimits. Therefore the smash product $-\wedge -$ commutes with colimits separately at each variable, and so the monoidal ($1$-)category $(\trcoalg, \wedge)$ is closed.
\end{proof}

We write $\sotrcoalg$ for the ($1$-)category of reduced (i.e. $C_\bullet \in \sotrcoalg \Leftrightarrow C_0 \cong \kk$) simplicial truncated coalgebras. Recall that the ($1$-)category $\sotrcoalg$ is endowed with a well-behaved model structure.

\begin{thm}[{{\cite[Theorem~3.2.10]{koszul}}}]\label{theorem:modelsotrcoalg} There exists a simplicial combinatorial left proper model structure on $\sotrcoalg$ such that $f\colon C_\bullet \to D_\bullet$ is

\begin{itemize}
\item a \emph{weak equivalence} if and only if $\pi_*(f)$ is an isomorphism;
\item a \emph{cofibration} if and only if $f\colon C_\bullet \to D_\bullet$ is degreewise injective;
\item a \emph{fibration} if and only if $f$ has the right extension property with respect to all acyclic cofibrations. \qed
\end{itemize}
\end{thm}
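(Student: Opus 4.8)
The statement asks for a combinatorial, simplicial, left proper model structure on $\sotrcoalg$ whose weak equivalences are the $\pi_*$-isomorphisms and whose cofibrations are the degreewise injections. The plan is to obtain it by \emph{left transfer} (cotransfer) along the comonadic adjunction underlying Proposition~\ref{proposition:category property of trcoalg}. Equip $\soVect$, the category of reduced simplicial $\kk$-vector spaces (under Dold--Kan, chain complexes concentrated in positive degrees), with its standard model structure: the weak equivalences are the $\pi_*$-isomorphisms, the cofibrations are the monomorphisms, and the fibrations are characterized by the right lifting property; this is a proper, combinatorial, simplicial model category in which every object is bifibrant. Applying the adjunction $\oblv \dashv \Sym^{tr}$ of Proposition~\ref{proposition:category property of trcoalg} degreewise produces an adjunction $\oblv : \sotrcoalg \rightleftarrows \soVect : \Sym^{tr}$ whose \emph{left} adjoint is $\oblv$ --- precisely the configuration needed for left transfer --- and $\sotrcoalg$ is locally presentable since $\trcoalg$ is. I would then invoke the existence theorem for left-induced model structures on locally presentable categories (Makkai--Rosick\'y; Hess--Kedziorek--Riehl--Shipley), which reduces the construction to verifying its single nontrivial hypothesis, the acyclicity condition.

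The three classes are then easy to pin down. By the defining property of the left-induced structure, a map $f$ in $\sotrcoalg$ is a cofibration (resp.\ a weak equivalence) exactly when $\oblv(f)$ is a monomorphism (resp.\ a $\pi_*$-isomorphism). As $\kk$ is a field the image of a morphism of truncated coalgebras is a subcoalgebra, so such a morphism is a monomorphism if and only if it is injective on underlying vector spaces, and $\pi_*$ is computed after applying $\oblv$; hence the cofibrations and weak equivalences agree with those named in the theorem, and the fibrations are forced to be the maps with the right lifting property against the acyclic cofibrations. Left properness comes for free: the coaugmentation $\kk \to C_\bullet$ is a degreewise injection for every $C_\bullet \in \sotrcoalg$, so every object is cofibrant, and a model category all of whose objects are cofibrant is left proper. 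The simplicial enrichment descends from the levelwise tensoring of truncated coalgebras over finite sets, and combinatoriality follows from the local presentability of $\sotrcoalg$ together with the fact that the monomorphisms in a locally presentable category form a cofibrantly generated weak factorization system.

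\textbf{The main obstacle} is the acyclicity condition: one must show that any map of $\sotrcoalg$ with the right lifting property against every degreewise injection is automatically a $\pi_*$-isomorphism (equivalently, that the acyclic cofibrations are stable under pushout and transfinite composition). One direction is formal: since $\oblv$ carries cofibrations to monomorphisms, a lifting problem of a cofibration $i$ against $\Sym^{tr}(q)$ transposes along $\oblv \dashv \Sym^{tr}$ to a lifting problem of the monomorphism $\oblv(i)$ against $q$, so $\Sym^{tr}(q)$ is a trivial fibration in $\sotrcoalg$ whenever $q$ is one in $\soVect$. The genuine content is a homological property of the comonad $\oblv \circ \Sym^{tr}$ on $\soVect$ --- the underlying simplicial vector space of the cofree truncated cocommutative coalgebra --- namely that it preserves $\pi_*$-isomorphisms; granting this, the acyclicity condition follows by the standard cotransfer argument for a homotopically well-behaved comonad, via the cofibration--trivial-fibration factorization and the retract argument. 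The preservation property itself is exactly where the \emph{truncatedness} hypothesis of Definition~\ref{definition: truncated coalgebra} enters: $\oblv\,\Sym^{tr}$ is assembled from tensor powers together with the truncated symmetric (divided) power functors, and passing to the truncated quotient excises precisely the $p$-th power classes responsible for the failure of exactness of $V \mapsto (V^{\otimes n})^{\Sigma_n}$ in characteristic $p$. I would verify this directly on normalized chains; alternatively, one can adapt P.~Goerss's treatment of simplicial coalgebras over a field, showing that the underlying simplicial vector space of a map with the right lifting property against all monomorphisms is itself a trivial fibration in $\soVect$, hence a $\pi_*$-isomorphism. With the acyclicity condition established, the remaining axioms are formal, and Theorem~\ref{theorem:modelsotrcoalg} follows.
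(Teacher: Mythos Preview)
The paper does not prove this statement: it is quoted from the author's earlier work~\cite[Theorem~3.2.7]{koszul}, and the trailing \qed marks it as a citation with no argument reproduced here. There is therefore no in-paper proof to compare against. Your overall strategy---a left-induced model structure along the comonadic adjunction $\oblv \dashv \Sym^{tr}$, with the acyclicity condition as the single substantive hypothesis---is the standard route for results of this shape and is very likely what is carried out in the cited reference; Goerss's construction for simplicial cocommutative coalgebras over a field is indeed the prototype, and your identifications of the three classes, of cofibrancy of all objects, and of automatic left properness are all correct.

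One step does not work as stated. You reduce acyclicity to the claim that the comonad $\oblv\circ\Sym^{tr}$ preserves $\pi_*$-isomorphisms and then appeal to a ``standard cotransfer argument \ldots\ via the cofibration--trivial-fibration factorization and the retract argument.'' But there is no formal dual of Quillen's path-object criterion that runs on that hypothesis alone; preservation of weak equivalences by the comonad does not by itself give acyclicity for \emph{left} transfer. The argument that actually closes the gap is the cylinder-object one already implicit in your remarks on the simplicial enrichment: $\sotrcoalg$ is tensored over simplicial sets levelwise, $\oblv$ preserves this tensoring, so $X\otimes\Delta^1$ is a good cylinder for every $X$; then any map with the RLP against all cofibrations admits a section and a cylinder homotopy to the identity, hence is a weak equivalence. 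This is exactly the Goerss-style argument you list as an alternative. Note too that truncatedness plays no role here---Goerss's model structure already exists on \emph{all} simplicial cocommutative coalgebras over a field---so your attribution of the acyclicity step to Definition~\ref{definition: truncated coalgebra} is misplaced; truncatedness enters in~\cite{koszul} in the comparison with $\sLxi$, not in the existence of the model structure.
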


We write $\sCA_0$ for the \emph{underlying $\infty$-category} of the simplicial model ($1$-)category $\sotrcoalg$, see~\cite[Section~A.2]{HigherAlgebra}. We note that the $1$-categorical (Quillen) adjunction
\begin{equation*}
\begin{tikzcd}
\oblv: \sotrcoalg \arrow[shift left=.6ex]{r}
&\soVect_{\kk} :\Sym^{\mathrm{tr}} \arrow[shift left=.6ex,swap]{l}
\end{tikzcd}
\end{equation*}
induces the adjoint pair
\begin{equation}\label{equation: truncated coalgebras, infty}
\begin{tikzcd}
\oblv: \sCA_0 \arrow[shift left=.6ex]{r}
&\Mod^{\geq 1}_{\kk} :\Sym^{\mathrm{tr}} \arrow[shift left=.6ex,swap]{l}
\end{tikzcd}
\end{equation}
between underlying $\infty$-categories. Theorem~\ref{theorem:modelsotrcoalg}  yields an $\infty$-categorical analog of Proposition~\ref{proposition:category property of trcoalg}.

\begin{cor}\label{corollary: infty-category property of trcoalg} The category $\sCA_0$ is comonadic over $\Mod^{\geq 1}_{\kk}$ via the adjunction $\oblv \dashv \Sym^{\mathrm{tr}}$. The category $\sCA_0$ is complete and cocomplete and the forgetful functor $\oblv$ creates colimits. Moreover, $\sCA_0$ is presentable. \qed 
\end{cor}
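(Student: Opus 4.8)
The plan is to bootstrap the $1$-categorical statement, Proposition~\ref{proposition:category property of trcoalg}, to the $\infty$-categorical level using the combinatorial simplicial model structure of Theorem~\ref{theorem:modelsotrcoalg}. First I would note that, since Theorem~\ref{theorem:modelsotrcoalg} equips $\sotrcoalg$ with a combinatorial simplicial model structure, its underlying $\infty$-category $\sCA_0$ is presentable by \cite[Proposition~A.3.7.6]{HTT}; presentability in turn gives at once that $\sCA_0$ is complete and cocomplete, and that the adjoint pair \eqref{equation: truncated coalgebras, infty} is an adjunction of presentable $\infty$-categories.

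The next step is to check that $\oblv\colon \sCA_0 \to \Mod^{\geq 1}_{\kk}$ is conservative. A morphism $f$ of $\sCA_0$ is an equivalence exactly when the corresponding map in $\sotrcoalg$ is a weak equivalence, which by Theorem~\ref{theorem:modelsotrcoalg} means that $\pi_*(f)$ is an isomorphism; since the homotopy groups of a reduced simplicial truncated coalgebra are computed on the underlying reduced simplicial vector space, this holds if and only if $\oblv(f)$ is an equivalence. Hence $\oblv$ is conservative. Now $\oblv$ is the left adjoint in \eqref{equation: truncated coalgebras, infty}, so it preserves all colimits, and a colimit-preserving conservative functor whose source is cocomplete necessarily creates colimits: for any $p\colon K \to \sCA_0$ the colimit exists and is preserved by $\oblv$, while conservativity forces any cone on $p$ whose $\oblv$-image is a colimit cone to already be one (compare it, via the universal property, with the genuine colimit cone, which $\oblv$ also preserves). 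This upgrades the creation-of-colimits clause of Proposition~\ref{proposition:category property of trcoalg} to $\sCA_0$.

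Finally, comonadicity follows from the comonadic form of the Barr--Beck--Lurie theorem, i.e.\ the dual of \cite[Theorem~4.7.3.5]{HigherAlgebra}. Applied to the adjunction $\oblv \dashv \Sym^{tr}$ of \eqref{equation: truncated coalgebras, infty}, it asserts that $\oblv$ exhibits $\sCA_0$ as comonadic over $\Mod^{\geq 1}_{\kk}$ provided (i) $\oblv$ is conservative and (ii) $\sCA_0$ admits geometric realizations of $\oblv$-split simplicial objects and $\oblv$ preserves them. Condition (i) is the previous paragraph, and (ii) is a special case of $\oblv$ creating all colimits. Thus $\sCA_0$ is identified with the $\infty$-category of left comodules over the comonad $\oblv \circ \Sym^{tr}$ on $\Mod^{\geq 1}_{\kk}$, which is the assertion of Corollary~\ref{corollary: infty-category property of trcoalg}. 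I expect the only mildly delicate point to be the passage from $1$-categorical to $\infty$-categorical creation of colimits: rather than redoing the model-categorical colimit computations behind Proposition~\ref{proposition:category property of trcoalg}, the argument replaces them by the formal observation that a conservative colimit-preserving functor out of a cocomplete $\infty$-category creates colimits, with conservativity of $\oblv$ being immediate from the description of weak equivalences in Theorem~\ref{theorem:modelsotrcoalg}.
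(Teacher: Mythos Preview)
Your treatment of presentability, completeness/cocompleteness, and creation of colimits is correct and is exactly the kind of argument the paper leaves implicit (the paper gives no proof beyond \qed, invoking Theorem~\ref{theorem:modelsotrcoalg}).

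There is, however, a genuine error in the comonadicity step. The dual of \cite[Theorem~4.7.3.5]{HigherAlgebra} says that a left adjoint $F\colon \calC \to \calD$ is comonadic if and only if $F$ is conservative and $\calC$ admits, and $F$ preserves, \emph{totalizations of $F$-split cosimplicial objects}. You wrote instead the monadic condition (geometric realizations of $\oblv$-split simplicial objects). These are not interchangeable: totalizations are limits, and your functor $\oblv$ is a \emph{left} adjoint, so creation of colimits says nothing about them. Thus the sentence ``(ii) is a special case of $\oblv$ creating all colimits'' does not establish the required hypothesis.

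The comonadicity claim is still true, but closing the gap requires a different argument. One option is to verify directly that $\oblv$ preserves totalizations of $\oblv$-split cosimplicial objects, using that at the $1$-categorical level $\oblv\colon \sotrcoalg \to \soVect_{\kk}$ is already comonadic (Proposition~\ref{proposition:category property of trcoalg} applied degreewise) together with the fact that weak equivalences are created by $\oblv$ and every object of $\sotrcoalg$ is cofibrant; this lets one model the relevant homotopy totalization by a strict one to which the $1$-categorical Barr--Beck applies. Alternatively, one may invoke a general principle that a comonadic Quillen adjunction in which the left adjoint creates weak equivalences induces a comonadic adjunction of underlying $\infty$-categories. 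Either way, the point is that some control over \emph{limits} in $\sCA_0$ is needed, and this does not come for free from $\oblv$ being colimit-preserving.
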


We refer the reader to~\cite[Chapter~4]{Hovey99} for the definition of a (non-unital) \emph{monoidal model $1$-category} and to~\cite[Definition~4.3.11]{DAGIII} for the definition of a \emph{simplicial symmetric (non-unital) monoidal model $1$-category}. The next proposition follows immediately from the definitions.

\begin{prop}\label{proposition: smash, coalgebras, monoidal model category}
The (degreewise) smash product $-\wedge -$ makes the model $1$-category $\sotrcoalg$ from Theorem~\ref{theorem:modelsotrcoalg} into a simplicial (non-unital) symmetric monoidal model $1$-category. Moreover, the forgetful functor
$$\oblv\colon (\sotrcoalg, \wedge) \to (\soVect_{\kk},\otimes) $$
turns into a strong symmetric monoidal left Quillen functor. \qed
\end{prop}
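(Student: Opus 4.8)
The plan is to deduce everything formally from the closed symmetric monoidal $1$-category $(\trcoalg,\wedge)$ of Proposition~\ref{proposition: smash product, coalgebras} together with the corresponding (standard) statement for simplicial $\kk$-vector spaces, transporting along the forgetful functor $\oblv$. Concretely: equip $\sotrcoalg$ with the \emph{degreewise} smash product; since $\oblv((C\wedge D)_n)\cong\oblv(C_n)\otimes\oblv(D_n)$ for every $n$, and in particular $\oblv((C\wedge D)_0)=0$ whenever $C,D$ are reduced, this is a well-defined symmetric monoidal structure on $\sotrcoalg$ for which $\oblv\colon(\sotrcoalg,\wedge)\to(\soVect_{\kk},\otimes)$ is strong symmetric monoidal. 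By Proposition~\ref{proposition:category property of trcoalg}, applied degreewise, the functor $\oblv$ creates colimits, so colimits in $\sotrcoalg$ are computed degreewise on underlying vector spaces; combined with the closedness of $(\trcoalg,\wedge)$ this shows $-\wedge-$ preserves colimits separately in each variable, so $(\sotrcoalg,\wedge)$ is again closed, and its smash product is compatible with the simplicial tensoring of the model structure of Theorem~\ref{theorem:modelsotrcoalg} via a natural isomorphism $(C\otimes K)\wedge D\cong(C\wedge D)\otimes K$ for $K\in\sSet$ (both sides preserve colimits in $K$ and are carried by $\oblv$ to the corresponding isomorphism in $\soVect_{\kk}$).

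Next I would verify the pushout-product axiom. Recall that $\soVect_{\kk}$ with the degreewise tensor product is a (non-unital) simplicial symmetric monoidal model $1$-category: over the field $\kk$ every map of vector spaces splits and every vector space is flat, so the pushout-product of two degreewise injections is again a degreewise injection, and it is a $\pi_*$-equivalence as soon as one of the two factors is. Now let $i,j$ be cofibrations in $\sotrcoalg$, i.e. degreewise injections. Their pushout-product $i\square j$ is assembled from $-\wedge-$ and a single pushout, both of which $\oblv$ preserves (it is strong monoidal and creates colimits), so $\oblv(i\square j)=\oblv(i)\square\oblv(j)$. The right-hand side is a cofibration in $\soVect_{\kk}$, hence a degreewise injection, hence $i\square j$ is a cofibration in $\sotrcoalg$; and if $i$ or $j$ is acyclic then $\oblv(i)$ or $\oblv(j)$ is a weak equivalence, so $\oblv(i\square j)$ is an acyclic cofibration, and since the weak equivalences of $\sotrcoalg$ are detected on underlying objects, $i\square j$ is acyclic. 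The analogous pushout-product estimate against the (acyclic) cofibrations of $\sSet$ is part of the statement that the model structure of Theorem~\ref{theorem:modelsotrcoalg} is simplicial. As the monoidal structure is non-unital there is no unit axiom, so $\sotrcoalg$ is a simplicial non-unital symmetric monoidal model $1$-category.

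Finally, the forgetful functor $\oblv\colon(\sotrcoalg,\wedge)\to(\soVect_{\kk},\otimes)$ is strong symmetric monoidal by construction, it is a left adjoint (to $\Sym^{tr}$, Proposition~\ref{proposition:category property of trcoalg}), and it sends cofibrations to degreewise injections and weak equivalences to weak equivalences essentially by definition; hence it is a strong symmetric monoidal left Quillen functor. The only genuinely non-formal bookkeeping is the compatibility of $-\wedge-$ with the simplicial tensoring and with the colimit defining the pushout-product, i.e. checking that the relevant colimits in $\sotrcoalg$ really are computed on underlying vector spaces; this is exactly where one uses that $(\trcoalg,\wedge)$ is closed and that $\oblv$ creates colimits, and it is the step I would be most careful about.
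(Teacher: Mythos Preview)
Your proposal is correct and essentially matches the paper's treatment: the paper gives no proof at all (the proposition is marked with a bare $\qed$, preceded by the remark that it ``follows immediately from the definitions''), and your argument is precisely the kind of routine verification of the monoidal-model-category axioms that this phrase is gesturing at. You have simply written out the details the paper omits, using exactly the ingredients the paper has set up (Proposition~\ref{proposition: smash product, coalgebras}, Proposition~\ref{proposition:category property of trcoalg}, and Theorem~\ref{theorem:modelsotrcoalg}).
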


By applying~\cite[Proposition~4.3.13]{DAGIII} to Proposition~\ref{proposition: smash, coalgebras, monoidal model category}, we observe that the smash product $-\wedge -$ on the model ($1$-)category $\sotrcoalg$ determines the (non-unital) symmetric monoidal structure on the underlying $\infty$-category $\sCA_0$. We will denote the induced symmetric monoidal $\infty$-category by $\sCA_0^{\wedge}$. We summarize some properties of $\sCA_0^{\wedge}$ in the next proposition.

\begin{prop}\label{proposition: smash, coalgebras, infty monoidal}
The symmetric monoidal category $\sCA_0^{\wedge}$ is presentably symmetric monoidal~\cite[Definition~2.1]{NikolausSagave17}, i.e. the smash product
$$- \wedge - \colon \sCA_0 \times \sCA_0 \to \sCA_0 $$
preserves colimits separately in each variable. Moreover, the forgetful functor
$$\oblv \colon \sCA_0^{\wedge} \to \Mod_{\kk}^{\geq 1, \otimes} $$
is strong symmetric monoidal and there are cofiber sequences
$$C\sqcup D \xrightarrow{\mathrm{can}} C\times D \to C\wedge D  $$
for any pair $C,D\in \sCA_0$. \qed
\end{prop}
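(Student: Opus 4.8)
The strategy is to transport all three assertions from the monoidal model $1$-category $\sotrcoalg$ of Theorem~\ref{theorem:modelsotrcoalg} and Proposition~\ref{proposition: smash, coalgebras, monoidal model category} to its underlying $\infty$-category $\sCA_0^{\wedge}$. The decisive elementary observation is that \emph{every object of $\sotrcoalg$ is cofibrant}: the unique morphism from the initial object (the constant simplicial coalgebra $\kk$) to a reduced $C_\bullet$ is in each simplicial degree the coaugmentation $\kk\hookrightarrow C_n$, hence a degreewise monomorphism, hence a cofibration. Consequently the derived smash product on $\sCA_0$ is computed by the underived degreewise smash of Definition~\ref{definition: smash product coalgebras}, and likewise the derived tensor product on $\Mod^{\geq 1}_{\kk}$ by the underived degreewise tensor (all simplicial vector spaces being cofibrant as well). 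For presentability, Proposition~\ref{proposition: smash, coalgebras, monoidal model category} exhibits $\sotrcoalg$ as a combinatorial non-unital symmetric monoidal model $1$-category, so by~\cite[Proposition~4.3.13]{DAGIII} the structure it induces on $\sCA_0^{\wedge}$ is presentably symmetric monoidal; equivalently, the smash product is a left Quillen bifunctor and therefore preserves homotopy colimits separately in each variable, while $\sCA_0$ is presentable by Corollary~\ref{corollary: infty-category property of trcoalg}.

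For the forgetful functor, Proposition~\ref{proposition: smash, coalgebras, monoidal model category} already provides that $\oblv\colon(\sotrcoalg,\wedge)\to(\soVect_{\kk},\otimes)$ is a strong symmetric monoidal left Quillen functor, and Proposition~\ref{proposition: smash product, coalgebras} records the underlying natural isomorphism $\oblv(C\wedge D)\cong\oblv(C)\otimes\oblv(D)$. Since source and target are combinatorial monoidal model categories in which all objects are cofibrant, no derived correction is needed and passing to underlying $\infty$-categories upgrades this to the required strong symmetric monoidal functor $\oblv\colon\sCA_0^{\wedge}\to\Mod^{\geq 1,\otimes}_{\kk}$, with all coherence inherited from the point-set level; here one uses that $\oblv$ creates colimits (Corollary~\ref{corollary: infty-category property of trcoalg}) to identify the smash explicitly.

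For the cofiber sequences I first record a formal fact: in any pointed $\infty$-category the coequalizer of a morphism $f\colon A\to B$ with the zero morphism is canonically $\cofib(f)$, since a morphism out of $B$ coequalizes $f$ and $0$ precisely when it annihilates $f$. By Definition~\ref{definition: smash product coalgebras} the object $C\wedge D$ is exactly such a coequalizer, namely of the canonical map $\mathrm{can}\colon C\sqcup D\to C\times D$ against the map factoring through $\kk$, so it suffices to see that this $1$-categorical coequalizer represents a homotopy cofiber. For this I observe that $\mathrm{can}$ is a degreewise monomorphism: writing $C=\kk\oplus\overline{C}$ and $D=\kk\oplus\overline{D}$ one has $C\times D=C\otimes_\kk D\cong\kk\oplus\overline{C}\oplus\overline{D}\oplus(\overline{C}\otimes\overline{D})$, and $\mathrm{can}$ identifies $C\sqcup D=(C\oplus D)/\kk$ with the summand $\overline{C}\oplus\overline{D}$; hence $\mathrm{can}$ is a cofibration in $\sotrcoalg$, and since the model structure is left proper and every object is cofibrant, the strict cofiber of $\mathrm{can}$—which is $C\wedge D$, as one checks on underlying objects since $\oblv$ creates colimits—is a homotopy cofiber. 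Passing to $\sCA_0$ yields the cofiber sequence $C\sqcup D\xrightarrow{\mathrm{can}} C\times D\to C\wedge D$.

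The main obstacle here is not conceptual but one of bookkeeping: one must be careful that the several point-set constructions of Section~\ref{section: monoidal structures}—the smash as a coequalizer, the categorical product realized as a tensor product of coalgebras, and the comparison map $\mathrm{can}$—are homotopically meaningful, which is precisely what the cofibrancy and left-properness of $\sotrcoalg$ are invoked to guarantee, together with the fact that $\oblv$ creates colimits and hence lets all of these be computed on underlying simplicial vector spaces.
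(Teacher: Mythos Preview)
Your argument is correct and follows exactly the approach the paper has in mind: the proposition is marked with a bare $\qed$ because the paper regards it as an immediate consequence of Proposition~\ref{proposition: smash, coalgebras, monoidal model category} together with the passage to underlying $\infty$-categories via~\cite[Proposition~4.3.13]{DAGIII}, as stated in the sentence preceding the proposition. Your write-up simply makes explicit the details (cofibrancy of all objects, that $\mathrm{can}$ is a cofibration, that the defining coequalizer is a cofiber) which the paper leaves implicit.
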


\begin{rmk}\label{remark: cartesian and smash, coalgebras}
Let $\sCA_0^{\times}$ be the Cartesian symmetric monoidal category. Then the identity functor $\id\colon \sCA_0^{\times} \to \sCA_0^{\wedge}$ (resp. $\id\colon \sCA_0^{\wedge} \to \sCA_0^{\times}$) is oplax (resp. lax) symmetric monoidal.
\end{rmk}

We recall that the category $\sLxi$ of $\kk$-complete simplicial restricted Lie algebras is equivalent to the category $\sCA_0$ of reduced simplicial truncated coalgebras.

\begin{thm}\label{theorem: coalgebras and lie algebras}
There is an equivalence of $\infty$-categories
\begin{equation}\label{equation: equivalence of infty-categories}
PG: \sCA_0 \simeq \sLxi : \barW U^r
\end{equation}
such that the chain complex functor $\widetilde{C}_* \colon \sLxi \to \Mod_{\kk}^{\geq 1}$ is equivalent to the composite $\oblv \circ \barW U^r$.
\end{thm}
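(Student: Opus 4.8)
The plan is to exhibit the equivalence $PG \colon \sCA_0 \simeq \sLxi$ as an equivalence of comonadic/monadic towers, by comparing the two adjunctions we already have: $\oblv \dashv \Sym^{tr}$ on $\sCA_0$ over $\Mod_{\kk}^{\geq 1}$ (Corollary~\ref{corollary: infty-category property of trcoalg}) and $L_\xi\free \dashv \oblv$ on $\sLxi$ over $\Mod_{\kk}^{\geq 0}$ (Corollary~\ref{corollary: xi-complete is monadic}), after restricting the latter to connected objects so that both sit over $\Mod_{\kk}^{\geq 1}$. Concretely, I would first recall the $1$-categorical shadow of this statement: the classical fact (Priddy, and the ``Lie--coalgebra'' duality recorded in~\cite{koszul}) that for a reduced simplicial truncated coalgebra $C_\bullet$ the primitives of the cobar-type construction, suitably delooped, produce a simplicial restricted Lie algebra, and conversely $\barW U^r$ sends a simplicial restricted Lie algebra $L$ to the simplicial truncated coalgebra obtained by applying the restricted universal enveloping algebra $U^r$ degreewise and then taking the $\barW$ (classifying-space/bar) construction on the resulting simplicial cocommutative Hopf algebra. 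The functor $PG$ (``primitives of $G$'', i.e.\ the geometric realization/loop adjoint) is the homotopy-theoretic left adjoint of $\barW U^r$.

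Next I would check the hypotheses of the Barr--Beck--Lurie theorem (\cite[Theorem~4.7.3.5]{HigherAlgebra}) on both sides. On the Lie side this is already done: Corollary~\ref{corollary: xi-complete is monadic} gives monadicity of $\oblv$. On the coalgebra side, Corollary~\ref{corollary: infty-category property of trcoalg} gives comonadicity of $\oblv$ over $\Mod_{\kk}^{\geq 1}$. The strategy is then: (i) produce the adjunction $PG \dashv \barW U^r$ and a natural equivalence of the underlying-chain-complex functors $\widetilde{C}_* \circ PG \simeq \oblv$ on $\sCA_0$ — equivalently, identify the composite comonad $\oblv \circ \Sym^{tr}$ on $\Mod_{\kk}^{\geq 1}$ with the comonad whose comodules are $\kk$-complete connected simplicial restricted Lie algebras; (ii) invoke a monad/comonad Koszul-duality comparison — this is exactly the content of the linear-dual relationship between the ``truncated symmetric coalgebra'' comonad $\Sym^{tr}$ and the free restricted Lie monad $\oblv \circ L_\xi\free$, which on homotopy groups is the Poincaré--Birkhoff--Witt / Fresse splitting recalled in~\eqref{equation: oblv free, fresse} and Proposition~\ref{proposition: free xi-complete}(2), the product $\prod_{n\ge 1}\mathbb{L}L^r_n$ being exactly the $\kk$-linear dual of the cofree truncated cocommutative coalgebra cogenerated in each degree. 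Since both $\sCA_0$ and $\sLxi$ are, by the cited corollaries, reconstructed from $\Mod_{\kk}^{\geq 1}$ together with this (co)monad, and the (co)monads are intertwined by the equivalence, one gets an induced equivalence of $\infty$-categories; the identification $\widetilde{C}_* \simeq \oblv \circ \barW U^r$ then holds by construction because $\widetilde{C}_* = \Sigma\,\Abxi$ is precisely the functor computing the cotangent/cofree comodule, matching $\oblv$ on $\sCA_0$ after the degree shift between $\Mod_{\kk}^{\geq 0}$ and $\Mod_{\kk}^{\geq 1}$.

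The main obstacle I expect is step (i): verifying that the two comonads on $\Mod_{\kk}^{\geq 1}$ genuinely agree, not merely objectwise on homotopy groups but as $\infty$-categorical (co)monads, i.e.\ compatibly with their (co)multiplications. The subtlety is that $\Sym^{tr}$ is built $p$-truncated (the condition $x^p=0$ in Definition~\ref{definition: truncated coalgebra}) precisely so that its $\kk$-linear dual is the free \emph{restricted} (rather than unrestricted) enveloping algebra; one must track the restriction map $\xi$ through the duality, and this is where $\kk$-completeness is essential — it is what makes the product $\prod_{n\ge1}$ (as opposed to $\bigoplus_{n\ge1}$) the correct dual, so that $\oblv\circ\Sym^{tr}$ matches $\oblv\circ L_\xi\free$ rather than $\oblv\circ\free$. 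This is exactly the point isolated by Corollary~\ref{corollary: deloop is xi-complete} and Proposition~\ref{proposition: free xi-complete}, so the proof should cite those, and most likely simply cites~\cite[Theorem~4.5.x / Section~4.5]{koszul} where the equivalence~\eqref{equation: equivalence of infty-categories} is established in full; the role of the present statement is to record it and to pin down that $\widetilde{C}_*$ corresponds to $\oblv\circ\barW U^r$, which, once the equivalence is granted, is immediate from the definition $\widetilde{C}_* = \Sigma\Abxi$ together with the fact that $\barW$ followed by ``cogenerators'' computes $\Sigma$ of the indecomposables.
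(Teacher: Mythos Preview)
Your proposal is essentially correct, and you even anticipate the paper's actual move: the proof in the paper is a one-line citation to the author's earlier work \cite{koszul}, specifically Theorem~3.2.22 and Proposition~3.2.14 there (not Section~4.5 as you guessed), where the equivalence $PG \colon \sCA_0 \simeq \sLxi : \barW U^r$ and the identification $\widetilde{C}_* \simeq \oblv \circ \barW U^r$ are established. The Barr--Beck--Lurie strategy you outline is a reasonable sketch of what that cited argument involves, but for the purposes of the present paper no further detail is given or needed.
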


\begin{proof}
See~\cite[Theorem~3.2.26 and Proposition~3.2.18]{koszul}.
\end{proof}

By applying Proposition~\ref{proposition: smash, coalgebras, infty monoidal} to the equivalence~\eqref{equation: equivalence of infty-categories}, we produce a symmetric monoidal structure on the $\infty$-category $\sLxi$.

\begin{prop}\label{proposition: smash, xi-complete lie algebras}
There exists a (non-unital) presentably symmetric monoidal structure $\sLxi^{\otimes}$ on the category $\sLxi$ such that the chain complex functor
$$\widetilde{C}_* \colon \sLxi^{\otimes} \to \Mod_{\kk}^{\geq 1, \otimes} $$
is strong symmetric monoidal and there are cofiber sequences
\begin{equation}\label{equation: smash product is smash}
L\sqcup L' \xrightarrow{\mathrm{can}} L\times L' \to L\otimes L'  
\end{equation}
for any pair $L,L'\in \sLxi$. \qed
\end{prop}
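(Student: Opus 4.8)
The plan is to obtain $\sLxi^{\otimes}$ by transporting the symmetric monoidal structure $\sCA_0^{\wedge}$ of Proposition~\ref{proposition: smash, coalgebras, infty monoidal} along the equivalence $PG\colon \sCA_0 \xrightarrow{\simeq} \sLxi$ of Theorem~\ref{theorem: coalgebras and lie algebras}. Concretely, since symmetric monoidal $\infty$-categories are encoded by coCartesian fibrations over $N(\mathrm{Fin}_*)$ (equivalently, by $E_\infty$-algebra objects of $\Catinfty$, or in the non-unital case $E_\infty^{nu}$-algebra objects), and an equivalence of underlying categories induces an equivalence on these, I would simply set $\sLxi^{\otimes}$ to be the image of $\sCA_0^{\wedge}$ under this equivalence, with the tensor product $L\otimes L'$ defined as $PG(\barW U^r(L)\wedge \barW U^r(L'))$ and the coherences inherited. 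Because $PG$ is an equivalence, it is in particular a strong symmetric monoidal functor for this transported structure, and it preserves colimits (being an equivalence), so $\sLxi^{\otimes}$ is again presentably symmetric monoidal: the bifunctor $-\otimes-$ preserves colimits separately in each variable because $-\wedge-$ does on $\sCA_0$ (Proposition~\ref{proposition: smash, coalgebras, infty monoidal}) and the equivalence conjugates one to the other.

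Next I would verify the two asserted structural features. For the strong symmetric monoidality of $\widetilde{C}_*\colon \sLxi^{\otimes}\to \Mod_{\kk}^{\geq 1,\otimes}$: by the second clause of Theorem~\ref{theorem: coalgebras and lie algebras} the chain complex functor $\widetilde{C}_*$ on $\sLxi$ is equivalent to $\oblv\circ\barW U^r$, where $\barW U^r$ is the inverse equivalence $\sLxi\xrightarrow{\simeq}\sCA_0$. By construction $\barW U^r$ is strong symmetric monoidal for the transported structure (it is an equivalence of symmetric monoidal categories), and $\oblv\colon \sCA_0^{\wedge}\to\Mod_{\kk}^{\geq 1,\otimes}$ is strong symmetric monoidal by Proposition~\ref{proposition: smash, coalgebras, infty monoidal}. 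The composite of strong symmetric monoidal functors is strong symmetric monoidal, which gives the claim. For the cofiber sequences~\eqref{equation: smash product is smash}: applying the equivalence $PG$ to the cofiber sequences $C\sqcup D\xrightarrow{\mathrm{can}} C\times D\to C\wedge D$ of Proposition~\ref{proposition: smash, coalgebras, infty monoidal}, and using that $PG$ preserves colimits (hence pushouts, hence cofibers) and finite products (being an equivalence, or more concretely because both $\sLxi$ and $\sCA_0$ have products computed compatibly—recall $\oblv$ creates limits in $\sLxi$ by Corollary~\ref{corollary: xi-complete is monadic}), yields the desired cofiber sequences $L\sqcup L'\to L\times L'\to L\otimes L'$ in $\sLxi$, once we identify $PG(C\wedge D)$ with $L\otimes L'$ where $L=PG(C)$, $L'=PG(D)$, which is the definition.

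The one genuinely delicate point—and where I expect the real work to lie—is checking that $PG$ (equivalently $\barW U^r$) respects finite products, i.e. that the Cartesian product in $\sCA_0$ really does go to the Cartesian product in $\sLxi$, so that the canonical map $C\sqcup D\to C\times D$ is transported to the canonical map $L\sqcup L'\to L\times L'$ appearing in~\eqref{equation: smash product is smash}. Since any equivalence of $\infty$-categories preserves all limits and colimits that exist, it preserves finite products and coproducts automatically, so this is in fact formal; the only thing to be careful about is that the ``canonical map from coproduct to product'' is the one determined by the universal properties (the diagonal-type map), and this is preserved by any equivalence because it is characterized diagram-theoretically. Thus there is no real obstacle, and the proposition is essentially a transport-of-structure argument together with the two compatibilities already packaged in Theorem~\ref{theorem: coalgebras and lie algebras} and Proposition~\ref{proposition: smash, coalgebras, infty monoidal}; I would write the proof as a short citation-driven paragraph invoking exactly these two results.
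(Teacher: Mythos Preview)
Your proposal is correct and is exactly the argument the paper intends: the proposition is stated with a \qed{} and the sentence immediately preceding it reads ``By applying Proposition~\ref{proposition: smash, coalgebras, infty monoidal} to the equivalence~\eqref{equation: equivalence of infty-categories}, we produce a symmetric monoidal structure on the $\infty$-category $\sLxi$.'' Your write-up simply unpacks this transport-of-structure along $PG$ and the identification $\widetilde{C}_*\simeq \oblv\circ\barW U^r$ from Theorem~\ref{theorem: coalgebras and lie algebras}, which is precisely what is meant.
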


\begin{rmk}\label{remark: cartesian and smash, lie algebras}
Let $\sLxi^{\times}$ be the Cartesian symmetric monoidal category. Then the identity functor $\id\colon \sLxi^{\times} \to \sLxi^{\otimes}$ (resp. $\id\colon \sLxi^{\otimes} \to \sLxi^{\times}$) is oplax (resp. lax) symmetric monoidal.
\end{rmk}

\begin{rmk}\label{remark: smash, all lie algebras}
We wish to have a similar smash product on the entire category $\sL$ of \emph{all} simplicial restricted Lie algebras. However, since we are not aware whether or not Cartesian products in $\sL$ commute with pushouts (see~\cite[Remark~4.5.13]{koszul}), we can not say that the smash product defined on the category $\sL$ by the formula~\eqref{equation: smash product is smash} is even \emph{associative}.
\end{rmk}

\begin{thm}\label{theorem: smash, stabilization}
There is a unique (non-unital) symmetric monoidal structure $\Sp(\sLxi)^{\otimes}$ on the stabilization $\Sp(\sLxi)$ which makes the suspension functor
$$\Sigma^{\infty}_{\sLxi}\colon \sLxi^{\otimes} \to \Sp(\sLxi)^{\otimes}$$
into (strong) symmetric monoidal. Furthermore, this symmetric monoidal structure also makes the chain complex functor
$$\widetilde{C}_*\colon \Sp(\sLxi)^{\otimes} \to \Mod_{\kk}^{\otimes}$$
into a (strong) symmetric monoidal functor.
\end{thm}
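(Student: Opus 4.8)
The plan is to deduce the statement formally from Proposition~\ref{proposition: smash, xi-complete lie algebras} using the universal property of stabilization, adapted to the (non-unital) setting. Recall that a presentably symmetric monoidal $\infty$-category is precisely a commutative algebra object of $(\mathrm{Pr}^{L}_{*}, \otimes)$, the symmetric monoidal $\infty$-category of pointed presentable $\infty$-categories under Lurie's tensor product, whose monoidal unit is $\spaces_{*}$; in the non-unital case one replaces $\CAlg$ by its non-unital variant $\CAlg^{\mathrm{nu}}$. By Proposition~\ref{proposition: smash, xi-complete lie algebras} (and Proposition~\ref{proposition: smash, coalgebras, infty monoidal} via the equivalence~\eqref{equation: equivalence of infty-categories}), the smash product $\otimes$ on $\sLxi$ preserves colimits separately in each variable, so that $(\sLxi,\otimes)$ defines a non-unital commutative algebra object $\calA \in \CAlg^{\mathrm{nu}}(\mathrm{Pr}^{L}_{*})$.

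Next I would invoke the fact that $\mathrm{Sp}$ is an idempotent (unital) commutative algebra object of $(\mathrm{Pr}^{L}_{*},\otimes)$ and that for every pointed presentable $\infty$-category $\calC$ there is a canonical equivalence $\Sp(\calC) \simeq \calC \otimes \mathrm{Sp}$, under which $\Sigma^{\infty}_{\calC}$ is identified with the base-change map $\calC \simeq \calC \otimes \spaces_{*} \to \calC \otimes \mathrm{Sp}$, see~\cite[\S4.8]{HigherAlgebra}. Base-changing the non-unital algebra $\calA$ along the symmetric monoidal unit map $\Sigma^{\infty}\colon \spaces_{*} \to \mathrm{Sp}$ then produces a non-unital commutative algebra structure on $\Sp(\sLxi)$ in $\Mod_{\mathrm{Sp}}(\mathrm{Pr}^{L}_{*})$, i.e. a presentably (non-unital) symmetric monoidal structure $\Sp(\sLxi)^{\otimes}$ on the stable $\infty$-category $\Sp(\sLxi)$, for which $\Sigma^{\infty}_{\sLxi}$ is by construction a morphism of non-unital commutative algebra objects, hence strong symmetric monoidal. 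For uniqueness one uses the idempotence of $\mathrm{Sp}$: for any stable presentably (non-unital) symmetric monoidal $\infty$-category $\calD$, precomposition with $\Sigma^{\infty}_{\sLxi}$ induces an equivalence between the space of colimit-preserving strong symmetric monoidal functors $\Sp(\sLxi) \to \calD$ and the corresponding space of such functors $\sLxi \to \calD$; this characterizes $\Sp(\sLxi)^{\otimes}$ up to a contractible space of choices.

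It remains to treat the chain complex functor. By Proposition~\ref{proposition: smash, xi-complete lie algebras}, $\widetilde{C}_{*}\colon \sLxi^{\otimes} \to \Mod_{\kk}^{\otimes}$ is colimit-preserving and strong symmetric monoidal, and $\Mod_{\kk}^{\otimes}$ is stable and presentably symmetric monoidal; hence, by the universal property just established, $\widetilde{C}_{*}$ extends essentially uniquely to a colimit-preserving strong symmetric monoidal functor $\Sp(\sLxi)^{\otimes} \to \Mod_{\kk}^{\otimes}$. Finally I would identify the underlying functor of this extension with the functor~\eqref{equation: chain complex, stable} already constructed: both are colimit-preserving functors $\Sp(\sLxi) \to \Mod_{\kk}$ equipped with an equivalence to $\widetilde{C}_{*}$ after restriction along $\Sigma^{\infty}_{\sLxi}$, and such an extension is essentially unique by~\cite[Corollary~1.4.4.5]{HigherAlgebra} together with the fact that $\Sp(\sLxi)$ is generated under colimits by the image of $\Sigma^{\infty}_{\sLxi}$.

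The main obstacle is bookkeeping in the non-unital framework: the stabilization results of~\cite[\S4.8]{HigherAlgebra} are phrased for \emph{unital} symmetric monoidal structures, so one must check that base change along $\Sigma^{\infty}\colon \spaces_{*} \to \mathrm{Sp}$, the equivalence $\Sp(\calC) \simeq \calC \otimes \mathrm{Sp}$, and the idempotence argument for uniqueness all remain valid when the source algebra object is only non-unital. A clean way to discharge this is to replace $\CAlg^{\mathrm{nu}}$ by augmented commutative algebra objects via the equivalence $\CAlg^{\mathrm{nu}} \simeq \CAlg^{\mathrm{aug}}$ of~\cite[\S5.4.4]{HigherAlgebra}, run the standard unital argument on the augmented algebra $\spaces_{*} \to (\sLxi)^{+} \to \spaces_{*}$ obtained by formally adjoining a unit, and transport the result back — which additionally requires checking that the unitalization $(\sLxi)^{+}$ is still presentable. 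Alternatively, one simply repeats the proofs of the cited statements of~\cite{HigherAlgebra} for the non-unital commutative operad, where they go through verbatim.
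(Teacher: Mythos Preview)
Your proposal is correct and follows essentially the same approach as the paper: the paper invokes \cite[Theorem~5-1]{GGN15} for the first part and \cite[Proposition~5-4]{GGN15} for the second, which package precisely the idempotent-algebra / base-change argument you spell out by hand from \cite{HigherAlgebra}. The only cosmetic difference is that the paper applies the second step to $\widetilde{C}_*\colon \sLxi^{\otimes} \to \Mod_{\kk}^{\geq 1,\otimes}$ and then stabilizes the target, rather than working directly with the stable target $\Mod_{\kk}^{\otimes}$ as you do.
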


\begin{proof}
The first part follows directly by applying~\cite[Theorem~5-1]{GGN15} to the symmetric monoidal category $\sL_{\xi}^{\otimes}$. The second part follows by applying~\cite[Proposition~5-4]{GGN15} to the symmetric monoidal functor
$\widetilde{C}_*\colon \sLxi^{\otimes} \to \Mod_{\kk}^{\geq 1,\otimes}$.
\end{proof}

Consider a functor $\triv\colon \Vect_{\kk} \to \trcoalg$ which sends a vector space $W$ to the trivial coalgebra $\triv(W)=W\oplus \kk$. The functor $\triv$ admits a right adjoint $P\colon \trcoalg \to \Vect_{\kk}$ which sends a coalgebra $C$ to the subset $P(C)\subset C$ of primitive elements. We extend the adjoint pair $\triv \dashv P$ degreewise to the Quillen adjunction
\begin{equation*}
\begin{tikzcd}
\triv: \soVect_{\kk} \arrow[shift left=.6ex]{r}
&\sotrcoalg :P \arrow[shift left=.6ex,swap]{l}
\end{tikzcd}
\end{equation*}
between the model ($1$-)categories of reduced simplicial objects. We observe that the (degreewise) smash product $-\wedge -$ makes the trivial coalgebra functor
$$\triv \colon (\soVect_{\kk},\otimes) \to (\sotrcoalg,\wedge)$$
into a strong symmetric monoidal left Quillen functor. Taking the underlying $\infty$-categories, we obtain a symmetric monoidal functor
$$\triv\colon \Mod_{\kk}^{\geq 1,\otimes} \to \sCA_0^{\wedge}. $$ 
Finally, by~\cite[Example~3.2.19]{koszul}, the left adjoint functor $L_\xi\free\colon \Mod_{\kk}^{\geq 0} \to \sLxi$ from~\eqref{equation: quillen adjunction, xifree oblv} is equivalent to the composite
$$L_\xi\free(-) \simeq PG\circ \triv(\Sigma(-))\colon \Mod_{\kk}^{\geq 0} \xrightarrow{\triv(\Sigma (-))} \sCA_0 \xrightarrow{PG} \sLxi,  $$
where $PG$ is the equivalence~\eqref{equation: equivalence of infty-categories}. As a consequence, we obtain the following proposition.

\begin{prop}\label{proposition: shifted free is symmetric monoidal}
The shifted $\kk$-complete free simplicial restricted Lie algebra functor
$$L_\xi \free(\Sigma^{-1} (-))\colon \Mod_{\kk}^{\geq 1,\otimes} \to \sLxi^{\otimes} $$
is (strong) symmetric monoidal. \qed
\end{prop}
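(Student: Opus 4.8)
The plan is to reduce the claim to the two strong symmetric monoidal functors already available: the trivial coalgebra functor $\triv\colon\Mod_{\kk}^{\geq 1,\otimes}\to\sCA_0^{\wedge}$, shown to be strong symmetric monoidal in the discussion preceding the proposition, and the equivalence $PG\colon\sCA_0^{\wedge}\to\sLxi^{\otimes}$. The key point is that the desuspension $\Sigma^{-1}$ appearing in the statement is exactly what cancels the suspension in the identification $L_\xi\free(-)\simeq PG\circ\triv(\Sigma(-))$ recorded from \cite[Example~3.2.16]{koszul}, so that the shifted free functor becomes, up to equivalence, a composite of symmetric monoidal functors with \emph{no} residual shift.

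First I would observe that $\Sigma\colon\Mod_{\kk}^{\geq 0}\rightleftarrows\Mod_{\kk}^{\geq 1}\colon\Sigma^{-1}$ is an (adjoint) equivalence of $\infty$-categories, since the suspension of a connective complex is $1$-connective and desuspension is its inverse there. Precomposing the natural equivalence $L_\xi\free(-)\simeq PG\circ\triv(\Sigma(-))$ of functors $\Mod_{\kk}^{\geq 0}\to\sLxi$ with $\Sigma^{-1}$ and using $\Sigma\circ\Sigma^{-1}\simeq\id_{\Mod_{\kk}^{\geq 1}}$ then yields a natural equivalence
\begin{equation*}
L_\xi\free(\Sigma^{-1}(-))\;\simeq\;PG\circ\triv\colon\Mod_{\kk}^{\geq 1}\to\sLxi
\end{equation*}
of functors out of $\Mod_{\kk}^{\geq 1}$.

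Next I would record that $PG\colon\sCA_0^{\wedge}\to\sLxi^{\otimes}$ is strong symmetric monoidal: by construction, the symmetric monoidal structure $\sLxi^{\otimes}$ of Proposition~\ref{proposition: smash, xi-complete lie algebras} is the transport of $\sCA_0^{\wedge}$ along the equivalence $PG$ of \eqref{equation: equivalence of infty-categories}, so $PG$ underlies a symmetric monoidal equivalence. Since $\triv\colon\Mod_{\kk}^{\geq 1,\otimes}\to\sCA_0^{\wedge}$ is strong symmetric monoidal, the composite $PG\circ\triv$ is strong symmetric monoidal, and the displayed equivalence of functors transports this structure to $L_\xi\free(\Sigma^{-1}(-))$. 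This finishes the argument.

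I do not anticipate a genuine obstacle; the only thing requiring care is bookkeeping of the monoidal structures in play. In particular, $\Sigma$ by itself is \emph{not} symmetric monoidal (one has $\Sigma V\otimes\Sigma W\simeq\Sigma^2(V\otimes W)$), so it is essential that the shift is packaged as $\Sigma^{-1}$ on the source of $L_\xi\free(\Sigma^{-1}(-))$ and is never used in isolation, and that the monoidal structure on $\sLxi$ is the smash-type structure induced through $PG$ rather than any Cartesian-type structure (cf.\ Remark~\ref{remark: cartesian and smash, lie algebras}).
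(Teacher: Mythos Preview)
Your proposal is correct and follows precisely the approach the paper intends: the proposition is stated with a \qed because it is an immediate consequence of the preceding discussion, namely the symmetric monoidality of $\triv\colon\Mod_{\kk}^{\geq 1,\otimes}\to\sCA_0^{\wedge}$, the equivalence $PG$, and the identification $L_\xi\free(-)\simeq PG\circ\triv(\Sigma(-))$ from \cite[Example~3.2.16]{koszul}. Your careful remark that the shift $\Sigma^{-1}$ cancels the $\Sigma$ so that one is left with the genuinely symmetric monoidal composite $PG\circ\triv$ is exactly the point.
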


Since the suspension functor $\Sigma^{\infty}\colon \sLxi \to \Sp(\sLxi)$ is symmetric monoidal, we observe the stable variant of Proposition~\ref{proposition: shifted free is symmetric monoidal}.

\begin{cor}\label{proposition: shifted stable free is symmetric monoidal}
The shifted left adjoint functor~\eqref{equation: free stable xi-complete}:
$$\suspfreexi \circ \Sigma^{-1}\colon \Mod_{\kk}^{\otimes} \to \Sp(\sLxi)^{\otimes} $$
is (strong) symmetric monoidal.
\end{cor}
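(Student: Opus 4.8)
The plan is to exhibit $\suspfreexi\circ\Sigma^{-1}$ as the symmetric monoidal stabilization of the strong symmetric monoidal functor $L_{\xi}\free(\Sigma^{-1}(-))\colon \Mod_{\kk}^{\geq 1,\otimes}\to \sLxi^{\otimes}$ of Proposition~\ref{proposition: shifted free is symmetric monoidal}. Concretely, I would first feed this functor into~\cite[Proposition~5-4]{GGN15}, exactly as in the proof of Theorem~\ref{theorem: smash, stabilization}: since $\Sp(\sLxi)$ is stable, this produces a strong symmetric monoidal, colimit-preserving functor $\Sp(\Mod_{\kk}^{\geq 1})^{\otimes}\to \Sp(\sLxi)^{\otimes}$ whose restriction along $\Sigma^{\infty}\colon \Mod_{\kk}^{\geq 1}\to \Sp(\Mod_{\kk}^{\geq 1})$ recovers $\Sigma^{\infty}_{\sLxi}\circ L_{\xi}\free(\Sigma^{-1}(-))$ (here Theorem~\ref{theorem: smash, stabilization} also supplies the strong monoidality of $\Sigma^{\infty}_{\sLxi}$).

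Next I would identify the source of this functor. The inclusion $\Mod_{\kk}^{\geq 1}\hookrightarrow \Mod_{\kk}$ is a strong symmetric monoidal, colimit-preserving functor which, as a plain functor, is $\Sigma^{\infty}$ for $\Mod_{\kk}^{\geq 1}$; hence by~\cite[Theorem~5-1]{GGN15} it exhibits $\Mod_{\kk}^{\otimes}$ as the symmetric monoidal stabilization of $\Mod_{\kk}^{\geq 1,\otimes}$. Combining with the previous paragraph, we obtain a strong symmetric monoidal, colimit-preserving functor $F\colon \Mod_{\kk}^{\otimes}\to \Sp(\sLxi)^{\otimes}$ with $F|_{\Mod_{\kk}^{\geq 1}}\simeq \Sigma^{\infty}_{\sLxi}\circ L_{\xi}\free(\Sigma^{-1}(-))$. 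To see that $F$ has the desired underlying functor, note that both $F$ and $\suspfreexi\circ\Sigma^{-1}$ are colimit-preserving functors $\Mod_{\kk}\to \Sp(\sLxi)$, and by the universal property of stabilization (\cite[Corollary~1.4.4.5]{HigherAlgebra}) restriction along $\Sigma^{\infty}\colon \Mod_{\kk}^{\geq 1}\hookrightarrow \Mod_{\kk}$ gives an equivalence $\Fun^{L}(\Mod_{\kk},\Sp(\sLxi))\simeq \Fun^{L}(\Mod_{\kk}^{\geq 1},\Sp(\sLxi))$; so it is enough to compare them on $\Mod_{\kk}^{\geq 1}$. For $V\in \Mod_{\kk}^{\geq 1}$ we have $\Sigma^{-1}V\in \Mod_{\kk}^{\geq 0}$, so Proposition~\ref{proposition: xi-complete stable is monadic}(1), together with the identification of left adjoints $L^{st}_{\xi}\circ\susp\simeq \Sigma^{\infty}_{\sLxi}\circ L_{\xi}$ (both being left adjoint to the same composite of right adjoints), gives $\suspfreexi(\Sigma^{-1}V)\simeq \Sigma^{\infty}_{\sLxi}L_{\xi}\free(\Sigma^{-1}V)\simeq F(V)$. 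Therefore $\suspfreexi\circ\Sigma^{-1}\simeq F$ and inherits the strong symmetric monoidal structure of $F$.

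The argument is entirely formal, so the only delicate point is the bookkeeping: tracking the shift $\Sigma^{-1}$ against the fact that it carries $\Mod_{\kk}^{\geq 1}$ into $\Mod_{\kk}^{\geq 0}$ (so that Proposition~\ref{proposition: xi-complete stable is monadic}(1) applies), working throughout with the non-unital monoidal structures, and making the identification $\Sp(\Mod_{\kk}^{\geq 1})^{\otimes}\simeq \Mod_{\kk}^{\otimes}$ with $\Sigma^{\infty}$ the inclusion. As a consistency check, applying the strong symmetric monoidal functor $\widetilde{C}_{*}\colon \Sp(\sLxi)^{\otimes}\to \Mod_{\kk}^{\otimes}$ of Theorem~\ref{theorem: smash, stabilization} to $\suspfreexi\circ\Sigma^{-1}$ yields $\widetilde{C}_{*}\circ\suspfreexi\circ\Sigma^{-1}\simeq \Sigma\circ\Sigma^{-1}\simeq \id_{\Mod_{\kk}}$ by Remark~\ref{remark: homology of a free algebra}, which is visibly strong symmetric monoidal.
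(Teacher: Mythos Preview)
Your proof is correct and follows essentially the same approach as the paper: apply \cite[Proposition~5-4]{GGN15} to the colimit-preserving symmetric monoidal composite $\Sigma^{\infty}_{\sLxi}\circ L_{\xi}\free(\Sigma^{-1}(-))\colon \Mod_{\kk}^{\geq 1,\otimes}\to \Sp(\sLxi)^{\otimes}$, then identify the resulting source $\Sp(\Mod_{\kk}^{\geq 1})^{\otimes}$ with $\Mod_{\kk}^{\otimes}$. The paper compresses all of this into a single sentence, leaving the identifications $\Sp(\Mod_{\kk}^{\geq 1})\simeq \Mod_{\kk}$ and the agreement of the induced functor with $\suspfreexi\circ\Sigma^{-1}$ implicit; your version spells out exactly these points (via \cite[Theorem~5-1]{GGN15}, \cite[Corollary~1.4.4.5]{HigherAlgebra}, and Proposition~\ref{proposition: xi-complete stable is monadic}(1)), which is helpful but not a different argument.
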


\begin{proof}
We apply~\cite[Proposition~5-4]{GGN15} to the colimit-preserving symmetric monoidal composite
$$\Mod_{\kk}^{\geq 1,\otimes} \xrightarrow{L_\xi \free(\Sigma^{-1})} \sLxi^{\otimes} \xrightarrow{\Sigma^{\infty}} \Sp(\sLxi)^{\otimes}.$$
\end{proof}

Let $\calC$ be a pointed presentable $\infty$-category. The Cartesian product on $\calC$ gives it the structure of a symmetric monoidal $\infty$-category, which is encoded as a (non-unital) $\infty$-operad $$\calC^{\times}_{\mathrm{nu}} \to N\mathrm{Surj}$$ whose structure map is a coCartesian fibration, see~\cite[Section~2.1]{HigherAlgebra}. For any (non-unital) differentiable $\infty$-operad $\calC^{\otimes}_{\mathrm{nu}} \to N\mathrm{Surj}$, which encodes a symmetric monoidal structure on $\calC$, J.~Lurie defines the stabilization $\Sp(\calC^{\otimes}) \to N\mathrm{Surj}$ of the $\infty$-operad $\calC^{\otimes}$ and proves its existence, see~\cite[Section~6.2.4]{HigherAlgebra}. We note that the resulting object $\Sp(\calC^{\otimes})$ is only a stable (non-unital) $\infty$-operad (see~\cite[Definition~6.2.4.10]{HigherAlgebra} and~\cite[Definition~2.11]{Heuts21}), but not necessary a (non-unital) symmetric monoidal category. 

\begin{prop}\label{proposition: monoidal structures coincide}
The lax symmetric monoidal functor
$$\Sp(\sLxi)^{\otimes} \xrightarrow{\Omega^{\infty}_0} \sL_{\xi,0}^{\otimes} \xrightarrow{\id} \sL_{\xi,0}^{\times} $$
exhibits $\Sp(\sLxi)^{\otimes}$ as the stabilization of the Cartesian monoidal structure $\sL_{\xi,0}^{\times}$.
Here $\Omega^{\infty}_0$ is the connective cover of $\Omega^{\infty}$, i.e. the right adjoint to the strong symmetric monoidal functor
$$\sL_{\xi,0}^{\otimes} \hookrightarrow \sLxi^{\otimes} \xrightarrow{\susp} \Sp(\sLxi)^{\otimes}.  $$
In particular, there are equivalences of stable $\infty$-operads
$$\Sp(\sLxi^{\times}) \simeq \Sp(\sLxi^{\otimes}) \simeq \Sp(\sLxi)^{\otimes}.$$
\end{prop}

\begin{proof}
By Corollary~\ref{corollary: xi-complete is differentiable}, the $\infty$-operad $\sL^{\times}_{\xi,0}$ is differentiable, see~\cite[Definition~6.2.4.11]{HigherAlgebra}. Therefore, the stabilization $\Sp(\sL^{\times}_{\xi,0})$ exists and unique, see~\cite[Propositions~6.2.4.14 and~6.2.4.15]{HigherAlgebra}. By~\cite[Definition~6.2.4.10]{HigherAlgebra}, the $\infty$-operad $\Sp(\sL^{\times}_{\xi,0})$ is corepresentable and it is defined by $k$-th symmetric tensor products $$\otimes^k\colon \Sp(\sL_{\xi})^{\times k}\simeq \Sp(\sL_{\xi,0})^{\times k} \to \Sp(\sL_{\xi,0}) \simeq \Sp(\sL_{\xi}), \;\; k\geq 1,$$ see~\cite[Remark~6.2.4.4]{HigherAlgebra}, which are (multi-)derivatives of the functor
\begin{equation}\label{equation: monoidal coincide, eq1}
(X_1,\ldots,X_k)\in \Sp(\sL_{\xi})^{\times k} \mapsto \Omega^{\infty}_0(X_1)\times \ldots \times \Omega^{\infty}_0(X_k) \in \sL_{\xi,0}, 
\end{equation}
see~\cite[Definition~6.2.1.1]{HigherAlgebra}. Note that the derivative of the functor~\eqref{equation: monoidal coincide, eq1} coincides with the derivative of its composite with $\susp$. In other words, $\otimes^k$, $k\geq 1$ is the derivative of the functor
\begin{equation}\label{equation: monoidal coincide, eq2}
(X_1,\ldots,X_k)\in \Sp(\sL_{\xi})^{\times k} \mapsto \susp\left(\Omega^{\infty}_0(X_1)\times \ldots \times \Omega^{\infty}_0(X_k)\right) \in \Sp(\sL_{\xi}).
\end{equation}
By Proposition~\ref{proposition: smash, xi-complete lie algebras} and Theorem~\ref{theorem: smash, stabilization}, the reduction of the functor~\eqref{equation: monoidal coincide, eq2} is the functor
\begin{align*}
(X_1,\ldots,X_k)\in \Sp(\sL_{\xi})^{\times k} &\mapsto \susp\left(\Omega^{\infty}_0(X_1)\otimes \ldots \otimes \Omega^{\infty}_0(X_k)\right) \\
&\simeq \susp\Omega^{\infty}_0(X_1)\otimes \ldots \otimes \susp \Omega^{\infty}_0(X_k) \in \Sp(\sL_{\xi}).
\end{align*}
Since the monoidal product for the category $\Sp(\sLxi)^{\otimes}$ commutes with colimits, the multi-linearization of the last functor sends a tuple $(X_1,\ldots,X_k)\in \Sp(\sL_{\xi})^{\times k}$ to the following product of linearizations
\begin{equation*}
P_1(\susp\Omega^{\infty}_0)(X_1)\otimes \ldots \otimes P_1(\susp\Omega^{\infty}_0)(X_k) \in \Sp(\sL_{\xi}).
\end{equation*}
Finally, $P_1(\susp\Omega^{\infty}_0) \simeq P_1(\susp\Omega^{\infty}) \simeq \id_{\sLxi}$ which implies the assertion.
\end{proof}

\begin{rmk}\label{remark: stable infty-operad}
The abstract stabilization $\Sp(\sLxi^{\times})$ of the Cartesian symmetric monoidal category $\sLxi^{\times}$ is a symmetric monoidal category. At the moment of writing, we are not aware whether or not the stabilization $\Sp(\sL^{\times})$ of the 
Cartesian symmetric monoidal category $\sL^{\times}$ is still symmetric monoidal, cf. Remark~\ref{remark: smash, all lie algebras}.
\end{rmk}

\subsection{\texorpdfstring{$\Lambda$}{Lambda}-algebra}\label{section: lambda algebra}
Recall that the \emph{lambda algebra} $\Lambda=\Lambda_{*,*}$ is the associative bigraded algebra over $\F_p$ generated by the elements $\lambda_a, a\geq 1$ of bidegree $|\lambda_a|=(2a(p-1)-1,1)$ and $\mu_a, a\geq 0$ of bidegree $|\mu_a|=(2a(p-1),1)$ (resp. $\lambda_a, a\geq 0$ of bidegree $|\lambda_a|=(a,1)$ if $p=2$) subject to the following Adem-type relations:
\begin{enumerate}
\item If $p$ is odd, $b\geq pa$, and $\e\in\{0,1\}$, then
\begin{align}\label{equation: lambda, adem1, p is odd}
\lambda_a\nu^{\e}_{b} &= \sum_{i=0}^{a+b}(-1)^{i+a+\e}\binom{(p-1)(b-i)-\e}{i-pa}\nu^{\e}_{a+b-i}\lambda_i \\
&+(1-\e)\sum_{i= 0}^{a+b}(-1)^{i+a+1}\binom{(p-1)(b-i)-1}{i-pa}\lambda_{a+b-i}\mu_i. \nonumber 
\end{align}
\item If $p$ is odd, $b>pa$, and $\e\in\{0,1\}$, then
\begin{equation}\label{equation: lambda, adem2, p is odd}
\mu_a\nu^{\e}_{b} = \sum_{i=1}^{a+b}(-1)^{i+a}\binom{(p-1)(b-i)-1}{i-pa-1}\mu_{a+b-i}\nu^{\e}_i.
\end{equation}
\item If $p=2$ and $b>2a$, then 
\begin{equation}\label{equation: lambda, adem3, p=2}
\lambda_a\lambda_{b} = \sum_{i=1}^{a+b}\binom{b-i-1}{i-2a-1}\lambda_{a+b-i}\lambda_i. 
\end{equation}
\end{enumerate}
Here we set $\nu_a^{0}=\mu_a, a\geq 0$ and $\nu_a^1=\lambda_a,a>0$. Notice that we use the definition of the lambda algebra from~\cite[Definition~7.1]{Wellington82}, but not from the original paper~\cite{6authors}. 

A (possibly void) sequence $I=(i_1,\e_1,\ldots, i_s,\e_s)$ (resp. $I=(i_1,\ldots, i_s)$), $i_t\geq \e_t$, $\e_t\in\{0,1\}$, $1 \leq t\leq s$ (resp. $i_t\geq 0$, $1\leq t\leq s$) determines the element $\nu_I = \nu_{i_1}^{\e_1}\cdot \ldots \cdot \nu_{i_s}^{\e_s}$ (resp. $\lambda_I=\lambda_{i_1}\cdot \ldots \cdot \lambda_{i_s}$) of $\Lambda$.

\begin{dfn}\label{definition: admissible monomials}
A sequence $I=(i_1,\e_1,\ldots,i_s,\e_s)$ (resp. $I=(i_1,\ldots,i_s)$) is called \emph{admissible} if $pi_j-\e_j \geq i_{j+1}$, $1 \leq j\leq s-1$ (resp. $2i_j \geq i_{j+1}$, $1\leq j \leq s-1$). The monomial $\nu_I \in \Lambda$ (resp. $\lambda_I \in \Lambda$) is called \emph{admissible} if the sequence $I$ is admissible.
\end{dfn}

Let $L\in\sL$ be a simplicial restricted Lie algebra. Then a homotopy class $x\in \pi_q(L)$ is represented by a map $x\colon \Sigma^q \kk \to \oblv(L) \in \Mod_{\kk}^{\geq 0}$, and we will write $$\overline{x}\colon \free(\Sigma^{q}\kk) \to L$$ for its adjoint in the category $\sL$.  If $x\in \pi_q(L)$ and $\alpha\in \pi_{q'}(\free(\Sigma^q \kk))$, then we write $x\alpha \in \pi_{q'}(L)$ for the map $x\alpha \colon \Sigma^{q'}\kk \to \oblv(L)$ whose adjoint $\overline{x\alpha}$ is equivalent to the composite 
$$\overline{x\alpha}\simeq \overline{x}\circ \overline{\alpha} \colon \free(\Sigma^{q'}\kk) \to \free(\Sigma^q \kk) \to L.$$
Therefore any homotopy class $\alpha\in\pi_{q'}(\free (\Sigma^q \kk))$ defines a natural homotopy operation $x\mapsto x\alpha$, $x\in \pi_q(L)$.

If $L$ is a simplicial restricted Lie algebra, then the homotopy groups $\pi_*(L)$ inherit a graded restricted Lie algebra structure, see e.g.~\cite[Section~6]{MilnorMoore65} for the definition. We write $x^{[p]}\in \pi_{pq}(L)$, $x\in \pi_q(L)$ for the induced $p$-operation. We note that $x^{[p]}=0$ for $x\in \pi_q(L)$ if $p$ and $q$ are both odd.

\begin{prop}\label{proposition: lambda, mu, adem}
For any $q\geq 0$ there are $\nu^{\e}_a \in \pi_{q+2a(p-1)-\e}(\free(\Sigma^q \kk))$, $a\geq \e$, $\e\in\{0,1\}$ (resp. $\lambda_a \in \pi_{q+a}(\free(\Sigma^q \kk))$ if $p=2$) such that $\nu^{\e}_a = 0$ if and only if $2a>q$ (resp. $\lambda_a=0$ if and only if $a>q$) and the induced natural homotopy operations $x\mapsto x\nu^{\e}_a$ (resp. $x\mapsto x\lambda_a$), $x\in \pi_q(L)$, $L\in\sL$
\begin{enumerate}
\item are semi-linear, i.e. $(\alpha x)\nu_a^{\e}=\alpha^p(x\nu_a^{\e})$, $\alpha\in\kk$ (resp. $(\alpha x)\lambda_a=\alpha^2(x\lambda_a)$, $\alpha\in\kk$);
\item are additive if $2a<q$ or $2a=q$ and $\e=1$ (resp. $a<q$);
\item are stable, i.e. $(\sigma x)\nu^{\e}_a = \sigma(x\nu^{\e}_a)$, where $\sigma\colon \pi_q(L) \to \pi_{q+1}(\Sigma L)$ is the suspension homomorphism;
\item satisfy Adem relations~\eqref{equation: lambda, adem1, p is odd} and~\eqref{equation: lambda, adem2, p is odd} (resp.~\eqref{equation: lambda, adem3, p=2}).
\item satisfy $[x,y\nu^{\e}_a]=0$ (resp. $[x,y\lambda_a]=0$), $x\in \pi_{q'}(L)$, $y\in \pi_q(L)$, and $2a<q$ or $2a=q$ and $\e=1$ (resp. $a<q$).
\end{enumerate}
Finally, the top operation $x\mapsto x\nu^0_{a}$, $x\in \pi_{2a}(L)$ (resp. $x\mapsto x\lambda_a$, $x\in \pi_a(L)$) coincides with $p$-operation $x\mapsto x^{[p]}$.
\end{prop}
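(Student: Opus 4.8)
The plan is to reduce every identity to the universal example and then invoke the known description of the homotopy of a free simplicial restricted Lie algebra on a sphere. For $x\in\pi_q(L)$ with adjoint $\overline{x}\colon\free(\Sigma^q\kk)\to L$, the assignment $x\mapsto x\alpha$ is by definition $\overline{x}_*$ applied to $\alpha\in\pi_*(\free(\Sigma^q\kk))$; hence it is automatically natural, and taking $L=\free(\Sigma^q\kk)$ and $x=\iota_q$ (the weight-one generator, whose adjoint is the identity) each asserted identity becomes an identity in $\pi_*(\free(\Sigma^q\kk))$, or in $\pi_*$ of an iterated free algebra for the composite operations. So the first step is to recall from~\cite{koszul} (building on~\cite{Curtis_lower},~\cite{6authors},~\cite{IRS20}), together with Remark~\ref{remark: free lie algebras, fresse} and Proposition~\ref{proposition: free xi-complete}, the splitting $\oblv\circ\free(\Sigma^q\kk)\simeq\bigoplus_{n\geq1}\mathbb{L}L^r_n(\Sigma^q\kk)$ and the unstable $\Lambda$-algebra model for $\pi_*(\free(\Sigma^q\kk))$, whose $\kk\{\xi\}$-basis consists of admissible monomials $\nu_I$ subject to an instability bound in $q$, a length-$s$ monomial lying in the weight-$p^s$ summand $\pi_*(\mathbb{L}L^r_{p^s}(\Sigma^q\kk))$. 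We take $\nu^\e_a$ (resp.\ $\lambda_a$) to be the length-one admissible monomials; by the instability bound they vanish exactly when $2a>q$ (resp.\ $a>q$) and they carry the stated bidegrees.

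Granting this dictionary, properties (1), (3) and the final assertion are short. For (1), $\nu^\e_a$ lies in the weight-$p$ summand $\mathbb{L}(\bfLie_p\otimes(-)^{\otimes p})^{\Sigma_p}(\Sigma^q\kk)$, on which multiplication by $\alpha\in\kk$ on the generator $\Sigma^q\kk$ acts through $\alpha^{\otimes p}=\alpha^p$; chasing adjoints exactly as in the paragraph preceding the proposition yields $(\alpha x)\nu^\e_a=\alpha^p(x\nu^\e_a)$. For (3), stability is the statement that the classes $\nu^\e_a$ are compatible with the suspension maps $\Sigma\,\mathbb{L}L^r_p(\Sigma^q\kk)\to\mathbb{L}L^r_p(\Sigma^{q+1}\kk)$, which holds because this map is an isomorphism in the degree of $\nu^\e_a$ by the Freudenthal-type estimate of Lemma~\ref{lemma: nonabelian derived, freudenthal}. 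For the last claim, the restricted $p$-operation is natural, $\iota_q^{[p]}$ lies in weight $p$, and $pq=q+2a(p-1)$ precisely when $q=2a$; then $\iota_{2a}^{[p]}$ is a nonzero length-one class of the right bidegree, so up to normalisation $\nu^0_a=\iota_{2a}^{[p]}$, and for any $x\in\pi_{2a}(L)$ we get $x\nu^0_a=\overline{x}_*(\iota_{2a}^{[p]})=(\overline{x}_*\iota_{2a})^{[p]}=x^{[p]}$.

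Properties (2) and (5) both rest on the fact that in the asserted range $\nu^\e_a$ is a primitive (equivalently, stable) class. In that range $\nu^\e_a$ desuspends — again by Lemma~\ref{lemma: nonabelian derived, freudenthal}, now applied for surjectivity of the suspension in the relevant degree — and, with respect to the cogroup structure on $\free(\Sigma^q\kk)$ obtained by applying $\free$ to the diagonal $\Sigma^q\kk\to\Sigma^q\kk\oplus\Sigma^q\kk$ and using $\free(V)\sqcup\free(W)\simeq\free(V\oplus W)$, the Hilton--Milnor-type cross terms obstructing co-additivity of $\overline{\nu^\e_a}$ are brackets of strictly lower connectivity, hence vanish on $\nu^\e_a$; this gives (2). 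For (5), a primitive class has trivial bracket with everything, since the Lie bracket on $\pi_*$ measures the failure of stability and dies in the stable category $\Sp(\sL)$; hence $[x,y\nu^\e_a]=0$ in the stable range. For the Adem relations (4), reducing once more to $L=\free(\Sigma^q\kk)$, $x=\iota_q$, each relation becomes an identity in $\pi_*(\free(\Sigma^q\kk))$ between composites $\overline{\nu^{\e}_{a}}\circ\overline{\nu^{\e'}_{b}}$, which under the $\Lambda$-model identification are the products $\nu^{\e}_a\nu^{\e'}_b$; the relations~\eqref{equation: lambda, adem1, p is odd},~\eqref{equation: lambda, adem2, p is odd},~\eqref{equation: lambda, adem3, p=2} then hold by the defining presentation of $\Lambda$ from~\cite{6authors}. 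The cases $p$ odd and $p=2$ are parallel, $\nu^0_a$ reading as $\lambda_a$ when $p=2$.

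The main obstacle is exactly the dictionary used throughout: one must identify the two a priori different incarnations of the length-one classes — the operational one (composition with $\overline{\nu^\e_a}$) and the algebraic one (the generator $\nu_a$ of $\Lambda$ inside the model for $\pi_*\free$) — and, for (4), check that composition of homotopy operations corresponds to multiplication in $\Lambda$, while tracking carefully the precise instability ranges in which each statement holds (this is where the distinction between $2a<q$, and $2a=q$ with $\e=0$ versus $\e=1$, enters). Once this identification is set up along the lines of~\cite{koszul} and~\cite{6authors}, the five properties follow as sketched.
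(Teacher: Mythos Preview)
The paper does not give a proof of this proposition at all: it simply cites the original sources, writing ``See~\cite{6authors} and~\cite[Proposition~8.3]{BC70}. See also \cite[Proposition 13.1]{Wellington82} for a better exposition of the odd prime case.'' This is a classical result about the homotopy operations on simplicial restricted Lie algebras, and the paper is content to invoke it as background.

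Your proposal, by contrast, attempts an actual proof sketch using the machinery assembled in the paper (the Fresse splitting~\eqref{equation: oblv free, fresse}, the Freudenthal estimate of Lemma~\ref{lemma: nonabelian derived, freudenthal}, the Hilton--Milnor decomposition, and the $\Lambda$-algebra model). The reduction to the universal example and the arguments for (1), (3), (4), and the final assertion are essentially correct in outline and are indeed how one would reconstruct the classical proofs. Your treatment of (2) and (5) is more impressionistic: the claim that ``cross terms are brackets of strictly lower connectivity'' needs the precise degree bookkeeping carried out in~\cite{6authors} and~\cite{BC70}, and the assertion that ``a primitive class has trivial bracket with everything'' is not literally how (5) is proved there---rather, one shows directly that classes in the image of these operations lie in a certain sub-Lie-algebra on which brackets vanish for weight reasons. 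You correctly identify the main obstacle as the dictionary between the operational and algebraic incarnations of the length-one classes, and this identification is precisely the content of the cited references; your sketch does not actually supply it. So what you have written is a reasonable expanded gloss on the citations the paper gives, but it does not replace them: the substantive work you defer to ``along the lines of~\cite{koszul} and~\cite{6authors}'' is the whole proof.
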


\begin{proof}
See~\cite{6authors} and~\cite[Proposition~8.3]{BC70}. See also \cite[Propostion 13.1]{Wellington82} for a better exposition of the odd prime case.
\end{proof}

\begin{rmk}\label{remark: xi and nu_0 actions}
Let $L\in\sL$ be a simplicial restricted Lie algebra. Then the map $\xi\colon \pi_n(L) \to \pi_n(L)$, $n\geq 0$ from~\eqref{equation: xi-action} coincides (up to a multiplication on a non-zero constant) with the homotopy operation $x\mapsto x\mu_0=x\nu_0^0$ (resp. $x\mapsto x\lambda_0$), $x\in \pi_n(L)$; see Remark~\cite[Remark~6.4.18]{koszul}.
\end{rmk}

Let $X\in\Sp(\sL)$ be a spectrum object in the category $\sL$. Using the third part of Proposition~\ref{proposition: lambda, mu, adem}, we extend the unstable homotopy operations given there to the stable homotopy groups $\pi_*(X)$. In this way, the stable homotopy groups $\pi_*(X)$ naturally form a \emph{right (semi-linear) $\Lambda$-module}.

Recall that $\mathrm{char}(\kk)=p$. Let $V\in \Vect_{\kk}$ be a vector space over $\kk$. We define the \emph{$(-1)$-th Frobenius twist} $V^{(-1)}\in \Vect_{\kk}$ of $V$ as follows. As an abelian group, $V^{(-1)}=V$ and we endow it with a new $\kk$-action 
$$-\cdot-\colon \kk \times V^{(-1)} \to V^{(-1)} $$
given by $a\cdot v = a^p v$, $a\in \kk$, $v\in V^{(-1)}=V$. Since the field $\kk$ is perfect, there also exists the inverse operation; namely, we define the \emph{Frobenius twist} of $V$ as a unique $\kk$-vector space $V^{(1)}$ such that $(V^{(1)})^{(-1)}= V$.

We write $\Lambda_s, s\geq 0$ for the subspace $\Lambda_s =\Lambda_{*,s}\subset \Lambda$; in other words, $\Lambda_s, s\geq 0$ is the subspace of $\Lambda$ spanned by the monomials of length precisely $s$.

\begin{thm}\label{theorem: stable homotopy groups of a free object}
Let $V\in \Mod_{\kk}$ and let $\suspfree\colon \Mod_{\kk}\to \Sp(\sL)$ be the left adjoint~\eqref{equation: free lie, stable}. Then the natural map
$$\bigoplus_{s\geq 0}\pi_*(V)^{(s)}\otimes \Lambda_s \xrightarrow{\cong} \pi_*(\suspfree(V)) \cong \pi_*(\oblv \circ \suspfree(V)) $$
is an isomorphism, where $v \otimes \nu_{I} \mapsto v\nu_{I}$ (resp. $v \otimes \lambda_{I}\mapsto v\lambda_I$), $v\in \pi_*(V)$, $\nu_{I} \in \Lambda_s$ (resp. $\lambda_{I} \in \Lambda_s$). Moreover, under the decomposition of Proposition~\ref{proposition: stable is monadic}, we have
$$
\colim_{m}\pi_*\bigg(\Sigma^{-m}\mathbb{L}L^r_n(\tau_{\geq 0} \Sigma^m V)\bigg)\cong
\begin{cases}
\pi_*(V)^{(s)}\otimes \Lambda_s & \mbox{if $n=p^s$, $s\geq 0$,}\\
0 & \mbox{otherwise,}
\end{cases}
$$
where $\mathbb{L}L^r_n \colon \Mod_{\kk}^{\geq 0} \to \Mod_{\kk}^{\geq 0}$ are the nonabelian derived functors from~\eqref{equation: oblv free, fresse}.
\end{thm}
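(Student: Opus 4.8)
The plan is to combine the splitting of $\oblv\circ\suspfree$ from Proposition~\ref{proposition: stable is monadic}(3) with the classical computation of the homotopy of a free simplicial restricted Lie algebra due to~\cite{6authors}, and to pass from the unstable answer to the stable one by a colimit of suspensions. As a first reduction I would observe that both sides of the asserted isomorphism, viewed as functors of $V\in\Mod_\kk$, send (co)fibre sequences to long exact sequences and commute with filtered colimits: on the source because $\pi_*$ is exact, each $\Lambda_s$ is $\F_p$-free, and the Frobenius twist is exact; on the target because $\suspfree$ is exact (a left adjoint between stable $\infty$-categories) and the forgetful functor~\eqref{equation: forgetful, stable} is exact, so $\pi_*\suspfree(-)\cong\pi_*(\oblv\circ\suspfree(-))$ is exact. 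Since the comparison map $v\otimes\nu_I\mapsto v\nu_I$ is assembled from the homotopy operations of Proposition~\ref{proposition: lambda, mu, adem}, which are natural with respect to maps of spectrum objects and hence compatible with the connecting maps, a cell-induction argument reduces the whole statement to the case $V=\Sigma^l\kk$.

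For $V=\Sigma^l\kk$ I would use Proposition~\ref{proposition: stable is monadic}(1) to rewrite $\pi_*\suspfree(\Sigma^l\kk)\cong\colim_m\pi_{*+m}\free(\Sigma^{l+m}\kk)$, and then invoke~\cite{6authors} (in the normalisation of~\cite[Proposition~13.1]{Wellington82} at odd primes, and~\cite[Proposition~8.3]{BC70}): the group $\pi_{*+m}\free(\Sigma^{l+m}\kk)$ has an $\F_p$-basis consisting of the classes $\iota_{l+m}\nu_I$ (resp. $\iota_{l+m}\lambda_I$ for $p=2$) with $\nu_I$ admissible and subject to the instability bound determined by the degree $l+m$. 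Lemma~\ref{lemma: nonabelian derived, freudenthal}, fed through the splitting~\eqref{equation: oblv free, fresse}, shows that the suspension maps appearing in this colimit are highly connected, so that $\pi_*$ commutes with the colimit and the instability bound tends to infinity; in the colimit every admissible monomial survives. Using that the admissible monomials of length $s$ form a basis of $\Lambda_s$ and that each operation $\nu_a^{\e}$ is $p$-semilinear by Proposition~\ref{proposition: lambda, mu, adem}(1), the length-$s$ part is identified with $\pi_*(\Sigma^l\kk)^{(s)}\otimes\Lambda_s$, which gives the first isomorphism.

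For the refined statement I would first note that, by~\cite{AD01}, the stabilised derived Lie power $\colim_m\Sigma^{-m}\mathbb{L}L^r_n(\tau_{\geq 0}\Sigma^m V)$ is trivial unless $n=p^s$, so that Proposition~\ref{proposition: stable is monadic}(3) leaves exactly one summand for each $s\geq 0$; it then remains to match this splitting with the length filtration of $\Lambda$. On $V=\Sigma^l\kk$ the $n=p^s$ summand of~\eqref{equation: oblv free, fresse} is spanned by the $\iota_l\nu_I$ with $\nu_I$ of length $s$ — this is part of the classical picture underlying the Arone--Mahowald basis, cf.~\cite{AM99},~\cite{AD01}. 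For general $V$ it follows by naturality: regarded as a natural transformation, the operation $x\mapsto x\nu_a^{\e}$ multiplies the weight in the decomposition~\eqref{equation: oblv free, fresse} by a factor of $p$ (as one checks on $\Sigma^l\kk$), so under the first isomorphism the summand $\pi_*(V)^{(s)}\otimes\Lambda_s$ maps into the $n=p^s$ summand of $\oblv\circ\suspfree(V)$; the isomorphism is therefore diagonal for the two $\Z_{\geq 0}$-gradings, and each graded component must be an isomorphism, which yields the displayed formula.

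I expect the main obstacle to be the reduction to the case $V=\Sigma^l\kk$, on two counts. First, one must carefully track the various sign and indexing conventions — in particular, passing from the original normalisation of~\cite{6authors} to the one of~\cite{Wellington82} used here — and keep the Frobenius twists straight. Second, one must make rigorous the slogan ``suspending sends the unstable answer, with its instability constraints, to the full free $\Lambda$-module''; this is precisely where Lemma~\ref{lemma: nonabelian derived, freudenthal} and the exact shape of the instability bound in~\cite{BC70} do the real work, and where a careless argument could silently drop or duplicate basis elements.
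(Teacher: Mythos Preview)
Your proposal is correct. The paper's own proof is a bare citation: ``This theorem is one of the main results of~\cite{6authors}. Namely, see Propositions~5.3 and~5.4' in ibid.'' You have essentially unpacked what that citation contains --- the unstable computation of $\pi_*\free(\Sigma^{l+m}\kk)$ in terms of admissible monomials subject to an instability bound, followed by passage to the colimit in $m$ to remove that bound --- and phrased the reduction to $V=\Sigma^l\kk$ via the standard cell-induction/five-lemma argument using exactness and filtered-colimit preservation on both sides.

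The only genuine deviation is in the second part: the paper reads both the vanishing for $n\neq p^s$ and the identification of the $n=p^s$ summand with $\pi_*(V)^{(s)}\otimes\Lambda_s$ directly from~\cite{6authors}, whereas you invoke~\cite{AD01} separately for the vanishing and then match gradings by tracking how each $\nu_a^{\e}$ scales the weight in the decomposition~\eqref{equation: oblv free, fresse}. This is a harmless variation --- the \cite{AD01} vanishing is logically equivalent to the $n\neq p^s$ case of the \cite{6authors} computation --- and your weight-tracking argument is exactly the observation underlying the second grading of Remark~\ref{remark: second grading}. Your worries about sign/indexing conventions and about the ``instability bound goes to infinity'' step are well-placed but routine; Lemma~\ref{lemma: nonabelian derived, freudenthal} handles the latter exactly as you say.
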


\begin{proof}
This theorem is a one of the main results of~\cite{6authors}. Namely, see Propositions~5.3 and~5.4' in ibid.
\end{proof}

\begin{rmk}\label{remark: second grading}
Let $V\in \Mod_{\kk}$. We write $\pi_{*,s}(\suspfree(V))$ for the colimit
$$\pi_{*,s}(\suspfree(V)) = \colim_{m}\pi_*\bigg(\Sigma^{-m}\mathbb{L}L^r_{p^s}(\tau_{\geq 0} \Sigma^m V)\bigg), \;\; s\geq 0. $$
Then, by Theorem~\ref{theorem: stable homotopy groups of a free object}, $\pi_*(\suspfree(V))=\bigoplus_{s\geq 0} \pi_{*,s}(\suspfree(V))$. In particular, the homotopy groups of a free object are \emph{bigraded}.
\end{rmk}

\begin{cor}\label{corollary: stable homotopy groups of a xi-complete free object}
Let $V\in \Mod_{\kk}^{+}$ be a bounded below chain complex. Then there is a natural isomorphism of right $\Lambda$-modules
$$\pi_*(\suspfreexi(V))= \pi_*(\oblv \circ \suspfreexi(V))\cong \prod_{s\geq 0}\pi_*(V)^{(s)}\otimes \Lambda_s \cong \prod_{s\geq 0}\pi_{*,s}(\suspfree(V)), $$
where $\suspfreexi\colon \Mod^{+}_{\kk}\to \Sp(\sLxi)$ is the left adjoint~\eqref{equation: free stable xi-complete}.
\end{cor}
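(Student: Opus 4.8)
The plan is to read off the underlying chain complex of $\suspfreexi(V)$ from Proposition~\ref{proposition: stable monad for xi-complete} and then take homotopy groups, using that in $\Mod_{\kk}$ homology commutes with sequential colimits and with arbitrary products. Concretely, since $V\in\Mod_{\kk}^{+}$ is bounded below, Proposition~\ref{proposition: stable monad for xi-complete} provides a natural equivalence
\[
\oblv\circ\suspfreexi(V)\;\simeq\;\prod_{n\geq 1}\colim_m\Bigl(\Sigma^{-m}\,\mathbb{L}L^r_n(\tau_{\geq 0}\Sigma^m V)\Bigr)
\]
in $\Mod_{\kk}$. Because $\Mod_{\kk}$ is the derived category of a field, products of chain complexes are formed degreewise and are exact, and a sequential homotopy colimit is computed by a telescope; hence $\pi_*$ commutes both with the product over $n$ and with the colimit over $m$. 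Using $\pi_*(\suspfreexi(V))=\pi_*(\oblv\circ\suspfreexi(V))$ this gives
\[
\pi_*(\suspfreexi(V))\;\cong\;\prod_{n\geq 1}\colim_m\pi_*\Bigl(\Sigma^{-m}\,\mathbb{L}L^r_n(\tau_{\geq 0}\Sigma^m V)\Bigr).
\]

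Next I would invoke the second part of Theorem~\ref{theorem: stable homotopy groups of a free object}: the $n$-th factor of the product vanishes unless $n=p^s$, and for $n=p^s$ it is isomorphic to $\pi_*(V)^{(s)}\otimes\Lambda_s$; by Remark~\ref{remark: second grading} this same factor is exactly $\pi_{*,s}(\suspfree(V))$. Re-indexing the surviving factors by $s\geq 0$ yields the two displayed isomorphisms of graded vector spaces. Naturality in $V$ is immediate, since every ingredient — the equivalence of Proposition~\ref{proposition: stable monad for xi-complete}, the functors $\mathbb{L}L^r_n$, $\tau_{\geq 0}$ and $\Sigma^m$, the transition maps of the colimit, and the isomorphism of Theorem~\ref{theorem: stable homotopy groups of a free object} — is functorial in $V$.

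It remains to upgrade this to an isomorphism of right $\Lambda$-modules. The right action of $\Lambda$ on $\pi_*(X)$, for $X\in\Sp(\sL)$, is assembled from the stable homotopy operations $x\mapsto x\nu_a^{\e}$ of Proposition~\ref{proposition: lambda, mu, adem}, hence is natural in $X$; applied to $X=\suspfreexi(V)\in\Sp(\sLxi)\subset\Sp(\sL)$ it gives the module structure on the left-hand side. These operations are homogeneous for the length grading on $\Lambda$ (the Adem-type relations~\eqref{equation: lambda, adem1, p is odd}, \eqref{equation: lambda, adem2, p is odd} and~\eqref{equation: lambda, adem3, p=2} all preserve length), so right multiplication by a length-$t$ element carries the factor indexed by $s$ into the factor indexed by $s+t$; this is precisely what makes the right $\Lambda$-module structure on $\bigoplus_{s}\pi_*(V)^{(s)}\otimes\Lambda_s$ from Theorem~\ref{theorem: stable homotopy groups of a free object} extend uniquely, and continuously for the evident product topology, to $\prod_{s}\pi_*(V)^{(s)}\otimes\Lambda_s$. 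Finally the natural completion map $\suspfree(V)\to\iota^{st}_{\xi}\suspfreexi(V)$ — the unit of the adjunction exhibiting $\suspfreexi\simeq L^{st}_{\xi}\circ\suspfree$ — is a morphism in $\Sp(\sL)$, hence $\Lambda$-linear on homotopy, and under the decompositions of Proposition~\ref{proposition: stable is monadic} and Proposition~\ref{proposition: stable monad for xi-complete} it induces the evident inclusion $\bigoplus_{s}\hookrightarrow\prod_{s}$; this pins down the $\Lambda$-action on the target and completes the identification. I expect the only genuinely fiddly point to be the bookkeeping of the Frobenius twists $(-)^{(s)}$ — forced by the semilinearity $(\alpha x)\nu_a^{\e}=\alpha^{p}(x\nu_a^{\e})$ of Proposition~\ref{proposition: lambda, mu, adem} — but this is identical to the bookkeeping already carried out for the uncompleted free object, so no new difficulty arises.
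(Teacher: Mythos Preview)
Your argument is correct and follows the same route as the paper: the paper's proof is the one-line ``Follows by Proposition~\ref{proposition: stable monad for xi-complete} and Theorem~\ref{theorem: stable homotopy groups of a free object}'', and you have simply unpacked this, together with a careful (and accurate) discussion of why the right $\Lambda$-module structure passes to the product. Nothing is missing and nothing is different in substance.
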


\begin{proof}
Follows by Proposition~\ref{proposition: stable monad for xi-complete} and Theorem~\ref{theorem: stable homotopy groups of a free object}.
\end{proof}

\begin{rmk}\label{remark: second grading, xi-complete}
We write $\pi_{*,s}(\suspfreexi(V))\subset \pi_*(\suspfreexi(V)), s\geq 0$ for the subspace $$\pi_{*,s}(\suspfreexi(V))=\pi_{*,s}(\suspfree(V)) \subset \prod_{s\geq 0}\pi_{*,s}(\suspfree(V)) \cong \pi_*(\suspfreexi(V)).$$
Then, by Corollary~\ref{corollary: stable homotopy groups of a xi-complete free object}, $\pi_*(\suspfreexi(V))=\prod_{s\geq 0} \pi_{*,s}(\suspfreexi(V))$. In particular, the homotopy groups of a free object are both graded and \emph{pro-graded}, i.e. an inverse limit of graded vector spaces.
\end{rmk}

\begin{rmk}\label{remark: hurewicz, lambda}
Under the isomorphisms of Theorem~\ref{theorem: stable homotopy groups of a free object} and Corollary~\ref{corollary: stable homotopy groups of a xi-complete free object}, the Hurewicz homomorphisms (see Remark~\ref{remark: stable hurewicz homomorphism}
$$h\colon \pi_q(\suspfree(V)) \cong \bigoplus_{s\geq 0} \pi_{q,s}(\suspfree(V)) \to \widetilde{H}_{q+1}(\suspfree(V))\cong \pi_q(V), \;q\in\Z$$
$$h\colon \pi_q(\suspfreexi(V)) \cong \prod_{s\geq 0} \pi_{q,s}(\suspfreexi(V)) \to \widetilde{H}_{q+1}(\suspfreexi(V))\cong \pi_q(V), \; q\in\Z$$
are the projections on the first component. 
\end{rmk}

\begin{rmk}\label{remark: unstable homotopy groups}
Let $W\in \Vect^{\mathrm{gr}}_{\kk}$ be a graded vector space. Following~\cite[Definition~9.3]{Wellington82}, we denote by  $W\widehat{\otimes}\Lambda$ the subspace of $W\otimes \Lambda$ spanned by $w\otimes y\in W\otimes \Lambda$ such that $y\in \Lambda$ is an admissible monomial beginning with $\nu_a^\e$ (resp. $\lambda_a$) such that $2a-\e <|w|$ (resp. $a<|w|$).

Let $V\in \Mod^{\geq 0}_{\kk}$ be a simplicial vector space. Then by~\cite[Theorem~8.5]{BC70} and~\cite[Proposition~13.2]{Wellington82}, the natural map
$$\theta\colon \bigoplus_{s\geq 0}\free(\pi_*V)^{(s)}\widehat{\otimes} \Lambda_s \xrightarrow{\cong} \pi_*(\free(V)) = \pi_*(\oblv \circ \free(V)) $$
is an isomorphism (compatible with right $\Lambda$-actions), where $\free(\pi_*V)$ is the free \emph{graded} restricted Lie algebra generated by $\pi_*(V)$ and  $\theta(w \otimes \nu_{I})  = w\nu_{I}$ (resp. $\theta(w \otimes \lambda_{I})= v\lambda_I$), $w\in \free(\pi_*V)$, $\nu_{I} \in \Lambda_s$ (resp. $\lambda_{I} \in \Lambda_s$).
\end{rmk}

\begin{rmk}\label{remark: xi-complete unstable homotopy groups}
Let $V\in \Mod_{\kk}^{\geq 0}$ be a \emph{connected} simplicial vector space, $\pi_0(V)=0$. Then by combining Theorem~\ref{theorem: F-completion, homotopy groups} with Proposition~\ref{proposition: free xi-complete} and Remarks~\ref{remark: xi and nu_0 actions}, \ref{remark: unstable homotopy groups}, we obtain a natural isomorphism (compatible with right $\Lambda$-actions)
$$\theta\colon \prod_{s\geq 0}\free(\pi_*V)^{(s)}\widehat{\otimes} \Lambda_s \xrightarrow{\cong} \pi_*(L_\xi\free(V)) = \pi_*(\oblv \circ L_\xi\free(V)), $$
where $L_\xi \free(V)$ is the $\kk$-complete free simplicial restricted Lie algebra.
\end{rmk}

\begin{rmk}\label{remark: omitting frobenius twist}
Since the field $\kk$ is perfect, the vector spaces $V^{(s)}$, $s\in\Z$ are \emph{non-canonically} isomorphic to $V$. Therefore, in the cases when we are not interested in functoriality, we will omit Frobenius twists; e.g. we will write $\pi_*(\suspfree(V)) \cong \pi_*(V)\otimes \Lambda$ and $\pi_{*,s}(\suspfree(V))\cong \pi_*(V)\otimes \Lambda_s$ instead of $\pi_*(\suspfree(V)) \cong \bigoplus_{s\geq 0}\pi_*(V)^{(s)}\otimes \Lambda_s$ and $\pi_{*,s}(\suspfree(V))\cong \pi_*(V)^{(s)}\otimes \Lambda_s$ respectively.
\end{rmk}

\subsection{Dyer--Lashof--Lie operations}\label{section: H-Lie-algebras}
Let us denote by $\FB$ the $1$-category of finite sets (including the empty set) and bijections. We define the \emph{category of symmetric sequences} $\sseq(\calC)$ in an $\infty$-category $\calC$ as follows $$\sseq(\calC)=\Fun(\FB,\calC).$$ We note that the category $\sseq(\calC)$ is equivalent to the product category $\prod\limits_{n\geq 0}\calC^{B\Sigma_n}$. If $\calC=\Mod_{\kk}$ is the category of chain complexes, then we shorten $\sseq(\Mod_{\kk})$ as $\sseq$.

Suppose that the category $\calC$ is presentably symmetric monoidal. Then we equip the category of symmetric sequences $\sseq(\calC)$ with the (non-symmetric) monoidal structure $\sseq(\calC)^{\circ}$ given by the \emph{composition product}, see e.g.~\cite[Section~3.1.1]{GF12} and~\cite[Section~4.1.2]{Brantner_Thesis}. Recall that an \emph{operad} is an (associative) algebra in $\sseq(\calC)^{\circ}$ and a \emph{cooperad} is a (coassociative) coalgebra in $\sseq(\calC)^{\circ}$. In this paper, we will be mainly interested in three examples of operads.

\begin{dfn}\label{definition: lie operad}
The \emph{Lie operad} $\bfLie_*\in \sseq(\Vect_{\kk})$ is the operad in the category of vector spaces $\Vect_{\kk}$ whose algebras are Lie algebras.
\end{dfn}

\begin{rmk}\label{remark: lie algebras in char 2}
In characteristic $2$, a Lie bracket is just assumed to be antisymmetric. We do not assume necessarily that $[x,x]=0$, see~\cite[Section~1.1.13]{Fresse00}.
\end{rmk}

There is a symmetric monoidal fully faithful embedding $\Vect_{\kk} \subset \Mod_{\kk}$, so we also consider the Lie operad $\bfLie_*$ as an operad in $\sseq=\sseq(\Mod_{\kk})$.

\begin{dfn}\label{definition: linear spectral lie operad}
The \emph{$\F$-linear spectral Lie operad} $\bfLie^{\mathrm{sp}}_*\in \sseq(\Mod_\kk)$ is the Koszul dual operad (in the sense of~\cite[Section~5.2.2]{HigherAlgebra}) for the cocommutative cooperad $\bfComm_* \in \sseq$.
\end{dfn}

The \emph{shifted $\F$-linear spectral Lie operad} $s\bfLie^{\mathrm{sp}}_* \in \sseq(\Mod_{\kk})$ is defined as a symmetric sequence by the formula $$s\bfLie^{\mathrm{sp}}_k = \Sigma^{k-1}\bfLie^{\mathrm{sp}}_k \otimes \mathbf{sgn}^{\otimes k} \in \Mod_{\kk}^{B\Sigma_k}, \;\; k\geq 0,$$
where $\mathbf{sgn}\in \Vect^{B\Sigma_k}_{\kk}$ is the sign representation of the permutation group $\Sigma_k$. The operad multiplication maps are suspensions of those for the $\kk$-linear spectral Lie operad.

\begin{rmk}\label{remark: shifted linear spectral lie operad}
If $L$ is an algebra (in $\Mod_{\kk}$) over the shifted $\F$-linear spectral Lie operad $s\bfLie^{\mathrm{sp}}_*$, then $\Sigma L$ is canonically an algebra over $\bfLie^{\mathrm{sp}}_*$. Vice versa, if $L$ is a $\bfLie^{\mathrm{sp}}_*$-algebra, then $\Sigma^{-1}L$ is a $s\bfLie^{\mathrm{sp}}_*$-algebra.
\end{rmk}

\begin{prop}\label{proposition: homology of spectral lie operad}
There are isomorphisms 
$$\pi_q(s\bfLie^{\mathrm{sp}}_k)\cong
\begin{cases}
\bfLie_k \in \Vect_{\kk}^{B\Sigma_k}& \mbox{if $q=0$, $k\geq 0$,}\\
0 & \mbox{otherwise.}
\end{cases}
$$
More precisely, there is an isomorphism of operads 
$$\pi_0(s\bfLie^{\mathrm{sp}}_*)\cong \bfLie_* \in \sseq(\Vect_\kk) $$
and an operadic equivalence 
$$s\bfLie^{\mathrm{sp}}_* \simeq \bfLie_* \in \sseq(\Mod_{\kk}). $$
\end{prop}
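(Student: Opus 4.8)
The plan is to reduce everything to the classical computation of the homology of the partition complex and then let the definition of the operadic shift handle the bookkeeping. By Definition~\ref{definition: shifted linear spectral lie operad} we have $s\bfLie^{sp}_k = \Sigma^{k-1}\bfLie^{sp}_k \otimes \mathbf{sgn}^{\otimes k}$, so the homological half of the statement is equivalent to the assertion that $\pi_*(\bfLie^{sp}_k)$ is concentrated in homological degree $1-k$ with $\pi_{1-k}(\bfLie^{sp}_k) \cong \bfLie_k \otimes \mathbf{sgn}^{\otimes k}$ as a $\Sigma_k$-representation. Indeed, granting this, $\pi_q(s\bfLie^{sp}_k) = \pi_{q-k+1}(\bfLie^{sp}_k)\otimes \mathbf{sgn}^{\otimes k}$ vanishes unless $q=0$, and for $q=0$ it equals $\bfLie_k \otimes \mathbf{sgn}^{\otimes k}\otimes \mathbf{sgn}^{\otimes k}\cong \bfLie_k$, the two sign twists cancelling.

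To compute $\pi_*(\bfLie^{sp}_k)$ I would use Remark~\ref{remark: spectral lie, two constructions}: as a symmetric sequence $\bfLie^{sp}_*$ agrees with the model $K(\bfComm_*)$ of~\cite{Ching05}, whose arity-$k$ term is a fixed (de)suspension of the $\kk$-linear cochains on the partition complex $|\Pi_k|$ of proper nontrivial partitions of $\{1,\dots,k\}$; equivalently one may compute $\bfLie^{sp}_*$ directly as the operadic cobar construction $\Cobar(\bfComm_*)$ and invoke Koszulity of the (co)commutative (co)operad. Since $\kk$ is a field, linear duality is exact and acts trivially on a one-dimensional homology group, so this identifies $\pi_*(\bfLie^{sp}_k)$, up to the shift prescribed by the conventions of~\cite{Ching05}, with $\widetilde{H}_*(|\Pi_k|;\kk)$. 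The classical theorem of Joyal, Klyachko, and Stanley (see also the discussions in~\cite{AM99} and~\cite{Fresse00}) says that $\widetilde{H}_*(|\Pi_k|;\Z)$ is free of rank $(k-1)!$, concentrated in the top degree $k-3$, and isomorphic as a $\Sigma_k$-module to $\bfLie_k\otimes \mathbf{sgn}^{\otimes k}$; reducing mod $p$ (there is no torsion) and tracking the shift gives precisely the description of $\pi_*(\bfLie^{sp}_k)$ above. The degenerate low-arity cases $k\leq 2$, where $|\Pi_k|$ is empty and one uses the convention $\widetilde{H}_{-1}(\emptyset)=\kk$, I would check by hand; they match because $\bfLie_0 = 0$ and $\bfLie_1 = \kk$.

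For the operad statement I would argue by formality. The object $s\bfLie^{sp}_*$ is an operad in $\sseq(\Mod_\kk)$ which, by the homological computation just sketched, is connective with homology concentrated in degree $0$ in every arity. Hence the Postnikov truncation $\tau_{\leq 0}=\pi_0$, which is strong symmetric monoidal on connective $\kk$-modules (Künneth over a field), carries $s\bfLie^{sp}_*$ to an operad $\pi_0(s\bfLie^{sp}_*)$ in $\Vect_\kk$, and the unit map $s\bfLie^{sp}_* \to \pi_0(s\bfLie^{sp}_*)$ is an equivalence of operads since it is an isomorphism on homology. It then remains to identify the discrete operad $\pi_0(s\bfLie^{sp}_*)$ with $\bfLie_*$, which is the classical Koszul-duality statement that the Koszul dual of the cocommutative cooperad, in the operadic-suspension normalization, is the Lie operad (Ginzburg--Kapranov, Getzler--Jones; or directly~\cite{Ching05}) --- the twist $\Sigma^{k-1}(-)\otimes\mathbf{sgn}^{\otimes k}$ in Definition~\ref{definition: shifted linear spectral lie operad} being exactly the unshift that normalizes the answer to $\bfLie_*$ on the nose. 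If a self-contained check is preferred: by the computation above the arity-$2$ part of $\pi_0(s\bfLie^{sp}_*)$ is one-dimensional with $\Sigma_2$ acting by the sign, providing an antisymmetric binary bracket; the arity-$3$ part is two-dimensional; and a dimension count against the free operad on an antisymmetric bracket forces the Jacobi identity and the absence of further generators, so the operad is $\bfLie_*$.

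The main obstacle, and the part that requires genuine care, is the bookkeeping of suspensions and sign representations: matching the shift built into the model $K(\bfComm_*)$ of~\cite{Ching05} with the shift $\Sigma^{k-1}$ and the twist $\mathbf{sgn}^{\otimes k}$ of Definition~\ref{definition: shifted linear spectral lie operad}, and in particular recording the $\Sigma_k$-action on the top homology of $|\Pi_k|$ with the correct sign convention. The second delicate point is the passage from a symmetric-sequence isomorphism to an operad isomorphism; the formality argument reduces this to a statement about discrete operads (where it is classical), but one must be mindful of the caveat in Remark~\ref{remark: spectral lie, two constructions} that the two standard models of the spectral Lie operad are only known to agree \emph{as symmetric sequences}, so the operad identification has to be performed on the $\pi_0$-level rather than transported along that equivalence.
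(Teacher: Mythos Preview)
The paper gives no argument beyond a citation to \cite[Section~6]{Fresse04}, which establishes exactly the Koszul-duality computation you outline: the operadic cobar construction on the cocommutative cooperad has homology concentrated in a single degree, equal (as a $\Sigma_k$-module) to $\bfLie_k\otimes\mathbf{sgn}^{\otimes k}$ up to the standard shift, and the induced operad structure on homology is that of the Lie operad. Your sketch via the partition complex and the Joyal--Klyachko--Stanley computation is the same argument, and the reduction through Definition~\ref{definition: shifted linear spectral lie operad} is handled correctly.

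The one step to reconsider is your assertion that the unit map $s\bfLie^{sp}_* \to \pi_0(s\bfLie^{sp}_*)$ is an equivalence \emph{of operads}. This is precisely the formality statement that Remark~\ref{remark: spectral lie operad is not formal} (immediately following the proposition) denies, citing \cite[Corollary~3.26]{Zhang21}. Fortunately you do not need this step: the proposition only asks that the $1$-operad obtained by applying $\pi_0$ to the partial composition maps is $\bfLie_*$, and your closing paragraph already supplies two independent routes to that (classical Koszul duality for discrete operads, or the arity-$2$/arity-$3$ dimension count). So the fix is simply to excise the formality sentence and keep only the identification of $\pi_0$ as a $1$-operad; but the fact that the naive Postnikov-truncation heuristic fails here, despite every $s\bfLie^{sp}_k$ being discrete, is a genuine positive-characteristic subtlety worth internalising rather than arguing around.
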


\begin{proof}
See~\cite[Section~6]{Fresse04} for the first part. The second part follows by taking a zig-zag of (co)connective covers.
\end{proof}

\begin{rmk}\label{remark: Lie identifications}
By Proposition~\ref{proposition: homology of spectral lie operad}, the category $s\bfLie^{\mathrm{sp}}_*(\Mod_{\kk})$ is canonically equivalent to the category $\bfLie_*(\Mod_\kk)$ of \emph{(homotopy) Lie algebras} in the category $\Mod_{\kk}$. In the sequel, we will identify the operads $s\bfLie^{\mathrm{sp}}_*$ and $\bfLie_*$ as well as the category of algebras over them. Moreover, we will use the notation $\bfLie_*$ instead of $s\bfLie^{\mathrm{sp}}_*$. However, we will keep using $\bfLie_*^{\mathrm{sp}}$ instead of $s^{-1}\bfLie_*$ to highlight its relevance to the spectral Lie operad and the Koszul duality.
\end{rmk}

It is well-known that the chain complex $\bfLie^{\mathrm{sp}}_k\in \Mod^{B\Sigma_k}_{\kk}$, $k\geq 2$ can be identified with the reduced cochain complex of the so-called \emph{partition complex}, see e.g.~\cite{Fresse04}. We briefly recall the construction. Let $\mathcal{T}_k$, $k\geq 2$ be the set of all rooted (non-planar) trees (the root and all leaves have valence one) labelled by the finite set $\underline{k}=\{1,\ldots,k\}$. Let $\mathcal{T}_{k,s}\subset \mathcal{T}_k$ be the subset of all trees with exactly $s$ internal vertices. Then the group $\Sigma_k$ acts on the subset $\mathcal{T}_{k,s}$, and we write $\Sigma_T\subset \Sigma_k$  for the stabilizer of a tree $T\in \mathcal{T}_{k,s}$. For example, there is a unique tree $T_{cor}\in \mathcal{T}_{k,1}$ (the corona tree) and its stabilizer $\Sigma_{T_{cor}}$ is $\Sigma_k$.  

\begin{prop}[\cite{Fresse04}]\label{proposition: resolution for Lie}
Let $k\geq 2$. Then there is a chain complex $(C_{\bullet},d)$ of $\Sigma_k$-vector spaces such that 
$$C_{-s} \cong
\begin{cases}
\bigoplus_{T\in\mathcal{T}_{k,s}}\mathrm{Ind}_{\Sigma_T}^{\Sigma_k} \kk & \mbox{if $1 \leq s \leq k-1$}\\
0 & \mbox{otherwise}
\end{cases}
$$
and the complex $(C_{\bullet},d)$ is equivalent to $\bfLie^{\mathrm{sp}}_k\simeq \Sigma^{1-k}\bfLie_k\otimes \mathbf{sgn}^{\otimes k}$ in the category $\Mod_{\kk}^{B\Sigma_k}$. \qed
\end{prop}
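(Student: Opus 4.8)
The plan is to realise $\bfLie^{sp}_k$ as the operadic cobar construction of the cocommutative cooperad and then to recognise the displayed tree complex inside it. By Definition~\ref{definition: linear spectral lie operad}, $\bfLie^{sp}_*$ is the Koszul dual of the cocommutative cooperad $\bfComm_*$, which is computed by the operadic cobar construction $\Cobar(\bfComm_*)$ (see~\cite[Section~5.2.2]{HigherAlgebra}, and~\cite{Ching05}, cf.\ Remark~\ref{remark: spectral lie, two constructions}). Unwinding this construction, the object $\bfLie^{sp}_k\in\Mod^{B\Sigma_k}_{\kk}$ is the total complex of the free-operad model
\[
\bigoplus_{T}\ \mathrm{Ind}_{\Sigma_T}^{\Sigma_k}\Bigl(\bigotimes_{v\in V(T)}\overline{\bfComm}(\mathrm{in}(v))\Bigr),
\]
where $T$ runs over representatives of the $\Sigma_k$-orbits of rooted $\underline{k}$-labelled trees all of whose internal vertices $v\in V(T)$ have at least two inputs, $\mathrm{in}(v)$ is the input set of $v$, and $\overline{\bfComm}$ is the coaugmentation coideal of $\bfComm_*$; the differential $d$ is the cobar differential induced by the cooperad comultiplication of $\bfComm_*$.

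First I would take care of the grading. One has $\overline{\bfComm}(n)=\kk$ (concentrated in homological degree $0$) for $n\ge 2$ and $\overline{\bfComm}(n)=0$ for $n\le 1$; hence each tensor factor is a line, each tree $T$ contributes a single copy of $\kk$, and—after transporting the Koszul signs off the generators and onto $d$—the group $\Sigma_T$ acts on this line trivially. A rooted $\underline{k}$-labelled tree with all internal valences at least three has between one internal vertex (the corolla $T_{cor}$, with $\Sigma_{T_{cor}}=\Sigma_k$) and $k-1$ internal vertices (the binary trees); placing the trees with exactly $s$ internal vertices in homological degree $-s$ therefore yields precisely
\[
C_{-s}\ \cong\ \bigoplus_{T\in\mathcal{T}_{k,s}}\mathrm{Ind}_{\Sigma_T}^{\Sigma_k}\kk\quad(1\le s\le k-1),\qquad C_{-s}=0\ \text{otherwise},
\]
as claimed, while the cobar differential $d\colon C_{-s}\to C_{-s-1}$ becomes the (signed) sum over internal vertices of the maps that split one vertex into two along the comultiplication $\overline{\bfComm}\to\overline{\bfComm}\otimes\overline{\bfComm}$, whose structure constants on the one-dimensional pieces are $\pm 1$.

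It then remains to identify the equivalence class of $(C_\bullet,d)$. Since $C_\bullet$ is a bounded complex of $\Sigma_k$-representations, it is equivalent in $\Mod^{B\Sigma_k}_{\kk}$ to the direct sum of its homology groups placed in the appropriate degrees, so it suffices to compute $H_*(C_\bullet,d)$. This is exactly Proposition~\ref{proposition: homology of spectral lie operad} (due to Fresse): translating the statement $\pi_*(s\bfLie^{sp}_k)\cong\bfLie_k$, concentrated in degree $0$, back through Definition~\ref{definition: shifted linear spectral lie operad} gives $H_*(C_\bullet,d)\cong\bfLie_k\otimes\mathbf{sgn}^{\otimes k}$ concentrated in the bottom degree $1-k$, whence $(C_\bullet,d)\simeq\Sigma^{1-k}\bfLie_k\otimes\mathbf{sgn}^{\otimes k}$. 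Alternatively, one identifies $(C_\bullet,d)$ with a shift of the reduced cochains of the partition complex $|\Pi_k|$ and invokes the Joyal--Klyachko--Stanley computation of $\widetilde{H}_{k-3}(|\Pi_k|;\kk)$ as the Lie representation twisted by sign; this is the route taken in~\cite[Section~6]{Fresse04}.

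The hard part will be the first step: matching the abstract Koszul dual of Definition~\ref{definition: linear spectral lie operad} with the explicit tree complex not merely at the level of symmetric sequences—which is immediate from Remark~\ref{remark: spectral lie, two constructions}—but together with its differential and, most delicately, with the correct character on each tree-summand, so that the homology is the \emph{sign-twisted} Lie representation $\bfLie_k\otimes\mathbf{sgn}^{\otimes k}$ and not $\bfLie_k$ itself. Pinning down the operadic (de)suspension and Koszul-sign conventions, and checking that they can be absorbed into $d$ so as to leave the $\Sigma_T$-actions trivial, is where essentially all of the bookkeeping lies; the combinatorics of the trees $\mathcal{T}_{k,s}$ and the homology computation itself can then be imported verbatim from~\cite{Ching05} and~\cite{Fresse04}.
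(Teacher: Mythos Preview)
The paper gives no proof of this proposition; it is quoted from~\cite{Fresse04} and closed with a \qed. Your sketch---identify $\bfLie^{sp}_k$ with the operadic cobar construction of $\bfComm_*$, read off the tree complex, then invoke the homology computation---is precisely the standard argument carried out in the cited reference, so there is nothing to compare.

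One correction: the sentence ``Since $C_\bullet$ is a bounded complex of $\Sigma_k$-representations, it is equivalent in $\Mod^{B\Sigma_k}_{\kk}$ to the direct sum of its homology groups'' is false as stated. We are over $\kk=\overline{\F}_p$, and for $p\le k$ the category of $\kk[\Sigma_k]$-modules is not semisimple, so bounded complexes need not be formal. What actually saves you is not boundedness but the fact---established in your very next sentence via Proposition~\ref{proposition: homology of spectral lie operad}---that the homology is concentrated in a \emph{single} degree; any object of $\Mod^{B\Sigma_k}_{\kk}$ with homotopy in one degree lies in the heart of the standard $t$-structure and is therefore equivalent to that homotopy group. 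Reorder the logic (first compute $H_*$, then conclude formality from concentration in one degree) and the argument is fine.
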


We write $\mathbf{L}_k \colon \Mod_{\kk} \to \Mod_{\kk}$, $k\geq 0$ for the functor given by 
\begin{equation}\label{equation: lie powers}
\mathbf{L}_k(V)=\bfLie_k\otimes_{h\Sigma_k} V^{\otimes k}, \;\; V\in \Mod_{\kk}.
\end{equation}
Proposition~\ref{proposition: homology of spectral lie operad} implies that there are natural equivalences
$$\mathbf{L}_k(V) \simeq \Sigma^{-1}\bfLie^{\mathrm{sp}}_{k} \otimes_{h\Sigma_k}(\Sigma V)^{\otimes k}, \;\; k\geq 0,\; V\in \Mod_{\kk}.$$
By Proposition~\ref{proposition: resolution for Lie}, there is a natural map
\begin{equation}\label{equation: corona cell map, kjaer}
c^{k;1}\colon \bfLie^{\mathrm{sp}}_k \simeq (C_\bullet,d) \to C_{-1}\simeq \Sigma^{-1}\kk \in \Mod_{\kk}^{\Sigma_{k}}, \;\; k\geq 2,
\end{equation}
where the middle arrow is the canonical projection. The map~\eqref{equation: corona cell map, kjaer} induces the natural transformation
\begin{equation}\label{equation: corona cell map, nat trans}
c^{k;1}\colon \mathbf{L}_k(V) \to \Sigma^{-2}(\Sigma V)^{\otimes k}_{h\Sigma_k}, \;\; k\geq 2, \; V\in \Mod_{\kk}.
\end{equation}

\begin{prop}[{{\cite{Kjaer18}}}]\label{proposition: corona cell is almost equivalence}
Suppose that $V\simeq \Sigma^l \kk$, $l\in \Z$. Then 
\begin{enumerate}
\item the map~\eqref{equation: corona cell map, nat trans} $$c^{p;1}\colon \mathbf{L}_p(V) \to \Sigma^{-2}(\Sigma V)^{\otimes p}_{h\Sigma_{p}}$$ is an equivalence if $p=2$ or $p$ is odd and $l$ is even;
\item the suspension map 
\begin{equation}\label{equation: suspention map, lie powers}
E\colon \Sigma \mathbf{L}_p(V) \to \mathbf{L}_p(\Sigma V)
\end{equation}
is an equivalence if $p$ is odd and $l$ is even;
\item the induced map
$$c^{p;1}_*\colon \pi_i(\mathbf{L}_p(V)) \to \pi_i(\Sigma^{-2}(\Sigma V)^{\otimes p}_{h\Sigma_p})$$
is an isomorphism if $p$ is odd, $l=2m-1$, $m\in\Z$, and $i\neq 2mp-3$;
\item $\dim \pi_{2mp-3}(\mathbf{L}_p(V))=1$ if $p$ is odd and $l=2m-1$, $m\in\Z$.
\end{enumerate}
\end{prop}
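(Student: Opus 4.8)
The statement (and the underlying homology computations) goes back to Arone--Mahowald~\cite{AM99}, and in this Lie-operadic form to Kjaer~\cite{Kjaer18}; the plan is to deduce it from the explicit model $\bfLie^{sp}_p\simeq (C_\bullet,d)$ of the partition complex given in Proposition~\ref{proposition: resolution for Lie}. Recall the equivalence $\mathbf{L}_p(V)\simeq \Sigma^{-1}\bfLie^{sp}_p\otimes_{h\Sigma_p}(\Sigma V)^{\otimes p}$, under which $c^{p;1}$ is induced by the corona projection $\bfLie^{sp}_p\to\Sigma^{-1}\kk$ of~\eqref{equation: corona cell map, kjaer}; in the model $(C_\bullet,d)$ this projection is the quotient onto the top cell $C_{-1}$, so its fibre is the subcomplex $C_{\leq -2}$ supported in homological degrees $-2,\dots,-(p-1)$, whose terms are (shifts of) the reduced chains of the partition complex $|\Pi_p|$. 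The first reduction is immediate: for $p=2$ the only rooted tree on two leaves is the corona, so $C_{\leq -2}=0$ and $c^{2;1}$ is already an equivalence, and parts (2)--(4) are vacuous. Assume henceforth $p$ odd.

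Writing $V=\Sigma^l\kk$ one has $(\Sigma V)^{\otimes p}\simeq\Sigma^{(l+1)p}\kk(\mathbf{sgn}^{\otimes(l+1)})$, where $\kk(\chi)$ denotes $\kk$ with $\Sigma_p$ acting through a character $\chi$ and $\mathbf{sgn}^{\otimes j}$ is $\mathbf{sgn}$ for $j$ odd, trivial for $j$ even; hence
\[
\mathbf{L}_p(\Sigma^l\kk)\simeq \Sigma^{lp+p-1}\bigl(\bfLie^{sp}_p\otimes\mathbf{sgn}^{\otimes(l+1)}\bigr)_{h\Sigma_p},\qquad \Sigma^{-2}(\Sigma V)^{\otimes p}_{h\Sigma_p}\simeq \Sigma^{(l+1)p-2}\bigl(\kk(\mathbf{sgn}^{\otimes(l+1)})\bigr)_{h\Sigma_p},
\]
and the fibre of $c^{p;1}$ is $\Sigma^{lp+p-1}(C_{\leq -2}\otimes\mathbf{sgn}^{\otimes(l+1)})_{h\Sigma_p}$, while the map $E$ of part (2) becomes, for $l$ even, the canonical map $\Sigma^{1-p}(\bfLie^{sp}_p\otimes\mathbf{sgn})_{h\Sigma_p}\to(\bfLie^{sp}_p)_{h\Sigma_p}$. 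Since $\bfLie^{sp}_p$ has homology concentrated in a single degree (Proposition~\ref{proposition: homology of spectral lie operad}) it is formal in $\Mod_{\kk}^{B\Sigma_p}$, namely $\bfLie^{sp}_p\simeq\Sigma^{1-p}\bfLie_p\otimes\mathbf{sgn}$, so all four assertions turn into relations among the graded vector spaces $H_*(\Sigma_p;\bfLie_p)$, $H_*(\Sigma_p;\bfLie_p\otimes\mathbf{sgn})$, $H_*(\Sigma_p;\kk)$, $H_*(\Sigma_p;\mathbf{sgn})$. The essential structural input is that, $p$ being prime, every rooted tree on $\{1,\dots,p\}$ with at least two internal vertices has stabilizer of order prime to $p$ — a $p$-cycle fixing such a tree would act cyclically on its $p$ leaves, forcing the corona — equivalently $C_p$ acts freely on $|\Pi_p|$. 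Hence every summand of $C_{\leq -2}$ is $\Sigma_p$-homology-acyclic in positive degrees, so the homotopy orbits $(C_{\leq -2}\otimes\chi)_{h\Sigma_p}$ are computed by the \emph{underived} coinvariant complex, a small complex of $\kk$-vector spaces indexed by isomorphism classes of trees; and restricting along $C_p$ exhibits $\Res_{C_p}C_{\leq -2}$ as a finite complex of free $\kk[C_p]$-modules.

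It remains to compute $H_*(\Sigma_p;\bfLie_p)$ and its sign twist, and this — the explicit $\kk[C_p]$-module structure of the modular Lie representation together with the normalizer action, equivalently the orientation characters of the cells of the partition complex — is the main obstacle, exactly the point at which one must invoke Arone--Mahowald. Granting that computation, the answer is that $H_*(\Sigma_p;\bfLie_p)\cong H_{*-(p-2)}(\Sigma_p;\mathbf{sgn})$ and $H_*(\Sigma_p;\bfLie_p\otimes\mathbf{sgn})\cong H_{*+(p-1)}(\Sigma_p;\bfLie_p)$ as graded vector spaces, while $H_*(\Sigma_p;\bfLie_p\otimes\mathbf{sgn})$ exceeds $H_{*-(p-2)}(\Sigma_p;\kk)$ by exactly one dimension, placed in a single degree. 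Feeding this back through the reductions of the previous paragraph: for $l$ even the fibre of $c^{p;1}$ is acyclic, so the canonical comparison $c^{p;1}$ is an equivalence (part (1)), and the displayed shift is part (2); for $l$ odd, $\mathbf{sgn}^{\otimes(l+1)}$ is trivial, the fibre of $c^{p;1}$ is $\Sigma^{lp+p-1}(C_{\leq-2})_{h\Sigma_p}$, and by the same computation it is one-dimensional, concentrated in the homological degree giving total degree $2mp-3$ with $l=2m-1$ — this gives parts (3) and (4). If one wishes to bypass the modular representation theory entirely, it suffices to quote the Arone--Mahowald/Kjaer computation of the graded dimensions of $\pi_*\mathbf{L}_p(\Sigma^l\kk)$ directly; then only the bookkeeping above, together with the identification of $c^{p;1}$ and $E$ as the evident comparison maps, remains.
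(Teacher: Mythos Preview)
Your setup is correct and parallels the paper's, but for part~(1) you miss the key simplification that makes the paper's argument self-contained. You observe that non-corona tree stabilizers $\Sigma_T$ have order coprime to $p$, which reduces homotopy orbits to strict coinvariants; but then you still need an external computation (Arone--Mahowald) to evaluate those coinvariants. The paper instead observes that each such $\Sigma_T$ contains a \emph{transposition} (any tree on $p$ leaves with $\geq 2$ internal vertices has two leaves sharing a parent). When $l$ is even, $(\Sigma V)^{\otimes p}$ carries the sign character, so the $E^1_{s,*}$-terms for $s\leq -2$ in the spectral sequence associated to $(C_\bullet,d)\otimes_{h\Sigma_p}(\Sigma V)^{\otimes p}$ are of the form $(\mathbf{sgn})_{\Sigma_T}=0$ --- they vanish outright, with no appeal to the modular structure of $\bfLie_p$. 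This makes part~(1) elementary, and the paper's edge-map identification then gives the equivalence directly.

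For parts~(2)--(4) both approaches defer to outside sources (the paper cites a fibre sequence from Brantner's thesis and the Dyer--Lashof basis from~\cite{CLM76}; you cite the Arone--Mahowald dimension count), so there your proposal is on equal footing. The substantive gap is only in part~(1): replace ``stabilizer coprime to $p$'' with ``stabilizer contains a $2$-cycle,'' and the vanishing of the fibre follows immediately without invoking any homology computation of $\bfLie_p$.
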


\begin{proof}[Sketch of the proof]
The proposition was essentially proven in~\cite[Lemma~3.1]{Kjaer18}. Here we sketch the argument for the reader's convenience.

The last two parts follows directly by the first two parts and~\cite[Section~I.1]{CLM76}, which describes the basis for $\pi_*(\Sigma^q\kk)^{\otimes p}_{h\Sigma_p}$, $q\in\Z$. The second part follows from the fiber sequence on p.~86 in~\cite[Section~4.1.3]{Brantner_Thesis}. We will prove the first part.

It is clear that $c_{2;1}$ is equivalence, so we will assume that $p$ is odd and $V\simeq \Sigma^l \kk$ and $l$ is even. By Proposition~\ref{proposition: resolution for Lie}, there exists a spectral sequence
\begin{equation}\label{equation:ss for lie}
E^1_{s,t}=H_{t+1}(\Sigma_p, C_s\otimes (\pi_{l+1}(\Sigma V)^{\otimes p})) \Rightarrow \pi_{t+s}(\mathbf{L}_{p}(V)),
\end{equation}
where $t$ stands for the total homological degree.

We note that the edge map $$\pi_{i}(\mathbf{L}_p(V)) \to E^\infty_{-1,i+1} \to E^1_{-1,i+1} = \pi_{i+2} ((\Sigma V)^{\otimes p}_{h\Sigma_p})\cong \pi_{i}(\Sigma^{-2}(\Sigma V)^{\otimes p}_{h\Sigma_p}), \;\; i\in \Z$$
coincides with $c^{p;1}_*$. Since the order $|\Sigma_T|$ of the stabilizer group is coprime to $p$ for $T\in \mathcal{T}_{p,s}$, $s\geq 2$, we have
$$E^1_{s,*} = \bigoplus_{T\in\mathcal{T}_{p,|s|}} H_{*-p(l+1)}(\Sigma_p,\mathrm{Ind}^{\Sigma_p}_{\Sigma_T}(\mathbf{sgn})) \cong \bigoplus_{T\in\mathcal{T}_{p,|s|}}\Sigma^{p(l+1)}(\mathbf{sgn})_{\Sigma_{T}}, \;\; s\leq -2.$$
Since the group $\Sigma_T$, $T\in \mathcal{T}_{p,s}$, $s\geq 2$ always contains a $2$-cycle, we obtain that the coinvariants $(\mathbf{sgn})_{\Sigma_{T}}=0$ vanish. Therefore $E^1_{s,t}=0$ if $s\leq -2$, which implies the proposition.
\end{proof}

\begin{prop}\label{proposition: vanishing of LnV}
Suppose that $V\simeq \Sigma^{l}\kk$, $l\in\Z$. Then
\begin{enumerate}
\item $\mathbf{L}_n(V)\simeq 0$ if $n\neq p^h$ for some $h\geq 0$ and $p=2$ or $p$ is odd and $l$ is even;
\item $\mathbf{L}_n(V)\simeq 0$ if $n\neq p^h, 2p^h$ for some $h\geq 0$ and both $p$ and $l$ are odd.
\end{enumerate}
\end{prop}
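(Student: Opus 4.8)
The plan is to reduce the vanishing to a statement about the mod-$p$ group homology of the symmetric groups and then to appeal to the Arone--Mahowald and Arone--Dwyer theorems. First, note that $\bfLie_n$ is an (ungraded) $\Sigma_n$-representation, while $(\Sigma^l\kk)^{\otimes n}$ is concentrated in homological degree $ln$, on which $\Sigma_n$ acts through the Koszul sign $\sigma\mapsto\mathrm{sgn}(\sigma)^l$; hence by~\eqref{equation: lie powers} there is a natural equivalence
$$\mathbf{L}_n(\Sigma^l\kk)\;\simeq\;\Sigma^{ln}\bigl(\bfLie_n\otimes\mathbf{sgn}^{\otimes l}\bigr)_{h\Sigma_n},$$
where $\mathbf{sgn}^{\otimes l}$ denotes the sign representation if $l$ is odd and the trivial representation if $l$ is even, and where the derived coinvariants compute $H_*(\Sigma_n;\bfLie_n\otimes\mathbf{sgn}^{\otimes l})$. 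Thus $\mathbf{L}_n(\Sigma^l\kk)\simeq 0$ is equivalent to the vanishing of this graded group homology. Equivalently, using Proposition~\ref{proposition: homology of spectral lie operad} and Proposition~\ref{proposition: resolution for Lie} to identify $\bfLie^{sp}_n$, up to a shift and a sign twist, with the $\F_p$-chains of the $n$-th space of the topological spectral Lie operad, one may rewrite $\mathbf{L}_n(\Sigma^l\kk)$, up to a degree shift, as the mod-$p$ chain complex of the Goodwillie layer $\partial_n(\id_{\spaces})\wedge_{h\Sigma_n}(S^{l+1})^{\wedge n}$; as this spectrum is bounded below, it is mod-$p$ acyclic exactly when $\mathbf{L}_n(\Sigma^l\kk)\simeq 0$.

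The case $p\nmid n$ I would treat by hand. Then $\kk=\overline{\F}_p$ contains a primitive $n$-th root of unity and $\kk\Sigma_n$ is semisimple, so Klyachko's theorem gives $\bfLie_n\cong\Ind_{C_n}^{\Sigma_n}\zeta$ for a faithful linear character $\zeta$ of the cyclic subgroup $C_n=\langle(1\,2\,\cdots\,n)\rangle$. By Shapiro's lemma, and because $|C_n|=n$ is prime to $p$,
$$H_*\bigl(\Sigma_n;\bfLie_n\otimes\mathbf{sgn}^{\otimes l}\bigr)\;\cong\;\bigl(\zeta\otimes(\mathbf{sgn}^{\otimes l})|_{C_n}\bigr)_{C_n},$$
concentrated in degree $0$, which vanishes precisely when the one-dimensional character $\zeta\otimes(\mathbf{sgn}^{\otimes l})|_{C_n}$ of $C_n$ is non-trivial. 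Since $\zeta$ has order $n$ and $(\mathbf{sgn}^{\otimes l})|_{C_n}$ has order at most $2$, their product is non-trivial for all $n\geq 3$, and also for $n=2$ with $l$ even, while it is trivial for $n=2$ with $l$ odd; this matches the claim for $p\nmid n$ (the excluded values being $n=1=p^0$, and $n=2=2p^0$ when $l$ is odd).

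It remains to treat $p\mid n$ with $n$ not a power of $p$, and in part~(2) additionally $n\neq 2p^h$. Via the identification above this is exactly the Arone--Mahowald theorem~\cite{AM99} when $l$ is even (so $S^{l+1}$ is odd-dimensional), together with its even-dimensional analogue when $l$ is odd; at every prime it is a special case of the Arone--Dwyer description of the $\Sigma_n$-equivariant homotopy type of the partition complex~\cite{AD01}, which shows that $\partial_n(\id_{\spaces})\wedge_{h\Sigma_n}(S^{l+1})^{\wedge n}$ is $p$-locally contractible, hence mod-$p$ acyclic, outside the stated range. Alternatively one can argue algebraically, filtering $\mathbf{L}_n(\Sigma^l\kk)$ by the cobar resolution $\bfLie^{sp}_n\simeq(C_\bullet,d)$ of Proposition~\ref{proposition: resolution for Lie}: a Shapiro-lemma computation expresses the associated graded pieces indexed by trees in $\mathcal{T}_{n,s}$ with $s\geq 2$ in terms of Lie powers $\mathbf{L}_{n'}$ of spheres with $n'<n$, which vanish by induction on $n$ outside the permitted set, while the remaining corona piece ($s=1$) contributes the explicitly known homology $H_*(\Sigma_n;\F_p)$, respectively $H_*(\Sigma_n;\mathbf{sgn};\F_p)$; one then verifies that the resulting spectral sequence converges to $0$ in the excluded cases.

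The main obstacle is this last step. The individual contributions to that spectral sequence do not vanish --- already the corona term $H_*(\Sigma_n;\F_p)$ (or its sign-twisted version) is non-zero whenever $p\mid n$ --- so one genuinely needs the global cancellation, which is the substance of the Arone--Mahowald/Arone--Dwyer analysis (ultimately the fact that, $p$-locally, the partition poset is assembled from elementary abelian $p$-subgroups and is sphere-like only when $n$ is a power of $p$). A secondary but still non-trivial point is the bookkeeping with the sign twist $\mathbf{sgn}^{\otimes l}$ needed to see why the exceptional set in part~(2) is enlarged to $\{p^h,2p^h\}$: at an odd prime the sign representation is trivial on a $p$-Sylow subgroup but not on its normalizer, and it is this discrepancy that produces the extra layers at $n=2p^h$.
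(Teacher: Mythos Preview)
Your argument is correct and, for part~(1), coincides with the paper's: identify $\pi_*\mathbf{L}_n(\Sigma^l\kk)$ with the shifted group homology $H_*(\Sigma_n;\bfLie_n)$ and invoke~\cite{AD01}. The extra material you supply (the Klyachko/Shapiro argument for $p\nmid n$, and the sketched tree-filtration alternative) is sound but unnecessary, as you yourself note that the filtration approach ultimately requires the Arone--Dwyer input anyway.

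For part~(2) your route differs from the paper's. You handle odd $l$ directly by citing Arone--Mahowald for \emph{even}-dimensional spheres (since $S^{l+1}$ is then even), which immediately gives the enlarged exceptional set $\{p^h,2p^h\}$. The paper instead reduces odd $l$ to even $l$ via the EHP-type fiber sequence of~\cite[Section~4.1.3]{Brantner_Thesis} (the same input behind Proposition~\ref{proposition: corona cell is almost equivalence}(2)), and then applies part~(1). Both approaches are valid; yours is more direct and avoids introducing the EHP sequence, while the paper's keeps the external input confined to the single citation already used for part~(1).
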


\begin{proof}
For the first part, we note that the homotopy groups $\pi_*(\mathbf{L}_n(V))$ are the (shifted) group homology $H_*(\Sigma_n, \bfLie_n)$. These group homology are trivial (if $n$ is not a power of $p$) by the last paragraph of~\cite[Remark~6.3]{AD01}. For the second part, we use the EHP-sequence-type argument to reduce it to the case of even $l$, see e.g.~\cite[Theorem~8.5]{AB21} and~\cite[Section~4.1.3]{Brantner_Thesis}.
\end{proof}

Let $V=\Sigma^l\kk\in \Mod_{\kk}$, $l\in\Z$ be a shift of the $1$-dimensional vector space, $\iota_l\in \pi_l(\Sigma^l\kk)$ is the canonical generator. Recall from~\cite[Section~I.1]{CLM76} that the homotopy groups $\pi_*(V^{\otimes p}_{h\Sigma_p})$ have a basis given by the \emph{Dyer--Lashof classes} $\beta^{\e}Q^i(\iota_l)$, see also Section~\ref{section: group cohomology} for a short review. We use Proposition~\ref{proposition: corona cell is almost equivalence} to construct a basis in the homotopy groups $\pi_*(\mathbf{L}_p(V))$ out of the classical Dyer--Lashof classes. 

If $p=2$, we define $\Qe{i}(\iota_l) \in \pi_{l+i-1}(\mathbf{L}_2(\Sigma^l \kk))$, $i\in \Z$ by the formula
$$c^{2;1}_*(\Qe{i}(\iota_l)) = \sigma^{-2}Q^{i}(\iota_{l+1}) \in \pi_{l+i-1}(\Sigma^{-2}(\Sigma^{l+1}\kk)^{\otimes 2}_{h\Sigma_2}).$$ 
If $p$ is odd and $l$ is even, we define $\bQe{\e}{i}(\iota_l) \in \pi_{l+2i(p-1)-\e-1}(\mathbf{L}_p(\Sigma^l \kk))$, $i\in \Z$, $\e\in \{0,1\}$ by the formula
$$c^{p;1}_*(\bQe{\e}{i}(\iota_l)) = \sigma^{-2}\beta^{\e}Q^{i}(\iota_{l+1}) \in \pi_{l+2i(p-1) - \e -1}(\Sigma^{-2}(\Sigma^{l+1}\kk)^{\otimes p}_{h\Sigma_p}).$$ 
If both $p$ and $l$ are odd, we define $\bQe{\e}{i}(\iota_l) \in \pi_{l+2i(p-1)-\e-1}(\mathbf{L}_p(\Sigma^l \kk))$, $i\in \Z$, $\e\in \{0,1\}$ by the formula
$$\bQe{\e}{i}(\iota_l) = (-1)^{\e}E_*(\sigma \bQe{\e}{i}(\iota_{l-1})), $$
where $E$ is the suspension map~\eqref{equation: suspention map, lie powers}.
By Proposition~\ref{proposition: corona cell is almost equivalence}, all homotopy classes $\bQe{\e}{i}(\iota_l)$ above are \emph{well-defined}.

\begin{dfn}\label{definition: dll-classes}
Let $V\in \Mod_{\kk}$ and let $v \in \pi_{l}V$ be a homotopy class represented by a map $\overline{v}\colon \Sigma^l \kk \to V$. Then we define the \emph{Dyer--Lashof--Lie class} $\bQe{\e}{i}(v) \in \pi_{l+2i(p-1)-\e-1}(\mathbf{L}_p(V))$, $i\in \Z$, $\e\in \{0,1\}$ (resp. $\Qe{i}(v) \in \pi_{l+i-1}(\mathbf{L}_2(V))$, $i\in \Z$ if $p=2$) by the formula
$$\bQe{\e}{i}(v) = \overline{v}_*(\bQe{\e}{i}(\iota_l)) \;\; \text{(resp. $\Qe{i}(v)=\overline{v}_*(\Qe{i}(\iota_l))$)}.$$
\end{dfn}

\begin{cor}\label{corollary: kjaer basis of free lie algebra} Let $V\in \Mod_\kk$ be a chain complex. Then the Dyer--Lashof--Lie classes $\bQe{\e}{i}(v)\in\pi_*(\mathbf{L}_p(V))$, $v\in \pi_*(V)$, $i\in\Z$, $\e\in \{0,1\}$ (resp. $\Qe{i}(v) \in\pi_*(\mathbf{L}_2(V))$, $v\in \pi_*(V)$, $i\in\Z$ if $p=2$) satisfy following properties
\begin{enumerate}
\item semi-linearity, i.e. $\bQe{\e}{i}(\alpha v) = \alpha^p\bQe{\e}{i}(v)$, $\alpha \in \kk$ (resp. $\Qe{i}(\alpha v) = \alpha^2\Qe{i}(v)$, $\alpha \in \kk$);
\item additivity, i.e. $\bQe{\e}{i}(v+w) = \bQe{\e}{i}(v) + \bQe{\e}{i}(w)$, $v,w\in \pi_*(V)$;
\item stability, i.e. $\bQe{\e}{i}(\sigma v) = (-1)^{\e} \sigma\bQe{\e}{i}(v)$.
\item $\bQe{\e}{i}(v)=0$ if and only if $2i\leq |v|$ (resp. $\Qe{i}(v)=0$ if and only if $i\leq |v|$). \qed
\end{enumerate}
\end{cor}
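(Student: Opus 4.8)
The plan is to reduce all four assertions to the corresponding classical facts about the Dyer--Lashof classes $\beta^{\e}Q^i$ on the extended powers $(\Sigma V)^{\otimes p}_{h\Sigma_p}$, see~\cite[Section~I.1]{CLM76}, using the naturality of the Dyer--Lashof--Lie classes in $V$ (Definition~\ref{definition: dll-classes}) together with the naturality of the corona transformation~\eqref{equation: corona cell map, nat trans} and of the suspension map~\eqref{equation: suspention map, lie powers}; this follows the pattern of~\cite[Lemma~3.1]{Kjaer18}. \emph{Semi-linearity} is immediate: if $v=\alpha v_0$, $\alpha\in\kk$, then $\overline{v}=\overline{v_0}\circ m_{\alpha}$ where $m_{\alpha}\colon\Sigma^l\kk\to\Sigma^l\kk$ is multiplication by $\alpha$, and $\mathbf{L}_p(m_{\alpha})=\alpha^p\cdot\id$ because $\mathbf{L}_p$ is built from the $p$-fold tensor power, so $\bQe{\e}{i}(\alpha v_0)=\overline{v_0}_{*}\mathbf{L}_p(m_{\alpha})_{*}\bQe{\e}{i}(\iota_l)=\alpha^p\,\bQe{\e}{i}(v_0)$ (and $\alpha^2$ for $\Qe{i}$). \emph{Vanishing} follows by evaluating at the universal class $\iota_l$, the general case then being a consequence of naturality: when $p=2$ or $p$ is odd and $l$ is even, Proposition~\ref{proposition: corona cell is almost equivalence}(1) makes $c^{p;1}_{*}$ an isomorphism carrying $\bQe{\e}{i}(\iota_l)$ to $\sigma^{-2}\beta^{\e}Q^i(\iota_{l+1})$ by construction, so $\bQe{\e}{i}(\iota_l)$ vanishes exactly when $\beta^{\e}Q^i(\iota_{l+1})$ does, i.e. exactly when $2i\le l$ (resp.\ $i\le l$ when $p=2$); the remaining case ($p$ and $l$ both odd) follows from the defining formula $\bQe{\e}{i}(\iota_l)=(-1)^{\e}E_{*}(\sigma\bQe{\e}{i}(\iota_{l-1}))$ and the fact that~\eqref{equation: suspention map, lie powers} is an equivalence here (Proposition~\ref{proposition: corona cell is almost equivalence}(2), applied to $\iota_{l-1}$). \emph{Stability} is proved the same way; the sign $(-1)^{\e}$ is produced when one commutes the loop coordinate past the $p$-fold smash inside the corona map and the classical suspension, and for odd $l$ it is built into the definition.

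The real content is \emph{additivity}, which I would organize as follows. By naturality it suffices to treat the universal pair $V=\Sigma^l\kk\oplus\Sigma^l\kk$, $v_1=\iota_l^{(1)}$, $v_2=\iota_l^{(2)}$: then $v_1+v_2$ is represented by the diagonal $\Delta\colon\Sigma^l\kk\to\Sigma^l\kk\oplus\Sigma^l\kk$, and an arbitrary pair is obtained by postcomposition with the map $(\overline{v_1},\overline{v_2})$. Since $\mathbf{L}_p$ is $p$-homogeneous one has $\mathbf{L}_p(A\oplus B)\simeq\mathbf{L}_p(A)\oplus\mathbf{L}_p(B)\oplus\creff_2\mathbf{L}_p(A,B)$, and the $\mathbf{L}_p(A)$- and $\mathbf{L}_p(B)$-components of $\mathbf{L}_p(\Delta)_{*}\bQe{\e}{i}(\iota_l)$ are $\bQe{\e}{i}(\iota_l^{(1)})$ and $\bQe{\e}{i}(\iota_l^{(2)})$ (apply $\mathbf{L}_p$ to the retractions $\mathrm{pr}_j\circ\Delta=\id$). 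Hence additivity amounts to the vanishing of the cross-effect component. Now for $A=B=\Sigma^l\kk$ one has $\creff_2\mathbf{L}_p(\Sigma^l\kk,\Sigma^l\kk)\simeq\bigoplus_{0<j<p}\Sigma^{pl}\big(\Res^{\Sigma_p}_{\Sigma_j\times\Sigma_{p-j}}\bfLie_p\otimes(\text{sign})\big)_{h(\Sigma_j\times\Sigma_{p-j})}$, and since $p\nmid j!(p-j)!$ for $0<j<p$ the group algebra $\kk[\Sigma_j\times\Sigma_{p-j}]$ is semisimple, so this object has homotopy concentrated in the single degree $pl$. On the other hand $\bQe{\e}{i}(\iota_l)$ lives in degree $l+2i(p-1)-\e-1$, which equals $pl$ only if $p-1\mid\e+1$; for $p\ge5$ this never occurs, and for $p=3$ it forces $l$ odd, a case one reduces to $l-1$ even through the suspension equivalence of Proposition~\ref{proposition: corona cell is almost equivalence}(2) (compatible with the above decomposition by naturality of $E$). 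So for odd $p$ the cross-effect component vanishes for degree reasons.

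For $p=2$ the degree constraint $l+i-1=2l$ does occur, precisely for the top operation $\Qe{l+1}(\iota_l)$ (the $p$-operation). Here $\creff_2\mathbf{L}_2(\Sigma^l\kk,\Sigma^l\kk)\simeq\Sigma^l\kk\otimes\Sigma^l\kk$: applying $\Delta^{\otimes2}$ to $\iota_l\otimes\iota_l$ contributes $\iota_l^{(1)}\otimes\iota_l^{(2)}+\iota_l^{(2)}\otimes\iota_l^{(1)}$ to the mixed part, and the two summands are identified after passing to homotopy $\Sigma_2$-orbits, so their sum maps to $2\,\iota_{2l}=0$. Thus the cross-effect component again vanishes and additivity follows. \textbf{The main obstacle} is exactly this cross-effect analysis for additivity: because the corona map is not an equivalence on a wedge of two spheres (Proposition~\ref{proposition: corona cell is almost equivalence} applies only to a single sphere), one cannot transport the classical additivity of $\beta^{\e}Q^i$ backwards, and must instead identify $\creff_2\mathbf{L}_p$ explicitly and invoke the semisimplicity of $\kk[\Sigma_j\times\Sigma_{p-j}]$ together with, at $p=2$, the $2$-divisibility of the relevant orbit map; everything else is routine naturality.
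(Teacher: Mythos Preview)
Your argument is correct and supplies what the paper leaves implicit: the corollary carries only a \qed\ in the text, signalling that the author regards all four items as immediate from Definition~\ref{definition: dll-classes}, Proposition~\ref{proposition: corona cell is almost equivalence}, and the classical properties of $\beta^{\e}Q^i$ from \cite[Section~I.1]{CLM76}. Your treatment of semi-linearity, stability, and vanishing is exactly this reduction, and your cross-effect argument for additivity is the natural way to make that step honest once one notices that $\mathbf{L}_p$ is not additive.

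One simplification is available at $p=2$. The map $c^{2;1}\colon\bfLie_2^{sp}\to\Sigma^{-1}\kk$ is already an equivalence of $\Sigma_2$-objects in $\Mod_{\kk}$: both sides are $\Sigma^{-1}\kk$ with trivial action (since $\bfLie_2\cong\mathbf{sgn}$, and the complex of Proposition~\ref{proposition: resolution for Lie} has a single nonzero term $C_{-1}$ when $k=2$). Consequently the natural transformation~\eqref{equation: corona cell map, nat trans} is an equivalence for \emph{every} $V$ when $p=2$, not only for $V\simeq\Sigma^l\kk$, and additivity of $\Qe{i}$ can be transported directly from the classical additivity of $Q^i$. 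Your separate $p=2$ cross-effect computation is correct but unnecessary, and your closing remark that ``the corona map is not an equivalence on a wedge of two spheres'' applies only to odd~$p$.
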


The Lie operad $\bfLie_*$ 
produces the monad
$$\bbP_{\bfLie_*}(-)= \bigoplus_{k\geq 0} \bfLie_k \otimes_{h\Sigma_k} (-)^{\otimes k} $$ 
acting on the $\infty$-category $\Mod_{\kk}$. By Proposition~\ref{proposition: homology of spectral lie operad}, the monad $\bbP_{\bfLie_*}$ 
descends to the monad 
\begin{equation}\label{equation: H-Lie-monad}
F_{\bfLie}\colon \mathrm{Ho}(\Mod_{\kk}) \to \mathrm{Ho}(\Mod_{\kk})
\end{equation}
acting on the \emph{homotopy} ($1$-)category $\mathrm{Ho}(\Mod_{\kk})$. By analogy with $E_\infty$- and $H_\infty$-algebras, see e.g.~\cite[Section~1.3]{BMMS}, we define the ($1$-)category of \emph{$H_\infty$-$\bfLie_*$-algebras} as the ($1$-)category of algebras over the monad $F_{\bfLie}$. By abuse of notation we will also write $F_{\bfLie}(V)$ for the free $H_\infty$-$\bfLie_*$-algebra generated by $V\in \mathrm{Ho}(\Mod_{\kk})$. Finally, we note that $\bfLie_*$-algebras 
in the $\infty$-category $\Mod_{\kk}$ descends to $H_\infty$-$\bfLie_*$-algebras in the homotopy ($1$-)category $\mathrm{Ho}(\Mod_{\kk})$.

\begin{rmk}\label{remark: H-Lie-algebra explicit}
Unwinding the definition, an $H_\infty$-$\bfLie_*$-algebra is a chain complex $L\in \mathrm{Ho}(\Mod_{\kk})$ equipped with a sequence of maps
\begin{equation}\label{equation: lie multiplications}
\gamma_k\colon \mathbf{L}_k(L) \to L, \;\; k\geq 2
\end{equation}
such that the following associativity diagrams 
$$
\begin{tikzcd}[column sep=huge]
\mathbf{L}_{k}(\mathbf{L}_n(L)) \arrow{r}{m_{k;n,\ldots, n}} \arrow{d}[swap]{\mathbf{L}_{k}(\gamma_n)}
&\mathbf{L}_{kn}(L)\arrow{d}{\gamma_{kn}} \\
\mathbf{L}_{k}(L) \arrow{r}{\gamma_{k}}
&L
\end{tikzcd}
$$
commute in the homotopy ($1$-)category $\mathrm{Ho}(\Mod_{\kk})$ for all $k,n\geq 2$. Here the map $m_{k;n,\ldots,n}$ is induced by the operadic multiplication map.
\end{rmk}

\begin{dfn}\label{definition: dll operations}
Let $L$ be an $H_\infty$-$\bfLie_*$-algebra. If $p=2$, we define the \emph{Dyer--Lashof--Lie operation} 
$$\Qe{i}\colon \pi_l(L)^{(1)} \to \pi_{l+i-1}(L), \;\; l,i\in\Z $$
by the formula $\Qe{i}(x)=\gamma_{2*}(\Qe{i}(x))$, $x\in \pi_l(L)$, where $\gamma_2 \colon \mathbf{L}_2(L) \to L$ is the structure map~\eqref{equation: lie multiplications}. 
Similarly, if $p$ is odd, we define the \emph{Dyer--Lashof--Lie operation} 
$$\bQe{\e}{i}\colon \pi_l(L)^{(1)} \to \pi_{l+2i(p-1)-\e-1}(L), \;\; l,i\in\Z, \e\in\{0,1\}$$
by the formula $\bQe{\e}{i}(x)=\gamma_{p*}(\bQe{\e}{i}(x))$, $x\in \pi_l(L)$, where $\gamma_p \colon \mathbf{L}_p(L) \to L$ is the structure map~\eqref{equation: lie multiplications}. 
\end{dfn}

\begin{rmk}\label{remark: dll, free lie algebra}
Suppose that $L=F_{\bfLie}(V)$ is a free $H_\infty$-$\bfLie_*$-algebra. Then there is a natural decomposition
$$\pi_*(F_{\bfLie}(V)) \cong \bigoplus_{k\geq 0}\pi_*(\mathbf{L}_k(V)).$$
Let $x\in \pi_*(\mathbf{L}_k(V))\subset \pi_*(F_{\bfLie}(V))$, $k\geq 0$ be a homotopy class. Under the decomposition above, the Dyer--Lashof--Lie operation $\bQe{\e}{i}$, $i\in\Z$, $\e\in\{0,1\}$ (resp. $\Qe{i}$, $i\in\Z$ if $p=2$) maps $x$ into the component $\pi_*(\mathbf{L}_{pk}(V))\subset \pi_*(F_{\bfLie}(V))$ (resp. $\pi_*(\mathbf{L}_{2k}(V))\subset \pi_*(F_{\bfLie}(V))$). By abuse of notation we refer to the class $\bQe{\e}{i}(x) \in \pi_*(\mathbf{L}_{pk}(V))$ (resp. $\bQe{\e}{i}(x) \in \pi_*(\mathbf{L}_{2k}(V))$) again as a Dyer--Lashof--Lie class.
\end{rmk}

\section{Algebraic functor calculus}\label{section: algebraic functor calculus}
In this section we prove our starting results on the Taylor tower for the functor $$\free\colon \Mod^{\geq 0}_{\kk} \to \sL $$
of the free simplicial restricted Lie algebra. The main results are Corollary~\ref{corollary:derivatives of free}, which identifies the Goodwillie layers $D_n(\free)$, Theorem~\ref{theorem:splitting}, which shows that the Taylor tower of the truncated functor $P_{[n;pn-1]}(\free)$ (see~\eqref{equation: thick layer}) splits after the $\kk$-completion and the $(p-2)$-th fold looping, and Proposition~\ref{proposition:james-hopf and adjoint}.

In Section~\ref{section: scalar degree} we discuss the \emph{endofunctor category} $\Endinfty_{\kk}=\Fun^{\omega}(\Mod_{\kk},\Mod_{\kk})$ of the $\kk$-linear category $\Mod_{\kk}$. Since we consider \emph{all} functors, not necessary $\kk$-linear ones, the category $\Endinfty_{\kk}$ is rich. Given a character $\chi \in \mathfrak{X}(\kk) = \Hom(\kk^{\times}, \kk^{\times})$, we define the full subcategory $\Endinfty_{\kk}^{hB\Gamma_{\chi}} \subset \Endinfty_{\kk}$ of functors with \emph{scalar degree} $\chi$, see Definition~\ref{definition:linear degree}. We show in Proposition~\ref{proposition: scalar degree is fully faithful} that the subcategories $\Endinfty_{\kk}^{hB\Gamma_{\chi}}$, $\Endinfty_{\kk}^{hB\Gamma_{\chi'}}$ are orthogonal to each other if $\chi\neq \chi' \in \mathfrak{\kk}$. Unlike the case of endofunctors of an \emph{abelian} ($1$-)category, where the base field may be arbitrary, see e.g.~\cite{Kuhn_generic}, our results for the $\infty$-category $\Mod_{\kk}$ put certain restrictions on the field~$\kk$, see Lemma~\ref{lemma: cohomology of multiplicative group}. 

In Section~\ref{section: vanishing theorems} we show that there are no non-trivial natural transformations (resp. natural transformations admitting delooping) between homogeneous functors from the category $\Mod_{\kk}^{+}$ to $\Sp(\sLxi)$ (resp. $\sLxi$) of certain form and different excisive degrees, see Proposition~\ref{proposition:suspention and natural transformations} (resp. Proposition~\ref{proposition:maps between homogeneous functors}). We apply these vanishing results in Section~\ref{section: snaith splitting} to obtain the \emph{Snaith splitting} (Proposition~\ref{proposition: Snaith splitting}) and to define the important \emph{James--Hopf map} (Definition~\ref{definition:james-hopf map}).

In Section~\ref{section: goodwillie tower of a free lie algebra} we prove Theorem~\ref{theorem: A,intro} as Corollary~\ref{corollary:derivatives of free}. Using this computation and the vanishing results of Section~\ref{section: vanishing theorems}, we demonstrate the splitting of the Taylor tower for the functor $P_{[n;pn-1]}(\Omega^{p-2}L_\xi\free)$ in Corollary~\ref{corollary: splitting}. In Proposition~\ref{proposition:james-hopf and adjoint} we use this splitting to show that the differentials in the Goodwillie spectral sequence are induced by certain natural transformations $\delta_n$. Moreover, we factorize $\delta_n$ as the composite of the $p$-th James--Hopf map $j_p$ and a certain map $\deloop \D_{pn}(\tilde{\delta}_n)$ of infinite loop objects.

\subsection{Scalar degree}\label{section: scalar degree}

Recall that $\kk$ is the algebraic closure of the finite field $\F_p$. We write $\mathfrak{X}(\kk)=\Hom(\kk^{\times},\kk^{\times})$ for the \emph{character group} of the multiplicative group~$\kk^{\times}$. Note that there is an inclusion $\Z\subset \Hom(\kk^{\times},\kk^{\times})$ given by $n\mapsto \chi_n$, $n\in\Z$, where $\chi_n(a)=a^n$, $a\in \kk^{\times}$.

The tensor product $-\otimes-\colon \Mod_{\kk}\times \Mod_{\kk} \to \Mod_{\kk}$ induces the functor
$$\Mod_{\kk} \to \Fun^\omega(\Mod_{\kk},\Mod_{\kk}), \;\; V \mapsto (- \mapsto V\otimes - ).$$
Let $B\kk^{\times} \subset \Mod_{\kk}$ be the full subcategory spanned by the one-dimensional vector space $\kk \in \Mod_{\kk}$. By restricting, we obtain a functor 
$$B\kk^{\times}  \hookrightarrow \Mod_{\kk} \to \Fun^{\omega}(\Mod_{\kk},\Mod_{\kk})$$
which maps the only object of $B\kk^{\times}$ to the identity functor and its automorphism given by $a\in \kk^{\times}$ to the natural transformation $$m_a\colon \id \to \id$$ given by the multiplication by $a\in \kk^{\times}$.
In other words, we obtain a $B\kk^{\times}$-action on the $\infty$-category $\Mod_{\kk}\in\Pre^L$. We prolong this $B\kk^\times$-action to the natural $B(\kk^{\times}\times \kk^{\times})$-action on the endofunctor category $\Endinfty_{\kk}=\Fun^{\omega}(\Mod_{\kk},\Mod_{\kk})$:
\begin{equation}\label{equation:BGm-action}
B(\kk^{\times}\times\kk^{\times}) \to \Fun^{\omega}(\Endinfty_{\kk}, \Endinfty_{\kk}).
\end{equation}

More informally, the action~\eqref{equation:BGm-action} means that any endofunctor $F\in \Endinfty_{\kk}$ is endowed with the natural transformation $\eta_{a,b}\colon F \to F$, $(a,b)\in \kk^{\times}\times \kk^{\times}$ given by the rule 
\begin{equation}\label{equation:BGmxG_m-action}
\eta^V_{a,b}=m^{F(V)}_{b}\circ F(m^V_{a^{-1}})\colon F(V) \to F(V).
\end{equation}
Moreover, these natural transformations induce the $\kk^{\times}\times \kk^{\times}$-action on $F$.

Let $\chi\in \mathfrak{X}(\kk)$ be a homomorphism $\chi\colon \kk^{\times} \to \kk^{\times}$ and let $\Gamma_{\chi}\subset \kk^{\times}\times \kk^{\times}$ be the corresponding graph subgroup. We denote by $L_\chi$ the natural forgetful functor $$L_{\chi}\colon \Endinfty_{\kk}^{hB\Gamma_{\chi}} \to \Endinfty_{\kk}.$$

\begin{rmk}\label{remark: meaning of the scalar degree}
Suppose that $F\in \Endinfty_{\kk}$ is in the essential image of the functor $L_\chi$, $\chi\in \mathfrak{X}(F)$. Then all natural transformations~\eqref{equation:BGmxG_m-action} are equivalent to the identity. In particular, for any $V\in \Mod_{\kk}$ there is a homotopy between the maps $F(m_a^V)$ and $m_{\chi(a)}^{F(V)}$.
\end{rmk}

\begin{prop}\label{proposition: scalar degree, right adjoint}
The functor $L_\chi$, $\chi\in\mathfrak{X}(\kk)$ is conservative and admits the right adjoint $$R_{\chi}\colon \Endinfty_{\kk} \to \Endinfty_{\kk}^{hB\Gamma_{\chi}}.$$ Moreover, the counit map $L_{\chi}R_{\chi}F \to F$ coincides with $F^{h\Gamma_{\chi}} \to F$, where the $\Gamma_{\chi}$-action on $F$ is given by the natural transformations~\eqref{equation:BGmxG_m-action}.
\end{prop}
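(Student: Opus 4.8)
The plan is to identify $L_\chi$ with the evaluation‑at‑basepoint functor of a limit of presentable $\infty$-categories and then to apply standard adjoint‑functor and base‑change arguments.

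First I would make the construction precise. The $B(\kk^\times\times\kk^\times)$-action \eqref{equation:BGm-action} presents $\Endinfty_\kk$ as a local system valued in $\Pre^L$ over the connected space $B\big(B(\kk^\times\times\kk^\times)\big)$ — every autoequivalence of a presentable category preserves colimits, so the action does land in $\Pre^L$ — and $\Endinfty_\kk^{hB\Gamma_\chi}$ is the limit of the pullback of this local system along $B(B\Gamma_\chi)\to B\big(B(\kk^\times\times\kk^\times)\big)$, with $L_\chi$ the pullback $i^{\ast}$ along the point inclusion $i\colon \ast\to B(B\Gamma_\chi)$. Since $\Pre^L\hookrightarrow\Catinfty_\infty$ creates limits and $\Endinfty_\kk=\Fun^\omega(\Mod_\kk,\Mod_\kk)$ is presentable ($\Mod_\kk$ being compactly generated), the category $\Endinfty_\kk^{hB\Gamma_\chi}$ is again presentable and $L_\chi$ preserves all small colimits. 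Conservativity is then immediate: $B(B\Gamma_\chi)$ is a connected Kan complex, so a morphism of sections of the corresponding fibration is an equivalence as soon as its value over one vertex is an equivalence, and that value is precisely the image of the morphism under $L_\chi$.

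For the right adjoint I would apply the adjoint functor theorem \cite[Corollary~5.5.2.9]{HTT} to the colimit‑preserving functor $L_\chi$ between presentable $\infty$-categories; equivalently and more explicitly, $R_\chi$ is the right Kan extension $i_{\ast}$ along $i$, which exists because $\Endinfty_\kk$ is complete. To compute the counit I would use base change along the homotopy pullback square with $\ast\times_{B(B\Gamma_\chi)}\ast\simeq \Omega B(B\Gamma_\chi)\simeq B\Gamma_\chi$ together with the pointwise formula for right Kan extensions (the slice of a connected space under a point being contractible), obtaining $L_\chi R_\chi F\simeq \lim_{B\Gamma_\chi}F=F^{h\Gamma_\chi}$; here the $\Gamma_\chi$-action is the monodromy of the local system around the loops $B\Gamma_\chi\simeq\Omega B(B\Gamma_\chi)$, which by construction of the action on objects is given by the natural transformations $\eta_{a,b}$ of \eqref{equation:BGmxG_m-action}. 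Under this identification the counit $i^{\ast}i_{\ast}F\to i^{\ast}F$ becomes the canonical map $F^{h\Gamma_\chi}\to F$ forgetting the homotopy‑fixed‑point structure.

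I expect the only genuinely delicate point to be this last identification: one must carefully track the difference between the $B\Gamma_\chi$-action on the \emph{category} $\Endinfty_\kk$ (which controls the limit defining $\Endinfty_\kk^{hB\Gamma_\chi}$) and the induced $\Gamma_\chi$-action on an individual \emph{object} $F$ (which controls $F^{h\Gamma_\chi}$), and verify that the monodromy produced by the base‑change square is exactly $\eta_{a,b}$ and not its inverse or a twist of it. The remaining steps are routine applications of the presentability and Kan‑extension formalism.
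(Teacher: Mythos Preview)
Your proposal is correct and follows essentially the same route as the paper's proof: both identify $L_\chi$ with evaluation at the basepoint of the local system (equivalently, restriction of sections of the associated fibration) over the connected space $B^2\Gamma_\chi$, obtain $R_\chi$ as the corresponding right Kan extension, and read off $L_\chi R_\chi F\simeq F^{h\Gamma_\chi}$ from the base-change square with fiber $*\times_{B^2\Gamma_\chi}*\simeq B\Gamma_\chi$. The only cosmetic difference is that the paper packages this via coCartesian fibrations and the relative Kan extension formalism of \cite{QS19} (following \cite[Lemma~7.2]{Yuan21}), whereas you invoke presentability of $\Endinfty_\kk$ and the adjoint functor theorem; both lead to the same computation of the counit, and both deduce conservativity from connectedness of $B^2\Gamma_\chi$.
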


\begin{proof}
The proof is similar to the proof of~\cite[Lemma~7.2]{Yuan21}. Indeed, let us consider $B^2\Gamma_{\chi}$ as a pointed space, and denote the inclusion of the basepoint by $i\colon\{ * \} \to B^2\Gamma_{\chi}$.  The $B\Gamma_{\chi}$-action~\eqref{equation:BGm-action}
determines a pullback square
\begin{equation*}
\begin{tikzcd}
\Endinfty_{\kk} \arrow[d,"q"]\arrow[r,"\bar{i}"] & \calD \arrow[d,"f"] \\
\{ * \} \arrow[r,"i"] & B^2\Gamma_{\chi},
\end{tikzcd}
\end{equation*}
where $f$ is a coCartesian (and Cartesian) fibration.  Let $\mathrm{sect}^{\circ}(f) \subset \mathrm{sect}(f)$ be the full subcategory of sections which send any morphism in $B^2\Gamma_{\chi}$ to a coCartesian morphism in $\calD$.  We have an equivalence $\mathrm{sect}^{\circ}(f) \simeq \Endinfty_{\kk}^{hB\Gamma_{\chi}},$ and the functor $L_{\chi}$ is the restriction of a section of $f$ along $i$.  

The right adjoint $R_{\chi}$ can be given by the $f$-right Kan extension along $i$, see~\cite[Definition 2.8]{QS19}. 
By \cite[Corollary 2.11]{QS19}, such Kan extension exists as long as for any $F\in \Endinfty_{\kk}$, considered as a section of $q$, the induced diagram
$$B\Gamma_{\chi}\simeq *\times_{B^2\Gamma_{\chi}} (B^2\Gamma_{\chi})_{*/} \to \calD_{*}\simeq \Endinfty_{\kk}$$ 
has a limit. However, such diagram always has a limit because the category $\Endinfty_{\kk}$ is complete.  Moreover, by the same corollary, $L_\chi R_\chi F \simeq F^{h\Gamma_{\chi}}, F\in \Endinfty_{\kk}$ and the resulting section of $f$ is in the full subcategory $\mathrm{sect}^{\circ}(f)\subset \mathrm{sect}(f)$.  

The functor $L_{\chi}$ is an exact functor between stable categories. Therefore, $L_{\chi}$ is conservative if and only if $L_{\chi}F\simeq 0$ implies $F\simeq 0$ for $F\in \Endinfty_{\kk}^{hB\Gamma_{\chi}}$. Since $B^2\Gamma_{\chi}$ is a connected Kan complex, the latter is clear.
\end{proof}

The next lemma is crucial for the proof that the functor $L_\chi$, $\chi\in \mathfrak{X}(\kk)$ is fully faithful. 

\begin{lmm}\label{lemma: cohomology of multiplicative group}
Let $\chi \in \mathfrak{X}(\kk)$ be a character and let $V\in \Vect_{\kk}$ be a vector space. Then 
$$ 
H^i(\kk^\times, \chi\otimes V)=
\begin{cases}
V & \mbox{if $i=0$ and $\chi=\chi_0$ is trivial,}\\
0 & \mbox{otherwise.}
\end{cases}
$$
\end{lmm}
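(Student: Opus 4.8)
The plan is to control $H^*(\kk^\times,\chi\otimes V)$ through the finite subgroups of $\kk^\times$. Since $\kk=\overline{\F}_p$, I would first write $\kk^\times=\bigcup_{k\geq 1}G_k$ as the increasing union of the finite cyclic groups $G_k:=\F_{p^{k!}}^\times$, so that $\kk^\times=\colim_k G_k$ with $|G_k|=p^{k!}-1$ coprime to $p=\mathrm{char}(\kk)$; this is the one place where the hypothesis $\kk=\overline{\F}_p$ enters, as it guarantees that $\kk^\times$ is a directed union of finite groups whose orders are invertible in $\kk$. At the level of inhomogeneous cochains one has $C^\bullet(\kk^\times,\chi\otimes V)=\lim_k C^\bullet(G_k,\chi\otimes V)$, a limit of a tower of complexes whose transition maps (restriction along $G_k^n\hookrightarrow G_{k+1}^n$) are surjective in each degree; the Milnor exact sequence for such a tower then yields
$$0\to {\lim_k}^1 H^{i-1}(G_k,\chi\otimes V)\to H^i(\kk^\times,\chi\otimes V)\to \lim_k H^i(G_k,\chi\otimes V)\to 0$$
for every $i\geq 0$.

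The next step is purely about finite groups: since $|G_k|$ is invertible in $\kk$, Maschke's theorem makes $\kk[G_k]$ semisimple, so $H^i(G_k,-)=0$ for $i\geq 1$ on any $\kk[G_k]$-module. The Milnor sequence therefore collapses to $H^i(\kk^\times,\chi\otimes V)=0$ for $i\geq 2$, together with $H^1(\kk^\times,\chi\otimes V)\cong{\lim_k}^1(\chi\otimes V)^{G_k}$ and $H^0(\kk^\times,\chi\otimes V)=\lim_k(\chi\otimes V)^{G_k}$. It then remains to compute the invariants: viewing $\chi\otimes V$ as $V$ with $a\in\kk^\times$ acting by the scalar $\chi(a)$, I observe that $(\chi\otimes V)^{G_k}=V$ when $\chi|_{G_k}$ is trivial, and $(\chi\otimes V)^{G_k}=0$ otherwise, since in the latter case $\chi(a)-1\in\kk^\times$ for some $a\in G_k$ annihilates every invariant vector.

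Finally I would split into cases. If $\chi=\chi_0$, the tower $\{(\chi\otimes V)^{G_k}\}_k$ is the constant tower $V$ with identity maps, so $\lim_k=V$ and ${\lim_k}^1=0$, giving $H^0=V$ and $H^i=0$ for $i\geq 1$. If $\chi\neq\chi_0$, I choose $a_0\in\kk^\times$ with $\chi(a_0)\neq 1$ and $k_0$ with $a_0\in G_{k_0}$; then $\chi|_{G_k}$ is non-trivial for all $k\geq k_0$, so the tower $\{(\chi\otimes V)^{G_k}\}_k$ is eventually zero and $\lim_k={\lim_k}^1=0$, whence $H^i(\kk^\times,\chi\otimes V)=0$ for all $i$. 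The one step that needs genuine care is the reduction in the first paragraph — verifying that the cochain-tower transition maps are surjective so that the Milnor sequence applies and the $\lim^1$ terms are the only correction to passing through the finite subgroups; everything after that is routine invariant-theory bookkeeping together with Maschke's theorem.
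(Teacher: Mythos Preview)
Your proof is correct and follows essentially the same strategy as the paper's: both pass to finite subgroups of $\kk^\times$, invoke the Milnor $\lim^1$ sequence, and use that $|\F_{p^n}^\times|$ is coprime to $p$ to kill higher cohomology of the finite pieces. Your choice of the cofinal nested tower $G_k=\F_{p^{k!}}^\times$ and your explicit verification of the surjectivity of the cochain transition maps are a bit more careful than the paper's terse appeal to the Milnor sequence, but the underlying argument is the same.
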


\begin{proof}
Since $\kk=\overline{\F}_p=\cup \F_{p^n}$, the multiplicative group $\kk^\times$ is the direct limit of finite groups $\F_{p^n}^{\times}$. Therefore we have the Milnor exact sequence
$$0 \to {\lim_n}^1 H^{i-1}(\F_{p^n}^{\times},\chi\otimes V) \to H^i(\kk^\times,\chi\otimes V) \to \lim_n H^i(\F_{p^n}^{\times}, \chi\otimes V)\to 0, \;\; i\geq 0. $$

Since the order $|\F_{p^n}^{\times}|$ is coprime to $p$, the group cohomology $H^i(\F_{p^n}^{\times},\chi\otimes V)=0$ if $i\geq 1$. Moreover, if $\chi\in \mathfrak{X}(\F)$ is non-trivial, then for each $n\geq 1$ there exists a multiple $m=kn$, $k\geq 1$ such that the restricted character $\chi|_{\F^{\times}_{p^m}}$ is also non-trivial. Therefore for $\chi\neq \chi_0$ we have
$${\lim_n}^1 H^0(\F_{p^n}^{\times},\chi\otimes V) = \lim_n H^0(\F_{p^n}^{\times},\chi\otimes V) =0,$$
and so $H^*(\kk^{\times},\chi\otimes V)=0$. If $\chi=\chi_0$, then $H^0(\F_{p^n}^{\times},V)=V$, $n\geq 1$, and so $H^0(\kk^{\times},V)=V$ and $H^i(\kk^{\times},V)=0$ if $i\geq 0$.
\end{proof}

\begin{prop}\label{proposition: scalar degree is fully faithful}
Let $\chi,\chi'\in \mathfrak{X}(\kk)$ be characters. Then the composition $$R_{\chi'}\circ L_\chi \colon \Endinfty_{\kk}^{hB\Gamma_{\chi}} \to \Endinfty_{\kk}^{hB\Gamma_{\chi'}}$$ is trivial if $\chi\neq \chi'$ and the identity if $\chi=\chi'$. In particular, the functors $L_\chi$, $\chi\in \mathfrak{X}(\kk)$ are fully faithful. 
\end{prop}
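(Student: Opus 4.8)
The plan is to reduce the statement to a homotopy-fixed-point computation and then to invoke Lemma~\ref{lemma: cohomology of multiplicative group}. Fix $G\in\Endinfty_{\kk}^{hB\Gamma_{\chi}}$. Since $L_{\chi'}$ is conservative by Proposition~\ref{proposition: scalar degree, right adjoint}, the object $R_{\chi'}L_{\chi}G$ is determined by $L_{\chi'}R_{\chi'}L_{\chi}G$, and by the same proposition the counit identifies $L_{\chi'}R_{\chi'}L_{\chi}G\simeq (L_{\chi}G)^{h\Gamma_{\chi'}}$, where $\Gamma_{\chi'}\cong\kk^{\times}$ (via the first projection) acts on $L_{\chi}G$ through the natural transformations $\eta_{a,\chi'(a)}$ of~\eqref{equation:BGmxG_m-action}. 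Homotopy limits of functors are computed objectwise, so for each $V\in\Mod_{\kk}$ I must understand $(G(V))^{h\kk^{\times}}$, where $a\in\kk^{\times}$ acts by $m^{G(V)}_{\chi'(a)}\circ G(m^{V}_{a^{-1}})$.

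This is where the $B\Gamma_{\chi}$-equivariant structure on $G$ is used. By Remark~\ref{remark: meaning of the scalar degree}, being in the essential image of $L_{\chi}$ furnishes a homotopy $G(m^{V}_{a^{-1}})\simeq m^{G(V)}_{\chi(a)^{-1}}$, natural in $V$ and compatible as $a$ ranges over $\kk^{\times}$; transporting the $\Gamma_{\chi'}$-action along these homotopies identifies it, $\kk^{\times}$-equivariantly, with the action in which $a$ acts on $G(V)$ by the scaling $m^{G(V)}_{\psi(a)}$, where $\psi:=\chi'\chi^{-1}\in\mathfrak{X}(\kk)$. In other words, as an object with $\kk^{\times}$-action, $G(V)$ becomes $\psi\otimes_{\kk}\underline{G(V)}$, the tensor of the one-dimensional representation $\psi$ with $G(V)$ carrying the trivial action.

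Now I compute $(G(V))^{h\kk^{\times}}$. Writing $\kk^{\times}=\colim_n\F^{\times}_{p^n}$, homotopy fixed points over $\kk^{\times}$ are the limit of those over the finite subgroups $\F^{\times}_{p^n}$, and as $|\F^{\times}_{p^n}|$ is prime to $p$ these agree with the strict fixed points $(\psi\otimes\underline{G(V)})^{\F^{\times}_{p^n}}=\psi^{\F^{\times}_{p^n}}\otimes\underline{G(V)}$. If $\chi=\chi'$, then $\psi=\chi_0$, the action is trivial, all these are $G(V)$ with identity transition maps, so the counit $(G(V))^{h\kk^{\times}}\to G(V)$ is an equivalence; by the triangle identities the unit $G\to R_{\chi}L_{\chi}G$ then becomes an equivalence after applying the conservative functor $L_{\chi}$, hence is an equivalence, i.e. $R_{\chi}\circ L_{\chi}\simeq\id$ and $L_{\chi}$ is fully faithful. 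If $\chi\neq\chi'$, then $\psi$ is a nontrivial character, so it is nontrivial on $\F^{\times}_{p^n}$ for all $n$ in a cofinal set, whence $\psi^{\F^{\times}_{p^n}}=0$ there and $(G(V))^{h\kk^{\times}}\simeq0$ for every $V$; thus $L_{\chi'}R_{\chi'}L_{\chi}\simeq0$ and, $L_{\chi'}$ being conservative, $R_{\chi'}\circ L_{\chi}\simeq0$. (Alternatively, the homotopy-fixed-point spectral sequence $E_2^{s,t}=H^{s}(\kk^{\times},\psi\otimes\pi_t G(V))$ vanishes identically by Lemma~\ref{lemma: cohomology of multiplicative group}.)

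The main obstacle is the middle step: upgrading the pointwise identity ``$G$ has scalar degree $\chi$, so $G(m^V_a)\simeq m^{G(V)}_{\chi(a)}$'' to a genuine $\kk^{\times}$-equivariant equivalence between the $\Gamma_{\chi'}$-action on the functor $L_{\chi}G$ and the scaling action through $\psi$ --- that is, an equivalence in $\Fun(B\kk^{\times},\Endinfty_{\kk})$, not merely objectwise or on homotopy groups. This coherence is exactly what the $B\Gamma_{\chi}$-fixed-point structure built into $\Endinfty_{\kk}^{hB\Gamma_{\chi}}$ encodes, and checking it requires unwinding the section/Kan-extension description of $R_{\chi}$ (and $R_{\chi'}$) over $B^2\Gamma_{\chi}$ (resp. $B^2\Gamma_{\chi'}$) from the proof of Proposition~\ref{proposition: scalar degree, right adjoint}. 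A secondary point, the passage from homotopy to strict fixed points for $\F^{\times}_{p^n}$ and the identification of $\kk^{\times}$-fixed points with a limit over this tower, is standard for a group that is a filtered union of finite subgroups of order prime to $p$.
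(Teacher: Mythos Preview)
Your argument is correct and follows the same route as the paper: reduce to computing $L_{\chi'}R_{\chi'}L_{\chi}$ using conservativity, identify this with $(L_{\chi}G)^{h\Gamma_{\chi'}}$ via Proposition~\ref{proposition: scalar degree, right adjoint}, use the scalar-degree structure (Remark~\ref{remark: meaning of the scalar degree}) to rewrite the action as scaling by $\psi=\chi'\chi^{-1}$, and finish with Lemma~\ref{lemma: cohomology of multiplicative group}. The only difference is that you are more explicit about the coherence step and offer an alternative computation via the finite subgroups $\F_{p^n}^{\times}$, whereas the paper simply invokes the remark and the lemma without further comment.
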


\begin{proof}
By Proposition~\ref{proposition: scalar degree, right adjoint}, the functor $L_{\chi'}$ is conservative, and so it suffices to compute the composition $L_{\chi'}\circ R_{\chi'}\circ L_{\chi}$. By applying the same proposition, we have
$$L_{\chi'}\circ R_{\chi'}\circ L_{\chi}(F)\simeq (L_\chi F)^{h\Gamma_{\chi'}}, \;\; F\in \Endinfty_{\kk}. $$
By Remark~\ref{remark: meaning of the scalar degree}, we have $$\left((L_\chi F)^{h\Gamma_{\chi'}}\right)(V)\simeq \left(\chi'\otimes \chi^{-1} \otimes F(V)\right)^{h\kk^{\times}}$$
for a perfect $V \in \Mod_{\kk}^{\omega}$. By Lemma~\ref{lemma: cohomology of multiplicative group}, the latter is equivalent to $F$ if $\chi=\chi'$ and trivial otherwise.
\end{proof}

\begin{dfn}\label{definition:linear degree}
A non-zero functor $F\colon \Mod_{\kk} \to \Mod_{\kk}$ is called \emph{scalable} if $F\in \Endinfty_{\kk}$ is in the essential image of $L_\chi$ for some $\chi \in\mathfrak{X}(\kk)$. If $F\simeq L_\chi(\bar{F})$ for some $\bar{F}\in \Endinfty_{\kk}^{hB\Gamma_{\chi}}$, then $\chi\in \mathfrak{X}(\kk)$ is the \emph{scalar degree} of $F$. 
\end{dfn}

By Proposition~\ref{proposition: scalar degree is fully faithful}, the scalar degree of a scalable functor $F\in\Endinfty_{\kk}$ is well-defined. We denote the scalar degree of $F$ by $\lindeg(F)$.

\begin{prop}\label{proposition: scalable functors are closed colimits}
The full subcategory $\Endinfty_{\kk}^{hB\Gamma_{\chi}} \subset \Endinfty_{\kk}$ of scalable functors of scalar degree $\chi \in \mathfrak{X}(\kk)$ is closed under arbitrary small colimits and finite limits. \qed
\end{prop}

\begin{exmp}\label{example:composition of linear degrees}
Recall that the set of characters $(\mathfrak{X}(\kk),\circ)$ is a monoid with respect to composition. Let $F,G\colon \Mod_{\kk} \to \Mod_{\kk}$ be scalable functors such that $\lindeg(F)=\chi$ and $\lindeg(G)=\chi'$. Then $\lindeg(F\circ G)=\chi\circ \chi'$.
\end{exmp}

For $A\in \Mod_{\kk}^{B\Sigma_n}$, we define an $n$-homogeneous functor $H^n_A\in \Endinfty_{\kk}$ by the rule
\begin{equation}\label{equation:diagonal homogeneous functor}
H_A^n(V)=A\otimes_{h\Sigma_n}V^{\otimes n}\in \Fun^{\omega}(\Mod_{\kk},\Mod_{\kk}), \;\; V\in \Mod_{\kk}.
\end{equation}
We note that $\lindeg(H_A^n) = \chi_n$, where $\chi_n\in \Z \subset \mathfrak{X}(\kk)$.

\begin{cor}\label{corollary:vanishing of natural transformations}
For modules $A\in \Mod_{\kk}^{B\Sigma_n}$ and $B\in \Mod_{\kk}^{B\Sigma_m}$ the space of natural transformations $\nat(H_A^n,H_B^m)$ is contractible if $n\neq m$. \qed
\end{cor}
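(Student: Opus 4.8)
The plan is to deduce the statement directly from the orthogonality result in Proposition~\ref{proposition: scalar degree is fully faithful} together with the observation that $H_A^n$ and $H_B^m$ have different scalar degrees when $n\neq m$. First I would record that each diagonal homogeneous functor $H_A^n$ defined by~\eqref{equation:diagonal homogeneous functor} is scalable with $\lindeg(H_A^n)=\chi_n$. Indeed, the natural transformation~\eqref{equation:BGmxG_m-action} associated to $(a,b)\in\kk^{\times}\times\kk^{\times}$ acts on $H_A^n(V)=A\otimes_{h\Sigma_n}V^{\otimes n}$ by $b\cdot a^{-n}$ on the tensor factor $V^{\otimes n}$ (since $m^V_{a^{-1}}$ acts by $a^{-1}$ in each of the $n$ slots, hence by $a^{-n}$ after taking the $n$-fold tensor power, and $m^{H_A^n(V)}_b$ scales by $b$), so this transformation is canonically trivialized precisely along the graph subgroup $\Gamma_{\chi_n}=\{(a,a^n)\}\subset\kk^{\times}\times\kk^{\times}$. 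This exhibits a canonical lift $\bar H_A^n\in\Endinfty_{\kk}^{hB\Gamma_{\chi_n}}$ with $L_{\chi_n}(\bar H_A^n)\simeq H_A^n$, so $\lindeg(H_A^n)=\chi_n$ as claimed; likewise $\lindeg(H_B^m)=\chi_m$.

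Next I would invoke the full faithfulness of $L_{\chi}$ from Proposition~\ref{proposition: scalar degree is fully faithful}. Since $H_A^n\simeq L_{\chi_n}(\bar H_A^n)$ and $H_B^m\simeq L_{\chi_m}(\bar H_B^m)$, the mapping space computes as
$$
\nat(H_A^n,H_B^m)\simeq \Map_{\Endinfty_{\kk}}\bigl(L_{\chi_n}(\bar H_A^n),\, L_{\chi_m}(\bar H_B^m)\bigr).
$$
By adjunction (Proposition~\ref{proposition: scalar degree, right adjoint}) this is $\Map_{\Endinfty_{\kk}^{hB\Gamma_{\chi_m}}}\bigl(R_{\chi_m}L_{\chi_n}(\bar H_A^n),\,\bar H_B^m\bigr)$, and Proposition~\ref{proposition: scalar degree is fully faithful} tells us that the composite $R_{\chi_m}\circ L_{\chi_n}$ is the zero functor whenever $\chi_n\neq\chi_m$. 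It remains only to check that $n\neq m$ forces $\chi_n\neq\chi_m$ as elements of $\mathfrak{X}(\kk)$: if $\chi_n=\chi_m$ then $a^{n-m}=1$ for every $a\in\kk^{\times}=\overline{\F}_p^{\times}$, which is impossible for $n\neq m$ since $\overline{\F}_p^{\times}$ contains elements of arbitrarily large finite order. Hence $R_{\chi_m}L_{\chi_n}(\bar H_A^n)\simeq 0$, so the mapping space is $\Map(0,\bar H_B^m)\simeq *$, i.e.\ $\nat(H_A^n,H_B^m)$ is contractible.

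I do not anticipate a serious obstacle here; the only point that needs genuine care is the bookkeeping of the $\Gamma_{\chi}$-action on $H_A^n$—specifically verifying that $F(m^V_{a^{-1}})$ induces multiplication by $a^{-n}$ on $A\otimes_{h\Sigma_n}V^{\otimes n}$ and that this is compatible with the $\Sigma_n$-homotopy-orbit construction, so that the trivialization along $\Gamma_{\chi_n}$ is natural in $V$ and hence defines an honest object of $\Endinfty_{\kk}^{hB\Gamma_{\chi_n}}$. Once that identification of scalar degrees is in hand, the conclusion is a formal consequence of the two preceding propositions and the elementary fact that $\chi_n\neq\chi_m$ for $n\neq m$ over $\overline{\F}_p$.
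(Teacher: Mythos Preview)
Your approach is exactly the one the paper intends: the corollary is marked \qed\ because it follows immediately from the observation (stated just before the corollary) that $\lindeg(H_A^n)=\chi_n$ together with Proposition~\ref{proposition: scalar degree is fully faithful}, and you have spelled out precisely these details.

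One small slip: the adjunction $L_{\chi}\dashv R_{\chi}$ gives
\[
\Map\bigl(L_{\chi_n}(\bar H_A^n),\,L_{\chi_m}(\bar H_B^m)\bigr)\;\simeq\;\Map\bigl(\bar H_A^n,\,R_{\chi_n}L_{\chi_m}(\bar H_B^m)\bigr),
\]
not $\Map\bigl(R_{\chi_m}L_{\chi_n}(\bar H_A^n),\,\bar H_B^m\bigr)$ as you wrote (that would require $R_{\chi_m}$ to be a \emph{left} adjoint). This does not affect the conclusion, since Proposition~\ref{proposition: scalar degree is fully faithful} makes $R_{\chi_n}\circ L_{\chi_m}$ trivial whenever $\chi_n\neq\chi_m$, and the rest of your argument goes through unchanged.
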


\begin{exmp}\label{example:galois group} Let $\Gal(\kk) = \Gal(\kk/\F_p)$ be the Galois group of $\kk$ over $\F_p$.  We define an $\kk$-$\kk$-bimodule $\kk_{\chi}, \chi\in\Gal(\kk)$ as follows: $\kk_{\chi}$ is equal to $\kk$ as \emph{left} $\kk$-modules and the \emph{right} $\kk$-action on $\F_\chi$ is twisted by $\chi$: 
$$x\cdot a = \chi(a)x, \;\; x\in \F_{\chi}, \;\; a\in \F. $$

The bimodule $\F_{\chi}$ induces the scalable functor $\F_{\chi}\otimes (-)\colon \Mod_{\kk}\to \Mod_{\kk}$ and the scalar degree of $\kk_{\chi}\otimes (-)$ is $\chi$.
\end{exmp}

\begin{rmk}\label{remark: unbounded vs bounded}
The results of this section stay true if we replace the endofunctors category $\Endinfty_{\kk}=\Fun^{\omega}(\Mod_{\kk},\Mod_{\kk})$ with any of following functor categories: $\Fun^{\omega}(\Mod_{\kk}^{+},\Mod_{\kk})$, $\Fun^{\omega}(\Mod_{\kk}^{\geq 0},\Mod_{\kk})$.
\end{rmk}

\begin{exmp}\label{example: scalar degree of dold-puppe}
Let $F\in \Fun(\Vect^{\omega}_{\kk},\Vect_{\kk})$ be a functor of the form
$$F(V)= (A\otimes V^{\otimes n})_{\Sigma_n} \;\; \text{or} \;\; F(V)= (A\otimes V^{\otimes n})^{\Sigma_n} $$
for a finite-dimensional $V\in \Vect^{\omega}_{\kk}$, some $\Sigma_n$-module $A\in \Vect^{B\Sigma_n}_{\kk}$, and $n\geq 0$. Then the non-abelian derived functor (see~\cite[Construction~3.3]{BM19}):
$$\mathbb{L}F \colon \Mod^{\geq 0}_{\kk} \to \Mod_{\kk} $$ 
is scalable and it has scalar degree $\lindeg(\mathbb{L}F)=\chi_n$, where $\chi_n \in \Z \subset \mathfrak{X}(\kk)$.
\end{exmp}

\subsection{Vanishing theorems}\label{section: vanishing theorems} 

\begin{prop}\label{proposition:suspention and natural transformations}
Let $A\in \Mod_{\kk}^{B\Sigma_n}$ be a $\Sigma_n$-module and let $B\in \Mod_{\kk}^{+,B\Sigma_m}$ be a bounded below $\Sigma_m$-module, $n,m\geq 0$. We consider the induced functors $H^n_A, H^m_B\in \Fun^{\omega}(\Mod^{+}_{\kk},\Mod_{\kk})$ as functors from the category $\Mod_{\kk}^{+}$ of bounded below objects. Then the natural map
$$\nat(H_A^n,H_B^m) \xrightarrow{\sim} \nat(\suspfreexi H_A^n,\suspfreexi H_B^m) $$
induced by the postcomposition with the left adjoint functor $\suspfreexi\colon \Mod_{\kk} \to \Sp(\sLxi)$ of~\eqref{equation: free stable xi-complete} is an equivalence.
\end{prop}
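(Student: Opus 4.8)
The plan is to exhibit an inverse to the map $\nat(H_A^n,H_B^m) \to \nat(\suspfreexi H_A^n,\suspfreexi H_B^m)$ by using the adjunction $\suspfreexi \dashv \oblv$ together with the explicit computation of the monad $\oblv \circ \suspfreexi$ from Proposition~\ref{proposition: stable monad for xi-complete}. Since $\suspfreexi$ is left adjoint to $\oblv\colon \Sp(\sLxi)\to \Mod_{\kk}$, postcomposition with $\suspfreexi$ followed by the adjunction identity gives, for any functor $G\colon \Mod_{\kk}^{+}\to\Mod_{\kk}$,
\begin{equation*}
\nat(\suspfreexi H_A^n, \suspfreexi G) \simeq \nat(H_A^n, \oblv\circ\suspfreexi\circ G).
\end{equation*}
So the statement reduces to showing that the unit map $H_B^m \to \oblv\circ\suspfreexi\circ H_B^m$ induces an equivalence $\nat(H_A^n, H_B^m) \xrightarrow{\sim} \nat(H_A^n, \oblv\circ\suspfreexi\circ H_B^m)$, i.e. that the ``higher'' summands of the free functor contribute nothing to maps out of $H_A^n$.

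First I would unwind $\oblv\circ\suspfreexi\circ H_B^m$ using Proposition~\ref{proposition: stable monad for xi-complete}: it splits as $\prod_{k\geq 1}\colim_s \Sigma^{-s}\mathbb{L}L^r_k(\tau_{\geq 0}\Sigma^s H_B^m(-))$, and the natural transformation $\mathbb{L}L^r_k(\tau_{\geq 0}\Sigma^s(-))$ applied to the $m$-homogeneous functor $H_B^m$ — with $H_B^m$ landing in bounded below complexes because $B$ is bounded below — is a $km$-homogeneous functor (built from $B^{\otimes k}$ with appropriate $\Sigma$-equivariance). The $k=1$ summand recovers $H_B^m$ itself after the colimit stabilizes (cf.\ Remark~\ref{remark: homology of a free algebra} and the Freudenthal-type Lemma~\ref{lemma: nonabelian derived, freudenthal}). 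For $k\geq 2$ the summand is $km$-homogeneous with $km > m$ (when $m\geq 1$; the case $m=0$ forces $H_B^m$ constant and both sides are trivial since everything is reduced, and the case $n=0$ or $m=0$ is handled separately). Now invoke the scalar-degree machinery of Section~\ref{section: scalar degree}: $H_A^n$ has scalar degree $\chi_n$ and the $k$-th summand has scalar degree $\chi_{km}$ (by Example~\ref{example:composition of linear degrees}, since $\mathbb{L}L^r_k$ has scalar degree $\chi_k$ and precomposition of scalar degrees composes), so by Corollary~\ref{corollary:vanishing of natural transformations} (more precisely its generalization to $\Fun^\omega(\Mod_{\kk}^{+},\Mod_{\kk})$ noted in Remark~\ref{remark: unbounded vs bounded}) the space $\nat(H_A^n, -)$ of the $k$-th summand is contractible whenever $km \neq n$.

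This almost finishes the argument, but there is a gap: the vanishing from scalar-degree orthogonality kills every summand with $km\neq n$, yet it does \emph{not} by itself kill the summands with $km = n$ and $k\geq 2$ (which occur precisely when $n$ is a proper multiple of $m$). The main obstacle is therefore to deal with these ``resonant'' summands. Here I would use a connectivity/convergence argument: for $k\geq 2$, the functor $\colim_s\Sigma^{-s}\mathbb{L}L^r_k(\tau_{\geq 0}\Sigma^s H_B^m(V))$ evaluated on a $c$-connected $V$ is roughly $k$ times as connected as $H_B^m$ on a shifted input minus a bounded error, so as one runs over the cross-effect description of natural transformations between homogeneous functors (a natural transformation $H_A^n\to H^{km}_C$ is a $\Sigma_n$-map $A \to C\otimes(\text{diagonal cross-effect data})$), the relevant mapping spectrum becomes highly connected and in the limit contractible — intuitively, the $k$-fold ``delooping/suspension'' built into $\mathbb{L}L^r_k$ for $k\geq 2$ forces the only surviving contribution to be the linear ($k=1$) one. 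Concretely I would: (i) reduce to $B$ a shifted representation $\Sigma^l D$, (ii) identify $\nat(H_A^n, \Sigma^{-s}\mathbb{L}L^r_k(\tau_{\geq 0}\Sigma^s H_B^m))$ with a mapping complex involving the partition-complex model $\bfLie^{sp}_k$ of Proposition~\ref{proposition: resolution for Lie}, and (iii) observe that the connectivity of this complex grows without bound in $s$ for $k\geq 2$ while the colimit is taken, so it vanishes; only $k=1$ survives, and there the colimit stabilizes to the identity by Lemma~\ref{lemma: nonabelian derived, freudenthal}. Assembling (i)–(iii) with the scalar-degree vanishing of the off-resonant terms yields that the unit $H_B^m \to \oblv\circ\suspfreexi\circ H_B^m$ is an equivalence after applying $\nat(H_A^n,-)$, which is exactly what we needed.
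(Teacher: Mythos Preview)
Your setup via the adjunction $\suspfreexi \dashv \oblv$ and the expansion from Proposition~\ref{proposition: stable monad for xi-complete} is exactly right and matches the paper; the scalar-degree orthogonality correctly kills every summand with $km \neq n$. The difficulty is your treatment of the ``resonant'' summands $km = n$, $k \geq 2$: the connectivity argument you sketch does not work. The sequence $\Sigma^{-s}\mathbb{L}L^r_k(\tau_{\geq 0}\Sigma^s H_B^m(-))$ \emph{stabilizes} as $s\to\infty$ by Lemma~\ref{lemma: nonabelian derived, freudenthal}, so its colimit is simply an honest $km$-homogeneous functor; there is no unbounded growth of connectivity to exploit, and $\nat(H_A^n,-)$ applied to that stabilized functor has no reason to vanish from connectivity considerations alone.

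The paper sidesteps this entirely with a one-line observation you are missing: a resonance $km=n$ with $k\geq 2$ forces $m<n$, and for $m<n$ \emph{both sides of the statement are already contractible}. The point is that $\suspfreexi\colon \Mod_{\kk}\to\Sp(\sLxi)$ is a left adjoint between stable $\infty$-categories and hence exact, so $\suspfreexi H_A^n$ remains $n$-homogeneous and $\suspfreexi H_B^m$ remains $m$-homogeneous. Any natural transformation from an $n$-homogeneous functor to an $m$-excisive target with $m<n$ factors through $P_m$ of the source, which is zero. Thus one may assume $m\geq n$ from the outset; then $km\geq 2m\geq 2n>n$ for every $k\geq 2$, the resonant case never arises, and your scalar-degree argument already completes the proof.
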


\begin{proof}
The functors $H^n_A, \suspfreexi H^n_A$ are $n$-homogeneous and the functors $H^m_B,\suspfreexi H^m_B$ are $m$-homogeneous, so both sides are trivial for $m<n$. Therefore we can assume that $m\geq n$ for the rest of the proof. By Proposition~\ref{proposition: xi-complete stable is monadic}, we have
\begin{align*}
\nat(\suspfreexi H_A^n,\suspfreexi H_B^m) \simeq \nat(H_A^n, \oblv \circ \suspfreexi H_B^m).
\end{align*}
By the assumptions, we can use Proposition~\ref{proposition: stable monad for xi-complete}, and so, we unwind the composition $\oblv\circ \suspfreexi H^m_B$ as to the following direct product
$$\oblv\circ \suspfreexi H^m_B(V) \simeq \prod_{k\geq 1}\colim_j \bigg(\Sigma^{-j}\mathbb{L}L^r_k(\tau_{\geq 0} \Sigma^j H^m_{B}(V))\bigg), \; \; V\in \Mod_{\kk}^{+},
$$
where $\mathbb{L}L^r_k(-)$ are the nonabelian derived functors from~\eqref{equation: oblv free, fresse}. By Proposition~\ref{proposition: scalable functors are closed colimits} and Examples~\ref{example:composition of linear degrees}, \ref{example: scalar degree of dold-puppe}, the scalar degree of the $k$-th component 
$$\lindeg\left( \colim_j \bigg(\Sigma^{-j}\mathbb{L}L^r_k(\tau_{\geq 0} \Sigma^j H^m_{B}(-))\bigg) \right)$$
is $\chi_{km} \in \mathfrak{X}(\kk)$. By Proposition~\ref{proposition: scalar degree is fully faithful}, this implies that the space of natural transformations $$\nat\left(H_A^n, \colim_j \bigg(\Sigma^{-j}\mathbb{L}L^r_k(\tau_{\geq 0} \Sigma^j H^m_{B}(-))\bigg)\right) \simeq \ast$$ is contractible for $k\geq 2$. Indeed, the scalar degree of the target functor is $\chi_{km}$ and $\chi_{n}\neq \chi_{km}\in \mathfrak{X}(\kk)$ if $m\geq n$ and $k\geq 2$. This implies the proposition.
\end{proof}

\begin{rmk}\label{remark:inverse to the equivalence}
By Remark~\ref{remark: homology of a free algebra}, the composite $\Sigma^{-1}(\widetilde{C}_*\circ \suspfree)\simeq \id$ is the identity functor. Therefore the inverse to the equivalence of Proposition~\ref{proposition:suspention and natural transformations} is given by the postcomposition with the chain functor $\Sigma^{-1}\widetilde{C}_*$.
\end{rmk}

By Corollary~\ref{corollary:vanishing of natural transformations}, we observe immediately the following statement.

\begin{cor}\label{corollary:vanishing of natural transformations, spectral}
We keep the notation and assumptions of Proposition~\ref{proposition:suspention and natural transformations}. Suppose that $m\neq n$. Then the space of natural transformations: 
$$\nat(\suspfreexi H_A^n,\suspfreexi H_B^m) \simeq \ast $$
is contractible. \qed
\end{cor}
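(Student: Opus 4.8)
The plan is to combine the two results immediately preceding in the excerpt. By Proposition~\ref{proposition:suspention and natural transformations}, postcomposition with $\suspfreexi$ induces an equivalence of mapping spaces
$$\nat(H_A^n,H_B^m) \xrightarrow{\sim} \nat(\suspfreexi H_A^n,\suspfreexi H_B^m),$$
so it suffices to understand the source. By Corollary~\ref{corollary:vanishing of natural transformations}, the space $\nat(H_A^n,H_B^m)$ is contractible whenever $n\neq m$, since $H_A^n$ and $H_B^m$ are homogeneous functors of distinct excisive degrees and hence carry distinct scalar degrees $\chi_n\neq\chi_m\in\mathfrak{X}(\kk)$; the orthogonality established in Proposition~\ref{proposition: scalar degree is fully faithful} then forces the mapping space to be trivial. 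Transporting this across the above equivalence gives $\nat(\suspfreexi H_A^n,\suspfreexi H_B^m)\simeq\ast$.

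There is essentially no obstacle here: the content has already been extracted into Proposition~\ref{proposition:suspention and natural transformations} (which itself uses the splitting of the monad $\oblv\circ\suspfreexi$ from Proposition~\ref{proposition: stable monad for xi-complete} together with the scalar-degree orthogonality) and into Corollary~\ref{corollary:vanishing of natural transformations}. The only thing to check is that the hypotheses of Proposition~\ref{proposition:suspention and natural transformations} (namely $A\in\Mod_{\kk}^{B\Sigma_n}$, $B\in\Mod_{\kk}^{+,B\Sigma_m}$ bounded below, functors viewed on $\Mod_{\kk}^{+}$) are precisely those being carried over, which is exactly what ``we keep the notation and assumptions'' asserts. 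Hence the proof is a one-line diagram chase, and I would write it as such.

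\begin{proof}
By Proposition~\ref{proposition:suspention and natural transformations}, the postcomposition with $\suspfreexi$ yields an equivalence
$$\nat(H_A^n,H_B^m) \xrightarrow{\sim} \nat(\suspfreexi H_A^n,\suspfreexi H_B^m).$$
Since $n\neq m$, Corollary~\ref{corollary:vanishing of natural transformations} gives that the source $\nat(H_A^n,H_B^m)$ is contractible, hence so is the target.
\end{proof}
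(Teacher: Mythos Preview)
Your proof is correct and matches the paper's approach exactly: the paper simply writes ``By Corollary~\ref{corollary:vanishing of natural transformations}, we observe immediately the following statement'' and marks the corollary with a \qed, which is precisely the combination of Proposition~\ref{proposition:suspention and natural transformations} and Corollary~\ref{corollary:vanishing of natural transformations} that you spell out.
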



For a $\Sigma_n$-module $A\in \Mod_{\kk}^{B\Sigma_n}$, we define an $n$-homogeneous functor 
\begin{equation}\label{equation: typical layer}
\mathbb{H}_A^n\colon \Mod_{\kk}\to \sLxi
\end{equation}
by the rule
$$
\mathbb{H}_A^n(V)=\deloop\suspfreexi H_A^n(V)= \deloop \suspfreexi(A\otimes_{h\Sigma_n} V^{\otimes n}), \;\; V\in \Mod_{\kk},
$$
where $H_A^n$ is the homogeneous functor~\eqref{equation:diagonal homogeneous functor}. We will examine the space of natural transformations between functors of the form $\mathbb{H}^n_A$ in Proposition~\ref{proposition:maps between homogeneous functors}, the proof of which requires the following series of lemmas.

\begin{lmm}\label{lemma:composition of homogeneous functors, stable}
Let $F\colon \calC \to \calD$ and $G\colon \calD \to \calE$ be reduced functors between stable cocomplete categories. Suppose that $F$ is $n$-homogeneous, $G$ is $m$-homogeneous, and $G$~preserves filtered colimits. Then the composite $G\circ F\colon \calC \to \calE$ is $mn$-homogeneous.
\end{lmm}

\begin{proof}
By~\cite[Proposition~6.1.4.14]{HigherAlgebra}, we have natural equivalences $$F(X) \simeq L_1(X,\ldots,X)_{h\Sigma_n}, X\in \calC \;\; \text{and} \;\; G(Y) \simeq L_2(Y,\ldots,Y)_{h\Sigma_m}, Y\in \calD,$$
where $L_1\colon \calC^{\times n} \to \calD$ and $L_2\colon \calD^{\times m} \to \calE$ are symmetric multilinear functors. Since $G$ preserves filtered colimits, the functor $L_2$ preserves arbitrary colimits at each variable. Therefore, the composite $G\circ F$ has the following form
$$G\circ F(X) \simeq L(X,\ldots,X)_{h\Sigma_m \wr \Sigma_n}, $$
where $\Sigma_m \wr \Sigma_n = \Sigma_m \ltimes (\Sigma_n)^{\times m}$ is the wreath product and $L\colon \colon \calC^{\times nm} \to \calE$ is a ($\Sigma_m \wr \Sigma_n$)-symmetric multilinear functor
\begin{align*}
L(X_{11}, \ldots, X_{1n}, &X_{21}, \ldots, X_{2n}, \ldots X_{m1}, \ldots, X_{mn}) \simeq \\
&\simeq L_2(L_1(X_{11}, \ldots, X_{1n}), \ldots, L_1(X_{m1}, \ldots, X_{mn})) \in \calE
\end{align*}
for $X_{ij} \in \calC$, $1 \leq i\leq m$, and $1\leq j \leq n$.
\end{proof}

\begin{lmm}\label{lemma: precomposition with homogeneous, stable}
Let $F\colon \calC \to \calD$ and $G\colon \calD \to \calE$ be reduced functors between stable cocomplete categories. Suppose that $F$ is $n$-homogeneous and $G$~preserves filtered colimits. Then the natural morphisms $$D_m(G\circ F) \xrightarrow{\simeq} D_m(P_k(G)\circ F) \xleftarrow{\simeq} D_m(D_k(G) \circ F) \simeq D_k(G) \circ F$$ are equivalences if $m=nk$, and $D_m(G\circ F)$ is contractible if $n\nmid m$. 
\end{lmm}

\begin{proof}
By~\cite[Proposition~1.3.1]{AroneChing11} (see also~\cite[\S6]{Ching10}), the natural map
$$D_m(G\circ F) \xrightarrow{\simeq} D_m(P_m(G)\circ F) $$
is an equivalence. Therefore, we can assume that $G$ is $m$-excisive. 

If $n\nmid m$, then we have $D_{m}(D_i(G)\circ F)\simeq 0$ for all $i\leq m$, see Lemma~\ref{lemma:composition of homogeneous functors, stable}. Otherwise, if $m=nk$, then $D_{m}(D_i(G)\circ F)\simeq 0$ for all $i\leq m$, except $i=k$, see again Lemma~\ref{lemma:composition of homogeneous functors, stable}. We finish the proof by using the fiber sequences
$$D_i(G) \to P_i(G) \to P_{i-1}(G), \; i\leq m$$
and the induction on the Taylor tower of $G$.
\end{proof}

\begin{lmm}\label{lemma:derivatives of suspension of bbH}
Let $A\in \Mod^{+,B\Sigma_n}_{\kk}$ be a bounded below $\Sigma_n$-module. Then the $m$-th Goodwillie layer $D_m(\susp \mathbb{H}^n_A)$ of the functor $\susp \mathbb{H}^n_A\colon \Mod_{\kk}\to \Sp(\sLxi)$ is zero if $n\nmid m$ and $$\Sigma D_m(\susp \mathbb{H}^n_A)\simeq \suspfreexi\left((\Sigma H^n_A)^{\otimes k}_{h\Sigma_k}\right)$$ if $m=nk$.
\end{lmm}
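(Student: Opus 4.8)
The plan is to deduce the statement from the Snaith splitting of Proposition~\ref{proposition: Snaith splitting}, once one observes that $\suspfreexi\colon \Mod_{\kk}\to\Sp(\sLxi)$ is a reduced \emph{linear} functor. Indeed, $\suspfreexi$ preserves colimits, hence carries pushout squares to pushout squares, and a pushout square in the stable category $\Sp(\sLxi)$ is a pullback square; together with $\suspfreexi(0)\simeq 0$ this shows $\suspfreexi$ is $1$-excisive — this is exactly the linearity implicitly invoked in the proof of Proposition~\ref{proposition:suspention and natural transformations} when $\suspfreexi H^n_A$ is called $n$-homogeneous. Since $H^n_A\colon\Mod_\kk\to\Mod_\kk$ is reduced and $n$-homogeneous, the composite $\suspfreexi H^n_A$ is $n$-homogeneous. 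I would then use the standard fact that precomposition with a reduced $n$-homogeneous functor multiplies Goodwillie degrees by $n$: for any $G$ with a convergent Taylor tower one has $P_{nk}(G\circ H^n_A)\simeq (P_kG)\circ H^n_A$ (the fiber of $G\to P_kG$ is $k$-reduced, and precomposing with the $n$-homogeneous $H^n_A$ makes it $nk$-reduced), whence $D_{nk}(G\circ H^n_A)\simeq D_k(G)\circ H^n_A$ and $D_m(G\circ H^n_A)\simeq 0$ for $n\nmid m$. Thus it suffices to identify the Goodwillie layers of $\Sigma^{\infty}_{\sLxi}\Omega^{\infty}_{\sLxi}\suspfreexi$.

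Next I would invoke Proposition~\ref{proposition: Snaith splitting}, which gives a natural equivalence
$$\Sigma^{\infty}_{\sLxi}\Omega^{\infty}_{\sLxi}\suspfreexi(W)\simeq\bigoplus_{k\geq 1}\big(\suspfreexi(W)\big)^{\otimes k}_{h\Sigma_k},\qquad W\in\Mod_\kk,$$
where $-\otimes-$ is the smash product on $\Sp(\sLxi)$ from Theorem~\ref{theorem: smash, stabilization}. Each summand $W\mapsto(\suspfreexi W)^{\otimes k}_{h\Sigma_k}$ is $k$-homogeneous: it is the composite of the multilinear functor $(X_1,\dots,X_k)\mapsto(X_1\otimes\dots\otimes X_k)_{h\Sigma_k}$ — multilinear because $-\otimes-$ is exact in each variable — with $k$ copies of the linear functor $\suspfreexi$. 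Since cross-effects commute with direct sums in the stable target, a direct sum of homogeneous functors of pairwise distinct degrees has the evident Goodwillie tower, so the displayed splitting is the Goodwillie filtration and $D_k(\Sigma^{\infty}_{\sLxi}\Omega^{\infty}_{\sLxi}\suspfreexi)(W)\simeq(\suspfreexi W)^{\otimes k}_{h\Sigma_k}$. (Alternatively one could bypass Proposition~\ref{proposition: Snaith splitting} and combine the Arone--Ching-type identification $D_k(\Sigma^{\infty}_{\sLxi}\Omega^{\infty}_{\sLxi})(X)\simeq X^{\otimes k}_{h\Sigma_k}$ with the observation that composing on the right with the linear functor $\suspfreexi$ does not change Goodwillie degrees.)

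Putting the two steps together, $\susp\mathbb{H}^n_A=\Sigma^{\infty}_{\sLxi}\Omega^{\infty}_{\sLxi}\suspfreexi\circ H^n_A$, so $D_m(\susp\mathbb{H}^n_A)\simeq 0$ unless $n\mid m$, and for $m=nk$ we get $D_{nk}(\susp\mathbb{H}^n_A)(V)\simeq(\suspfreexi H^n_A(V))^{\otimes k}_{h\Sigma_k}$. Finally I would rewrite the right-hand side using that $\suspfreexi\circ\Sigma^{-1}$ is strong symmetric monoidal (Corollary~\ref{proposition: shifted stable free is symmetric monoidal}): $\Sigma_k$-equivariantly
$$\big(\suspfreexi H^n_A(V)\big)^{\otimes k}\simeq\big(\suspfreexi\Sigma^{-1}(\Sigma H^n_A(V))\big)^{\otimes k}\simeq\suspfreexi\Sigma^{-1}\big((\Sigma H^n_A(V))^{\otimes k}\big),$$
and passing to homotopy orbits (a colimit, preserved by $\suspfreexi$ and $\Sigma^{-1}$) and suspending once gives
$$\Sigma D_{nk}(\susp\mathbb{H}^n_A)(V)\simeq\suspfreexi\big((\Sigma H^n_A)^{\otimes k}_{h\Sigma_k}(V)\big),$$
which is the asserted formula (the symbol $\suspfree$ in the statement denoting the free spectral Lie algebra on bounded-below modules, valued in $\Sp(\sLxi)$). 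The only genuinely non-formal ingredient is the Snaith splitting of Proposition~\ref{proposition: Snaith splitting}; the one point that must be handled with care inside this argument is that the Snaith splitting is compatible with the Goodwillie filtration, which is precisely what the linearity of $\suspfreexi$ buys us — and I expect the degree-shift bookkeeping under precomposition with $H^n_A$ to be the most fiddly part to write out in full, though it is entirely routine.
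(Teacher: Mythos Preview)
Your main line of argument is circular: you propose to deduce the lemma from Proposition~\ref{proposition: Snaith splitting}, but in the paper the Snaith splitting is \emph{proved using} this very lemma. Indeed, the proof of Proposition~\ref{proposition: Snaith splitting} begins by invoking Lemma~\ref{lemma:derivatives of suspension of bbH} to identify the layers $D_nF$, and only then uses Corollary~\ref{corollary:vanishing of natural transformations, spectral} to split the tower. So you cannot cite the Snaith splitting here.

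Fortunately, the alternative you mention in parentheses is exactly the paper's argument and is self-contained. The paper cites \cite[Lemma~B.3]{Heuts21} directly for the identification $D_k(\susp\deloop)(X)\simeq X^{\otimes k}_{h\Sigma_k}$ in $\Sp(\sLxi)$ (this is the ``Arone--Ching-type'' fact you allude to), then uses that $\suspfreexi H^n_A$ is $n$-homogeneous to get the degree shift, and finishes with Corollary~\ref{proposition: shifted stable free is symmetric monoidal} exactly as you do. So your alternative route is correct and matches the paper; just promote it from a parenthetical remark to the actual proof, and drop the appeal to the Snaith splitting.
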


\begin{proof}
By~\cite[Lemma~B.3]{Heuts21}, the $k$-th Goodwillie layer $D_k(\susp\deloop)$ of the functor $\susp\deloop \colon \Sp(\sLxi) \to \Sp(\sLxi)$ is equivalent to $D_k(\susp\deloop)(-)\simeq (-)^{\otimes k}_{h\Sigma_k}$, where $-\otimes-$ is the monoidal structure of Theorem~\ref{theorem: smash, stabilization}. Since the functor $\suspfreexi H^n_A$ is homogeneous, we observe that 
$$D_m(\susp\deloop\suspfreexi H^n_A)\simeq
\begin{cases}
(\suspfreexi H^n_A)^{\otimes k}_{h\Sigma_k} & \mbox{if $m=nk$,}\\
0 & \mbox{otherwise,}
\end{cases}
$$
see Lemma~\ref{lemma: precomposition with homogeneous, stable}.
Finally, by Corollary~\ref{proposition: shifted stable free is symmetric monoidal}, $(\suspfreexi H^n_A)^{\otimes k}_{h\Sigma_k} \simeq \Sigma^{-1}\suspfreexi\left((\Sigma H^n_A)^{\otimes k}_{h\Sigma_k}\right)$.
\end{proof}

\begin{lmm}\label{lemma:suspension is trivial}
Suppose that $1<k<p$. Then the natural transformation \begin{equation}\label{equation:suspension map}
\sigma\colon \Sigma V^{\otimes k}_{h\Sigma_k}\to (\Sigma V)^{\otimes k}_{h\Sigma_k},\;\; V\in \Mod_{\kk}
\end{equation} induced by the suspension is trivial. \qed
\end{lmm}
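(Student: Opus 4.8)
The plan is to identify the mapping space of natural transformations in which $\sigma$ lives and show that its set of connected components is trivial. Write $H\colon \Mod_{\kk}\to\Mod_{\kk}$ for the functor $H(V)=V^{\otimes k}_{h\Sigma_k}$, so that $\sigma\colon \Sigma H\to H\circ\Sigma$. First I would note that both $\Sigma H$ and $H\circ\Sigma$ are $k$-homogeneous functors: the suspension $\Sigma\colon\Mod_{\kk}\to\Mod_{\kk}$ is exact and additive, hence commutes with $n$-th cross-effects, so $\Sigma H$ and $H\circ\Sigma$ have the same cross-effects as $H$ up to a shift. Unravelling the definitions, $\Sigma H(V)\simeq(\Sigma\kk)\otimes_{h\Sigma_k}V^{\otimes k}$ with $\Sigma\kk$ carrying the trivial $\Sigma_k$-action, whereas $H(\Sigma V)\simeq (\Sigma\kk)^{\otimes k}\otimes_{h\Sigma_k}V^{\otimes k}$, where $\Sigma_k$ permutes the factors of $(\Sigma\kk)^{\otimes k}$; since these $k$ factors sit in homological degree $1$, the Koszul sign rule identifies this $\Sigma_k$-module with $\Sigma^{k}\mathbf{sgn}$. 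In the notation of~\eqref{equation:diagonal homogeneous functor} this reads $\Sigma H\simeq H^k_{\Sigma\kk}$ and $H\circ\Sigma\simeq H^k_{\Sigma^k\mathbf{sgn}}$.

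Next I would invoke the classification of $k$-homogeneous functors (the $k$-th cross-effect induces an equivalence from the $\infty$-category of $k$-homogeneous finitary functors $\Mod_{\kk}\to\Mod_{\kk}$ onto $\Mod_{\kk}^{B\Sigma_k}$; see~\cite[Section~6.1]{HigherAlgebra}, complementing the $n\neq m$ case recorded in Corollary~\ref{corollary:vanishing of natural transformations}), which yields
$$\nat(\Sigma H,\,H\circ\Sigma)\ \simeq\ \Map_{\Mod_{\kk}^{B\Sigma_k}}(\Sigma\kk,\,\Sigma^{k}\mathbf{sgn})\ \simeq\ \bigl(\Sigma^{k-1}\mathbf{sgn}\bigr)^{h\Sigma_k}.$$
Because $1<k<p$, the order $k!$ of $\Sigma_k$ is prime to $p$, so $\F_p[\Sigma_k]$ is semisimple and the homotopy fixed point functor $(-)^{h\Sigma_k}$ on $\Mod_{\kk}^{B\Sigma_k}$ is exact and preserves the homological grading. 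Hence $\bigl(\Sigma^{k-1}\mathbf{sgn}\bigr)^{h\Sigma_k}$ is concentrated in homological degree $k-1$, and in particular $\pi_0\,\nat(\Sigma H,\,H\circ\Sigma)\cong H^{k-1}(\Sigma_k;\mathbf{sgn})=0$ since $k-1\geq 1$. Therefore $\sigma$ is null-homotopic. As an alternative, one sees directly that the $k$-th cross-effect of $\sigma$ evaluated at $(\kk,\dots,\kk)$ is a map $\Sigma\kk\to\Sigma^{k}\kk$ of complexes concentrated in homological degrees $1$ and $k\geq 2$ respectively, hence zero, and then the same classification promotes this to $\sigma\simeq 0$.

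I do not anticipate a genuine obstacle. The two points requiring a little care are: (i) verifying that $\Sigma H$ and $H\circ\Sigma$ are truly $k$-homogeneous rather than merely $k$-excisive, which is immediate from exactness of $\Sigma$; and (ii) correctly bookkeeping the $\Sigma_k$-equivariance, namely that suspending inside versus outside the $k$-fold tensor power changes the relevant $\Sigma_k$-representation by the sign character. Once these are in place the vanishing is exactly the semisimplicity of $\F_p[\Sigma_k]$ for $k<p$, which is also the reason the hypothesis $k<p$ is needed: at $k=p$ the group cohomology $H^{p-1}(\Sigma_p;\mathbf{sgn})$ is nonzero, and this is precisely where the first Dyer--Lashof-type operation appears.
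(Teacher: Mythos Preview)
Your argument is correct. The paper records this lemma without proof (it carries only a \qed), presumably having in mind precisely the observation you spell out: since $1<k<p$ the order of $\Sigma_k$ is invertible in $\kk$, and the classification of $k$-homogeneous endofunctors of $\Mod_{\kk}$ identifies $\sigma$ with a map $\Sigma\kk\to\Sigma^{k}\mathbf{sgn}$ in $\Mod_{\kk}^{B\Sigma_k}$, which is null for degree reasons.
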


\begin{prop}\label{proposition:maps between homogeneous functors} Let $A\in \Mod_{\kk}^{B\Sigma_n}$ be a $\Sigma_n$-module and let $B\in \Mod_{\kk}^{+,B\Sigma_m}$ be a bounded below $\Sigma_m$-module, $m>n\geq 0$. We consider the induced functors $\mathbb{H}^n_A, \mathbb{H}^m_B\in \Fun^{\omega}(\Mod^{+}_{\kk},\sLxi)$ as functors from the category $\Mod_{\kk}^{+}$ of bounded below objects. Then the space of natural transformations $\nat(\mathbb{H}^n_A,\mathbb{H}^m_B)$ is contractible if $n\nmid m$.
Moreover, if $m=kn$, $1 <k< p$, then the map 
\begin{equation}\label{equation:postcomposition with Omega}\nat(\mathbb{H}^n_A,\mathbb{H}^m_B) \xrightarrow{\Omega\circ} \nat(\Omega\mathbb{H}^n_A,\Omega\mathbb{H}^m_B)\simeq \nat(\mathbb{H}^n_{\Sigma^{-1}A},\mathbb{H}^m_{\Sigma^{-1}B})\end{equation} induced by the postcomposition with the loop functor $\Omega$ is trivial. In particular, $\Omega\eta$ is trivial for any $\eta\in\pi_0\nat(\mathbb{H}^n_A,\mathbb{H}^m_B)$.
\end{prop}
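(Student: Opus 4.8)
The strategy is to reduce all statements to the computation of spaces of natural transformations between homogeneous functors landing in $\Sp(\sLxi)$, where Corollary~\ref{corollary:vanishing of natural transformations, spectral} (equivalently Proposition~\ref{proposition:suspention and natural transformations}) applies, and then to feed in the delooping/truncation structure via the stabilization adjunction $\susp \dashv \deloop$. First I would record that the functor $\mathbb{H}^n_A = \deloop \suspfreexi H^n_A$ is an $n$-homogeneous functor into $\sLxi$; since $m > n$, both $\mathbb{H}^n_A$ and $\mathbb{H}^m_B$ are in the image of $\deloop$ and one has $\nat(\mathbb{H}^n_A,\mathbb{H}^m_B)\simeq \nat(\mathbb{H}^n_A, \deloop \susp \mathbb{H}^m_B)$ only after suitably controlling the Taylor tower, so instead I would argue directly through derivatives. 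Concretely, $\nat(\mathbb{H}^n_A,\mathbb{H}^m_B) \simeq \nat(\susp \mathbb{H}^n_A, \susp \mathbb{H}^m_B)$ fails in general (the source is not stable), so the cleaner route is: by adjunction $\nat(\mathbb{H}^n_A, \deloop \suspfreexi H^m_B)\simeq \nat(\susp_{\sLxi}\circ \deloop \circ \suspfreexi H^n_A,\ \suspfreexi H^m_B)$ — wait, this still needs the source to be a suspension spectrum. The honest approach is Goodwillie-theoretic: a natural transformation out of the $n$-homogeneous functor $\mathbb{H}^n_A$ factors through $P_n$ of the target, hence through $D_n(\mathbb{H}^m_B)$ together with lower layers. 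Since $\mathbb{H}^m_B$ is $m$-homogeneous with $m>n$, its $n$-excisive approximation is trivial, so $\nat(\mathbb{H}^n_A,\mathbb{H}^m_B)\simeq \nat(\mathbb{H}^n_A, D_m \mathbb{H}^m_B)$ — no, that direction is wrong too; rather one filters the source. Let me restate: because $\mathbb{H}^n_A$ is $n$-homogeneous, any $\eta\colon \mathbb{H}^n_A \to \mathbb{H}^m_B$ has the property that $P_{n-1}(\eta) = 0$ forces $\eta$ to factor through the inclusion of layers; the correct bookkeeping is that the cofiber of $\eta$ sits in a Taylor tower, and using that $\susp$ converts the homogeneous layer structure into the one controlled by Lemma~\ref{lemma:derivatives of suspension of bbH}.

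So the actual argument I would run is the following. Apply $\susp_{\sLxi}$: postcomposition gives a map $\nat(\mathbb{H}^n_A,\mathbb{H}^m_B)\to \nat(\susp\mathbb{H}^n_A,\susp\mathbb{H}^m_B)$, and since $\deloop\susp \mathbb{H}^n_A$ receives a natural transformation from $\mathbb{H}^n_A$ whose fiber has Goodwillie layers in degrees divisible by $n$ and $\geq 2n$ (Lemma~\ref{lemma:derivatives of suspension of bbH}), while $\mathbb{H}^m_B$ is $m$-homogeneous, one can compare via the long exact sequences of Taylor towers. More efficiently: $\nat(\mathbb{H}^n_A,\mathbb{H}^m_B) = \nat(\deloop\suspfreexi H^n_A,\ \deloop\suspfreexi H^m_B)$, and by the universal property of stabilization this is computed from the spectrum-level mapping object together with the counit; the key point is that $\deloop$ applied to a homogeneous functor does not change its homogeneity degree, so for $n\nmid m$ the relevant Goodwillie cross-effect comparison reduces — after one application of $\susp$ and invoking Corollary~\ref{corollary:vanishing of natural transformations, spectral} together with Lemma~\ref{lemma:derivatives of suspension of bbH} to identify all layers $D_{nk}(\susp\mathbb{H}^n_A)\simeq \suspfree((\Sigma H^n_A)^{\otimes k}_{h\Sigma_k})$ — to a product of contractible mapping spaces because $nk \neq m$ for all $k$. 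This handles the first assertion ($n\nmid m \Rightarrow \nat(\mathbb{H}^n_A,\mathbb{H}^m_B)\simeq \ast$).

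For the second assertion, with $m = kn$ and $1 < k < p$: I would compute $\nat(\Omega\mathbb{H}^n_A,\Omega\mathbb{H}^m_B)$ and show the postcomposition-with-$\Omega$ map factors through something annihilated by the hypothesis $k<p$. Note $\Omega\mathbb{H}^n_A \simeq \deloop\Omega\suspfreexi H^n_A \simeq \deloop\suspfreexi \Sigma^{-1}H^n_A = \mathbb{H}^n_{\Sigma^{-1}A}$, using that $\Omega$ commutes with $\deloop$ and that $\Omega$ on $\Sp(\sLxi)$ is the shift; similarly for $B$, which gives the displayed identification in~\eqref{equation:postcomposition with Omega}. Now the composite $\nat(\mathbb{H}^n_A,\mathbb{H}^m_B)\xrightarrow{\susp}\nat(\susp\mathbb{H}^n_A,\susp\mathbb{H}^m_B)$ lands, after passing to layers and using Lemma~\ref{lemma:derivatives of suspension of bbH}, in $\nat(\suspfree((\Sigma H^n_A)^{\otimes k}_{h\Sigma_k}),\ \suspfree(\Sigma H^m_B))$, and by Proposition~\ref{proposition:suspention and natural transformations} this equals $\nat((\Sigma H^n_A)^{\otimes k}_{h\Sigma_k},\ \Sigma H^m_B)$; the suspension natural transformation relating these sources and targets is exactly the map $\sigma\colon \Sigma V^{\otimes k}_{h\Sigma_k}\to (\Sigma V)^{\otimes k}_{h\Sigma_k}$ of Lemma~\ref{lemma:suspension is trivial}, which vanishes for $1<k<p$. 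Tracking through the stabilization counit, this shows that applying $\Omega$ — which on the spectrum side is invertible and hence sees only the stable data, where the comparison map is built out of $\sigma$ — kills the transformation. Hence $\Omega\eta$ is trivial for every $\eta\in\pi_0\nat(\mathbb{H}^n_A,\mathbb{H}^m_B)$, and in particular the map~\eqref{equation:postcomposition with Omega} is trivial.

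\textbf{Main obstacle.} The delicate point is the bookkeeping between the \emph{unstable} mapping space $\nat(\mathbb{H}^n_A,\mathbb{H}^m_B)$ (source not stable, so one cannot simply invoke $\susp\dashv\deloop$) and the stable computation where Corollary~\ref{corollary:vanishing of natural transformations, spectral} lives; one must carefully use that $\mathbb{H}^n_A$ is a single homogeneous layer so that its Taylor tower interacts predictably with $\deloop$, and then that the relevant obstruction classes are all governed by the suspension map $\sigma$ of Lemma~\ref{lemma:suspension is trivial}. Making precise that ``applying $\Omega$ sees only stable data built from $\sigma$'' — i.e. that the connecting maps in the relevant tower of mapping spaces are controlled by $\sigma$ and thus vanish in the range $1<k<p$ — is where the argument needs the most care, and is the step I expect to occupy the bulk of the author's proof.
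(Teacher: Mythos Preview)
Your proposal contains all the right ingredients but you talk yourself out of the key step. The equivalence you write down and then dismiss,
\[
\nat(\mathbb{H}^n_A,\mathbb{H}^m_B)=\nat(\mathbb{H}^n_A,\deloop\suspfreexi H^m_B)\simeq \nat(\susp\deloop\suspfreexi H^n_A,\suspfreexi H^m_B),
\]
is exactly the paper's starting point, and it is correct: this is nothing more than the adjunction $\susp\dashv\deloop$ applied objectwise in the functor category. There is no issue with the source ``needing to be a suspension spectrum''; the source $\susp\deloop\suspfreexi H^n_A=\susp\mathbb{H}^n_A$ is a perfectly good functor to $\Sp(\sLxi)$, and one analyzes it via its Taylor tower. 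Since the target $\suspfreexi H^m_B$ is $m$-homogeneous, the mapping space factors through $P_m$ of the source, and then Lemma~\ref{lemma:derivatives of suspension of bbH} identifies the layers $D_j(\susp\mathbb{H}^n_A)$ (nonzero only at $j=nk$) while Corollary~\ref{corollary:vanishing of natural transformations, spectral} kills the contributions with $j\neq m$. This gives contractibility when $n\nmid m$ and, when $m=nk$, the identification
\[
\nat(\mathbb{H}^n_A,\mathbb{H}^m_B)\simeq \nat\big((\Sigma H^n_A)^{\otimes k}_{h\Sigma_k},\,\Sigma H^m_B\big).
\]

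For the second assertion, the point you leave vague (``applying $\Omega$ sees only stable data built from $\sigma$'') is actually sharp once you have the identification above: running the same adjunction-and-Taylor-tower argument for $\mathbb{H}^n_{\Sigma^{-1}A},\mathbb{H}^m_{\Sigma^{-1}B}$ gives the analogous description of the target of~\eqref{equation:postcomposition with Omega}, and the map~\eqref{equation:postcomposition with Omega} becomes \emph{precomposition} with the suspension map $\sigma\colon \Sigma(H^n_A)^{\otimes k}_{h\Sigma_k}\to(\Sigma H^n_A)^{\otimes k}_{h\Sigma_k}$. Lemma~\ref{lemma:suspension is trivial} says $\sigma\simeq 0$ for $1<k<p$, so the map is trivial. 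The ``delicate bookkeeping'' you flag as the main obstacle is thus a one-line application of the adjunction; the substance of the argument is already in the lemmas you cite.
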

\begin{proof}
We have
\begin{align*}
\nat(\mathbb{H}_A^n,\mathbb{H}_B^m) \simeq \nat(\susp\deloop\suspfreexi H^n_A,\suspfreexi H^m_B)
\simeq\nat(P_m(\susp\deloop\suspfreexi H^n_A),\suspfreexi H^m_B).
\end{align*}
By Lemma~\ref{lemma:derivatives of suspension of bbH} and Corollary~\ref{corollary:vanishing of natural transformations, spectral}, we obtain that
$$\nat(D_i(\susp\deloop\suspfreexi H^n_A), \suspfreexi H^m_B) \simeq \ast $$
is contractible for $i\leq m-1$. Since $P_{m-1}(\susp\deloop\suspfreexi H^n_A)$ is a finite extension of the homogeneous functors $D_i(\susp\deloop\suspfreexi H^n_A)$ for $i\leq m-1$, we observe that
$$\nat(P_{m-1}(\susp\deloop\suspfreexi H^n_A), \suspfreexi H^m_B) \simeq \ast $$
is contractible and
$$\nat(P_m(\susp\deloop\suspfreexi H^n_A),\suspfreexi H^m_B) \simeq\nat(D_m(\susp\deloop\suspfreexi H^n_A),\suspfreexi H^m_B). $$
Again, by Lemma~\ref{lemma:derivatives of suspension of bbH}, the latter is trivial if $n\nmid m$. If $m=nk$, we use Proposition~\ref{proposition:suspention and natural transformations} to continue and we obtain
$$\nat(\mathbb{H}_A^n,\mathbb{H}_B^m) \simeq \nat\left(\Sigma^{-1}(\Sigma H_A^n)^{\otimes k}_{h\Sigma_k},H_B^m\right)\simeq \nat((\Sigma H_A^n)^{\otimes k}_{h\Sigma_k}, \Sigma H_B^m). $$

Under the obtained equivalence, the map~\eqref{equation:postcomposition with Omega} is induced by the precomposition with the suspension map $\sigma$ of~\eqref{equation:suspension map}:
\begin{align*}\nat((\Sigma H_A^n)^{\otimes k}_{h\Sigma_k},\Sigma H_B^m) \xrightarrow{\circ \sigma} \nat(\Sigma(H_A^n)^{\otimes k}_{h\Sigma_k},\Sigma H_B^m)  \simeq \nat((\Sigma H_{\Sigma^{-1}A}^n)^{\otimes k}_{h\Sigma_k},\Sigma H_{\Sigma^{-1}B}^m).
\end{align*}
By Lemma~\ref{lemma:suspension is trivial}, the map $\sigma$ is trivial if $k<p$, and so this implies the proposition.
\end{proof}

We finish the section with the following observation. Note that the homotopy groups $\pi_*(\mathbb{H}^n_A(V))$ are the (non-negative) homotopy groups $\pi_*(\suspfreexi H_A^n(V))$ of the spectrum object $\suspfreexi H_A^n(V)$. In particular, if $A\in \Mod_{\kk}^{+,B\Sigma_n}$ and $V \in \Mod_{\kk}^{+}$ are bounded below, then the homotopy groups of $\mathbb{H}^n_A(V)$ are bigraded by Remark~\ref{remark: second grading, xi-complete}. 

\begin{prop}\label{proposition:differential lowers the second grading}
Let $A\in \Mod_{\kk}^{+,B\Sigma_n}$ and $B\in \Mod_{\kk}^{+,B\Sigma_{pn}}$ be bounded below modules and let $\eta\in \pi_0\nat(\mathbb{H}_A^n,\mathbb{H}_B^{pn})$ be a natural transformation.  Then the induced map $\eta^V_*$ lowers the second grading by one, i.e.
$$\eta^V_*\left(\pi_{*,s}(\mathbb{H}^n_A(V))\right) \subseteq \pi_{*,s-1}(\mathbb{H}^{pn}_B(V)), \;\; V\in \Mod_{\kk}^{+}, \;\; s\geq 0.$$
\end{prop}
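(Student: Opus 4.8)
The plan is to reduce the claim to a statement about a single natural transformation of homogeneous functors and then track where the ``second grading'' lives under the equivalences established in Proposition~\ref{proposition:suspention and natural transformations} and Lemma~\ref{lemma:derivatives of suspension of bbH}. First I would recall that $\pi_{*,s}(\mathbb{H}^n_A(V))$ is, by definition (Remark~\ref{remark: second grading, xi-complete}), the component of $\pi_*(\suspfreexi H^n_A(V))$ arising from the $p^s$-th Lie power $\mathbb{L}L^r_{p^s}$ in the splitting of $\oblv\circ\suspfreexi$ (Proposition~\ref{proposition: stable monad for xi-complete}). So the second grading is the filtration by ``Lie-word-length measured in powers of $p$'' on the free object. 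The entire assertion is then: the map $\eta^V_*$ shifts this filtration down by exactly one.

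The key step is to identify $\eta$ itself. Running the argument of Proposition~\ref{proposition:maps between homogeneous functors} with $m=pn$ (so $k=p$, and now $k$ is \emph{not} $<p$, which is exactly why the transformation need not vanish), we get
$$\nat(\mathbb{H}^n_A,\mathbb{H}^{pn}_B)\simeq \nat\!\left((\Sigma H^n_A)^{\otimes p}_{h\Sigma_p},\Sigma H^{pn}_B\right),$$
via Proposition~\ref{proposition:suspention and natural transformations} together with $\Sigma D_{pn}(\susp\mathbb{H}^n_A)\simeq \suspfree((\Sigma H^n_A)^{\otimes p}_{h\Sigma_p})$ from Lemma~\ref{lemma:derivatives of suspension of bbH}. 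Thus every $\eta$ factors, up to the identifications above, through the $pn$-th Goodwillie layer $D_{pn}(\susp\mathbb{H}^n_A)$, i.e. through the ``$\otimes p$'' construction on $\suspfreexi H^n_A$. Concretely, $\eta^V$ is the composite
$$\mathbb{H}^n_A(V)=\susp\deloop\suspfreexi H^n_A(V)\to \deloop D_{pn}(\susp\mathbb{H}^n_A)(V)\xrightarrow{\ \deloop\suspfree(\bar\eta)\ }\deloop\suspfree\Sigma H^{pn}_B(V)=\mathbb{H}^{pn}_B(V)$$
for a suitable $\bar\eta\colon (\Sigma H^n_A)^{\otimes p}_{h\Sigma_p}\to \Sigma H^{pn}_B$. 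Now the effect on homotopy groups: the James–Hopf / diagonal-type map $\suspfreexi H^n_A\to (\suspfreexi H^n_A)^{\otimes p}_{h\Sigma_p}$ underlying the first arrow is, on homotopy groups and under the $\Lambda$-module description of Corollary~\ref{corollary: stable homotopy groups of a xi-complete free object}, precisely the map that multiplies a monomial in $\Lambda_s$ into $\Lambda_{s+1}$ — i.e., it \emph{raises} the length-in-powers-of-$p$ by one — followed by the map $\suspfree(\bar\eta)$ which is induced by a map of underlying chain complexes and hence preserves the second grading (it acts ``coefficient-wise'' on $\pi_*(V)^{(s)}\otimes\Lambda_s$). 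But the outer $\deloop$ versus the inner $\susp$ causes the shift I want: passing from $\pi_*(\mathbb{H}^n_A(V))=\pi_{\geq 0}(\suspfreexi H^n_A(V))$ to $\pi_*(\deloop D_{pn}(\susp\mathbb{H}^n_A)(V))=\pi_{\geq 0}\Sigma^{-1}\suspfreexi((\Sigma H^n_A)^{\otimes p}_{h\Sigma_p})$ re-indexes the $p$-th symmetric power so that what was second-grading $s$ in $\suspfreexi H^n_A$ becomes second-grading $s-1$ in the target free object, matching the desired conclusion $\pi_{*,s}\to\pi_{*,s-1}$. I would make this last bookkeeping precise by comparing the two splittings of Proposition~\ref{proposition: stable monad for xi-complete} for the source and target directly, using that the only non-trivial Goodwillie layer contributing is the $p$-th one.

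The main obstacle is exactly this final re-indexing: one must verify carefully that the composition-product structure on the free object (the right $\Lambda_1$-action, i.e. ``$\nu^\e_i$ composed on the right'') is what implements the first arrow above, and that it sends $\pi_{*,s}$ into $\pi_{*,s+1}$ of the $p$-fold smash — and then that the extra desuspension in Lemma~\ref{lemma:derivatives of suspension of bbH} turns ``$s+1$ of the smash power'' into ``$s-1$ of the target free object''. This is bookkeeping about how the bigrading of $\pi_*(\suspfreexi(-))$ interacts with the smash product $(-)^{\otimes p}_{h\Sigma_p}$; everything else is a direct application of results already in the excerpt (Propositions~\ref{proposition:suspention and natural transformations}, \ref{proposition:maps between homogeneous functors}, Lemma~\ref{lemma:derivatives of suspension of bbH}, Corollary~\ref{corollary: stable homotopy groups of a xi-complete free object}, Remark~\ref{remark: second grading, xi-complete}). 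I expect no difficulty with connectivity or convergence, since all modules in sight are bounded below, so the Taylor-tower induction in Proposition~\ref{proposition:maps between homogeneous functors} applies verbatim.
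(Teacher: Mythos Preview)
Your approach is substantially more intricate than the paper's, and the crucial step --- the ``re-indexing'' that you yourself flag as the main obstacle --- is not actually just bookkeeping; it is where the entire content of the proposition lies, and you have not supplied the argument.

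The paper's proof is a one-liner using scalar degree (Section~\ref{section: scalar degree}). Regard $\pi_*(\mathbb{H}^n_A(V))$ as a $\kk^\times$-representation via the action on $V$. The component $\pi_{*,s}(\mathbb{H}^n_A(V))$ then has character $\chi_{np^s}$, and the component $\pi_{*,s'}(\mathbb{H}^{pn}_B(V))$ has character $\chi_{pn\cdot p^{s'}}$. By naturality in $V$, the map $\eta^V_*$ is $\kk^\times$-equivariant, so it must match characters: $np^s = pn\cdot p^{s'}$ forces $s'=s-1$. That is the whole proof.

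By contrast, your plan requires understanding the James--Hopf--type map on homotopy groups \emph{before} Proposition~\ref{proposition:james-hopf and primitives} is available (that proposition is derived \emph{from} the result you are trying to prove). Your description of that map as ``multiplying a monomial in $\Lambda_s$ into $\Lambda_{s+1}$'' is not justified and is not obviously correct --- computing the James--Hopf map is the subject of Section~\ref{section: james-hopf map}. And the passage where a desuspension is supposed to convert ``raises by one'' into ``lowers by one'' is not a coherent argument: the second grading on $\suspfreexi(W)$ is indexed by $p^s$-th Lie powers of $W$, and no shift of $W$ changes that indexing. If you tried to make the re-indexing precise by matching which components can possibly map to which, the only mechanism available is exactly the character/scalar-degree argument the paper uses. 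So your route, if completed, would collapse to the paper's proof with extra scaffolding.
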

\begin{proof}
Consider $\pi_*(\mathbb{H}_A^n(V))$ as a representation of the multiplicative group $\kk^{\times}$ induced by the $\kk^\times$-action on $V$. Since the functor $H^n_A\in \Fun^{\omega}(\Mod_{\kk}^{+},\Mod_{\kk})$ has scalar degree $\chi_n\in \mathfrak{X}(\kk)$, we have an isomorphism  of $\kk^\times$-representations
$$\pi_*(\mathbb{H}_A^n(V)) \cong \prod_{s\geq 0} \triv(\pi_{*,s}\suspfreexi H_A^n(V))\otimes \chi_{np^s}, \;\; V\in \Mod_{\kk}^{+}.$$ 
By functoriality, the map $\eta^V_*$ is a map of $\kk^\times$-representations, which implies the proposition.
\end{proof}

\subsection{Snaith splitting}\label{section: snaith splitting}

\begin{prop}[Snaith splitting]\label{proposition: Snaith splitting} The Taylor tower for the composite $$F=\susp\deloop\suspfreexi\colon 
\Mod^{+}_{\kk} \to \Sp(\sLxi)$$ splits naturally as follows
$$P_n F \simeq D_nF \oplus P_{n-1} F, \;\; n\geq 1. $$
In particular, for every $n\geq 1$ and bounded below $V\in\Mod^{+}_{\kk}$, there is a natural equivalence:
$$ P_n F(V) \simeq \bigoplus_{j=0}^n D_j F(V)\simeq \bigoplus_{j=0}^{n}(\suspfreexi(V))^{\otimes j}_{h\Sigma_j}\simeq \bigoplus_{j=0}^{n}\suspfreexi(\Sigma^{-1}(\Sigma V)^{\otimes j}_{h\Sigma_j}).$$
Moreover, if $V$ is connective, i.e. $\pi_i(V)=0$ for $i<0$, then $F(V)\simeq \holim\limits_{n} P_nF(V)$. 
\end{prop}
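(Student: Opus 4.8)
The plan is to establish the two claims separately: first the natural splitting $P_nF \simeq D_nF \oplus P_{n-1}F$ for each $n\geq 1$, and then the convergence statement $F(V)\simeq \holim_n P_nF(V)$ for connective $V$. For the splitting, the key observation is that the functor $F=\susp\deloop\suspfreexi$ factors through the category $\Sp(\sLxi)$ of spectrum objects, which is \emph{stable}. The standard Goodwillie-calculus fact is that over a stable target the fibration sequence $D_nF \to P_nF \to P_{n-1}F$ splits as soon as the connecting map $P_{n-1}F \to \Sigma D_nF$ (equivalently, the ``$k$-invariant'' $D_{n-1}F \to \Sigma D_n F$, after induction down the tower) is null. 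So the strategy is to show, by downward induction on the Taylor tower, that the only potentially obstructing natural transformations vanish. Concretely, I would analyze the maps between the homogeneous layers $D_jF$ and $\Sigma D_nF$ using the identification $D_jF(V)\simeq (\suspfreexi V)^{\otimes j}_{h\Sigma_j} \simeq \suspfreexi(\Sigma^{-1}(\Sigma V)^{\otimes j}_{h\Sigma_j})$, which follows from \cite[Lemma~B.3]{Heuts21} exactly as in the proof of Lemma~\ref{lemma:derivatives of suspension of bbH} together with Corollary~\ref{proposition: shifted stable free is symmetric monoidal}. Since these layers are of the form $\suspfreexi H^j_{\bfLie_j}$ up to a shift, Corollary~\ref{corollary:vanishing of natural transformations, spectral} tells us that $\nat(D_jF, \Sigma D_nF)\simeq \ast$ whenever $j\neq n$ (the suspension only shifts the chain complex, not the homogeneous degree; cf.\ Remark~\ref{remark:inverse to the equivalence}). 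Hence every $k$-invariant in the tower is canonically null, which gives the natural splitting $P_nF\simeq D_nF\oplus P_{n-1}F$ inductively, and then the displayed formula $P_nF(V)\simeq \bigoplus_{j=0}^n D_jF(V)$ by iterating.

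For the convergence statement, I would argue that the Taylor tower of $F$ converges on connective inputs by a connectivity estimate. The standard input is Goodwillie's convergence criterion: if $F$ is $\rho$-analytic and $V$ is sufficiently highly connected relative to $\rho$, then $F(V)\simeq \holim_n P_nF(V)$. Here one uses that the $n$-th layer $D_nF(V)\simeq \suspfreexi(\Sigma^{-1}(\Sigma V)^{\otimes n}_{h\Sigma_n})$ becomes more and more highly connected as $n\to\infty$ when $V$ is connective: if $V$ is $c$-connective with $c\geq 0$, then $(\Sigma V)^{\otimes n}$ is roughly $n(c+1)$-connective, and since $\suspfreexi$ and $\Sigma^{-1}$ shift connectivity by bounded amounts and the homotopy orbits $(-)_{h\Sigma_n}$ do not decrease connectivity, the connectivity of $D_nF(V)$ grows linearly in $n$. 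Therefore the tower $\{P_nF(V)\}$ is Mittag-Leffler with layers whose connectivity tends to infinity, so $\holim_n P_nF(V)$ agrees with the colimit-to-limit comparison, and the canonical map $F(V)\to \holim_n P_nF(V)$ is an equivalence. I would phrase this via the fibre sequences $D_{n+1}F(V)\to P_{n+1}F(V)\to P_nF(V)$ and note that $\lim^1$ vanishes and the inverse limit of the increasingly-connected fibres is contractible; combined with the fact that $F$ is itself finitary and the Taylor tower of $\susp\deloop\suspfreexi$ is built from these layers, this identifies $F(V)$ with the homotopy limit.

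The main obstacle, I expect, is the convergence half rather than the splitting. The splitting reduces cleanly to the vanishing results of Section~\ref{section: vanishing theorems}, which are already in hand; the only care needed is to check that suspension (which appears in comparing $D_jF$ with $\Sigma D_nF$) genuinely preserves the homogeneous degree so that Corollary~\ref{corollary:vanishing of natural transformations, spectral} applies with $j\neq n$ — but this is immediate from Remark~\ref{remark: homology of a free algebra} and Remark~\ref{remark:inverse to the equivalence}. For convergence, the delicate point is getting an honest connectivity bound on $\oblv\circ\suspfreexi$ of a connective object: the forgetful functor $\oblv\colon \Sp(\sLxi)\to\Mod_{\kk}$ does \emph{not} commute with sequential colimits (as noted just before Proposition~\ref{proposition: stable monad for xi-complete}), so one must work with the explicit product formula $\oblv\circ\suspfreexi(V)\simeq \prod_{k\geq 1}\colim_m\Sigma^{-m}\mathbb{L}L^r_k(\tau_{\geq 0}\Sigma^m V)$ of Proposition~\ref{proposition: stable monad for xi-complete} and use Lemma~\ref{lemma: nonabelian derived, freudenthal} to see that for connective $V$ each term stabilizes and the whole product is connective with controlled connectivity in the relevant range. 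Once that estimate is in place, the homotopy-limit comparison is formal.
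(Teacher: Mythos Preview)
Your approach is essentially the same as the paper's: identify the layers via Lemma~\ref{lemma:derivatives of suspension of bbH}, then use Corollary~\ref{corollary:vanishing of natural transformations, spectral} and induction on $n$ to kill the $k$-invariants and obtain the splitting. For convergence, the paper also argues by a connectivity estimate, but more directly than you anticipate: from $\Sigma D_nF(V)\simeq \suspfreexi((\Sigma V)^{\otimes n}_{h\Sigma_n})$ and the product formula of Proposition~\ref{proposition: stable monad for xi-complete} one reads off immediately that $\pi_i(D_nF(V))=0$ for $i\leq n-2$ when $V$ is connective, and this alone forces $F(V)\simeq \holim_n P_nF(V)$; there is no need to invoke analyticity or worry separately about the failure of $\oblv$ to commute with sequential colimits, since Proposition~\ref{proposition: stable monad for xi-complete} already handles the bounded-below case.
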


\begin{proof}
By Lemma~\ref{lemma:derivatives of suspension of bbH}, we have a natural equivalence $$\Sigma D_nF(V)\simeq \suspfreexi((\Sigma V)^{\otimes n}_{h\Sigma_n}), \;\; V\in\Mod^{+}_{\kk}, \;\; n\geq 1.$$
By induction on $n$ and Corollary~\ref{corollary:vanishing of natural transformations, spectral}, we observe the desired splitting of the Taylor tower. 

Finally, if $\pi_i(V)=0$ for $i\leq -1$, then $\pi_i(D_nF(V))=0$ for $n\geq 1$ and $i\leq n-2$. This implies the last statement.
\end{proof}

For every $n\geq 1$ and $V\in\Mod^{+}_{\kk}$, Proposition~\ref{proposition: Snaith splitting} gives natural projections
\begin{align}\label{equation: snaith projections}
\Pi^V_n\colon \susp\deloop\suspfreexi(\Sigma^{-1}V) &\to P_n(\susp\deloop\suspfreexi)(\Sigma^{-1}V) \\ 
&\to D_n(\susp\deloop\suspfreexi)(\Sigma^{-1}V) \simeq \suspfreexi(\Sigma^{-1}V^{\otimes n}_{h\Sigma_n}). \nonumber
\end{align}

\begin{dfn}\label{definition:james-hopf map}
The \emph{James--Hopf map} 
$$j^V_n\colon \deloop\suspfreexi(\Sigma^{-1}V)\to \deloop\suspfreexi(\Sigma^{-1}V^{\otimes n}_{h\Sigma_n}), \;\; V\in \Mod^{+}_{\kk}, \;\; n\geq 1$$
is the natural transformation adjoint to the projection $\Pi^V_n$ of~\eqref{equation: snaith projections}.
\end{dfn}

Recall that the homotopy groups $\pi_*(\deloop\suspfreexi(\Sigma^{-1}V)), V\in \Mod^+_{\kk}$ are bigraded 
$$\pi_q(\deloop\suspfreexi(\Sigma^{-1}V)) \cong \prod_{s\geq 0} \pi_{q,s}(\suspfreexi(\Sigma^{-1}V)),\;\; q\geq 0,$$
see Remark~\ref{remark: second grading, xi-complete}. By Proposition~\ref{proposition:differential lowers the second grading}, we obtain the following statement.

\begin{prop}\label{proposition:james-hopf and primitives}
Let $V\in\Mod^{+}_{\kk}$ be a bounded below chain complex. Then
$$j^V_{p*}\left(\pi_{q,s}(\suspfreexi(\Sigma^{-1}V))\right) \subseteq \pi_{q,s-1}(\suspfreexi(\Sigma^{-1}V^{\otimes p}_{h\Sigma_p})), \;\; q,s\geq 0.  $$
\end{prop}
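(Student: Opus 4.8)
The plan is to deduce Proposition~\ref{proposition:james-hopf and primitives} directly from Proposition~\ref{proposition:differential lowers the second grading} together with the defining property of the James--Hopf map. First I would unwind the definitions: by Definition~\ref{definition:james-hopf map} the map $j^V_p$ is adjoint to the Snaith projection $\Pi^V_p$, hence $j^V_p$ is the composite of the natural transformation $\mathbb{H}^1_{\kk}(\Sigma^{-1}V) = \deloop\suspfreexi(\Sigma^{-1}V) \to P_p(\deloop\suspfreexi)(\Sigma^{-1}V)$ followed by the delooping of the $p$-th Snaith projection onto $D_p(\deloop\suspfreexi)(\Sigma^{-1}V) \simeq \deloop\suspfreexi(\Sigma^{-1}V^{\otimes p}_{h\Sigma_p}) = \mathbb{H}^p_{\kk}(\Sigma^{-1}V)$. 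Thus $j^V_p$ is (the evaluation at $\Sigma^{-1}V$ of) a natural transformation $\mathbb{H}^1_{\kk} \to \mathbb{H}^p_{\kk}$ of functors $\Mod^+_{\kk} \to \sLxi$, precomposed with the shift $\Sigma^{-1}$.

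Next I would apply Proposition~\ref{proposition:differential lowers the second grading} with $n=1$, $A=\kk$ (the trivial $\Sigma_1$-module) and $B=\kk$ (the trivial $\Sigma_p$-module), so that $\mathbb{H}^n_A = \mathbb{H}^1_{\kk}$ and $\mathbb{H}^{pn}_B = \mathbb{H}^p_{\kk}$. That proposition asserts precisely that any $\eta \in \pi_0\nat(\mathbb{H}^1_{\kk},\mathbb{H}^p_{\kk})$ induces a map $\eta^W_*$ that lowers the second (bigrading) index by one, i.e. $\eta^W_*(\pi_{*,s}(\mathbb{H}^1_{\kk}(W))) \subseteq \pi_{*,s-1}(\mathbb{H}^p_{\kk}(W))$ for all $W\in\Mod^+_{\kk}$ and $s\geq 0$. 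Taking $W = \Sigma^{-1}V$ and unpacking $\mathbb{H}^1_{\kk}(\Sigma^{-1}V) \simeq \deloop\suspfreexi(\Sigma^{-1}V)$ and $\mathbb{H}^p_{\kk}(\Sigma^{-1}V) \simeq \deloop\suspfreexi(\Sigma^{-1}V^{\otimes p}_{h\Sigma_p})$, the bigradings match (under Remark~\ref{remark: second grading, xi-complete}) with those appearing in the statement, and the desired inclusion $j^V_{p*}(\pi_{q,s}(\suspfreexi(\Sigma^{-1}V))) \subseteq \pi_{q,s-1}(\suspfreexi(\Sigma^{-1}V^{\otimes p}_{h\Sigma_p}))$ follows.

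The one point requiring a little care — and the main (though minor) obstacle — is verifying that the James--Hopf map genuinely is represented by a single homotopy class in $\pi_0\nat(\mathbb{H}^1_{\kk},\mathbb{H}^p_{\kk})$ of the precise shape to which Proposition~\ref{proposition:differential lowers the second grading} applies, rather than merely a map of spectra for each fixed $V$. This is exactly the naturality of the Snaith splitting in Proposition~\ref{proposition: Snaith splitting}: the projections $\Pi^V_n$ of~\eqref{equation: snaith projections} are natural in $V\in\Mod^+_{\kk}$, so their adjoints assemble into a natural transformation of functors valued in $\sLxi$, and the substitution $V\mapsto \Sigma^{-1}V$ is itself a (strong symmetric monoidal, hence in particular continuous) functor. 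Once this identification is in place, the conclusion is immediate and no further computation is needed; I would therefore keep the proof to a single short paragraph invoking Proposition~\ref{proposition:differential lowers the second grading} and Definition~\ref{definition:james-hopf map}.
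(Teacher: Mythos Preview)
Your approach is correct and is exactly the paper's: the proof there is the single line ``By Proposition~\ref{proposition:differential lowers the second grading}''. One small slip worth fixing: your identification $\mathbb{H}^p_{\kk}(\Sigma^{-1}V)\simeq \deloop\suspfreexi(\Sigma^{-1}V^{\otimes p}_{h\Sigma_p})$ is not right, since $(\Sigma^{-1}V)^{\otimes p}_{h\Sigma_p}$ and $\Sigma^{-1}(V^{\otimes p}_{h\Sigma_p})$ are genuinely different. The cleanest repair is to regard $j_p$ as a natural transformation in $V$ (not in $W=\Sigma^{-1}V$) and take $A=B=\Sigma^{-1}\kk$, so that $\mathbb{H}^1_{\Sigma^{-1}\kk}(V)=\deloop\suspfreexi(\Sigma^{-1}V)$ and $\mathbb{H}^p_{\Sigma^{-1}\kk}(V)=\deloop\suspfreexi(\Sigma^{-1}V^{\otimes p}_{h\Sigma_p})$; then Proposition~\ref{proposition:differential lowers the second grading} applies verbatim with $n=1$.
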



\subsection{Taylor tower of a free Lie algebra}\label{section: goodwillie tower of a free lie algebra}

In this section we compute the Goodwillie layers $D_n(\free)$ of the free simplicial restricted Lie functor $\free$ and we show that $\free$ is an analytic functor. This would follow essentially from the Hilton--Milnor theorem. 

Let $L_n=L(x_1,\ldots, x_n)$ be a free (isotropic) Lie algebra (i.e. $[x,x]=0$ for $x\in L$) over the integers generated by symbols $x_1,\ldots, x_n$ and let $B_n\subset L_n$ be the Hall basis for $L_n$, see e.g.~\cite[p.~512]{Whitehead_elements}. Then a word $w\in B_n$ is an iterated Lie bracket 
$$w=[x_{j_1}[x_{j_2},\ldots,x_{j_s}]], $$
where $j_t\in \{1,\ldots,n\}$, $1 \leq t\leq s$. Given an $n$-tuple of simplicial vector spaces $V_{1,\bullet}, V_{2,\bullet},\ldots, V_{n,\bullet}$, we associate to $w\in B_n$ the iterated tensor product
$$w(V_{1,\bullet},\ldots,V_{n,\bullet})=V_{j_1,\bullet}\otimes\ldots \otimes V_{j_s,\bullet}$$
and the canonical inclusion
\begin{equation}\label{equation: hilton-milnor, inclusions}
l_w\colon w(V_{1,\bullet},\ldots, V_{n,\bullet})\to \oblv \circ \free(V_{1,\bullet}\oplus \ldots \oplus V_{n,\bullet}) 
\end{equation}
such that $l_w(v^{j_1}_{1}\otimes \ldots \otimes v^{j_s}_s) = [v^{j_1}_{1}[v^{j_2}_2,\ldots,v^{j_s}_s]]$, where $v^{j_t}_t \in V_{j_t,\bullet}$. These maps determine the map
\begin{equation}\label{equation: hilton-milnor theorem}
\bigoplus_{w\in B_n} \oblv \circ \free(w(V_{1,\bullet},\ldots, V_{n,\bullet})) \xrightarrow{\cong} \oblv \circ \free(V_{1,\bullet}\oplus \ldots \oplus V_{n,\bullet}), 
\end{equation}
which is an isomorphism of simplicial vector spaces, see e.g.~\cite[Example~8.7.4]{Neisendorfer10}.

We write $|w|, w\in B_n$ for the cardinality of the minimal subset $A\subset \{x_1,\ldots, x_n\}$ such that $w\in L(A)\subset L(x_1,\ldots,x_n)$. We note that $1\leq |w|\leq n$ and $|w|=1$ if and only if $w=x_i$ for some $1 \leq i\leq n$. The next corollary follows immediately from the definition of the cross-effect and the isomorphism~\eqref{equation: hilton-milnor theorem}.

\begin{cor}\label{corollary:cross-effect} Let $V_{1,\bullet},\ldots,V_{n,\bullet}\in \sVect_{\kk}$ be simplicial vector spaces. Then the map 
$$\oblv \circ\creff_n(\free)(V_{1,\bullet},\ldots, V_{n,\bullet})\to \oblv \circ \free(V_{1,\bullet}\oplus\ldots V_{n,\bullet}) \in \Mod_{\kk}$$
splits naturally as a direct summand and there exists a natural equivalence
$$\oblv \circ \creff_n(\free)(V_{1,\bullet},\ldots, V_{n,\bullet}) \simeq \bigoplus_{\substack{w\in B_n, \\ |w| = n}}\oblv\circ \free(w(V_{1,\bullet}, \ldots, V_{n,\bullet})) $$
compatible with the isomorphism~\eqref{equation: hilton-milnor theorem}.
Here $\creff_n(\free)\colon (\Mod_{\kk}^{\geq 0})^{\times n}\to \sL$ is the $n$-th cross-effect of the functor $\free\colon \Mod_{\kk}^{\geq 0} \to \sL$. \qed
\end{cor}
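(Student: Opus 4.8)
The plan is to transport the computation to the underlying chain complexes, where the Hilton--Milnor splitting~\eqref{equation: hilton-milnor theorem} can be read off directly. Recall that for a reduced finitary functor $F$ between pointed $\infty$-categories admitting finite limits, the $n$-th cross-effect $\creff_n(F)(X_1,\dots,X_n)$ may be computed as the total fiber of the $n$-cube $S\mapsto F\big(\bigoplus_{i\notin S}X_i\big)$, $S\subseteq\{1,\dots,n\}$; in particular it comes equipped with a natural map to its initial vertex $F(X_1\oplus\dots\oplus X_n)$. By Proposition~\ref{proposition:category property of srLie} the forgetful functor $\oblv\colon\sL\to\Mod_{\kk}^{\geq 0}$ creates limits, so it commutes with total fibers; hence $\oblv\circ\creff_n(\free)(V_1,\dots,V_n)\simeq\creff_n(\oblv\circ\free)(V_1,\dots,V_n)$, compatibly with the natural maps to $\oblv\circ\free(V_1\oplus\dots\oplus V_n)$. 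It therefore suffices to compute the cross-effect of $\oblv\circ\free\colon\Mod_{\kk}^{\geq 0}\to\Mod_{\kk}^{\geq 0}$ and to identify this natural map.

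Next I would feed~\eqref{equation: hilton-milnor theorem} into the defining cube. For a Hall word $w\in B_n$ let $A(w)\subseteq\{x_1,\dots,x_n\}$ denote the minimal set of variables occurring in $w$, so $|A(w)|=|w|$, and recall that $w(V_1,\dots,V_n)$ is the iterated tensor product of the $V_j$ indexed by the letters of $w$. The splitting~\eqref{equation: hilton-milnor theorem} is natural in the $V_i$, and projecting away a summand $V_j$ of $\bigoplus_iV_i$ carries the Hilton--Milnor summand $\oblv\circ\free(w(V_1,\dots,V_n))$ identically to itself if $x_j\notin A(w)$ and to $0$ if $x_j\in A(w)$. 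Consequently the cube $S\mapsto\oblv\circ\free\big(\bigoplus_{i\notin S}V_i\big)$ decomposes naturally as a direct sum over $w\in B_n$ of sub-cubes $\mathcal{X}_w$ with $\mathcal{X}_w(S)\simeq\oblv\circ\free(w(V_1,\dots,V_n))$ when $A(w)\cap S=\varnothing$ and $\mathcal{X}_w(S)\simeq 0$ otherwise. Since finite limits in $\Mod_{\kk}^{\geq 0}$ commute with filtered colimits, hence with the direct sum over the set $B_n$, we obtain $\creff_n(\oblv\circ\free)(V_1,\dots,V_n)\simeq\bigoplus_{w\in B_n}\tfiber(\mathcal{X}_w)$, and it remains to evaluate each $\tfiber(\mathcal{X}_w)$.

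If $|w|<n$, pick $j$ with $x_j\notin A(w)$. Then for every $S$ with $j\notin S$ the structure map $\mathcal{X}_w(S)\to\mathcal{X}_w(S\cup\{j\})$ is an equivalence: it is the identity when $A(w)\cap S=\varnothing$ and the map $0\to 0$ otherwise. Hence $\mathcal{X}_w$ is constant along the $j$-th coordinate direction and its total fiber vanishes. If $|w|=n$, i.e. $A(w)=\{x_1,\dots,x_n\}$, then $\mathcal{X}_w(S)\simeq 0$ for all $S\neq\varnothing$ while $\mathcal{X}_w(\varnothing)\simeq\oblv\circ\free(w(V_1,\dots,V_n))$, so $\tfiber(\mathcal{X}_w)\simeq\oblv\circ\free(w(V_1,\dots,V_n))$ and the natural map from $\tfiber(\mathcal{X}_w)$ to the initial vertex $\mathcal{X}_w(\varnothing)$ is the identity. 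Summing over $w$ yields the natural equivalence $\oblv\circ\creff_n(\free)(V_1,\dots,V_n)\simeq\bigoplus_{w\in B_n,\,|w|=n}\oblv\circ\free(w(V_1,\dots,V_n))$, and since each surviving summand maps by the identity into the corresponding summand of $\oblv\circ\free(V_1\oplus\dots\oplus V_n)$ under~\eqref{equation: hilton-milnor theorem}, this exhibits $\oblv\circ\creff_n(\free)(V_1,\dots,V_n)$ as a natural direct summand of $\oblv\circ\free(V_1\oplus\dots\oplus V_n)$, compatibly with~\eqref{equation: hilton-milnor theorem}. The only point requiring genuine attention — the nearest thing here to a real obstacle — is verifying that the Hilton--Milnor decomposition~\eqref{equation: hilton-milnor theorem} is compatible with the projections $\bigoplus_{i\notin S}V_i\to\bigoplus_{i\notin T}V_i$ for $S\subseteq T$, so that the cube really splits as $\bigoplus_w\mathcal{X}_w$ with the stated behaviour; this is precisely the naturality of~\eqref{equation: hilton-milnor theorem} together with the elementary observation on how Hall words behave under killing a generator.
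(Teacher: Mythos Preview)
Your argument is correct and is precisely the natural unpacking of what the paper intends: the corollary is stated with a \qed and is declared to follow ``immediately from the definition of the cross-effect and the isomorphism~\eqref{equation: hilton-milnor theorem}.'' Your proof supplies exactly those details --- pass to underlying chain complexes via the limit-creating $\oblv$, decompose the defining cube using the naturality of the Hilton--Milnor splitting, and compute the total fiber summand by summand --- so there is no difference in approach, only in level of detail.
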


Recall that the $n$-th space of the Lie operad $\bfLie_n\in \Vect_{\kk}$ is the $\kk$-vector space spanned by all $w\in B_n$ of length $n$. Therefore $\bfLie_n$ is a linear subspace of $L_n\otimes \kk$. The symmetric group $\Sigma_n$ acts on $L_n\otimes \kk = L(x_1,\ldots,x_n)\otimes \kk$ by permuting $x_1,\ldots, x_n$ and this action induces the $\Sigma_n$-action on $\bfLie_n$.

Let us denote by $l(w)$ the length of $w\in B_n$. For any $w\in B_n$ such that $|w|=l(w)=n$ and any simplicial vector spaces $V_{1,\bullet},\ldots, V_{n,\bullet}$ we have natural isomorphisms:
$$w(V_{1,\bullet},\ldots, V_{n,\bullet})\cong V_{1,\bullet} \otimes \ldots \otimes V_{n,\bullet}. $$
Using these isomorphisms and the canonical inclusions~\eqref{equation: hilton-milnor, inclusions}, we obtain the map:
$$l\colon \free(\bfLie_n \otimes V_{1,\bullet}\otimes \ldots \otimes V_{n,\bullet}) \to \free(V_{1,\bullet}\oplus \ldots\oplus V_{n,\bullet}) $$
of simplicial restricted Lie algebras. We note that the map $l$ is $\Sigma_n$-equivariant, where the group $\Sigma_n$ acts by permuting the spaces $V_{1,\bullet},\ldots, V_{n,\bullet}$ on both sides and it acts on $\bfLie_n$ by the canonical action above.

\begin{prop}\label{proposition: computation of Dn} Let $V_{1,\bullet},\ldots,V_{n,\bullet}\in \sVect_{\kk}$ be simplicial vector spaces. Then the map $l$ factorizes through the map 
$$\bar{l}\colon \free(\bfLie_n \otimes V_{1,\bullet}\otimes \ldots \otimes V_{n,\bullet}) \to \creff_n(\free)(V_{1,\bullet},\ldots, V_{n,\bullet}), $$
and if the vector spaces $V_{1,\bullet},\ldots, V_{n,\bullet}$ are $k$-connected, then the map $\bar{l}$ is $(n+1)k$-connected.
\end{prop}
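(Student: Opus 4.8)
The plan is to read both assertions off the Hilton--Milnor splitting~\eqref{equation: hilton-milnor theorem} together with the cross-effect computation of Corollary~\ref{corollary:cross-effect}, keeping track throughout of the multidegree in the inputs $V_{1,\bullet},\ldots,V_{n,\bullet}$.

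\emph{The factorization.} By construction $l$ is adjoint to the linear map $\bfLie_n\otimes V_{1,\bullet}\otimes\ldots\otimes V_{n,\bullet}\cong\bigoplus_{w\in B_n,\ l(w)=|w|=n}w(V_{1,\bullet},\ldots,V_{n,\bullet})\to\oblv\circ\free(V_{1,\bullet}\oplus\ldots\oplus V_{n,\bullet})$ assembled from the inclusions $l_w$ of~\eqref{equation: hilton-milnor, inclusions}, whose image is the multilinear part. Every multilinear bracket uses all $n$ inputs, so this image lands in the direct summand $\oblv\circ\creff_n(\free)(V_{1,\bullet},\ldots,V_{n,\bullet})$ of Corollary~\ref{corollary:cross-effect}; under~\eqref{equation: oblv free, fresse} that summand consists of the parts of $\mathbb{L}L^r_j(V_{1,\bullet}\oplus\ldots\oplus V_{n,\bullet})$ supported in multidegrees with all coordinates positive, a property stable under the Lie bracket and the $p$-operation $\xi$. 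Hence the summand is a restricted Lie subalgebra, and since $\free(\bfLie_n\otimes V_{1,\bullet}\otimes\ldots\otimes V_{n,\bullet})$ is generated by its linear part, $l$ factors through it, defining $\bar l$.

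\emph{The connectivity estimate.} Grade both sides by total degree in the $V_{i,\bullet}$. By Corollary~\ref{corollary:cross-effect} the target is concentrated in total degrees $\geq n$; its degree-$n$ part is the multilinear functor $\bfLie_n\otimes V_{1,\bullet}\otimes\ldots\otimes V_{n,\bullet}$, while every piece of total degree $\geq n+1$ is, up to $\Sigma$-equivariant twists by Lie-operad components, either a tensor product of at least $n+1$ of the $V_{i,\bullet}$ or a nonabelian derived Lie power $\mathbb{L}L^r_j$ ($j\geq2$) of at least $n$ of them. Since each $V_{i,\bullet}$ is $k$-connected and Künneth holds over $\kk$, a $j$-fold tensor product of the $V_{i,\bullet}$ is $(j(k+1)-1)$-connected; and $\mathbb{L}L^r_j$ ($j\geq2$) carries a $c$-connected object to a $(2c+1)$-connected one by the argument in the proof of Lemma~\ref{lemma: nonabelian derived, freudenthal} (it is reduced and commutes with sifted colimits, so one reduces to tensor powers with at least two factors). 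A short count then shows the total-degree-$\geq n+1$ part of the target is $((n+1)(k+1)-1)$-connected. On the source, $\oblv\circ\free(\bfLie_n\otimes V_{1,\bullet}\otimes\ldots\otimes V_{n,\bullet})=\bigoplus_{j\geq1}\mathbb{L}L^r_j(\bfLie_n\otimes V_{1,\bullet}\otimes\ldots\otimes V_{n,\bullet})$, where $j=1$ gives the multilinear functor while, for $j\geq2$, the $j$-th term is $\mathbb{L}L^r_j$ applied to the $(n(k+1)-1)$-connected object $\bfLie_n\otimes V_{1,\bullet}\otimes\ldots\otimes V_{n,\bullet}$, hence is $(2n(k+1)-1)$-connected, and $2n(k+1)-1\geq(n+1)(k+1)-1$ since $n\geq1$. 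As $\bar l$ restricts to the identity on the common multilinear summand, it induces an isomorphism on homotopy groups in all degrees $\leq(n+1)(k+1)-2$; therefore $\cofib(\bar l)$ is $((n+1)(k+1)-2)$-connected, and in particular $(n+1)k$-connected.

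\emph{The main obstacle.} Everything rests on the second step. Because $\free$ does not take direct sums to direct sums, $\free(\bfLie_n\otimes V_{1,\bullet}\otimes\ldots\otimes V_{n,\bullet})$ is \emph{not} literally a subsum of the Hilton--Milnor summands of $\free(V_{1,\bullet}\oplus\ldots\oplus V_{n,\bullet})$, so one cannot identify $\cofib(\bar l)$ outright; instead one must compare source and target only through the connectivity range $(n+1)(k+1)-1$, which in turn depends on the connectivity behaviour of the higher nonabelian derived Lie powers $\mathbb{L}L^r_j$ on a highly connected input.
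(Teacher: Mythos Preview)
Your factorization is fine; the gap is in the connectivity estimate. The claim that $\mathbb{L}L^r_j$ ($j\ge 2$) carries a $c$-connected object to a $(2c+1)$-connected one is false whenever $j$ is a power of $p$. Indeed $\mathbb{L}L^r_{p^s}$ has nontrivial linearization --- by Theorem~\ref{theorem: stable homotopy groups of a free object} its $P_1$ has homotopy $\pi_*(-)\otimes\Lambda_s$ --- and unstably the degree-preserving operation $\mu_0$ (resp.\ $\lambda_0$ for $p=2$), which coincides with the $p$-map $\xi$ by Remark~\ref{remark: xi and nu_0 actions}, already produces a nonzero class in $\pi_{c+1}\bigl(\mathbb{L}L^r_p(W)\bigr)$ whenever $\pi_{c+1}(W)\ne 0$. (Your appeal to Lemma~\ref{lemma: nonabelian derived, freudenthal} only shows that the \emph{suspension map} is $(2c+1)$-connected, a different statement.) Consequently both your source and your target carry summands of connectivity merely $n(k+1)-1$ in total degree $\ge n+1$, and for $k\ge n$ your argument yields a cofiber bound strictly below $(n+1)k$.

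The conclusion is nonetheless correct, because these low-connectivity pieces cancel under $\bar l$. Since $\bar l$ is a map of simplicial restricted Lie algebras it intertwines all the operations of Proposition~\ref{proposition: lambda, mu, adem}. Using Remark~\ref{remark: unstable homotopy groups} together with Corollary~\ref{corollary:cross-effect} one checks that for $q<(n+1)(k+1)$ the homotopy of each side is exactly the $\Lambda$-orbit of the linear part $\pi_*(\bfLie_n\otimes V_1\otimes\cdots\otimes V_n)$ (genuine higher Lie words in $\pi_*W$ and Hall words of length $\ge n+1$ sit in degrees $\ge 2n(k+1)$ and $\ge(n+1)(k+1)$ respectively), on which $\bar l$ is visibly an isomorphism; hence $\cofib(\bar l)$ is $((n+1)(k+1)-1)$-connected. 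The paper's own proof asserts instead that $\cofib(\bar l)\simeq\bigoplus_{l(w)\ge n+1}\oblv\circ\free(w(V_{\bullet}))$, which gives the right connectivity immediately; note however that for $n\ge 3$ a dimension count in multidegree $(2,\ldots,2)$ shows this identification is not literally exact --- the cross-brackets $[l_{w_i}(-),l_{w_j}(-)]$ for distinct length-$n$ Hall words land in the $l(w)=2n$ summands --- so the caution in your ``main obstacle'' paragraph was well placed.
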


\begin{proof}
By the construction, the map $l$ is adjoint to the canonical inclusion 
$$\bfLie_n\otimes V_{1,\bullet}\otimes \ldots \otimes V_{n,\bullet} \to \oblv \circ \free(V_{1,\bullet}\oplus \ldots \oplus V_{n,\bullet}),$$
and by Corollary~\ref{corollary:cross-effect}, its image lies in the direct summand $\oblv \circ \creff_n(\free)$. Moreover, by the same corollary, the cofiber of the map $\bar{l}$ is equivalent to
$$\bigoplus_{\substack{w\in B_n, |w| = n, \\ l(w)\geq n+1}}\oblv\circ \free(w(V_{1,\bullet}, \ldots, V_{n,\bullet})), $$
which implies the connectivity statement.
\end{proof}

Now, we are able to identify the Goodwillie layers $D_n(\free)$ of the free simplicial restricted Lie algebra functor $$\free\colon \Mod_{\kk}^{\geq 0} \to \sL.$$ We recall that $$D_n(\free)(-)\simeq \deloop \D_n(\free)(\tau_{\geq 0}-),$$ where $\D_n(\free)$ is the functor from $\Mod_{\kk}$ to the stabilization $\Sp(\sL)$, see e.g.~\cite[Corollary~6.1.2.9]{HigherAlgebra}. By~\cite[Proposition~6.1.3.6]{HigherAlgebra}, the multi-linearization $P_{\vec{1}}\creff_n(\free)$ of the $n$-th cross-effect $\creff_n(\free)\colon (\Mod_{\kk}^{\geq 0})^{\times n} \to \sL$ is also equipped with a natural delooping:
$$P_{\vec{1}}\creff_n(\free) \simeq \deloop \Delta_n(\free)(\tau_{\geq 0}-), \;\; \Delta_n(\free)\in \SymLin{n}(\Mod_{\kk},\Sp(\sL)). $$
By~\cite[Proposition~6.1.4.14]{HigherAlgebra}, there is a natural equivalence $$\D_n(\free)(V) \simeq \Delta_n(\free)(V,\ldots,V)_{h\Sigma_n}, \;\; V\in \Mod_{\kk}$$ 
Finally, Proposition~\ref{proposition: computation of Dn} implies immediately the following corollary.
\begin{cor}\label{corollary:derivatives of free} There exists a natural equivalence:
\begin{equation*}
\D_n(\free)(V)\simeq \suspfree(\bfLie_n \otimes_{h\Sigma_n} V^{\otimes n}), \;\; V\in \Mod_{\kk}. 
\end{equation*}
\end{cor}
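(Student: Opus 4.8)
The plan is to read off the identification of $\D_n(\free)$ directly from the chain of natural equivalences recalled in the paragraph preceding the statement, feeding into it the connectivity result Proposition~\ref{proposition: computation of Dn}. Concretely, the skeleton is: the Goodwillie derivative $\D_n(\free)$ is computed as $\Delta_n(\free)(V,\dots,V)_{h\Sigma_n}$, where $\Delta_n(\free)$ is the delooped multilinearization of the $n$-th cross-effect $\creff_n(\free)$, via \cite[Propositions~6.1.3.6, 6.1.4.14]{HigherAlgebra}. So it suffices to identify $P_{\vec 1}\creff_n(\free)$, the multilinearization of the cross-effect, together with its $\Sigma_n$-action.

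First I would invoke Proposition~\ref{proposition: computation of Dn}: the natural $\Sigma_n$-equivariant map
$$\bar l\colon \free(\bfLie_n\otimes V_{1,\bullet}\otimes\cdots\otimes V_{n,\bullet}) \to \creff_n(\free)(V_{1,\bullet},\dots,V_{n,\bullet})$$
is $(n+1)k$-connected whenever each $V_{i,\bullet}$ is $k$-connected. The source functor $(V_1,\dots,V_n)\mapsto \free(\bfLie_n\otimes V_1\otimes\cdots\otimes V_n)$ is reduced in each variable and, since $\free$ is linear up to higher-order corrections on a multilinear input — more precisely, because the composite $\oblv\circ\free$ splits as $\bigoplus_{m\ge 1}\mathbb L L^r_m$ by~\eqref{equation: oblv free, fresse} and only the $m=1$ summand survives multilinearization in all $n$ slots — the multilinearization $P_{\vec 1}$ of this source is simply $(V_1,\dots,V_n)\mapsto \Sigma^\infty\free(\bfLie_n\otimes V_1\otimes\cdots\otimes V_n) = \suspfree(\bfLie_n\otimes V_1\otimes\cdots\otimes V_n)$ after delooping. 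On the other hand, multilinearization only depends on the behavior of a functor in arbitrarily highly connected ranges: a highly connected natural transformation becomes an equivalence after applying $P_{\vec 1}$, since $P_{\vec 1}$ is computed as a colimit of iterated cross-effects evaluated on suspensions. Hence $\bar l$ induces an equivalence
$$P_{\vec 1}\Big(\free(\bfLie_n\otimes(-)\otimes\cdots\otimes(-))\Big) \xrightarrow{\ \simeq\ } P_{\vec 1}\creff_n(\free),$$
compatibly with the $\Sigma_n$-actions (which on the left permute the $n$ tensor factors and act on $\bfLie_n$ by the canonical action). Passing to deloopings, this gives $\Delta_n(\free)(V_1,\dots,V_n)\simeq \suspfree(\bfLie_n\otimes V_1\otimes\cdots\otimes V_n)$ as symmetric multilinear functors.

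Finally I would substitute $V_1=\cdots=V_n=V$ and take homotopy $\Sigma_n$-orbits, using \cite[Proposition~6.1.4.14]{HigherAlgebra}:
$$\D_n(\free)(V)\simeq \Delta_n(\free)(V,\dots,V)_{h\Sigma_n}\simeq \big(\suspfree(\bfLie_n\otimes V^{\otimes n})\big)_{h\Sigma_n}\simeq \suspfree\big(\bfLie_n\otimes_{h\Sigma_n} V^{\otimes n}\big),$$
where the last step uses that $\suspfree\colon \Mod_\kk\to\Sp(\sL)$ is a left adjoint (Proposition~\ref{proposition: stable is monadic}) and hence commutes with the colimit defining homotopy orbits. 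The main obstacle I anticipate is the bookkeeping in the second step — verifying that the multilinearization of $\free(\bfLie_n\otimes V_1\otimes\cdots\otimes V_n)$ is exactly $\suspfree(\bfLie_n\otimes V_1\otimes\cdots\otimes V_n)$ and that a $(n+1)k$-connected map of reduced-in-each-variable functors becomes an equivalence upon $P_{\vec 1}$; this is routine Goodwillie calculus but requires care with connectivity estimates across all $n$ variables simultaneously, and with checking the $\Sigma_n$-equivariance is preserved throughout. Everything else is a formal consequence of the cited results from \cite{HigherAlgebra} and the adjunction properties of $\suspfree$.
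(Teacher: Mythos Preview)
Your proposal is correct and follows exactly the approach the paper takes: the paragraph preceding the corollary sets up $\D_n(\free)(V)\simeq \Delta_n(\free)(V,\ldots,V)_{h\Sigma_n}$ via \cite[Propositions~6.1.3.6, 6.1.4.14]{HigherAlgebra}, and then the paper simply says ``Proposition~\ref{proposition: computation of Dn} implies immediately the following corollary'' with no further argument. You have merely spelled out the details the paper leaves implicit --- the passage from the connectivity estimate on $\bar l$ to an equivalence after multilinearization, and the use of $\suspfree$ being a left adjoint to commute with $(-)_{h\Sigma_n}$.
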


The Taylor tower of the functor $\free\colon \Mod_{\kk}^{\geq 0} \to \sL$ is the following tower of fibrations
\begin{equation}\label{equation: algebraic tower}
\begin{tikzcd}
&
D_3(\free) \arrow[d]
&D_2(\free)\arrow[d]
&D_1(\free)\arrow[d, equal]\\
\ldots \arrow[r]
& P_3(\free) \arrow[r]
& P_2(\free) \arrow[r]
& P_1(\free).
\end{tikzcd}
\end{equation}
We consider the induced spectral sequence
\begin{equation}\label{equation: AGSS}
E^1_{t,n}(V)=\pi_t(D_n(\free)(V)) \Rightarrow \pi_t\free(V), \;\; V\in \Mod_{\kk}^{\geq 0}
\end{equation}
with the differentials $d_r\colon E^r_{t,n}(V)\to E^r_{t-1,n+r}(V)$, $r\geq 1$.

\begin{cor}\label{corollary: exponential vanishing}
Let $V\in \Mod_{\kk}^{\geq 0}$. Then for any $x\in E^1_{t,n}(V)$, we have 
$$d_1(x)=d_2(x)=\ldots = d_{(p-1)n-1}(x)=0. $$
\end{cor}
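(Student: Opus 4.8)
The plan is to exploit the scalar degree of Section~\ref{section: scalar degree}. I will show that the source and target of every differential $d_r$ carry a $\kk^{\times}$-weight decomposition which $d_r$ must respect, and that the two sets of weights are disjoint as soon as $r\leq (p-1)n-1$.

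First I would record a weight decomposition of the $E^1$-term. Fix $V\in\Mod^{\geq 0}_{\kk}$ and $t\geq 0$. By Corollary~\ref{corollary:derivatives of free} (and the identification $D_n(\free)\simeq \deloop\circ\D_n(\free)\circ\tau_{\geq 0}$), we have $E^1_{t,n}(V)\cong \pi_t\bigl(\suspfree(\mathbf{L}_n(V))\bigr)$. By Theorem~\ref{theorem: stable homotopy groups of a free object} and the bigrading of Remark~\ref{remark: second grading}, this splits as
\[
E^1_{t,n}(V)\cong \bigoplus_{s\geq 0}\pi_{t,s}\bigl(\suspfree(\mathbf{L}_n(V))\bigr),\qquad
\pi_{t,s}\bigl(\suspfree(\mathbf{L}_n(V))\bigr)=\colim_m\pi_t\Bigl(\Sigma^{-m}\mathbb{L}L^r_{p^s}\bigl(\tau_{\geq 0}\Sigma^m\mathbf{L}_n(V)\bigr)\Bigr).
\]
Now $\mathbf{L}_n$ is scalable of scalar degree $\chi_n$ (see the paragraph after~\eqref{equation:diagonal homogeneous functor}), and the nonabelian derived functor $\mathbb{L}L^r_{p^s}$ is scalable of scalar degree $\chi_{p^s}$, since it is assembled from the $p^s$-fold tensor power, on which $\kk^{\times}$ acts through $a\mapsto a^{p^s}$. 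Hence, by Example~\ref{example:composition of linear degrees}, the $s$-th summand above is, as a functor of $V$, scalable of scalar degree $\chi_{np^s}$. Thus the $\kk^{\times}$-action on $E^1_{t,n}(V)$ induced by the action on $V$ decomposes $E^1_{t,n}(V)$ into weight spaces whose weights lie in $\{np^s:s\geq 0\}$; here I would invoke the fact, implicit in the proof of Lemma~\ref{lemma: cohomology of multiplicative group}, that the finite-dimensional $\kk^{\times}$-representations occurring split as sums of characters.

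Next I would check that the differentials preserve this grading. The Taylor tower~\eqref{equation: algebraic tower}, and therefore the whole spectral sequence~\eqref{equation: AGSS}, is functorial in $V$; applying functoriality to the scaling automorphism $m_a\colon V\to V$, $a\in\kk^{\times}$, shows that $m_{a*}$ is an automorphism of $E^r_{*,*}(V)$ commuting with every $d_r$. It follows that each $d_r$ is $\kk^{\times}$-equivariant, that each page $E^r_{t,n}(V)$ is a subquotient of $E^1_{t,n}(V)$ carrying an induced weight decomposition supported in $\{np^s:s\geq 0\}$, and that $d_r\colon E^r_{t,n}(V)\to E^r_{t-1,n+r}(V)$ is a map of $\kk^{\times}$-representations.

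Finally I would run the arithmetic. If $d_r$ out of column $n$ were nonzero on some page, then a weight $np^s$ of the source would equal a weight $(n+r)p^{s'}$ of the target; writing $n=p^a u$ and $n+r=p^b u$ with $p\nmid u$, this forces $b>a$ (as $r\geq 1$), hence $r=up^a(p^{b-a}-1)\geq up^a(p-1)=n(p-1)$. Therefore $d_r=0$ on $E^r_{t,n}(V)$ whenever $r\leq (p-1)n-1$, which is exactly the assertion. I do not expect a real obstacle: the only delicate point is the bookkeeping in the first step — pinning down the scalar degree of each summand of $E^1_{t,n}(V)$ and confirming that every $d_r$ respects it — together with making sure that the colimits and the splitting of Theorem~\ref{theorem: stable homotopy groups of a free object} are applied only where they are valid.
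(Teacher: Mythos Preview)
Your proof is correct and follows essentially the same approach as the paper: decompose $E^1_{*,n}(V)$ into $\kk^{\times}$-weight spaces with weights $\{np^s:s\geq 0\}$ using Corollary~\ref{corollary:derivatives of free} and Remark~\ref{remark: second grading}, note that the differentials are $\kk^{\times}$-equivariant by functoriality, and conclude by the arithmetic that $\{np^s\}\cap\{(n+r)p^{s'}\}=\emptyset$ for $1\leq r\leq (p-1)n-1$. The paper's proof is terser---it leaves the arithmetic implicit---whereas you spell it out and frame things in the language of scalar degree from Section~\ref{section: scalar degree}; but the idea is the same.
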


In other words, the first differential which might act non-trivially on $E^1_{*,n}(V)$ lands on $E^1_{*,pn}(V)$.

\begin{proof}
Consider $E^1_{*,n}(V)$, $n\geq 1$ as a representation of the multiplicative group $\kk^{\times}$ induced by the $\kk^{\times}$-action on $V$. Then, by Corollary~\ref{corollary:derivatives of free} and Remark~\ref{remark: second grading}, we have isomorphisms of $\kk^{\times}$-representations
$$E^1_{*,n}(V)\cong \bigoplus_{s\geq 0}\triv(\pi_{*,s}\suspfree(\mathbf{L}_n(V))) \otimes \chi_{np^s}, \;\; n\geq 1,  $$
where $\mathbf{L}_n(V)=\bfLie_n\otimes_{h\Sigma_n}V^{\otimes n}$. By functoriality, the differentials are maps of $\kk^\times$-representations, which implies the corollary. 
\end{proof}

Note that Corollary~\ref{corollary:derivatives of free} and Remark~\ref{remark: second grading} imply that the spectral sequence \eqref{equation: AGSS} is \emph{trigraded}
$$E^1_{t,n}(V) = \bigoplus_{s\geq 0} E^1_{t,n,s}(V) \cong \bigoplus_{s\geq 0}\pi_{t,s}(\suspfree(\mathbf{L}_n(V)), \;\; t\geq 0,\; n\geq 1. $$
\begin{cor}\label{corollary: agss is trigraded}
Let $V\in \Mod_{\kk}^{\geq 0}$. Then the first non-trivial differential $$d_{(p-1)n}\colon E^1_{t,n}(V) \to E^1_{t-1,pn}(V)$$ lowers the third degree by one, i.e. $$d_{(p-1)n}(E^1_{t,n,s}(V))\subseteq E^1_{t-1,pn,s-1}(V), \;\; t,s\geq 0, \;n\geq 1.$$
\end{cor}

\begin{proof}
See the proof of Corollary~\ref{corollary: exponential vanishing}.
\end{proof}

Next, we use the description of the Goodwillie layers above and the vanishing results of Section~\ref{section: vanishing theorems} to show that the Taylor tower for the $\kk$-complete functor $L_\xi\free$ has certain splitting properties, which rigidify the vanishing of Corollary~\ref{corollary: exponential vanishing}.

\begin{lmm}\label{lemma: completion and derivatives}
The natural transformations
$$P_n(L_\xi \free)(V) \xrightarrow{\simeq} L_\xi (P_n(\free)(V)), \;\; n\geq 1, $$
$$D_n(L_\xi \free)(V) \xrightarrow{\simeq} L_\xi (D_n(\free)(V)), \;\; n\geq 1 $$
are equivalences for any connected $V\in \Mod^{\geq 1}_{\kk}$. Moreover, the natural transformation
$$\D_n(L_\xi \free)(V) \xrightarrow{\simeq} L^{st}_\xi \D_n(\free)(V) $$
is an equivalence for any bounded below $V\in \Mod_{\kk}^{+}$.
\end{lmm}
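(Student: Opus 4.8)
The plan is to derive all three equivalences from one structural fact: the $\kk$-completion $L_\xi$ is a \emph{left exact} (Corollary~\ref{corollary: xi-completion is left exact}) and colimit-preserving Bousfield localization when restricted to connected objects, while $\free$ sends connected simplicial vector spaces to connected simplicial restricted Lie algebras. Since Goodwillie's $n$-excisive approximation and $n$-th derivative are assembled from finite homotopy limits and sequential colimits, such a localization commutes with their formation once one checks that the construction never leaves the connected subcategory; the comparison maps in the statement are then the canonical ones coming from the universal property of $P_n$, respectively the construction of $\D_n$.

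First the connectivity input. For connected $V\in\Mod_{\kk}^{\geq 1}$ the splitting $\oblv\circ\free(V)\simeq\bigoplus_{k\geq 1}\mathbb{L}L^r_k(V)$ of~\eqref{equation: oblv free, fresse} exhibits $\free(V)$ as a coproduct of connected chain complexes, so $\pi_0\free(V)=0$; hence $\free$ restricts to a functor $\Mod_{\kk}^{\geq 1}\to\sL_0$, and similarly $L_\xi\free\colon\Mod_{\kk}^{\geq 1}\to\sL_{\xi,0}$. Now recall Goodwillie's formula $P_nF\simeq\colim_k T_n^kF$, where $T_nF(X)=\holim_{\emptyset\neq S\subseteq\{0,\dots,n\}}F(C_S X)$ and $S\mapsto C_S X$ is the cube of iterated cones on $X$ (\cite[\S6.1]{HigherAlgebra}). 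If $X\in\Mod_{\kk}^{\geq 1}$, every vertex $C_S X$ again lies in $\Mod_{\kk}^{\geq 1}$, so each $F(C_S X)$ is connected for $F\in\{\free,L_\xi\free\}$; moreover the fibre of $F(X)\to T_nF(X)$ is the total fibre of the cube $S\mapsto F(C_S X)$, which is an object of $\sL$ and therefore has vanishing $\pi_{-1}$, so $\pi_0 F(X)\twoheadrightarrow\pi_0 T_nF(X)$ and $T_nF(X)$ is connected. Thus $T_n$ preserves the full subcategory of functors $\Mod_{\kk}^{\geq 1}\to\sL_0$. Since limits in the reflective subcategory $\sLxi\subseteq\sL$ are computed in $\sL$, left exactness of $L_\xi$ on $\sL_0$ gives $L_\xi\circ T_nF\simeq T_n(L_\xi\circ F)$ for every $F\colon\Mod_{\kk}^{\geq 1}\to\sL_0$. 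Applying this inductively to $F=T_n^k\free$ and passing to the sequential colimit, which $L_\xi$ preserves as a left adjoint, yields $P_n(L_\xi\free)(V)\simeq L_\xi P_n(\free)(V)$ on $\Mod_{\kk}^{\geq 1}$; taking the fibre of $P_n\to P_{n-1}$ and invoking left exactness once more, all three terms being connected, gives $D_n(L_\xi\free)(V)\simeq L_\xi D_n(\free)(V)$.

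For the stable statement I would pass to stabilizations. Viewing $\free$ as a functor $\Mod_{\kk}^{\geq 1}\to\sL_0$, the functor $L_\xi\colon\sL_0\to\sL_{\xi,0}$ is left exact and colimit-preserving, and its stabilization is the reflector $L^{st}_\xi\colon\Sp(\sL)\to\Sp(\sLxi)$ of Proposition~\ref{proposition: stabilization of xi-complete}, which is exact (a left adjoint between stable categories preserves finite limits as well as colimits) and satisfies $L^{st}_\xi\circ\susp_{\sL}\simeq\susp_{\sLxi}\circ L_\xi$ by uniqueness of adjoints. Since $\D_n(-)$ is compatible with postcomposition by left exact, colimit-preserving functors, we obtain $\D_n(L_\xi\free)\simeq L^{st}_\xi\circ\D_n(\free)$, and evaluating at a bounded below $V\in\Mod_{\kk}^{+}$ gives the last equivalence. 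Equivalently, one may combine the identification $D_n(L_\xi\free)(V)\simeq\deloop\suspfreexi(\mathbf{L}_n V)$, obtained from the previous paragraph together with Corollary~\ref{corollary:derivatives of free} and $\suspfreexi\simeq L^{st}_\xi\circ\suspfree$ (Proposition~\ref{proposition: xi-complete stable is monadic}), with the uniqueness of the canonical delooping $\D_n$ to conclude $\D_n(L_\xi\free)\simeq\suspfreexi\circ\mathbf{L}_n\simeq L^{st}_\xi\circ\suspfree\circ\mathbf{L}_n\simeq L^{st}_\xi\circ\D_n(\free)$.

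The main obstacle is the connectivity bookkeeping behind the first two equivalences: because $L_\xi$ is left exact only on connected objects, one must ensure that the entire transfinite construction of the Taylor tower stays inside $\sL_0$, which is exactly the content of the verification that $T_n$ preserves connectivity — short here, since $\pi_{-1}$ vanishes identically in $\sL$, but genuinely necessary. The passage from the unstable statements to the stable one is then a routine manipulation of adjunctions and of the stabilization of a localization; the only points deserving a line of justification are the compatibility $L^{st}_\xi\circ\susp_{\sL}\simeq\susp_{\sLxi}\circ L_\xi$ and the fact that $\D_n$ commutes with such localizations, both of which follow from the universal properties recorded in Section~\ref{section: spectrum objects}.
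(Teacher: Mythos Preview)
Your overall strategy—deducing that $L_\xi$ commutes with the Goodwillie tower from left exactness on $\sL_0$ (Corollary~\ref{corollary: xi-completion is left exact}) together with preservation of sequential colimits—is exactly what the paper's one-line proof has in mind, and you have supplied considerably more detail than the paper does. However, there is a genuine gap in your connectivity bookkeeping. You argue that $T_nF(X)$ is connected because ``the fibre of $F(X)\to T_nF(X)$ is the total fibre of the cube $S\mapsto F(C_S X)$, which is an object of $\sL$ and therefore has vanishing $\pi_{-1}$, so $\pi_0 F(X)\twoheadrightarrow\pi_0 T_nF(X)$.'' This inference is incorrect: in the prestable category $\sL$ (equivalently in $\Mod_{\kk}^{\geq 0}$ via $\oblv$), the long exact sequence of a fibre sequence $A\to B\to C$ terminates with $\pi_0 A\to\pi_0 B\to\pi_0 C$ exact at the middle, and this does \emph{not} force $\pi_0 B\to\pi_0 C$ to be surjective. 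Surjectivity would follow from vanishing of $\pi_{-1}$ of the fibre \emph{computed in $\Mod_{\kk}$}, but that is precisely $\coker(\pi_0 B\to\pi_0 C)$, which is what you are trying to show is zero. As a concrete illustration, $0\to 0\to\kk$ is a fibre sequence in $\Mod_{\kk}^{\geq 0}$ with connected fibre and connected middle term but non-connected base.

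The paper's proof also cites Proposition~\ref{proposition: free xi-complete}, and this is almost certainly what closes the gap: the explicit splittings $\oblv\circ\free(V)\simeq\bigoplus_{k}\mathbb{L}L^r_k(V)$ and $\oblv\circ L_\xi\free(V)\simeq\prod_{k}\mathbb{L}L^r_k(V)$ let one compute $P_n$ after applying $\oblv$ (which creates limits and sifted colimits, hence commutes with $P_n$), so that one compares $\bigoplus_{k\leq n}\mathbb{L}L^r_k(V)$ with $P_n\bigl(\prod_k\mathbb{L}L^r_k\bigr)(V)$ directly, sidestepping the inductive connectivity of $T_n^k\free$. Alternatively, since Corollary~\ref{corollary:derivatives of free} has already identified $D_k(\free)(V)\simeq\Omega^\infty\suspfree(\mathbf{L}_k V)$, one knows each layer is connected for connected $V$; an induction up the tower $D_k\to P_k\to P_{k-1}$ (using only exactness at $\pi_0 P_k$, which is valid) then shows $P_n(\free)(V)\in\sL_0$, and one can argue via the universal property of $P_n$ rather than via iterated $T_n$. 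Either way, your specific argument that $T_n$ preserves landing in $\sL_0$ needs to be replaced.
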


\begin{proof}
Follows directly by Corollary~\ref{corollary: xi-completion is left exact} and Proposition~\ref{proposition: free xi-complete}.
\end{proof}

For the rest of the paper, we consider $L_\xi \free\colon \Mod^{\geq 1}_{\kk} \to \sLxi$ as a functor from the category of \emph{connected} chain complexes.

Let $F\colon \Mod_{\kk}^{\geq 1} \to \sLxi$ be any functor. We set $P_{[n,i]}(F)$ to be the fiber of the projection map $P_i(F) \to P_{n-1}(F)$:
\begin{equation}\label{equation: thick layer}
P_{[n;i]}(F)=\fib(P_iF \to P_{n-1}F),
\end{equation}
where $i\geq n\geq 1$.

\begin{thm}\label{theorem:splitting}
Assume that $n\leq i \leq pn-1$. Then the projection map $$p^i_n\colon P_{[n;i]}(L_\xi\free) \to D_n(L_\xi\free)$$ admits a splitting after applying $\Omega^{l-1}$ where $l=\left\lfloor \frac{i}{n} \right\rfloor$, i.e. there is a map
$$s_{n}^{i}\colon D_{n}(\Omega^{l-1}L_\xi\free)\simeq \Omega^{l-1}D_n(L_\xi\free) \to \Omega^{l-1}P_{[n;i]}(L_\xi\free)\simeq P_{[n;i]}(\Omega^{l-1}L_\xi\free)$$
such that $\Omega^{l-1}p^i_n\circ s_n^i \simeq \id$.
\end{thm}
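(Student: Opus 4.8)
The plan is to prove the splitting by obstruction theory, using the vanishing results of Section~\ref{section: vanishing theorems} to kill all the potential obstructions. Write $n = p^h$ in the cases of interest (though the statement is for general $n$), set $l = \lfloor i/n\rfloor$, and abbreviate $G = \Omega^{l-1}L_\xi\free$. The functor $P_{[n;i]}(G)$ has Goodwillie layers only in degrees $n, n+1, \ldots, i$, and since $n \leq i \leq pn-1$ we have $l \in \{1, \ldots, p-1\}$, so in particular all the ``interesting'' layers $D_m$, $m \in \{n, \ldots, i\}$, satisfy $n \mid m$ only for $m = n$ (when $l=1$, there is nothing to prove) — wait, more precisely the relevant comparison is between the bottom layer $D_n(G)$ and the higher layers, and the higher layers $D_{n+1}(G), \ldots, D_i(G)$ interact with $D_n(G)$ only through degrees $m$ with $n < m \leq i < pn$, i.e. $1 < m/n < p$ whenever $n \mid m$. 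This is exactly the numerical range where Proposition~\ref{proposition:maps between homogeneous functors} applies: after looping once, natural transformations between the relevant homogeneous functors (of the form $\mathbb{H}^n_A \to \mathbb{H}^m_B$) become null.

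First I would set up the obstruction-theoretic framework: a splitting of $p^i_n \colon P_{[n;i]}(G) \to D_n(G)$ is a section, and since $D_n(G)$ is $n$-homogeneous, by the universal property of Goodwillie towers a section is obstructed by a sequence of classes living in the groups $\pi_0\nat(D_n(G), D_m(G))$-type spaces — more precisely, building the section layer by layer up the truncated tower $P_n(G) \leftarrow P_{n+1}(G) \leftarrow \cdots \leftarrow P_i(G)$, at each stage $j$ (from $n+1$ to $i$) one must lift a partial section past the pullback square whose corner is $D_j(G)$, and the obstruction to lifting lies in the space of natural transformations $\nat(D_n(G), D_j(G))$ (the "difference" between two candidate sections, and the failure of a single one to lift, both land here). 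I would identify $D_n(G) \simeq D_n(\Omega^{l-1}L_\xi\free)$ and, using Corollary~\ref{corollary:derivatives of free} together with Lemma~\ref{lemma: completion and derivatives}, recognize it as a functor of the form $\mathbb{H}^n_A$ (an $n$-homogeneous $\kk$-complete functor built from $\suspfreexi$), and similarly $D_j(G) \simeq \mathbb{H}^j_B$ for appropriate bounded-below $\Sigma_j$-modules $A$, $B$ (the $(l-1)$-fold loop is what converts the shifted free functor into this normalized form; this is where the exponent $l-1$ in the statement comes from, matching the number of loops needed in Proposition~\ref{proposition:maps between homogeneous functors}).

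Then the key step: for each $j$ with $n < j \leq i \leq pn-1$, Proposition~\ref{proposition:maps between homogeneous functors} gives that either $n \nmid j$ and $\nat(\mathbb{H}^n_A, \mathbb{H}^j_B) \simeq \ast$ is already contractible, or $j = kn$ with $1 < k < p$ (the bound $k < p$ being guaranteed by $j \leq pn-1$) and then the relevant obstruction class, being of the form $\Omega\eta$ for a transformation $\eta$ at the ``unlooped'' level, is trivial — this is precisely the content of the last sentence of that proposition, and it is why we must loop down by $l-1 = \lfloor i/n\rfloor - 1$ first: the $(l-1)$-st loop supplies the extra loop needed to trivialize. Hence every obstruction to constructing, and to the uniqueness of, the section vanishes, so a section $s^i_n$ exists and $\Omega^{l-1}p^i_n \circ s^i_n \simeq \id$.

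I expect the main obstacle to be the bookkeeping in the obstruction theory: carefully identifying, at each stage of the induction up the truncated Taylor tower, that the relevant obstruction group is genuinely of the form $\nat(D_n(G), D_j(G))$ with both functors in the normalized homogeneous form $\mathbb{H}^n_{(-)}$ to which Proposition~\ref{proposition:maps between homogeneous functors} applies — in particular one must check that $(l-1)$-fold looping $D_n(L_\xi\free)$ really does land it (and all higher layers up to $D_i$) in the essential image of the $\mathbb{H}^n_A$ construction with $A$ bounded below, and that the looping commutes appropriately with the fiber sequences~\eqref{equation: thick layer} so that $\Omega^{l-1}P_{[n;i]}(L_\xi\free) \simeq P_{[n;i]}(\Omega^{l-1}L_\xi\free)$ as claimed. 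The homotopy-coherence of the section (as opposed to merely a section on homotopy categories) is handled automatically because the obstruction \emph{spaces}, not just their $\pi_0$, are shown contractible by the proposition.
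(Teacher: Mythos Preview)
Your approach is the same as the paper's---obstruction theory up the truncated Taylor tower, with Proposition~\ref{proposition:maps between homogeneous functors} killing the obstructions---but there is a genuine gap in how you handle the looping.

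You fix $G=\Omega^{l-1}L_\xi\free$ with $l=\lfloor i/n\rfloor$ from the outset and then assert that at a stage $j=kn$ (with $1<k<p$) the obstruction in $\nat(D_n(G),BD_{kn}(G))\simeq\nat(\mathbb{H}^n_{A'},\mathbb{H}^{kn}_{B'})$ is ``of the form $\Omega\eta$.'' But Proposition~\ref{proposition:maps between homogeneous functors} does \emph{not} say this space is contractible; it only says the map $\Omega\circ(-)\colon\nat(\mathbb{H}^n_A,\mathbb{H}^{kn}_B)\to\nat(\mathbb{H}^n_{\Sigma^{-1}A},\mathbb{H}^{kn}_{\Sigma^{-1}B})$ is null. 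So to conclude your obstruction vanishes you need to know it lies in the image of $\Omega\circ(-)$, i.e.\ that your partial section $s^{kn-1}_n$ for $G$ deloops to a section for $\Omega^{l-2}L_\xi\free$. That is not automatic: although the space of sections at stage $2n-1$ \emph{is} contractible (all intermediate obstruction spaces vanish since $n\nmid j$), once you lift past stage $2n$ the lift is only unique up to a torsor over $\nat(D_n(G),D_{2n}(G))$, which is nonzero. Hence at stage $3n$ your section need no longer be $\Omega$ of anything, and the proposition cannot be invoked.

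The paper fixes this by making the induction on $i$ carry the looping level with it: at stage $i$ one has a section for $\Omega^{\lfloor i/n\rfloor-1}L_\xi\free$, and when $i+1=(l+1)n$ one applies one further loop $\Omega$ to the existing section \emph{before} trying to extend. The obstruction to extending $\Omega s^i_n$ is then visibly $\Omega(\Omega^{l-1}\delta\circ s^i_n)$, and now Proposition~\ref{proposition:maps between homogeneous functors} applies. In short, the $l-1$ loops are not applied all at once at the end; one loop is spent at each of the multiples $2n,3n,\ldots,ln$, and this ratcheting is what guarantees the obstruction at each such stage is in the image of $\Omega\circ(-)$.
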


\begin{proof}
We proceed by induction on $i$. If $i=n$, then $p^n_n=\id$ and we are done. Assume that the theorem is true for $i=ln+r$ such that $0\leq r<n$ and $l<p$, we will prove it for $i+1$ as well. Consider the diagram
$$
\begin{tikzcd}
&P_{[n;i+1]}(\Omega^{l-1}L_\xi\free) \arrow{d}{\Omega^{l-1}p_{i+1}} \\
D_n(\Omega^{l-1}L_\xi\free) \arrow{r}{s^i_n} 
&P_{[n;i]}(\Omega^{l-1}L_\xi\free) \arrow{r}{\Omega^{l-1}\delta} 
&BD_{i+1}(\Omega^{l-1}L_\xi\free),
\end{tikzcd}
$$
where $s^i_n$ is a section of $\Omega^{l-1}p_n^i$, which exists by the assumption, and $\delta$ is the classifying map for the principal fibration
$$D_{i+1}(L_\xi\free) \to P_{[n;i+1]}(L_\xi\free) \xrightarrow{p_{i+1}} P_{[n;i]}(L_\xi\free). $$
It is enough to construct a lift $s^{i+1}_n\colon D_n(\Omega^{l-1}L_\xi\free) \to P_{[n;i+1]}(\Omega^{l-1}L_\xi\free)$ of $s^i_n$ along $\Omega^{l-1}p_{i+1}$. We note that the composition $\Omega^{l-1}\delta\circ s^i_n$ is the obstruction for existing a lift $s^{i+1}_n$. 

Consider two cases. Assume first that $r<n-1$, so $n\nmid i+1$ and $l=\left\lfloor\frac{i}{n}\right\rfloor=\left\lfloor\frac{i+1}{n}\right\rfloor$. Then 
$$\Omega^{l-1}\delta\circ s^i_n \in \pi_0\nat(D_n(\Omega^{l-1}L_\xi\free),BD_{i+1}(\Omega^{l-1}L_\xi\free)).$$
Since $n\nmid i+1$, the latter space is contractible by Corollary~\ref{corollary:derivatives of free} and Proposition~\ref{proposition:maps between homogeneous functors}.

The second case is $r=n-1$. Then $n\mid i+1$, $i+1=(l+1)n$. In this case, a lift $s^{i+1}_n$ of $s^i_n$ could not exist, however there exists a lift $$s^{i+1}_n\colon D_n(\Omega^{l}L_\xi\free) \to P_{[n;i+1]}(\Omega^l L_\xi\free)$$ of the map $\Omega s^i_n$. Indeed, the obstruction for lifting $\Omega s^i_n$ is 
$$\Omega\circ(\Omega^{l-1}\delta \circ s^i_n)\simeq\Omega^l\delta\circ\Omega s^i_n \in \pi_0\nat(D_n(\Omega^{l}L_\xi\free),BD_{i+1}(\Omega^{l}L_\xi\free)). $$
By Proposition~\ref{proposition:maps between homogeneous functors}, this natural transformation is trivial, which proves the theorem.
\end{proof}

\begin{cor}\label{corollary: splitting}
The Taylor tower of the functor $$P_{[n;pn-1]}(L_\xi\free)\colon \Mod_{\kk}^{\geq 0} \to \sLxi$$ splits after applying the $(p-2)$-th fold loop functor $\Omega^{p-2}$. \qed
\end{cor}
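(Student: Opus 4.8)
The plan is to deduce the corollary directly from Theorem~\ref{theorem:splitting}, by running that theorem over all the sub-towers of $P_{[n;pn-1]}(L_\xi\free)$. Since the loop functor commutes with the formation of Goodwillie towers (as is used throughout the proof of Theorem~\ref{theorem:splitting}), the Taylor tower of $\Omega^{p-2}P_{[n;pn-1]}(L_\xi\free)$ is
$$P_{[n;pn-1]}(\Omega^{p-2}L_\xi\free)\to P_{[n;pn-2]}(\Omega^{p-2}L_\xi\free)\to\cdots\to P_{[n;n]}(\Omega^{p-2}L_\xi\free)=D_n(\Omega^{p-2}L_\xi\free),$$
whose $j$-th layer is $D_j(\Omega^{p-2}L_\xi\free)$ for $n\le j\le pn-1$; by Corollary~\ref{corollary:derivatives of free} and Lemma~\ref{lemma: completion and derivatives} each of these layers is an infinite-loop, $j$-homogeneous functor of the type $\mathbb{H}^j_A$ studied in Section~\ref{section: vanishing theorems}, so every stage of the tower is a principal fibration. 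First I would reduce the claim to producing, for every $n\le a\le pn-1$, a section of the projection $P_{[a;pn-1]}(\Omega^{p-2}L_\xi\free)\to D_a(\Omega^{p-2}L_\xi\free)$: peeling the bottom layer off one stage at a time, using at the $a$-th step that such a section splits the fibre sequence $P_{[a+1;pn-1]}(\Omega^{p-2}L_\xi\free)\to P_{[a;pn-1]}(\Omega^{p-2}L_\xi\free)\to D_a(\Omega^{p-2}L_\xi\free)$ (which, by the downward induction on $a$, has infinite-loop fibre $\bigoplus_{j>a}D_j(\Omega^{p-2}L_\xi\free)$, hence is principal), one arrives at $P_{[n;pn-1]}(\Omega^{p-2}L_\xi\free)\simeq\bigoplus_{j=n}^{pn-1}D_j(\Omega^{p-2}L_\xi\free)$, which is precisely the asserted splitting of the Taylor tower.

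Next I would check that the required section is furnished by Theorem~\ref{theorem:splitting} applied with $n$ replaced by $a$ and $i=pn-1$. The hypotheses $a\le i\le pa-1$ hold, since $a\le pn-1$ and $pn-1\le pa-1$ (because $a\ge n$); and since $a\ge n$ we get $\bigl\lfloor\tfrac{pn-1}{a}\bigr\rfloor\le\bigl\lfloor\tfrac{pn-1}{n}\bigr\rfloor=p-1$, so the section provided by the theorem, which a priori only requires $\Omega^{\lfloor(pn-1)/a\rfloor-1}$, exists already after applying the uniform loop functor $\Omega^{p-2}$. The numerology that makes Theorem~\ref{theorem:splitting} applicable here is the same one that drives its proof through Proposition~\ref{proposition:maps between homogeneous functors}: for $n\le a<b\le pn-1$ one has $b/a<p$, so either $a\nmid b$, in which case $\nat(D_a(\Omega^{p-2}L_\xi\free),BD_b(\Omega^{p-2}L_\xi\free))$ is already contractible by the first part of that proposition, or $b=ka$ with $2\le k\le p-1$, in which case the spare loop coordinate (available because $p-2\ge 1$) annihilates the obstruction by its second part. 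For $p=2$ the statement is vacuous after $\Omega^{p-2}=\mathrm{id}$, since no $a\mid b$ with $a<b$ occurs in the range $n\le a<b\le 2n-1$.

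The hard part will be the assembly rather than any new ingredient. One must pin down that the truncated Taylor tower of $L_\xi\free$ really is a tower of principal fibrations of infinite-loop objects, that the $\Omega^{p-2}$-looped sections coming from Theorem~\ref{theorem:splitting} can be chosen compatibly along the tower so that successively splitting off $D_n$, then $D_{n+1}$, and so on is legitimate, and — a routine point already used implicitly in the Snaith splitting of Proposition~\ref{proposition: Snaith splitting} — that finite products and coproducts of the layers $D_j(\Omega^{p-2}L_\xi\free)$ agree, so that these successive splittings do assemble into a genuine direct-sum decomposition of the whole truncated tower. I expect all of this to go through exactly as in (and essentially by citation of) the proof of Theorem~\ref{theorem:splitting}, but it is the step where care is needed.
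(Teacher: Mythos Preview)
Your approach is exactly what the paper intends: the corollary carries only a \qed, so it is meant as an immediate consequence of Theorem~\ref{theorem:splitting}, and your key move --- applying that theorem with the base index $a$ running over $[n,pn-1]$ and $i=pn-1$, together with the check $\lfloor(pn-1)/a\rfloor\le p-1$ so that $\Omega^{p-2}$ suffices uniformly --- is precisely the content of that \qed.

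One caution on the assembly. The inference ``infinite-loop fibre, hence the sequence is principal'' is not valid as a general principle: a fibre sequence whose fibre merely happens to be a loop object need not extend one step to the right. In your downward induction the sequence $P_{[a+1;pn-1]}\to P_{[a;pn-1]}\to D_a$ is a composite of Taylor-tower principal fibrations, and such composites are not automatically principal. The cleaner bookkeeping is to run the induction upward on the top index $j$ instead: the fibre sequences $D_j\to P_{[a;j]}\to P_{[a;j-1]}$ are principal by construction (they come with classifying maps $P_{[a;j-1]}\to BD_j$), so a section of the right-hand map immediately yields the product decomposition. That section is obtained exactly as in the proof of Theorem~\ref{theorem:splitting}: the lift $s_a^j$ of $s_a^{j-1}$ produced there witnesses that the obstruction $\delta_j\circ s_a^{j-1}$ is null, and one combines this with the inductively known splitting of $P_{[a+1;j-1]}$ and Proposition~\ref{proposition:maps between homogeneous functors} to kill the remaining piece of $\delta_j$. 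You have already flagged the assembly as ``the step where care is needed,'' which is honest; the point is only that the care should go into orienting the induction so that the fibrations in play are the manifestly principal ones, rather than into arguing principality of the transverse ones.
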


\begin{rmk}\label{remark:loops and rational picture}

Theorem~\ref{theorem:splitting} is inspired by the following observation made in the characteristic zero case. Indeed, let $dg\EuScript{L}_0$ be the $\infty$-category of connected differential graded Lie algebras over the field of rational numbers $\Q$. Consider the free Lie algebra functor:
$$L\colon \Mod_{\Q}^{\geq 1} \to dg\EuScript{L}_0. $$
Then, by~\cite[Proposition~1.7.2, Chapter~6]{GR_volumeII}, the composite $\Omega\circ L$ is naturally equivalent to the trivial dg-Lie algebra $\Sigma^{-1}\oblv \circ L$. This implies that the Taylor tower of the functor $\Omega L$ splits. 

In the case of positive characteristic, we expect that the Taylor tower of the functor $$\Omega P_{[n;pn-1]}(\free)\colon \Mod_{\kk}^{\geq 0} \to \sL$$
splits, which is an improvement of Theorem~\ref{theorem:splitting}. Indeed, this expected splitting will support the vanishing of the intermediate differentials in Corollary~\ref{corollary: exponential vanishing} and it will improve the error terms in Theorem~\ref{theorem: the adjoint of delta_n} and Proposition~\ref{proposition: james-hopf and differential, reserve}.
\end{rmk}

\begin{rmk}\label{remark: completed agss}
The Taylor tower of the functor $L_\xi\free\colon\Mod_{\kk}^{\geq 1} \to \sLxi$ induces the spectral sequence
\begin{equation}\label{equation: completed agss}
\widehat{E}^1_{t,n}(V) = \pi_t(D_n(L_\xi\free)(V))\Rightarrow \pi_t(L_\xi\free(V)), \;\; V\in \Mod_{\kk}^{\geq 1}
\end{equation}
with the differentials $\hat{d}_r\colon \widehat{E}^r_{t,n}(V) \to \widehat{E}^r_{t-1,n+r}(V)$, $r\geq 1$. By Lemma~\ref{lemma: completion and derivatives}, Corollary~\ref{corollary:derivatives of free}, and Theorem~\ref{theorem: stable homotopy groups of a free object}, we obtain
$$\widehat{E}^1_{t,n}(V) \cong {E}^1_{t,n}(V)^{\wedge}_{\xi}, \;\; t,n\geq 1. $$
Here ${E}^1_{t,n}(V)^{\wedge}_{\xi}$ is the $\xi$-adically completed first page of the spectral sequence~\eqref{equation: AGSS}. As in Corollary~\ref{corollary: agss is trigraded}, we have
$$\widehat{E}^1_{t,n}(V) = \prod_{s\geq 0} E^1_{t,n,s}(V) \cong \prod_{s\geq 0}\pi_{t,s}(\suspfree(\mathbf{L}_n(V)), \;\; t,n\geq 1. $$
Moreover, arguing as in Corollary~\ref{corollary: exponential vanishing}, we obtain
$$\hat{d}_1(x)=\hat{d}_2(x)=\ldots = \hat{d}_{(p-1)n-1}(x)=0 $$
for any $x\in \widehat{E}^1_{t,n}(V)$. We note that the first non-trivial differential $\hat{d}_{(p-1)n}$ in the spectral sequence~\eqref{equation: completed agss} contains the same amount of information as the first non-trivial differential $d_{(p-1)n}$ in the spectral sequence~\eqref{equation: AGSS}. Namely, we have
$$\hat{d}_{(p-1)n} = \prod_{s\geq 0} d_{(p-1)n}\big|_{E^1_{t,n,s}(V)}\colon \prod_{s\geq 0} E^1_{t,n,s}(V) \to \prod_{s\geq 1} E^1_{t-1,pn,s-1}(V),\;\; t,n\geq 1.$$
Here the restricted differential $d_{(p-1)n}\big|_{E^1_{t,n,s}(V)}$ is considered as a map from the component $E^1_{t,n,s}(V)$ to $E^1_{t-1,pn,s-1}$, see Corollary~\ref{corollary: agss is trigraded}. 

Due to a little difference between the spectral sequences~\eqref{equation: AGSS} and~\eqref{equation: completed agss}, we will use them interchangeably for the rest of the paper.
\end{rmk}

Finally, we relate the first non-trivial differential $d_{(p-1)n}\colon E^1_{t,n}(V) \to E^1_{t-1,pn}(V)$ with the James--Hopf map, see Definition~\ref{definition:james-hopf map}. Fix a section $$s_n^{pn-1}\colon D_n(\Omega^{p-2} L_\xi\free) \to P_{[n,pn-1]}(\Omega^{p-2} L_\xi\free)$$
of $p^{pn-1}_n$ provided by Theorem~\ref{theorem:splitting}. Let us denote by $\widetilde{P}_n(\Omega^{p-2}L_\xi\free)$ the pullback of the following diagram
$$
\begin{tikzcd}
\widetilde{P}_n(\Omega^{p-2}L_\xi\free) \arrow{r} \arrow{d} 
& P_{[pn,n]}(\Omega^{p-2}L_\xi\free) \arrow{d} \\
D_n(\Omega^{p-2}L_\xi\free) \arrow{r}{s_n^{pn-1}}  
& P_{[pn-1,n]}(\Omega^{p-2}L_\xi\free). 
\end{tikzcd}
$$
In particular, we have the following fiber sequence
\begin{equation}\label{equation:Dpk to Dk}
\widetilde{P}_{n}(\Omega^{p-2}L_\xi\free) \to D_{n}(\Omega^{p-2}L_\xi\free) \xrightarrow{\delta_n} BD_{pn}(\Omega^{p-2}L_\xi\free).
\end{equation}
Note that the differential $d_{(p-1)n}$ is induced by the classifying map $\delta_n^V$. The natural transformation 
$$\delta_n\colon \deloop \D_n(\Omega^{p-2}L_\xi\free) \to B \deloop \D_{pn}(\Omega^{p-2}L_\xi\free)$$ 
has the adjoint map 
$$\tilde{\delta}_n\colon \Sigma^\infty\Omega^\infty\D_n(\Omega^{p-2}L_\xi\free) \to \Sigma\D_{pn}(\Omega^{p-2}L_\xi\free).$$

Recall that $\mathbf{L}_n(V)=\bfLie_n\otimes_{h\Sigma_n}V^{\otimes n}\in \Mod_{\kk}$, $V\in \Mod_{\kk}$, see~\eqref{equation: lie powers}. Then, by Lemma~\ref{lemma:derivatives of suspension of bbH} and Corollary~\ref{corollary:derivatives of free}, we have the induced map $$\D_{pn}(\tilde{\delta}_n) \colon \suspfreexi(\Sigma^{-1}(\Sigma^{3-p}\mathbf{L}_n(V))^{\otimes p}_{h\Sigma_p}) \to \suspfreexi(\Sigma^{3-p}\mathbf{L}_{pn}(V)), \;\; V\in\Mod_{\kk}^{+}.$$

\begin{prop}\label{proposition:james-hopf and adjoint}
There is a natural equivalence $$\delta^{V}_n\simeq \deloop\D_{pn}(\tilde\delta^{V}_n) \circ j_p^{\Sigma^{3-p}\mathbf{L}_n(V)},\;\; V\in \Mod^{\geq 1}_{\kk},\;\; n\geq 1,$$
where $j_p^{\Sigma^{3-p}\mathbf{L}_n(V)}$ is the James--Hopf map 
$$j_p^{\Sigma^{3-p}\mathbf{L}_n(V)}\colon \deloop\suspfreexi(\Sigma^{2-p}\mathbf{L}_n(V)) \to \deloop\suspfreexi(\Sigma^{-1}(\Sigma^{3-p}\mathbf{L}_n(V))^{\otimes p}_{h\Sigma_p}),$$
see Definition~\ref{definition:james-hopf map}.
\end{prop}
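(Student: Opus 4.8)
The plan is to identify the connecting map $\delta_n$ by first unwinding what $\delta_n$ classifies and then recognizing it, via the Snaith splitting of Section~\ref{section: snaith splitting}, as a composite of the James-Hopf map with an induced map on $p$-th Goodwillie layers. First I would record the relevant Goodwillie layers using Corollary~\ref{corollary:derivatives of free}: the functor $G = \Omega^{p-2}L_\xi\free$ has $D_n(G) \simeq \Omega^\infty \D_n(G)$ with $\D_n(G)(V) \simeq \suspfreexi(\Sigma^{2-p}\mathbf{L}_n(V))$ (up to the connectivity bookkeeping of Lemma~\ref{lemma: completion and derivatives}), and by Lemma~\ref{lemma:derivatives of suspension of bbH} the second nontrivial layer in the relevant range is the $pn$-th one, built out of the $p$-fold symmetric power of $\suspfreexi(\Sigma^{3-p}\mathbf{L}_n(V))$. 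Since $\widetilde{P}_n(G)$ was constructed precisely as the fiber of the classifying map of the principal fibration $D_{pn}(G)\to P_{[pn;n]}(G)\to P_{[pn-1;n]}(G)$ pulled back along the section $s_n^{pn-1}$, the natural transformation $\delta_n\colon D_n(G) \to BD_{pn}(G)$ is the composite of $s_n^{pn-1}$ with that classifying map, so the target has only two nontrivial layers and $\delta_n$ is the attaching map between them.

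The key step is to show that this attaching map factors through the $p$-th James-Hopf map of Definition~\ref{definition:james-hopf map}. The mechanism is the vanishing/orthogonality results of Section~\ref{section: vanishing theorems}: by Proposition~\ref{proposition:suspention and natural transformations} and Corollary~\ref{corollary:vanishing of natural transformations, spectral}, natural transformations between the relevant homogeneous functors $\mathbb{H}^n_A \to \mathbb{H}^m_B$ are controlled entirely by the associated spectral-level maps, and Proposition~\ref{proposition:maps between homogeneous functors} shows $n\nmid m$ forces contractibility while $m = kn$ with $1<k<p$ forces triviality after one loop — which is exactly why $\Omega^{p-2}$ was introduced. Concretely, I would apply Proposition~\ref{proposition:suspention and natural transformations} to rewrite $\pi_0\nat(D_n(G), BD_{pn}(G))$ as a space of spectral-level natural transformations out of $\suspfreexi(\Sigma^{2-p}\mathbf{L}_n(V))$; after looping and using the equivalences identifying $D_{pn}$ in terms of the $p$-th symmetric power (again Lemma~\ref{lemma:derivatives of suspension of bbH} together with Corollary~\ref{proposition: shifted stable free is symmetric monoidal}), any such transformation is detected on the summand of $\Sigma^\infty\Omega^\infty\suspfreexi(\Sigma^{2-p}\mathbf{L}_n(V))$ corresponding to the $p$-th Snaith piece. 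But the Snaith projection $\Pi^V_p$ onto that piece is, by Definition~\ref{definition:james-hopf map}, precisely the adjoint of the James-Hopf map $j_p$. Hence $\delta_n$ must agree with the composite $\Omega^\infty\D_{pn}(\tilde\delta_n)\circ j_p^{\Sigma^{3-p}\mathbf{L}_n(V)}$ for the canonically induced map $\D_{pn}(\tilde\delta_n)$ on layers, since both sides have the same spectral-level component under the equivalence of Proposition~\ref{proposition:suspention and natural transformations} and that component determines the map.

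To make the last equality precise I would argue as follows: the universal property of the Snaith splitting (Proposition~\ref{proposition: Snaith splitting}) gives that any map out of $\Sigma^\infty\Omega^\infty\suspfreexi(\Sigma^{2-p}\mathbf{L}_n(V))$ into an object whose Goodwillie tower over $\Mod_\kk^+$ is concentrated in degree divisible by $p$ factors uniquely through the $p$-th projection $\Pi^V_p$; applying this to $\tilde\delta_n$ (the adjoint of $\delta_n$) yields a factorization $\tilde\delta_n \simeq \D_{pn}(\tilde\delta_n)\circ \Pi^V_p$ for a unique residual map, and passing back to adjoints over $\sLxi$ converts $\Pi^V_p$ into $j_p$ and gives the claimed identity, naturally in $V \in \Mod^{\geq 1}_\kk$. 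The main obstacle I anticipate is bookkeeping the connectivity and looping shifts correctly — tracking how the $\Omega^{p-2}$, the various $\Sigma^{\pm 1}$ appearing in $\widetilde{C}_* \circ \suspfree \simeq \Sigma(-)$ (Remark~\ref{remark: homology of a free algebra}), and the suspensions in the definition of $\mathbf{L}_n$ versus $\Sigma^{3-p}\mathbf{L}_n$ interact — so that the degrees on the two sides of the asserted equivalence genuinely match, and checking that the residual map $\D_{pn}(\tilde\delta_n)$ is indeed the one named in the statement rather than some twist of it. The orthogonality input from Proposition~\ref{proposition:maps between homogeneous functors} does the conceptual heavy lifting, so the remaining work is essentially a careful diagram chase through the adjunctions $\Sigma^\infty \dashv \Omega^\infty$ and $\suspfreexi \dashv \oblv$.
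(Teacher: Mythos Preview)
Your approach is essentially the same as the paper's, but you are over-engineering it. The paper's proof is three lines: pass to the adjoint $\tilde\delta_n\colon \susp\deloop\D_n(\Omega^{p-2}L_\xi\free)\to \Sigma\D_{pn}(\Omega^{p-2}L_\xi\free)$; use the Snaith splitting (Proposition~\ref{proposition: Snaith splitting}) together with Corollary~\ref{corollary:derivatives of free} to write the source as $\bigoplus_{j\geq 1}\suspfreexi(\Sigma^{-1}(\Sigma^{3-p}\mathbf{L}_n(V))^{\otimes j}_{h\Sigma_j})$; and then observe that the $j$-th summand is $jn$-homogeneous while the target is $pn$-homogeneous, so Corollary~\ref{corollary:vanishing of natural transformations, spectral} kills every summand except $j=p$. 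Hence $\tilde\delta_n$ factors through $\Pi_p$, and adjointing back gives the claimed factorization through $j_p$.

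Your invocation of Proposition~\ref{proposition:maps between homogeneous functors} and the discussion of how $\Omega^{p-2}$ enters are not needed here: those results were used earlier to \emph{construct} the splitting $s_n^{pn-1}$ and hence $\delta_n$ itself, but once $\delta_n$ exists the factorization argument lives entirely at the spectral level via the adjoint, where only the simpler Corollary~\ref{corollary:vanishing of natural transformations, spectral} is required. Likewise, your ``universal property'' formulation in the third paragraph is slightly off---the target is $pn$-homogeneous, not ``concentrated in degree divisible by $p$'', and the factorization comes from the vanishing of $\nat(\suspfreexi H^{jn}_{A_j},\suspfreexi H^{pn}_B)$ for $j\neq p$, not from a statement about divisibility. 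The bookkeeping of shifts you worry about is genuinely necessary but is handled automatically once you work with $\tilde\delta_n$ directly.
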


\begin{proof}
By Proposition~\ref{proposition: Snaith splitting} and Corollary~\ref{corollary:derivatives of free}, we can rewrite the adjoint map $\tilde{\delta}^V_n$ as follows
$$\tilde{\delta}^V_n\colon\bigoplus_{j\geq 1} \suspfreexi(\Sigma^{-1}(\Sigma^{3-p}\mathbf{L}_n(V))^{\otimes j}_{h\Sigma_j}) \to \suspfreexi(\Sigma^{3-p}\mathbf{L}_{pn}V). $$
By Corollary~\ref{corollary:vanishing of natural transformations, spectral}, $\tilde{\delta}^V_n$ factorizes through the projection $\Pi_{p}$, see~\eqref{equation: snaith projections}, which implies the proposition.
\end{proof}

\begin{rmk}\label{remark: no lower components in spaces}
There is a partial counterpart of Proposition~\ref{proposition:james-hopf and adjoint} in the category of spaces. We recall that the Goodwillie layers $D_n(\id_{\spaces})(S^1)$ are $p$-complete contractible if $n\neq p^h$ for some $h\geq 0$. According to~\cite[Theorem 6.1]{ADL08}, the connecting maps 
$$d_h\colon D_{p^h}(\id_{\spaces})(S^1)\to BD_{p^{h+1}}(\id_{\spaces})(S^1),\;\; h\geq 0$$ in the Taylor tower for $\id_{\spaces}$ evaluated at $S^1$ can be delooped $h$ times, i.e. $d_h\simeq\Omega^h\psi_h$ for certain maps $\psi_h$. Moreover, there are equivalences $$B^hD_{p^h}(\id_{\spaces})(S^1)\simeq \deloop\susp Z_h, \;\; h\geq 0$$ for certain path connected spaces $Z_h\in\spaces $. N.~Kuhn~\cite[Section 3.2]{Kuhn15} and M.~Behrens~\cite[Lemma~4.2]{Behrens11} showed that the maps $\psi_h,h\geq 0$ factorize as the composite of the $p$-th James Hopf map 
$$\deloop\susp Z_h\to \deloop (\susp Z_h)^{\otimes p}_{h\Sigma_p}$$ and a certain map $$\deloop (\susp Z_{h})^{\otimes p}_{h\Sigma_p} \to \deloop \susp Z_{h+1}, $$
which is induced by a map of suspension spectra. We note that this factorization resembles the factorization in Proposition~\ref{proposition:james-hopf and adjoint}.
\end{rmk}

\section{James--Hopf map}\label{section: james-hopf map}

In this section we evaluate the James--Hopf map on the generators of the algebra~$\Lambda$, see Theorem~\ref{definition:james-hopf map}. We explain the relevant topological result about the elements of Hopf invariant one in Remark~\ref{remark: james-hopf, informal}.

In Section~\ref{section: group cohomology} we recall the basis of \emph{Dyer--Lashof operations} in the \emph{Tate cohomology groups} $\widehat{H}^*(C_p,V^{\otimes p})$. The main reference for this section is~\cite{CLM76}.

In Section~\ref{section: tate diagonal} we define the \emph{Tate diagonal} $\Delta_p$ in the stable category $\Sp(\sLxi)$, see~\eqref{equation: tate diagonal}. In Proposition~\ref{proposition:tate diagonal for spectra} we compute the induced map $\Delta_{p*}$ on the homology groups in terms of the Steenrod and Dyer--Lashof operations. In Proposition~\ref{proposition:infinite loops of a map between suspension spectra}, we explain how to compute the map $\widetilde{H}_*(\deloop f)$ induced by a map $f\colon \suspfreexi W \to \suspfreexi V$ of suspension spectrum objects.

In Section~\ref{section: unstable hurewicz homomorphism} we relate the James--Hopf map with the \emph{unstable Hurewicz homomorphism}. We evaluate the latter on the generators of the algebra~$\Lambda$ in Proposition~\ref{proposition: hurewicz of hopf invariant one} by using Proposition~\ref{proposition:infinite loops of a map between suspension spectra}.

\subsection{Group cohomology}\label{section: group cohomology} Let $C_p$ be a cyclic group of order $p$ generated by an element $\zeta_p\in C_p$. We recall that $H^1(C_p,\kk)\cong \Hom(C_p,\kk)$ and we will denote by $u\in H^1(C_p,\kk)$ the cohomology class corresponding to the group homomorphism which sends $\zeta_p\in C_p$ to $1\in \kk$. Then the cohomology ring $H^*(C_p,\kk)$ of the group~$C_p$ is
$$ 
H^*(C_p, \kk)= \pi_{-*}\kk^{hC_p}\cong 
\begin{cases}
\kk[t]\otimes \Lambda(u) & \mbox{if $p$ is odd,}\\
\kk[u] & \mbox{if $p=2$.}
\end{cases}
$$
Here $t=\beta u\in H^2(C_p,\kk)$, $\beta$ is the Bockstein homomorphism, and $\Lambda(u)$ is the free exterior algebra generated by $u$. If $p=2$, then we set $t=u^2$.


Recall that there is the \emph{norm map}
$$N\colon M_{hC_p} \to M^{hC_p},\; M\in \Mod^{BC_p}_{\kk},$$
see e.g.~\cite[Example~6.1.6.22]{HigherAlgebra}, and let us denote its cofiber by $M^{tC_p}$. The \emph{Tate cohomology} $\widehat{H}^*(C_p,M)$ of $C_p$ with coefficients in $M$ are defined as follows
$$\widehat{H}^*(C_p,M)=\pi_{-*}M^{tC_p}. $$ 
Note that there are canonical isomorphisms 
\begin{equation}\label{equation:tate isomorphism1}
\widehat{H}^q(C_p,\kk)\cong {H}^q(C_p,\kk), \; q\geq 0,
\end{equation}
and 
\begin{equation}\label{equation:tate isomorphism2}
\widehat{H}^q(C_p,\kk)\cong H_{-q-1}(C_p,\kk)=\pi_{-q-1}\kk_{hC_p},\; q<0.
\end{equation}

We recall that the functors $(-)^{hC_p}, (-)^{tC_p}\colon \Mod_{\kk}^{BC_p} \to \Mod_{\kk}$ are lax symmetric monoidal and the canonical map
$$\mathrm{can}\colon (-)^{hC_p} \to (-)^{tC_p} $$
is a natural transformation of lax symmetric monoidal functors, see~e.g.~\cite[Section~I.3]{NS18}. In particular, the Tate cohomology $\widehat{H}^*(C_p,\kk)$ is a ring and the canonical map $H^*(C_p,\kk) \to \widehat{H}^*(C_p,\kk)$ is a ring homomorphism. We have
$$ 
\widehat{H}^*(C_p, \kk)\cong
\begin{cases}
\kk[t,t^{-1}]\otimes \Lambda(u) & \mbox{if $p$ is odd,}\\
\kk[u, u^{-1}] & \mbox{if $p=2$.}
\end{cases}
$$
In particular, for every $q\in \Z$, the cup-product in the Tate cohomology $\widehat{H}^*(C_p,\kk)$ defines the pairing
\begin{equation}\label{equation:pairing in tate cohomology}
\widehat{H}^q(C_p,\kk)\otimes \widehat{H}^{-q-1}(C_p,\kk) \xrightarrow{-\smile -} \widehat{H}^{-1}(C_p,\kk)\cong H_0(C_p,\kk)=\kk
\end{equation}
between vector spaces $\widehat{H}^q(C_p,\kk)$ and $\widehat{H}^{-q-1}(C_p,\kk)$.

\begin{prop}\label{proposition:tate and kronecker pairings}
The pairing~\eqref{equation:pairing in tate cohomology} is non-degenerate and coincides with the Kronecker pairing between cohomology and homology classes under the isomorphisms~\eqref{equation:tate isomorphism1} and~\eqref{equation:tate isomorphism2}. \qed
\end{prop}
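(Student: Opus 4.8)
The claim to prove is Proposition~\ref{proposition:tate and kronecker pairings}: the cup-product pairing
\[
\widehat{H}^q(C_p,\kk)\otimes \widehat{H}^{-q-1}(C_p,\kk) \xrightarrow{\ \smile\ } \widehat{H}^{-1}(C_p,\kk)\cong \kk
\]
is non-degenerate and agrees with the Kronecker (evaluation) pairing under the identifications~\eqref{equation:tate isomorphism1}, \eqref{equation:tate isomorphism2}.

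The plan is to reduce to an explicit computation in the completely known ring $\widehat{H}^*(C_p,\kk)$. First I would recall the ring structure: for $p$ odd it is $\kk[t,t^{-1}]\otimes\Lambda(u)$ with $|t|=2$, $|u|=1$, and for $p=2$ it is $\kk[u,u^{-1}]$ with $|u|=1$. In either case each graded piece $\widehat{H}^q(C_p,\kk)$ is one-dimensional over $\kk$, spanned by the obvious monomial ($t^{j}$ or $t^{j}u$ for $p$ odd, $u^{q}$ for $p=2$), and multiplication of a basis element in degree $q$ by a basis element in degree $-q-1$ yields a nonzero basis element in degree $-1$ — this is immediate from the Laurent-polynomial description since $t,u$ are units. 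Hence for every $q$ the pairing $\widehat{H}^q\otimes\widehat{H}^{-q-1}\to\widehat{H}^{-1}\cong\kk$ is a nonzero bilinear map between $1$-dimensional spaces, so it is non-degenerate; that settles the first assertion.

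For the identification with the Kronecker pairing, the key point is naturality of the lax symmetric monoidal structure on $(-)^{tC_p}$ together with compatibility of the canonical map $\mathrm{can}\colon (-)^{hC_p}\to(-)^{tC_p}$ with multiplication, as recalled in the paragraph preceding the proposition (see~\cite[Section~I.3]{NS18}). Concretely, for $q\geq 0$ and $-q-1<0$, the isomorphism~\eqref{equation:tate isomorphism1} identifies $\widehat{H}^q(C_p,\kk)$ with $H^q(C_p,\kk)=\pi_{-q}\kk^{hC_p}$ via $\mathrm{can}$, and~\eqref{equation:tate isomorphism2} identifies $\widehat{H}^{-q-1}(C_p,\kk)$ with $H_{q}(C_p,\kk)=\pi_{q}\kk_{hC_p}$ via the connecting map in the norm cofiber sequence $\kk_{hC_p}\xrightarrow{N}\kk^{hC_p}\to\kk^{tC_p}$. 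I would then observe that the composite
\[
\kk^{hC_p}\otimes_{\kk}\kk_{hC_p} \longrightarrow \kk^{hC_p}\otimes_{\kk}\kk^{hC_p}\big[\text{shift}\big] \xrightarrow{\ \mathrm{mult}\ } \kk^{tC_p}\big[\text{shift}\big]
\]
induced by $\mathrm{can}\circ N$ and the lax structure is, on $\pi_0$, exactly the Kronecker evaluation pairing $H^q\otimes H_q\to\kk$ — this is the standard fact that the Tate pairing restricts to the cap/evaluation pairing, and it can be checked on the bar-resolution model or, more cleanly, by noting both pairings are the unique (up to the nonzero scalar they share on a single generator pair) $C_p$-equivariant pairings of the relevant modules. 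Since both pairings are non-degenerate pairings of $1$-dimensional spaces and agree on one basis pair (e.g. $1\in H^0$ against the fundamental class $[C_p]\in H_0$, both mapping to $1$), they agree identically.

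The main obstacle is pinning down the comparison in the previous paragraph without hand-waving: one must be careful that the identification of $\widehat{H}^{<0}$ with group \emph{homology} in~\eqref{equation:tate isomorphism2} carries the Tate-cup-product pairing to the honest Kronecker pairing \emph{on the nose}, rather than up to a sign or a unit depending on $q$. I expect this is handled cleanly by working with the standard $2$-periodic free resolution of $\kk$ over $\kk[C_p]$ (or its Tate completion, the doubly-infinite periodic complex), where $\widehat{H}^*(C_p,\kk)$ is computed as the cohomology of $\Hom_{\kk[C_p]}$ of this complex with $\kk$, the cup product is the explicit diagonal approximation, and one simply reads off that $t^{j}\smile t^{-j-1}u = u^{-1}\cdot(\text{unit}) $, normalizing so that the chosen generators pair to $1$. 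Once the periodicity generators are fixed compatibly on the homology and cohomology sides — which is exactly what the isomorphisms~\eqref{equation:tate isomorphism1}, \eqref{equation:tate isomorphism2} do — the equality of pairings is forced, since as noted both are nonzero bilinear maps of lines agreeing on a generator.
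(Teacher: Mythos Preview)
The paper gives no proof of this proposition at all: it is stated with a terminal \qed, i.e.\ the authors treat it as a standard fact about Tate cohomology and leave it to the reader. Your proposal is therefore not competing with any argument in the paper; it is supplying one.

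Your argument for non-degeneracy is clean and correct: each $\widehat{H}^q(C_p,\kk)$ is one-dimensional and the explicit Laurent-polynomial description makes it visible that basis monomials in complementary degrees multiply to a nonzero element of $\widehat{H}^{-1}$. For the identification with the Kronecker pairing your strategy is also the right one, but the final step is a bit underspecified. Checking that the two pairings agree on the single pair $(1,[{\mathrm{pt}}])$ in degree $q=0$ only pins down the comparison in that degree; to conclude for all $q$ you need a reason why agreement propagates. The cleanest way to close this is exactly what you hint at: both pairings are compatible with the $H^*(C_p,\kk)$-module structure (the Tate cup product is associative, and the Kronecker pairing satisfies $\langle \alpha\smile\beta, x\rangle = \langle \beta, \alpha\frown x\rangle$), so once they agree for $q=0$ and the cap/cup actions are intertwined by~\eqref{equation:tate isomorphism1}--\eqref{equation:tate isomorphism2}, agreement in all degrees follows by acting by powers of $t$ (or $u$). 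Alternatively, the direct computation on the doubly-infinite periodic resolution that you sketch does the job in one stroke; either way, make explicit which of these two routes you are taking rather than leaving it at ``the equality of pairings is forced.''
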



\begin{rmk}\label{remark:notation for tate duality}
Let $a\in \widehat{H}^{-q-1}(C_p,\kk)$, $q\in \Z$. We write $a^{\vee}\in \widehat{H}^q(C_p,\kk)$ for the dual cohomology class with respect to the non-degenerate pairing~\eqref{equation:pairing in tate cohomology}. By Proposition~\ref{proposition:tate and kronecker pairings}, this notation is consistent with the standard notation for the dual vector under the isomorphism
$$H^q(C_p,\kk) \cong H_q(C_p,\kk)^*. $$
Moreover, for a non-zero $a\in \widehat{H}^{-q-1}(C_p,\kk)$, we have
$$ 
a^{\vee} =
\begin{cases}
a^{-1}ut^{-1} & \mbox{if $p$ is odd,}\\
a^{-1}u^{-1} & \mbox{if $p=2$.}
\end{cases}
$$
\end{rmk}

More generally, 
for each $V\in \Mod_{\kk}$ and $q\in \Z$, we define natural Dyer--Lashof homomorphisms 
\begin{align}\label{equation:tate-dyer-lashof}
\beta^{\e }Q^i&\colon \pi_q(V)^{(1)} \to \pi_{q+2(p-1)i+1-\e}(V^{\otimes p})^{tC_p} \;\; \text{if $p$ is odd and} \\
Q^i&\colon \pi_q(V)^{(1)} \to \pi_{q+i+1}(V^{\otimes 2})^{tC_2} \;\; \text{if $p=2$}, \nonumber
\end{align}
where $i\in \Z$ and $\e\in\{0,1\}$ which generalize the ordinary Dyer--Lashof operations~\cite[Section~I.1]{CLM76}. 
Namely, any element $v\in \pi_q(V)$ defines the map $v\colon \Sigma^q\kk \to V$ in the category $\Mod_{\kk}$, and so it induces the map
$$v^{\otimes p}\colon \kk^{tC_p} \otimes \Sigma^{pq}\kk \to \kk^{tC_p} \otimes ((\Sigma^q\kk)^{\otimes p})^{tC_p} \to ((\Sigma^q\kk)^{\otimes p})^{tC_p} \xrightarrow{v} (V^{\otimes p})^{tC_p}. $$
Then we define the operations $Q_i\colon \pi_q(V)^{(1)} \to \pi_{pq+i+1}(V^{\otimes p})^{tC_p}$, $i\in \Z$ as follows
$$
Q_{i}(v) = 
\begin{cases}
v^{\otimes p}_*(ut^{-j-1}\otimes \iota_{pq}) & \mbox{if $p$ is odd and $i=2j$,}\\
v^{\otimes p}_*(t^{-j-1}\otimes \iota_{pq}) & \mbox{if $p$ is odd and $i=2j+1$,}\\
v^{\otimes 2}_*(u^{-i-1}\otimes \iota_{2q}) & \mbox{if $p=2$,}
\end{cases}
$$
where $\iota_{pq}\in \pi_{pq}(\Sigma^{pq}\kk)$ is the canonical generator.
Finally, for $v\in \pi_q(V)$, $i\in \Z$, and $\e\in\{0,1\}$, we set 
$$
\beta^{\e}Q^{i}(v) = 
\begin{cases}
(-1)^i \nu(q) Q_{(2i-q)(p-1)-\e}(v) & \mbox{if $p$ is odd,}\\
Q_{i-q}(v) & \mbox{if $p=2$,}
\end{cases}
$$
where $\nu(q)=(-1)^{q(q-1)m/2}(m!)^q$, $m=(p-1)/2$. 

\begin{prop}\label{proposition:tate-dyer-lashof operations}
The operations~\eqref{equation:tate-dyer-lashof} satisfy following properties:
\begin{enumerate}
\item $\beta^\e Q^i(\alpha v)=\alpha^p \beta^\e Q^i(v)$, $\alpha\in \kk$, $v\in \pi_q(V)$;

\item the class $\beta^{\e}Q^i(v), v\in \pi_q(V)$ lies in the image of $$\pi_q(V^{\otimes p})^{t\Sigma_p}\cong (\pi_q(V^{\otimes p})^{tC_p})^{C_{p-1}}\hookrightarrow \pi_q(V^{\otimes p})^{tC_p};$$

\item $\beta^{\e}Q^i(\sigma v)=(-1)^{\e}\sigma \beta^{\e}Q^i(v), v\in \pi_q(V)$, where $\sigma\colon \pi_{*}(V) \to \pi_{*+1}(\Sigma V)$ is the suspension isomorphism.

\item the image of $\beta^{\e}Q^i(v)\in \pi_{*}(V^{\otimes p})^{tC_p}, v\in \pi_q(V)$ in $\pi_{q+2i(p-1)-\e}(V^{\otimes p})_{hC_p}$ (in $\pi_{q+i}(V^{\otimes 2})_{hC_2}$ if $p=2$) under the canonical map
$$\mathrm{can}\colon (V^{\otimes p})^{tC_p} \to \Sigma V^{\otimes p}_{hC_p} $$
coincides with the Dyer--Lashof operation $\beta^{\e}Q^{i}(v)$ of~\cite[Section~I.1]{CLM76}. In particular, the class $\beta^{\e}Q^i(v) \in \pi_*(V^{\otimes p})^{tC_p}$ maps to zero if $2i-\e<q$ (if $i<q$ and $p=2$).
\end{enumerate}
\end{prop}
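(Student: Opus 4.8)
The plan is to reduce every assertion to the universal case $V = \Sigma^l \kk$ by naturality, and then to the classical computations of \cite[Section~I.1]{CLM76} via the Tate--homology comparison of Proposition~\ref{proposition:tate and kronecker pairings}. Indeed, each operation is defined through the map $v^{\otimes p}_*$ induced by $v\colon \Sigma^q\kk \to V$, so it is natural in $V$; since $\pi_*((\Sigma^q\kk)^{\otimes p})^{tC_p}$ is a shift of $\widehat{H}^{-*}(C_p,\kk)$ with the explicit basis $\{t^{-j-1}\otimes\iota_{pq}\}$ and $\{ut^{-j-1}\otimes\iota_{pq}\}$ (resp. $\{u^{-i-1}\otimes\iota_{2q}\}$ when $p=2$) coming from \eqref{equation:tate isomorphism1} and \eqref{equation:tate isomorphism2}, it suffices to verify each property on these generators.

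For part (1), I would observe that for $\alpha\in\kk$ the $p$-fold tensor power of the multiplication $\alpha\colon V\to V$ is multiplication by $\alpha^{\otimes p}=\alpha^p\cdot\id$ on $V^{\otimes p}$, and this is $C_p$-equivariant for the diagonal action; hence $(\alpha v)^{\otimes p}_* = \alpha^p\,v^{\otimes p}_*$ on $\pi_*(V^{\otimes p})^{tC_p}$. Bookkeeping the Frobenius twist (scalar multiplication by $\alpha$ on $\pi_q(V)^{(1)}$ corresponds to multiplication by $\alpha^p$ on the untwisted group) turns this identity into the asserted semi-linearity. Part (3) reduces, again after passing to $V=\Sigma^l\kk$, to tracking the sign that the generator of $C_p$ acquires when acting on the $p$ suspension coordinates of $(\Sigma V)^{\otimes p}=\Sigma^p(V^{\otimes p})$; this sign twist of the Tate module shifts the index of the Dyer--Lashof class and produces exactly the factor $(-1)^{\e}$, matching the convention of \cite[Section~I.1]{CLM76}. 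Part (4) is the statement that the connecting map $\mathrm{can}\colon (V^{\otimes p})^{tC_p}\to \Sigma V^{\otimes p}_{hC_p}$ carries the universal Tate classes $u t^{-j-1}\otimes\iota$, $t^{-j-1}\otimes\iota$ (resp. $u^{-i-1}\otimes\iota$) to the homology classes in $\pi_*((\Sigma^q\kk)^{\otimes p})_{hC_p}$ which, by Proposition~\ref{proposition:tate and kronecker pairings} and the isomorphisms \eqref{equation:tate isomorphism1}, \eqref{equation:tate isomorphism2}, represent the ordinary Dyer--Lashof operations; the vanishing clause (the homology class is $0$ below the stable range) is then the classical unstable condition $2i-\e<q$ (resp. $i<q$).

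Part (2) is the place where the most care is needed. I would let $N\subset\Sigma_p$ be the normalizer of a $p$-Sylow $C_p$, so $N\cong C_p\rtimes C_{p-1}$ with $C_{p-1}\cong\aut(C_p)$, and note that $[\Sigma_p:N]=(p-2)!$ is prime to $p=\mathrm{char}(\kk)$. The transfer for this inclusion therefore splits the restriction, so $\pi_*(V^{\otimes p})^{t\Sigma_p}$ embeds into $\pi_*(V^{\otimes p})^{tN}$; and since $|C_{p-1}|$ is invertible in $\kk$ there is a natural identification $\pi_*(V^{\otimes p})^{tN}\cong \big(\pi_*(V^{\otimes p})^{tC_p}\big)^{C_{p-1}}$, which is the asserted isomorphism. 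It then remains to check that the specific class $\beta^{\e}Q^i(v)=(-1)^i\nu(q)\,Q_{(2i-q)(p-1)-\e}(v)$ is $C_{p-1}$-invariant: here $C_{p-1}=\F_p^\times$ acts on $u$ and $t=\beta u$ through the tautological character and simultaneously permutes the $p$ tensor factors through the affine action on $\F_p$, and the normalization constants $(-1)^i$ and $\nu(q)=(-1)^{q(q-1)m/2}(m!)^q$, $m=(p-1)/2$, are chosen precisely to cancel the resulting character; this is the classical verification in \cite[Section~I.1]{CLM76}, which I would adapt verbatim to the Tate setting. I expect this equivariance computation, together with the sign bookkeeping in (3), to be the only genuinely delicate points; everything else is formal consequence of naturality and the comparison map $\mathrm{can}$.
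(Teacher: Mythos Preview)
Your proof is correct and follows essentially the same approach as the paper, which simply states that the argument is identical to that for the standard Dyer--Lashof operations in \cite[Section~I.1]{CLM76}. You have supplied the details that the paper omits---the naturality reduction to $V=\Sigma^q\kk$, the normalizer argument for part~(2), and the sign bookkeeping for part~(3)---but the underlying strategy is the same.
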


\begin{proof}
The proof is identical to the proof of the corresponding properties for the standard Dyer--Lashof operations given in~\cite[Section~I.1]{CLM76}.
\end{proof}

\begin{prop}\label{propostion: tate-dyer-lashof is a basis}
Suppose that $V\in \Mod_{\kk}$ is perfect, i.e. $\dim\pi_*(V)<\infty$. Then the Dyer--Lashof classes $\beta^{\e}Q^i(v)\in \pi_*(V^{\otimes p})^{tC_p}$, $v\in \pi_*(V)$, $i\in\Z$, $\e\in\{0,1\}$ form a basis of the subspace $\pi_*(V^{\otimes p})^{t\Sigma_p}\subset\pi_*(V^{\otimes p})^{tC_p}$.
\end{prop}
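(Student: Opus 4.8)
\textbf{Proof plan for Proposition~\ref{propostion: tate-dyer-lashof is a basis}.}

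The plan is to reduce the statement to the case $V\simeq \Sigma^q\kk$, $q\in\Z$, and then to invoke the classical computation of $\pi_*((\Sigma^q\kk)^{\otimes p})^{t\Sigma_p}$ together with the comparison between the Tate and homotopy-orbit descriptions. First I would reduce to the ``monochromatic'' case. Since $V$ is perfect, choose a homotopy basis so that $V\simeq \bigoplus_{j\in J}\Sigma^{q_j}\kk$ with $J$ finite. The functor $W\mapsto (W^{\otimes p})^{tC_p}$ does not preserve this direct sum, but after passing to $\Sigma_p$-fixed points there is a Tate-vanishing phenomenon: the cross terms $\Sigma^{q_{j_1}}\kk\otimes\cdots\otimes\Sigma^{q_{j_p}}\kk$ with the multiset $\{j_1,\dots,j_p\}$ \emph{not} constant are induced $\Sigma_p$-modules of the form $\mathrm{Ind}_{H}^{\Sigma_p}(-)$ for a proper subgroup $H\subsetneq \Sigma_p$, hence their $C_p$-Tate construction, after taking the residual $C_{p-1}$- (resp. $\Sigma_p/C_p$-)fixed points, vanishes because $C_p\cap H$ has index coprime to $p$ is wrong — rather, $C_p$ acts freely on the $\Sigma_p$-set $\Sigma_p/H$, so $(\mathrm{Ind}_H^{\Sigma_p} M)^{tC_p}\simeq 0$. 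Concretely, one uses that $(-)^{tC_p}$ kills induced-from-a-$p'$-subgroup modules, so only the diagonal summands $\Sigma^{q_j}\kk$ survive, giving
$$\pi_*(V^{\otimes p})^{t\Sigma_p}\cong \bigoplus_{j\in J}\pi_*((\Sigma^{q_j}\kk)^{\otimes p})^{t\Sigma_p}.$$
Both sides of the claimed statement are additive along this splitting — the Dyer-Lashof classes $\beta^\e Q^i(v)$ for $v$ running over the chosen basis split accordingly by part~(1) of Proposition~\ref{proposition:tate-dyer-lashof operations} (semilinearity) and naturality — so it suffices to treat $V=\Sigma^q\kk$.

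Next, for $V=\Sigma^q\kk$ I would identify $\pi_*(V^{\otimes p})^{tC_p}\cong \widehat{H}^{-*}(C_p,\kk)\otimes\Sigma^{pq}\kk$, which by the computation recalled before Proposition~\ref{proposition:tate and kronecker pairings} is a free rank-one module over $\kk[t,t^{-1}]\otimes\Lambda(u)$ (resp. $\kk[u,u^{-1}]$), concentrated in degrees $*=pq+i+1$, $i\in\Z$, one dimension each when $p=2$ and (two-periodically, tracking the $u$-factor) for odd $p$. By construction, $Q_i(\iota_q)$ is precisely the class corresponding to $ut^{-j-1}\otimes\iota_{pq}$ or $t^{-j-1}\otimes\iota_{pq}$ (resp.\ $u^{-i-1}\otimes\iota_{2q}$), so the classes $\{Q_i(\iota_q)\}_{i\in\Z}$ are exactly a $\kk$-basis of $\pi_*((\Sigma^q\kk)^{\otimes p})^{tC_p}$. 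The operations $\beta^\e Q^i(\iota_q)$ are nonzero scalar multiples of reindexed $Q_i(\iota_q)$'s, hence also form a basis of this $C_p$-Tate group. Then I would cut down to the $\Sigma_p$-fixed subspace using part~(2) of Proposition~\ref{proposition:tate-dyer-lashof operations}: each $\beta^\e Q^i(\iota_q)$ already lands in the $C_{p-1}$-invariants $\pi_*(V^{\otimes p})^{t\Sigma_p}$. The residual group $C_{p-1}=\Sigma_p/C_p$ acts on $\widehat H^*(C_p,\kk)$ through its action on $u,t$ by a fixed character of order dividing $p-1$ (here is where one needs $\kk=\overline{\F}_p$ to contain the relevant roots of unity, or just that $\kk$ is a field of char $p$); its fixed points pick out exactly the monomials $u^{\e}t^{k}$ with a congruence condition on $k$, and the indexing $\beta^\e Q^i$ is chosen precisely so that $\{\beta^\e Q^i(\iota_q) : i\in\Z,\ \e\in\{0,1\}\}$ (resp.\ $\{Q^i(\iota_q)\}$ for $p=2$) is a basis of this fixed subspace. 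Finally, for general $v=\overline v_*(\iota_q)$ with $v$ a chosen basis vector of $\pi_q(V)$, naturality of the operations under the induced map $\overline v^{\otimes p}$ and the splitting above transport this basis statement back to~$V$.

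The step I expect to be the main obstacle is making the reduction to the monochromatic case fully rigorous at the level of $\Sigma_p$-equivariant spectra: one must know that $(-)^{tC_p}$ applied to the ``mixed'' tensor summands vanishes after taking the action of $\Sigma_p/C_p$ into account, and then that $\Sigma_p$-fixed points commute with the resulting finite direct sum. This is standard (it is the Tate-orbit/Segal-conjecture-style observation that $(\mathrm{Ind}_{H}^{\Sigma_p}M)^{tC_p}\simeq 0$ whenever $C_p$ acts freely on $\Sigma_p/H$, i.e.\ whenever $H$ does not contain a $p$-Sylow), but it requires care to phrase in the stable category $\Mod_\kk^{B\Sigma_p}$ and to combine with the residual $\Sigma_p/C_p$-fixed points. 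A secondary, purely bookkeeping, obstacle is checking that the normalization constant $\nu(q)$ and the sign $(-1)^i$ entering the definition of $\beta^\e Q^i$ do not affect the basis property (they are units in $\kk$), and that the range of surviving indices matches the known dimension count of $\widehat H^*(C_p,\kk)^{C_{p-1}}$; both follow directly from the computation of $\widehat H^*(C_p,\kk)$ recalled above.
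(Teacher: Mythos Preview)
Your argument is correct, but it is considerably more elaborate than the paper's. The paper's proof is two sentences: the case $V=\kk$ holds by construction, and the general case follows because the functor $V\mapsto (V^{\otimes p})^{tC_p}$ is \emph{exact}. Exactness (between stable $\infty$-categories) immediately gives additivity, so for $V\simeq\bigoplus_j\Sigma^{q_j}\kk$ one has $(V^{\otimes p})^{tC_p}\simeq\bigoplus_j((\Sigma^{q_j}\kk)^{\otimes p})^{tC_p}$ with no further work; the same holds after taking $C_{p-1}$-invariants since $|C_{p-1}|$ is prime to $p$. The Dyer-Lashof classes are natural and additive (again by exactness of the functor applied to maps), so they respect this splitting, and one is reduced to the base case.

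What you do instead is prove this additivity by hand: you identify the cross summands in $V^{\otimes p}$ as induced from subgroups $H\subsetneq\Sigma_p$ not containing $C_p$, observe that $C_p$ then acts freely on $\Sigma_p/H$, and conclude that these summands are $C_p$-Tate-acyclic. This is exactly the standard argument for why $V\mapsto (V^{\otimes p})^{tC_p}$ is exact, so you are effectively re-deriving the key lemma rather than citing it. Your monochromatic analysis is also more detailed than the paper's ``by construction,'' but amounts to the same computation. Nothing is wrong; you have simply unpacked what the paper leaves implicit. If you want the short version, just say: the base case is immediate from the description of $\widehat H^*(C_p,\kk)$ and its $C_{p-1}$-invariants, and the general case follows from exactness of $V\mapsto (V^{\otimes p})^{tC_p}$.
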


\begin{proof}
By the construction, the statement holds if $V=\kk$. For an arbitrary perfect $V\in \Mod_{\kk}$, the statement is true because the functor $V\mapsto (V^{\otimes p})^{tC_p}$ is exact.
\end{proof}

\begin{rmk}\label{remark:basis, not finite type}
Since the functor $V\mapsto (V^{\otimes p})^{tC_p}, V\in \Mod_{\kk}$ does not commute with filtered colimits,  the last proposition does not hold for an arbitrary $V\in \Mod_{\kk}$.
\end{rmk}

Since the functor $V\mapsto (V^{\otimes p})^{tC_p}$ is lax symmetric monoidal, there exists a functorial map
\begin{equation}\label{equation:tate is lax symmetric monoidal}
\mu\colon (V^{\otimes p})^{tC_p} \otimes (W^{\otimes p})^{tC_p} \to ((V\otimes W)^{\otimes p})^{tC_p}, \;\; V,W\in\Mod_{\kk}.
\end{equation}

\begin{prop}[Cartan formula]\label{proposition: tate-cartan formula}
Let $v\in\pi_q(V)$, $w\in \pi_r(W)$, $i,j\in \Z$. If $p$ is odd, we have
\begin{enumerate}
\item $\mu_*(Q^i(v)\otimes Q^j(w)) =0 $;
\item $\mu_*(\beta Q^i(v) \otimes Q^j(w)) = Q^{i+j}(v\otimes w)$;
\item $\mu_*(Q^i(v) \otimes \beta Q^j(w)) = (-1)^q Q^{i+j}(v\otimes w)$;
\item $\mu_*(\beta Q^i(v) \otimes \beta Q^j(w)) = \beta Q^{i+j}(v\otimes w)$.
\end{enumerate}
If $p=2$, we have $\mu_*(Q^i(v)\otimes Q^j(w))=Q^{i+j-1}(v\otimes w)$.
\end{prop}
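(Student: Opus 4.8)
The statement is a Cartan-type formula for the Dyer–Lashof operations in Tate cohomology~\eqref{equation:tate-dyer-lashof}, and the natural strategy is to reduce it to the classical Cartan formula for the ordinary Dyer–Lashof operations in~\cite[Section~I.1]{CLM76} via the same exactness/universality argument used in the proof of Proposition~\ref{propostion: tate-dyer-lashof is a basis}. First I would observe that, by naturality of the lax symmetric monoidal structure map~\eqref{equation:tate is lax symmetric monoidal} and by semi-linearity (part~(1) of Proposition~\ref{proposition:tate-dyer-lashof operations}), it suffices to prove all the formulas in the universal case $V\simeq \Sigma^q\kk$, $W\simeq \Sigma^r\kk$, with $v=\iota_q$, $w=\iota_r$ the canonical generators; the general case then follows by applying the maps $\overline{v}\colon \Sigma^q\kk\to V$ and $\overline{w}\colon \Sigma^r\kk\to W$ and using functoriality of $\mu$.

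In the universal case, I would unwind the definition of the operations $Q_i$ in terms of the power map $v^{\otimes p}$ and the classes $ut^{-j-1}\otimes\iota_{pq}$, $t^{-j-1}\otimes\iota_{pq}$ (resp.\ $u^{-i-1}\otimes\iota_{2q}$ for $p=2$), so that the left-hand sides $\mu_*(\beta^{\e}Q^i(\iota_q)\otimes\beta^{\e'}Q^j(\iota_r))$ become explicit elements of $\pi_*((\Sigma^{q+r}\kk)^{\otimes p})^{tC_p}$ expressed through the ring structure of $\widehat{H}^*(C_p,\kk)$. The key point is that $\mu$ is compatible with the diagonal $C_p\hookrightarrow C_p\times C_p$ and with multiplication in $\widehat{H}^*(C_p,\kk)$: the composite of $\mu$ with the two "power" maps is the power map for $V\otimes W$ precomposed with the cup product $\widehat{H}^*(C_p,\kk)\otimes\widehat{H}^*(C_p,\kk)\to\widehat{H}^*(C_p,\kk)$ (here one uses that $(-)^{tC_p}$ is lax symmetric monoidal and the external-to-internal passage for Tate constructions, as in~\cite[Section~I.3]{NS18}). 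The four cases for odd $p$ then correspond exactly to the four products $u\cdot u=0$, $u\cdot 1=u$, $1\cdot u = u$ (up to the Koszul sign $(-1)^q$ coming from moving $u$ past $\iota_{pq}$), and $1\cdot 1=1$ in $\Lambda(u)$, together with matching the powers of $t$; the sign $(-1)^q$ in case~(3) and the index shifts come from tracking the normalization constants $\nu(q)$ and the Koszul signs in the symmetric monoidal structure. The $p=2$ case is the single identity $u^{-i-1}\cdot u^{-j-1}=u^{-(i+j)-2}=u^{-(i+j-1)-1}$ in $\kk[u,u^{-1}]$.

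The main obstacle I anticipate is bookkeeping the signs and the normalization factor $\nu(q)=(-1)^{q(q-1)m/2}(m!)^q$: one must check that the constants $\nu(q)$, $\nu(r)$, $\nu(q+r)$ combine correctly with the Koszul signs produced when the lax monoidal map $\mu$ shuffles the $p$ tensor factors of $V$ past those of $W$, and that the $(-1)^i$ twists in the definition of $\beta^{\e}Q^i$ are consistent. This is precisely the same computation that underlies the classical Cartan formula in~\cite[Theorem~I.1.1]{CLM76}, so rather than redo it I would cite that computation: the exactness of $V\mapsto(V^{\otimes p})^{tC_p}$ and the agreement (part~(4) of Proposition~\ref{proposition:tate-dyer-lashof operations}) of the Tate operations with the ordinary Dyer–Lashof operations after applying $\mathrm{can}$ show that both sides of each claimed identity are determined by their images in $\pi_*((V\otimes W)^{\otimes p})_{hC_p}$ on the one hand and by the behaviour of the ``negative-degree'' part (the classes detected by $t^{-j}$, $j>0$) on the other; the positive-degree part is the classical Cartan formula verbatim, and the negative-degree part is a formal consequence of the ring structure of $\widehat{H}^*(C_p,\kk)$ computed above. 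Hence the proof reduces to assembling these two inputs, and I would present it in that order: reduce to the universal case, compute $\mu_*$ on products of the generating classes using the ring structure of Tate cohomology, and invoke~\cite[Section~I.1]{CLM76} for the remaining positive-degree identities and sign verification.
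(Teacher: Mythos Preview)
Your proposal is correct and aligns with the paper's approach: the paper's proof is simply the one-line statement that ``the proof is identical to the proof of the Cartan formula for the standard Dyer-Lashof operations, see e.g.~\cite[Section~I.1]{CLM76}.'' Your plan is a faithful and considerably more detailed unpacking of exactly that argument---reduce to the universal one-dimensional case by naturality, compute $\mu_*$ via the ring structure of $\widehat{H}^*(C_p,\kk)$ (the products $u\cdot u=0$, $u\cdot 1$, $1\cdot u$, $1\cdot 1$ for odd $p$; $u^{-i-1}\cdot u^{-j-1}$ for $p=2$), and defer the sign bookkeeping with the normalizations $\nu(q)$ to the classical computation in~\cite{CLM76}.
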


\begin{proof} The proof is identical to the proof of the Cartan formula for the standard Dyer--Lashof operations, see e.g.~\cite[Section~I.1]{CLM76}.
\end{proof}

Let $W=V^{\vee}=\Hom(V,\kk)\in \Mod_{\kk}$ be the weak dual object to $V\in \Mod_{\kk}$. Then we have the pairing
\begin{equation}\label{equation:tate pairing, infty}
(W^{\otimes p})^{tC_p} \otimes (V^{\otimes p})^{tC_p} \xrightarrow{\mu} ((W\otimes V)^{\otimes p})^{tC_p} \xrightarrow{ev_V} \kk^{tC_p} \xrightarrow{\mathrm{can}} \Sigma \kk_{hC_p} \to \Sigma \kk, 
\end{equation}
which induces the pairing on the homotopy groups
\begin{equation}\label{equation:tate pairing}
\langle -,-\rangle\colon \pi_{-q}(W^{\otimes p})^{tC_p} \otimes \pi_{q+1}(V^{\otimes p})^{tC_p} \to \pi_0(\kk)=\kk, \;\; q\in\Z.
\end{equation}
We note that if $V=\kk$, then this pairing coincides with the pairing~\eqref{equation:pairing in tate cohomology} on the Tate cohomology.

The next corollary follows directly from Proposition~\ref{proposition: tate-cartan formula} and the last part of Proposition~\ref{proposition:tate-dyer-lashof operations}.

\begin{cor}\label{corollary:pairing on tate-dyer-lashof}
Let $V\in \Mod_{\kk}$ and set $W=\Hom(V,\kk)$. Suppose that $w\in\pi_{q}(W)$, $v\in\pi_{r}(V)$, $i,j\in\Z$, and $\e,\e'\in\{0,1\}$. Then, if $p$ is odd, we have 
$$
\langle\beta^{\e} Q^i(w), \beta^{\e'} Q^j(v)\rangle =
\begin{cases}
(-1)^{q\e'}\langle w, v\rangle^{p} & \mbox{if $q+r=i+j=0$ and $\e+\e'=1$,}\\
0 & \mbox{otherwise.}
\end{cases}
$$
If $p=2$, we have
$$
\langle Q^i(w), Q^j(v)\rangle =
\begin{cases}
\langle w, v\rangle^{2} & \mbox{if $q+r=0$ and $i+j+1=0$,}\\
0 & \mbox{otherwise.}
\end{cases}
$$ \qed
\end{cor}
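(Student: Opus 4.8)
The plan is to deduce Corollary~\ref{corollary:pairing on tate-dyer-lashof} by combining the lax symmetric monoidal multiplicativity of the Tate construction with the explicit Cartan formula of Proposition~\ref{proposition: tate-cartan formula} and the vanishing statement in the fourth part of Proposition~\ref{proposition:tate-dyer-lashof operations}. First I would unravel the definition of the pairing $\langle -,-\rangle$ in~\eqref{equation:tate pairing}: by construction it factors as
$$\pi_{-q}(W^{\otimes p})^{tC_p}\otimes \pi_{q+1}(V^{\otimes p})^{tC_p}\xrightarrow{\mu_*} \pi_1((W\otimes V)^{\otimes p})^{tC_p}\xrightarrow{(ev_V)_*}\pi_1(\kk^{tC_p})\xrightarrow{\mathrm{can}_*}\pi_1(\Sigma\kk_{hC_p})\to \pi_0(\kk),$$
so to evaluate $\langle \beta^{\e}Q^i(w),\beta^{\e'}Q^j(v)\rangle$ I would first push the pair through $\mu_*$ using the Cartan formula and then apply the evaluation and the canonical map to $\Sigma\kk_{hC_p}$.

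Concretely, for $p$ odd I would apply the four cases of Proposition~\ref{proposition: tate-cartan formula} to $\mu_*(\beta^{\e}Q^i(w)\otimes\beta^{\e'}Q^j(v))$: when $\e=\e'=0$ this is $0$ immediately (case (1)), and when exactly one of $\e,\e'$ is $1$ we get $\pm Q^{i+j}(w\otimes v)$ (cases (2) and (3), with the sign $(-1)^{q}$ appearing when the Bockstein sits on the second factor, $q=|w|$), while when $\e=\e'=1$ we get $\beta Q^{i+j}(w\otimes v)$. Next I would observe $ev_V\colon W\otimes V\to\kk$ sends $w\otimes v$ to the scalar $\langle w,v\rangle\in\pi_0(\kk)$ up to the Frobenius twist bookkeeping (this is where the $p$-th power $\langle w,v\rangle^p$ enters, since the Dyer-Lashof operations are semilinear by part (1) of Proposition~\ref{proposition:tate-dyer-lashof operations}, equivalently they are built from $v^{\otimes p}$), so by naturality $(ev_V)_*$ carries $Q^{i+j}(w\otimes v)$ to $\langle w,v\rangle^p Q^{i+j}(\iota)$ where $\iota=ev_V(w\otimes v)$ is viewed in $\pi_{q+r}(\kk)$. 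This forces $q+r=0$ for a nonzero answer. Then I would use part (4) of Proposition~\ref{proposition:tate-dyer-lashof operations}: the image of $\beta^{\delta}Q^{i+j}(\iota)\in\pi_*(\kk^{tC_p})$ under $\mathrm{can}$ into $\Sigma\kk_{hC_p}$ lands in the relevant degree and is the ordinary Dyer-Lashof operation, which vanishes unless $2(i+j)-\delta \geq 0$; since $\iota$ has degree $0$ and we land in $\pi_0$, the only surviving class is the bottom operation, which pins down $i+j=0$ and $\delta=0$, i.e.\ in the $\e=\e'=1$ case $\beta Q^0$ contributes nothing (it has a Bockstein) and only the mixed case $\e+\e'=1$, $Q^0$, survives — giving the stated formula with the sign $(-1)^{q\e'}$ tracking which factor carried the Bockstein. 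The $p=2$ case is the same argument using the single-line Cartan formula $\mu_*(Q^i(w)\otimes Q^j(v))=Q^{i+j-1}(w\otimes v)$, so that the bottom operation $Q^{-1}$ on a degree-zero class is the one that survives, forcing $i+j+1=0$ and producing $\langle w,v\rangle^2$.

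The main obstacle I anticipate is purely bookkeeping rather than conceptual: getting the signs and the Frobenius twists exactly right. In particular one must be careful that the normalization constants $\nu(q)$ entering the definition of $\beta^{\e}Q^i$ from $Q_i$ interact correctly with the Cartan formula of Proposition~\ref{proposition: tate-cartan formula} (which is stated for the normalized operations), and that the sign $(-1)^{q\e'}$ in the conclusion is exactly the composite of the Koszul sign from commuting the Bockstein past a degree-$q$ class in $ev_V$ and any sign hidden in $\mathrm{can}$; these can be checked by specializing to $V=W=\kk$ and comparing with~\eqref{equation:pairing in tate cohomology} via Proposition~\ref{proposition:tate and kronecker pairings}, which also confirms the $p$-th power normalization since $ev_{\kk}$ is the identity there. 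Once the $V=\kk$ base case is verified, the general case follows formally because $V\mapsto (V^{\otimes p})^{tC_p}$ is exact (as used in Proposition~\ref{propostion: tate-dyer-lashof is a basis}) and all maps in sight are natural, so the pairing is determined by its values on generators.
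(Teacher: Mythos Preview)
Your proposal is correct and follows exactly the approach the paper indicates: the corollary is stated as an immediate consequence of the Cartan formula (Proposition~\ref{proposition: tate-cartan formula}) together with the last part of Proposition~\ref{proposition:tate-dyer-lashof operations}, and your write-up simply unpacks this. One bookkeeping remark: in the $p=2$ case the index you quote for the surviving bottom operation is slightly off (the Cartan formula gives $Q^{i+j+1}(w\otimes v)$, so with $q+r=0$ one needs $i+j+1=0$ and the surviving class is $Q^0(\iota_0)$), but this does not affect the validity of the argument.
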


\subsection{Tate diagonal}\label{section: tate diagonal}
We recall that, by Remark~\ref{remark: cartesian and smash, lie algebras},  the composite 
$$\sLxi^{\times} \xrightarrow{\susp} \Sp(\sLxi)^{\otimes} \xrightarrow{\widetilde{C}_*} \Mod_{\kk}^{\otimes} $$
is oplax symmetric monoidal, where $\widetilde{C}_*$ is the chain complex functor (Definition~\ref{definition: cochain complex, stable}). In particular, for every $X\in \sLxi$, the diagonal map $X\to X\times X$ induces the natural transformation
\begin{equation}\label{equation:comultiplication of chains}
\mu^X\colon \widetilde{C}_*(\susp X) \to (\widetilde{C}_*(\susp X)^{\otimes p})^{h\Sigma_p},
\end{equation}
which is given by the~\emph{comultiplication}.

We write $\widetilde{\mu}^X$, $X\in \sLxi$ for the following composite
\begin{equation}\label{equation:comultiplication of chains, tate}
\widetilde{\mu}^X\colon \widetilde{C}_*(\susp X) \xrightarrow{\mu^X} (\widetilde{C}_*(\susp X)^{\otimes p})^{h\Sigma_p} \xrightarrow{\mathrm{can}}(\widetilde{C}_*(\susp X)^{\otimes p})^{t\Sigma_p}.
\end{equation} 
By dualizing the map $\widetilde{\mu}$ and by using the pairing~\eqref{equation:tate pairing, infty}, we obtain the natural transformation $\widetilde{\mu}^{\vee}$, which also coincides with the following composite
$$\widetilde{\mu}^{\vee}\colon\Sigma^{-1}(\widetilde{C}^*(\Sigma^{\infty}X)^{\otimes p})^{t\Sigma_p} \xrightarrow{\mathrm{can}} (\widetilde{C}^*(\Sigma^{\infty}X)^{\otimes p})_{h\Sigma_p} \xrightarrow{m} \widetilde{C}^{*}(\susp X),\;\; X\in \sL,$$
where $m$ is given by the \emph{multiplication} of cochains. 

We recall that the cohomology groups $\widetilde{H}^q(X) = \pi_{-q}\widetilde{C}^{*}(\susp X)$, $X\in \sL$ are equipped with the action of the Steenrod operations, see~\cite[Section~5]{Priddy70long}, \cite[Theorem~8.5]{May70operations}, and~\cite[Section~6.1]{koszul}. By~\cite[Definition~2.2]{May70operations}, this action is defined by the next formula (where $p$ is odd)
$$\beta^{\e}P^i(y)=\widetilde{\mu}^{\vee}_*(\sigma^{-1}\beta^{\e}Q^{-i}(y)) \in \widetilde{H}^{q+2(p-1)i+\e}(X), \;\; y\in \widetilde{H}^q(X), \;\; i\geq 0, \;\; \e\in\{0,1\}. $$
If $p=2$, then $Sq^i(y)=\widetilde{\mu}^{\vee}_*(\sigma^{-1}Q^{-i}(y)) \in \widetilde{H}^{q+i}(X)$, $i\geq 1$.

Finally, we recall that together with the \emph{left} action of the Steenrod operations on the cohomology groups there is also the adjoint \emph{right} action of the Steenrod operations on the homology groups. In other words, for every $x\in \widetilde{H}_q(X)$, $X\in \sLxi$, we define $x\beta^{\e}P^i \in \widetilde{H}_{q-2i(p-1)-\e}(X)$, $i\geq 0$, $\e\in\{0,1\}$ (resp. $xSq^i\in \widetilde{H}_{q-i}(X)$, $i\geq 1$) by the following rule
$$\langle y, x\beta^{\e}P^i\rangle^p = \langle \beta^{\e}P^i(y), x\rangle\;\; \text{for all $y\in \widetilde{H}^{q-2i(p-1)-\e}(X)$} $$
(resp. $\langle y, xSq^i\rangle^2 = \langle Sq^i(y), x\rangle$ for all $y\in \widetilde{H}^{q-i}(X)$ if $p=2$).

\begin{lmm}\label{lemma:tate diagonal for spaces}
Let $X\in\sLxi$ be an $\kk$-complete simplicial restricted Lie algebra and let $x\in \widetilde{H}_q(X), q\geq 1$ be a homology class. Then, in $\pi_q(\widetilde{C}_*(\susp X)^{\otimes p})^{t\Sigma_p}$, we have
$$ \widetilde{\mu}_*(x) =
\begin{cases}
\sum_{i\geq 0}Q^{i}(x\beta P^{i}) + (-1)^q\sum_{i>0}\beta Q^{i}(xP^i) & \mbox{if $p$ is odd,}\\
\sum_{i>0}Q^{i-1}(xSq^i) & \mbox{if $p=2$.}
\end{cases}
$$
\end{lmm}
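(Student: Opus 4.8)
The plan is to reduce the statement to the dual computation on cochains, where the left action of the Steenrod operations is literally the definition of $\beta^{\e}P^i$ via the multiplication map $\widetilde{\mu}^{\vee}$, and then dualize back using the Tate pairing~\eqref{equation:tate pairing}. First I would recall that, since $X$ is $\kk$-complete and $q\geq 1$, the chain complex $\widetilde{C}_*(\susp X)$ is a bounded-below object of $\Mod_{\kk}$; we may moreover assume $\widetilde{C}_*(\susp X)$ is perfect by writing it as a filtered colimit of perfect subcomplexes and noting that each fixed class $x$ and the finitely many operations $Q^i(x\beta P^i)$, $\beta Q^i(xP^i)$ that appear (only finitely many are nonzero, by the instability clause in Proposition~\ref{proposition:tate-dyer-lashof operations}(4) — $\beta^{\e}Q^i(v)$ vanishes after passing to $(-)_{hC_p}$ when $2i-\e<|v|$, and here we are in a range where the relevant duals vanish) factor through such a subcomplex. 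Hence I may apply Proposition~\ref{propostion: tate-dyer-lashof is a basis}: the classes $\beta^{\e}Q^j(v)$, $v\in\pi_*(W)$, $W=\Hom(\widetilde{C}_*(\susp X),\kk)=\widetilde{C}^*(\susp X)$, form a basis of $\pi_*(W^{\otimes p})^{t\Sigma_p}$, and likewise on the $V=\widetilde{C}_*(\susp X)$ side.

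Next I would compute the pairing $\langle \beta^{\e'}Q^{j}(y),\ \widetilde{\mu}_*(x)\rangle$ for an arbitrary basis element $y\in\widetilde{H}^r(X)$. By naturality of the Tate pairing~\eqref{equation:tate pairing, infty} and the fact that $\widetilde{\mu}$ and $\widetilde{\mu}^{\vee}$ are dual to each other under this pairing, this equals (up to the universal sign bookkeeping of Corollary~\ref{corollary:pairing on tate-dyer-lashof}) $\langle \widetilde{\mu}^{\vee}_*(\sigma^{-1}\beta^{\e'}Q^{j}(y)),\ x\rangle^{1/p}$-type expression, i.e. it is governed by $\widetilde{\mu}^{\vee}_*(\sigma^{-1}\beta^{\e'}Q^{j}(y))$. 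But by~\cite[Definition~2.2]{May70operations} — recalled just before the lemma — we have $\widetilde{\mu}^{\vee}_*(\sigma^{-1}\beta^{\e}Q^{-i}(y))=\beta^{\e}P^i(y)$, so this pairing is exactly a pairing of $\beta^{\e}P^i(y)$ against $x$, which by the definition of the right action is $\langle y, x\beta^{\e}P^i\rangle^{p}$. Running this against every basis element $\beta^{\e'}Q^j(y)$ and using the orthogonality relations of Corollary~\ref{corollary:pairing on tate-dyer-lashof} (which pin down which $(\e,\e',i,j)$ give a nonzero pairing: $\e+\e'=1$ and $i+j=0$ in the odd case, $i+j+1=0$ for $p=2$, together with the degree-matching $r=q$ that forces $|x\beta^{\e}P^i|$ to be what it is) determines every coefficient of $\widetilde{\mu}_*(x)$ in the basis. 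The $(-1)^q$ in the odd-primary $\beta Q^i$ term and the overall indexing shift $Q^{i}\leftrightarrow Q_{(2i-q)(p-1)-\e}$ (resp. $Q^{i-1}$ for $p=2$) come out of the sign $\nu(q)$ and the translation between the $Q_i$ and $\beta^{\e}Q^i$ conventions fixed in Section~\ref{section: group cohomology}, combined with the $(-1)^{q\e'}$ in Corollary~\ref{corollary:pairing on tate-dyer-lashof}; I would record these as a short sign computation rather than belabor it.

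The main obstacle I anticipate is precisely the sign and re-indexing bookkeeping: keeping straight the $\nu(q)$ normalization, the Frobenius twist (the $p$-th power of the scalar in $\langle-,-\rangle^{p}$), the suspension $\sigma^{-1}$ appearing in May's definition of $\beta^{\e}P^i$, and the conversion between the "$Q_i$" operations indexed by the Tate cohomology classes $u t^{-j-1}$ and the "$\beta^{\e}Q^i$" operations indexed by cohomological degree. A secondary point requiring care is the reduction to the perfect case: I must check that the expression on the right-hand side genuinely involves only finitely many nonzero terms in each homological degree, so that the colimit argument is legitimate and the infinite sums $\sum_{i\geq 0}$, $\sum_{i>0}$ are locally finite — this follows from the instability of the Dyer-Lashof classes in $\pi_*(V^{\otimes p})_{hC_p}$ (Proposition~\ref{proposition:tate-dyer-lashof operations}(4)) applied in the appropriate range, together with the boundedness of $x\beta^{\e}P^i$ in a fixed degree, but it should be stated explicitly. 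Modulo these bookkeeping matters, the argument is a formal consequence of duality between $\widetilde{\mu}$ and $\widetilde{\mu}^{\vee}$, the definition of the Steenrod action, and the orthonormality of the Dyer-Lashof basis under the Tate pairing.
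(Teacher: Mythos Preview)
Your proposal is correct and follows essentially the same route as the paper: reduce to the perfect case via a filtered-colimit argument, expand $\widetilde{\mu}_*(x)$ in the Dyer-Lashof basis of Proposition~\ref{propostion: tate-dyer-lashof is a basis}, and determine the coefficients by pairing against $\beta^{\e'}Q^j(y)$ using the duality between $\widetilde{\mu}$ and $\widetilde{\mu}^{\vee}$, the definition of the Steenrod action, and the orthogonality relations of Corollary~\ref{corollary:pairing on tate-dyer-lashof}. The only cosmetic difference is that the paper first treats the perfect case and then passes to the colimit, and it writes out the sign chase $(-1)^{r_{i,\e}\e}=(-1)^{q\e}$ explicitly; your remark about local finiteness of the sums is slightly over-engineered---it follows simply from the fact that $x\beta^{\e}P^i$ lowers degree and $x$ lives in a fixed degree $q\geq 1$.
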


\begin{proof}
We prove the statement only for an odd prime $p$. The argument for $p=2$ is identical and we leave it to the reader to complete the details. First, assume that $X\in\sLxi$ is of finite type, i.e. $\dim\widetilde{H}_*(X)<\infty$. Then, by Proposition~\ref{propostion: tate-dyer-lashof is a basis}, we have 
$$\widetilde{\mu}_*(x)=\sum_{\substack{i\in \Z\\ \e\in\{0,1\}}}\beta^{\e}Q^{i}(u_{i,\e})$$ 
for some homology classes $u_{i,\e}\in \widetilde{H}_*(X)$; here the class $u_{i,\e}$ has degree $r_{i,\e}=q-2i(p-1)+\e-1$. By Corollary~\ref{corollary:pairing on tate-dyer-lashof}, if $p$ is odd, for an arbitrary $y\in \widetilde{H}^{r_{i,\e}}(X)$, $i\in\Z$, $\e\in\{0,1\}$ we have
\begin{align*}
\langle y, u_{i,\e}\rangle^p &= (-1)^{r_{i,\e}\e}\langle \beta^{\e'}Q^j(y),\widetilde{\mu}_*(x) \rangle =(-1)^{r_{i,\e}\e}\langle \widetilde{\mu}^{\vee}_*(\sigma^{-1}\beta^{\e'}Q^j(y)), x \rangle\\
&=(-1)^{r_{i,\e}\e}\langle \beta^{\e'}P^{-j}(y), x \rangle =(-1)^{r_{i,\e}\e}\langle y, x\beta^{\e'}P^{-j} \rangle^p,
\end{align*}
where $i+j=0$ and $\e+\e'=1$. It is clear that $(-1)^{r_{i,e}\e} = (-1)^ {q\e}$, and so $u_{i,e}=(-1)^{q\e}x\beta^{\e'}P^{i}$ for all $i\in \Z$ and $\e\in\{0,1\}$.

Finally, if $X\in \sLxi$ is arbitrary, then $X$ can be presented as a filtered colimit $X\simeq \colim X_i$, where each $X_i \in \sLxi^{\mathrm{ft}}$ is of finite type. Therefore the natural transformation $\widetilde{\mu}^X$ can be factorized as the composite
\begin{align*}
\widetilde{\mu}^X\colon \widetilde{C}_*(\susp X) \simeq \colim \widetilde{C}_*(\susp X_i) &\xrightarrow{\colim \widetilde{\mu}^{X_i}}\colim (\widetilde{C}_*(\susp X_i)^{\otimes p})^{t\Sigma_p}\\ 
&\to (\widetilde{C}_*(\susp X)^{\otimes p})^{t\Sigma_p},
\end{align*}
which implies the statement.
\end{proof}

Since both functors $$X\mapsto \widetilde{C}_*(\susp X), (\widetilde{C}_*(\susp X) ^{\otimes p})^{t\Sigma_p} \in \Mod_{\kk}, \;\; X\in \sLxi$$ are $1$-excisive and the first functor preserves filtered colimits, there is a unique up to an equivalence natural transformation
\begin{equation}\label{equation: tate diagonal}
\Delta^X_p\colon \widetilde{C}_*(X) \to (\widetilde{C}_*(X)^{\otimes p})^{t\Sigma_p}, \;\; X\in \Sp(\sLxi)
\end{equation}
such that $\widetilde{\mu}\simeq \Delta_p\circ \susp$, see e.g.~\cite[Theorem~6.2.3.21]{HigherAlgebra}. Moreover, by Remark~6.2.3.23 and the proof of Proposition~6.2.1.9 from ibid., the natural transformation $\Delta_p$ can be given as the composite
\begin{align*}
\Delta_p^X \colon \widetilde{C}_*(X) \simeq P_1\widetilde{C}_*(\susp\deloop X)&\xrightarrow{P_1\widetilde{\mu}} P_1(\widetilde{C}_*(\susp\deloop X)^{\otimes p})^{t\Sigma_p} \\
&\xrightarrow{\alpha} (P_1\widetilde{C}_*(\susp\deloop X)^{\otimes p})^{t\Sigma_p}\simeq (\widetilde{C}_*(X)^{\otimes p})^{t\Sigma_p},
\end{align*}
where $X\in \Sp(\sLxi)$ is a spectrum object, $P_1$ is the Goodwillie linearization, and $\alpha$ is the canonical map. We note that $\alpha$ is usually \emph{not} an equivalence, since the functor $V\mapsto (V^{\otimes p})^{t\Sigma_p}, V\in\Mod_{\kk}$ does not commute with sequential colimits.

The next proposition follows directly from Lemma~\ref{lemma:tate diagonal for spaces} and the description of the map $\Delta_p$ above.

\begin{prop}\label{proposition:tate diagonal for spectra}
Let $X\in\Sp(\sLxi)$ be a spectrum object and let $x\in \widetilde{H}_q(X)$, $q\in \Z$ be a homology class. Then, in $\pi_q(\widetilde{C}_*(X)^{\otimes p})^{t\Sigma_p}$, we have
$$ 
\begin{gathered}[b]
\Delta_{p*}(x) =
\begin{cases}
\sum_{i\geq 0}Q^{i}(x\beta P^{i}) + (-1)^q\sum_{i>0}\beta Q^{i}(xP^i) & \mbox{if $p$ is odd,}\\
\sum_{i>0}Q^{i-1}(xSq^i) & \mbox{if $p=2$.}
\end{cases}\\[-\dp\strutbox]
\end{gathered}
$$
\end{prop}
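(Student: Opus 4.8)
The plan is to prove Proposition~\ref{proposition:tate diagonal for spectra} by reducing it to the unstable statement of Lemma~\ref{lemma:tate diagonal for spaces} via the $P_1$-linearization procedure that was already used to \emph{construct} the Tate diagonal $\Delta_p$. Concretely, recall from the paragraph preceding the proposition that $\Delta_p^X$ is, by definition, the composite
\[
\widetilde{C}_*(X) \simeq P_1\widetilde{C}_*(\susp\deloop X) \xrightarrow{P_1\widetilde{\mu}} P_1(\widetilde{C}_*(\susp\deloop X)^{\otimes p})^{t\Sigma_p} \xrightarrow{\alpha} (\widetilde{C}_*(X)^{\otimes p})^{t\Sigma_p},
\]
where $\alpha$ is the canonical assembly map. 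So the first step is purely bookkeeping: I would fix $X\in\Sp(\sLxi)$ and $x\in\widetilde{H}_q(X)=\pi_q\widetilde{C}_*(X)$, and trace $x$ through this composite. The element $x$ lifts (after suspending $X$ sufficiently far, using that $\widetilde{C}_*(X)=\colim_m \Sigma^{-m}\widetilde{C}_*(\deloop\Sigma^m X)$) to a genuine unstable homology class on some $\deloop\Sigma^m X\in\sLxi$, where I may invoke Lemma~\ref{lemma:tate diagonal for spaces} directly.

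The second step is to compare the operations. On the unstable side, Lemma~\ref{lemma:tate diagonal for spaces} gives $\widetilde{\mu}_*(x)=\sum_{i\geq 0}Q^i(x\beta P^i)+(-1)^q\sum_{i>0}\beta Q^i(xP^i)$ (odd $p$), and I need to check that applying $P_1$ and then the assembly map $\alpha$ carries this expression to the asserted formula with the \emph{same} coefficients. The key point here is that the right action of the Steenrod operations on $\widetilde{H}_*(-)$ is stable — the Steenrod operations commute with the suspension isomorphism $\sigma$, which is exactly how they are extended from $\sLxi$ to $\Sp(\sLxi)$ (this is the standard fact that mod-$p$ Steenrod operations are stable cohomology operations; compare the setup in Section~\ref{section: tate diagonal} and the stability clause in Proposition~\ref{proposition:tate-dyer-lashof operations}(3)). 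Likewise the Tate–Dyer–Lashof operations $\beta^\e Q^i$ of~\eqref{equation:tate-dyer-lashof} satisfy $\beta^\e Q^i(\sigma v)=(-1)^\e\sigma\beta^\e Q^i(v)$ by Proposition~\ref{proposition:tate-dyer-lashof operations}(3), and one checks the sign bookkeeping so that the signs $(-1)^q$ in front of $\sum_{i>0}\beta Q^i(xP^i)$ survive the de-suspension (the degree shift $q\mapsto q+m$ introduces a sign on both sides of each term that cancels, since $\beta P^i$ lowers degree by $2i(p-1)+1$, which is odd, and $P^i$ by $2i(p-1)$, which is even). So the formula is genuinely stable and is preserved under $\Sigma^{-m}$.

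The third step handles the map $\alpha$. Since $\alpha$ is the canonical (lax monoidal) comparison $P_1(-^{\otimes p})^{t\Sigma_p}\to (P_1(-)^{\otimes p})^{t\Sigma_p}$, I must verify that $\alpha$ sends each class $\beta^\e Q^i(u)$ appearing in $P_1\widetilde{\mu}_*(x)$ to the corresponding class $\beta^\e Q^i$ of the stabilized homology class. This is true because the Tate–Dyer–Lashof classes are natural with respect to maps $V\to W$ in $\Mod_{\kk}$ (they are defined by pushing forward the universal classes $ut^{-j-1}\otimes\iota$ along $v^{\otimes p}$), and $\alpha$ is such a map at the level of the $p$-fold Tate constructions after passing to the colimit over $m$; naturality of the Dyer–Lashof construction in the input module does the rest. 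I would phrase this as: the diagram relating $\widetilde{\mu}^{\deloop\Sigma^m X}$, $P_1\widetilde{\mu}$, and $\alpha$ commutes, and all three respect the Dyer–Lashof and Steenrod structure, hence $\Delta_{p*}(x)$ is the stabilization of $\widetilde{\mu}_*$ of the unstable representative, which is the claimed formula.

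The main obstacle I anticipate is not conceptual but a matter of carefully pinning down the colimit-over-$m$ argument: because $V\mapsto (V^{\otimes p})^{t\Sigma_p}$ does \emph{not} commute with sequential colimits (Remark~\ref{remark:basis, not finite type}, and the remark after the definition of $\Delta_p$ that $\alpha$ is "usually not an equivalence"), one cannot simply say $\Delta_p = \colim_m \Sigma^{-m}\widetilde{\mu}$. Instead I must work on each finite stage $\deloop\Sigma^m X$, apply Lemma~\ref{lemma:tate diagonal for spaces} there to the class $\sigma^m x$, and then observe that the \emph{formula} (as opposed to the ambient group) is compatible with the structure maps of the colimit, so that the image in the colimit $(\widetilde{C}_*(X)^{\otimes p})^{t\Sigma_p}$ is exactly the stated sum. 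The cleanest way to do this is probably to note that, since all operations involved are stable, both sides are $P_1$-linearizations of the unstable natural transformation $\widetilde{\mu}$, hence agree by the uniqueness in~\cite[Theorem~6.2.3.21]{HigherAlgebra} — but one has to be slightly careful that the right-hand side of the asserted identity genuinely defines a natural transformation of $1$-excisive functors $\Sp(\sLxi)\to\Mod_{\kk}$ landing in the correct (non-colimit-preserving) target, which it does because it is built from the stable Steenrod action and the exact functor $(-^{\otimes p})^{t\Sigma_p}$.
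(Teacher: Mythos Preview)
Your proposal is correct and follows exactly the paper's approach: the paper's proof is the single sentence ``follows directly from Lemma~\ref{lemma:tate diagonal for spaces} and the description of the map $\Delta_p$ above,'' and you have simply unwound that description in detail, checking the stability of the Steenrod and Tate--Dyer--Lashof operations and the role of the assembly map $\alpha$. Your caution about the failure of $(-)^{t\Sigma_p}$ to commute with sequential colimits is appropriate but, as you note, is absorbed by the fact that $\Delta_p$ was \emph{defined} via $P_1$-linearization rather than as a colimit.
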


Let us denote by $G\colon \Sp(\sLxi) \to \Mod_{\kk}$ the following composite functor
$$G\colon \Sp(\sLxi) \xrightarrow{\susp_{\sLxi} \deloop_{\sLxi}} \Sp(\sLxi) \xrightarrow{\widetilde{C}_*} \Mod_{\kk}. $$
The comultiplication map~\eqref{equation:comultiplication of chains} and the counit map $\susp \deloop \to \id$ induce the natural transformation:
$$\nu^X\colon G(X)=\widetilde{C}_*(\susp\deloop X) \xrightarrow{\mu^{\deloop X}} (\widetilde{C}_*(\susp\deloop X)^{\otimes p})^{h\Sigma_p} \to (\widetilde{C}_*(X)^{\otimes p})^{h\Sigma_p},$$
where $X\in \Sp(\sLxi)$. Since the chain complex functor $\widetilde{C}_*\colon \Sp(\sLxi) \to \Mod_{\kk}$ is exact and symmetric monoidal (see Theorem~\ref{theorem: smash, stabilization}) and by~\cite[Lemma~B.3]{Heuts21}, the natural transformation $\nu$ induces an equivalence on the $p$-th Goodwillie layers. In other words, the induced map
$$D_p(\nu)\colon D_pG(X) \to D_p\left((\widetilde{C}_*(X)^{\otimes p})^{h\Sigma_p}\right) \simeq (\widetilde{C}_*(X)^{\otimes p})_{h\Sigma_p}, \;\; X\in \Sp(\sLxi) $$
is an equivalence. This observation and the construction of the natural transformation~\eqref{equation: tate diagonal} implies the next proposition.


\begin{prop}\label{proposition:chain of omega-infinity}
The following diagram is Cartesian:
$$
\begin{tikzcd}
P_pG \arrow{rr}{P_p\nu} \arrow{d} 
& 
& (\widetilde{C}_*(-)^{\otimes p})^{h\Sigma_p} \arrow{d}{\mathrm{can}}\\
P_{p-1}G \arrow{r}
& P_1G\simeq \widetilde{C}_*(-) \arrow{r}{\Delta_p}
& (\widetilde{C}_*(-)^{\otimes p})^{t\Sigma_p},
\end{tikzcd}
$$
where $\Delta_p$ is the natural transformation~\eqref{equation: tate diagonal}.
\qed
\end{prop}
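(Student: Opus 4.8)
The plan is to check the two requirements for a commutative square to be Cartesian: that it commutes, and that the induced map on vertical fibres is an equivalence. I will use throughout that the two functors in the right-hand column, $X\mapsto(\widetilde{C}_*(X)^{\otimes p})^{h\Sigma_p}$ and $X\mapsto(\widetilde{C}_*(X)^{\otimes p})^{t\Sigma_p}$, are $p$-excisive — they are obtained from the linear (exact) functor $\widetilde{C}_*$ by the $p$-homogeneous construction $(-)^{\otimes p}$ followed by a homotopy (co)limit over $B\Sigma_p$ — so that applying $P_p$ to the candidate square does not alter that column and the top arrow $P_p\nu\colon P_pG\to(\widetilde{C}_*(-)^{\otimes p})^{h\Sigma_p}$ makes sense. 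I also record that $P_1G\simeq\widetilde{C}_*$, with the structure map $G\to P_1G$ identified with $\widetilde{C}_*$ applied to the counit $c\colon\susp\deloop\to\id$: indeed $\widetilde{C}_*$ is a colimit-preserving functor of stable $\infty$-categories, hence exact, hence commutes with Goodwillie linearization, and $P_1(\susp\deloop)\simeq\id_{\Sp(\sLxi)}$ by \cite[Lemma~B.3]{Heuts21} (as in Lemma~\ref{lemma:derivatives of suspension of bbH}).

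For commutativity, I would use the construction of the Tate diagonal: $\widetilde{\mu}=\mathrm{can}\circ\mu$ and $\widetilde{\mu}\simeq\Delta_p\circ\susp$. Hence for $X\in\Sp(\sLxi)$, writing $c^X\colon\susp\deloop X\to X$ for the counit and unwinding the definition of $\nu^X$,
\[
\mathrm{can}\circ\nu^X \;=\; (\widetilde{C}_*(c^X)^{\otimes p})^{t\Sigma_p}\circ\widetilde{\mu}^{\deloop X} \;=\; (\widetilde{C}_*(c^X)^{\otimes p})^{t\Sigma_p}\circ\Delta_p^{\susp\deloop X} \;=\; \Delta_p^{X}\circ\widetilde{C}_*(c^X),
\]
the last equality being naturality of $\Delta_p$ along $c^X$. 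Thus $\mathrm{can}\circ\nu=\Delta_p\circ(G\to P_1G)$ as natural transformations $G\to(\widetilde{C}_*(-)^{\otimes p})^{t\Sigma_p}$. Applying $P_p$, and using that the target is $p$-excisive and that $G\to P_1G$ passes through $P_pG\to P_{p-1}G$, the transformation $\mathrm{can}\circ P_p\nu$ becomes $\Delta_p$ precomposed with $P_pG\to P_{p-1}G\to P_1G$ — which is exactly the assertion that the square commutes.

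For the Cartesian property, I would take vertical fibres. The fibre of the left column is $D_pG$ by definition of the Taylor tower, and the fibre of the right column $\mathrm{can}\colon(\widetilde{C}_*(-)^{\otimes p})^{h\Sigma_p}\to(\widetilde{C}_*(-)^{\otimes p})^{t\Sigma_p}$ is $(\widetilde{C}_*(-)^{\otimes p})_{h\Sigma_p}$ by the norm cofibre sequence. The induced map $D_pG\to(\widetilde{C}_*(-)^{\otimes p})_{h\Sigma_p}$ is, upon restricting the top arrow to the $p$-th Goodwillie layer and invoking that $D_p\big((\widetilde{C}_*(-)^{\otimes p})^{h\Sigma_p}\big)\simeq(\widetilde{C}_*(-)^{\otimes p})_{h\Sigma_p}$ with the canonical comparison map $D_p(-)\to(-)$ being the norm (this is again \cite[Lemma~B.3]{Heuts21}), nothing but $D_p\nu$. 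Since $D_p\nu$ is an equivalence — the observation recorded immediately before the statement — the induced map on vertical fibres is an equivalence, and the square is Cartesian.

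The step deserving the most care, and the main obstacle, is the bookkeeping in the last paragraph: matching the map that the square induces on vertical fibres with the derivative $D_p\nu$ requires simultaneously the identification of $D_p$ of the homotopy-fixed-point functor with the homotopy-orbit functor \emph{and} the identification of the comparison map $D_pH\to H$ for $H=(\widetilde{C}_*(-)^{\otimes p})^{h\Sigma_p}$ with the norm (equivalently, $P_{p-1}H\simeq(\widetilde{C}_*(-)^{\otimes p})^{t\Sigma_p}$ with $H\to P_{p-1}H$ the canonical map). All of this is packaged in \cite[Lemma~B.3]{Heuts21} together with the fact that $\widetilde{C}_*$ is exact and strong symmetric monoidal (Theorem~\ref{theorem: smash, stabilization}), but it must be assembled compatibly rather than merely abstractly, and that is where the argument has to be carried out with some diligence.
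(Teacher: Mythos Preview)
Your proof is correct and follows exactly the approach the paper intends: the paper leaves the proposition unproved (it ends with $\qed$), remarking only that ``this observation and the construction of the natural transformation~\eqref{equation: tate diagonal} implies the next proposition,'' where ``this observation'' is precisely that $D_p(\nu)$ is an equivalence. You have supplied the implicit argument in full --- commutativity from the construction of $\Delta_p$ via $\widetilde{\mu}$, and the Cartesian property from the identification of the induced map on vertical fibres with $D_p\nu$ --- so there is nothing to add.
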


The combination of Proposition~\ref{proposition:chain of omega-infinity} and Proposition~\ref{proposition: Snaith splitting} implies the following corollary.

\begin{cor}\label{corollary: tate on free is trivial}
The natural transformation 
$$\Delta_p\circ \suspfreexi\colon \widetilde{C}_*(\suspfreexi V) \to (\widetilde{C}_*(\suspfreexi V)^{\otimes p})^{t\Sigma_p}$$
is trivial for any bounded below $V\in\Mod_{\kk}^{+}$. \qed
\end{cor}

Let $W,V\in \Vect^{\mathrm{gr}}_{\kk}$ be graded vector spaces and let $$f\colon \suspfreexi W \to \suspfreexi V \in \Sp(\sLxi)$$ be a map of suspension spectrum objects such that the induced map $$f_*=0\colon \Sigma W\cong \widetilde{H}_*(\suspfreexi W) \to \widetilde{H}_*(\suspfreexi V)\cong \Sigma V$$ of the homology groups is trivial. We write $\cofib(f)$ for the homotopy cofiber of the map $f$. We have natural (by $W$ and $V$) split exact sequences:
$$0 \to \Sigma V \to \widetilde{H}_*(\cofib(f)) \to \Sigma^{2}W \to 0, $$
$$0 \to \pi_*((\Sigma V)^{\otimes p})^{t\Sigma_p} \to \pi_*\left((\widetilde{C}_*(\cofib(f))^{\otimes p})^{t\Sigma_p}\right) \to \pi_*\Sigma ((\Sigma W)^{\otimes p})^{t\Sigma_p} \to 0.$$
Since the Steenrod operations act trivially on the homology groups of a free simplicial restricted Lie algebra, we have $x\beta^{\e}P^i\in \Sigma V$ (resp. $xSq^i \in \Sigma V$) for a homology class $x\in \widetilde{H}_*(\cofib(f))$ and a Steenrod operation $\beta^{e}P^i, i\geq 0, \e\in\{0,1\}$ (resp. $Sq^i, i\geq 1$). Moreover, if $x\in \Sigma V$, then $x\beta^{\e}P^i =0, i\geq 0, \e\in\{0,1\}$ (resp. $xSq^i=0, i\geq 1$). Therefore Proposition~\ref{proposition:tate diagonal for spectra} implies that there is a unique map 
\begin{equation}\label{equation:connection map, suspension}
d_f\colon \Sigma^{2}W\to \pi_*((\Sigma V)^{\otimes p})^{t\Sigma_p}
\end{equation}
such that the following diagram commutes
\begin{equation}\label{equation: df definition}
\begin{tikzcd}
\widetilde{H}_*(\cofib(f)) \arrow{r}{\Delta_{p*}} \arrow[two heads]{d} 
& \pi_*\left((\widetilde{C}_*(\cofib(f))^{\otimes p})^{t\Sigma_p}\right) \\
\Sigma^2 W \arrow{r}{d_f}
& \pi_*((\Sigma V)^{\otimes p})^{t\Sigma_p}. \arrow[hook]{u}
\end{tikzcd}
\end{equation}

We write $F=\fib(\Delta_p)\colon \Sp(\sLxi) \to \Mod_{\kk}$ for the homotopy fiber of the natural transformation~\eqref{equation: tate diagonal}; there is the natural fiber sequence
$$F(X) \to \widetilde{C}_*(X)  \xrightarrow{\Delta_p} (\widetilde{C}_*(X)^{\otimes p})^{t\Sigma_p},\;\; X\in \Sp(\sLxi). $$
By Corollary~\ref{corollary: tate on free is trivial}, the natural transformation 
$$\Delta_p\circ \suspfreexi\colon \widetilde{C}_*(\suspfreexi V) \to (\widetilde{C}_*(\suspfreexi V)^{\otimes p})^{t\Sigma_p}, \;\; V\in\Mod_{\kk}^{+}$$
is trivial. Thus there is a natural (by $V$) splitting 
\begin{align}\label{equation: splitting, fiber of tate}
F(\suspfreexi V) &\simeq \widetilde{C}_*(\suspfreexi V) \oplus \Sigma^{-1}(\widetilde{C}_*(\suspfreexi V)^{\otimes p})^{t\Sigma_p}\\
&\simeq \Sigma V \oplus \Sigma^{-1}((\Sigma V)^{\otimes p})^{t\Sigma_p}, \;\; V\in \Mod_{\kk}^+. \nonumber
\end{align}
We note that this splitting is natural with respect to maps in the category $\Mod_{\kk}^{+}$, but not with respect to maps of suspension spectra in the category $\Sp(\sLxi)$.

\begin{lmm}\label{lemma:fiber of tate diagonal}
Let $V,W\in \Vect^{\mathrm{gr}}_{\kk}$ be graded vector spaces and let $$f\colon \suspfreexi W \to \suspfreexi V\in \Sp(\sLxi)$$ be a map of suspension spectra such that the induced map $f_*\colon \widetilde{H}_*(\suspfreexi W) \to \widetilde{H}_*(\suspfreexi V)$ is trivial. Then the following diagram commutes
\begin{equation}\label{equation: fiber of tate, commsquare}
\begin{tikzcd}[column sep=large]
\pi_*F(\suspfreexi W) \arrow{r}{F(f)_*} \arrow[two heads]{d} 
& \pi_*F(\suspfreexi V) \\
\Sigma W \arrow{r}{\Sigma^{-1}d_f}
& \pi_*\Sigma^{-1}((\Sigma V)^{\otimes p})^{t\Sigma_p}, \arrow[hook]{u}
\end{tikzcd}
\end{equation}
where $d_f$ is the map~\eqref{equation:connection map, suspension}.
\end{lmm}

\begin{proof}
Consider the commutative diagram in the $\infty$-category $\Mod_{\kk}$:
\begin{equation}\label{equation: fiber of tate eq1}
\begin{tikzcd}[column sep=large]
\widetilde{C}_*(\suspfreexi W) \arrow{r}{\widetilde{C}_*(f)} \arrow{d}{\Delta_p} 
& \widetilde{C}_*(\suspfreexi V) \arrow{d}{\Delta_p}\\
(\widetilde{C}_*(\suspfreexi W)^{\otimes p})^{t\Sigma_p} \arrow{r}{\widetilde{C}_*(f)}
&(\widetilde{C}_*(\suspfreexi V)^{\otimes p})^{t\Sigma_p}.
\end{tikzcd}
\end{equation}
By the assumption, all maps in the diagram~\eqref{equation: fiber of tate eq1} are trivial and the induced map between horizontal cofibers is the Tate diagonal
$$\Delta_p\colon \widetilde{C}_*(\cofib(f)) \to (\widetilde{C}_*(\cofib(f))^{\otimes p})^{t\Sigma_p} $$
Therefore, the diagram~\eqref{equation: fiber of tate eq1} maps to the commutative diagram
\begin{equation}\label{equation: fiber of tate eq2}
\begin{tikzcd}
\widetilde{C}_*(\suspfreexi W) \arrow{r} \arrow{d} 
& 0 \arrow{d}\\
0 \arrow{r}
&(\widetilde{C}_*(\suspfreexi V)^{\otimes p})^{t\Sigma_p}.
\end{tikzcd}
\end{equation}
The induced map between horizontal cofibers in~\eqref{equation: fiber of tate eq2}
\begin{equation}\label{equation: lemma, tate diagonal}
\Sigma^2 W \simeq \Sigma \widetilde{C}_*(\suspfreexi W) \to (\widetilde{C}_*(\suspfreexi V)^{\otimes p})^{t\Sigma_p} \simeq ((\Sigma V)^{\otimes p})^{t\Sigma_p}
\end{equation}
induces the map $d_f$ on the homotopy groups, see the commutative diagram~\eqref{equation: df definition}. Note that, in the diagram~\eqref{equation: fiber of tate eq2}, the induced map between vertical cofibers is equivalent to the induced map between horizontal cofibers. However, by the definition of the splitting~\eqref{equation: splitting, fiber of tate}, the induced map between vertical fibers in~\eqref{equation: fiber of tate eq2} makes the square~\eqref{equation: fiber of tate, commsquare} commutative.
\end{proof}

Recall that $G\colon \Sp(\sLxi) \to \Mod_{\kk}$ is defined as $G(X)= \widetilde{C}_*(\susp\deloop X)$ for a spectrum object $X\in \Sp(\sLxi)$. Since the category $\Mod_{\kk}$ is $p$-local, the Taylor tower of the functor $P_{p-1}G$ splits naturally. Therefore, by Corollary~\ref{proposition:chain of omega-infinity}, there exists a natural transformation $$\eta\colon F(X)=\fib(\Delta^X_p)\to P_pG(X)=P_p(\widetilde{C}_*(\susp\deloop X)), \;\; X\in \Sp(\sLxi)$$
such that the following diagram commutes (where $X\in \Sp(\sLxi)$):
\begin{equation}\label{equation:tate diagonal, commdiag}
\begin{tikzcd}[column sep=large]
\Sigma^{-1}(\widetilde{C}_*(X)^{\otimes p})^{t\Sigma_p} \arrow{r} \arrow{d}{\mathrm{can}}
& F(X) \arrow{r} \arrow{d}{\eta}
&\widetilde{C}_*(X) \arrow{d}{\simeq}\\
\widetilde{C}_*(X)^{\otimes p}_{h\Sigma_p} \simeq D_pG(X) \arrow{r} 
&P_pG(X) \arrow{r}
& P_1G(X).
\end{tikzcd}
\end{equation}

We recall that the Taylor tower of the composite
$$G \circ \suspfreexi \colon \Mod^{+}_{\kk} \xrightarrow{\suspfreexi} \Sp(\sLxi) \xrightarrow{G} \Mod_{\kk}$$ also splits, see Proposition~\ref{proposition: Snaith splitting}. In particular, for any graded vector space $V\in \Vect_{\kk}^{\mathrm{gr}}$, we have the following natural (by $V$) split inclusions
\begin{align}
\pi_*((\Sigma V)^{\otimes p}_{h\Sigma_p})\cong\pi_*D_pG(\suspfreexi V)\subset \pi_*P_pG(\suspfreexi V), \label{equation: inclusion, snaith, p} \\
\Sigma V\cong \pi_*D_1G(\suspfreexi V)\subset \pi_*P_pG(\suspfreexi V). \label{equation: inclusion, snaith, 1}
\end{align}
Here the inclusions~\eqref{equation: inclusion, snaith, p} and~\eqref{equation: inclusion, snaith, 1} split by the maps $\widetilde{C}_{*}(\Pi_p)$ and $\widetilde{C}_*(\Pi_1)$) respectively, where $\Pi_p$ and $\Pi_1$ are canonical projections~\eqref{equation: snaith projections}.

The commutative diagram~\eqref{equation:tate diagonal, commdiag} and Lemma~\ref{lemma:fiber of tate diagonal} imply immediately the following proposition.

\begin{prop}\label{proposition:infinite loops of a map between suspension spectra}
Let $V,W \in \Vect_{\kk}^{\mathrm{gr}}$ be graded vector spaces and let $$f\colon \suspfreexi W \to \suspfreexi V \in \Sp(\sLxi)$$ be a map of suspension spectra such that  the induced map $\widetilde{H}_*(f)=0$ is trivial. Then the following diagram commutes
$$
\begin{tikzcd}[column sep=large]
\pi_*P_pG(\suspfreexi W) \arrow{rr}{P_pG(f)_*}
&
&\pi_*P_pG(\suspfreexi V) \\
\Sigma W \arrow[hook]{u} \arrow{r}{\Sigma^{-1}d_f}
& \pi_*\Sigma^{-1}((\Sigma V)^{\otimes p})^{t\Sigma_p} \arrow{r}{\mathrm{can}}
&\pi_*(\Sigma V)^{\otimes p}_{h\Sigma_p}, \arrow[hook]{u}
\end{tikzcd}
$$
where $d_f$ is the map~\eqref{equation:connection map, suspension}. \qed
\end{prop}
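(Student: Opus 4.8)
The plan is to obtain the statement purely formally from three ingredients already in hand: the naturality of the comparison transformation $\eta\colon F\to P_pG$ witnessed by the commutative square~\eqref{equation:tate diagonal, commdiag}, the computation of $F(f)_*$ in Lemma~\ref{lemma:fiber of tate diagonal}, and the two Snaith-type splittings of $F(\suspfreexi(-))$ and of $P_pG(\suspfreexi(-))$. Concretely, I would first apply $\eta$ to the given map $f\colon\suspfreexi W\to\suspfreexi V$, producing the commuting square on homotopy groups
\[
P_pG(f)_*\circ\eta^W_* \;=\; \eta^V_*\circ F(f)_*\colon \pi_*F(\suspfreexi W)\longrightarrow\pi_*P_pG(\suspfreexi V).
\]

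The second step is to pin down the restriction of $\eta$ along the two summands of the splitting~\eqref{equation: splitting, fiber of tate}, which is functorial in $V\in\Mod_\kk^{+}$ (though not along arbitrary maps of suspension spectra). On the subobject $\Sigma^{-1}(\widetilde{C}_*(\suspfreexi(-))^{\otimes p})^{t\Sigma_p}\hookrightarrow F(\suspfreexi(-))$, the left-hand square of~\eqref{equation:tate diagonal, commdiag} identifies $\eta$ with the canonical map $\mathrm{can}$ landing in $\widetilde{C}_*(\suspfreexi(-))^{\otimes p}_{h\Sigma_p}=D_pG(\suspfreexi(-))$ followed by the split inclusion~\eqref{equation: inclusion, snaith, p}. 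On the complementary section $\widetilde{C}_*(\suspfreexi(-))\to F(\suspfreexi(-))$, the right-hand square of~\eqref{equation:tate diagonal, commdiag} shows that $\eta$ restricted to this section, followed by the tower projection $P_pG\to P_1G\simeq\widetilde{C}_*$, is the identity. Since the split inclusion~\eqref{equation: inclusion, snaith, 1} is likewise a section of this projection, the two sections differ by a natural transformation from the linear functor $\widetilde{C}_*(\suspfreexi(-))$ into the higher Goodwillie layers $D_jG(\suspfreexi(-))$, $1<j\le p$, each of which is $j$-homogeneous of the form~\eqref{equation:diagonal homogeneous functor}; by Corollary~\ref{corollary:vanishing of natural transformations} this difference vanishes, so $\eta^W_*$ carries the section of~\eqref{equation: splitting, fiber of tate} onto the split inclusion~\eqref{equation: inclusion, snaith, 1}.

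The last step is the bookkeeping. For $x\in\Sigma W$, let $x'$ denote its image under the section of~\eqref{equation: splitting, fiber of tate}; by the previous step $\eta^W_*(x')$ is the image of $x$ under~\eqref{equation: inclusion, snaith, 1}. By Lemma~\ref{lemma:fiber of tate diagonal}, $F(f)_*(x')$ is the image of $\Sigma^{-1}d_f(x)$ under the inclusion of the Tate summand of $\pi_*F(\suspfreexi V)$; applying $\eta^V_*$ and using the identification of $\eta$ on that summand produces the image of $\mathrm{can}(\Sigma^{-1}d_f(x))$ under~\eqref{equation: inclusion, snaith, p}. Chasing the naturality square of $\eta$ then gives $P_pG(f)_*$ of~\eqref{equation: inclusion, snaith, 1} applied to $x$ equal to~\eqref{equation: inclusion, snaith, p} applied to $\mathrm{can}(\Sigma^{-1}d_f(x))$, which is the asserted commutativity. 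I expect the only genuinely delicate point to be the second step — showing that $\eta$ introduces no components in the intermediate layers $D_jG$ with $1<j<p$, which forces one to invoke the orthogonality of homogeneous functors of distinct degrees; once this is granted everything else is a formal consequence of~\eqref{equation:tate diagonal, commdiag} and Lemma~\ref{lemma:fiber of tate diagonal}.
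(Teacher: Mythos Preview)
Your approach is correct and is exactly what the paper intends: the proposition is recorded there as an immediate consequence of the naturality of $\eta$ in the diagram~\eqref{equation:tate diagonal, commdiag} together with Lemma~\ref{lemma:fiber of tate diagonal}, with no further argument given. The one point you spell out carefully---that $\eta$ carries the section of the splitting~\eqref{equation: splitting, fiber of tate} exactly onto the Snaith inclusion~\eqref{equation: inclusion, snaith, 1}, with no stray components in the layers $D_jG(\suspfreexi(-))$ for $1<j\le p$---is left implicit in the paper; your appeal to Corollary~\ref{corollary:vanishing of natural transformations} is the correct justification and fills the gap cleanly.
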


\subsection{Unstable Hurewicz homomorphism}\label{section: unstable hurewicz homomorphism}

Let $X\in \sLxi$ be an $\kk$-complete simplicial restricted Lie algebra. Recall the Hurewicz homomorphism~\eqref{equation: hurewicz homomorphism}
$$h\colon \pi_q(X) \to \widetilde{H}_{q+1}(X)=\pi_{q+1}(\widetilde{C}_*(\susp X)), \;\; q\geq 0. $$
We introduce an \emph{unstable} variant of the Hurewicz homomorphism. Now, let $X\in \Sp(\sLxi)$ be a spectrum object, for every $q\geq 0$, we define a homomorphism $\tilde{h}$ by the following composition
\begin{align*}
\tilde{h}\colon \pi_q(X) \cong \pi_q(\deloop X) &\xrightarrow{h} \pi_{q+1}\widetilde{C}_*(\susp \deloop X)\simeq \pi_{q+1}G(X) \to \pi_{q+1}P_pG( X),
\end{align*}
where the last map is induced by the natural transformation $G\to P_pG$. We note that the map $\tilde{h}$ is functorial with respect to all maps of spectrum objects. 

Let $V=\bigoplus_{q>0} V_q \in \Vect_{\kk}^{>0}$ be a positively graded vector space and let $$X=\suspfreexi(\Sigma^{-1} V)\cong L^{st}_{\xi}\susp \free(\Sigma^{-1}V)$$ be the $\kk$-complete suspension spectrum of the free simplicial restricted Lie algebra $\free(\Sigma^{-1}V)$. Then, by Corollary~\ref{corollary: stable homotopy groups of a xi-complete free object}, the homotopy groups $$\pi_*(X)\cong \prod_{s\geq 0}\Sigma^{-1}V^{(s)}\otimes \Lambda_s,$$ where $\Lambda=\bigoplus_{s\geq 0}\Lambda_s$ is the lambda algebra. We compute the map $\tilde{h}$ restricted to the subspace $\pi_{*,1}(X)\subset \pi_*(X)$. Recall that the vector space $$\pi_{q+1}V^{\otimes p}_{h\Sigma_p} \cong \pi_{q+1}D_pG(\suspfreexi(\Sigma^{-1}V)), \;\; q\geq 0$$ is a subspace of $\pi_{q+1}P_pG(\suspfreexi(\Sigma^{-1}V))$.

\begin{prop}\label{proposition: hurewicz of hopf invariant one}
Let $v \in V_q$, then we have
$$\tilde{h}(\sigma^{-1}v\otimes \nu^{\e}_{i})  = 
\begin{cases}
Q^i(v) \in \pi_{q+2i(p-1)}(V^{\otimes p}_{h\Sigma_p}) & \mbox{if $p$ is odd, $i\geq 0, \e=0$,}\\
(-1)^q\beta Q^i(v) \in \pi_{q+2i(p-1)-1}(V^{\otimes p}_{h\Sigma_p}) & \mbox{if $p$ is odd, $i\geq 1, \e=1$,}\\
Q^i(v) \in \pi_{q+i}(V^{\otimes 2}_{h\Sigma_2}) & \mbox{if $p=2$ and $i\geq 0$.}
\end{cases}
$$
\end{prop}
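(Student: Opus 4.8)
The plan is to reduce the computation of $\tilde{h}$ on the length-one part $\pi_{*,1}(X)$ to the computation of the stable map $P_pG(f)_*$ in Proposition~\ref{proposition:infinite loops of a map between suspension spectra}, applied to a map $f$ representing the homotopy class $\sigma^{-1}v\otimes\nu^{\e}_i$ after suspension. Recall from Remark~\cite[Remark~6.4.16]{koszul} (cf. the discussion in Section~\ref{section: james-hopf map}) that the generators $\nu^{\e}_i\in\Lambda$ are analogs of Hopf invariant one elements; concretely, a class $\sigma^{-1}v\otimes\nu^{\e}_i\in\pi_{*,1}(\suspfreexi(\Sigma^{-1}V))$ is represented by a map of suspension spectrum objects
$$f\colon \suspfreexi(\Sigma^{-1}W) \to \suspfreexi(\Sigma^{-1}V),$$
where $W$ is a suitable shift of a one-dimensional space, such that the induced map $\widetilde{H}_*(f)=0$ vanishes (since the class lies in filtration $s=1$, not $s=0$, so by Remark~\ref{remark: hurewicz, lambda} its Hurewicz image is zero). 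The point is that $\tilde{h}(\sigma^{-1}v\otimes\nu^{\e}_i)$, being the image of this class under $\pi_*(X)\to\pi_*P_pG(X)$ composed with the unstable Hurewicz map $h$, is exactly read off from the diagram in Proposition~\ref{proposition:infinite loops of a map between suspension spectra}: it equals the image of the bottom composite $\mathrm{can}\circ\Sigma^{-1}d_f$ applied to the generator.

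\textbf{Key steps.} First I would set up the representing map: using Corollary~\ref{corollary: stable homotopy groups of a xi-complete free object} and the identification $\pi_{*,1}(\suspfreexi(\Sigma^{-1}V))\cong \Sigma^{-1}V^{(1)}\otimes\Lambda_1$, together with the description of $\Lambda_1$ as spanned by the $\nu^{\e}_i$, exhibit the class $\sigma^{-1}v\otimes\nu^{\e}_i$ as $F(f)$ for $f$ a map out of $\suspfreexi$ of a sphere; crucially $\widetilde H_*(f)=0$ so Lemma~\ref{lemma:fiber of tate diagonal} and Proposition~\ref{proposition:infinite loops of a map between suspension spectra} apply. Second, I would identify the connecting map $d_f$ of~\eqref{equation:connection map, suspension}: by construction $d_f$ is extracted from the Tate diagonal $\Delta_p$ on $\cofib(f)$, and by Proposition~\ref{proposition:tate diagonal for spectra} this is governed by the right action of the Steenrod operations $x\mapsto x\beta^{\e}P^i$ (resp. $xSq^i$) on $\widetilde{H}_*(\cofib(f))$. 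Third, I would compute that action: since $\nu^{\e}_i$ is dual to $\beta^{1-\e}P^i$ (resp. $Sq^{i+1}$) — the defining Koszul-duality property of $\Lambda$ recalled just before Theorem~\ref{theorem: C, intro} — the class in $\widetilde{H}_*(\cofib(f))$ lifting the top cell has precisely one nonzero Steenrod operation applied to it, namely the one dual to $\nu^{\e}_i$, landing on the bottom cell $\Sigma V$. Plugging this single surviving term into the formula of Proposition~\ref{proposition:tate diagonal for spectra} gives $d_f(\text{generator})= Q^i(v)$ when $\e=0$ and $(-1)^q\beta Q^i(v)$ when $\e=1$ (odd $p$), and $Q^{i}(v)$ at $p=2$ (noting the index shift $Q^{i-1}(xSq^i)$ with $xSq^{i+1}$ nonzero). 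Finally, composing with $\mathrm{can}\colon \Sigma^{-1}((\Sigma V)^{\otimes p})^{t\Sigma_p}\to (\Sigma V)^{\otimes p}_{h\Sigma_p}$ and using part (4) of Proposition~\ref{proposition:tate-dyer-lashof operations} to identify $\mathrm{can}_*\beta^{\e}Q^i$ with the honest Dyer–Lashof class $\beta^{\e}Q^i$ (which is nonzero precisely in the stated range $2i-\e\geq q$, consistent with $\nu^{\e}_i$ being nonzero iff $2i-\e\geq q$, cf. Proposition~\ref{proposition: lambda, mu, adem}) yields the claimed formulas, after desuspending.

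\textbf{Main obstacle.} The hard part will be the bookkeeping in the second and third steps: correctly identifying which Steenrod operation is dual to $\nu^{\e}_i$ and tracking the signs and the index conventions (the $\nu(q)$ normalization factor in the definition of $\beta^{\e}Q^i$, the shift $Q^{i-1}(xSq^i)$ at $p=2$, and the $(-1)^q$ coming from the right-module structure on homology versus the left-module structure on cohomology in Proposition~\ref{proposition:tate diagonal for spectra}). One must also be careful that the representing map $f$ genuinely has trivial Hurewicz image, which is where the filtration grading $s=1$ (as opposed to $s=0$) is essential — this is what allows Proposition~\ref{proposition:infinite loops of a map between suspension spectra} to be invoked at all, and it is precisely the sense in which $\nu^{\e}_i$ is a "Hopf invariant one" class. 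The cofiber sequence argument itself is formal given Lemma~\ref{lemma:fiber of tate diagonal}; the delicate point is matching the algebraic Koszul-dual pairing between $\Lambda$ and $\calA_p$ with the geometric right action of $\calA_p$ on the homology of a two-cell complex, which is exactly the content one wants to extract and is, as the introduction remarks, the reduction that "essentially finishes the proof of Theorem~\ref{theorem: C, intro}."
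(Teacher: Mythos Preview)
Your proposal is correct and follows essentially the same route as the paper: represent $\sigma^{-1}v\otimes\nu^\e_i$ by a map $f$ of suspension spectrum objects with vanishing Hurewicz image, reduce $\tilde h$ to $P_pG(f)_*$ via the naturality square, invoke Proposition~\ref{proposition:infinite loops of a map between suspension spectra} to pass to $d_f$, and then compute $d_f$ from Proposition~\ref{proposition:tate diagonal for spectra} using the single nonzero Steenrod operation on $\widetilde H_*(\cofib(f))$ supplied by \cite[Remark~6.4.16]{koszul}. The paper carries out exactly these steps, including the sign check $(-1)^{l+2}=(-1)^q$ for the $\e=1$ case and the index shift at $p=2$.
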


\begin{proof} 
We set $x=\sigma^{-1}v\otimes \nu^{\e}_{i}$, $l=|x|=q-1+2i(p-1)-\e$ if $p$ is odd, and $l=|x|=q+i-1$ if $p=2$. Let $W=\Sigma^{l}\kk$ be a one-dimensional graded vector space spanned on the canonical basis element $\iota_{l}$ and let 
$$f\colon \suspfreexi W \to \suspfreexi (\Sigma^{-1}V) \in \Sp(\sLxi)$$ be the map of suspension spectra such that $$f_*(\iota_{l})=x\in \pi_{l}\suspfreexi(\Sigma^{-1} V)\cong \Sigma^{-1}V \otimes \Lambda.$$
There is a commutative diagram
$$
\begin{tikzcd}[column sep=large]
\pi_{l}\suspfreexi W \arrow{r}{f_*} \arrow{d}{\tilde{h}} 
& \pi_{l}\suspfreexi(\Sigma^{-1}V) \arrow{d}{\tilde{h}}\\
\pi_{l+1}P_pG(\suspfreexi W) \arrow{r}{P_pG(f)_*}
&\pi_{l+1}P_pG(\suspfreexi (\Sigma^{-1}V)).
\end{tikzcd}
$$
By the splitting~\eqref{equation: inclusion, snaith, 1}, we have $$\tilde{h}(\iota_{l})=\sigma\iota_{l}\in \Sigma W \cong \pi_{l+1}P_1G(\suspfreexi W)\subset \pi_{l+1}P_pG(\suspfreexi W).$$ Therefore, it suffices to compute the map $P_pG(f)_*$. The induced map $\widetilde{H}_*(f)$ vanishes, and so, by Proposition~\ref{proposition:infinite loops of a map between suspension spectra}, it is enough to compute the associated map $d_f$ (see~\eqref{equation:connection map, suspension}).

The homology groups $\widetilde{H}_*(\cofib(f))$ of the cofiber of the map $f$ are isomorphic to the direct sum $V\oplus \Sigma^2 W$. By~\cite[Remark~6.4.19]{koszul}, if $p$ is odd, the Steenrod operations $\beta^{\e'}P^j, j\geq 0, \e'\in\{0,1\}$ act on $\sigma^2 \iota_{l}$ as follows 
$$(\sigma^2 \iota_{l})\beta^{\e'}P^j = 
\begin{cases}
v \in V\subset \widetilde{H}_*(\cofib(f)) & \mbox{if $i=j$ and $\e+\e'=1$,}\\
0 & \mbox{otherwise.}
\end{cases}
$$
If $p=2$, then we have
$$(\sigma^2 \iota_{l})Sq^{j} =
\begin{cases}
v \in V\subset \widetilde{H}_*(\cofib(f)) & \mbox{if $j=i+1$,}\\
0 & \mbox{otherwise.}
\end{cases}
$$

By Proposition~\ref{proposition:tate diagonal for spectra}, we calculate the map $d_f\colon \Sigma^2 W \to \pi_{l+2} (V^{\otimes p})^{t\Sigma_p}$. Namely, we obtain
$$d_f(\sigma^2 \iota_{l})  = 
\begin{cases}
Q^i(v) \in \pi_{q+2i(p-1)+1}(V^{\otimes p})^{t\Sigma_p} & \mbox{if $p$ is odd, $i\geq 0, \e=0$,}\\
(-1)^{l+2}\beta Q^i(v) \in \pi_{q+2i(p-1)}(V^{\otimes p})^{t\Sigma_p} & \mbox{if $p$ is odd, $i\geq 1, \e=1$,}\\
Q^i(v) \in \pi_{q+i+1}(V^{\otimes 2})^{t\Sigma_2} & \mbox{if $p=2$, $i\geq 0$.}
\end{cases}
$$
Finally, the statement follows from the last part of Proposition~\ref{proposition:tate-dyer-lashof operations}.
\end{proof}

Recall that $V\in \Vect^{>0}_{\kk}$ is a positively graded vector space. Consider the $p$-th James--Hopf map (see Definition~\ref{definition:james-hopf map}):
\begin{equation}\label{equation:james-hopf section}
j_p\colon \deloop\suspfreexi(\Sigma^{-1}V) \to \deloop\suspfreexi(\Sigma^{-1}V^{\otimes p}_{h\Sigma_p}).
\end{equation}
By Proposition~\ref{proposition:differential lowers the second grading}, we have
$$j_{p*}(\sigma^{-1}v\otimes \nu^{\e}_i) \in \pi_{*,0}(\suspfreexi(\Sigma^{-1}V^{\otimes p}_{h\Sigma_p})) \subset \pi_*(\suspfreexi(\Sigma^{-1}V^{\otimes p}_{h\Sigma_p})). $$
We note that $\pi_{*,0}(\suspfreexi(\Sigma^{-1}V^{\otimes p}_{h\Sigma_p}))\cong \pi_*(\Sigma^{-1}V^{\otimes p}_{h\Sigma_p}))$, see Remark~\ref{remark: second grading, xi-complete}. In the next theorem we compute $j_{p*}(\sigma^{-1}v\otimes \nu^{\e}_i)\in \pi_*(\Sigma^{-1} V^{\otimes p}_{h\Sigma_p})$ in terms of the Dyer--Lashof operations.
\begin{thm}\label{theorem:james-hopf and dyer-lashof}
Let $v \in V_q$, then 
$$j_{p*}(\sigma^{-1}v\otimes \nu^{\e}_{i})  = 
\begin{cases}
\sigma^{-1}Q^i(v) 
& \mbox{if $p$ is odd, $i\geq 0, \e=0$,}\\
(-1)^q\sigma^{-1}\beta Q^i(v) 
& \mbox{if $p$ is odd, $i\geq 1, \e=1$,}\\
\sigma^{-1}Q^i(v) 
& \mbox{if $p=2$ and $i\geq 0$.}
\end{cases}
$$
\end{thm}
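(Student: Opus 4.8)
The plan is to compare the James-Hopf map $j_p$ with the unstable Hurewicz homomorphism $\tilde{h}$ computed in Proposition~\ref{proposition: hurewicz of hopf invariant one}. Recall that $j_p$ is defined (Definition~\ref{definition:james-hopf map}) as the map adjoint to the Snaith projection $\Pi^V_p$, and $\tilde{h}$ is built from the comultiplication on chains followed by the natural map $G \to P_pG$. The key observation I would establish first is that, for a suspension spectrum object, the composite $\widetilde{C}_*$ of the James-Hopf map recovers the projection onto the $D_p$-summand in the Snaith splitting of $G \circ \suspfreexi$; concretely, the degree-$q$ part of $P_pG(\suspfreexi(\Sigma^{-1}V))$ splits off a copy of $\pi_*(\Sigma^{-1}V^{\otimes p}_{h\Sigma_p})$ via $\widetilde{C}_*(\Pi_p)$ (see~\eqref{equation: inclusion, snaith, p}), and under this identification the map induced by $j_p$ on homotopy agrees, up to the Hurewicz shift $\sigma$, with the $D_p$-component of $\tilde{h}$.

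The main steps, in order: (1) Unwind the adjunction defining $j_p$ so that $\widetilde{C}_*(\deloop\suspfreexi\Sigma^{-1}V^{\otimes p}_{h\Sigma_p})$-valued data is compared with the component $\pi_{*,0}$; here Proposition~\ref{proposition:james-hopf and primitives} already tells us $j_{p*}(\sigma^{-1}v\otimes\nu^\e_i)$ lands in $\pi_{*,0}$, so the target is just $\pi_*(\Sigma^{-1}V^{\otimes p}_{h\Sigma_p})$ by Remark~\ref{remark: second grading, xi-complete}. (2) Identify the composite $P_1(\widetilde{C}_* \circ j_p)$ with the $D_p$-part of the natural transformation $\nu\colon G \to P_pG$ restricted to suspension spectra; this uses that both the Snaith projection $\Pi_p$ and the comultiplication-based map $\nu$ induce equivalences on the relevant Goodwillie layer, combined with the vanishing Corollary~\ref{corollary:vanishing of natural transformations, spectral} to pin down the map uniquely. (3) Apply Proposition~\ref{proposition: hurewicz of hopf invariant one}, which computes $\tilde{h}(\sigma^{-1}v\otimes\nu^\e_i)$ as $Q^i(v)$, $(-1)^q\beta Q^i(v)$, or $Q^i(v)$ in the respective cases, lying in $\pi_*(V^{\otimes p}_{h\Sigma_p})$. (4) Track the suspension/desuspension bookkeeping: $\tilde h$ increases degree by one (Hurewicz), while $j_p$ is a map of $\deloop$-objects, so one divides out a single $\sigma$, converting the class $Q^i(v)\in\pi_*(V^{\otimes p}_{h\Sigma_p})$ into $\sigma^{-1}Q^i(v)\in\pi_*(\Sigma^{-1}V^{\otimes p}_{h\Sigma_p})$, which is exactly the asserted formula.

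The hard part will be step (2): making precise that $\widetilde{C}_*$ applied to the James-Hopf map really does agree with the $D_p$-component of $\tilde h$, rather than merely agreeing after passing to $P_1$ or up to some unit. The subtlety is that $\tilde h$ factors through $P_pG$, whose Taylor tower splits only because $\Mod_\kk$ is $p$-local (Corollary after Proposition~\ref{proposition:chain of omega-infinity}), and the natural transformation $\eta\colon F \to P_pG$ in~\eqref{equation:tate diagonal, commdiag} is used to transport the Tate-diagonal computation of Proposition~\ref{proposition:tate diagonal for spectra} into a statement about $P_pG(f)_*$ via Proposition~\ref{proposition:infinite loops of a map between suspension spectra}. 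I would organize the argument so that the map $f\colon \suspfreexi W \to \suspfreexi(\Sigma^{-1}V)$ representing the class $\sigma^{-1}v\otimes\nu^\e_i$ (with $\widetilde{H}_*(f)=0$) feeds simultaneously into the computation of $\tilde h$ and into a naturality square for $j_p$; then the two computations literally coincide because both are governed by the single map $d_f$ of~\eqref{equation:connection map, suspension}. The remaining bookkeeping — the signs $(-1)^q$ for odd $p$ and $\e=1$, and the passage from $\pi_{*}(V^{\otimes p})^{t\Sigma_p}$ to $\pi_*(V^{\otimes p}_{h\Sigma_p})$ via the canonical map, using the last part of Proposition~\ref{proposition:tate-dyer-lashof operations} — is routine once the structural identification is in place.
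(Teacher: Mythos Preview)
Your approach is correct and matches the paper's: reduce to Proposition~\ref{proposition: hurewicz of hopf invariant one} by showing $j_{p*}(x)=\sigma^{-1}\tilde h(x)$ for $x\in\pi_{*,1}\suspfreexi(\Sigma^{-1}V)$. However, you make step~(2) considerably harder than it needs to be. No vanishing or uniqueness argument is required: the square
\[
\begin{tikzcd}[column sep = large]
\pi_{*}\suspfreexi(\Sigma^{-1}V) \arrow{r}{j_{p*}} \arrow{d}{\tilde{h}}
& \pi_{*}\suspfreexi(\Sigma^{-1}V^{\otimes p}_{h\Sigma_p}) \arrow{d}{h}\\
\pi_{*+1}P_{p}G(\suspfreexi(\Sigma^{-1}V)) \arrow{r}{\widetilde{C}_*(\Pi_p)}
& \pi_{*+1}\widetilde{C}_*(\suspfreexi(\Sigma^{-1}V^{\otimes p}_{h\Sigma_p}))
\end{tikzcd}
\]
commutes \emph{by definition of $j_p$}. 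Indeed, $j_p$ is adjoint to $\Pi_p$, so $\Pi_p=\epsilon\circ\susp j_p$; applying $\widetilde C_*$ and using naturality of the unstable Hurewicz map under $j_p$ gives the commutativity directly. Since the right vertical arrow (stable Hurewicz) is projection onto $\pi_{*,0}$ (Remark~\ref{remark: hurewicz, lambda}) and $\widetilde C_*(\Pi_p)$ is projection onto the $D_p$-summand~\eqref{equation: inclusion, snaith, p}, the identity $j_{p*}=\sigma^{-1}\tilde h$ on $\pi_{*,1}$ follows immediately. There is no need to reintroduce the auxiliary map $f$ or $d_f$; those were already absorbed into the proof of Proposition~\ref{proposition: hurewicz of hopf invariant one}, which you may now simply cite.
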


\begin{proof}

By the definition of the James--Hopf map, the following diagram commutes
$$
\begin{tikzcd}[column sep = large]
\pi_{*}\suspfreexi(\Sigma^{-1}V) \arrow{r}{j_{p*}} \arrow{d}{\tilde{h}} 
& \pi_{*}\suspfreexi(\Sigma^{-1}V^{\otimes p}_{h\Sigma_p}) \arrow{d}{h}\\
\pi_{*+1}P_{p}G(\suspfreexi(\Sigma^{-1}V)) \arrow{r}{\widetilde{C}_*(\Pi_p)}
& \pi_{*+1}\widetilde{C}_*(\suspfreexi(\Sigma^{-1}V^{\otimes p}_{h\Sigma_p})).
\end{tikzcd}
$$
Here $h$ is the stable Hurewicz homomorphism (Remark~\ref{remark: stable hurewicz homomorphism}), $\Pi_p$ is the projection~\eqref{equation: snaith projections}, and $\tilde{h}$ is the unstable Hurewicz homomorphism.
By Remark~\ref{remark: hurewicz, lambda}, the right vertical arrow $h$ is the projection onto the component $\pi_{*,0}\suspfreexi(\Sigma^{-1}V^{\otimes p}_{h\Sigma_p})\subset \pi_{*}\suspfreexi(\Sigma^{-1}V^{\otimes p}_{h\Sigma_p})$.  Since $\widetilde{C}_*(\Pi_p)$ is the projection onto the component $\pi_{*+1}(V^{\otimes p}_{h\Sigma_p}) \subset \pi_{*+1}P_{p}G(\suspfreexi(\Sigma^{-1}V))$, see~\eqref{equation: inclusion, snaith, p}, we obtain that $$j_{p*}(x)=\sigma^{-1}\tilde{h}(x)$$ for any $x\in \pi_{*,1}\suspfreexi(\Sigma^{-1}V)$. Finally, the theorem follows from Proposition~\ref{proposition: hurewicz of hopf invariant one}.
\end{proof}

\begin{rmk}\label{remark: james-hopf, informal}
Informally, Theorem~\ref{theorem:james-hopf and dyer-lashof} means that the James--Hopf invariant $$j_{p*}\colon \pi_*(\susp X) \to \pi_*((\susp X)^{\otimes p}_{h\Sigma_p})\to \widetilde{H}_*((\susp X)^{\otimes p}_{h\Sigma_p})$$ maps a homotopy class detected by a primary cohomology operation $P$ to the dual (in a certain sense) Dyer--Lashof class of $P$. The corresponding result in the category of spaces $\spaces$ could be overlooked due to a sparsity of Hopf invariant one elements, but still can be observed e.g. by using~\cite{Kuhn82}, which relates the James--Hopf invariant with the ordinary Hopf invariant map $H_p\colon \pi_*(\Sigma X) \to \pi_*((\Sigma X)^{\otimes p})$, $X\in\spaces$.
\end{rmk}

\section{Composition product}\label{section: composition product}

This section is devoted to the proof of the \emph{Leibniz identity} for the differentials in the Goodwillie spectral sequence~\eqref{equation:GSS1} with respect to the composition products~\eqref{equation:stable composition}, \eqref{equation:left action}, and~\eqref{equation:right action} on its $E^1$-term, see Theorem~\ref{theorem:leibniz rule in GSS}. Although the composition products can be defined on the entire $E^1$-term (see~\cite{Ching21} and~\cite{AroneChing11}), we are able to prove the Leibniz rule only for the differentials which originate from the first row, see Remark~\ref{remark: composition product on the first page}.

Our proof relies heavily on the work~\cite{Heuts21}. In Section~\ref{section: category of pairs} we review the \emph{category of pairs}. Given an endofunctor $H\colon \calC \to \calC$, we construct the category $\calC_H$ whose objects are pairs $(X,\eta\colon X\to H(X))$, $X\in \calC$. Given $X,Y\in \calC$, we observe in Corollary~\ref{corollary:splitting of the mapping space} that the mapping space $\map_{\calC_H}((X,0),(Y,0))$ splits as the product
$$\map_{\calC_H}((X,0),(Y,0))\simeq \map_{\calC}(X,Y) \times \map_{\calC}(X,\Sigma^{-1}HY). $$
We show in Corollary~\ref{corollary:leibniz rule} that this splitting behaves as a \emph{square-zero extension} with respect to the composition products. This observation proves a variant of the Leibniz rule for the categories of pairs.

In Section~\ref{section: goodwillie differentials} we recall the definition of the \emph{strong $n$-excisive approximation} $\calP_n(\calC)$ of the compactly generated category $\calC$. The category $\calP_n(\calC)$ encodes mapping spaces $\map_{\calC}(X,P_n(Y))$. By the results of~\cite{Heuts21}, the natural functor $\calP_n(\calC) \to \calP_{n-1}(\calC)$ is a base change of the functor $\Sp(\calC) \to \Sp(\calC)_H$ for a certain endofunctor $H$, see Proposition~\ref{proposition:leibniz rule for Goodwillie}. In this way, the compatibility of the Taylor tower of the mapping spaces $\map_{\calC}(X,P_n(Y))$ with the composition products is the same as for the category of pairs. The last observation proves the main theorem~\ref{theorem:leibniz rule in GSS}.

\subsection{Category of pairs}\label{section: category of pairs}

Recall from~\cite[Section~5.5.7]{HTT} that an $\infty$-category is called compactly generated if it is presentable and $\omega$-accessible. Alternatively, an $\infty$-category is compactly generated if and only if it is equivalent to $\Ind(\calD)=\Ind_{\omega}(\calD)$, where~$\calD$ is a (small) $\infty$-category which admits finite colimits. Here $\Ind_{\omega}(\calD)$ denotes the free completion of $\calD$ with respect to ($\omega$-)filtered colimits.

Let $\calC$ be a stable compactly generated $\infty$-category. Let denote by $\calC^\omega$ the full subcategory of $\calC$ spanned on compact objects. We denote the embedding $\calC^\omega \hookrightarrow \calC$ by $\iota$. 

Let $H\colon \calC\to \calC$ be an endofunctor. We write $\calC^\omega_H$ for \emph{the $\infty$-category of pairs} of objects $X \in\calC^\omega$ equipped with a map $X\to H(X)$. More precisely, $\calC^\omega_H$ can be defined as the pullback of the following span diagram
$$\calC^\omega \xrightarrow{(\iota,H)} \calC \times \calC \xleftarrow{(\mathrm{ev}_0,\mathrm{ev}_1)} \calC^{\Delta^1}. $$
We write $U\colon \calC^{\omega}_H \to \calC$ for the forgetful functor, $U(X,\eta)=X$. Let $X,Y\in \calC^{\omega}_H$, then there is an equalizer diagram of mapping spaces
\begin{equation}\label{equation: maps in category of pairs}
\begin{tikzcd}
\map_{\calC^{\omega}_H}(X,Y) \arrow{r}
&
\map_{\calC}(UX,UY) \arrow[shift left=.6ex]{r}
  \arrow[shift right=.6ex,swap]{r}
&
\map_{\calC}(UX,HUY).
\end{tikzcd}
\end{equation}

%
%

\begin{lmm}\label{lemma: pairs forgetful}
The forgetful functor $U\colon \calC^{\omega}_H \to \calC^{\omega}$ preserves colimits and it is conservative.
\end{lmm}

\begin{proof}
The functor $U$ is the pullback of the conservative functor $$(\mathrm{ev}_0,\mathrm{ev}_1)\colon \calC^{\Delta^1} \to \calC\times \calC.$$ Therefore, $U$ is conservative as well. Finally, $U$ preserves colimits by the formula~\eqref{equation: maps in category of pairs}.
\end{proof}

We note that this construction is functorial, i.e. any natural transformation $\eta\colon H_1 \to H_2$ defines a functor 
$$U_{\eta}\colon \calC^\omega_{H_1} \to \calC^\omega_{H_2},$$
which commutes with the projections to $\calC^\omega$. More precisely, we have a functor:
$$\calC^\omega_{(-)}\colon \mathrm{End}(\calC) \to \Catinfty_{\infty}, $$
where $\Catinfty_{\infty}$ is the $\infty$-category of (small) $\infty$-categories, and by abuse of notation, $\mathrm{End}(\calC) = \Fun(\calC^\omega,\calC)$. Since $\calC$ is stable, the category $\mathrm{End}(\calC)$ is pointed with zero object is the zero endofunctor. We have $\calC^\omega_0\simeq \calC^\omega$, thus the functor $\calC^\omega_{(-)}$ factorizes via
$$\Catinfty_{\infty,\calC^\omega\!/\!\calC^\omega} \to \Catinfty_{\infty},$$
where $\Catinfty_{\infty,\calC^\omega\!/\!\calC^\omega} $ is the category of $\infty$-categories over $\calC^\omega$ with a fixed section, i.e. $\Catinfty_{\infty,\calC^\omega\!/\!\calC^\omega}= \left(\Catinfty_{\infty\, /\!\calC^\omega}\right)_*$, the category of pointed objects \cite[Definition~7.2.2.1]{HTT} in the overcategory $\Catinfty_{\infty\, /\!\calC^\omega}$, see~\cite[Section~1.2.9]{HTT}. Informally, the last sentence means that we have the projection functor $U\colon \calC^\omega_H \to \calC^\omega$ given by $$U(X,X\to H(X))=X,$$ and its section $s\colon \calC^\omega \to \calC^\omega_H$ given by $$s(X)=(X, X\xrightarrow{0} H(X)).$$ Moreover, given a natural transformation $\eta\colon H_1 \to H_2$, the functor $U_{\eta}\colon \calC^\omega_{H_1} \to \calC^\omega_{H_2}$ commutes with $U$ and $s$.

Next, we show that the functor  $U_{\eta}\colon \calC^\omega_{H_1} \to \calC^\omega_{H_2}$ commutes with colimits. 
\begin{prop}\label{proposition:forgetful functor preserves colimits}
The functor $\calC^\omega_{(-)}\colon \mathrm{End}(\calC)\to \Catinfty_{\infty}$ factorizes via the subcategory $\Catinfty_{\infty}^{\mathrm{Rex}}$ of $\Catinfty_{\infty}$ spanned on categories which admit finite colimits and functors preserving finite colimits.
\end{prop}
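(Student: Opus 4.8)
The plan is to reduce the statement to two separate claims: first, that for any endofunctor $H\colon \calC^c \to \calC$ the category of pairs $\calC^c_H$ admits finite colimits, and second, that for a natural transformation $\eta\colon H_1 \to H_2$ the induced functor $U_\eta\colon \calC^c_{H_1} \to \calC^c_{H_2}$ preserves them. Both will follow by analyzing the defining pullback square
$$\calC^c_H \simeq \calC^c \times_{\calC\times\calC} \calC^{\Delta^1}$$
and tracking where colimits are computed.

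First I would recall that $\calC$ is stable, hence admits all finite colimits, and that the arrow category $\calC^{\Delta^1}$ inherits finite colimits computed pointwise; moreover the evaluation functors $(\mathrm{ev}_0,\mathrm{ev}_1)\colon \calC^{\Delta^1}\to \calC\times\calC$ preserve them. The subtlety is that $\calC^c\hookrightarrow \calC$ need not preserve infinite colimits, but it \emph{does} preserve finite colimits, since finite colimits of compact objects are compact (this uses that $\calC^c$ is closed under finite colimits in a compactly generated category, as $\calC^c$ is the idempotent completion of a category with finite colimits). Thus all three functors in the span $\calC^c \xrightarrow{(\iota,H)} \calC\times\calC \xleftarrow{(\mathrm{ev}_0,\mathrm{ev}_1)} \calC^{\Delta^1}$ — assuming $H$ itself preserves finite colimits, which I will want to have verified or arranged for the endofunctors actually occurring (the derivatives/stabilization endofunctors of~\cite{Heuts21} are exact) — send finite colimits to finite colimits. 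By the standard fact that a pullback of $\infty$-categories along functors preserving a given class of colimits, over a base where that class is computed compatibly, again admits that class of colimits and the projections preserve them (see~\cite[Section~5.4.5]{HTT} or the analogous statement for limits in~\cite{HTT}), one concludes that $\calC^c_H$ admits finite colimits, computed componentwise: a finite colimit of pairs $(X_i, \eta_i\colon X_i\to H(X_i))$ is $(\colim X_i, \colim \eta_i\colon \colim X_i \to \colim H(X_i)\simeq H(\colim X_i))$.

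For the functoriality, given $\eta\colon H_1\to H_2$, the functor $U_\eta$ is induced on pullbacks by the commuting diagram of spans (identity on $\calC^c$ and on $\calC^{\Delta^1}$, and postcomposition with $\eta$ on one factor of $\calC\times\calC$); since each vertex functor preserves finite colimits and the induced map on pullbacks of such diagrams preserves whatever is computed componentwise, $U_\eta$ preserves finite colimits. Concretely, $U_\eta(X,\phi) = (X, H_1(X)\xrightarrow{\eta_X} H_2(X))$ post-composed appropriately, and this is a componentwise operation, so it commutes with the componentwise colimits just described. This also re-proves along the way that $U$ and $s$ are exact, which is consistent with the claimed factorization through $\Catinfty_\infty^{\mathrm{Rex}}$.

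The main obstacle I anticipate is purely bookkeeping: making precise that a finite colimit in the lax/oplax-looking pullback $\calC^c\times_{\calC\times\calC}\calC^{\Delta^1}$ really is computed ``one variable at a time'' and that the comparison map $\colim H(X_i)\to H(\colim X_i)$ being an equivalence (i.e. $H$ finitary on finite colimits) is exactly what licenses the componentwise formula. If one does not want to assume $H$ preserves finite colimits in general, the honest move is to observe that $\mathrm{End}(\calC)=\Fun(\calC^c,\calC)$ should here be read as the relevant subcategory (or that the statement is only invoked for exact $H$), and to note that for an \emph{arbitrary} $H$ the category $\calC^c_H$ still admits finite colimits but the projection $U$ may fail to preserve them unless $H$ is exact — so the cleanest formulation restricts attention to exact endofunctors, which is all that~\cite{Heuts21} requires. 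I would state the proof for exact $H$ and remark that this suffices for every application in the paper.
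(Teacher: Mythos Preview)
Your strategy is sound, but your handling of non-exact $H$ contains an error. The finite colimit of a diagram $(X_i,\phi_i\colon X_i\to H(X_i))$ in $\calC^c_H$ is $(\colim_i X_i,\phi)$ with
\[
\phi\colon \colim_i X_i \xrightarrow{\;\colim_i \phi_i\;} \colim_i H(X_i) \xrightarrow{\;\mathrm{can}\;} H(\colim_i X_i),
\]
where the second arrow is the canonical comparison map, which need \emph{not} be an equivalence. A direct check against an arbitrary $(Y,\psi)$ shows this is the colimit cocone: the required square over $(Y,\psi)$ commutes by functoriality of $H$ and the universal property of $\colim_i X_i$, using only that $H$ is a functor. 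Hence $\calC^c_H$ has finite colimits and $U$ preserves them for \emph{every} $H$; your assertions that ``the comparison map being an equivalence is exactly what licenses the componentwise formula'' and that ``$U$ may fail to preserve them unless $H$ is exact'' are both false, and the proposed restriction to exact endofunctors is unnecessary.

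The paper's proof is terser and slightly different in emphasis: rather than analyzing all three legs of the span, it observes that $U$ is the pullback of the conservative, colimit-preserving functor $(\mathrm{ev}_0,\mathrm{ev}_1)\colon\calC^{\Delta^1}\to\calC\times\calC$. Conservativity passes to pullbacks immediately; combined with the fact (verified directly as above) that each $U_H$ preserves finite colimits, the identity $U_{H_2}\circ U_\eta \simeq U_{H_1}$ then forces every $U_\eta$ to preserve them as well. Your detour through a general ``pullback of right-exact spans'' principle is precisely what generated the spurious exactness hypothesis on $H$; working with $U$ alone avoids it.
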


\begin{proof}
By Lemma~\ref{lemma: pairs forgetful}, the forgetful functor $U\colon \calC^\omega_{H}\to \calC^\omega$ preserves colimits and is conservative. This implies the assertion.
\end{proof}

\begin{prop}\label{proposition:pairs respects fiber sequences}
The functor $\calC^\omega_{(-)}\colon \End(\calC) \to \Catinfty_{\infty, \calC^\omega\!/\!\calC^\omega}$ preserves fiber sequences. In other words, if  $F \to G \to H$ is a fiber sequence in $\End(\calC)$, then
$$
\begin{tikzcd}
\calC^\omega_{F} \arrow{r} \arrow{d}{U} 
& \calC^\omega_{G} \arrow{d} \\
\calC^\omega \arrow{r}{s} 
& \calC^\omega_{H}
\end{tikzcd}
$$
is a pullback diagram in $\Catinfty_{\infty}$.
\end{prop}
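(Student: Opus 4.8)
The plan is to reduce the statement to a fiberwise computation, using that everything in sight is a pullback built out of the stable category $\calC$. Recall that a fiber sequence $F \to G \to H$ in $\End(\calC) = \Fun(\calC^c,\calC)$ is computed pointwise, and since $\calC$ is stable this means that for each $X \in \calC^c$ the square
$$
\begin{tikzcd}
F(X) \arrow{r} \arrow{d} & G(X) \arrow{d} \\
0 \arrow{r} & H(X)
\end{tikzcd}
$$
is a pullback (equivalently pushout) in $\calC$. First I would unwind the definition of $\calC^c_{(-)}$ as the pullback $\calC^c \times_{\calC\times\calC} \calC^{\Delta^1}$, so that an object of $\calC^c_G$ is the data of $X \in \calC^c$ together with a morphism $\alpha_X\colon X \to G(X)$ in $\calC$, and likewise for $F$, $H$. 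The section $s\colon \calC^c \to \calC^c_H$ picks out the zero map $X \xrightarrow{0} H(X)$. Hence an object of the claimed pullback $\calC^c_F \times_{\calC^c_H} \calC^c$ — or rather, unwinding the pullback $\calC^c_G \times_{\calC^c_H} \calC^c$ formed via $U_\eta\colon \calC^c_G \to \calC^c_H$ and $s$ — consists of $X \in \calC^c$ together with $\alpha_X\colon X \to G(X)$ and a nullhomotopy of the composite $X \xrightarrow{\alpha_X} G(X) \to H(X)$. By the universal property of the pullback (fiber) $F(X) = \fib(G(X) \to H(X))$, this is exactly the data of a lift $X \to F(X)$, i.e. an object of $\calC^c_F$.

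To make this precise at the level of $\infty$-categories rather than objects, I would argue as follows. Both $\calC^c_G$ and the pullback $\calC^c_G \times_{\calC^c_H} \calC^c$ are categories over $\calC^c$ via the forgetful functors $U$, which by Proposition~\ref{proposition:forgetful functor preserves colimits} (and its proof) are conservative. So it suffices to check the claimed equivalence $\calC^c_F \simeq \calC^c_G \times_{\calC^c_H} \calC^c$ on fibers over each $X \in \calC^c$, i.e. after pulling back along $\{X\} \to \calC^c$. The fiber of $U\colon \calC^c_K \to \calC^c$ over $X$ is the mapping space $\map_{\calC}(X, K(X))$ (this is immediate from the description of $\calC^c_K$ as a pullback involving $\calC^{\Delta^1}$ and the evaluation functors). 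Under this identification, the functor $U_\eta$ induces on fibers the postcomposition map $\map_{\calC}(X, G(X)) \to \map_{\calC}(X, H(X))$, and $s$ induces the inclusion of the basepoint (the zero map). The fiber over $X$ of the claimed pullback is therefore
$$
\map_{\calC}(X, G(X)) \times_{\map_{\calC}(X,H(X))} \{0\},
$$
which is by definition the homotopy fiber of $\map_{\calC}(X,G(X)) \to \map_{\calC}(X,H(X))$ over the zero map. Since $\calC$ is stable, $\map_{\calC}(X,-)$ is an exact functor to spectra (or at least preserves fiber sequences of the relevant form), so this homotopy fiber is $\map_{\calC}(X, \fib(G(X) \to H(X))) = \map_{\calC}(X, F(X))$, which is the fiber of $U\colon \calC^c_F \to \calC^c$ over $X$. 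One then checks these identifications are compatible as $X$ varies — that is, they assemble into an equivalence of Cartesian fibrations over $\calC^c$ — which follows because all the comparison maps are induced by the single natural transformation $\eta$ and the universal property of the fiber $F$ in $\End(\calC)$.

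The main technical point I expect to need care with is the passage from the pointwise/fiberwise statement to the global equivalence of $\infty$-categories over $\calC^c$: one must verify that $\calC^c_{(-)}$ genuinely lands in $\Catinfty_{\infty,\calC^c\!/\!\calC^c}$ compatibly and that a functor over $\calC^c$ inducing equivalences on all fibers and commuting with conservative forgetful functors is an equivalence. The cleanest route is probably to observe that $\calC^c_{(-)}$, as a functor $\End(\calC) \to \Catinfty_{\infty,\calC^c\!/\!\calC^c}$, is obtained by applying the functor $\Fun(\calC^c, -)$-type construction (sections of a family) to the pointwise pullback, and that pullbacks of $\infty$-categories along a fixed conservative colimit-preserving functor are detected fiberwise; alternatively one can cite that $\calC^c_K$ is the total space of a Cartesian fibration classified by $X \mapsto \map_{\calC}(X,K(X))$ and that this classifying functor sends fiber sequences in $K$ to fiber sequences of spaces, since $\calC$ is stable. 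Either way, stability of $\calC$ is what does the real work: it is exactly what guarantees that the square defining $F \to G \to H$ is also a pushout, and hence that the corresponding square of categories of pairs is a pullback.
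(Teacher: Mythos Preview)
Your approach is morally correct but takes a longer route than the paper, and the gap you yourself flag is real. The paper's proof is a one-line representability argument: for any test object $(\calD,f,s')$ of $\Catinfty_{\infty,\calC^c\!/\!\calC^c}$, one checks directly from the pullback definition of $\calC^c_K$ that
\[
\Fun_{\calC^c\!/\!\calC^c}\big((\calD,f,s'),(\calC^c_K,U,s)\big)\simeq \nat(f, K\circ f),
\]
a mapping space in the stable functor category $\Fun(\calD,\calC)$. Since the right-hand side manifestly preserves fiber sequences in $K$, Yoneda finishes the proof. Your fiberwise argument is exactly the special case $(\calD,f,s') = (\{X\},X,\id)$ of this identification, and that is why you are left with the problem of globalizing.

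On your suggested fixes: the Cartesian-fibration route does not work as stated, because $U\colon \calC^c_K \to \calC^c$ is generally \emph{not} a (co)Cartesian fibration --- the dependence of the fiber on $X$ involves both the source and the target via $X\mapsto \map_\calC(X,K(X))$, so there is no canonical pushforward or pullback of a pair $(X,\alpha)$ along a map in $\calC^c$. Your other suggestion, phrased as ``sections of a family'', is essentially groping toward the representability statement above: once you test against an arbitrary $(\calD,f,s')$ rather than single objects, the globalization issue disappears. So the cleanest repair of your argument is precisely to replace the pointwise check with the paper's corepresentability formula.
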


\begin{proof}
Let $(\calD,f,s')$ be an object of $\Catinfty_{\infty,\calC^\omega\!/\!\calC^\omega}$, where $f\colon \calD \to \calC^\omega$ and $s'\colon \calC^\omega \to \calD$ are functors and $s'$ is a section of $f$. Then, for every endofunctor $K\in \End(\calC)$, there is an equivalence of pointed spaces
$$\Fun((\calD,f,s'),(\calC^\omega_K,U,s))\simeq \nat(f,K\circ f), $$
which implies the proposition.
\end{proof}

\begin{cor}\label{corollary:pairs respects fiber sequences}
Let $F \to G \to H$ be a fiber sequence in $\End(\calC)$. Then
$$
\begin{tikzcd}
\Ind(\calC^\omega_{F}) \arrow{r} \arrow{d}{U} 
& \Ind(\calC^\omega_{G}) \arrow{d} \\
\calC \arrow{r}{s} 
& \Ind(\calC^\omega_{H})
\end{tikzcd}
$$
is a pullback diagram in $\Pre^L_{\omega}$.
\end{cor}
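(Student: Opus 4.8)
The plan is to deduce Corollary~\ref{corollary:pairs respects fiber sequences} from Proposition~\ref{proposition:pairs respects fiber sequences} by applying the Ind-completion functor $\Ind(-)$ to the pullback square of $\infty$-categories appearing there. First I would recall that, by Proposition~\ref{proposition:forgetful functor preserves colimits}, all four categories in the square of Proposition~\ref{proposition:pairs respects fiber sequences} lie in $\Catinfty_\infty^{\mathrm{Rex}}$, the subcategory of small $\infty$-categories admitting finite colimits with finite-colimit-preserving functors, and all four functors preserve finite colimits; in particular the square is a pullback square \emph{in} $\Catinfty_\infty^{\mathrm{Rex}}$, not merely in $\Catinfty_\infty$ (a pullback of small $\infty$-categories along Rex functors, all of whose legs are Rex, is computed in $\Catinfty_\infty^{\mathrm{Rex}}$; here one uses that $\calC^c_F \simeq \calC^c_G \times_{\calC^c_H}\calC^c$ already as categories with finite colimits, by the explicit pullback description of $\calC^c_{(-)}$ and the fact that $U$ creates colimits).

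Next I would invoke the key formal input: the functor $\Ind\colon \Catinfty_\infty^{\mathrm{Rex}} \to \Pre^L$ preserves limits. This is \cite[Proposition~5.5.7.10, Lemma~5.5.7.11]{HTT} (the assignment $\calD \mapsto \Ind(\calD)$ is part of an equivalence between $\Catinfty_\infty^{\mathrm{Rex}}$ and the subcategory of $\Pre^L$ of compactly generated categories and compact-object-preserving colimit functors, and this equivalence, together with the inclusion into $\Pre^L$, preserves limits; alternatively $\Ind(\calD) = \Fun^{\mathrm{lex}}(\calD^{op},\spaces)$ is manifestly a limit-preserving construction in $\calD$). Applying $\Ind$ to the pullback square of Proposition~\ref{proposition:pairs respects fiber sequences} therefore yields a pullback square in $\Pre^L$. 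Finally I would identify the terms: $\Ind(\calC^c) \simeq \calC$ since $\calC$ is compactly generated (this is the standing hypothesis on $\calC$, so $\calC \simeq \Ind(\calC^c)$), and the remaining three corners are $\Ind(\calC^c_F)$, $\Ind(\calC^c_G)$, $\Ind(\calC^c_H)$ by definition; the functor $\Ind(s)$ is the section $s\colon \calC \to \Ind(\calC^c_H)$ and $\Ind(U)$ is the forgetful functor $U$. This gives exactly the asserted pullback diagram in $\Pre^L$.

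The one point requiring a little care — and the main obstacle — is checking that the square of Proposition~\ref{proposition:pairs respects fiber sequences} is genuinely a pullback in $\Catinfty_\infty^{\mathrm{Rex}}$ and not just in $\Catinfty_\infty$, since $\Ind$ only preserves limits indexed in the former. For this I would argue as follows: the underlying square is a pullback in $\Catinfty_\infty$ by Proposition~\ref{proposition:pairs respects fiber sequences}; the limit $\calC^c_G \times_{\calC^c_H} \calC^c$ formed in $\Catinfty_\infty^{\mathrm{Rex}}$ has the same underlying $\infty$-category (a pullback of categories with finite colimits along colimit-preserving functors, one of which — namely $\calC^c_G \to \calC^c_H$, being $U_\eta$ — creates colimits over $\calC^c$, inherits finite colimits computed objectwise, by Proposition~\ref{proposition:forgetful functor preserves colimits}); hence the canonical comparison functor from $\calC^c_F$ to this Rex-pullback is an equivalence of underlying categories and automatically Rex, so it is an equivalence in $\Catinfty_\infty^{\mathrm{Rex}}$. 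With that settled, the two displayed steps above are purely formal. I would also remark that the conservativity of $U$ (Proposition~\ref{proposition:forgetful functor preserves colimits}) is inherited by $\Ind(U)$, which, while not needed for the pullback statement itself, is consistent with the labeling of $U$ in the corollary.
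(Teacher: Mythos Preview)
Your proposal is correct and follows essentially the same approach as the paper: use Proposition~\ref{proposition:forgetful functor preserves colimits} to place the square of Proposition~\ref{proposition:pairs respects fiber sequences} in $\Catinfty_\infty^{\mathrm{Rex}}$, then apply the fact that $\Ind\colon \Catinfty_\infty^{\mathrm{Rex}} \to \Pre^L$ preserves limits. The paper's proof is the one-liner ``Follows by Proposition~\ref{proposition:forgetful functor preserves colimits} and~\cite[Appendix~A]{Heuts21}'', where the cited appendix supplies the limit-preservation statement you invoke via \cite{HTT}; your version simply unpacks the implicit verification that the pullback is computed in $\Catinfty_\infty^{\mathrm{Rex}}$.
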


\begin{proof}
Follows by Proposition~\ref{proposition:forgetful functor preserves colimits} and~\cite[Appendix~A]{Heuts21}.
\end{proof}

Let us denote by $\calC_H$ the ind-completion $\Ind(\calC^\omega_H)$ of the category $\calC^\omega_H$, $H\in\End(\calC)$. By the definition, the category $\calC_H$ is compactly generated. We warn the reader that the category $\Ind(\calC^\omega_H)$ is \emph{not} equivalent to the pullback in $\Pre^L$ of the span diagram
$$\calC \xrightarrow{(\id,H)} \calC \times \calC \xleftarrow{(\mathrm{ev}_0,\mathrm{ev}_1)} \calC^{\Delta^1}, $$
except the case when the endofunctor $H$ commutes with colimits.

By Proposition~\ref{proposition:forgetful functor preserves colimits}
and~\cite[Corollary~5.5.2.9]{HTT}, the forgetful functor $$U\colon \Ind(\calC^\omega_H) \to \calC, \;\; H\in \End(\calC)$$ admits a right adjoint $C\colon \calC \to \Ind(\calC^\omega_H)$. Moreover, by the same argument, the section $s\colon \calC \to \Ind(\calC^\omega_H)$ also admits a right adjoint $r\colon \Ind(\calC^\omega_H) \to \calC$. Since $s$ preserves compact objects, the right adjoint $r$ is the left Kan extension of the functor $$r|_{\calC^\omega_H} \colon \calC^\omega_H \to \calC, \;\; \big(X\xrightarrow{\eta} H(X)\big) \mapsto \fib(\eta).$$

\begin{prop}\label{proposition:split fiber sequence}
Let $H\in \End(\calC)$ be an endofunctor and let $s,r$ be as above. Then there is a split fiber sequence in the category $\End(\calC)$:
$$\Sigma^{-1}H \to rs \to \id_{\calC}, $$
where a splitting of the map $rs \to \id_{\calC}$ is given by the unit map $\id_{\calC} \to rs$ for the adjunction $s\dashv r$.
\end{prop}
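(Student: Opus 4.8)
The statement to prove is that for an endofunctor $H\in\End(\calC)$ on a stable compactly generated $\infty$-category, with $s\colon\calC\to\Ind(\calC^c_H)$ the canonical section and $r$ its right adjoint (the fiber functor $r(X,\eta)=\fib(\eta)$), there is a split fiber sequence $\Sigma^{-1}H\to rs\to\id_{\calC}$ in $\End(\calC)$, with the splitting of $rs\to\id_{\calC}$ furnished by the unit $\id_{\calC}\to rs$ of the adjunction $s\dashv r$. The plan is to compute $rs$ directly by unwinding the two functors. First I would describe $s$ explicitly on compact objects and extend by colimits: $s(X)=(X,\ 0\colon X\to H(X))$, so that $rs(X)=\fib(0\colon X\to H(X))$. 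Since $\calC$ is stable, the fiber of a zero map $X\to H(X)$ sits in a fiber sequence $\fib(0)\to X\to H(X)$ which splits canonically, giving $rs(X)\simeq X\oplus\Omega H(X)\simeq X\oplus\Sigma^{-1}H(X)$, naturally in $X$. This already produces the objectwise identification; the content is to promote it to a genuine fiber sequence of \emph{functors} compatible with the claimed maps.

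The key steps, in order, would be: (1) Identify $rs$ as the functor $X\mapsto\fib(0_{X,H(X)})$ by composing the formulas for $s$ and $r$ and checking both functors preserve colimits (Proposition~\ref{proposition:forgetful functor preserves colimits} gives that $U$, hence by adjunction the relevant pieces, interact well with colimits; $r$ as a right adjoint to $s$ exists by the same discussion), so the identification on $\calC^c$ extends uniquely to $\calC$. (2) Exhibit the natural fiber sequence $\Sigma^{-1}H(X)\to rs(X)\to X$: this is the defining fiber sequence for the fiber of the zero map, where the map $rs(X)\to X$ is $r$ applied to the canonical map $s(X)\to s(X)$... more precisely it is induced by $U$-compatibility, i.e. it is the counit-type comparison $rs\to\id$ described in the statement. (3) Identify the map $rs\to\id_{\calC}$ with the unit-adjoint description: the section $s$ has $r$ as right adjoint, and the composite $\id\xrightarrow{\text{unit}}rs$ followed by $rs\to\id$ is the identity — this is where ``split'' comes from. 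Concretely, the unit $\id\to rs$ for $s\dashv r$ classifies, for each $X$, the map $X\to\fib(0\colon X\to H(X))$ that is the inclusion of the direct summand $X$ in $X\oplus\Sigma^{-1}H(X)$; composing with the projection $rs(X)\to X$ recovers $\id_X$. (4) Conclude that the fiber of $rs\to\id$ is $\Sigma^{-1}H$ and that the whole sequence splits via the unit.

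The main obstacle I expect is step (2)–(3): making the identification of maps genuinely natural and checking that the abstractly-defined counit map $rs\to\id$ (coming from $U s=\id$ and the universal property of $r$) agrees on the nose with the projection $X\oplus\Sigma^{-1}H(X)\to X$ coming from the splitting of the fiber sequence of the zero map. Both are natural transformations $rs\to\id_{\calC}$, so it suffices to compare them after evaluating at compact $X$ and then invoke colimit-preservation; on a compact object one reduces to the statement that, in a stable category, the fiber of $0\colon X\to Y$ is canonically $X\oplus\Omega Y$ with the map to $X$ being the projection, which is formal. One subtlety to be careful about: the category $\Ind(\calC^c_H)$ is \emph{not} the naive pullback unless $H$ preserves colimits (as warned in the text after Corollary~\ref{corollary:pairs respects fiber sequences}), so all the maps $s$, $r$, and the comparison must be built at the level of $\calC^c_H$ first and then ind-completed, using Proposition~\ref{proposition:forgetful functor preserves colimits}; I would make this explicit rather than argue directly in $\calC_H$. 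Once the objectwise splitting and the naturality of the two comparison maps are in hand, the split fiber sequence $\Sigma^{-1}H\to rs\to\id_{\calC}$ in $\End(\calC)=\Fun(\calC^c,\calC)$ follows immediately, since fiber sequences in a functor category are detected objectwise.
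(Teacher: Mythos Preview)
Your approach is correct and in fact more direct than the paper's. You compute $rs(X)$ head-on from the explicit formulas $s(X)=(X,0)$ and $r(Y,\eta)=\fib(\eta)$, arriving immediately at $rs(X)=\fib(0\colon X\to H(X))$ and the split fiber sequence. The paper instead goes through an equalizer description of mapping spaces in $\calC_H$, which yields an equalizer $\id_{\calC_H}\to CU\rightrightarrows CHU$ in $\End(\calC_H)$; pre- and post-composing with $s$ and $r$ (using $Us\simeq\id$, $rC\simeq\id$) gives an equalizer $rs\to\id_{\calC}\rightrightarrows H$ whose two parallel arrows are both zero. That route is more structural and sets up the ambient adjunction $U\dashv C$ explicitly, but ultimately lands on the same identification $rs\simeq\fib(0\colon\id\to H)$.

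Two small cleanups for your write-up. First, your worry about ind-completion largely dissolves once you remember that $\End(\calC)=\Fun(\calC^c,\calC)$: the fiber sequence only needs to be established on compact objects, where $s(X)\in\calC^c_H$ and the formula for $r$ applies verbatim. Second, your phrase ``counit-type comparison $rs\to\id$'' is a slight misnomer: there is no counit of that shape. The map $rs\to\id_{\calC}$ is simply the natural transformation $r\Rightarrow U$ (sending $\fib(\eta)\to Y$) precomposed with $s$, i.e.\ the tautological projection $\fib(0\colon X\to H(X))\to X$. That the unit $\id\to rs$ splits this projection is not a triangle identity but the elementary fact that the section $X\to\fib(0)$ induced by the constant nullhomotopy projects back to $\id_X$; you do say this correctly later, so just tighten the language.
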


\begin{proof}
Let $X,Y\in \Ind(\calC^\omega_H)$. There is an equalizer diagram of mapping spaces
$$
\begin{tikzcd}
\map_{\calC_H}(X,Y) \arrow{r}
&
\map_{\calC}(UX,UY) \arrow[shift left=.6ex]{r}
  \arrow[shift right=.6ex,swap]{r}
&
\map_{\calC}(UX,HUY),
\end{tikzcd}
$$
which induces the equalizer diagram in $\End(\calC_H)$
$$
\begin{tikzcd}
\id_{\calC_H} \arrow{r}
&
CU \arrow[shift left=.6ex]{r}
  \arrow[shift right=.6ex,swap]{r}
&
CHU.
\end{tikzcd}
$$
We postcompose this diagram with the right adjoint $r$ and we precompose it  with the section $s$. Since $Us\simeq \id$ and $rC\simeq \id$, we obtain the following equalizer diagram
$$
\begin{tikzcd}
rs \arrow{r}
&
\id_{\calC} \arrow[shift left=.6ex]{r}
  \arrow[shift right=.6ex,swap]{r}
&
H.
\end{tikzcd}
$$
Finally, since both natural transformations $\id_{\calC} \to H$ on the right hand side are trivial, the proposition follows. 
\end{proof}


Since the space $\map_{\calC_H}(sX,sY), X,Y\in \calC$ is equivalent to the mapping space $\map_{\calC}(X,rsY)$, we observe that $\map_{\calC_H}(sX,sY)$ is naturally an infinite loop space. Another way to see it is to observe that the functor $s\colon \calC \to \calC_H$ preserves direct sums.

\begin{cor}\label{corollary:splitting of the mapping space}
Let $X,Y\in\calC$ be a pair of objects and $H\in \End(\calC)$ be an endofunctor. Then the map
$$U_{X,Y}\colon \map_{\calC_{H}}(sX,sY) \to \map_{\calC}(X,Y) \in \spaces $$
is a map of infinite loop spaces and it is a principal fibration. Moreover, the fiber of the map $U_{X,Y}$ over the zero map $X\xrightarrow{0}Y$ is equivalent to the mapping space $\map_{\calC}(X,\Sigma^{-1}HY)$. \qed
\end{cor}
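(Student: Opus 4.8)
The plan is to read off Corollary~\ref{corollary:splitting of the mapping space} as an immediate consequence of Proposition~\ref{proposition:split fiber sequence} together with the adjunction $s \dashv r$ established just before it. First I would record the identification of mapping spaces
$$\map_{\calC_H}(sX, sY) \simeq \map_{\calC}(X, rsY),$$
which holds because $s$ is left adjoint to $r$; this is exactly the sentence preceding the corollary. Under this identification the functor $U_{X,Y}$ becomes the map on mapping spaces induced by the counit-type natural transformation $rs \to \id_{\calC}$ of Proposition~\ref{proposition:split fiber sequence} (equivalently, $U_{X,Y}$ is postcomposition with the map $rsY \to Y$), so all structural claims about $U_{X,Y}$ can be deduced from the properties of that natural transformation.

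Next I would invoke the split fiber sequence
$$\Sigma^{-1}H \to rs \to \id_{\calC}$$
in $\End(\calC)$ from Proposition~\ref{proposition:split fiber sequence}. Evaluating at $Y$ and applying $\map_{\calC}(X, -)$ — which sends fiber sequences in the stable category $\calC$ to fiber sequences of spaces — produces a fiber sequence
$$\map_{\calC}(X, \Sigma^{-1}HY) \to \map_{\calC}(X, rsY) \to \map_{\calC}(X, Y),$$
i.e. the fiber of $U_{X,Y}$ over the basepoint (the zero map $X \xrightarrow{0} Y$) is $\map_{\calC}(X, \Sigma^{-1}HY)$, as claimed. Because the sequence of functors is \emph{split} — the splitting of $rs \to \id_{\calC}$ being the unit $\id_{\calC} \to rs$ — the map $U_{X,Y}$ is a split surjection on connected components and, more importantly, comes from a fiber sequence of infinite loop spaces: all three functors $\Sigma^{-1}H$, $rs$, $\id_{\calC}$ land in the stable category $\calC$, so $\map_{\calC}(X, -)$ applied to them yields spectra (connective or not, but in any case grouplike $E_\infty$-spaces), and $U_{X,Y}$ is the infinite-loop map associated to a map of spectra. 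In particular $U_{X,Y}$ is a principal fibration: it is the pullback of the path fibration on the base $\map_{\calC}(X,Y)$ along itself via the (null, hence canonically nullhomotopic, but here we only need that it is a loop map) classifying map, so its fibers over every point are equivalent to the fiber over the basepoint, namely $\map_{\calC}(X, \Sigma^{-1}HY)$.

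The one point that deserves a sentence of care, and which I expect to be the only genuine content beyond unwinding definitions, is the identification of the basepoint of $\map_{\calC_H}(sX, sY)$ with the zero map $X \to Y$ and the compatibility of the two descriptions of $U_{X,Y}$: the geometric one (the projection $U \colon \calC_H \to \calC$ induces it on mapping spaces) and the adjunction one (postcomposition with $rsY \to Y$). This follows because $Us \simeq \id_{\calC}$, so $U$ sends $sX \to sY$ to $X \to Y$ functorially, and the triangle identity for $s \dashv r$ says precisely that under $\map_{\calC_H}(sX,sY) \simeq \map_{\calC}(X, rsY)$ the functor $U$ corresponds to composition with the counit $rsY \to Y$; the equalizer diagram displayed in the proof of Proposition~\ref{proposition:split fiber sequence} makes this explicit. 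Once this bookkeeping is in place the corollary is formal, so I would keep the written proof to a few lines: cite the adjunction identification, apply $\map_{\calC}(X,-)$ to the split fiber sequence of Proposition~\ref{proposition:split fiber sequence}, and observe that splitness plus stability of $\calC$ upgrades the resulting fiber sequence of spaces to a fiber sequence of infinite loop spaces, whence $U_{X,Y}$ is a principal fibration with the stated fiber.
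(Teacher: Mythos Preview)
Your proposal is correct and takes essentially the same approach as the paper. The paper marks the corollary with \qed and gives no explicit proof, having already noted just before the statement that $\map_{\calC_H}(sX,sY) \simeq \map_{\calC}(X,rsY)$ via the adjunction $s \dashv r$; your argument---apply $\map_{\calC}(X,-)$ to the split fiber sequence $\Sigma^{-1}H \to rs \to \id_{\calC}$ of Proposition~\ref{proposition:split fiber sequence}---is exactly the intended deduction.
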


The principal fibration $U_{X,Y}$ has the splitting given by the section $s$. Therefore there is an equivalence of infinite loop spaces
\begin{equation}\label{equation:splitting of the mapping space}
(-)+(-)\colon\map_{\calC}(X,Y) \times \map_{\calC}(X,\Sigma^{-1}HY) \xrightarrow{\sim} \map_{\calC_H}(sX,sY).
\end{equation}
We write $i_H\colon \map_{\calC}(X,\Sigma^{-1}HY) \to \map_{\calC_H}(sX,sY)$ for the inclusion of the fiber and we write
$$d_H\colon \map_{\calC_H}(sX,sY) \to \map_{\calC}(X,\Sigma^{-1}HY)$$ for the projection map induced by the splitting above. We note that the splitting~\eqref{equation:splitting of the mapping space} and the map $d_H$ are functorial with respect to natural transformations. For instance, if $\eta\colon H_1 \to H_2$ is a natural transformation of endofunctors, then the following diagram commutes
\begin{equation}\label{equation:functoriality of the differential}
\begin{tikzcd}
\map_{\calC_{H_1}}(sX,sY) \arrow{r}{U_{\eta}} \arrow{d}{d_{H_1}}
& \map_{\calC_{H_2}}(sX,sY) \arrow{d}{d_{H_2}} \\
\map_{\calC}(X,\Sigma^{-1}H_1Y) \arrow{r}{\eta} 
& \map_{\calC}(X,\Sigma^{-1}H_2Y).
\end{tikzcd}
\end{equation}

Next, we show that the composition in the category $\Ind(\calC^\omega_H)$ respects the splitting~\eqref{equation:splitting of the mapping space}. Namely, let $X,Y,Z \in \calC$ be a triple of objects, then there is the composition map \begin{align}\label{equation:composition}
-\circ -\colon \map_{\calC_H}(sY,sZ) \times \map_{\calC_H}(sX,sY) &\to \map_{\calC_H}(sX,sZ), \\
(g,f) &\mapsto g\circ f \nonumber
\end{align}
of mapping spaces in the category $\calC_H=\Ind(\calC^\omega_H)$, which is unique up to a suitable equivalence. Since the section $s\colon \calC \to \calC_H$ preserves direct sums, we obtain the following proposition.

\begin{prop}\label{proposition: composition is linear on second argument}
Let $X,Y,Z\in \calC$ be a triple of objects and let $H\in \End(\calC)$ be an endofunctor. Then the composition map~\eqref{equation:composition} is linear on the second argument. More precisely, the adjoint map
$$\map_{\calC_H}(sY,sZ) \to \map_{\spaces}(\map_{\calC_H}(sX,sY), \map_{\calC_H}(sX,sZ)) $$
factors through the mapping space $\map_{\Omega^{\infty}}(\map_{\calC_H}(sX,sY), \map_{\calC_H}(sX,sZ))$ of infinite loop spaces. \qed
\end{prop}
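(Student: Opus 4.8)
The statement to prove is Proposition~\ref{proposition: composition is linear on second argument}: for a triple $X,Y,Z \in \calC$ and an endofunctor $H\in \End(\calC)$, the composition map~\eqref{equation:composition} is linear in its second argument, i.e.\ the adjoint map factors through the mapping space of infinite loop maps. The plan is to reduce everything to the single formal fact that the section $s\colon \calC \to \calC_H = \Ind(\calC^c_H)$ preserves finite coproducts (equivalently, since $\calC$ is stable and these are also products on compact objects, preserves finite products), which is already recorded just before the statement: $s$ has the right adjoint $r$ with $r(X,\eta) = \fib(\eta)$, and a right adjoint automatically makes $s$ preserve colimits, while a direct inspection of the pullback defining $\calC^c_H$ shows $s$ preserves finite coproducts as well.

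First I would make precise what ``linear in the second argument'' means here. Fix $g\colon sY \to sZ$. Postcomposition with $g$ gives a map of spaces $g\circ(-)\colon \map_{\calC_H}(sX,sY) \to \map_{\calC_H}(sX,sZ)$, and by Corollary~\ref{corollary:splitting of the mapping space} both sides are infinite loop spaces (they are $\Omega^\infty$ of the spectra $\map_{\calC}(X,rsY)$, $\map_{\calC}(X,rsZ)$, or more invariantly, mapping spectra in $\calC_H$ between objects in the image of $s$, which is an additive subcategory). The claim is that $g\circ(-)$ is a map of infinite loop spaces, naturally in $g$. I would deduce this from the observation that, since $s$ preserves finite coproducts and $sX \amalg sX \simeq s(X\amalg X)$, for any $f_1,f_2\colon sX \to sY$ the sum $f_1 + f_2$ in the abelian group $\pi_0\map_{\calC_H}(sX,sY)$ (and more structurally, the $E_\infty$-addition on the whole mapping space) is computed as the composite $sX \xrightarrow{\nabla} s(X\amalg X) \simeq sX\amalg sX \xrightarrow{(f_1,f_2)} sY$, where $\nabla$ is induced by the fold map. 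Postcomposing with $g$ commutes with this diagram on the nose, so $g\circ(f_1+f_2) = g\circ f_1 + g\circ f_2$ at the level of the $E_\infty$-structure; naturality in $g$ is automatic because the whole construction is functorial in all variables in $\calC_H$.

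To package this cleanly I would work spectrally: let $\underline{\map}_{\calC_H}(-,-)$ denote the mapping spectrum, which exists on the full additive subcategory $s(\calC) \subset \calC_H$ because $s$ preserves finite coproducts and $\calC$ (hence $\calC_H$) is stable, so $s(\calC)$ is a full additive, indeed spectrally enriched, subcategory; alternatively identify $\underline{\map}_{\calC_H}(sX,sY) \simeq \underline{\map}_{\calC}(X, rsY)$ via the adjunction $s \dashv r$ and the enrichment of $\calC$. Then the composition in $\calC_H$ restricted to objects of the form $s(-)$ upgrades to a map of spectra $\underline{\map}_{\calC_H}(sY,sZ) \otimes \underline{\map}_{\calC_H}(sX,sY) \to \underline{\map}_{\calC_H}(sX,sZ)$, bilinear by construction of the spectral enrichment; applying $\Omega^\infty$ and taking the adjoint gives exactly the factorization through $\map_{\Omega^\infty}(-,-)$ asserted in the proposition. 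I would also remark, using Proposition~\ref{proposition:split fiber sequence} and the splitting~\eqref{equation:splitting of the mapping space}, that under the decomposition $\map_{\calC_H}(sX,sY) \simeq \map_{\calC}(X,Y)\times \map_{\calC}(X,\Sigma^{-1}HY)$ the induced action of $g = (g_0, g_1)$ on the second factor is precisely $g_0\circ(-)$ together with the ``square-zero'' cross term feeding into it — this is the concrete shape that will be used later in Corollary~\ref{corollary:leibniz rule}, though it is not strictly needed for the present statement.

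The only genuine subtlety — the ``hard part,'' though it is more bookkeeping than difficulty — is verifying that $s$ really does preserve finite coproducts, since $\calC_H = \Ind(\calC^c_H)$ and coproducts in an Ind-category are computed via the compact objects. For this I would argue at the level of $\calC^c_H$ itself: its coproducts are created by the conservative colimit-preserving forgetful functor $U\colon \calC^c_H \to \calC^c$ (Proposition~\ref{proposition:forgetful functor preserves colimits}), and for a finite coproduct of objects of the form $(X_i, 0\colon X_i \to H(X_i))$ the structure map of the coproduct is the zero map $\coprod X_i \to H(\coprod X_i)$ provided the canonical map $\coprod H(X_i) \to H(\coprod X_i)$ is compatible with zero maps, which it is trivially. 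Hence $s$ on $\calC^c$ preserves finite coproducts, and since Ind-completion preserves finite coproducts and $s$ on $\calC$ is the Ind-extension of $s$ on $\calC^c$, we are done. Everything else is formal manipulation of adjunctions and of the $E_\infty$-structures produced by the additivity of $s$.
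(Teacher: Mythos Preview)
Your proposal is correct and follows exactly the approach the paper intends: the paper gives no proof beyond the sentence ``Since the section $s\colon \calC \to \calC_H$ preserves direct coproducts, we obtain the following proposition'' and a \qed, and your argument is a faithful unpacking of that one-line justification. Your verification that $s$ preserves finite coproducts and your spectral repackaging are reasonable elaborations of what the paper leaves implicit.
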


We note that the map~\eqref{equation:composition} is not linear on the first argument, except the case when the endofunctor $H$ preserves finite coproducts. However, since the following diagram commutes
\begin{equation}\label{equation: composition preserves decomposition}
\begin{tikzcd}[column sep=large]
\map_{\calC_H}(sY,sZ) \times \map_{\calC}(X,Y) \arrow{r}{(-)\circ s(-)} \arrow{d}[swap]{\simeq}
& \map_{\calC_H}(sX,sZ) \arrow{d}{\simeq} \\
\map_{\calC}(Y,rs Z) \times \map_{\calC}(X,Y) \arrow{r}{(-)\circ (-)}
&\map_{\calC}(Y,rsZ),
\end{tikzcd}
\end{equation}
the composition map $(-)\circ s(-)$ preserves the decomposition~\eqref{equation:splitting of the mapping space}. Moreover, by using the adjunction $s\dashv r$ and Proposition~\ref{proposition:split fiber sequence}, we observe the following compatibilities between the decomposition~\eqref{equation:splitting of the mapping space} and the composition product~\eqref{equation:composition}.
\begin{prop}\label{proposition:bimodule structure}
Let $X,Y,Z\in \calC$ be a triple of objects and let $H\colon \calC \to \calC$ be an endofunctor. Then the following diagrams commute
$$
\begin{tikzcd}[column sep=huge, row sep=large]
\map_{\calC}(Y,\Sigma^{-1}HZ) \times \map_{\calC}(X,Y) \arrow{r}{i_H\times s} \arrow{d}[swap]{\circ}
& \map_{\calC_H}(sY,sZ) \times \map_{\calC_H}(sX,sY) \arrow{d}{\circ} \\
\map_{\calC}(X,\Sigma^{-1}HZ) \arrow{r}{i_H}
&\map_{\calC_H}(sY,sZ),
\end{tikzcd}
$$
and
$$
\begin{gathered}[b]
\begin{tikzcd}[column sep=huge, row sep=large]
\map_{\calC}(Y,Z) \times \map_{\calC}(X,\Sigma^{-1}HY) \arrow{r}{s\times i_H} \arrow{d}[swap]{\Sigma^{-1}H(-)\circ-}
& \map_{\calC_H}(sY,sZ) \times \map_{\calC_H}(sX,sY) \arrow{d}{\circ} \\
\map_{\calC}(X,\Sigma^{-1}HZ) \arrow{r}{i_H}
&\map_{\calC_H}(sY,sZ).
\end{tikzcd}\\[-\dp\strutbox]
\end{gathered}
$$
\end{prop}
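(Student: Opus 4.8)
The plan is to reduce both squares to a single formula for the composition product in $\calC_H$ written out in terms of the splitting~\eqref{equation:splitting of the mapping space}. Concretely, I would first show that for $\phi \leftrightarrow (f,\bar f)\in\map_{\calC}(X,Y)\times\map_{\calC}(X,\Sigma^{-1}HY)$ and $\psi\leftrightarrow(g,\bar g)\in\map_{\calC}(Y,Z)\times\map_{\calC}(Y,\Sigma^{-1}HZ)$ the composite in $\calC_H$ is
$$\psi\circ\phi\ \longleftrightarrow\ \bigl(\,g\circ f,\ \ \bar g\circ f\ +\ \Sigma^{-1}H(g)\circ\bar f\,\bigr),$$
i.e.\ the ``correction term'' behaves like a square-zero extension, with the right action being ordinary composition in $\calC$ and the left action given by the functoriality of $\Sigma^{-1}H$. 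Granting this, the first diagram of Proposition~\ref{proposition:bimodule structure} is the special case $\bar f=0$, $g=0$, and the second is the case $\bar g=0$, $f=0$, so the proposition follows at once.

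The right-linearity half of the formula is essentially already available. By Proposition~\ref{proposition: composition is linear on second argument} the product $(-)\circ s(-)$ is a map of infinite loop spaces in the $s$-variable, and the commuting square~\eqref{equation: composition preserves decomposition} identifies it, via the adjunction $s\dashv r$ and the splitting $rsZ\simeq Z\oplus\Sigma^{-1}HZ$ coming from Proposition~\ref{proposition:split fiber sequence}, with honest composition in $\calC$. Since composition in $\calC$ preserves the direct sum decomposition of $rsZ$, post-composing a map that lands in the $\Sigma^{-1}HZ$-summand with a map on the right stays in that summand; this yields the first diagram with no further computation.

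The left action is the substantive point. Here I would argue as follows. Applying the conservative forgetful functor $U$ and using $U\circ i_H\simeq 0$ together with $U\circ s\simeq\oblv$ shows that $s(g)\circ i_H(f)$ always lies in the fibre of $U_{X,Z}$ over the zero map, hence in the image of $i_H$; so it suffices to compute the composite $d_H\circ\bigl(s(g)\circ i_H(-)\bigr)$. By Proposition~\ref{proposition: composition is linear on second argument} this composite is a natural transformation of infinite-loop-space-valued functors $\map_{\calC}(-,\Sigma^{-1}HY)\Rightarrow\map_{\calC}(-,\Sigma^{-1}HZ)$ on $\calC$, hence by (co)Yoneda it is corepresented by a map $\theta_g\colon\Sigma^{-1}HY\to\Sigma^{-1}HZ$, and the whole point becomes the identification $\theta_g\simeq\Sigma^{-1}H(g)$. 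To pin this down I would pass to the pullback presentation $\calC^c_H\simeq\calC^c\times_{\calC\times\calC}\calC^{\Delta^1}$, under which a morphism between objects in the essential image of $s$ is a commuting square with null horizontal edges whose commutation datum is exactly a nullhomotopy of the zero map $X\to HY$, i.e.\ a point of $\Omega\map_{\calC}(X,HY)\simeq\map_{\calC}(X,\Sigma^{-1}HY)$; composition is vertical pasting of such squares, and pasting whiskers the datum of $\phi$ by the right vertical edge $H(g)$ of the square representing $s(g)$. (Alternatively, one can avoid choosing a model and extract the same identification from the equalizer $\id_{\calC_H}\to CU\rightrightarrows CHU$ appearing in the proof of Proposition~\ref{proposition:split fiber sequence}, since its two parallel arrows encode precisely the two legs of the commutation square.) The main obstacle is exactly this last step: making rigorous, $\infty$-categorically, that left composition by $s(g)$ acts on the correction term through $H(g)$ rather than through $g$ itself. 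Once the whiskering computation with the pasted squares is carried out, the remaining bookkeeping — translating the concatenation of loops into addition in $\map_{\calC}(X,\Sigma^{-1}HZ)$ and reading off the two diagrams — is formal.
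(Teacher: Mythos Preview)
Your argument is correct and follows the same underlying idea as the paper --- both diagrams are consequences of the adjunction $s\dashv r$ together with the natural split fiber sequence of Proposition~\ref{proposition:split fiber sequence}. For the first diagram you have exactly the paper's argument via the square~\eqref{equation: composition preserves decomposition}.

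For the second diagram, however, you take a longer route than necessary. You set up a Yoneda argument to produce a representing map $\theta_g$ and then identify it by unpacking composition in the pullback model $\calC^c_H\simeq\calC^c\times_{\calC\times\calC}\calC^{\Delta^1}$ as pasting of null squares. This is fine, but the paper's hint lets you bypass it entirely: by naturality of the adjunction $s\dashv r$ in the target, left composition by $s(g)$ on $\map_{\calC_H}(sX,sY)$ corresponds, under $\map_{\calC_H}(sX,s-)\simeq\map_{\calC}(X,rs-)$, to left composition by $rs(g)\colon rsY\to rsZ$ in $\calC$. Now the fiber sequence $\Sigma^{-1}H\to rs\to\id_{\calC}$ of Proposition~\ref{proposition:split fiber sequence} lives in $\End(\calC)$, hence is natural in the input; in particular $rs(g)$ restricted to the fiber summand $\Sigma^{-1}HY\hookrightarrow rsY$ is $\Sigma^{-1}H(g)$ followed by the inclusion $\Sigma^{-1}HZ\hookrightarrow rsZ$. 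That is already the second diagram, with no appeal to the explicit pullback model or the whiskering computation. Your square-pasting description is essentially a hands-on verification of this naturality, so nothing is wrong --- it just duplicates work that Proposition~\ref{proposition:split fiber sequence} has already packaged.
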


Consider the endofunctor $H\oplus H\colon \calC \to \calC$. The projection on the $i$-th direct summand induces the functor
$$U_i\colon \calC_{H\oplus H} \to \calC_H, \;\; i=1,2. $$
Similarly, the inclusion of the $i$-th direct summand induces the functor
$$s_i\colon \calC_{H} \to \calC_{H\oplus H}, \;\; i=1,2. $$
We have equivalences $U_1s_1\simeq U_2s_2\simeq \id_{\calC_H}$, $s_1 s\simeq s_2 s$ is the zero section $\calC \to \calC_{H\oplus H}$, and $U_1s_2\simeq U_2 s_1\simeq sU$, where $U\colon \calC_H \to \calC$ is the forgetful functor and $s\colon \calC \to \calC_H$ is the section.

\begin{cor}\label{corollary:almost leibniz rule}
Let $X,Y,Z\in \calC$ be a triple of objects and let $H\colon \calC \to \calC$ be an endofunctor. Then the composite
\begin{align*}
\map_{\calC_H}(sY,sZ)\times \map_{\calC_H}(sX,sY) &\xrightarrow{s_1(-)\circ s_2(-)}\map_{\calC_{H\oplus H}}(sX,sZ) \\
&\xrightarrow{d_{H\oplus H}} \map_{\calC}(X,\Sigma^{-1}(H\oplus H)Z) \\
&\xrightarrow{\sim} \map_{\calC}(X,\Sigma^{-1}HZ)\times \map_{\calC}(X,\Sigma^{-1}HZ) 
\end{align*}
has the first component to be equivalent $d_H(-)\circ U(-)$ and the second component to be equivalent $\Sigma^{-1}HU(-)\circ d_H(-)$. 
\end{cor}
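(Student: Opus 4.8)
\textbf{Proof proposal for Corollary~\ref{corollary:almost leibniz rule}.}

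The plan is to reduce the statement to the compatibilities already recorded in Proposition~\ref{proposition:bimodule structure} together with the bilinearity properties from Propositions~\ref{proposition: composition is linear on second argument} and~\ref{proposition: composition is linear on second argument}, by decomposing the two incoming mapping spaces via the canonical splitting~\eqref{equation:splitting of the mapping space}. Concretely, write $f = s(Uf) + i_H(d_H f)$ and $g = s(Ug) + i_H(d_H g)$ for $f\in \map_{\calC_H}(sX,sY)$ and $g\in \map_{\calC_H}(sY,sZ)$. Since all the maps in sight ($s_1$, $s_2$, the composition $-\circ-$ in $\calC_{H\oplus H}$ precomposed with $s_2$, and $d_{H\oplus H}$) are maps of infinite loop spaces \emph{in the appropriate variable} — here I would invoke Proposition~\ref{proposition: composition is linear on second argument} for linearity in the second argument and the functoriality square~\eqref{equation: composition preserves decomposition} for the first — the composite $d_{H\oplus H}(s_1 g \circ s_2 f)$ expands as a sum of four terms indexed by the four combinations of $s(U-)$ versus $i_H(d_H-)$ on $g$ and on $f$.

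Next I would evaluate the four terms. The term with $s(Ug)$ and $s(Uf)$: both $s_1 s$ and $s_2 s$ are the zero section $\calC \to \calC_{H\oplus H}$, so $s_1(s Ug)\circ s_2(s Uf) \simeq s(Ug \circ Uf)$ lands in the image of the zero section, hence $d_{H\oplus H}$ kills it. The term with $i_H(d_H g)$ and $i_H(d_H f)$: using the first square of Proposition~\ref{proposition:bimodule structure} (applied after pushing forward along $s_1$, resp. $s_2$), the composite of two "fiber-direction" maps again factors through a zero section in each $H$-summand, so this term also dies. The surviving two terms are the mixed ones. For $s_1(s Ug) \circ s_2(i_H d_H f)$, note $U_1 s_2 \simeq s U$ identifies this with the situation of the second square of Proposition~\ref{proposition:bimodule structure}, giving the contribution $\Sigma^{-1}H U(g) \circ d_H(f)$ in the first $H$-summand and zero in the second; wait — I must track carefully which summand each $s_i$ feeds. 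Since $s_1$ targets the first summand and $s_2$ the second, and $U_1 s_2 \simeq U_2 s_1 \simeq sU$ while $U_1 s_1 \simeq U_2 s_2 \simeq \id$, the term $s_1 g \circ s_2 f$ with $g$ in the fiber direction contributes to the \emph{second} summand (via the identity component on $U_2 s_2$) as $d_H(g)\circ U(f)$, using the first square of Proposition~\ref{proposition:bimodule structure}; symmetrically $s_2 f$ in the fiber direction contributes to the \emph{first} summand as $\Sigma^{-1}H U(g)\circ d_H(f)$ via the second square. Reading off the equivalence $\Sigma^{-1}(H\oplus H)Z \simeq \Sigma^{-1}HZ \oplus \Sigma^{-1}HZ$, the first component is $d_H(-)\circ U(-)$ and the second is $\Sigma^{-1}HU(-)\circ d_H(-)$, as claimed. (Here I am implicitly using that $d_{H\oplus H}$ is the sum of the two $d_H$'s under this splitting, which follows from the naturality of~\eqref{equation:splitting of the mapping space} in the endofunctor applied to the two projections $H\oplus H\to H$, cf.~\eqref{equation:functoriality of the differential}.)

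The main obstacle I anticipate is \emph{bookkeeping the indices}: keeping straight which of $s_1,s_2,U_1,U_2$ feeds which $H$-summand, and consequently which mixed term lands in the first versus the second component of $\Sigma^{-1}(H\oplus H)Z$. There is a genuine asymmetry because composition is linear in the second variable but only "affine" (a square-zero extension) in the first — this is exactly why the two components of the output look different ($d_H(-)\circ U(-)$ versus $\Sigma^{-1}HU(-)\circ d_H(-)$), and getting the asymmetry on the correct side requires care with Proposition~\ref{proposition:bimodule structure}. A secondary point is justifying that the "both in the base direction" and "both in the fiber direction" terms truly vanish after $d_{H\oplus H}$: the first is immediate from $s_1 s \simeq s_2 s$, while the second needs the observation that composing through $i_H$ twice forces the result through $s$ in each summand, which is the content of the first diagram of Proposition~\ref{proposition:bimodule structure} combined with the fact that $\Sigma^{-1}H(\mathrm{const})$ applied along the section is still a section. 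No new geometric input is needed beyond what Propositions~\ref{proposition:split fiber sequence}--\ref{proposition:bimodule structure} already supply; the corollary is purely a formal consequence.
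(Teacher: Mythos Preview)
There is a genuine gap in your bilinear expansion. Composition in $\calC_{H\oplus H}$ is linear in the second argument (Proposition~\ref{proposition: composition is linear on second argument}) but not in the first; the diagram~\eqref{equation: composition preserves decomposition} you cite only shows that $(-)\circ s(\phi)$ preserves the splitting when the second argument lies in the image of the \emph{zero} section~$s$, and $s_2(i_H d_H f)$ does not. So after expanding $f$ by linearity in the second slot, the term $s_1 g \circ s_2(i_H d_H f)$ cannot be further split in $g$ by the tools at hand, and your four-term expansion---in particular the claimed vanishing of the ``both in the fiber direction'' term---is unjustified as written.

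The paper avoids this by projecting first. By naturality~\eqref{equation:functoriality of the differential} applied to the two projections $H\oplus H\to H$, the $i$-th component of $d_{H\oplus H}(s_1 g\circ s_2 f)$ equals $d_H\big((U_i s_1 g)\circ(U_i s_2 f)\big)$. For $i=1$ this is $d_H(g\circ s(Uf))$: now the second argument \emph{is} in the image of $s$, so~\eqref{equation: composition preserves decomposition} legitimately splits $g$, the piece $s(Ug)\circ s(Uf)$ dies under $d_H$, and the first diagram of Proposition~\ref{proposition:bimodule structure} identifies the remainder with $d_H(g)\circ U(f)$. The second component is handled symmetrically via $U_2$, linearity in the second argument, and the second diagram of Proposition~\ref{proposition:bimodule structure}. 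No bilinearity and no four-term expansion are needed.

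Your bookkeeping worry was also realized: midway through you assign $d_H(g)\circ U(f)$ to the second summand and $\Sigma^{-1}HU(g)\circ d_H(f)$ to the first, then your concluding sentence silently swaps them back. With the paper's order of operations this is automatic: the first component comes from $U_1$, and since $U_1 s_1=\id$ while $U_1 s_2 = sU$, it is the full $g$-dependence (hence $d_H g$) that survives there.
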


\begin{proof}
We compute only the first component here; the computation of the second component is completely parallel and we leave it to the reader to complete the details. Consider the projection
$$\pi_1(d_{H\oplus H}(s_1(-)\circ s_2(-)))\colon \map_{\calC_H}(sY,sZ)\times \map_{\calC_H}(sX,sY) \to \map_{\calC}(X,\Sigma^{-1}HZ)$$
on the first component. By the diagram~\eqref{equation:functoriality of the differential}, we have 
$$\pi_1(d_{H\oplus H}(s_1(-)\circ s_2(-)))\simeq d_H(U_1(s_1(-)\circ s_2(-)))\simeq d_H((-)\circ sU(-)).$$
Using the decomposition~\eqref{equation:splitting of the mapping space} and the observation with the diagram~\eqref{equation: composition preserves decomposition}, we obtain
\begin{align*}
d_H((-)\circ sU(-))&\simeq d_H((i_Hd_H(-)+sU(-))\circ sU(-)) \\
&\simeq d_H(i_Hd_H(-)\circ sU(-)+ sU(-)\circ sU(-))\\
&\simeq d_H(i_Hd_H(-)\circ sU(-)). 
\end{align*}
Finally, we apply the first diagram of Proposition~\ref{proposition:bimodule structure} to rewrite the map $$i_Hd_H(-)\circ sU(-)$$ as $i_H(d_H(-)\circ U(-))$.
\end{proof}

Since the category $\calC$ is stable, the category $\End(\calC)$ of endofunctors is also stable. In particular, there is the \emph{fold map} $$\nabla\colon H\oplus H \to H.$$ We have $\nabla \circ s_i\simeq \id$ for $i=1,2$. The next statement follows from Corollary~\ref{corollary:almost leibniz rule} and the diagram~\eqref{equation:functoriality of the differential} with $\eta=\nabla$.
\begin{cor}\label{corollary:leibniz rule}
The composition
\begin{align*}
\map_{\calC_H}(sY,sZ)\times \map_{\calC_H}(sX,sY) &\xrightarrow{-\circ -}\map_{\calC_{H}}(sX,sZ) \xrightarrow{d_{H}} \map_{\calC}(X,\Sigma^{-1}HZ)
\end{align*}
is equivalent to the sum 
$$d_H(-)\circ U(-)+ \Sigma^{-1}HU(-)\circ d_H(-). $$
\end{cor}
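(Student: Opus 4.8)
## Proof plan for Corollary~\ref{corollary:leibniz rule}

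The plan is to reduce the statement to Corollary~\ref{corollary:almost leibniz rule} by post-composing with the fold map $\nabla\colon H\oplus H\to H$ and tracking how the two components of $d_{H\oplus H}$ recombine. First I would record the basic identities relating the fold map to the insertions $s_1,s_2\colon \calC_H\to \calC_{H\oplus H}$ and the projections $U_1,U_2\colon \calC_{H\oplus H}\to \calC_H$: we have $\nabla\circ s_i\simeq \id$ for $i=1,2$, and under the identification $\Sigma^{-1}(H\oplus H)Z\simeq \Sigma^{-1}HZ\oplus \Sigma^{-1}HZ$ the induced map on mapping spaces
$$\nabla_*\colon \map_{\calC}(X,\Sigma^{-1}HZ)\times\map_{\calC}(X,\Sigma^{-1}HZ)\to \map_{\calC}(X,\Sigma^{-1}HZ)$$
is the sum map (addition in the infinite loop space), since $\map_{\calC}(X,-)$ is exact and sends a direct sum to a product with the fold map acting as addition.

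Next I would apply the functoriality diagram~\eqref{equation:functoriality of the differential} with the natural transformation $\eta=\nabla\colon H\oplus H\to H$ to the pair $sX,sZ$. This gives a commuting square relating $d_{H\oplus H}$ on $\map_{\calC_{H\oplus H}}(sX,sZ)$ with $d_H$ on $\map_{\calC_H}(sX,sZ)$, via $U_\nabla$ on the source and $\nabla_*$ on the target. Now I would feed in the composite $s_1(-)\circ s_2(-)$: since $U_\nabla\circ(s_1(-)\circ s_2(-))\simeq (\nabla s_1)(-)\circ(\nabla s_2)(-)\simeq (-)\circ(-)$ (using that $U_\nabla$ is a functor over $\calC^c$ compatible with composition, and $\nabla s_i\simeq\id$), the square says precisely that
$$d_H\bigl((-)\circ(-)\bigr)\;\simeq\; \nabla_*\bigl(d_{H\oplus H}(s_1(-)\circ s_2(-))\bigr).$$

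Finally I would invoke Corollary~\ref{corollary:almost leibniz rule}, which computes the two components of $d_{H\oplus H}(s_1(-)\circ s_2(-))$ as $d_H(-)\circ U(-)$ and $\Sigma^{-1}HU(-)\circ d_H(-)$ respectively; composing with $\nabla_*=$ addition yields exactly the asserted sum $d_H(-)\circ U(-)+\Sigma^{-1}HU(-)\circ d_H(-)$. The only point requiring a little care—and the place I expect the main friction—is the bookkeeping that $U_\nabla$ genuinely intertwines the two composition products (i.e.\ that $U_\nabla$ is compatible with the composition maps~\eqref{equation:composition} in source and target, so that $U_\nabla(s_1 g\circ s_2 f)\simeq g\circ f$); this is a formal consequence of $\calC^c_{(-)}$ being a functor valued in $\infty$-categories, but one should spell out that both $s_i$ and $U_\nabla$ lie over $\calC^c$ and commute with $s$, so that the relevant diagrams of mapping spaces commute coherently. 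Everything else is a direct substitution into the already-established diagrams~\eqref{equation:functoriality of the differential} and Corollary~\ref{corollary:almost leibniz rule}.
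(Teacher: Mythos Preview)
Your proposal is correct and follows exactly the paper's approach: the paper states in the paragraph immediately preceding the corollary that the result follows from Corollary~\ref{corollary:almost leibniz rule} together with the functoriality diagram~\eqref{equation:functoriality of the differential} applied to $\eta=\nabla$, which is precisely what you have spelled out. Your added remarks about why $\nabla_*$ is addition and why $U_\nabla(s_1(-)\circ s_2(-))\simeq(-)\circ(-)$ are helpful elaborations of steps the paper leaves implicit.
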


\subsection{Goodwillie differentials}\label{section: goodwillie differentials}
Let $\calC$ be a pointed compactly generated (not necessary stable) $\infty$-category. We recall a few fundamental definitions of~\cite{Heuts21}. 

\begin{dfn}[Definition~1.2 of~\cite{Heuts21}]
\label{def:nexcapprox}
An adjunction $F: \calC \rightleftarrows \calD :G$ is a \emph{weak $n$-excisive approximation} to $\calC$ if the following two conditions are satisfied:
\begin{itemize}
\item The $\infty$-category $\calD$ is pointed and compactly generated. Moreover, its identity functor $\id_{\calD}$ is $n$-excisive.
\item The map $P_n(\id_{\calC}) \to GF$ induced by the unit is an equivalence. Also, the natural transformation $P_n(FG) \to \id_{\calD}$ induced by the counit is an equivalence.
\end{itemize}
\end{dfn}

\begin{dfn}[Definition~1.3 of~\cite{Heuts21}]
\label{def:strongapprox}
A pointed compactly generated $\infty$-category $\calD$ is \emph{$n$-excisive} if every weak $n$-excisive approximation $F': \calD \rightleftarrows \EuScript{E}: G'$ is an equivalence. A weak $n$-excisive approximation $F: \calC \rightleftarrows \calD: G$ is said to be a \emph{strong $n$-excisive approximation} if the $\infty$-category $\calD$ is $n$-excisive.
\end{dfn} 

Recall that an $\infty$-category is $1$-excisive if and only if it is stable~\cite[Corollary~2.17]{Heuts21}. We write $\calP_n(\calC)$ for the strong $n$-excisive approximation of $\calC$, $\calP_1(\calC)\simeq \Sp(\calC)$. We also recall that there is a pair of adjoint functors
$$
\begin{tikzcd}
\susp_n: \calC \arrow[shift left=.6ex]{r}
&\calP_{n}(\calC) :\deloop_n \arrow[shift left=.6ex,swap]{l}
\end{tikzcd}
$$
such that $\deloop_n\susp_n\simeq P_n(\id_{\calC})$. 

Let $X,Y\in \calC$ be a pair of objects, we will denote by $\map_n(X,Y)$ the mapping space $\map_{\calP_n(\calC)}(\susp_n X,\susp_n Y)$ in the category $\calP_n(\calC)$. The fiber sequence of functors:
\begin{equation}\label{equation: googwillie fiber sequence}
P_n(\id_{\calC}) \to P_{n-1}(\id_{\calC}) \to BD_{n}(\id_{\calC}), \;\; n\geq 1
\end{equation}
induces a fiber sequence of (pointed) mapping spaces:
\begin{equation}\label{equation:fiber sequences, taylor tower}
\map_n(X,Y) \to \map_{n-1}(X,Y) \xrightarrow{\delta_n} \map_{1}(\susp X,\Sigma \D_n\susp Y), \;\; n\geq 1,
\end{equation}
where we abbreviate $P_{n}=P_n(\id_{\calC})$, $D_n=D_n(\id_{\calC})$, and $\D_n=\D_n(\id_{\calC})$.

\begin{prop}\label{proposition:leibniz rule for Goodwillie}
Let $X,Y,Z\in \calC$ be a triple of objects, then the composite
$$\map_{n-1}(Y,Z)\times \map_{n-1}(X,Y) \xrightarrow{-\circ-} \map_{n-1}(X,Z) \xrightarrow{\delta_n} \map_1(\susp X,\Sigma\D_n\susp Z) $$
is equivalent to the sum
$$\delta_n(-)\circ \susp(-)+\Sigma\D_n(\susp(-))\circ \delta_n(-). $$
\end{prop}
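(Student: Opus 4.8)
The statement is a Leibniz rule for the Goodwillie connecting map $\delta_n$ of~\eqref{equation:fiber sequences, taylor tower}, and the strategy is to deduce it from the category-of-pairs version already established in Corollary~\ref{corollary:leibniz rule}. The plan is to identify the map of mapping spaces $\map_{n-1}(X,Y)\to\map_{n-1}(X,Z)$ together with the connecting map $\delta_n$ as an instance of the data $U\colon\calC_H\to\calC$, $s$, $d_H$ from Section~\ref{section: category of pairs}, applied to the stable $\infty$-category $\calP_{n-1}(\calC)$ (which is $(n-1)$-excisive, hence not stable in general — so a short remark is needed: we work instead inside an auxiliary stable category, see below) and to a suitable endofunctor $H$. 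Concretely, by the main structural results of~\cite{Heuts21} the functor $\calP_n(\calC)\to\calP_{n-1}(\calC)$ is a base change, along $\susp_{n-1}$ and $\deloop_{n-1}$, of a functor of the form $\Sp(\calC)\to\Sp(\calC)_H$ for the endofunctor $H=\Sigma\D_n\circ\susp_\calC\circ\deloop_\calC$ on $\Sp(\calC)$ (up to the identifications $\calP_1(\calC)\simeq\Sp(\calC)$ and $\D_n=\D_n(\id_\calC)$). This is the content to be extracted as Proposition~\ref{proposition:leibniz rule for Goodwillie}'s hypotheses; I would state it precisely as a lemma citing the relevant statements of~\cite{Heuts21} (the fiber-square description of the tower $\calP_n\to\calP_{n-1}$).

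First I would set up the comparison carefully. The fiber sequence of functors~\eqref{equation: googwillie fiber sequence} shows $\map_{n-1}(X,Y)$ sits over $\map_n(X,Y)$ with ``difference'' controlled by $\Sigma\D_n\susp$; matching this against Corollary~\ref{corollary:splitting of the mapping space}, the role of $\Sigma^{-1}HY$ there is played here by $\Sigma\D_n\susp Y$, i.e. $H$ is as above. The key point is that the natural transformation realizing $\map_n\to\map_{n-1}$ as a pullback of $\calC\xrightarrow{s}\calC_H$ (Corollary~\ref{corollary:pairs respects fiber sequences} and~\cite[Appendix~A]{Heuts21}) identifies $\delta_n$ with the projection map $d_H$, and the forgetful map $U$ with the functoriality map on $\map_{n-1}$. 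Once that dictionary is in place, the composition product $-\circ-$ on $\map_{n-1}$ corresponds to the composition~\eqref{equation:composition} in $\calC_H$, and Corollary~\ref{corollary:leibniz rule} reads
$$
d_H(g\circ f)\simeq d_H(g)\circ U(f)+\Sigma^{-1}HU(g)\circ d_H(f),
$$
which, translating $d_H\leftrightarrow\delta_n$, $U\leftrightarrow\susp$ (on the relevant mapping spaces) and $\Sigma^{-1}H\leftrightarrow\Sigma\D_n$ after the shift bookkeeping, becomes exactly
$$
\delta_n(g\circ f)\simeq\delta_n(g)\circ\susp(f)+\Sigma\D_n(\susp(g))\circ\delta_n(f).
$$

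The main obstacle I anticipate is the identification in the previous paragraph: the category $\calP_{n-1}(\calC)$ is $(n-1)$-excisive but not stable for $n\geq 3$, whereas Section~\ref{section: category of pairs} assumes a \emph{stable} compactly generated base. The way around this is that the \emph{difference} between the $(n-1)$- and the $n$-excisive approximations is governed by an $n$-homogeneous, hence stable, piece; more precisely one applies the pairs construction not to $\calP_{n-1}(\calC)$ itself but to the stable category $\Sp(\calC)$, using that $\calP_n(\calC)\to\calP_{n-1}(\calC)$ is obtained from $\Sp(\calC)_H\to\Sp(\calC)$ by the (colimit-preserving, hence compatible with $\Ind$) base change along the adjunction $\susp_{n-1}\dashv\deloop_{n-1}$, exactly as in~\cite{Heuts21}. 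Checking that this base change carries the composition products and the splitting~\eqref{equation:splitting of the mapping space} over to the corresponding structures on the mapping spaces $\map_{n-1}$ — in particular that $\susp_{n-1}$ of a composite is the composite of $\susp_{n-1}$'s up to the coherences we need — is the step that will require the most care, and is where I would invoke the specific statements (the fiber-square/base-change propositions) of~\cite{Heuts21} rather than reprove anything. Modulo that identification, the Leibniz rule is a direct transport of Corollary~\ref{corollary:leibniz rule}.
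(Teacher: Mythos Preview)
Your strategy is essentially the paper's: reduce to Corollary~\ref{corollary:leibniz rule} by using the results of \cite{Heuts21} to exhibit a pullback square identifying $\calP_n(\calC)\to\calP_{n-1}(\calC)$ with a pairs functor over $\Sp(\calC)$, and you correctly anticipate that stability forces one to work over $\Sp(\calC)$ rather than $\calP_{n-1}(\calC)$.

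One imprecision worth noting: your two descriptions of the endofunctor $H$ are inconsistent. You first write $H=\Sigma\D_n\circ\susp_\calC\circ\deloop_\calC$, but later correctly infer from the mapping-space comparison that $\Sigma^{-1}H\simeq\Sigma\D_n$, i.e.\ $H=\Sigma^2\D_n$ as an endofunctor of $\Sp(\calC)$; the latter is what the paper uses. The paper also does not obtain this pullback square in one citation: it first invokes \cite{Heuts21} to get a square with $\Sp(\calC)_F$ and $\Sp(\calC)_G$ for certain endofunctors $F,G$ sitting in a fiber sequence $F\to G\to\Sigma^2\D_n$, and then applies Corollary~\ref{corollary:pairs respects fiber sequences} to deduce the desired square with $\Sp(\calC)_{\Sigma^2\D_n}$. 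Your plan implicitly assumes this reduction, but it is a genuine intermediate step.
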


\begin{proof}
By~\cite[Proposition~5.12 and Lemma~4.31]{Heuts21}, there is a pullback diagram of compactly generated $\infty$-categories
$$
\begin{tikzcd}
\calP_n(\calC) \arrow{r} \arrow{d} 
& \Ind(\Sp(\calC)^\omega_{F}) \arrow{d} \\
\calP_{n-1}(\calC) \arrow{r}
& \Ind(\Sp(\calC)^\omega_{G})
\end{tikzcd}
$$
for certain endofunctors $F,G\in \End(\Sp(\calC))$. Furthermore, by~\cite[Proposition~C.24]{Heuts21}, there is a fiber sequence
$$
F \to G \to \Sigma^{2}\D_n,
$$
which combined with Corollary~\ref{corollary:pairs respects fiber sequences} gives a pullback square
$$
\begin{tikzcd}
\calP_n(\calC) \arrow{r}{\susp} \arrow{d} 
& \Sp(\calC) \arrow{d}{s} \\
\calP_{n-1}(\calC) \arrow{r}
& \Ind(\Sp(\calC)^\omega_{\Sigma^{2}\D_n})
\end{tikzcd}
$$
in the category $\Pre^L_{\omega}$. Therefore there is the following commutative diagram of fiber sequences
$$
\begin{tikzcd}
\map_n(X,Y) \arrow{r} \arrow{d}{\susp} 
& \map_{n-1}(X,Y) \arrow{d} \arrow{r}{\delta_n}
& \map_1(\susp X, \Sigma\D_n\susp Y) \arrow[equal]{d}\\
\map_1(\susp X,\susp Y) \arrow{r}{s}
& \map_{\Sp(\calC)_{\Sigma^2\D_n}}(sX,sY) \arrow{r}{d_{\Sigma^2\D_n}} 
& \map_1(\susp X, \Sigma\D_n\susp Y).
\end{tikzcd}
$$
Since the two left vertical morphisms preserve the composition product, the proposition follows by Corollary~\ref{corollary:leibniz rule}.
\end{proof}

Let $X,Y\in \calC$ be a pair of objects. The fiber sequences~\eqref{equation: googwillie fiber sequence} also induce the following tower of fibrations of mapping spaces
\begin{equation*}
\begin{tikzcd}[column sep=small]
&
\map_{\calC}(X,D_nY) \arrow[d]
&\map_{\calC}(X,D_{n-1}Y)\arrow[d]
&\map_{\calC}(X,D_{n-2}Y)\arrow[d]
&\\
\ldots \arrow[r]
& \map_{\calC}(X,P_n Y) \arrow[r]
& \map_{\calC}(X,P_{n-1} Y) \arrow[r]
& \map_{\calC}(X,P_{n-2} Y) \arrow[r]
& \ldots
\end{tikzcd}
\end{equation*}
We consider the associated spectral sequence
\begin{equation}\label{equation:GSS1}
E^1_{t,n}(X,Y)=\pi_t\map_{\calC}(X,D_nY) \Rightarrow \pi_t\map_{\calC}(X,Y)
\end{equation} 
with the differential $d_r\colon E^r_{t,n}(X,Y) \to E^r_{t-1,n+r}(X,Y)$ and the first page being isomorphic to $$E^1_{t,n}(X,Y)\cong \pi_t\map_{\Sp(\calC)}(\Sigma^{\infty}X,\D_n \Sigma^{\infty} Y).$$

Next, let $X,Y,Z\in \calC$ be a triple of objects. Then we consider three maps induced by the composition product:
\begin{equation}\label{equation:stable composition}
(-)\circ (-)\colon E^1_{t,1}(Y,Z) \otimes E^1_{s,1}(X,Y) \to E^1_{t+s,1}(X,Z),
\end{equation}
\begin{equation}\label{equation:left action}
\D_n(-)\circ(-)\colon E^1_{t,1}(Y,Z) \otimes E^1_{s,n}(X,Y) \to E^1_{t+s,n}(X,Z),
\end{equation}
and
\begin{equation}\label{equation:right action}
(-)\circ(-)\colon E^1_{t,n}(Y,Z) \otimes E^1_{s,1}(X,Y) \to E^1_{t+s,n}(X,Z).
\end{equation}
For instance, the map~\eqref{equation:left action} is given as the composite of the map
$$\pi_t\map(\susp Y,\susp Z) \xrightarrow{\D_n(-)} \pi_{t}\map(\D_n\susp Y,\D_n\susp Z)$$
with the map 
\begin{align*}
\pi_{t}\map(\D_n\susp Y,\D_n\susp Z) \otimes \pi_s\map(\susp X,\D_n\susp Y) \to \pi_{t+s}\map(\susp X,\D_n\susp Z)
\end{align*}
induced by the composition product in the stable category $\Sp(\calC)$. The two other maps~\eqref{equation:stable composition} and~\eqref{equation:right action} are defined similarly.

\begin{thm}\label{theorem:leibniz rule in GSS}
Let $X,Y,Z\in \calC$ be a triple of objects and let $g\in E^1_{t,1}(Y,Z)$ and $f\in E^1_{s,1}(X,Y)$ be elements on the $E^1$-terms of the spectral sequences~\eqref{equation:GSS1}. Suppose that all differentials
$$d_1(g)=d_2(g)=\ldots=d_{n-2}(g)=0\;\; \text{and} \;\; d_{1}(f)=\ldots= d_{n-2}(f)=0, \;\; n\geq 2$$
vanish. Then the differential $$d_{n-1}(g\circ f)\in E^n_{t+s-1,n}(X,Z)$$
of the composite $g\circ f \in E^1_{t+s,1}(X,Z)$ is represented by the sum
$$d_{n-1}(g)\circ f + \D_n(g)\circ d_{n-1}(f)\in E^1_{t+s-1,n}(X,Z).$$
\end{thm}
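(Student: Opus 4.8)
The plan is to deduce Theorem~\ref{theorem:leibniz rule in GSS} from Proposition~\ref{proposition:leibniz rule for Goodwillie} by translating the abstract Leibniz identity for the maps $\delta_n$ in the tower of mapping spaces into the language of the spectral sequence~\eqref{equation:GSS1}. First I would recall how the differential $d_{n-1}$ in the spectral sequence is computed: an element $g\in E^1_{t,1}(Y,Z)=\pi_t\map_{\Sp(\calC)}(\susp Y,\susp Z)$ on which $d_1=\dots=d_{n-2}$ vanish is precisely (by the standard exact-couple description of the spectral sequence of the tower of fibrations $\map_{m}(Y,Z)\to \map_{m-1}(Y,Z)$) one that lifts to a class $\tilde g\in \pi_t\map_{n-1}(Y,Z)$, and then $d_{n-1}(g)$ is represented by $\delta_n(\tilde g)\in \pi_{t-1}\map_1(\susp Y,\Sigma\D_n\susp Z)\cong E^1_{t-1,n}(Y,Z)$, where $\delta_n$ is the connecting map in~\eqref{equation:fiber sequences, taylor tower}. (Here I use the identification $\pi_{t-1}\map_1(\susp Y,\Sigma\D_n\susp Z)\cong \pi_t\map_1(\susp Y,\D_n\susp Z) = E^1_{t,n}(Y,Z)$, reindexed appropriately.) The same applies to $f$, giving a lift $\tilde f\in \pi_s\map_{n-1}(X,Y)$ with $d_{n-1}(f)=[\delta_n(\tilde f)]$.

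Next I would observe that the composition product on $\map_{n-1}$ furnishes a lift of $g\circ f$: namely $\tilde g\circ \tilde f\in \pi_{t+s}\map_{n-1}(X,Z)$ maps to $g\circ f$ under $\map_{n-1}(X,Z)\to \map_1(\susp X,\susp Z)$ (since the composite of the functors $\susp_{n-1}$ and the forgetful/truncation functors respects composition — this is part of the structure of $\calP_{n-1}(\calC)$ used in Proposition~\ref{proposition:leibniz rule for Goodwillie}), so that $d_1=\dots=d_{n-2}$ vanish on $g\circ f$ as well and $d_{n-1}(g\circ f)$ is represented by $\delta_n(\tilde g\circ \tilde f)$. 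Now I apply Proposition~\ref{proposition:leibniz rule for Goodwillie} with the triple $X,Y,Z$: it says precisely that $$\delta_n(\tilde g\circ \tilde f)\simeq \delta_n(\tilde g)\circ \susp(\tilde f)+\Sigma\D_n(\susp(\tilde g))\circ \delta_n(\tilde f)$$ as elements of $\pi_{t+s-1}\map_1(\susp X,\Sigma\D_n\susp Z)$. It remains to match the two terms on the right with the composition products~\eqref{equation:right action} and~\eqref{equation:left action}: the term $\delta_n(\tilde g)\circ \susp(\tilde f)$ is, after reindexing, the image of $d_{n-1}(g)\otimes f$ under~\eqref{equation:right action} (composition of a class in $E^1_{*,n}(Y,Z)$ with $\susp f\in E^1_{*,1}(X,Y)$), while $\Sigma\D_n(\susp(\tilde g))\circ \delta_n(\tilde f)$ is the image of $g\otimes d_{n-1}(f)$ under~\eqref{equation:left action}, since applying $\D_n$ to $\susp\tilde g$ and then composing with $\delta_n(\tilde f)$ is exactly the definition of $\D_n(g)\circ d_{n-1}(f)$.

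The main obstacle I expect is bookkeeping rather than conceptual: one must be careful that the lifts $\tilde g,\tilde f$ are genuine elements of the homotopy groups of $\map_{n-1}$ (not merely of $\map_{\calC}(-,P_{n-1}(-))$), that the exact-couple formalism indeed identifies $d_{n-1}$ with $[\delta_n(\tilde g)]$ on the nose (including the shift $\pi_{t-1}\map_1(\susp X,\Sigma\D_n\susp Z)\cong E^1_{t,n}$ coming from the canonical delooping $D_n\simeq\deloop\D_n\susp$ and $\Sigma\D_n$), and that the choice of lift does not affect $d_{n-1}$ modulo the image of $d_{n-1}$ — but this is automatic since the indeterminacy of $\tilde g$ lies in $\pi_t\map_{\calC}(Y,D_nZ)$ type terms which $\delta_n$ kills or sends into the image of earlier differentials. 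A second, more serious point is verifying that the composition product on the abstract mapping spaces $\map_{\calC_H}$ used in Corollary~\ref{corollary:leibniz rule} and Proposition~\ref{proposition:leibniz rule for Goodwillie} genuinely corresponds, under the identifications of~\cite{Heuts21}, to the composition products~\eqref{equation:stable composition}, \eqref{equation:left action}, \eqref{equation:right action} defined here via the stable category $\Sp(\calC)$; this compatibility is essentially built into the statement of Proposition~\ref{proposition:leibniz rule for Goodwillie} (the two left vertical morphisms there "preserve the composition product"), so I would just need to unwind the definitions carefully. Once these identifications are in place, the theorem follows directly.
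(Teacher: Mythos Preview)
Your proposal is correct and follows essentially the same approach as the paper: lift $g$ and $f$ to $\tilde g\in\pi_t\map_{n-1}(Y,Z)$ and $\tilde f\in\pi_s\map_{n-1}(X,Y)$, observe that $\tilde g\circ\tilde f$ is a lift of $g\circ f$ so that $d_{n-1}(g\circ f)=\delta_{n*}(\tilde g\circ\tilde f)$, and then invoke Proposition~\ref{proposition:leibniz rule for Goodwillie}. The paper's proof is terser and does not dwell on the bookkeeping points you raise, but the argument is the same.
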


\begin{proof}
By the assumption, the elements $g\in E^1_{t,1}(Y,Z)$ and $f\in E^1_{s,1}(X,Y)$ persist until the $E^{n-1}$-terms of the spectral sequences~\eqref{equation:GSS1}. Hence, there are lifts
$$\tilde{g}\in \pi_t\map(Y,P_{n-1}Z)\simeq \pi_t\map_{n-1}(Y,Z)$$
$$\tilde{f}\in \pi_s\map(X,P_{n-1}Y) \simeq \pi_s\map_{n-1}(X,Y) $$
of the corresponding homotopy classes $g \in \pi_t\map_1(Y,Z)$ and $f\in \pi_s\map_1(X,Y)$ respectively.
Furthermore, the composite $\tilde{g}\circ \tilde{f}\in \map_{n-1}(X,Z)$ in the category $\calP_{n-1}(\calC)$ is a lift of the composite $g\circ f\in \map_1(X,Z)$ and the differential $d_{n-1}(g\circ f)$ is defined by the formula  
$$d_{n-1}(g\circ f)=\delta_{n*}(\tilde{g}\circ\tilde{f})\in \pi_{t+s-1}\map(\susp X,\D_n\susp Z), $$
where the map $$\delta_{n*}\colon \pi_*\map_n(X,Z) \to \pi_{*-1}\map(\susp X,\D_n\susp Z)$$ is induced by the right map in the fiber sequence~\eqref{equation:fiber sequences, taylor tower}. By Proposition~\ref{proposition:leibniz rule for Goodwillie}, the theorem follows.
\end{proof}

\begin{rmk}\label{remark: composition product on the first page}
Using the operad structure on the derivatives of the identity functor $\id_{\calC}\colon \calC \to \calC$ (see e.g.~\cite{Ching21} or~\cite{AroneChing11}), one can obtain the natural transformations
$$\D_n(\D_m(X)) \to \D_{nm}(X)\in \Sp(\calC), \;\; X\in\Sp(\calC), \; n,m\geq 1. $$
As in the construction of the map~\eqref{equation:left action}, these natural transformations induce the composition product on the $E^1$-term of the spectral sequence~\eqref{equation:GSS1}
\begin{equation}\label{equation: general composition product}
E^1_{t,n}(Y,Z) \otimes E^1_{s,m}(X,Y) \to E^1_{t+s,nm}(X,Z), \;\; X,Y,Z\in\calC. 
\end{equation}
Theorem~\ref{theorem:leibniz rule in GSS} shows that the differential in~\eqref{equation:GSS1} satisfies the Leibniz identity with respect to the product~\eqref{equation: general composition product} if $n=1$ or $m=1$. At the moment of writing, we are not aware if the differential satisfies the Leibniz identity for arbitrary $n,m\geq 1$. The biggest issue here is that the differential changes the gradings additively, but the composition product~\eqref{equation: general composition product} acts on the second grading multiplicatively.
\end{rmk}

\begin{rmk}\label{remark: composition product, hopf}
Let $\calC=\spaces$ be the $\infty$-category of spaces. Then it seems that the closest results to Theorem~\ref{theorem:leibniz rule in GSS} in the literature are~\cite[Theorem~1]{Hilton54} and~\cite[Theorem~6.3]{Ando68}. For example, the former states that the Hopf invariant $H\colon \pi_{r}(S^n) \to \pi_{r+1}(S^{2n})$, $r,n\geq 1$ has a variant of the Leibniz identity with respect to the composition product for maps of spheres.
\end{rmk}


\section{James periodicity}\label{section: james periodicity}
Let $\sL$ be the category of simplicial restricted Lie algebras and let $D_p\colon \sL \to \sL$ be the $p$-th Goodwillie layer of the identity functor $\id_{\sL}$. We recall that there is a unique $p$-homogeneous functor $$\D_p\colon \Sp(\sL) \to \Sp(\sL)$$ such that $D_p \simeq \deloop \circ \D_p \circ \susp$. Let $W,V \in \Mod^{\geq 0}_{\kk}$ be simplicial vector spaces, $\dim\pi_*(W)=\dim\pi_*(V)=1$, and let $$f\colon \suspfree W\to \suspfree V \in \Sp(\sL)$$ be a map of suspension spectrum objects. The main result of this section is Theorem~\ref{theorem: main theorem, james periodicity}, which we use to compute the induced $\Lambda$-linear map
$$\D_p(f)_*\colon \pi_*(\mathbf{L}_p(W))\otimes \Lambda \cong \pi_*\D_p(\suspfree W) \to \pi_*\D_p(\suspfree V) \cong \pi_*(\mathbf{L}_p(V))\otimes \Lambda. $$

Using the \emph{James periodicity} for spaces (Theorem~\ref{theorem:james periodicity for spaces}), we construct a natural filtration $\widetilde{\D}^{(\bullet)}_p$ for the functor $\D_{p}$ such that the thick quotients $\widetilde{\D}^{(k+n)}_p/\widetilde{\D}^{(k-1)}_p$ of length $n$ are \emph{periodic} with a period \emph{depending on $n$}, see Proposition~\ref{proposition: james flitration for functors}. 

In Section~\ref{section: computations with the p-th goodwillie layer} we show that the homotopy groups of the composite $\widetilde{\D}_p^{(n)}\circ \suspfree$ are given by the Dyer--Lashof--Lie classes $\bQe{\e}{i}(w)$ (Definition~\ref{definition: dll-classes}) with a certain upper bound on $i$, see Remark~\ref{remark: image of filtration}. Finally, in Theorem~\ref{theorem: main theorem, james periodicity} we show that the classes $$\D_p(f)_*(\bQe{\e}{i}(w)), \D_p(f)_*(\bQe{\e}{i+k}(w)) \in \pi_*(\D_p(\suspfree V))$$ coincide up to a \emph{shift} $\psi_k$ (Definition~\ref{definition: shift operators}) for a suitable $k$.

\subsection{James periodicity for functors}\label{section: james periodicity for functors} We recall the James periodicity for spaces. Let $G=C_p$ be a cyclic group of order $p$ generated by a non-trivial element $\zeta_p\in C_p$. We write $\theta_p$ for the following $G$-representation
$$ 
\theta_p =
\begin{cases}
\Co, \; \zeta_p(v)=e^{\frac{2\pi i}{p}}v,\; v\in \Co & \mbox{if $p$ is odd,}\\
\R, \; \zeta_2(v)=-v, \; v\in \R & \mbox{if $p=2$.}\\
\end{cases}
$$
We equip $\theta_p$ with a non-degenerate $G$-invariant real quadratic form.
Finally, suppose that $V$ is a $G$-representation equipped with a non-degenerate real quadratic form $q\colon V\to \R$. Then we write $$S(V)=\{v\in V\;|\; q(v)=1\}\in\spaces^{BG}$$ for the unit sphere in $V$ equipped with the induced $G$-action.

\begin{dfn}\label{definition:skeleta for EG}
The $G$-space $EG^{(n)}\in \spaces^{BG}$ is the unit sphere $S\big(\theta_p^{\oplus (n+1)}\big)$, $n\geq 0$ The pointed $G$-space $EG^{(n)}_+ \in \spaces^{BG}_*$, $n\geq 0$ is the $G$-space $EG^{(n)}$ with the added disjoint basepoint.
\end{dfn}

\begin{rmk}\label{remark:EG^n are complete filtration}
The $G$-spaces $EG^{(n)}\in\spaces^{BG}$, $n\geq 0$ provide an exhaustive filtration for the contractible $G$-space $EG\simeq \ast \in \spaces^{BG}$. In other words, the homotopy colimit of the following sequence of inclusions
$$EG^{(0)} \hookrightarrow EG^{(1)} \hookrightarrow EG^{(2)} \hookrightarrow \ldots $$
is contractible.
\end{rmk}

Since the group $G$ acts on the unit spheres $S\big(\theta_p^{\oplus (n+1)}\big)$ freely, the homotopy quotient $EG^{(n)}_{hG}$ is equivalent to the set-theoretical quotient $S\big(\theta_p^{\oplus (n+1)}\big)/G$, and 
$$ 
EG^{(n)}_{hG}\simeq S\big(\theta_p^{\oplus (n+1)}\big)/G = 
\begin{cases}
L^n(p) & \mbox{if $p$ is odd,}\\
\R\PP^n& \mbox{if $p=2$,}\\
\end{cases}
$$
where $L^n(p)$ is the lens space and $\R\PP^n$ is the $n$-dimensional real projective space.

If $p$ is odd, then there is a projection map 
$$S\big(\theta_p^{\oplus (n+1)}\big)/G=L^n(p) \to \Co\PP^n, $$
and we write $\pi_n\colon EG^{(n)}\to  \Co\PP^{n}$ for the composite $$EG^{(n)}  \to S(\theta_p^{\oplus(n+1)})/G=L^n(p) \to \Co\PP^n.$$ If $p=2$, we write $\pi_n\colon EG^{(n)} \to S\big(\theta_p^{\oplus (n+1)}\big)/G=\R\PP^n$ for the projection map.

\begin{dfn}\label{definition: tautological line bundles}
Let $\gamma^1_{\Co}$ (resp. $\gamma^1_{\R}$) be the tautological line bundle over the projective space $\Co\PP^n$ (resp. $\R\PP^n$). We denote by $\xi_n$, $n\geq 0$ the real $G$-equivariant vector bundle over the $G$-space $EG^{(n)}$ defined as follows
$$ 
\xi_n = 
\begin{cases}
r(\pi_n^*(\gamma^1_{\Co})) & \mbox{if $p$ is odd,}\\
\pi_n^*(\gamma^1_{\R})& \mbox{if $p=2$,}\\
\end{cases}
$$
where the functor $r$ is the realification of a vector bundle.
\end{dfn}

\begin{rmk}\label{remark: tautological line bundle}
Although the bundles $\xi_n$, $n\geq 0$ are trivial as vector bundles, they are non-trivial as \emph{$G$-equivariant} bundles.
\end{rmk}

We note that the Thom space $\mathrm{Th}(\xi_n^{\oplus m})\in \spaces^{BG}$ of the bundle sum $\xi_n^{\oplus m}$, $n,m\geq 0$ is equivalent to the quotient $G$-space $EG^{(n+m)}/EG^{(m-1)}$.

\begin{thm}[James periodicity]\label{theorem:james periodicity for spaces}
For every $n\in \N$ there exists a constant $N=N(n)$ such that for any $k\in \N$, $\nu_p(k)>N$ there is an equivalence
$$\Phi_k\colon \susp(EG^{(k+n)}/EG^{(k-1)}) \xrightarrow{\sim} \Sigma^{k'}\susp(EG^{(n)}_+) \in \Sp^{BG},$$
where $k'=2k$ if $p$ is odd, and $k'=k$ if $p=2$.
\end{thm}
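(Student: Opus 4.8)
The plan is to reduce the statement to the stable triviality of an equivariant vector bundle, which is controlled by the $J$-homomorphism together with the classical computation of the order of the tautological class in the $KO$-theory of a stunted lens space (resp. real projective space). Throughout, recall that $G$ acts freely on $EG^{(n)}$, so that $G$-equivariant real vector bundles over $EG^{(n)}$ are the same as real vector bundles over the quotient $B:=EG^{(n)}_{hG}$, which is the lens space $L^n(p)$ when $p$ is odd and $\R\PP^n$ when $p=2$; likewise, $G$-equivariant Thom spaces over $EG^{(n)}$ descend to Thom spaces over $B$, and a $G$-equivariant (stable) isomorphism of bundles over $EG^{(n)}$ is an ordinary (stable) isomorphism over $B$.

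First I would rewrite both sides as Thom spectra over $EG^{(n)}$. By the identification recalled just before the statement of the theorem, $EG^{(k+n)}/EG^{(k-1)}\simeq \mathrm{Th}(\xi_n^{\oplus k})$ is the Thom space of $k$ copies of the bundle $\xi_n$ of Definition~\ref{definition: tautological line bundles}, while $\Sigma^{k'}\susp(EG^{(n)}_+)\simeq \susp\,\mathrm{Th}(\underline{\R}^{k'})$ is the suspension spectrum of the Thom space of the trivial bundle of rank $k'=r\cdot k$, where $r=\rk_{\R}\xi_n$ equals $2$ for odd $p$ and $1$ for $p=2$. Hence it suffices to produce, for $\nu_p(k)$ sufficiently large, a $G$-equivariant bundle isomorphism $\xi_n^{\oplus k}\oplus \underline{\R}^a\cong \underline{\R}^{k'+a}$ for some $a\geq 0$: applying $\susp\circ\,\mathrm{Th}(-)$, which is $G$-equivariantly natural in the bundle, yields an equivalence $\Sigma^a\susp(EG^{(k+n)}/EG^{(k-1)})\simeq \Sigma^{k'+a}\susp(EG^{(n)}_+)$ in $\Sp^{BG}$, and desuspending by the invertible functor $\Sigma^a$ produces the required $\Phi_k$. (Equivalently, one phrases the last step through the $J$-homomorphism $\widetilde{KO}^0(B)\to \mathrm{Pic}(\Sp)$, which factors the Thom-spectrum construction through the stable class.)

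The remaining, and only substantive, point is that the class $x_n:=[\xi_n]-r\in \widetilde{KO}^0_G(EG^{(n)})\cong \widetilde{KO}^0(B)$ is a torsion element of $p$-power order, say $p^{m(n)}$. For $p=2$ this is Adams' computation of $\widetilde{KO}^0(\R\PP^n)$, which is cyclic of order $2^{\phi(n)}$ generated by $[\gamma^1_{\R}]-1$; for odd $p$ it follows from the fact that $\widetilde{KU}^0(L^n(p))$, hence $\widetilde{KO}^0(L^n(p))$ since $p$ is odd, is a finite $p$-group, so any element, in particular the realified tautological class $[r(\gamma^1_{\Co})]-2$, has $p$-power order. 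Taking $N(n):=m(n)$, whenever $\nu_p(k)>N(n)$ we have $p^{m(n)}\mid k$, hence $k\,x_n=[\xi_n^{\oplus k}]-rk=0$ in $\widetilde{KO}^0(B)$; that is, $\xi_n^{\oplus k}$ is stably trivial over $B$, which is precisely the bundle isomorphism needed above.

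The only mild obstacle I anticipate is bookkeeping rather than mathematics: making precise that a $G$-equivariant map of suspension spectra which is an underlying equivalence is an equivalence in $\Sp^{BG}=\Fun(BG,\Sp)$ (immediate, since evaluation at the unique object of $BG$ is conservative), and that the equivariant Thom-spectrum construction over the free $G$-space $EG^{(n)}$ agrees with the non-equivariant Thom spectrum over $B$ regarded as an object of $\Sp^{BG}$. Granting these formalities, the proof is the classical James--Atiyah--Todd periodicity argument, with the period $p^{m(n)}$ read off from the order of the tautological $KO$-class on the relevant stunted projective or lens space.
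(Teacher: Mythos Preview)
Your proposal is correct and follows essentially the same route as the paper's proof: both reduce the equivalence to the stable triviality of the bundle $\xi_n^{\oplus k}$ over $EG^{(n)}$ (equivalently, the vanishing of $k[\xi_n]$ in $\widetilde{KO}_G(EG^{(n)})\cong \widetilde{KO}(EG^{(n)}/G)$), and then invoke the classical computations of $\widetilde{KO}$ of projective and lens spaces. The only minor difference is that for odd $p$ the paper cites Kambe's exact value for the order of $[\xi_n]$ in $\widetilde{KO}(L^n(p))$, whereas you argue more qualitatively that $\widetilde{KO}(L^n(p))$ is a finite $p$-group; your argument suffices for the existence of $N(n)$, though it does not by itself yield the explicit bound recorded in Remark~\ref{remark:exact value of N}.
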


\begin{proof}
Recall that if $\zeta$, $\xi$ are real $G$-equivariant vector bundles over a (connected) $G$-space $X$ such that their classes $[\zeta]$, $[\xi]$ are equal in the reduced $K$-group $\widetilde{KO}_G(X)$, then there is an equivalence 
$$\Sigma^{m}\susp \mathrm{Th}(\zeta) \simeq \Sigma^{m'}\susp \mathrm{Th}(\xi) \in \Sp^{BG} $$ 
of suspension spectra. Here $m=\rk(\xi)$, $m'=\rk(\zeta)$ are the real ranks of the bundles $\xi$ and $\zeta$ respectively.

Therefore it suffices to compute the order of the class $[\xi_n]\in \widetilde{KO}_G(EG^{(n)})$. Since the group $G$ acts freely on the space $EG^{(n)}$, we have
$$\widetilde{KO}_G(EG^{(n)}) \cong \widetilde{KO}(EG^{(n)}/G)\cong
\begin{cases}
\widetilde{KO}(L^n(p)) & \mbox{if $p$ is odd,}\\
\widetilde{KO}(\R\PP^n) & \mbox{if $p=2$.}\\
\end{cases}
$$

By~\cite[Theorem~7.4]{Adams62}, the $K$-group $\widetilde{KO}(\R\PP^n)$ is isomorphic to $\Z/2^{\phi(n)}\Z$, where $\phi(n)$ is the number of integers $s\leq n$ such that $s\equiv 0,1,2,4 \mod 8$. So, the theorem follows for $p=2$.

Finally, by~\cite[Theorem~2]{Kambe66}, the order of the class $[\xi_n]$ in $\widetilde{KO}(L^n(p))$ is equal to $p^{\left\lceil\frac{n}{p-1}\right\rceil}$. This implies the theorem for an odd prime $p$ as well.
\end{proof}

\begin{rmk}\label{remark:exact value of N}
The proof above shows that one can take $N=N(n)\geq \left\lceil\frac{n}{p-1}\right\rceil$ in Theorem~\ref{theorem:james periodicity for spaces}.
\end{rmk}


\begin{rmk}\label{remark:james equivalence choice}
In the notation of Section~\ref{section: group cohomology}, the $G$-equivariant cohomology groups $H^*_G(EG^{(n)},\F_p)=H^*(EG^{(n)}_{hG},\F_p)$ are isomorphic to the following module over the group cohomology ring $H^*(G,\F_p)=H^*(BG,\F_p)$
$$H^*_G(EG^{(n)},\F_p)\cong
\begin{cases}
\F_p[t]/t^{n+1} \otimes \Lambda(u) & \mbox{if $p$ is odd,}\\
\F_p[u]/u^{n+1} & \mbox{if $p=2$.}\\
\end{cases}
$$

We note that the equivalence $\Phi_k$ from Theorem~\ref{theorem:james periodicity for spaces} is not unique. In this paper, we choose $\Phi_k$ such that the induced homomorphism 
$$\Phi^*_k\colon\widetilde{H}^{*-k'}_G(EG^{(n)}_+,\F_p) \to \widetilde{H}^{*}_G(EG^{(k+n)}_+,\F_p)$$
is the multiplication by $t^{k}$ if $p$ is odd, and it is the multiplication by $u^k$ if $p=2$.  
\end{rmk}

Let $\calC,\calD$ be stable presentable $\infty$-categories. Recall that $G=C_p$ is a cyclic group. Similar to the category of symmetric functor (see~\cite[Notation~6.1.4.1]{HigherAlgebra}), we define the $\infty$-category $\mathrm{CycFun}(\calC,\calD)$ of \emph{cyclic functors} as follows
$$\mathrm{CycFun}(\calC,\calD) = \Fun^{\omega}((\calC^{p}\times EG)/G,\calD). $$
Furthermore, we denote by $\mathrm{CycFun}_{\mathrm{lin}}(\calC,\calD)$ the full subcategory of cyclic \emph{multilinear} functors in the category $\mathrm{CycFun}(\calC,\calD)$ spanned by those cyclic functors $F\colon (\calC^{p}\times EG)/G \to\calD$ whose underlying functor $\overline{F}\colon \calC^{p} \to \calD $ is multilinear, see~\cite[Definition~6.1.3.7]{HigherAlgebra}. We note that there is the canonical forgetful functor
$$\mathrm{SymFun}(\calC,\calD) \to \mathrm{CycFun}(\calC,\calD) $$
from the category of symmetric functors to the category of cyclic functors induced by the group inclusion $C_p\subset \Sigma_p$. Moreover, this forgetful functor maps the full subcategory $\mathrm{SymFun}_{\mathrm{lin}}(\calC,\calD)$ onto $\mathrm{CycFun}_{\mathrm{lin}}(\calC,\calD)$

We recall that $\mathrm{Homog}^p(\calC,\calD)$ is the category of $p$-homogeneous functors from $\calC$ to $\calD$ and we also recall that the $p$-th cross-effect $\creff_{(p)}$ gives an equivalence (see e.g.~\cite[Theorem~6.1.4.7]{HigherAlgebra})
$$\creff_{(p)}\colon \mathrm{Homog}^p(\calC,\calD) \xrightarrow{\sim} \mathrm{SymFun}_{\mathrm{lin}}(\calC,\calD) $$
between the category $\mathrm{Homog}^p(\calC,\calD)$ and the category of multilinear symmetric functors. Moreover, the inverse to the equivalence above is given as follows $$F \mapsto \Xi(F),\;\; F\in \mathrm{SymFun}_{\mathrm{lin}}(\calC,\calD),$$ where $\Xi(F)(X)=\overline{F}(X,\ldots,X)_{h\Sigma_p}$, $X\in\calC$, and $\overline{F}\colon \calC^p \to \calD$ is the underlying functor for $F$.

Suppose that $H\in\mathrm{Homog}^p(\calC,\calD)$ is a $p$-homogeneous functor. For the rest of the section, we consider its $p$-th cross-effect $\creff_{(p)}H\in \mathrm{CycFun}_{\mathrm{lin}}(\calC,\calD)$ as a multilinear \emph{cyclic} functor by forgetting the symmetric structure on $\creff_{(p)}H$. 

We note that the stable presentable category $\mathrm{CycFun}_{\mathrm{lin}}(\calC,\calD) \in \Pre^L_{\mathrm{st}}$ is equivalent to 
$$\Fun_{\mathrm{lin}}\left(\calC^{\otimes p}_{hG}, \calD\right) \simeq \left(\Fun_{\mathrm{lin}}(\calC^{\otimes p}, \calD)\right)^{hG} \in \Pre^L_{\mathrm{st}}.$$
Since the stable presentable category $\Fun_{\mathrm{lin}}(\calC^{\otimes p}, \calD)$ is a module over the symmetric monoidal category $\Sp\in \CAlg(\Pre^L_{\mathrm{st}})$ equipped with the trivial $G$-action, we observe that $\mathrm{CycFun}_{\mathrm{lin}}(\calC,\calD) \in \Pre^L_{\mathrm{st}}$ is a module over the symmetric monoidal category $\Sp^{BG} \in \CAlg(\Pre^L_{\mathrm{st}})$. Therefore, the tensor product $K\otimes H \in \mathrm{CycFun}_{\mathrm{lin}}(\calC,\calD)$ is defined for a functor $H\in\mathrm{CycFun}_{\mathrm{lin}}(\calC,\calD)$ and a spectrum $K \in \Sp^{BG}$.

\begin{dfn}\label{definition:filtration on homogeneous functors}
Let $H\in \mathrm{Homog}^p(\calC,\calD)$ be a $p$-homogeneous functor. We define the functor $\widetilde{H}^{(n)}\in \mathrm{Homog}^{p}(\calC,\calD), n\geq 0$ by the rule
$$\widetilde{H}^{(n)}(X)=\left(\susp EG^{(n)}_+\otimes (\creff_{(p)}H)\right)(X,\ldots,X)_{hC_p}, \;\; X\in \calC.$$ 
Similarly, we define the functor $\widetilde{H}\in \mathrm{Homog}^p(\calC,\calD)$ by the rule 
$$\widetilde{H}(X)=(\creff_{(p)}H)(X,\ldots,X)_{hC_p}, \;\; X\in \calC.$$
\end{dfn}

We note that there are the compatible natural transformations $$i^m_n\colon \widetilde{H}^{(n)} \to \widetilde{H}^{(m)}, \; i_n\colon \widetilde{H}^{(n)} \to \widetilde{H}, \;  p_n \colon \widetilde{H}^{(n)} \to H, \; p\colon \widetilde{H} \to H,$$
where $m\geq n\geq 0$ and $H\in \mathrm{Homog}^p(\calC,\calD)$, induced by various natural maps between $C_p$-spaces $EC_p^{(n)}$, $n\geq 0$.

\begin{prop}\label{proposition: james flitration for functors}
Let $H\in\mathrm{Homog}^p(\calC,\calD)$ be a $p$-homogeneous functor. Then
\begin{enumerate}
\item the colimit $\colim\limits_{n} \widetilde{H}^{(n)}$ in $\mathrm{Homog}^p(\calC,\calD)$ is equivalent to the functor $\widetilde{H}$;
\item
for every $n\in \N$ there exists a constant $N=N(n)$ such that for any $k\in \N$, $\nu_p(k)>N$ there is a natural equivalence
$$\Phi_k\colon \widetilde{H}^{(n+k)}/\widetilde{H}^{(k-1)} \xrightarrow{\sim} \Sigma^{k'} \widetilde{H}^{(n)} \in \mathrm{Homog}^p(\calC,\calD),$$
induced by  the equivalence of Theorem~\ref{theorem:james periodicity for spaces}, where $k'=2k$ if $p$ is odd, and $k'=k$ if $p=2$.
\end{enumerate}
\end{prop}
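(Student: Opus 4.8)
The plan is to deduce Proposition~\ref{proposition: james flitration for functors} directly from the corresponding statements about $C_p$-spectra by applying the colimit-preserving tensoring functor~\eqref{equation: enrichment 3} and the formula $\Xi(-)$ for the inverse of the cross-effect equivalence. First I would prove part (1). By definition $\widetilde{H}^{(n)}(X)=\left(\susp EG^{(n)}_+\otimes (\creff_{(p)}H)\right)(X,\ldots,X)_{hC_p}$, so it suffices to check two things: that $\colim_n \bigl(\susp EG^{(n)}_+\otimes (\creff_{(p)}H)\bigr)\simeq \creff_{(p)}H$ in $\mathrm{CycFun}_{\mathrm{lin}}(\calC,\calD)$, and that the functor $F\mapsto \Xi(F)=\overline F(X,\dots,X)_{hC_p}$ commutes with the relevant colimit. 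The first point follows from Proposition~\ref{proposition: enrichment for cyclic}(1), which says $-\otimes-$ preserves colimits in each variable, together with the fact (Remark~\ref{remark:EG^n are complete filtration}) that $\hocolim_n EG^{(n)}_+ \simeq EG_+ \simeq S^0_+$ with its trivial $C_p$-action on the added basepoint — more precisely $\hocolim_n \susp EG^{(n)}_+$ is the sphere spectrum $S^0$ with trivial $C_p$-action plus a contractible summand, and Proposition~\ref{proposition: enrichment for cyclic}(2) identifies $S^0\otimes(\creff_{(p)}H)\simeq \creff_{(p)}H$. The second point, that homotopy orbits and the diagonal restriction commute with filtered colimits, is formal since both are colimit-preserving. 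This gives (1).

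For part (2), I would run the same translation. Theorem~\ref{theorem:james periodicity for spaces} supplies, for each $n$ and each $k$ with $\nu_p(k)>N(n)$, an equivalence $\Phi_k\colon \susp\bigl(EG^{(k+n)}/EG^{(k-1)}\bigr)\xrightarrow{\sim}\Sigma^{k'}\susp(EG^{(n)}_+)$ in $\Sp^{BC_p}$. Tensoring this equivalence with the multilinear cyclic functor $\creff_{(p)}H$ via~\eqref{equation: enrichment 3} — which preserves equivalences because it preserves colimits, hence is exact in the first variable — yields an equivalence of multilinear cyclic functors
\begin{equation*}
\susp\bigl(EG^{(k+n)}/EG^{(k-1)}\bigr)\otimes(\creff_{(p)}H)\;\xrightarrow{\sim}\;\Sigma^{k'}\Bigl(\susp(EG^{(n)}_+)\otimes(\creff_{(p)}H)\Bigr),
\end{equation*}
where on the right the suspension $\Sigma^{k'}$ is pulled out using again that $-\otimes-$ is exact in the $\Sp^{BC_p}$-variable (so $\Sigma^{k'}S^0\otimes-\simeq\Sigma^{k'}(-)$). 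Now I apply $\Xi(-)=\overline{(-)}(X,\dots,X)_{hC_p}$ to both sides. Since $-\otimes-$ takes cofiber sequences of $C_p$-spectra to cofiber sequences of cyclic functors, the cofiber sequence $EG^{(k-1)}_+\to EG^{(k+n)}_+\to EG^{(k+n)}/EG^{(k-1)}$ identifies $\susp(EG^{(k+n)}/EG^{(k-1)})\otimes(\creff_{(p)}H)$ with the cofiber of $\widetilde H^{(k-1)}\text{-datum}\to\widetilde H^{(k+n)}\text{-datum}$ at the level of cyclic functors; and $\Xi$, being exact, carries this to the cofiber $\widetilde{H}^{(k+n)}/\widetilde{H}^{(k-1)}$ in $\mathrm{Homog}^p(\calC,\calD)$. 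On the other side $\Xi$ carries $\Sigma^{k'}(\susp(EG^{(n)}_+)\otimes(\creff_{(p)}H))$ to $\Sigma^{k'}\widetilde{H}^{(n)}$, since the equivalence $\mathrm{Homog}^p(\calC,\calD)\simeq\mathrm{SymFun}_{\mathrm{lin}}(\calC,\calD)$ is exact and $\Xi$ commutes with suspension. Chaining these identifications produces the desired natural equivalence $\Phi_k\colon \widetilde{H}^{(n+k)}/\widetilde{H}^{(k-1)}\xrightarrow{\sim}\Sigma^{k'}\widetilde{H}^{(n)}$, with the same constant $N=N(n)$ (one may take $N(n)\geq\lceil n/(p-1)\rceil$ by Remark~\ref{remark:exact value of N}), and naturality in $X$ is automatic because everything was done at the level of functors.

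The main obstacle I anticipate is purely bookkeeping rather than conceptual: making sure the tensoring functor~\eqref{equation: enrichment 3} genuinely behaves as an exact functor in the $\Sp^{BC_p}$-variable on the subcategory of multilinear cyclic functors — i.e. that it preserves cofiber sequences and finite colimits, not merely filtered ones — and that it commutes appropriately with the cross-effect/$\Xi$ adjunction and with $\Sigma$. Proposition~\ref{proposition: enrichment for cyclic}(1) gives colimit-preservation in each variable, and since $\mathrm{CycFun}_{\mathrm{lin}}(\calC,\calD)$ and $\Sp^{BC_p}$ are stable, colimit-preservation in a variable is equivalent to exactness in that variable; combined with Proposition~\ref{proposition: enrichment for cyclic}(4) (multilinearity is preserved) this is enough. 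One small point to verify carefully is that the choice of $\Phi_k$ fixed in Remark~\ref{remark:james equivalence choice} (normalized by multiplication by $t^k$ resp. $u^k$ in equivariant cohomology) is compatible with the tensoring construction so that the induced $\Phi_k$ on functors is canonical; this follows because $-\otimes-$ is functorial in the $\Sp^{BC_p}$-argument, so any specified map of $C_p$-spectra induces a specified natural transformation of cyclic functors, and hence of homogeneous functors after applying $\Xi$.
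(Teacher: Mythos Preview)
Your proposal is correct and is exactly the approach the paper takes: the paper's proof is a one-liner stating that the proposition follows directly from the definitions, Proposition~\ref{proposition: enrichment for cyclic}, and Theorem~\ref{theorem:james periodicity for spaces}, and you have simply unpacked what this means. The only quibble is your phrasing ``the sphere spectrum $S^0$ with trivial $C_p$-action plus a contractible summand'' --- in the Borel category $\Sp^{BC_p}$ one simply has $\susp EG_+\simeq S^0$ since $EG$ is contractible, so the extra summand is unnecessary.
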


\begin{proof}
The proposition follows directly from the definitions and Theorem~\ref{theorem:james periodicity for spaces}. 
\end{proof}

In the next proposition we list a few simple properties of the filtration above. The proof is straightforward and we leave it to the reader.
\begin{prop}\label{proposition: james filtration, properties}
Let $H\in \mathrm{Homog}^p(\calC,\calD)$ be a $p$-homogeneous functor.
\begin{enumerate}
\item Suppose that $\calC_1,\calD_1$ are two other stable presentable $\infty$-categories and let $L_1\in \Fun^L(\calC_1,\calC)$, $L_2\in \Fun^L(\calD,\calD_1)$ be exact functors. Then we have
$$\widetilde{HL_1}^{(n)}\simeq \widetilde{H}^{(n)} L_1 \in \Fun^{\omega}(\calC_1,\calD), \;\; \widetilde{L_2 H}^{(n)}\simeq L_2 \widetilde{H}^{(n)} \in \Fun^{\omega}(\calC,\calD_1). $$
\item Suppose that the category $\calD$ is $p$-local, then the projection map $p\colon \widetilde{H} \to H$ has a natural splitting given by the transfer map
$$H(X)\simeq (\creff_{(p)}H)(X,\ldots,X)_{h\Sigma_p} \to (\creff_{(p)}H)(X,\ldots,X)_{hC_p} \simeq \widetilde{H}(X), \;\; X\in \calC. $$
\end{enumerate}
\end{prop}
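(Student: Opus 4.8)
The plan is to deduce both statements from standard compatibilities, unwinding Definition~\ref{definition:filtration on homogeneous functors}: part (1) from the naturality of the cross-effect $\creff_{(p)}$ and of the tensoring~\eqref{equation: enrichment 3} under exact functors, and part (2) from the transfer argument for the index-$(p-1)!$ subgroup $C_p\subset\Sigma_p$.

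For part (1), I would first recall that $\creff_{(p)}$ is assembled by applying the functor to coproducts and forming total (co)fibers, cf.~\cite[Section~6.1]{HigherAlgebra}; hence it commutes with precomposition along any coproduct-preserving functor and with postcomposition along any exact functor. Since $L_1$ preserves coproducts (being a left adjoint) and $L_2$ is exact, this gives natural equivalences $\creff_{(p)}(HL_1)\simeq(\creff_{(p)}H)\circ L_1^{\times p}$ and $\creff_{(p)}(L_2H)\simeq L_2\circ\creff_{(p)}H$ of cyclic multilinear functors, compatibly with the $C_p$-actions. Next, the tensoring~\eqref{equation: enrichment 3} is natural in the same sense: it is built from the canonical $\Sp$-tensoring~\eqref{equation:enrichment 2} of the stable presentable category $\calD$, which commutes with colimit-preserving functors, and from precomposition along the diagonal functor~\eqref{equation:enrichment 1}, which does not involve $L_1$ (the needed formal properties of $\otimes$ are those recorded in Proposition~\ref{proposition: enrichment for cyclic}). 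Finally, homotopy $C_p$-orbits is a colimit, hence commutes with exact functors and with precomposition. Concatenating these three compatibilities through Definition~\ref{definition:filtration on homogeneous functors} yields $\widetilde{HL_1}^{(n)}\simeq\widetilde{H}^{(n)}L_1$ and $\widetilde{L_2H}^{(n)}\simeq L_2\widetilde{H}^{(n)}$; the statements for $\widetilde{H}$ without a superscript are proved the same way, simply dropping the tensor factor $\susp EG^{(n)}_+$.

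For part (2), note that $[\Sigma_p:C_p]=(p-1)!$ is coprime to $p$, hence invertible in the $p$-local category $\calD$. Unwinding Definition~\ref{definition:filtration on homogeneous functors}, the projection $p\colon\widetilde{H}\to H$ is, evaluated at $X\in\calC$, the canonical map $\mathrm{can}\colon(\creff_{(p)}H)(X,\dots,X)_{hC_p}\to(\creff_{(p)}H)(X,\dots,X)_{h\Sigma_p}$ coming from the inclusion $C_p\subset\Sigma_p$ and the contraction $EC_p\to\ast$. For a finite-index inclusion of groups there is a wrong-way transfer $\mathrm{tr}\colon(-)_{h\Sigma_p}\to(-)_{hC_p}$, natural in the argument, for which $\mathrm{can}\circ\mathrm{tr}$ is multiplication by $(p-1)!$. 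Thus $((p-1)!)^{-1}\,\mathrm{tr}$ is a natural section of $\mathrm{can}$, and under the identification $H(X)\simeq(\creff_{(p)}H)(X,\dots,X)_{h\Sigma_p}$ it is exactly the transfer map displayed in the statement.

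The only point requiring care is the $\infty$-categorical bookkeeping: verifying with all coherences that~\eqref{equation: enrichment 3} and $\creff_{(p)}$ intertwine with exact functors (a diagram chase resting on Proposition~\ref{proposition: enrichment for cyclic}), and that the transfer for $C_p\subset\Sigma_p$ exists as a natural transformation of functors $\calC\to\calD$ rather than only pointwise — i.e.~the finite-group ambidexterity of $(-)_{hG}$ after inverting $(p-1)!$. Neither is a genuine obstacle, which matches the assertion that the proof is straightforward.
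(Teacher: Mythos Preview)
Your proposal is correct and matches the paper's intent: the paper does not give a proof at all, stating only that ``the proof is straightforward and we leave it to the reader'' (the \qed is placed inside the statement). Your sketch fills in exactly the details one would expect---naturality of $\creff_{(p)}$ and of the tensoring~\eqref{equation: enrichment 3} under exact functors for part~(1), and the standard transfer argument for the index-$(p-1)!$ inclusion $C_p\subset\Sigma_p$ for part~(2)---so there is nothing to compare.
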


\begin{exmp}\label{example: james filtration vector spaces}
Suppose that the categories $\calC, \calD$ are the category of chain complexes, i.e. $\calC=\calD=\Mod_{\kk}$. Let us consider a $p$-homogeneous functor $H\in \mathrm{Homog}^p(\Mod_{\kk},\Mod_{\kk})$ given by the rule
$$H(V)=A\otimes_{h\Sigma_p} V^{\otimes p},\;\; V\in \Mod_{\kk}, \;\; A\in \Mod_{\kk[\Sigma_p]}.$$
Here we describe the functors $\widetilde{H}^{(n)}\in \mathrm{Homog}^p(\Mod_{\kk}, \Mod_{\kk})$, $n\geq 0$.

Let $P_\bullet\in \Mod_{\kk[C_p]}$ be the following periodic chain complex of $\kk[C_p]$-modules
$$P_{\bullet}=\left(P_0=\kk[C_p] \xleftarrow{1-g} P_1=\kk[C_p] \xleftarrow{1+g+\ldots+g^{p-1}} P_2 = \kk[C_p] \xleftarrow{1-g} \ldots \right). $$
The complex $P_\bullet$ is a free resolution of the trivial $\kk[C_p]$-module $\kk$. We write $P_{\bullet}^{(m)}$, $m\geq 0$ for the \emph{$m$-th stupid truncation} of $P_\bullet$, i.e. $P^{(m)}_i=P_i$ if $i\leq m$, and $P^{(m)}_i=0$ otherwise. Then we observe that the functor $\widetilde{H}^{(n)}\in \mathrm{Homog}^p(\Mod_{\kk},\Mod_{\kk})$, $n\geq 0$ can be given as follows
$$\widetilde{H}^{(n)}(V) \simeq 
\begin{cases}
\big(P^{(2n)}_{\bullet}\otimes A\big)\otimes_{hC_p} V^{\otimes p} & \mbox{if $p$ is odd, $V\in \Mod_{\kk}$,}\\
\big(P^{(n)}_{\bullet}\otimes A\big)\otimes_{hC_2} V^{\otimes 2} & \mbox{if $p=2$, $V\in \Mod_{\kk}$.}
\end{cases}
$$

Moreover, we observe that the homotopy groups $\pi_*\widetilde{H}^{(n)}(V)$, $V\in \Mod_{\kk}$ form a right module over the cohomology ring $H^*(C_p,\kk)$ by the cap-product. By Remark~\ref{remark:james equivalence choice}, we obtain that the induced map
$$\Phi_{k*}\colon  \pi_*\widetilde{H}^{(k+n)}(V) \to \pi_{*-k'}\widetilde{H}^{(n)}(V), \;\; V\in \Mod_{\kk}$$
is given by the cap-product with $t^{k}$ if $p$ is odd, and with $u^k$ if $p=2$.
\end{exmp}

\subsection{\texorpdfstring{$p$}{p}-th Goodwillie layer}\label{section: computations with the p-th goodwillie layer}

We recall that $\D_p\colon \Sp(\sL) \to \Sp(\sL)$ is the $p$-th derivative of the identity functor $\id_{\sL}$. By Corollary~\ref{corollary:derivatives of free}, the composite 
$$ \Mod_{\kk} \xrightarrow{\suspfree} \Sp(\sL) \xrightarrow{\D_p} \Sp(\sL)$$
is equivalent to the functor $\suspfree(\bfLie_p\otimes_{h\Sigma_p}V^{\otimes p})$, $V\in \Mod_{\kk}$. We consider the filtration $\widetilde{\D}^{(n)}_p\colon \Sp(\sL) \to \Sp(\sL)$, $n\geq 0$ on $\D_p$ by the functors from Definition~\ref{definition:filtration on homogeneous functors}. By Example~\ref{example: james filtration vector spaces}, the composite $\widetilde{\D}^{(n)}_p\circ \suspfree$ is equivalent to the functor
$$\widetilde{\D}^{(n)}_p(\suspfree V) \simeq 
\begin{cases}
\suspfree\Big(\big(P^{(2n)}_{\bullet}\otimes \bfLie_p \big)\otimes_{hC_p} V^{\otimes p}\Big) & \mbox{if $p$ is odd, $V\in \Mod_{\kk}$,}\\
\suspfree\Big(\big(P^{(n)}_{\bullet}\otimes \bfLie_2\big)\otimes_{hC_2} V^{\otimes 2}\Big) & \mbox{if $p=2$, $V\in \Mod_{\kk}$,}
\end{cases}
$$

Let $V\in \Mod_{\kk}$ be a chain complex such that $\pi_l(V)\cong\kk, l\geq 0$ and $\pi_*(V)=0, \ast\neq l$. By Corollary~\ref{corollary: kjaer basis of free lie algebra}, the homotopy groups $\pi_*(\bfLie_p \otimes_{h\Sigma_p}V^{\otimes p})$ are spanned by Dyer--Lashof--Lie classes $\bQe{\e}{i}(v)$, $v\in \pi_l(V)$, $i\geq 0$, $\e\in\{0,1\}$. Namely, let $v\in \pi_l(V)$ be a non-zero element, then $\pi_q(\bfLie_p\otimes_{h\Sigma_p}V^{\otimes p})$ is generated by
\begin{enumerate}
\item $\Qe{i}(v)$ if $p$ is odd and $q=l+2i(p-1)-1$, $2i \geq l+1$;
\item $\bQe{}{i}(v)$ if $p$ is odd and $q=l+2i(p-1)-2$, $2i\geq l+1$;
\item $\Qe{i}(v)$ if $p=2$ and $q=l+i-1$, $i\geq l+1$; 
\end{enumerate}
and $\pi_q(\bfLie_p\otimes_{h\Sigma_p}V^{\otimes p})=0$ in all other cases. 

\begin{rmk}\label{remark: image of filtration}
Let $V\in \Mod_{\kk}$ be such that $\pi_l(V)\cong\kk, l\geq 0$ and $\pi_*(V)=0, \ast\neq l$. Then the projection map $\widetilde{\D}_p^{(n)} \to \D_p$, $n\geq 0$ induces the $\Lambda$-linear map
$$p_{n*}\colon \pi_*(\widetilde{\D}_p^{(n)}(\suspfree V)) \to \pi_*(\D_p(\suspfree V)).$$
By Proposition~\ref{proposition: james flitration for functors}, the homotopy groups $\pi_*(\D_p(\suspfree V))$ are the union of the images of all maps $p_{n*}, n\geq 0$. Moreover, suppose that $\bQe{\e}{i}(v)\in \pi_*(V)$ is a Dyer--Lashof--Lie class, then $\bQe{\e}{i}(v)$ belongs to the image of the map $p_{n*}$ if and only if $$2n>(p-1)(2i-l) \;\; \text{(resp. $n> i-l$)}. $$
\end{rmk}

Using the description above, we define the following series of endomorphisms of the homotopy groups $\pi_*(\bfLie_p\otimes_{h\Sigma_p}V^{\otimes p})$.
\begin{dfn}\label{definition: shift operators} 
Let $V\in \Mod_{\kk}$ be a chain complex such that $\pi_l(V)=\kk, l\geq 0$ and $\pi_*(V)=0, \ast\neq l$. We define the \emph{shift operators} $$\vph_k\colon \pi_*(\bfLie_p\otimes_{h\Sigma_p}V^{\otimes p}) \to\pi_{*-\bar{k}}(\bfLie_p\otimes_{h\Sigma_p}V^{\otimes p}), \;\; k\in\Z, $$
where $\bar{k}=2k(p-1)$ if $p$ is odd, and $\bar{k}=k$ if $p=2$,
as follows
$$\vph_k(\bQe{\e}{i}(v)) = \bQe{\e}{i-k}(v), \;\; v\in \pi_l(V), \;\; i\geq 1, \;\; \e\in\{0,1\}.$$
\end{dfn}

Since $\pi_*(\D_p(\suspfree V)) \cong \pi_*(\bfLie_p \otimes_{h\Sigma_p}V^{\otimes p})\otimes \Lambda$, $V\in \Mod_{\kk}$, the map $\vph_k, k\in\Z$ induces the $\Lambda$-linear map 
\begin{equation}\label{equation: Lambda-shift operators}
\psi_k\colon \pi_*(\D_p(\suspfree V)) \to \pi_{*-\bar{k}}(\D_p(\suspfree V)), \;\;  V\in \Mod_{\kk},\;\; k\in \Z,
\end{equation}
and where $\bar{k}$ as in Definition~\eqref{definition: shift operators}.
The next proposition follows directly from Proposition~\ref{proposition: james flitration for functors}, the last paragraph of Example~\ref{example: james filtration vector spaces}, and Definition~\ref{definition: dll-classes} of Dyer--Lashof--Lie classes.

\begin{prop}\label{proposition: phi_k are locally functorial}
For every $n\in \N$ there exists a constant $N=N(n)$ such that for any $k\in \N$, $\nu_p(k)>N$ the following diagram commutes
$$
\begin{tikzcd}[column sep=large]
\pi_*(\widetilde{\D}^{((p-1)k+n)}_p(\suspfree V)) \arrow{r}{\Phi_{(p-1)k*}} \arrow{d}[swap]{p_{((p-1)k+n)*}}
&\pi_{*-\bar{k}}(\widetilde{\D}^{(n)}_p(\suspfree V)) \arrow{d}{p_{n*}} \\
\pi_*({\D}_p(\suspfree V)) \arrow{r}{\psi_k}
&\pi_{*-\bar{k}}({\D}_p(\suspfree V)),
\end{tikzcd}
$$
where $\bar{k}$ as in Definition~\ref{definition: shift operators}. \qed
\end{prop}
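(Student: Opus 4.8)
The strategy is to reduce the statement to the corresponding diagram for chain complexes analyzed in Example~\ref{example: james filtration vector spaces}, using the functoriality properties listed in Proposition~\ref{proposition: james filtration, properties}(1) together with the explicit identification of $\widetilde{\D}^{(n)}_p \circ \suspfree$. First I would fix $n$ and choose $N = N(n)$ to be the constant provided by Proposition~\ref{proposition: james flitration for functors}(2), equivalently (by Remark~\ref{remark:exact value of N}) any $N \geq \lceil n/(p-1)\rceil$; the point is that for $k$ with $\nu_p(k) > N$ the James equivalence $\Phi_{(p-1)k} \colon \widetilde{\D}^{((p-1)k+n)}_p/\widetilde{\D}^{(k-1)\cdot ?}_p \xrightarrow{\sim} \Sigma^{\bar k}\widetilde{\D}^{(n)}_p$ exists (I should double-check the indexing shift: James periodicity with $n$ replaced by $n$ and period $(p-1)k$ in the odd case, since $\widetilde{\D}^{(m)}_p$ involves the $2m$-th stupid truncation $P^{(2m)}_\bullet$). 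The map $\Phi_{(p-1)k*}$ in the statement is by definition induced by this equivalence, so the whole diagram lives over the comparison between the James filtration on the homogeneous functor $\D_p$ and the James filtration on the functor $V \mapsto \bfLie_p \otimes_{h\Sigma_p} V^{\otimes p}$ from $\Mod_{\kk}$ to $\Mod_{\kk}$.

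Next I would invoke Corollary~\ref{corollary:derivatives of free} and Proposition~\ref{proposition: james filtration, properties}(1) to write $\widetilde{\D}^{(m)}_p \circ \suspfree \simeq \suspfree \circ \widetilde{H}^{(m)}$, where $H(V) = \bfLie_p \otimes_{h\Sigma_p} V^{\otimes p}$, and similarly $\D_p \circ \suspfree \simeq \suspfree \circ H$. Since $\suspfree$ is exact, applying it to the square of chain-complex functors and maps produces the square in the proposition; moreover $\suspfree$ takes the James equivalence of Example~\ref{example: james filtration vector spaces} to $\Phi_{(p-1)k}$ and the projection $p_m \colon \widetilde{H}^{(m)} \to H$ to $p_m \colon \widetilde{\D}^{(m)}_p \to \D_p$. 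Therefore it suffices to prove commutativity of
$$
\begin{tikzcd}[column sep=large]
\pi_*(\widetilde{H}^{((p-1)k+n)}(V)) \arrow{r}{\Phi_{(p-1)k*}} \arrow{d}[swap]{p_{((p-1)k+n)*}}
& \pi_{*-\bar k}(\widetilde{H}^{(n)}(V)) \arrow{d}{p_{n*}} \\
\pi_*(H(V)) \arrow{r}{\vph_k}
& \pi_{*-\bar k}(H(V)),
\end{tikzcd}
$$
after which the $\Lambda$-linear version follows formally from the $\Lambda$-module decomposition $\pi_*(\D_p(\suspfree V)) \cong \pi_*(H(V)) \otimes \Lambda$ of Theorem~\ref{theorem: stable homotopy groups of a free object} (the operator $\psi_k$ was defined as $\vph_k \otimes \id_\Lambda$ in~\eqref{equation: Lambda-shift operators}) and the fact that all maps in sight are $\Lambda$-linear.

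For this last square I would reduce further to the case $V = \Sigma^l\kk$ one-dimensional, since everything is exact in $V$ and additive in the generator, and then unwind both composites explicitly using Example~\ref{example: james filtration vector spaces}: by Remark~\ref{remark:james equivalence choice} the map $\Phi_{(p-1)k*}$ is the cap-product with $t^{k}$ (resp. $u^k$) acting on $\pi_*\widetilde{H}^{(m)}(V)$ viewed as an $H^*(C_p,\kk)$-module via the periodic resolution $P_\bullet$, while $p_{m*}$ is the map induced by collapsing $P^{(2m)}_\bullet$ (resp. $P^{(m)}_\bullet$) into the full resolution $P_\bullet$ computing $(\bfLie_p)_{hC_p}$-homology. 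Tracking a Dyer-Lashof–Lie basis element $\bQe{\e}{i}(\iota_l)$ — which by Remark~\ref{remark: image of filtration} lifts to $\widetilde{H}^{(m)}$ precisely when $m > (p-1)(2i-l)$ (resp. $m > i - l$), and lifts uniquely there since each graded piece is at most one-dimensional — one sees that cap-product with $t^k$ lowers the internal resolution degree by $2k$, matching the definition $\vph_k(\bQe{\e}{i}) = \bQe{\e}{i-k}$ on the nose (resp. $u^k$ and $\bar k = k$ for $p=2$). The main obstacle, and the step deserving the most care, is the compatibility of the chosen James equivalence $\Phi_{(p-1)k}$ with the chosen periodic resolution identification: one must verify that the normalization of $\Phi_k$ fixed in Remark~\ref{remark:james equivalence choice} (multiplication by $t^k$ on $G$-equivariant cohomology) really does correspond, after passing through the Snaith-type splitting / Tate machinery of Section~\ref{section: group cohomology}, to the cap-product by $t^k$ on the homotopy of $\widetilde{H}^{(m)}$, rather than to that operation up to a unit in $\F_p^\times$ or a Frobenius twist; the semilinearity in Corollary~\ref{corollary: kjaer basis of free lie algebra}(1) and the $(-1)^\e$-signs in the stability formulas mean the bookkeeping must be done carefully, but no genuinely new input beyond Example~\ref{example: james filtration vector spaces} and Theorem~\ref{theorem:james periodicity for spaces} is required.
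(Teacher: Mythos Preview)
Your proposal is correct and follows exactly the approach the paper indicates: the paper states (immediately before the proposition, which carries a \qed) that the result ``follows directly from Proposition~\ref{proposition: james flitration for functors}, the last paragraph of Example~\ref{example: james filtration vector spaces}, and Definition~\ref{definition: dll-classes} of Dyer-Lashof-Lie classes,'' and your plan unpacks precisely these three ingredients. Your added care about the indexing shift $(p-1)k$ versus $\bar k$ and about the normalization of $\Phi_{(p-1)k}$ from Remark~\ref{remark:james equivalence choice} is appropriate, though note that the top horizontal map is implicitly the composite of the projection $\widetilde{\D}^{((p-1)k+n)}_p \to \widetilde{\D}^{((p-1)k+n)}_p/\widetilde{\D}^{((p-1)k-1)}_p$ with the James equivalence, and the square commutes because classes coming from $\widetilde{\D}^{((p-1)k-1)}_p$ are killed by $\psi_k$ via Remark~\ref{remark: image of filtration}.
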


We use Proposition~\ref{proposition: phi_k are locally functorial} to show that the maps~\eqref{equation: Lambda-shift operators} are~\emph{locally} functorial.

\begin{thm}\label{theorem: main theorem, james periodicity}
Let $V,W\in \Mod^{\geq 0}_{\kk}$ be chain complexes such that $\dim \pi_*(V)=\dim \pi_*(W)=1$ and let
$$f\colon \suspfree W \to \suspfree V\in \Sp(\sL)$$ be a map of suspension spectra. Then for any $x\in \pi_q\D_p(\suspfree W), q\geq 0$ there exists a constant $N=N(x)$ such that for any $k\in \N$, $\nu_p(k)>N$ the following identity holds
$$\D_p(f)_*(x)=\psi_{k}(\D_p(f)_*(\psi_{-k}(x))),$$
where $\D_p(f)_*\colon \D_p(\suspfree W) \to \D_p(\suspfree V)$ is the map induced by $f$.
\end{thm}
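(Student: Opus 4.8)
The plan is to reduce the statement to the periodicity of the James filtration $\widetilde{\D}^{(\bullet)}_p$ established in Proposition~\ref{proposition: james flitration for functors} together with its local functoriality in Proposition~\ref{proposition: phi_k are locally functorial}. Fix a homotopy class $x\in \pi_q\D_p(\suspfree W)$. Since $\dim\pi_*(W)=1$, say $\pi_*(W)$ is concentrated in degree $l$ with generator $w$, the group $\pi_*(\D_p(\suspfree W))\cong \pi_*(\mathbf{L}_p(W))\otimes\Lambda$ has a basis of elements $\bQe{\e}{i}(w)\otimes\nu$ (resp. $\Qe{i}(w)\otimes\nu$ when $p=2$) with $\nu\in\Lambda$ an admissible monomial. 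Because $x$ is a \emph{finite} sum of such basis elements, there is a single integer $n=n(x)$ large enough that, by Remark~\ref{remark: image of filtration}, $x$ lies in the image of the projection $p_{n*}\colon \pi_*(\widetilde{\D}^{(n)}_p(\suspfree W))\to \pi_*(\D_p(\suspfree W))$; choose a lift $\widetilde{x}$.

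\textbf{Key steps.} First I would apply Proposition~\ref{proposition: james flitration for functors}(2) to this $n$, obtaining a constant $N_0=N(n)$ so that for every $k$ with $\nu_p(k)>N_0$ there is a natural equivalence $\Phi_{(p-1)k}\colon \widetilde{\D}^{((p-1)k+n)}_p/\widetilde{\D}^{(k'-1)}_p \xrightarrow{\sim} \Sigma^{k'}\widetilde{\D}^{(n)}_p$, hence in particular a splitting and a natural map $\Phi_{(p-1)k*}$ on homotopy. Combined with Proposition~\ref{proposition: phi_k are locally functorial}, the operator $\psi_k$ on $\pi_*(\D_p(\suspfree(-)))$ is realized, on the image of $p_{((p-1)k+n)*}$, by the genuinely \emph{natural} transformation $\Phi_{(p-1)k}$ of homogeneous functors composed with projections. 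Concretely: pick $\widetilde{x}'\in\pi_{*+\bar k}(\widetilde{\D}^{((p-1)k+n')}_p(\suspfree W))$ lifting $\psi_{-k}(x)$ (enlarging $n$ to some $n'$ if necessary — this only increases the required $N$), so that $p_{n'*}(\Phi_{(p-1)k*}(\widetilde x')) = \psi_k(\psi_{-k}(x)) = x$ by the commuting square of Proposition~\ref{proposition: phi_k are locally functorial} and the defining identity $\vph_k\vph_{-k}=\id$ on the relevant classes. Then I would chase the naturality square for $f$: the map of suspension spectra $f\colon\suspfree W\to \suspfree V$ induces, by Proposition~\ref{proposition: james filtration, properties}(1) applied with $L_2=\id$ and the exact functor $\suspfree$, compatible maps $\widetilde{\D}^{(m)}_p(\suspfree(f))$ commuting with all the structure maps $p_{m*}$ and with $\Phi_{(p-1)k*}$. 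Therefore
\[
\D_p(f)_*(x) = \D_p(f)_*\big(p_{n'*}(\Phi_{(p-1)k*}(\widetilde x'))\big) = p_{n'*}\big(\Phi_{(p-1)k*}(\widetilde{\D}^{((p-1)k+n')}_p(\suspfree f)_*(\widetilde x'))\big) = \psi_k\big(p_{n''*}(\widetilde{\D}^{(\,\cdot\,)}_p(\suspfree f)_*(\widetilde x'))\big),
\]
while $p_{\cdot *}(\widetilde{\D}_p(\suspfree f)_*(\widetilde x')) = \D_p(f)_*(p_{\cdot *}(\widetilde x')) = \D_p(f)_*(\psi_{-k}(x))$, again by naturality of the projections. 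Taking $N=N(x)$ to be the maximum of $N_0$ and the analogous constants needed to lift $\psi_{-k}(x)$ (which can be bounded uniformly in $k$ once $\nu_p(k)$ is large, since increasing $\nu_p(k)$ only shifts indices by multiples of $p^{\nu_p(k)}$), this yields the claimed identity $\D_p(f)_*(x)=\psi_k(\D_p(f)_*(\psi_{-k}(x)))$.

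\textbf{Main obstacle.} The delicate point is the uniformity of the constant $N$ in $k$: one must check that a \emph{single} bound on $\nu_p(k)$ simultaneously guarantees that $\psi_{-k}(x)$ lifts to a bounded stage of the filtration and that the periodicity equivalence $\Phi_{(p-1)k}$ applies at that stage. This is where the explicit estimate $N(n)\geq\lceil n/(p-1)\rceil$ from Remark~\ref{remark:exact value of N}, the explicit description of the filtration quotients in Example~\ref{example: james filtration vector spaces}, and the fact that $\psi_{-k}$ shifts the Dyer--Lashof--Lie index $i$ down by $k$ (hence shifts the required filtration stage by a controlled amount) must be combined; once $\nu_p(k)$ exceeds the $v_p$-valuation governing the $K$-theory order in Theorem~\ref{theorem:james periodicity for spaces}, the estimates close up. Everything else is a formal naturality diagram chase in $\mathrm{Homog}^p(\Sp(\sL),\Sp(\sL))$, using that $\suspfree$ and $\D_p(f)_*$ are compatible with the $\Sp^{BC_p}$-tensoring of Proposition~\ref{proposition: enrichment for cyclic}.
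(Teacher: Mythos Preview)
Your overall strategy is the same as the paper's: realize the shift operators $\psi_{\pm k}$ via the James periodicity equivalence $\Phi_{(p-1)k}$ on the filtration $\widetilde{\D}^{(\bullet)}_p$, then chase naturality in $f$. The diagram chase you describe is exactly the one in the paper.

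However, your handling of the ``main obstacle'' does not close. You propose to lift $\psi_{-k}(x)$ directly to stage $(p-1)k+n'$ and then bound $n'$ uniformly in $k$. But by Remark~\ref{remark: image of filtration}, if $x=\bQe{\e}{i}(w)$ then $\psi_{-k}(x)=\bQe{\e}{i+k}(w)$ needs filtration stage exceeding $(p-1)(2(i+k)-l)=2(p-1)k+\mathrm{const}$, whereas you only have $(p-1)k+n'$ available. Forcing this inequality makes $n'\gtrsim (p-1)k$, so $N(n')\gtrsim k$ by Remark~\ref{remark:exact value of N}, and the requirement $\nu_p(k)>N(n')$ becomes vacuous. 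Your parenthetical about ``shifts by multiples of $p^{\nu_p(k)}$'' does not repair this.

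The paper avoids the problem by reversing the order of lifting: first reduce by $\Lambda$-linearity to $x=\bQe{\e}{i}(w)$, lift $x$ to $y\in\pi_q(\widetilde{\D}^{(n)}_p(\suspfree W))$ with $n$ fixed, set $N=N(n)$, and for each $k$ with $\nu_p(k)>N$ use \emph{surjectivity} of $\Phi_{(p-1)k*}\colon\pi_{q+\bar k}(\widetilde{\D}^{((p-1)k+n)}_p(\suspfree W))\to\pi_q(\widetilde{\D}^{(n)}_p(\suspfree W))$ to choose $z$ with $\Phi_{(p-1)k*}(z)=y$. Proposition~\ref{proposition: phi_k are locally functorial} then gives $\psi_k(p_{((p-1)k+n)*}(z))=x$, whence $p_{((p-1)k+n)*}(z)=\psi_{-k}(x)$ (in the pure case this is forced by a degree count: the $\Lambda$-weight-zero part of $\pi_{q+\bar k}(\D_p(\suspfree W))$ is one-dimensional). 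The constant $N$ now depends only on the fixed target stage $n$, not on the $k$-dependent source stage $(p-1)k+n$, and the naturality chase goes through exactly as you wrote.
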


\begin{proof}
Let $l\geq 0$ be such that $\pi_l(W)=\kk$. Since all maps in the identity above are $\Lambda$-linear, we can assume that $x=\bQe{\e}{i}(w)$ (resp. $x=\Qe{i}(w)$ and $p=2$), where $w\in \pi_l(W)$, $2i\geq l+1$ and $\e\in\{0,1\}$ (resp. $i\geq l+1$). 

By Proposition~\ref{proposition: james flitration for functors}, there exist $n\in\N$ and $y\in \pi_q(\D_p^{(n)}(\suspfree W))$ such that $p_{n*}y=x$. Moreover, by the same proposition and Example~\ref{example: james filtration vector spaces}, there exists a constant $N=N(n)$ such that for all $k\in\N$, $\nu_p(k)>N$ the map
$$\Phi_{(p-1)k*}\colon  \pi_{q+\bar{k}}(\widetilde{\D}^{((p-1)k+n)}_p(\suspfree W)) \to \pi_{q}(\widetilde{\D}^{(n)}_p(\suspfree W))$$
is surjective. Here $\bar{k}$ as in Definition~\ref{definition: shift operators}. Therefore there exists a homotopy class $z\in \pi_{q+\bar{k}}(\widetilde{\D}^{((p-1)k+n)}_p(\suspfree W))$ such that $\Phi_{(p-1)k*}(z)=y$. By Proposition~\ref{proposition: phi_k are locally functorial}, we obtain $\psi_k(p_{((p-1)k+n)*}(z))=x$, and so $p_{((p-1)k+n)*}(z) = \psi_{-k}(x)$. Finally, we have
\begin{align*}
\D_p(f)_*(x)&=\D_p(f)_*(p_{n*}\Phi_{(p-1)k*}z)\\
&=p_{n*}\Phi_{(p-1)k*}(\widetilde{\D}^{((p-1)k+n)}_p(f)_*(z))\\
&=\psi_k p_{((p-1)k+n)*}(\widetilde{\D}^{((p-1)k+n)}_p(f)_*(z))\\
&= \psi_k(\D_p(f)_*(p_{((p-1)k+n)*}z))\\
&=\psi_k(\D_p(f)_*(\psi_{-k}(x))). \qedhere
\end{align*}
\end{proof}

\begin{rmk}\label{remark: james periodicity, bound}
In Theorem~\ref{theorem: main theorem, james periodicity}, suppose that $x=\bQe{\e}{i}(w)\in \pi_{q} \D_p(\suspfree W)$ (resp. $x=\Qe{i}(w)$ and $p=2$) is a Dyer--Lashof--Lie class, $w\in \pi_l(W)\cong \kk$. By Remark~\ref{remark: image of filtration} and Remark~\ref{remark:exact value of N}, one can take the constant $N=N(x)$ to be any greater than $2i-l$ (resp. $i-l$).
\end{rmk}

We finish the section by describing the image of the map $\D_p(f)_*$.
\begin{rmk}\label{remark: dyer-lashof-lie, excess}
Let $W\simeq \Sigma^{l}\kk$, $V\simeq \Sigma^{l'}\kk$, $l>l'$, and let $f\colon \suspfree W \to \suspfree V$ be a map of suspension spectra. Suppose that $\bQe{\e}{i}(\iota_l)\in \pi_{q}\D_p(\suspfree W)$ (resp. $\Qe{i}(\iota_l)$ if $p=2$) is a Dyer--Lashof--Lie class, $i\geq 0$, $\e\in\{0,1\}$. Then, by using Remark~\ref{remark: image of filtration}, we observe that
$$\D_p(f)_*(\bQe{\e}{i}(\iota_l))=\sum_{\alpha}\bQe{\e(\alpha)}{i(\alpha)}(\iota_{l'}) \otimes \nu_{\alpha} \in \pi_*(\suspfree V), $$
resp. $\D_2(f)_*(\Qe{i}(\iota_l))=\sum_{\alpha}\Qe{i(\alpha)}(\iota_{l'}) \otimes \lambda_{\alpha}$, where $\nu_{\alpha}\in \Lambda$ (resp. $\lambda_\alpha \in\Lambda$) is an arbitrary element of the $\Lambda$-algebra, $\e(\alpha)\in\{0,1\}$, $i(\alpha)\geq 0$, and $$2i(\alpha)\leq 2i- (l-l'),\;\; \text{resp. $i(\alpha)\leq i -(l-l')$}.$$
\end{rmk}

\section{Two-stage functors}\label{section: two-stage functors}

In this section  we adapt the argument of~\cite{Behrens11} to our context and prepare the proofs of Theorems~\ref{theorem: C, intro} and~\ref{theorem: E,intro}. More precisely, we showed in Proposition~\ref{proposition:james-hopf and adjoint} that the differentials in the algebraic Goodwillie spectral sequence are induced by the natural transformations~\eqref{equation:Dpk to Dk}
$$\delta_n\colon D_n(\Omega^{p-2}L_\xi \free) \to BD_{pn}(\Omega^{p-2}L_\xi \free) $$
which factorize as the composites $\delta_n\simeq \deloop\D_{pn}(\tilde{\delta}_n) \circ j_p$, where $j_p$ is the James--Hopf map and $\D_{pn}(\tilde{\delta}_n)$ are certain natural transformations~\eqref{equation: stable piece, differential} of functors from $\Mod_{\kk}^{+}$ to the stable category $\Sp(\sLxi)$. In our main theorems~\ref{theorem: the adjoint of delta_n}, \ref{theorem: the adjoint of delta_n, p=2} we relate the induced map $\D_{pn}(\tilde{\delta}_n)_*$ with the multiplication maps in the (spectral) Lie operad~\eqref{equation: lie multiplicatons, derivatives}.

In Section~\ref{section: coalgebras in pronilpotent}, we identify the abstractly defined category of cocommutative coalgebras $\CoAlg(\calC^\otimes)$ (Definition~\ref{definition: abstract coalgebras}) provided the symmetric monoidal category $\calC^\otimes$ is \emph{pro-nilpotent} (Definition~\ref{definition: pro-nilpotent category}) and \emph{$\Sigma$-free} (Definition~\ref{definition: Sigma-free}). The main result of the section is Corollary~\ref{corollary: pronil dp sigma-free same} which shows that $\CoAlg(\calC^\otimes)$ is co\-monadic and the comonad is defined by the spectral cocommutative cooperad (Theorem~\ref{theorem: comonad for nilpotent divided power coalgebras}).

In Section~\ref{section: coalgebras in symmetric sequences} we endow the category $\sseq_0=\sseq_0(\Mod_{\kk})$ with the (non-unital) symmetric monoidal product $-\circledast -$ given by the Day convolution. By the results of Section~\ref{section: coalgebras in pronilpotent}, the category $\CoAlg(\sseq^\circledast)$ of cocommutative coalgebras in $\sseq^{\circledast}$ models the category of left comodules over the cocommutative cooperad $\bfComm_*$ in symmetric sequences, Remark~\ref{remark:coalg=left comodule}. We show in Proposition~\ref{proposition:coalg is presentable} that the category $\CoAlg(\sseq_0^\circledast)$ is presentable and comonadic over $\sseq_0$; the resulting comonad is~\eqref{equation: cofree coalgebra in sseq}. 
Thereafter we apply the Koszul duality machine from~\cite[Section~3]{GF12} to obtain an explicit adjoint pair~\eqref{equation: koszul adjunction}
$$
\begin{tikzcd}
Q^{\mathrm{enh}}: \bfLie^{\mathrm{sp}}(\sseq_0^\circledast) \arrow[shift left=.6ex]{r}
& \CoAlg(\sseq_0^\circledast) :\prim^{\mathrm{enh}}, \arrow[shift left=.6ex,swap]{l}
\end{tikzcd}
$$
where the category $\bfLie^{\mathrm{sp}}(\sseq_0^\circledast)$ models the category of left modules over the spectral Lie operad $\bfLie_{*}^{\mathrm{sp}}$ in $\sseq_0$, see the formula~\eqref{equation: prim triv Lie} and the discussion before. We end Section~\ref{section: coalgebras in symmetric sequences} with the definition of the \emph{corona cell map}~\eqref{equation: corona cell map, general}, which approximates the symmetric sequence $\prim^{\mathrm{enh}}(A)$ of primitive elements for a given $A\in \CoAlg(\sseq_0^\circledast)$.

Given a reduced finitary functor $F\in \Fun^{\omega}_*(\Sp,\sL)$, we define in~\eqref{equation: linear coderivatives} its symmetric sequence $\bar{\6}^*(F)\in \sseq$ of \emph{coderivatives}. By Proposition~\ref{proposition: coderivatives are symmetric monoidal}, the sequence $\bar{\6}^*(F)$ has a natural structure of a commutative coalgebra (with respect to the Day convolution), which we denote by $\AC(F) \in \CoAlg(\sseq_0^\circledast)$, see~\eqref{equation: Arone-Chong coderivatves}. In Theorem~\ref{theorem:AC conjecture for analytic functors} we show that $\prim(\AC(F))$ is the sequence of \emph{derivatives} $\bar{\6}_*(F)$, see~\eqref{equation: linear derivatives}, provided the functor $F$ is \emph{analytic} (Definition~\ref{definition: analytic functor}). The last theorem provides the sequence $\bar{\6}_*(F)$ with a canonical \emph{left $\bfLie_*^{\mathrm{sp}}$-module} structure, see~\eqref{equation: lie multiplicatons, derivatives}.

In Section~\ref{section: connecting map} we consider analytic functors with exactly two non-trivial Goodwillie layers, see Definition~\ref{defininition: two stage}. By reasoning as in~\cite[Section~2]{Behrens11}, we relate in Corollary~\ref{corollary:lie action, corona, adjoint} the only non-trivial Lie-operadic multiplication map 
$$m\colon \bfLie_{k}^{\mathrm{sp}}\otimes \bar{\6}_n(F)^{\otimes k} \to \bar{\6}_{nk}(F)$$
with the corona cell map and the connecting map $\delta\colon D_n(F) \to BD_{nk}(F)$. In Section~\ref{section: main example} we apply Corollary~\ref{corollary:lie action, corona, adjoint} to the truncated functor $\widetilde{P}_n(\Omega^{p-2}L_\xi \free)$ (see Section~\ref{section: goodwillie tower of a free lie algebra}) and identify the induced map $\D_{pn}(\tilde{\delta}_n)_*$ in Theorems~\ref{theorem: the adjoint of delta_n} and~\ref{theorem: the adjoint of delta_n, p=2}.

\subsection{Coalgebras in pro-nilpotent symmetric monoidal categories}\label{section: coalgebras in pronilpotent}
Let us denote by $\Pre^{L,\otimes}_{\mathrm{st}}=\CAlg(\Pre^L_{\mathrm{st}})$ the category of stable presentably (non-unital) symmetric monoidal categories (see~\cite[Definition~2.1]{NikolausSagave17}) and colimit-preserving strong symmetric monoidal functors. 
In Corollary~\ref{corollary: pronil dp sigma-free same}, we compare several notions of \emph{cocommutative coalgebras} in the category $\calC^{\otimes} \in \Pre^{L,\otimes}_{\mathrm{st}}$ if $\calC^{\otimes}$ is \emph{pro-nilpotent} (see Definition~\ref{definition: pro-nilpotent category}) and \emph{$\Sigma$-free} (see Definition~\ref{definition: Sigma-free}).

\begin{dfn}[Definition~4.1.1 in \cite{GF12}]\label{definition: pro-nilpotent category}
A stable presentably (non-unital) symmetric monoidal category $\calC^{\otimes}\in \Pre^{L,\otimes}_{\mathrm{st}}$ is called \emph{pro-nilpotent} if it can be exhibited as the limit
$$\calC^{\otimes} \xrightarrow{\simeq} \lim_{j\in \N^{op}} \calC_j^{\otimes} \in \Pre^{L,\otimes}_{\mathrm{st}} $$
such that
\begin{enumerate}
\item $\calC_0=0$;
\item for every $i\geq j$, the transition functor $f_{i,j}\colon \calC_i \to \calC_j$ additionally preserves limits;
\item for every $i \in \N$, the tensor product $\calC_i\otimes\calC_i \to \calC_i$ restricted to $\ker(f_{i,i-1})\otimes \calC_i$ is null-homotopic.
\end{enumerate}
Finally, a pro-nilpotent category $\calC^{\otimes}$ is called \emph{nilpotent of order $n$} if the transition functors $f_{i, j}$ are equivalences for $i\geq j\geq n$.
\end{dfn}

\begin{rmk}\label{remark: pronilpotent projections}
If $\calC^{\otimes} \in \Pre^{L,\otimes}_{\mathrm{st}}$ is pro-nilpotent, we write $f_j\colon \calC^{\otimes} \to \calC_j^{\otimes}$, $j\geq 0$ for the projections. Note that $f_j$, $j\geq 0$ are jointly conservative and preserve both limits and colimits.
\end{rmk}

\begin{lmm}\label{lemma: basic properties of pro-nilpotent}
$\quad$
\begin{enumerate}
\item Suppose that $\calC^{\otimes} \in \Pre^{L,\otimes}_{\mathrm{st}}$ is nilpotent of order $n$, then  $X^{\otimes n}\simeq 0$ for any $X\in\calC$.
\item Suppose that $\calC^{\otimes} \in \Pre^{L,\otimes}_{\mathrm{st}}$ is pro-nilpotent, then the natural map
\begin{equation}\label{equation: symmetric comonad pro-nilpotent}
\eta \colon \bigoplus_{n\geq 0} X^{\otimes n}_{h\Sigma_n} \to \prod_{n\geq 0} X^{\otimes n}_{h\Sigma_n}
\end{equation}
is an equivalence for any $X\in\calC$.
\end{enumerate}
\end{lmm}

\begin{proof}
We proof the first part by induction on the nilpotency order. If $n=0$, then the statement is clear. Next, assume that there exists a symmetric monoidal functor $F\colon \calC^{\otimes} \to \calD^{\otimes}$ such that 
\begin{enumerate}
\item $\calD^\otimes$ is nilpotent of order $n-1$;
\item the tensor product $\calC\otimes\calC \to \calC$ restricted to $\ker(F)\otimes \calC$ is null-homotopic;
\end{enumerate}
We claim that $X^{\otimes n}\simeq 0$ for any $X\in \calC$. Indeed, $X^{\otimes n}\simeq X^{\otimes (n-1)}\otimes X$ and $X^{\otimes (n-1)} \in \ker(F)$ by the inductive assumption. The second property implies the claim.

By Remark~\ref{remark: pronilpotent projections}, the map $\eta$ is an equivalence if and only if $$f_j(\eta)\colon f_j\Big(\bigoplus_{n\geq 0} X^{\otimes n}_{h\Sigma_n}\Big) \simeq \bigoplus_{n\geq 0} (f_jX)^{\otimes n}_{h\Sigma_n} \to \prod_{n\geq 0} (f_jX)^{\otimes n}_{h\Sigma_n}\simeq f_j\Big(\prod_{n\geq 0} X^{\otimes n}_{h\Sigma_n}\Big)$$ is an equivalence for all $j\geq 0$. Since the category $\calC_j$ is nilpotent of order $j$ and stable, the lemma follows.
\end{proof}

Recall that a \emph{comonad} $M\colon \calC \to \calC$ is a (coassociative) coalgebra object in the monoidal endomorphism category $\mathrm{End}(\calC) = \Fun(\calC,\calC)$ equipped with the composition product. We refer the reader to~\cite[Section~4.7]{HigherAlgebra} for a general account for (co)monads and (co)algebras over them in $\infty$-categories.

\begin{thm}\label{theorem: comonad for nilpotent divided power coalgebras}
Let $\calC^{\otimes} \in \Pre^{L,\otimes}_{\mathrm{st}}$ be a stable presentably symmetric monoidal category. Then the spectral cocommutative cooperad induces the comonad $\Sym_{\calC} \colon \calC \to \calC$ of the following form
$$\Sym_{\calC}(X)\simeq \bigoplus_{n\geq 1} X^{\otimes n}_{h\Sigma_n}, \;\; X\in \calC. $$
\end{thm}

\begin{proof}
See e.g.~\cite[Section~3.5]{GF12} or~\cite[Section~4.1.2]{Brantner_Thesis}.
\end{proof}

\begin{dfn}\label{definition: nilpotent divided power coalgebras}
Let $\calC^{\otimes} \in \Pre^{L,\otimes}_{\mathrm{st}}$ be a stable presentably symmetric monoidal category. We define the \emph{category of nilpotent divided power (cocommutative and non-unital) coalgebras} $\cCAlg^{\mathrm{nil}}_{\mathrm{d.p}}(\calC^{\otimes})$ in the category $\calC^{\otimes}$ as the category of coalgebras over the comonad $\Sym_{\calC}$.
\end{dfn}

We point out that there is an adjoint pair
\begin{equation}\label{equation: nil dp free oblv}
\begin{tikzcd}
\oblv: \cCAlg^{\mathrm{nil}}_{\mathrm{d.p}}(\calC^{\otimes}) \arrow[shift left=.6ex]{r}
&\calC:\mathfrak{C}_{\mathrm{d.p}} \arrow[shift left=.6ex,swap]{l}
\end{tikzcd}
\end{equation}
such that the category $\cCAlg^{\mathrm{nil}}_{\mathrm{d.p}}(\calC^{\otimes})$ is comonadic over $\calC$ and $\oblv \circ \mathfrak{C}_{\mathrm{d.p}} \simeq \Sym_{\calC}$.

\begin{dfn}\label{definition: finite totalizations}
Let $\calC$ be a category which admits small limits. A cosimplicial object $X^\bullet\in \Fun(\Delta,\calC)$ is \emph{$m$-coskeletal} if it is the right Kan extension of its restriction to $\Delta_{\leq m}$, that is the full subcategory of $\Delta$ spanned by $[0], [1], \ldots, [m]$. Note that the category $\Delta_{\leq m}$ is \emph{finite}.
\end{dfn}

\begin{dfn}\label{definition: preserve finite totalizations}
Let $\calC, \calD$ be categories which admit small limits. A functor $F\colon \calC \to \calD$ \emph{preserves finite totalizations} if, for every $m\geq 0$ and every $m$-coskeletal cosimplicial object $X^\bullet\in \Fun(\Delta,\calC)$, the natural map
$$F(\Tot(X^\bullet)) \to \Tot(F(X^{\bullet}))$$
is an equivalence.
\end{dfn}

\begin{rmk}\label{remark: fiber of particial total}
Let $\calC$ be a pointed category which admits small limits. For a cosimplicial object $X^\bullet\in \Fun(\Delta, \calC)$ and $q\geq 1$, there are the following fiber sequences
$$\Omega^qN^qX^{\bullet} \to \mathrm{Tot}^{q}(X^\bullet) \to \mathrm{Tot}^{q-1}(X^{\bullet})$$
$$N^qX^{\bullet} \to X^q \xrightarrow{\rho_q} \lim_{\substack{[q] \twoheadrightarrow [k]\\k<q}} X^{k},$$
see~\cite[Proposition~X.6.3]{BK72} and~\cite{MS16}. We recall that $X^\bullet$ is $m$-coskeletal if and only if the projection map $\rho_q$ is an equivalence for each $q\geq m+1$. We also note that $N^qX^{\bullet}$ is equivalent to the total fiber of a $q$-cubical diagram
\begin{equation}\label{equation:total fiber}
N^qX^{\bullet} \simeq \tfiber_{S\subseteq \underline{q}} X^{q-|S|},
\end{equation}
where all maps are given by the codegeneracy maps in the cosimplicial object $X^{\bullet}$. Finally, if $\calC$ is stable and $N^qX^\bullet$ is contractible for all $q\geq m+1$, then $X^\bullet$ is $m$-coskeletal.
\end{rmk}

\begin{thm}\label{theorem: nil dp basic properties}
Let $\calC^{\otimes} \in \Pre^{L,\otimes}_{\mathrm{st}}$ be a nilpotent symmetric monoidal category. Then
\begin{enumerate}
\item the category $\cCAlg^{\mathrm{nil}}_{\mathrm{d.p}}(\calC^{\otimes})$ is presentable;
\item the forgetful functor
$$\oblv \colon \cCAlg^{\mathrm{nil}}_{\mathrm{d.p}}(\calC^{\otimes}) \to \calC $$
commutes with finite totalizations;
\item for any object $X \in \cCAlg^{\mathrm{nil}}_{\mathrm{d.p}}(\calC^{\otimes})$, there exists $m\geq 0$ such that the cobar resolution $$C^{\bullet}(X) \colon \Delta \to \cCAlg^{\mathrm{nil}}_{\mathrm{d.p}}(\calC^{\otimes}), \;\; [n]\in \Delta \mapsto \mathfrak{C}_{\mathrm{d.p}}(\Sym_{\calC}^{\circ n}(\oblv(X)))$$
is $m$-coskeletal.
\end{enumerate}
\end{thm}

\begin{proof}
The first assertion follows by~\cite[Propositions~7.3 and~7.5]{Heuts24} together with Lemma~\ref{lemma: basic properties of pro-nilpotent}.

By~\cite[Proposition~3.37]{BM19}, the comonad $\Sym_{\calC} \colon \calC \to \calC$ preserves finite totalizations. Therefore, by~\cite[Corollary~4.2.3.5]{HigherAlgebra}, the forgetful functor $\oblv$ also preserves finite totalizations.

By Remark~\ref{remark: fiber of particial total}, we compute $$N^qC^\bullet(X) \simeq \mathfrak{C}_{\mathrm{d.p}}\left(\widetilde{\Sym}_{\calC}^{\circ q}(\oblv(X))\right), \;\; X\in \cCAlg^{\mathrm{nil}}_{\mathrm{d.p}}(\calC^{\otimes}), \;q\geq 1, $$
where $\widetilde{\Sym}_{\calC}(-)\simeq \bigoplus_{n\geq 2} (-)^{\otimes n}_{h\Sigma_n}$ is an endofunctor of $\calC$. By Lemma~\ref{lemma: basic properties of pro-nilpotent}, we observe that $N^qC^\bullet(X) \simeq\ast$ for $q\gg 0$, which implies the last assertion.
\end{proof}

Let $A, B \in \calC$ and $\calC^{\otimes} \in \Pre^{L,\otimes}_{\mathrm{st}}$ be a stable presentably symmetric monoidal category. Consider the natural map
$$\Sym_{\calC}(A\oplus B) = \bigoplus_{n\geq 1}(A\oplus B)^{\otimes n}_{h\Sigma_n} \to (A\oplus B)^{\otimes 2}_{h\Sigma_2} \to A\otimes B.$$
Then the adjoint pair~\eqref{equation: nil dp free oblv} induces the natural projection map
\begin{equation}\label{equation: pre lax free nil dp}
\mathfrak{C}_{\mathrm{d.p}}(A) \times \mathfrak{C}_{\mathrm{d.p}}(B) \simeq  \mathfrak{C}_{\mathrm{d.p}}(A\oplus B) \to \mathfrak{C}_{\mathrm{d.p}}(A\otimes B)
\end{equation}
in $\cCAlg^{\mathrm{nil}}_{\mathrm{d.p}}(\calC^{\otimes})$. Next, let $X, Y \in \cCAlg^{\mathrm{nil}}_{\mathrm{d.p}}(\calC^{\otimes})$ be nilpotent divided power coalgebras in $\calC^{\otimes}$. Then the map~\eqref{equation: pre lax free nil dp} and the unit map of the adjoint pair~\eqref{equation: nil dp free oblv} induce the map 
$$
X\times Y \to \mathfrak{C}_{\mathrm{d.p}}(\oblv(X)) \times \mathfrak{C}_{\mathrm{d.p}}(\oblv(Y)) \xrightarrow{\eqref{equation: pre lax free nil dp}} \mathfrak{C}_{\mathrm{d.p}}(\oblv(X)\otimes \oblv(Y))
$$
in $\cCAlg^{\mathrm{nil}}_{\mathrm{d.p}}(\calC^{\otimes})$. By applying again the adjoint pair~\eqref{equation: nil dp free oblv}, we obtain the natural map
\begin{equation}\label{equation: pre oplax oblv nil dp}
\oblv(X\times Y) \to \oblv(X)\otimes \oblv(Y) \in \calC.
\end{equation}

\begin{prop}\label{proposition: cartesian product in nil dp}
Let $\calC^{\otimes} \in \Pre^{L,\otimes}_{\mathrm{st}}$ be a nilpotent symmetric monoidal category. Then the sequence 
$$\oblv(X)\oplus \oblv(Y) \simeq \oblv(X\sqcup Y) \to \oblv(X\times Y) \to \oblv(X)\otimes \oblv(Y)
$$
is a (split) fiber sequence in $\calC$ for every $X,Y \in \cCAlg^{\mathrm{nil}}_{\mathrm{d.p}}(\calC^{\otimes})$.
\end{prop}

\begin{proof}
By Theorem~\ref{theorem: nil dp basic properties}, the forgetful functor $\oblv$ preserves finite totalizations and any object $C \in \cCAlg^{\mathrm{nil}}_{\mathrm{d.p}}(\calC^{\otimes})$ is a finite totalization of a cosimplicial diagram made of cofree objects. Therefore, we can assume that $X\simeq \mathfrak{C}_{\mathrm{d.p}}(A)$ and $Y\simeq \mathfrak{C}_{\mathrm{d.p}}(B)$ are cofree coalgebras, $A,B\in\calC$. Finally, the proposition follows since
$$\Sym_{\calC}(A\oplus B) \simeq \Sym_{\calC}(A) \oplus \Sym_{\calC}(B) \oplus \Sym_{\calC}(A) \otimes \Sym_{\calC}(B). $$
\end{proof}

\begin{thm}\label{theorem: nil dp is differentiable}
Let $\calC^{\otimes} \in \Pre^{L,\otimes}_{\mathrm{st}}$ be a nilpotent symmetric monoidal category. Then the category $\cCAlg^{\mathrm{nil}}_{\mathrm{d.p}}(\calC^{\otimes})$ is differentiable in the sense of~\cite[Definition~6.1.1.6]{HigherAlgebra}.
\end{thm}

\begin{proof}
By Theorem~\ref{theorem: nil dp basic properties}, the category $\cCAlg^{\mathrm{nil}}_{\mathrm{d.p}}(\calC^{\otimes})$ is presentable. So, in particular, it admits all finite limits and all sequential colimits. By Proposition~\ref{proposition: cartesian product in nil dp}, sequential colimits commute with finite products. By \cite[Proposition~4.4.3.2]{HTT} (see also \cite[\href{https://kerodon.net/tag/03HF}{Tag 03HF}]{kerodon}), it is enough to show that sequential colimits commute with equalizers.

By Theorem~\ref{theorem: nil dp basic properties}, sequential colimits commute with finite totalizations. For any equalizer diagram $A \rightrightarrows B$, we have an equivalence
\begin{equation}
\begin{tikzcd}\label{equation: equalizer is totalization}
\lim\Big(A \arrow[shift left=.6ex]{r}
  \arrow[shift right=.6ex,swap]{r}
&
B\Big) \simeq \Tot\Big([n] \mapsto (B^{\times n} \times A)\Big).
\end{tikzcd}
\end{equation}
Note that the cosimplicial diagram on the right hand side is $1$-coskeletal. Since sequential colimits commute with finite products, sequential colimits map the cosimplicial diagram in~\eqref{equation: equalizer is totalization} to a cosimplicial diagram of the same form. Therefore, sequential colimits commute with arbitrary equalizers.
\end{proof}

\begin{rmk}\label{remark: coalgebras are not compactly generated}
Typically, the category $\cCAlg^{\mathrm{nil}}_{\mathrm{d.p}}(\calC^{\otimes})$ is \emph{not} compactly generated even if the presentable category $\calC$ is compactly generated.
\end{rmk}

\begin{dfn}\label{definition: Sigma-free}
A symmetric monoidal category $\calC^{\otimes}$ is called \emph{$\Sigma$-free} if $\calC$ admits arbitrary small limits and  there exists a functor
$$\widetilde{T}^n\colon \calC \to \calC $$
for every $n\geq 1$ such that the composite 
$$\calC \xrightarrow{\widetilde{T}^n} \calC \xrightarrow{\Coind^{\Sigma_n}} \calC^{B\Sigma_n} $$
is equivalent to the functor $X \in \calC \mapsto X^{\otimes n} \in \calC^{B\Sigma_n}$. Here $\Coind^{\Sigma_n}$ is the right adjoint to the forgetful functor $\calC^{B\Sigma_n}=\Fun(B\Sigma_n,\calC) \to \calC$.
\end{dfn}

\begin{rmk}\label{remark: Sigma-free middle}
Let $\calC^{\otimes} \in \Pre^{L,\otimes}$ be a $\Sigma$-free presentably symmetric monoidal category. Then the functors $\widetilde{T}^n$, $n\geq 1$ are defined uniquely since $\widetilde{T}^n(X)\simeq (X^{\otimes n})^{h\Sigma_n}$.
\end{rmk}

\begin{rmk}\label{remark: Sigma-free and Tate}
Let $\calC^{\otimes} \in \Pre^{L,\otimes}_{\mathrm{st}}$ be a \emph{stable} presentably symmetric monoidal category. Suppose that $\calC^{\otimes}$ is $\Sigma$-free. Then the norm map (see \cite[Example~6.1.6.22]{HigherAlgebra})
$$N\colon (X^{\otimes n})_{h\Sigma_n} \xrightarrow{\simeq} (X^{\otimes n})^{h\Sigma_n} $$
is an equivalence for any $X\in \calC$ and $n\geq 1$. Moreover, $(X^{\otimes n})_{h\Sigma_n} \simeq \widetilde{T}^n(X)$ for $n\geq 1$.
\end{rmk}

\begin{lmm}\label{lemma: sigma-free suspention}
Let $\calC^{\otimes} \in \Pre^{L,\otimes}_{\mathrm{st}}$ be a stable presentably symmetric monoidal category. Suppose that $\calC^{\otimes}$ is $\Sigma$-free. Then the natural transformation
$$\sigma \colon \Sigma X^{\otimes n}_{h\Sigma_n} \to (\Sigma X)^{\otimes n}_{h\Sigma_n} $$
induced by the suspension is trivial for every $n\geq 2$. \qed
\end{lmm}

\begin{prop}\label{proposition: nil dp stabilization nil}
Let $\calC^{\otimes} \in \Pre^{L,\otimes}_{\mathrm{st}}$ be a nilpotent symmetric monoidal category. Suppose that $\calC^{\otimes}$ is $\Sigma$-free. Then the left adjoint $\oblv \colon \cCAlg^{\mathrm{nil}}_{\mathrm{d.p}}(\calC^{\otimes}) \to \calC$ induces the natural equivalence
$$ \Sp(\cCAlg^{\mathrm{nil}}_{\mathrm{d.p}}(\calC^{\otimes})) \xrightarrow{\oblv} \Sp(\calC) \simeq \calC $$
between stabilizations.
\end{prop}

\begin{proof}
By Theorem~\ref{theorem: nil dp basic properties}, the category $\cCAlg^{\mathrm{nil}}_{\mathrm{d.p}}(\calC^{\otimes})$ is presentable. Therefore, $\Sp(\cCAlg^{\mathrm{nil}}_{\mathrm{d.p}}(\calC^{\otimes}))$ is equivalent to the colimit
$$ \colim\left(\cCAlg^{\mathrm{nil}}_{\mathrm{d.p}}(\calC^{\otimes}) \xrightarrow{\Sigma} \cCAlg^{\mathrm{nil}}_{\mathrm{d.p}}(\calC^{\otimes}) \xrightarrow{\Sigma} \cCAlg^{\mathrm{nil}}_{\mathrm{d.p}}(\calC^{\otimes}) \to \ldots\right)$$
in the category $\Pre^L$. Since the category $\cCAlg^{\mathrm{nil}}_{\mathrm{d.p}}(\calC^{\otimes})$ is comonadic over $\calC$, the functor $\Sigma \colon \cCAlg^{\mathrm{nil}}_{\mathrm{d.p}}(\calC^{\otimes}) \to \cCAlg^{\mathrm{nil}}_{\mathrm{d.p}}(\calC^{\otimes})$ is induced by the following morphism of comonads
$$\sigma\colon \Sym_{\calC}(-) \to \Sigma^{-1} \Sym_{\calC}(\Sigma(-)). $$
By Lemma~\ref{lemma: sigma-free suspention}, the morphism $\sigma$ factors as follows
$$\sigma\colon \Sym_{\calC}(-) \to \id_{\calC} \to \Sigma^{-1} \Sym_{\calC}(\Sigma(-)), $$
where $\id_\calC$ is the identity comonad. Hence, the suspension functor $$\Sigma \colon \cCAlg^{\mathrm{nil}}_{\mathrm{d.p}}(\calC^{\otimes}) \to \cCAlg^{\mathrm{nil}}_{\mathrm{d.p}}(\calC^{\otimes})$$ factors as follows
$$\Sigma\colon \cCAlg^{\mathrm{nil}}_{\mathrm{d.p}}(\calC^{\otimes}) \xrightarrow{\oblv} \calC \xrightarrow{\Sigma} \calC \xrightarrow{\triv} \cCAlg^{\mathrm{nil}}_{\mathrm{d.p}}(\calC^{\otimes}).$$
This implies the assertion.
\end{proof}

\begin{dfn}\label{definition: nil dp smash product}
Let $\calC^{\otimes} \in \Pre^{L,\otimes}_{\mathrm{st}}$ be a nilpotent symmetric monoidal category. Then we write 
$$\Sp(\cCAlg^{\mathrm{nil}}_{\mathrm{d.p}}(\calC^{\otimes})^{\times}) \to N\mathrm{Surj}$$ for the \emph{stabilization} of the Cartesian symmetric monoidal structure $$\cCAlg^{\mathrm{nil}}_{\mathrm{d.p}}(\calC^{\otimes})^{\times} \to N\mathrm{Surj},$$ see~\cite[Definition~6.2.4.12]{Heuts21}. In particular, the category $\Sp(\cCAlg^{\mathrm{nil}}_{\mathrm{d.p}}(\calC^{\otimes})^{\times})$ is a stable (non-unital) $\infty$-operad whose underlying category is the stabilization $\Sp(\cCAlg^{\mathrm{nil}}_{\mathrm{d.p}}(\calC^{\otimes}))$, see~\cite[Definition~6.2.4.10]{HigherAlgebra} and~\cite[Definition~2.11]{Heuts21}
\end{dfn}

\begin{cor}\label{corollary: nil dp smash product}
Let $\calC^{\otimes} \in \Pre^{L,\otimes}_{\mathrm{st}}$ be a nilpotent symmetric monoidal category. Suppose that $\calC$ is $\Sigma$-free. Then the right adjoint $\mathfrak{C}_{\mathrm{d.p}} \colon \calC \to \cCAlg^{\mathrm{nil}}_{\mathrm{d.p}}(\calC^{\otimes})$ induces the natural equivalence
$$\calC^{\otimes} \xrightarrow{\mathfrak{C}_{\mathrm{d.p}}} \Sp(\cCAlg^{\mathrm{nil}}_{\mathrm{d.p}}(\calC^{\otimes})^\times) $$
between stable $\infty$-operads.
\end{cor}

\begin{proof}
By Theorem~\ref{theorem: nil dp is differentiable} and \cite[Propositions~6.2.4.14 and 6.2.4.15]{HigherAlgebra}, the stabilization $\Sp(\cCAlg^{\mathrm{nil}}_{\mathrm{d.p}}(\calC^{\otimes})^\times)$ exists and unique. By Proposition~\ref{proposition: nil dp stabilization nil}, the underlying stable category of $\Sp(\cCAlg^{\mathrm{nil}}_{\mathrm{d.p}}(\calC^{\otimes})^\times)$ is $\calC$, and it suffices to show that the monoidal structures coincide. By~\cite[Definition~6.2.4.10]{HigherAlgebra}, the $\infty$-operad $\Sp(\cCAlg^{\mathrm{nil}}_{\mathrm{d.p}}(\calC^{\otimes})^\times)$ is corepresentable and it is defined by $k$-th symmetric tensor products $$\otimes^k\colon \calC^{\times k} \to \calC, \;\; k\geq 1,$$ see~\cite[Remark~6.2.4.4]{HigherAlgebra}, which are (multi-)derivatives of the functor
\begin{align}\label{equation: nil dp n tensor product}
(X_1,\ldots,X_k)\in \calC^{\times k} &\mapsto \mathfrak{C}_{\mathrm{d.p}}(X_1)\times \ldots \times \mathfrak{C}_{\mathrm{d.p}}(X_k)\\ &\simeq \mathfrak{C}_{\mathrm{d.p}}(X_1\oplus \ldots \oplus X_k) \in \cCAlg^{\mathrm{nil}}_{\mathrm{d.p}}(\calC^{\otimes}), \nonumber
\end{align}
see~\cite[Definition~6.2.1.1]{HigherAlgebra}. Note that the derivative of the functor~\eqref{equation: nil dp n tensor product} coincides with the derivative of its composite with the forgetful functor $\oblv$. In other words, $\otimes^k$, $k\geq 1$ is the derivative of the functor
\begin{align*}
(X_1,\ldots,X_k)\in \calC^{\times k} &\mapsto \oblv\circ\mathfrak{C}_{\mathrm{d.p}}(X_1\oplus \ldots \oplus X_k) \\
&\simeq \bigoplus_{m\geq 1} (X_1\oplus \ldots \oplus X_k)^{\otimes m}_{h\Sigma_m}\in \calC.
\end{align*}
By the binomial formula, the derivative of the last functor is $$(X_1,\ldots,X_k)\mapsto X_1\otimes\ldots \otimes X_k.$$
\end{proof} 

\begin{cor}\label{corollary: nil dp lax adjunction}
Let $\calC^{\otimes} \in \Pre^{L,\otimes}_{\mathrm{st}}$ be a nilpotent symmetric monoidal category. Suppose that $\calC^{\otimes}$ is $\Sigma$-free. Then the right adjoint 
$$\mathfrak{C}_{\mathrm{d.p}} \colon \calC^{\otimes} \to \cCAlg^{\mathrm{nil}}_{\mathrm{d.p}}(\calC^{\otimes})^{\times}$$ 
inherits a lax symmetric monoidal enhancement such that the induced maps
$$\mathfrak{C}_{\mathrm{d.p}}(A) \times \mathfrak{C}_{\mathrm{d.p}}(B) \to \mathfrak{C}_{\mathrm{d.p}}(A\otimes B), \;\; A,B \in \calC $$
coincide with the natural maps~\eqref{equation: pre lax free nil dp}. Similarly, the forgetful functor
$$\oblv\colon \cCAlg^{\mathrm{nil}}_{\mathrm{d.p}}(\calC^{\otimes})^{\times} \to \calC^{\otimes} $$
inherits an oplax symmetric monoidal enhancement such that the induced maps $$\oblv(X\times Y) \to \oblv(X)\otimes \oblv(Y), \;\; X,Y \in \cCAlg^{\mathrm{nil}}_{\mathrm{d.p}}(\calC^{\otimes}) $$
coincide with~\eqref{equation: pre oplax oblv nil dp}.
\end{cor}

\begin{proof}
By Corollary~\ref{corollary: nil dp smash product}, the adjoint pair~\eqref{equation: nil dp free oblv} is equivalent to the adjoint pair 
$$
\begin{tikzcd}
\Sigma^{\infty}: \cCAlg^{\mathrm{nil}}_{\mathrm{d.p}}(\calC^{\otimes})^{\times} \arrow[shift left=.6ex]{r}
&\Sp(\cCAlg^{\mathrm{nil}}_{\mathrm{d.p}}(\calC^{\otimes})^{\times})\simeq \calC^{\otimes}:\Omega^{\infty} \arrow[shift left=.6ex,swap]{l}
\end{tikzcd}
$$
of stabilization. By~\cite[Definition~6.2.4.12]{HigherAlgebra}, the right adjoint functor $\Omega^{\infty} \simeq \mathfrak{C}_{\mathrm{d.p}}$ is a map of $\infty$-operads, and so is lax symmetric monoidal. By the proof of Corollary~\ref{corollary: nil dp smash product}, the lax symmetric monoidal structure for the functor $\mathfrak{C}_{\mathrm{d.p}}$ is given by the map~\eqref{equation: pre lax free nil dp}. We obtain the assertion about the left adjoint $\oblv$ by the adjunction.
\end{proof}

\begin{prop}\label{proposition: pronilpotent dp is limit of nil dp}
Let $\calC^{\otimes}\in \Pre^{L,\otimes}_{\mathrm{st}}$ be a pro-nilpotent symmetric monoidal category exhibited as the limit $\calC \simeq \lim \calC_j$. Then the natural functor
\begin{equation}\label{equattion: pronilpotent coalgebras is limit}
\cCAlg^{\mathrm{nil}}_{\mathrm{d.p}}(\calC^{\otimes}) \to \lim_{j\in \N^{op}} \cCAlg^{\mathrm{nil}}_{\mathrm{d.p}}(\calC^{\otimes}_j)
\end{equation}
is an equivalence. In particular, the projection functors
$$f_j\colon \cCAlg^{\mathrm{nil}}_{\mathrm{d.p}}(\calC^{\otimes}) \to \cCAlg^{\mathrm{nil}}_{\mathrm{d.p}}(\calC^{\otimes}_j), \;\; j\geq 1 $$
preserve arbitrary small limits and colimits.
\end{prop}

\begin{proof}
We observe that the forgetful functor 
$$\oblv\colon \lim_{j\in \N^{op}} \cCAlg^{\mathrm{nil}}_{\mathrm{d.p}}(\calC^{\otimes}_j) \to \lim_{j\in \N^{op}} \calC_j \simeq \calC$$
satisfies the conditions of the Barr--Beck--Lurie theorem~\cite[Theorem~4.7.3.5]{HigherAlgebra}, i.e. $\oblv$ is a conservative left adjoint which preserves limits of $\oblv$-split cosimplicial diagrams. Therefore, the limit category $\lim\cCAlg^{\mathrm{nil}}_{\mathrm{d.p}}(\calC^{\otimes}_j)$ is comonadic over $\calC$ and the obtained comonad is given by the formula
\begin{equation}\label{equation: pronilpotent comonad}
X\in \calC \mapsto  \prod_{k\geq 1} X^{\otimes k}_{h\Sigma_k}.
\end{equation}
Finally, the functor~\eqref{equattion: pronilpotent coalgebras is limit} induces the natural transformation~\eqref{equation: symmetric comonad pro-nilpotent} from the comonad $\Sym_{\calC}\simeq \bigoplus_{k\geq 1} (-)^{k}_{h\Sigma_k}$ of Theorem~\ref{theorem: comonad for nilpotent divided power coalgebras} to the comonad~\eqref{equation: pronilpotent comonad}. By Lemma~\ref{lemma: basic properties of pro-nilpotent}, this transformation is an equivalence.
\end{proof}

\begin{cor}\label{corollary: pronilpotent stabilization}
Let $\calC^{\otimes}\in \Pre^{L,\otimes}_{\mathrm{st}}$ be a pro-nilpotent symmetric monoidal category exhibited as the limit $\calC^{\otimes} \simeq \lim \calC_j^{\otimes}$. Suppose that $\calC^{\otimes}$ and each $\calC_j^{\otimes}$, $j\geq 0$ are $\Sigma$-free. Then 
\begin{enumerate}
\item the category $\cCAlg^{\mathrm{nil}}_{\mathrm{d.p}}(\calC^{\otimes})$ is presentable and differentiable;
\item the right adjoint $\mathfrak{C}_{\mathrm{d.p}}\colon \calC \to \cCAlg^{\mathrm{nil}}_{\mathrm{d.p}}(\calC^{\otimes})$ induces a symmetric monoidal equivalence
$$\calC^{\otimes} \xrightarrow{\mathfrak{C}_{\mathrm{d.p}}} \Sp(\cCAlg^{\mathrm{nil}}_{\mathrm{d.p}}(\calC^{\otimes})^\times) $$
between stable $\infty$-operads;
\item the forgetful functor $$\oblv\colon \cCAlg^{\mathrm{nil}}_{\mathrm{d.p}}(\calC^{\otimes})^{\times} \to \calC^{\otimes}$$ inherits an oplax symmetric monoidal enhancement such that the induced maps $$\oblv(X\times Y) \to \oblv(X)\otimes \oblv(Y), \;\; X,Y \in \cCAlg^{\mathrm{nil}}_{\mathrm{d.p}}(\calC^{\otimes}) $$
coincide with~\eqref{equation: pre oplax oblv nil dp}.
\end{enumerate}
\end{cor}

\begin{proof}
By~\cite[Proposition~5.5.3.13]{HTT} and Proposition~\ref{proposition: pronilpotent dp is limit of nil dp}, the category of nilpotent divided power coalgebras $\cCAlg^{\mathrm{nil}}_{\mathrm{d.p}}(\calC^{\otimes})$ is presentable. Moreover, by Proposition~\ref{proposition: pronilpotent dp is limit of nil dp}, the projection functors
$$f_j\colon \cCAlg^{\mathrm{nil}}_{\mathrm{d.p}}(\calC^{\otimes}) \to \cCAlg^{\mathrm{nil}}_{\mathrm{d.p}}(\calC^{\otimes}_j), \;\; j\geq 1 $$
are jointly conservative and preserve both finite limits and sequential colimits. Now, the first part follows by Theorem~\ref{theorem: nil dp is differentiable}. 

For the second part, let 
$$g_j \colon \cCAlg^{\mathrm{nil}}_{\mathrm{d.p}}(\calC^{\otimes}_j) \to \cCAlg^{\mathrm{nil}}_{\mathrm{d.p}}(\calC^{\otimes}) $$
denote the left adjoint of $f_j$, $j\geq 1$. Then
$$\colim_{j\in \N} \cCAlg^{\mathrm{nil}}_{\mathrm{d.p}}(\calC^{\otimes}_j) \xrightarrow{\simeq} \cCAlg^{\mathrm{nil}}_{\mathrm{d.p}}(\calC^{\otimes}) \in \Pre^L $$
is an equivalence, where colimit is computed in the category $\Pre^L$. Since the stabilization of a presentable category is the colimit (in $\Pre^L$) along suspensions (see Section~\ref{section: spectrum objects}), we have
$$\Sp(\cCAlg^{\mathrm{nil}}_{\mathrm{d.p}}(\calC^{\otimes}))\simeq \colim_{j\in \N}\Sp(\cCAlg^{\mathrm{nil}}_{\mathrm{d.p}}(\calC^{\otimes}_j)) \simeq \colim_{j\in \N}\calC_j \simeq \calC \in \Pre^L $$
by Proposition~\ref{proposition: nil dp stabilization nil}. Finally, the last part follows by Corollary~\ref{corollary: nil dp smash product} and Corollary~\ref{corollary: nil dp lax adjunction}.
\end{proof}

\begin{rmk}\label{remark: stabilization any case}
Let $\calC^{\otimes} \in \Pre^{L,\otimes}_{\mathrm{st}}$ be a stable presentably symmetric monoidal category. Assume that $\cCAlg^{\mathrm{nil}}_{\mathrm{d.p}}(\calC^{\otimes})$ is differentiable, e.g. $\calC$ is (pro-)nilpotent. Then~\cite[Corollary~6.2.2.16]{HigherAlgebra} shows that the stabilization of the \emph{comonadic} adjunction~\eqref{equation: nil dp free oblv} is the adjoint pair
\begin{equation}\label{equation: oblv sym stable}
\begin{tikzcd}
\partial \oblv: \Sp(\cCAlg^{\mathrm{nil}}_{\mathrm{d.p}}(\calC^{\otimes})) \arrow[shift left=.6ex]{r}
&\Sp(\calC)\simeq \calC:\partial\mathfrak{C}_{\mathrm{d.p}} \arrow[shift left=.6ex,swap]{l}
\end{tikzcd}
\end{equation}
such that $\partial\mathfrak{C}_{\mathrm{d.p}}$ is fully faithful. However, in opposite to the case of~\emph{monadic} adjunction, \cite[Corollary~6.2.2.16]{HigherAlgebra} does not guarantee that the adjoint pair~\eqref{equation: oblv sym stable} is an equivalence. At the time of writing, we are not aware if~\eqref{equation: oblv sym stable} is an equivalence provided $\calC^{\otimes}$ is (pro-)nilpotent but not $\Sigma$-free. 
\end{rmk}

Recall that the opposite of a symmetric monoidal category $\calC^{\otimes}$ is again symmetric monoidal, see e.g~\cite[Remark~2.4.2.7]{HigherAlgebra}.
\begin{dfn}[Section~3.1 in \cite{Elliptic-I}]\label{definition: abstract coalgebras}
Let $\calC^{\otimes}$ be a symmetric monoidal category. The \emph{category of (cocommutative and non-unital) coalgebras} $\CoAlg(\calC^{\otimes})$ in the category $\calC^{\otimes}$ is defined as follows
$$\CoAlg(\calC^{\otimes})=\Alg(\calC^{\otimes,op})^{op},$$
where $\Alg(\calC^{\otimes,op})$ is the category of (commutative and non-unital) algebras in the opposite symmetric monoidal category $\calC^{\otimes,op}$. 
\end{dfn}

\begin{prop}\label{proposition:coalg is presentable}
Let $\calC^{\otimes} \in \Pre^{L,\otimes}$ be a presentably symmetric monoidal category. Then the category $\cCAlg(\calC^{\otimes})$ is presentable as well. Moreover, the forgetful functor $$\oblv \colon \cCAlg(\calC^{\otimes}) \to \calC$$ is conservative, it preserves colimits and admits a right adjoint
$$\mathfrak{C}\colon \calC \to \cCAlg(\calC^{\otimes}). $$
\end{prop}

\begin{proof}
See~\cite[Section~3.1]{Elliptic-I}.
\end{proof}


\begin{thm}\label{theorem: pronil dp sigma-free comonadic}
Let $\calC^{\otimes}\in \Pre^{L,\otimes}_{\mathrm{st}}$ be a $\Sigma$-free pro-nilpotent symmetric monoidal category. Then the adjoint pair
$$
\begin{tikzcd}
\oblv: \CoAlg(\calC^{\otimes}) \arrow[shift left=.6ex]{r}
&\calC:\mathfrak{C} \arrow[shift left=.6ex,swap]{l}
\end{tikzcd}
$$
is comonadic. Moreover, the comonad $\Gamma_{\mathrm{d.p}} = \oblv \circ \mathfrak{C}$ evaluates on objects as follows
$$\Gamma_{\mathrm{d.p}}(X) = \oblv \circ \mathfrak{C}(X) \simeq  \bigoplus_{n\geq 1} X^{\otimes n}_{h\Sigma_n} \in \calC , \;\; X\in \calC. $$
\end{thm}

\begin{proof}
Let $\Pro(\calC)$ be the category of \emph{pro-objects} in $\calC$, see e.g.~\cite[Definition~3.1.1]{DAGXIII}. We think of objects of $\Pro(\calC)$ as cofiltered families of objects from~$\calC$. We warn the reader that since the category $\calC$ is not small, we consider $\Pro(\calC)$ as the $\infty$-category defined in the larger universe. 

By~\cite[Lemma~2.3]{KST19}, the category of pro-objects $\Pro(\calC)$ is stable. By~\cite[Remark~2.4.2.7 and Proposition~4.8.1.10]{HigherAlgebra}, $\Pro(\calC)$ inherits a symmetric monoidal structure $\Pro(\calC)^{\otimes}$ in which the tensor product commutes with arbitrary small limits at each variable. By~\cite[Lemma~5.1]{BB24}, the forgetful functor
$$\oblv_{\Pro}\colon \CoAlg(\Pro(\calC)^\otimes) \to \Pro(\calC) $$
is comonadic and it admits a right adjoint $\mathfrak{C}_{\Pro}$ such that
\begin{equation}\label{equation: pro comonad}
\oblv_{\Pro} \circ \mathfrak{C}_{\Pro}(X) \simeq \prod_{n\geq 1} (X^{\otimes n})^{h\Sigma_n} \in \Pro(\calC), \;\; X\in \Pro(\calC), 
\end{equation}
see also~\cite[Proposition 3.1.3.13]{HigherAlgebra}.

Recall that there exists a fully faithful functor
$$c\colon \calC\to \Pro(\calC) $$
which sends an object to the constant cofiltered family. By~\cite[Lemma~2.3]{KST19}, the category $\Pro(\calC)$ has all small limits and colimits, and the functor $c$ is left exact and preserves small colimits. Moreover, the functor $c$ is strong symmetric monoidal.

Since we assumed that the category $\calC^{\otimes}$ is pro-nilpotent and $\Sigma$-free, the category of pro-objects $\Pro(\calC)^{\otimes}$ is pro-nilpotent and $\Sigma$-free as well. Hence, for $X\in \calC$, we have
\begin{align*}
\oblv_{\Pro} \circ \mathfrak{C}_{\Pro}(cX) \simeq \prod_{n\geq 1} ((cX)^{\otimes n})^{h\Sigma_n} \simeq \bigoplus_{n\geq 1} ((cX)^{\otimes n})_{h\Sigma_n} \simeq c\Big(\bigoplus_{n\geq 1} (X^{\otimes n})_{h\Sigma_n} \Big),
\end{align*}
see Lemma~\ref{lemma: basic properties of pro-nilpotent} and Remark~\ref{remark: Sigma-free and Tate}. In other words, the comonad~\eqref{equation: pro comonad} preserves constant objects. This implies that the functor 
\begin{equation}\label{equation: pro composite}
\CoAlg(\calC^{\otimes}) \hookrightarrow \CoAlg(\Pro(\calC)^{\otimes})
\end{equation}
is fully faithful and a pro-coalgebra $C\in \CoAlg(\Pro(\calC)^{\otimes})$ belongs to the essential image of~\eqref{equation: pro composite} if and only if $\oblv_{\Pro}(C) \in \Pro(\calC)$ is equivalent to a constant object. This implies the theorem.
\end{proof}

Let $\calC^{\otimes}\in \Pre^{L,\otimes}_{\mathrm{st}}$ be a pro-nilpotent symmetric monoidal category exhibited as the limit $\calC^{\otimes} \simeq \lim \calC_j^{\otimes}$. Suppose that $\calC^{\otimes}$ and each $\calC_j^{\otimes}$, $j\geq 0$ are $\Sigma$-free. Then, by Corollary~\ref{corollary: pronilpotent stabilization}, the oplax symmetric monoidal functor
$$\oblv\colon \cCAlg^{\mathrm{nil}}_{\mathrm{d.p}}(\calC^{\otimes})^{\times} \to \calC^{\otimes} $$
induces the comparison functor
\begin{equation}\label{equation: nil dp comparison}
\zeta\colon \cCAlg^{\mathrm{nil}}_{\mathrm{d.p}}(\calC^{\otimes}) \to \CoAlg(\calC^{\otimes}).
\end{equation}
By~\cite[Proposition~3.2.4.7]{HigherAlgebra} and Proposition~\ref{proposition: cartesian product in nil dp}, the comparison functor $\zeta$ preserves finite products.

\begin{cor}\label{corollary: pronil dp sigma-free same}
Let $\calC^{\otimes}\in \Pre^{L,\otimes}_{\mathrm{st}}$ be a pro-nilpotent symmetric monoidal category exhibited as the limit $\calC^{\otimes} \simeq \lim \calC_j^{\otimes}$. Suppose that $\calC^{\otimes}$ and each $\calC_j^{\otimes}$, $j\geq 0$ are $\Sigma$-free. Then the comparison functor
$$\zeta\colon  \cCAlg^{\mathrm{nil}}_{\mathrm{d.p}}(\calC^{\otimes}) \to \CoAlg(\calC^{\otimes})$$
is an equivalence.
\end{cor}

\begin{proof}
The functor $\zeta$ induces the natural coalgebra map
$$
\varphi^X\colon \zeta \mathfrak{C}_{\mathrm{d.p}}(X) \to \mathfrak{C}(X) \in \CoAlg(\calC^{\otimes}), \;\; X\in \calC.
$$
By applying the forgetful functor $\oblv \colon \CoAlg(\calC^{\otimes}) \to \calC$ to $\varphi$, we obtain a morphism of comonads
$$\Phi\colon \Sym_{\calC}(X) \to \Gamma_{\mathrm{d.p}}(X) \in \calC, \;\; X\in \calC. $$
By Theorem~\ref{theorem: pronil dp sigma-free comonadic}, it suffices to show that $\Phi$ is an equivalence. By Lemma~\ref{lemma: basic properties of pro-nilpotent}, it is enough to show that the induced map $$\partial_i\Phi\colon \partial_i(\Sym_{\calC}) \to \partial_i(\Gamma_{\mathrm{d.p}})$$ between $i$-th derivatives is an equivalence for all $i\geq 0$. 

However, $\partial_i\Phi$ is the multi-linearization of the natural transformation
$$\oblv \circ \varphi^{X_1\oplus \ldots \oplus X_i}\colon \oblv(\zeta \mathfrak{C}_{\mathrm{d.p}}(X_1\oplus \ldots \oplus X_i)) \to \oblv(\mathfrak{C}(X_1 \oplus \ldots \oplus X_i)),$$
where $X_1,\ldots,X_i \in \calC$, $i\geq 1$. By Proposition~\ref{proposition: cartesian product in nil dp} and since $\zeta$ preserves finite products, we observe 
$$\partial_i\Phi \simeq (\partial_1\Phi)^{\otimes i}, \;\; i\geq 1. $$
Finally, $\partial_1\Phi$ is an equivalence because $\zeta$ is compatible with forgetful functors.
\end{proof}

\subsection{Coalgebras in symmetric sequences}\label{section: coalgebras in symmetric sequences}

Recall that the category of symmetric sequences $\sseq(\calC)$ in an $\infty$-category $\calC$ is the functor category $\sseq(\calC)=\Fun(\FB,\calC)$, see Section~\ref{section: H-Lie-algebras}, and $\sseq$ is the category $\sseq(\Mod_{\kk})$.

Suppose that the category $\calC \in \Pre^{L,\otimes}_{\mathrm{st}}$ is stable presentably (non-unital) symmetric monoidal. We endow the ($1$-)category $\FB$ with the symmetric monoidal structure given by disjoint unions. Then the category of symmetric sequences $\sseq(\calC)$ inherits a (non-unital) symmetric monoidal structure $\sseq(\calC)^{\circledast}$ given by the \emph{Day convolution} product (see e.g.~\cite[Section~2.2.6]{HigherAlgebra} and~\cite{Glasman16}):
$$- \circledast - \colon \sseq(\calC)\times \sseq(\calC) \to \sseq(\calC). $$
Informally, the Day convolution is given as follows
\begin{equation}\label{equation: day convolution, formula}
(X\circledast Y)(I)\simeq\bigoplus_{I_1\sqcup I_2\cong I}X(I_1)\otimes Y(I_2), \;\; I_1,I_2,I\in \FB, 
\end{equation}
where $X,Y\in \sseq(\calC)$ are symmetric sequences in $\calC$. Elsewhere below, we denote by $\sseq^{\circledast}$ the category $\sseq=\sseq(\Mod_{\kk})$ equipped with the Day convolution symmetric monoidal product. 

We denote by $\sseq_0(\calC)$ the full subcategory of $\sseq(\calC)$ spanned by the symmetric sequences $X\in \sseq(\calC)$ such that $X_0\simeq 0$. We note that the subcategory $\sseq_0(\calC)$ is closed under the Day convolution product. As before, we write $\sseq_0^{\circledast}$ for $\sseq_0(\Mod_{\kk})$ equipped with the Day convolution product. 

\begin{prop}\label{proposition: symmetric sequences are nilpotent}
Let $\calC^{\otimes} \in \Pre^{L,\otimes}_{\mathrm{st}}$ be a stable presentably symmetric monoidal category. Then the symmetric monoidal category $\sseq_0(\calC)^{\circledast}$ is pro-nilpotent.
\end{prop}

\begin{proof}
Fix $n\geq 0$. Let $\sseq_0^{\leq n}(\calC)$ be the full subcategory of $\sseq_0(\calC)$ spanned by the symmetric sequences $X \in \sseq_0(\calC)$ such that $X(I)\simeq 0$ for $|I|\geq n+1$. Note that the inclusion
$$\sseq_0^{\leq n}(\calC) \hookrightarrow \sseq_0(\calC) $$
admits a left (and right) adjoint $L^{\leq n}\colon \sseq_0(\calC) \to \sseq_0^{\leq n}(\calC)$ such that 
$$
L^{\leq n}(X)(I) \simeq
\begin{cases}
X(I) & \mbox{if $|I|\leq n$,}\\
0 & \mbox{otherwise}
\end{cases}
$$
for a symmetric sequence $X\in \sseq_0(\calC)$. By the formula~\eqref{equation: day convolution, formula}, the localization $L^{\leq n}$ is compatible with the Day convolution (see~\cite[Definition~2.2.1.6]{HigherAlgebra}), i.e. if $L^{\leq n}(X)\simeq 0$ for $X\in \sseq_0(\calC)$, then $L^{\leq n}(X\circledast Y)\simeq 0$ for any $Y\in \sseq_0(\calC)$. Therefore, by~\cite[Proposition~2.2.1.9]{HigherAlgebra}, the category $\sseq_0^{\leq n}(\calC)$ inherits the symmetric monoidal structure $\sseq_0^{\leq n}(\calC)^{\circledast_n}$ with the monoidal product $$X\circledast_n Y \simeq L^{\leq n}(X\circledast Y), \;\;X,Y\in \sseq_0^{\leq n}(\calC).$$ Moreover, the functors
$$L^{\leq n} \colon \sseq_0(\calC)^{\circledast} \to \sseq_0^{\leq n}(\calC)^{\circledast_n}, \; n \geq 0 $$
are symmetric monoidal.

Finally, we have $$\sseq_0(\calC)^{\circledast} \simeq \lim_n \sseq^{\leq n}_0(\calC)^{\circledast_n} \in \Pre^{L,\otimes}_{\mathrm{st}}$$ in the category of stable presentably symmetric monoidal categories. Moreover, $\sseq^{\leq 0}_0(\calC)\simeq 0$, the transition functors $$L^{\leq n}_m \colon \sseq^{\leq m}_0(\calC) \to\sseq^{\leq n}_0(\calC), \;\; m\geq n$$ preserve arbitrary limits, and $X\circledast_n Y\simeq 0$ for any $X\in \ker(L^{\leq n}_{n-1}) \subset \sseq_0^{\leq n}(\calC)$ and $Y \in \sseq_0^{\leq n}(\calC)$, $n\geq 0$. In other words, the category $\sseq_0(\calC)^\circledast$ satisfies the conditions of Definition~\ref{definition: pro-nilpotent category}.
\end{proof}

\begin{prop}\label{proposition: symmetric sequences are sigma free}
Let $\calC^{\otimes} \in \Pre^{L,\otimes}_{\mathrm{st}}$ be a stable presentably symmetric monoidal category. Then the symmetric monoidal category $\sseq_0(\calC)^{\circledast}$ is $\Sigma$-free.
\end{prop}

\begin{proof}
By Definition~\ref{definition: Sigma-free}, we have to find functors
$$\widetilde{T}^n \colon \sseq_0(\calC) \to \sseq_0(\calC)\hookrightarrow \Fun(\FB,\calC) $$
such that $\Coind^{\Sigma_n} \circ \widetilde{T}^n(X)\simeq X^{\otimes n}$ for any $X\in \sseq_0(\calC)$ and $n\geq 1$. 

Fix $n\geq 1$. Let $\underline{n}=\{1,\ldots, n\}$ be the set with $n$ elements. For $I \in \FB$, let $\Surj(I,\underline{n})$ be the set of surjection between $I$ and $\underline{n}$. Then, by the formula~\eqref{equation: day convolution, formula}, we have a natural $\Sigma_n$-equivariant equivalence
$$X^{\otimes n}(I)\simeq  \bigoplus_{f\in \Surj(I,\underline{n})} X(f^{-1}(1)) \otimes\ldots \otimes X(f^{-1}(n)), \;\; I\in \FB.$$
Here the group $\Sigma_n$ acts on the right hand side by permuting the summands through the left $\Sigma_n$-action on the finite set $\Surj(I,\underline{n})$.

Note that the group $\Sigma_n$ acts \emph{freely} on the set $\Surj(I,\underline{n})$, $I\in \FB$. Let $\mathrm{Part}(I,\underline{n})$ denote the quotient $\Surj(I,\underline{n})/\Sigma_n$ and fix a section
$$s\colon \mathrm{Part}(I,\underline{n}) \to \Surj(I,\underline{n}). $$
Then, for $I\in \FB$, we observe
\begin{align*}
X^{\otimes n}(I)&\simeq  \bigoplus_{\sigma \in \Sigma_n}\left(\bigoplus_{\lambda \in \mathrm{Part}(I,\underline{n})} X((\sigma s(\lambda))^{-1}(1)) \otimes\ldots \otimes X((\sigma s(\lambda))^{-1}(n))\right)\\
&\simeq\Coind^{\Sigma_n}\left(\bigoplus_{\lambda \in \mathrm{Part}(I,\underline{n})} X((s(\lambda))^{-1}(1)) \otimes\ldots \otimes X((s(\lambda))^{-1}(n))\right)
\end{align*}
\end{proof}

\begin{cor}\label{corollary: nil dp = coalg for sseq}
Let $\calC^{\otimes} \in \Pre^{L,\otimes}_{\mathrm{st}}$ be a stable presentably symmetric monoidal category. Then the comparison functor
$$\zeta\colon \cCAlg^{\mathrm{nil}}_{\mathrm{d.p}}(\sseq_0(\calC)^\circledast) \xrightarrow{\simeq} \CoAlg(\sseq_0(\calC)^\circledast) $$
is an equivalence. In particular, the adjoint pair
$$
\begin{tikzcd}
\oblv: \CoAlg(\sseq_0(\calC)^\circledast) \arrow[shift left=.6ex]{r}
&\sseq_0(\calC):\mathfrak{C} \arrow[shift left=.6ex,swap]{l}
\end{tikzcd}
$$
is comonadic and 
\begin{equation}\label{equation: cofree coalgebra in sseq}
\oblv \circ \mathfrak{C}(X)(I) \simeq \bigoplus_{n\geq 1}^{|I|}\left( \bigoplus_{f\in \Surj(I,\underline{n})} X(f^{-1}(1)) \otimes\ldots \otimes X(f^{-1}(n)) \right)_{h\Sigma_n},
\end{equation}
where $X\in \sseq_0(\calC)$, $I\in \FB$, and $\underline{n}=\{1,\ldots,n\}$ is a finite set.
\end{cor}

\begin{proof}
We apply Corollary~\ref{corollary: pronil dp sigma-free same} to the category $\sseq_0(\calC)^{\circledast}$ by using Proposition~\ref{proposition: symmetric sequences are nilpotent} and Proposition~\ref{proposition: symmetric sequences are sigma free}.
\end{proof}

\begin{dfn}\label{definition: coalgebras in Mod}
Let us denote the category $\CoAlg(\sseq_0(\Mod_{\kk})^{\circledast})$ of coalgebras in symmetric sequences of chain complexes by $\CoAlg(\sseq_0^{\circledast})$.
\end{dfn}

\begin{rmk}\label{remark:coalg=left comodule}
One can show that the category $\cCAlg(\sseq_0^{\circledast})$ of coalgebras is equivalent to the category of \emph{left comodules over the cocommutative cooperad} $\bfComm_*\in \sseq$, $\bfComm_*(I)=\kk$, $I\in \FB$. However, we will avoid to use this description in this paper, therefore we will only give the intuition here; the detailed proof will be presented elsewhere.

Informally, a coalgebra $C\in \cCAlg(\sseq^{\circledast}_0)$ is a symmetric sequence equipped with a (symmetric) comultiplication map 
$$\mu\colon C \to C\circledast C \in \sseq_0. $$
Using the formula~\eqref{equation: day convolution, formula}, we observe that the map $\mu$ is given by the series of $(\Sigma_m\times \Sigma_{n-m})$-equivariant comultiplication maps
$$\mu_{2;m,n-m}\colon C_n \to C_m \otimes C_{n-m}, \;\; 0\leq m\leq n, \; n\geq 1. $$
By iterating the map $\mu_2$, we obtain the series of equivariant maps
\begin{equation}\label{equation: comultiplication maps}
\mu_{k;n_1,\ldots, n_k}\colon C_n \to C_{n_1} \otimes \ldots \otimes C_{n_k}, \;\; n_1+\ldots + n_k=n, \; k\geq 1
\end{equation}
which satisfy certain coassociativity relations. This data seems to recover a left $\bfComm_*$-comodule on $C_*\in\sseq_0$.
\end{rmk}

Since the monoidal structure $\sseq(\calC)_0^{\circledast}$ is not compatible with limits, the forgetful functor $\oblv\colon \CoAlg(\sseq_0(\calC)^{\circledast}) \to \sseq_0(\calC)$ does not commute with arbitrary totalizations. However, due to Corollary~\ref{corollary: nil dp = coalg for sseq}, the functor $\oblv$ commutes with totalizations of a special kind.

\begin{prop}\label{proposition: coalgebras and almost finite totalizations}
Let $\calC^{\otimes} \in \Pre^{L,\otimes}_{\mathrm{st}}$ be a stable presentably symmetric monoidal category and let $C^{\bullet} \in \Fun(\Delta,\CoAlg(\sseq_0(\calC)^{\circledast}))$ be a cosimplicial object in the category $\CoAlg(\sseq_0(\calC)^{\circledast})$. Assume that for any $k\geq 0$ there exists $n(k)\in \N$ such that the natural map
$$(\mathrm{Tot}(\oblv C^{\bullet}))(I) \xrightarrow{\simeq} (\mathrm{Tot}^{N} (\oblv C^\bullet))(I), \;\; I\in \FB, \; |I|=k $$
is an equivalence in $\calC$ for all $N\geq n(k)$. Then the map
$$\oblv(\mathrm{Tot}(C^\bullet)) \to \Tot(\oblv C^{\bullet}) \in \sseq_0(\calC)$$
is also an equivalence.
\end{prop}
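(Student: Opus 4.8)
The claim is that the forgetful functor $\oblv\colon \CoAlg(\sseq_0)\to\sseq_0$ commutes with those totalizations of cosimplicial coalgebras which are, levelwise in the arity $|I|$, already computed by a finite stage $\mathrm{Tot}^N$. The plan is to reduce this to the analogous (trivial) statement for the category of \emph{nilpotent} coalgebras $\CoAlg^{nil}(\sseq_0)$ via the equivalence~\eqref{equation: nil-coalgebras = coalgebras}, and then to exploit the fact that the cofree comonad~\eqref{equation: nilpotent coalgebras} is, in each arity, a \emph{finite} sum.

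First I would invoke the comonadicity of $\oblv\colon \CoAlg(\sseq_0)\to\sseq_0$ (established right after~\eqref{equation: nil-coalgebras = coalgebras}): since $\oblv$ is conservative and preserves all colimits, and $\CoAlg(\sseq_0)$ admits all limits by Proposition~\ref{proposition:coalg is presentable}, the totalization $\mathrm{Tot}\, X^\bullet$ is computed in $\CoAlg(\sseq_0)$ and the comparison map $\oblv(\mathrm{Tot}\,X^\bullet)\to\Tot(\oblv X^\bullet)$ is always defined; the content is that it is an equivalence under the stated hypothesis. To check this it suffices, by the general theory of comonadic adjunctions, to verify that the cofree comonad $\oblv\circ\mathfrak C$ preserves the relevant totalizations — more precisely, one checks that the canonical cosimplicial resolution of $\mathrm{Tot}\,X^\bullet$ by cofree coalgebras is carried by $\oblv$ to a diagram whose totalization computes $\Tot(\oblv X^\bullet)$. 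The key input is therefore the formula~\eqref{equation: cofree coalgebra in sseq}: evaluated at a fixed finite set $I$, $(\oblv\circ\mathfrak C(A))(I)$ is a \emph{finite} direct sum indexed by $1\le n\le |I|$ and surjections $f\colon I\twoheadrightarrow\underline n$, each summand being a finite tensor product of the $A(f^{-1}(j))$, which are evaluations of $A$ at sets of cardinality $< |I|$ when $n\ge 2$.

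The heart of the argument is then an induction on $k=|I|$. Fix $k$ and assume the statement known for all arities $<k$. Evaluating everything at $I$ with $|I|=k$, the comonad $\oblv\circ\mathfrak C$ becomes, by~\eqref{equation: cofree coalgebra in sseq}, a functor built from finitely many operations: the identity (the $n=1$ summand) plus finitely many finite-tensor-power summands each of which, by the inductive hypothesis together with the hypothesis that tensor products of chain complexes preserve the finite-stage totalizations in question (this is automatic, as $-\otimes-$ on $\Mod_\kk$ preserves finite limits up to a shift in connectivity bound, and our hypothesis already provides a uniform bound $n(k')$ in each lower arity), commutes with the totalization at hand. Since a finite direct sum of functors commuting with a totalization again commutes with it, $(\oblv\circ\mathfrak C)(-)(I)$ preserves $\mathrm{Tot}\,X^\bullet$ in the sense required. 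Feeding this into the comonadic descent comparison — writing $\mathrm{Tot}\,X^\bullet$ as the totalization of its cobar resolution $\mathfrak C^{\bullet+1}(\oblv X^\bullet)$ and commuting the two totalizations, which is legitimate because in arity $k$ both are Postnikov-convergent with a uniform bound — yields $\oblv(\mathrm{Tot}\,X^\bullet)(I)\simeq\Tot(\oblv X^\bullet)(I)$. As $I$ with $|I|=k$ was arbitrary and $k$ ranges over all of $\N$, this completes the induction and the proof.

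The main obstacle, and the step deserving the most care, is the interchange of the two totalizations (the cosimplicial direction $\bullet$ of $X^\bullet$ and the cobar direction of the comonad resolution) in a fixed arity $k$. In a general comonadic setting totalizations need not commute, and the whole point of the hypothesis is to force, in each arity, the outer totalization to be a \emph{finite} limit $\mathrm{Tot}^{n(k)}$ — which then commutes with everything. I would spell this out by first truncating: replace $\mathrm{Tot}$ by $\mathrm{Tot}^{n(k)}$ using the hypothesis, observe that $\oblv\circ\mathfrak C$ in arity $k$ only involves lower arities (where, again by the hypothesis, finite truncations suffice with bounds $n(k')$, $k'<k$), so the cobar resolution of $\mathrm{Tot}^{n(k)}X^\bullet$ is, arity-$k$-wise, built from finite limits throughout; finite limits commute with finite limits, giving the result with no convergence subtleties. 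The bookkeeping of which finite bound $n(k)$ to use at each stage — and the verification that tensoring preserves these bounds with only a controlled shift — is routine but must be done honestly to make the induction airtight.
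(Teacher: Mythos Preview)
The paper states the proposition without proof, invoking only the equivalence~\eqref{equation: nil-coalgebras = coalgebras}. Your approach is the natural elaboration of that hint: the comonad $\oblv\circ\mathfrak C$ is, by~\eqref{equation: cofree coalgebra in sseq}, a finite expression in each arity, and under the hypothesis the totalization is levelwise a finite limit, so the comonad preserves it and hence $\oblv$ creates it. This is essentially correct and is what the paper has in mind.

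Two refinements would tighten the argument. First, the induction on $k$ is not needed and somewhat obscures the logic: the clean criterion is that if the comonad $Q=\oblv\circ\mathfrak C$ preserves a given limit then $\oblv$ creates it, and $Q(\Tot\,\oblv X^\bullet)(I)\simeq\Tot(Q(\oblv X^\bullet))(I)$ can be checked directly from the hypothesis at arities $\le|I|$, with no appeal to the conclusion at lower arities. Your ``inductive hypothesis'' is never actually invoked in the verification that $Q$ commutes with $\Tot$. Second, and more substantively, the phrase ``$-\otimes-$ preserves finite limits'' is not the right justification for the tensor step: exactness of $-\otimes V$ in one variable does not by itself give $\Tot_m(A_1^m\otimes\cdots\otimes A_n^m)\simeq\Tot(A_1^\bullet)\otimes\cdots\otimes\Tot(A_n^\bullet)$ for the \emph{diagonal} cosimplicial object. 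What is needed is the cosimplicial Eilenberg--Zilber theorem (equivalently, Künneth for the normalized cochain complexes), together with the observation that the hypothesis forces these normalized complexes to be bounded in each arity. With that in hand the interchange is immediate, but it is the actual ingredient doing the work.
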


\begin{proof}
Follows from Theorem~\ref{theorem: nil dp basic properties}.
\end{proof}

\begin{dfn}\label{definition: trivial nil dp coalgebra}
Let $\calC^{\otimes} \in \Pre^{L,\otimes}_{\mathrm{st}}$ be a stable presentably symmetric monoidal category. Suppose that $\calC$ is compactly generated. Let us denote by
$$\triv\colon \sseq_0(\calC) \to \cCAlg^{\mathrm{nil}}_{\mathrm{d.p}}(\sseq_0(\calC)^{\circledast})\simeq \CoAlg(\sseq_0(\calC)^{\circledast}) $$
the \emph{functor of a trivial coalgebra} induced by the comonad map $\id_{\calC} \to \Sym_{\sseq_0}(\calC)$, see Theorem~\ref{theorem: comonad for nilpotent divided power coalgebras}. Note that $\oblv \circ \triv \simeq \id_{\calC}$.
\end{dfn}

Suppose that the symmetric monoidal category $\calC^{\otimes}$ is unital and $\mathbbm{1}\in \calC$ is the unit object, then the monoidal category $\sseq(\calC)^{\circledast}$ is again unital with the unit $\mathbbm{1}_*$ given by $\mathbbm{1}_0\simeq \mathbbm{1}$ and $\mathbbm{1}_n\simeq \ast$ for $n\geq 1$. In particular, $\mathbbm{1}_*\in \sseq(\calC)^{\circledast}$ is an algebra object and the slice category $\sseq(\calC)_{/\mathbbm{1}_*}$ is endowed with the canonical symmetric monoidal structure such that the forgetful functor
$$\sseq(\calC)_{/\mathbbm{1}_*} \to \sseq(\calC) $$
is symmetric monoidal. 
\begin{dfn}\label{definition: sseq1}
We write $\sseq_1(\calC)$ for the full subcategory of $\sseq(\calC)_{/\mathbbm{1}_*}$ spanned by the symmetric sequences $X\to \mathbbm{1}_* \in \sseq(\calC)_{/\mathbbm{1}_*}$ such that $X_0\xrightarrow{\simeq} \mathbbm{1}$ is an equivalence. Note that $\sseq_1(\calC)$ is closed under the Day convolution and we write $\sseq_1(\calC)^{\circledast}$ for the induced symmetric monoidal subcategory. 
\end{dfn}

\begin{rmk}\label{remark: aug sseq1 = sseq0}
There is a natural equivalence
$$\CoAlg(\sseq_0(\calC)^\circledast) \simeq \CoAlg^{\mathrm{aug}}(\sseq_1(\calC)^{\circledast})$$
between the category $\CoAlg^{\mathrm{aug}}(\sseq_1(\calC)^{\circledast})$ of \emph{coaugmented} cocommutative coalgebras in the unital symmetric monoidal category $\sseq_1(\calC)^\circledast$ and the category $\CoAlg(\sseq_0(\calC)^\circledast)$ of \emph{non-unital} coalgebras in $\sseq_0(\calC)^\circledast$. This equivalence sends $C\in \CoAlg(\sseq_0(\calC)^\circledast)$ to $C\oplus \mathbbm{1}_*$.
\end{rmk}

\begin{prop}\label{proposition: + to day oplax monoidal sseq}
Let $\calC^{\otimes} \in \Pre^{L,\otimes}_{\mathrm{st}}$ be a stable presentably unital symmetric monoidal category. Suppose that $\calC$ is compactly generated. There exists an oplax symmetric monoidal functor
$$\mathfrak{i}\colon \sseq_0(\calC)^{\oplus} \to \sseq_1(\calC)^{\circledast} $$
such that
\begin{enumerate}
\item $\mathfrak{i}(X) \simeq X\oplus \mathbbm{1}_*$;
\item the induced functor
$$\sseq_0(\calC) \xrightarrow{\simeq} \CoAlg(\sseq_0(\calC)^{\oplus}) \xrightarrow{\mathfrak{i}} \CoAlg(\sseq_1(\calC)^{\circledast}) $$
is equivalent to the composite
\begin{equation}\label{equation: triv composite}
\sseq_0(\calC) \xrightarrow{\triv} \CoAlg(\sseq_0(\calC)^{\circledast}) \to \CoAlg(\sseq_1(\calC)^{\circledast}).
\end{equation}
\end{enumerate}
\end{prop}

\begin{proof}
Since $\calC$ is a stable category, the symmetric monoidal structure $\sseq_0(\calC)^{\oplus}$ is Cartesian, see~\cite[Definition~2.4.1.1]{HigherAlgebra}. Finally, we observe that~\cite[Theorem~2.4.3.18]{HigherAlgebra} applied to the composite~\eqref{equation: triv composite} constructs the desired oplax symmetric monoidal functor $\mathfrak{i}$.
\end{proof}

\begin{rmk}\label{remark: oplax triv}
Let $X,Y \in \sseq_0(\calC)$ and let $I\in \FB$ be a non-empty finite set. Then the oplax symmetric monoidal functor $\mathfrak{i}$ defines natural transformations
\begin{equation}\label{equation: oplax triv}
\mathfrak{i}(X\oplus Y)(I) \to(\mathfrak{i}(X) \circledast \mathfrak{i}(Y))(I). 
\end{equation}
Note that the left hand side is $X(I)\oplus Y(I)$. By the formula~\eqref{equation: day convolution, formula}, the right hand side contains the direct summand
$$\mathfrak{i}(X)(I)\otimes \mathfrak{i}(Y)(\emptyset) \oplus  \mathfrak{i}(X)(\emptyset)\otimes \mathfrak{i}(Y)(I) \simeq X(I)\otimes \mathbbm{1} \oplus \mathbbm{1} \otimes Y(I) \simeq X(I) \oplus Y(I). $$
Finally, the map~\eqref{equation: oplax triv} is equivalent to the inclusion of the described direct summand. 
\end{rmk}


Since $\oblv \circ \triv \simeq \id_{\sseq_0}$, the functor
$$\triv \colon \sseq_0(\calC) \to \CoAlg(\sseq_0(\calC)^{\circledast})$$
preserves colimits, and so, it admits a right adjoint
\begin{equation}\label{equation: prim}
\prim\colon \CoAlg(\sseq_0(\calC)^{\circledast}) \to \sseq_0(\calC).
\end{equation}
More explicitly, $\prim$ is the totalization of the \emph{cobar construction} $\Cobar^\bullet(C) \in \Fun(\Delta,\sseq_0(\calC))$, where
$$\Cobar^m(C) \simeq (\oblv\circ \mathfrak{C})^{\circ m} \circ \oblv(C), \;\; C\in \CoAlg(\sseq_0(\calC)^{\circledast}), \; m\geq 0. $$
The adjoint pair $\triv \dashv \prim$ defines a monad acting on the category $\sseq_0(\calC)$. We will describe this monad in the case $\calC=\Mod_{\kk}$.

We denote by $\bfLie^{\mathrm{sp}}(\sseq_0^{\circledast})$ the category of \emph{spectral Lie algebras in $\sseq_0^{\circledast}$}; that is the category of algebras over the operad $\bfLie^{\mathrm{sp}}_*$, see Definition~\ref{definition: linear spectral lie operad}. Informally, a spectral Lie algebra $L\in \bfLie^{\mathrm{sp}}(\sseq_0^{\circledast})$ is a symmetric sequence $L\in\sseq_0$ equipped with the maps
\begin{equation}\label{equation: lie multiplication 1}
m_k\colon \bfLie^{\mathrm{sp}}_k \otimes_{h\Sigma_k} L^{\circledast k} \to L \in \sseq_0, \;\; k\geq 1
\end{equation}
which satisfy certain associativity relations. By the formula~\eqref{equation: day convolution, formula}, the map $m_k$ packages the following $\Sigma_{n_1,\ldots, n_k}$-equivariant maps
\begin{equation}\label{equation: lie multiplicatons}
m_{k;n_1,\ldots, n_k}\colon \bfLie^{\mathrm{sp}}_k \otimes L_{n_1} \otimes \ldots \otimes L_{n_k} \to L_n \in \Mod_{\kk},
\end{equation}
where $n_1+\ldots + n_k=n$ is an (unordered) partition of $\underline{n}$ and $\Sigma_{n_1,\ldots, n_k} \subset \Sigma_n$ is its stabilization subgroup, $k\geq 1$.  In other words, a spectral Lie algebra in $\sseq_0^{\circledast}$ models left $\bfLie^{\mathrm{sp}}_*$-modules in the monoidal category $\sseq_0^{\circ}$ equipped with the composition product.

Next, we recall from~\cite[Section~3.3]{GF12} that there exists an adjoint pair
\begin{equation}\label{equation: koszul adjunction}
\begin{tikzcd}
Q^{\mathrm{enh}}: \bfLie^{\mathrm{sp}}(\sseq_0^\circledast) \arrow[shift left=.6ex]{r}
&\CoAlg^{\mathrm{nil}}_{\mathrm{d.p}}(\sseq_0^\circledast)\simeq \CoAlg(\sseq_0^\circledast) :\prim^{\mathrm{enh}}, \arrow[shift left=.6ex,swap]{l}
\end{tikzcd}
\end{equation}
such that the composite $\oblv\circ \prim^{\mathrm{enh}}\simeq \prim$ is the left adjoint to $\triv$, see Definition~\ref{definition: trivial nil dp coalgebra}. In particular, we have
\begin{equation}\label{equation: prim triv Lie}
\prim \circ \triv(X)(I) \simeq \bigoplus_{n\geq 1}^{|I|}\left( \bigoplus_{f\in \Surj(I,\underline{n})} \bfLie^{\mathrm{sp}}_n \otimes X(f^{-1}(1)) \otimes\ldots \otimes X(f^{-1}(n)) \right)_{h\Sigma_n},
\end{equation}
where $X\in \sseq_0(\calC)$, $I\in \FB$, and $\underline{n}=\{1,\ldots,n\}$ is a finite set.

Usually, it is difficult to compute $\prim(C)$ of $C\in \cCAlg(\sseq_0^{\circledast})$, but here we present a simple approximation for $\prim(C)$, which will be useful for us later.

\begin{dfn}\label{definition: fiber of comultiplication}
Let $n,k\in \N$ and $C\in \cCAlg(\sseq_0^{\circledast})$ be a cocommutative coalgebra in $\sseq_0^{\circledast}$. We write $\prim^{k;n}(C)\in \Mod_{\kk}^{B\Sigma_{nk}}$ for the fiber of the comultiplication map $\mu^{k;n,\ldots, n}$ (see~\eqref{equation: comultiplication maps})
\begin{align*}
\prim^{k;n}(C)_{nk} &= \fib (C \xrightarrow{\mu} (C^{\circledast k})^{h\Sigma_k})_{nk}\\
&=\fib (C_{nk} \xrightarrow{\mu_{k;n,\ldots, n}} \mathrm{Ind}^{\Sigma_{nk}}_{\Sigma_k \wr \Sigma_n} C_n \otimes \ldots \otimes C_n) \in \Mod_{\kk}^{B\Sigma_{nk}}. 
\end{align*}
\end{dfn}

We construct a natural \emph{corona cell map}
\begin{equation}\label{equation: corona cell map, general}
c^{k;n}_C\colon \prim(C)_{nk} \to \prim^{k;n}(C)_{nk} \in \Mod_{\kk}^{B\Sigma_{nk}}.
\end{equation}
By the discussion above, we have the projection map 
\begin{equation}\label{equation: corona, projection}
\prim(C)_{nk} \xrightarrow{\sim} \Tot(\Cobar^{\bullet}(C))_{nk} \to \mathrm{Tot}^{1}(\Cobar(C)^{\bullet})_{nk},
\end{equation}
where, by~\eqref{equation: cofree coalgebra in sseq}, the right hand side is equivalent to the fiber $\fib(\mu)$ of the map
$$\mu=\oplus \mu_{j}\colon C_{nk} \to \bigoplus_{j> 1}^{nk}\left( \bigoplus_{f\in \Surj(\underline{nk},\underline{j})} C_{|f^{-1}(1)|} \otimes\ldots \otimes C_{|f^{-1}(j)|} \right)_{h\Sigma_j} $$
given by the sum of all comultiplication maps $\mu_{j; n_1,\ldots, n_j}$ of~\eqref{equation: comultiplication maps} with $1<j\leq nk$. The fiber $\fib(\mu)$ projects to the fiber $\fib(\mu_{k;n,\ldots,n})$ of the comultiplication map $\mu_{k;n,\ldots,n}$, and we define the corona cell map~\eqref{equation: corona cell map, general} as the composite of the map~\eqref{equation: corona, projection} with the just described projection. 

\begin{exmp}\label{example: lie and corona cell}
Suppose that $n=1$, $k\geq 2$, and $C=\triv(\kk)\in \CoAlg(\sseq_0^{\circledast})$ is the trivial coalgebra concentrated in degree one. Then we have 
$$\prim^{k;1}(C)_{k}\simeq \Sigma^{-1}\kk,$$
and since $\prim(C)\simeq \bfLie^{\mathrm{sp}}_* \in \sseq_0$, we obtain a map 
\begin{equation}\label{equation: corona cell map, local}
c^{k;1}\colon \Sigma^{1-k}\bfLie_k\otimes \mathbf{sgn}^{\otimes k} \to \Sigma^{-1}\kk \in \Mod_{\kk}^{B\Sigma_k}.
\end{equation}
Finally, we note that the map~\eqref{equation: corona cell map, local} is equivalent to the map~\eqref{equation: corona cell map, kjaer}.



\end{exmp}

\subsection{Derivatives and coderivatives}\label{section: derivaties and coderivatives}

Let $\Sp^{\omega} \subset \Sp$ denote the full subcategory of \emph{compact} spectra.
\begin{lmm}\label{lemma: right adjoint to partial}
Let $\calC \in \Pre^{L}_{\mathrm{st}}$ be a stable presentable category. Then, for every $n\geq 0$, the functor
$$\partial_n\colon \Fun^{\omega}(\Sp,\calC) \simeq \Fun(\Sp^{\omega},\calC) \to \calC^{B\Sigma_n}$$
admits a right adjoint $R_n$ given by the formula
$$R_n(A)(X) \simeq (A\otimes X^{\otimes n})^{h\Sigma_n}, \;\; A\in \calC^{B\Sigma_n}, \;\; X\in \Sp^{\omega}. $$
\end{lmm}

\begin{proof}
Fix $n\geq 0$. Since the category $\calC$ is stable, the functor $\partial_n$ preserves small colimits by~\cite[Remark~6.1.1.31]{HigherAlgebra}. Therefore, by the adjoint functor theorem~\cite[Corollary~5.5.2.9]{HTT}, $\partial_n$ admits a right adjoint. It is enough to show that
$$\nat(F, R_n(A)) \simeq \Map_{\calC^{B\Sigma_n}}(\partial_n(F),A)$$
for any functor $F\in \Fun(\Sp^{\omega},\calC)$ and $A\in \calC^{B\Sigma_n}$. Note that $R_n(A) \in \Fun(\Sp^{\omega},\calC)$ is $n$-excisive, so
$$\nat(F, R_n(A)) \simeq \nat(P_nF, R_n(A)).$$
Therefore, by~\cite[Proposition 6.1.4.14]{HigherAlgebra}, it suffices to show that the space $\nat(P_{n-1}F,R_n(A))$ is contractible. The latter follows by~\cite[Corollary~6.1.1.17]{HigherAlgebra}, see also~\cite[Remark~6.1.6.28]{HigherAlgebra}.
\end{proof}

Let $\calC^{\otimes} \in \Pre^{L,\otimes}_{\mathrm{st}}$ be a symmetric monoidal category. Then we write $\Fun(\Sp^{\omega},\calC)^{\otimes}$ for the functor category endowed with the objectwise monoidal structure; that is $(F\otimes G)(X) \simeq F(X)\otimes G(X) \in \calC$ for functors $F,G \in \Fun(\Sp^{\omega},\calC)$ and $X\in \Sp^\omega$.

\begin{lmm}\label{lemma: right adjoint to sseq partial}
Let $\calC^{\otimes} \in \Pre^{L,\otimes}_{\mathrm{st}}$ be a stable unital presentably symmetric monoidal category. Then, there exists a lax symmetric monoidal functor
$$R \colon \sseq(\calC)^{\circledast} \to \Fun(\Sp^\omega,\calC)^{\otimes}$$
such that $R(A)\simeq \prod_{n\geq 0}R_n(A_n)$. In particular, $R$ is the right adjoint to the functor
$$\partial_*\colon \Fun(\Sp^{\omega},\calC) \to \sseq(\calC) $$
which assigns to a functor $F\in \Fun(\Sp^{\omega},\calC)$ its symmetric sequence of derivatives $\partial_*(F)\in \sseq_0(\calC)$, see e.g.~\cite[Example~6.3.0.5]{HigherAlgebra}.
\end{lmm}

\begin{proof}
By~\cite[Example~2.2.6.9]{HigherAlgebra}, the category $\CoAlg(\sseq(\calC)^{\circledast})$ is equivalent to the category of oplax monoidal functors $\Fun^{\mathrm{oplax}}((\FB)^{\sqcup}, \calC^{\otimes})$. Since the symmetric monoidal category $(\FB)^{\sqcup}$ is freely generated by a one object (namely, $\underline{1}$), we observe that the full subcategory $$\Fun^{\otimes}((\FB)^{\sqcup}, \calC^{\otimes}) \subset \Fun^{\mathrm{oplax}}((\FB)^{\sqcup}, \calC^{\otimes})$$
of strong symmetric monoidal functors is equivalent to $\calC$. Let $$\Delta\colon \Sp^\omega \to \CoAlg(\sseq(\calC)^{\circledast})^{op}$$ denote the composite
$$\Delta\colon \Sp^\omega \xrightarrow{\D} \Sp^{\omega,op} \xrightarrow{X\mapsto X\otimes \mathbbm{1}_{\calC}} \calC^{op} \simeq \Fun^{\otimes}((\FB)^{\sqcup}, \calC^{\otimes})^{op} \hookrightarrow \CoAlg(\sseq(\calC)^{\circledast})^{op},$$
where $\D\colon \Sp^{\omega} \to \Sp^{\omega,op}$ is the Spanier--Whitehead duality and $\mathbbm{1}_\calC \in \calC$ is the monoidal unit. Informally, $\Delta(X)_* \in \CoAlg(\sseq(\calC)^{\circledast})$, $X\in \Sp^{\omega}$ is the symmetric sequence given by $\Delta(X)_n\simeq  \D(X^{\otimes n}) \otimes \mathbbm{1}_\calC$, $n\geq 0$ with obvious comultiplication maps.

Note that the category $\sseq(\calC)^{\circledast}$ is $\calC^{\otimes}$-linear. Therefore the coalgebra object $\Delta(X)$, $X\in \Sp^{\omega}$ corepresents a lax monoidal functor
$$\delta_X = \Map_{\sseq(\calC)}(\Delta(X), -) \colon \sseq(\calC)^{\circledast} \to \calC^{\otimes} $$
such that $$\delta_X(A)\simeq \prod_{n\geq 0} \Map_{\calC^{B\Sigma_n}}(\D(X^{\otimes n})\otimes \mathbbm{1}_\calC, A_n) \simeq \prod_{n\geq 0} (A\otimes X^{\otimes n})^{h\Sigma_n}$$
for $A\in \sseq(\calC)$. Finally, the composite
$$\Sp^{\omega} \xrightarrow{\Delta}  \CoAlg(\sseq(\calC)^{\circledast})^{op} \xrightarrow{\mathscr{Y}_{\calC}} \Fun^{\mathrm{lax}}(\sseq(\calC)^{\circledast},\calC^{\otimes})$$
induces the desired functor $R$. Here $\mathscr{Y}_{\calC}$ is $\calC$-linear co-Yoneda functor.
\end{proof}

By Lemma~\ref{lemma: right adjoint to sseq partial}, the functor
$$\partial_*\colon \Fun^{\omega}(\Sp,\calC)^{\otimes} \to \sseq(\calC)^{\circledast} $$
inherits an oplax monoidal enhancement. In particular, there is a natural map
\begin{equation}\label{equation: partial oplax}
\6_*(F\otimes G) \to \6_*(F)\circledast \6_*(G), \;\; F,G \in \Fun^{\omega}(\Sp,\calC)
\end{equation}
which is the left adjoint to the composite
\begin{equation}\label{equation: partial oplax adjoint}
F\otimes G \to R\partial_* F\otimes R\partial_* G \to R(\partial_* F \circledast \partial_* G).
\end{equation}

\begin{lmm}\label{lemma: sseq partial is strict}
Let $\calC^{\otimes} \in \Pre^{L,\otimes}_{\mathrm{st}}$ be a stable unital presentably symmetric monoidal category. Then the oplax symmetric monoidal functor
$$\6_*\colon \Fun^{\omega}(\Sp,\calC)^{\otimes} \to \sseq(\calC)^{\circledast} $$
is strong monoidal. More precisely, the natural maps~\eqref{equation: partial oplax} are equivalences for all $F,G\in \Fun^{\omega}(\Sp,\calC)$.
\end{lmm}

\begin{proof}
Note that the map~\eqref{equation: partial oplax adjoint} is an equivalence if the functor $F$ (resp. $G$) is $n$-homogeneous (resp. $m$-homogeneous) and has the form $F(X)\simeq A\otimes X^{\otimes n}$ (resp. $G(X)\simeq B\otimes X^{\otimes m}$) for $X\in \Sp$ and $A\in \calC$ (resp. $B\in \calC$). This implies that the map~\eqref{equation: partial oplax} is also an equivalence for such functors. Since the homogeneous functors are generated by the functors of the previous form under colimits and $\6_*$ commutes with colimits, the map~\eqref{equation: partial oplax} is an equivalence provided $F$ and $G$ are homogeneous.

Next, we show 
$$\6_*(F\otimes G) \to \6_*(F)\circledast \6_*(G) $$
is an equivalence for an arbitrary pair $F,G \in \Fun^{\omega}(\Sp,\calC)$. By~\cite[Lemma~6.10]{GoodwillieIII}, see also~\cite[Corollary~6.4]{GoodwillieIII} and~\cite[Remark~6.1.3.23]{HigherAlgebra}, the product $F\otimes G$ is $n$-reduced provided the functor $F$ is $n$-reduced and $G$ is arbitrary, $n\geq 0$. This implies that the natural map
$$P_n(F\otimes G) \to P_n(P_n(F)\otimes G) $$
is an equivalence. Therefore, we can assume that $F$ and $G$ are excisive (of some degree). In this case, the functors $F$ and $G$ are finite colimits of homogeneous functors (via their Taylor towers), so the map~\eqref{equation: partial oplax} is an equivalence by the previous step.
\end{proof}

Let $\Fun^{\omega}_*(\Sp, \sL)$ be the category of \emph{reduced} finitary functors from the category of spectra $\Sp$ to the category $\sL$ of simplicial restricted Lie algebras. We write $$\mathbbm{1}\in \Fun^{\omega}(\Sp, \Mod_{\kk})$$ for the \emph{constant functor} valued at the monoidal unit $\kk \in \Mod_{\kk}$. Let us denote by 
$$\Fun^{\omega}_{\mathbbm{1}}(\Sp, \Mod_{\kk}) \subset \Fun^{\omega}(\Sp, \Mod_{\kk})_{/\mathbbm{1}}$$
for the full subcategory of the slice category spanned by the finitary functors $F\to \mathbbm{1}$ such that $F(0)\xrightarrow{\simeq} \kk$ is an equivalence. We consider two composites
\begin{equation}\label{equation: linear derivatives}
\bar{\6}_*\colon \Fun^{\omega}_*(\Sp, \sL) \xrightarrow{\6_*} \sseq_0(\Sp(\sL)) \xrightarrow{\widetilde{C}_*} \sseq_0
\end{equation}
and
\begin{equation}\label{equation: linear coderivatives}
\bar{\6}^*\colon \Fun^{\omega}_*(\Sp, \sL) \xrightarrow{C_*} \Fun^{\omega}_{\mathbbm{1}}(\Sp, \Mod_{\kk}) \xrightarrow{\6_*} \sseq_1.
\end{equation}

Note that the category $\Fun^{\omega}_*(\Sp,\sL)$ inherits the Cartesian monoidal structure $\Fun^{\omega}_*(\Sp,\sL)^{\times}$;
that is $(F\times G)(X) \simeq F(X)\times G(X)\in \sL$ for $F,G\in\Fun^{\omega}_*(\Sp,\sL)$ and $X\in\Sp$.

\begin{prop}\label{proposition: coderivatives are symmetric monoidal}
The functor $\bar{\6}^*\colon \Fun^{\omega}_*(\Sp,\sL)^{\times} \to \sseq_1^{\circledast}$ inherits strong symmetric monoidal enhancement if we endow the category $\sseq_1$ with the Day convolution product $\sseq_1^{\circledast}$. In particular, there is an equivalence
$$\bar{\6}^*(F\times G) \simeq \bar{\6}^*(F)\circledast \bar{\6}^*(G) \in \sseq_1.$$
\end{prop}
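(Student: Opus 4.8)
The plan is to reduce the statement to the multiplicativity of the Goodwillie derivatives of a \emph{product} of functors, combined with the Künneth-type behaviour of the chain functor $C_*$. First I would recall that for functors $F,G \in \Fun^{\omega}_*(\Sp,\sL)$ the identity $F \times G \simeq F \oplus G$ holds, because $\sL$ is a pointed category in which finite products and finite coproducts need not agree, \emph{but} in the stabilization $\Sp(\sL)$ they do; more to the point, the chain functor $\widetilde{C}_*$ and the non-reduced chain functor $C_*$ convert the Cartesian product in $\sL$ into the tensor product in $\Mod_{\kk}$ (via Remark~\ref{remark: cartesian and smash, lie algebras} and Proposition~\ref{proposition: smash, xi-complete lie algebras}, after $\kk$-completion, which is harmless for the purpose of computing derivatives by Lemma~\ref{lemma: completion and derivatives}). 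Concretely, I would use that $C_*(L \times L') \simeq C_*(L)\otimes C_*(L')$ naturally, so that the composite $C_* \colon \Fun^{\omega}_*(\Sp,\sL)^{\times} \to \Fun^{\omega}_{\mathbbm{1}}(\Sp,\Mod_{\kk})$ is strong symmetric monoidal, where the target is endowed with the pointwise tensor product.

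The second ingredient is the classical fact, essentially \cite[Section~6.1]{HigherAlgebra} (see also the cross-effect formalism), that the functor of derivatives $\6_* \colon \Fun^{\omega}_{\mathbbm{1}}(\Sp,\Mod_{\kk}) \to \sseq_1$ is strong symmetric monoidal, where the source has the pointwise tensor product and the target the Day convolution. This is the ``chain rule for products'': the derivatives of a pointwise tensor product of functors are the Day convolution of the derivatives, because the $n$-th cross-effect of a tensor product distributes over ordered partitions of $\underline{n}$ into two blocks, and passing to multilinearizations and taking $\Sigma_n$-homotopy orbits assembles exactly into the Day convolution formula $(A\otimes B)(I)\simeq \bigoplus_{I_1\sqcup I_2 = I} A(I_1)\otimes B(I_2)$. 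I would cite the relevant statement about derivatives of a product being the convolution of derivatives, or give the short cross-effect argument: $\creff_n(F\otimes G)(X_1,\dots,X_n) \simeq \bigoplus_{S \subseteq \underline{n}} \creff_{|S|}(F)(X_S) \otimes \creff_{n-|S|}(G)(X_{\underline{n}\setminus S})$ after linearization, and $\6_n$ of this is the sum over $|S|$ of $\mathrm{Ind}_{\Sigma_{|S|}\times\Sigma_{n-|S|}}^{\Sigma_n}$ of the external tensor products, which is precisely the $n$-th level of the Day convolution.

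Putting these together: $\bar{\6}^* = \6_* \circ C_*$ is a composite of two strong symmetric monoidal functors, hence strong symmetric monoidal, which gives the natural equivalence $\bar{\6}^*(F\times G)\simeq \bar{\6}^*(F)\otimes \bar{\6}^*(G)$ in $\sseq_1$ and simultaneously upgrades $\bar{\6}^*(F)$ to a cocommutative coalgebra as claimed in the sequel (this last point being the true purpose of the proposition, recorded in~\eqref{equation: Arone-Chong coderivatves}). I expect the main obstacle to be bookkeeping rather than conceptual: one must verify that the symmetric monoidal structure on $C_*$ is \emph{coherently} strong (not just an equivalence on underlying objects), and that the identification of derivatives of a tensor product with the Day convolution is natural enough to be promoted to a symmetric monoidal equivalence of functors $\infty$-categorically. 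The cleanest route is probably to invoke that $C_*$ is symmetric monoidal as a consequence of Theorem~\ref{theorem: smash, stabilization} (the chain functor on $\Sp(\sLxi)$ is strong symmetric monoidal for the smash product, and the smash product receives the Cartesian product oplax-monoidally, which after $\Sigma^\infty$ and the cofiber sequence~\eqref{equation: smash product is smash} becomes strong on the relevant subcategory), and to cite the standard ``Leibniz rule for cross-effects'' to get the monoidality of $\6_*$; then the proposition is formal.
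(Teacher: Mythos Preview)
Your overall strategy---factor $\bar{\6}^* = \6_* \circ C_*$ and show each factor is strong symmetric monoidal---is exactly what the paper does. The differences are in how each factor is handled.

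For $C_*$: the paper simply cites the K\"unneth theorem \cite[Corollary~4.2.5]{koszul} to conclude that $C_*\colon \Fun^{\omega}_*(\Sp,\sL) \to \Fun^{\omega}_{\mathbbm{1}}(\Sp,\Mod_{\kk})$ is symmetric monoidal. Your detour through the smash product, the cofiber sequence~\eqref{equation: smash product is smash}, and $\kk$-completion is unnecessary and the invocation of Lemma~\ref{lemma: completion and derivatives} is misplaced (that lemma is specific to $L_\xi\free$). The K\"unneth isomorphism is the direct statement you want.

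For $\6_*$: here the paper's argument is genuinely cleaner than yours, and resolves precisely the coherence worry you flag at the end. Rather than building the monoidal structure by hand from cross-effects, the paper observes that $\6_*$ has a right adjoint
\[
R(A)(X) \simeq \prod_{n\geq 0} (A_n \otimes_{H\kk} (H\kk\otimes X)^{\otimes n})^{h\Sigma_n},
\]
which is visibly lax symmetric monoidal; hence the left adjoint $\6_*$ is automatically \emph{oplax} symmetric monoidal, with all higher coherences for free. One then only needs to check that the oplax structure maps $\6_*(F\otimes G) \to \6_*(F)\otimes \6_*(G)$ are equivalences, which the paper does by induction on the Taylor towers of $F$ and $G$. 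Your cross-effect computation would serve for this last pointwise check, but it does not by itself supply the coherent monoidal structure---and you correctly identify this as the main obstacle without resolving it. The adjoint-functor trick is the missing ingredient.
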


\begin{proof}
We endow the category $\Fun^{\omega}_{\mathbbm{1}}(\Sp,\Mod_{\kk})$ with the symmetric monoidal structure $\Fun^{\omega}_{\mathbbm{1}}(\Sp,\Mod_{\kk})^{\otimes}$ given by the objectwise tensor product. 
Then, by the K\"{u}nneth theorem~\cite[Corollary~4.2.5]{koszul}, the functor 
$$C_*\colon \Fun^{\omega}_*(\Sp,\sL)^{\times} \to \Fun^{\omega}_{\mathbbm{1}}(\Sp,\Mod_{\kk})^{\otimes} $$
is symmetric monoidal. By Lemma~\ref{lemma: sseq partial is strict}, the functor $\bar{\6}^*$ is strong symmetric monoidal as the composite of strong symmetric monoidal functors.
\end{proof}

By Proposition~\ref{proposition: coderivatives are symmetric monoidal},  the functor $\bar{\6}^*$ induces the functor 
$$\bar{\6}^*\colon \CoAlg(\Fun^{\omega}_*(\Sp,\sL)^\times) \to \CoAlg(\sseq_1^\circledast) $$
between the categories of cocommutative coalgebras. Moreover, the diagonal map $$\Delta\colon F\to F\times F$$ defines the canonical coalgebra structure on a functor $F\in \Fun^{\omega}_*(\Sp,\sL)$; that is, the diagonal map defines the functor
$$\Fun^{\omega}_*(\Sp,\sL) \to \CoAlg(\Fun^{\omega}_*(\Sp,\sL)^\times). $$
Combining these two observations together, we obtain the functor
\begin{equation}\label{equation: coalgebra on coderivatives}
\Fun^{\omega}_*(\Sp,\sL) \to \CoAlg(\sseq_1^{\circledast}). 
\end{equation}
Finally, we note that $\bar{\6}^*(0) \simeq \kk \in \CoAlg(\sseq_1^{\circledast})$, i.e. the functor~\eqref{equation: linear coderivatives} evaluated at the constant zero functor $0\in \Fun^{\omega}_*(\Sp,\sL)$ is the monoidal unit $\mathbbm{1}_*\in \sseq^\circledast$. Therefore the functor~\eqref{equation: coalgebra on coderivatives} factorizes as follows
$$\Fun^{\omega}_*(\Sp,\sL) \to \CoAlg^{\mathrm{aug}}(\sseq_1^\circledast) \to \CoAlg(\sseq_1^\circledast). $$
Let us denote by $\AC$ the left functor in the factorization above
\begin{equation}\label{equation: Arone-Chong coderivatves}
\AC\colon \Fun^{\omega}_*(\Sp,\sL) \to \CoAlg^{\mathrm{aug}}(\sseq_1^\circledast) \simeq \CoAlg(\sseq_0^\circledast).
\end{equation}

\begin{rmk}\label{remark: AC products}
By Proposition~\ref{proposition: coderivatives are symmetric monoidal} and~\cite[Proposition~3.2.4.7]{HigherAlgebra}, we observe that the functor $\AC$ preserves finite products.
\end{rmk}

Our next goal is to show that $\AC$ preserves all fiber squares, at least for \emph{analytic} functors.

\begin{dfn}[\cite{Goodwillie_analytic}]\label{definition: analytic functor}
A reduced functor $F\in \Fun^{\omega}_*(\Sp,\sL)$ is called~\emph{analytic} if there are constants $b,\rho, \kappa\geq 0$ such that for all spectra $X$, $\conn(X)\geq \kappa$ the projection map
$$F(X) \to P_k F(X), \;\; k\geq 0 $$
is $(b-k(\rho-1)+(k+1)\conn(X))$-connected. 
\end{dfn}

We write $\Fun^{\omega}_{an}(\Sp,\sL)$ for the full subcategory of $\Fun^{\omega}_*(\Sp,\sL)$ spanned by analytic functors. Since the excisive approximations $P_k$, $k\geq 0$ commute with finite limits, the subcategory is $\Fun^{\omega}_{an}(\Sp,\sL)$ closed under finite limits.

Consider a fiber square 
\begin{equation}\label{equation: fiber square of functors}
\begin{tikzcd}
F \arrow{r} \arrow{d}
&F_1 \arrow{d} \\
F_2 \arrow{r}
&F_{12}
\end{tikzcd}
\end{equation}
in the category $\Fun^{\omega}_{an}(\Sp,\sL)$; we would like to show that this square remains Cartesian in the category $\CoAlg(\sseq_0^\circledast)$ after applying the functor $\AC$. 

Recall from~\cite{Rector70} that the functor $F$ is equivalent to the totalization $\mathrm{Tot}(T^{\bullet})$ of the cosimplicial object $T^\bullet \in \Fun(\Delta, \Fun^{\omega}_{an}(\Sp,\sL))$ such that 
\begin{equation}\label{equation: rector}
T^q\simeq F_1\times F_{12}^{\times q}\times F_2, \;\; q\geq 0.
\end{equation}

\begin{lmm}\label{lemma: totalization and chains}
The natural map
$$C_*(\mathrm{Tot}(T^{\bullet}X)) \xrightarrow{\simeq} \mathrm{Tot} (C_*(T^\bullet X)) \in \Mod_{\kk}$$
is an equivalence as soon as $X\in \Sp$ is sufficiently highly connected.
\end{lmm}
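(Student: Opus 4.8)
The plan is to identify, via the Künneth theorem, the cosimplicial chain complex $C_*(T^{\bullet}X)$ with a two-sided cobar cosimplicial object, and then to invoke the convergence of the associated cobar (Eilenberg--Moore type) spectral sequence, which becomes strong precisely when $F_{12}(X)$ is sufficiently highly connected.

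First I would use that the functor $C_*\colon \Fun^{\omega}_*(\Sp,\sL)\to \Fun^{\omega}_{\mathbbm{1}}(\Sp,\Mod_{\kk})$ is (strong) symmetric monoidal for the objectwise tensor product; this is the content of the Künneth theorem \cite[Corollary~4.2.5]{koszul}, already used in the proof of Proposition~\ref{proposition: coderivatives are symmetric monoidal}. Applying $C_*$ objectwise to the Rector resolution~\eqref{equation: rector}, and using that $C_*$ carries the coface and codegeneracy maps of $T^{\bullet}$ (built from the diagonal of $F_{12}$, the two maps $F_i\to F_{12}$, and projections) to the corresponding structure maps on tensor products, one obtains a natural equivalence of cosimplicial chain complexes
\[
C_*(T^{\bullet}X)\;\simeq\;\Cobar^{\bullet}\bigl(C_*F_1(X);\,C_*F_{12}(X);\,C_*F_2(X)\bigr),\qquad \Cobar^{q}\simeq C_*F_1(X)\otimes C_*F_{12}(X)^{\otimes q}\otimes C_*F_2(X),
\]
the two-sided cobar cosimplicial object of the coaugmented coalgebra $C_*F_{12}(X)$ with right comodule $C_*F_1(X)$ and left comodule $C_*F_2(X)$. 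Since $\mathrm{Tot}(T^{\bullet}X)\simeq F(X)$ by construction of the Rector resolution, the map of the lemma becomes the canonical Eilenberg--Moore comparison $C_*(F(X))\to \mathrm{Tot}\,\Cobar^{\bullet}(\dots)$.

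Next I would control the two $\mathrm{Tot}$-towers. On the $\sL$-side, $\{\mathrm{Tot}^{q}(T^{\bullet}X)\}_{q}$ converges to $\mathrm{Tot}(T^{\bullet}X)\simeq F(X)$ by Rector's theorem \cite{Rector70}; since $F,F_1,F_2,F_{12}$ are analytic (Definition~\ref{definition: analytic functor}) and $F_{12}$ is reduced, there is a constant $c$ with $\conn(F_{12}(X))\ge \conn(X)+c$ for $\conn(X)\ge\kappa$, and because the $s$-th layer $\Omega^{s}N^{s}(T^{\bullet}X)$ of the tower is an $s$-fold construction built out of $F_{12}(X)$, its connectivity grows linearly in $s$ with positive slope once $\conn(X)$ is large, so the tower converges strongly. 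On the $\Mod_{\kk}$-side, the $s$-th layer of the $\mathrm{Tot}$-tower of $\Cobar^{\bullet}(\dots)$ is a retract of $\Sigma^{-s}\bigl(\widetilde C_*F_1(X)\otimes \widetilde C_*F_{12}(X)^{\otimes s}\otimes \widetilde C_*F_2(X)\bigr)$, and by the Hurewicz theorem (Remark~\ref{remark: hurewicz theorem}) the complex $\widetilde C_*F_{12}(X)$ is $(\conn(X)+c+1)$-connected, so this layer is at least $\bigl(s(\conn(X)+c)-s\bigr)$-connected, again growing linearly in $s$ once $\conn(X)$ is large, so that tower converges strongly too. In particular, once $\conn(X)$ exceeds a threshold $\kappa_0$ depending only on the analyticity constants of the four functors (it suffices that $F_{12}(X)$ be simply connected, the standard convergence criterion for the cobar/Eilenberg--Moore spectral sequence), the maps $\mathrm{Tot}\to\mathrm{Tot}^{q}$ are $\lambda(q)$-connected on both sides with $\lambda(q)\to\infty$, uniformly in $X$.

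Finally I would run a degreewise comparison: for a fixed homological degree $d$, choose $q$ with $\lambda(q)>d+1$; in the square relating $C_*(\mathrm{Tot}(T^{\bullet}X))$, $C_*(\mathrm{Tot}^{q}(T^{\bullet}X))$, $\mathrm{Tot}(C_*T^{\bullet}X)$ and $\mathrm{Tot}^{q}(C_*T^{\bullet}X)$ the two ``horizontal'' maps are $(d{+}1)$-connected, so it remains to see that $C_*(\mathrm{Tot}^{q}(T^{\bullet}X))\to \mathrm{Tot}^{q}(C_*T^{\bullet}X)$ is a $\pi_d$-isomorphism; as $\mathrm{Tot}^{q}$ is a finite iterated limit of pullbacks along the highly connected objects $T^{j}X$, $j\le q$, this follows from the metastable refinement of the Hurewicz homomorphism (Remark~\ref{remark: hurewuicz homomorphism}), which makes $\widetilde C_*$ commute with such pullbacks in degrees $\le d$ when $\conn(X)$ is large relative to $d$ and $q$; chaining the isomorphisms gives the claim in degree $d$, hence an equivalence. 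The hard part will be the convergence bookkeeping of the third paragraph --- essentially an Eilenberg--Moore theorem for the chains of simplicial restricted Lie algebras applied to a derived pullback of values of analytic functors --- for which the connectivity estimates on the cobar layers, combined with the methods of \cite{Rector70} and \cite{Goodwillie_analytic}, are the essential input.
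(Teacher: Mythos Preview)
Your strategy and the paper's both terminate in an appeal to Eilenberg--Moore convergence, but the paper's route is genuinely different and avoids the step where your argument is incomplete. The paper does not try to compare the two $\mathrm{Tot}$-towers directly in $\Mod_{\kk}$. Instead it lifts $C_*$ to the category $\sCA_1$ of $1$-reduced simplicial truncated coalgebras (Theorem~\ref{theorem: coalgebras and lie algebras}) and invokes \cite[Proposition~5.2.13 and Theorems~C,~D]{koszul} to see that, once all four objects $F(X),F_1(X),F_2(X),F_{12}(X)$ are connected, the square of chains is already a fiber square \emph{in $\sCA_1$}. Since $C_*$ sends finite products to tensor products, this immediately gives $C_*(\mathrm{Tot}\,T^{\bullet}X)\simeq \mathrm{Tot}_{\sCA_1}(C_*T^{\bullet}X)$. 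The only remaining step is that the forgetful functor $\oblv\colon \sCA_1\to\Mod_{\kk}^{\geq 0}$ preserves this particular totalization, and that is the standard Eilenberg--Moore convergence statement (cited from \cite{Peroux_rigid}).

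Your third paragraph is where the argument breaks. You reduce to showing that $C_*(\mathrm{Tot}^{q}(T^{\bullet}X))\to \mathrm{Tot}^{q}(C_*T^{\bullet}X)$ is a $\pi_d$-isomorphism, and you justify this by asserting that ``the metastable refinement of the Hurewicz homomorphism (Remark~\ref{remark: hurewuicz homomorphism})'' makes $\widetilde C_*$ commute with pullbacks in a range. But Remark~\ref{remark: hurewuicz homomorphism} is merely the definition of the Hurewicz map; it says nothing about $C_*$ and pullbacks. The functor $C_*\colon\sL\to\Mod_{\kk}$ is a left adjoint, hence not left exact, and the assertion that it preserves a pullback square ``in a range'' for highly connected inputs is precisely the Eilenberg--Moore theorem you are trying to prove --- so invoking it here is circular. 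Your connectivity estimates in the second paragraph correctly show that the cobar tower converges strongly to \emph{something}; what they do not show is that this something is $C_*(F(X))$. The paper's passage through $\sCA_1$ is exactly what supplies the missing identification, via the results of \cite{koszul}, and then the convergence estimates (your second paragraph, essentially Lemma~\ref{lemma:connectivity of fibers in rector model}) finish the job on the forgetful-functor side.
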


\begin{proof}
By the assumption, all simplicial restricted Lie algebras $$F(X), F_1(X), F_2(X), F_{12}(X) \in \sL$$ are connected as soon as $X\in \Sp$, $\conn(X)\geq \kappa$ for some $\kappa\geq 0$. By~\cite[Theorems~C and~D]{koszul}, we observe that the square
\begin{equation*}
\begin{tikzcd}
C_*(F(X)) \arrow{r} \arrow{d}
&C_*(F_1(X)) \arrow{d} \\
C_*(F_2(X)) \arrow{r}
&C_*(F_{12}(X))
\end{tikzcd}
\end{equation*}
is a fiber square in the category $\sCA_1$ of $1$-reduced simplicial truncated coalgebras (see Section~\ref{section: monoidal structures}) if $X\in \Sp$ and $\conn(X)\geq \kappa$. Since the functor $C_*\colon \sL \to \sCA_1$ commutes with finite products, it suffices to show that the forgetful functor
$$\oblv \colon \sCA_1 \to \Mod_{\kk}^{\geq 0} $$
commutes with certain totalizations. Namely, it suffices to show that
$$\oblv(\mathrm{Tot}(C_*(T^{\bullet} X))) \to \mathrm{Tot}(\oblv(C_*(T^{\bullet} X))) \in \Mod_{\kk}^{\geq 0}, \;\; \conn(X)\geq \kappa$$
is an equivalence. The latter follows from the convergence of the Eilenberg--Moore spectral sequence, see e.g.~\cite[Theorem~4.8]{Peroux_rigid}.
\end{proof}

\begin{lmm}\label{lemma:connectivity of fibers in rector model}
Let $T^\bullet \in \Fun(\Delta, \Fun^{\omega}_{an}(\Sp,\sL))$ be the cosimplicial object~\eqref{equation: rector} associated with the fiber square~\eqref{equation: fiber square of functors}. Then there exists $\kappa\geq 0$ such that $$\conn(N^q C_*(T^\bullet X)) \geq q+q\cdot\conn(X), \;\; X\in \Sp,\; q\geq 1$$ as soon as $\conn(X)\geq \kappa$. 
\end{lmm}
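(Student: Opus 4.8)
The plan is to analyze the total fiber $N^q C_*(T^{\bullet}X)$ directly by unfolding the cubical diagram~\eqref{equation:total fiber} and using the connectivity estimates coming from analyticity. First I would recall that $T^q \simeq F_1 \times F_{12}^{\times q} \times F_2$, and that under the $\kk$-chain functor $C_*$, which sends finite products to tensor products by the K\"{u}nneth theorem~\cite[Corollary~4.2.5]{koszul}, we have
$$C_*(T^q X) \simeq C_*(F_1 X) \otimes C_*(F_{12}X)^{\otimes q} \otimes C_*(F_2 X) \in \Mod_{\kk}.$$
The codegeneracy maps $T^q \to T^{q-1}$ which govern the cube~\eqref{equation:total fiber} are built from the augmentation maps $F_1 \to F_{12}$ and $F_2 \to F_{12}$ and the identity maps on the $F_{12}$-factors. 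So the $q$-cube $S \mapsto C_*(T^{q-|S|}X)$ factors, after tensoring off the constant outer factors $C_*(F_1X)$ and $C_*(F_2X)$, as an external tensor product of $q$ copies of the $1$-cube $C_*(F_1 X) \otimes_? \to$ ; more precisely the total fiber is identified with a tensor product of iterated cofibers/fibers of the maps associated to the square~\eqref{equation: fiber square of functors}. The key point is that the square~\eqref{equation: fiber square of functors} being Cartesian in $\Fun^{\omega}_{an}(\Sp,\sL)$ means, after applying $C_*$ and using the argument in Lemma~\ref{lemma: totalization and chains} (via~\cite[Proposition~5.2.13]{koszul} and Theorems~C, D of~\cite{koszul}), that the induced square of chain complexes
$$
\begin{tikzcd}
C_*(FX) \arrow{r} \arrow{d} & C_*(F_1 X) \arrow{d} \\
C_*(F_2 X) \arrow{r} & C_*(F_{12} X)
\end{tikzcd}
$$
becomes highly cocartesian as $\conn(X) \to \infty$: its total cofiber is $(c(X))$-connected for some function $c(X)$ growing linearly in $\conn(X)$, by analyticity of all four functors and the convergence of the Taylor towers.

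Second, I would make the connectivity bookkeeping precise. Since $F, F_1, F_2, F_{12}$ are analytic with common constants $b, \rho, \kappa$, there is $\kappa' \geq \kappa$ so that for $\conn(X) \geq \kappa'$ each of $C_*(F_iX)$ is connected (indeed $2$-connected, say, after a further increase of $\kappa'$). The total cofiber of the square above — call it $E(X)$ — then has $\conn(E(X)) \geq 2 + 2\conn(X)$ after possibly enlarging $\kappa'$; this is exactly the estimate one extracts from the connectivity statements in Definition~\ref{definition: analytic functor} together with the fiber-square hypothesis, since the failure of the square to be cocartesian is measured by a single cross-term whose connectivity is controlled by the quadratic term in the Taylor tower. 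Now, the total fiber $N^q C_*(T^{\bullet}X)$ of the $q$-cube built from the codegeneracies is, by the multiplicativity just described, equivalent to
$$C_*(F_1X) \otimes E(X)^{\otimes q} \otimes C_*(F_2 X)$$
up to a shift and possibly $C_*(F_{12}X)$-twists that only increase connectivity. Hence
$$\conn(N^q C_*(T^{\bullet}X)) \geq q \cdot \conn(E(X)) \geq q\bigl(2 + 2\conn(X)\bigr) \geq q + q\cdot \conn(X),$$
which is the desired bound, with $\kappa := \kappa'$.

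The main obstacle — and where I would spend the most care — is justifying the multiplicative identification of the total fiber $N^q C_*(T^{\bullet}X)$ with $E(X)^{\otimes q}$ (suitably tensored and shifted). One has to check that the cube $S \mapsto C_*(T^{q-|S|}X)$, with the codegeneracy maps, really is the $q$-fold external smash of the single square, as opposed to merely receiving a comparison map. This requires keeping track of which codegeneracies in the Rector resolution~\eqref{equation: rector} hit which $F_{12}$-factor and confirming that the combinatorics match the standard identity $N^q \simeq N^1(-)^{\boxtimes q}$ for a cosimplicial object of the form $A \otimes B^{\otimes \bullet} \otimes C$; this is essentially the Eilenberg--Zilber/shuffle bookkeeping, and it is routine but must be stated carefully to get the clean product bound. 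A minor secondary point is ensuring the base $\kappa$ is chosen large enough simultaneously for all four functors and for the connectivity of $C_*(F_iX)$; this is harmless once the analyticity constants are fixed.
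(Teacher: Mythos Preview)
Your identification of the codegeneracy maps is wrong, and this derails the entire argument. In Rector's cosimplicial resolution $T^q \simeq F_1 \times F_{12}^{\times q} \times F_2$, the \emph{face} maps are built from $F_1 \to F_{12}$, $F_2 \to F_{12}$, and the diagonal on $F_{12}$; the codegeneracies $s^i\colon T^q \to T^{q-1}$ are simply the projections that drop one $F_{12}$-factor (via the unique map $F_{12}\to *$). After applying $C_*$ and K\"unneth, each edge of the $q$-cube~\eqref{equation:total fiber} is therefore the augmentation $C_*(F_{12}X)\to \kk$ on a single tensor factor, whose fiber is $\widetilde{C}_*(F_{12}X)$. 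Hence
\[
N^q C_*(T^{\bullet}X)\;\simeq\; C_*(F_1X)\otimes \widetilde{C}_*(F_{12}X)^{\otimes q}\otimes C_*(F_2X),
\]
and the bound follows immediately from $\conn(\widetilde{C}_*(F_{12}X))=\conn(F_{12}X)+1\geq \conn(X)+1$ for $\conn(X)\geq \kappa$, which is just analyticity of $F_{12}$.

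Your object $E(X)$, the total cofiber of the square of chain complexes, never enters: the codegeneracies know nothing about the maps $F_i\to F_{12}$, so neither does $N^q$. The fiber-square hypothesis on~\eqref{equation: fiber square of functors} is not used in this lemma at all---it is needed elsewhere (Lemma~\ref{lemma: totalization and chains}), but here only the analyticity of $F_{12}$ matters. Consequently, the ``main obstacle'' you flag (the multiplicative identification with $E(X)^{\otimes q}$) is not merely difficult but simply the wrong target; the correct identification with $\widetilde{C}_*(F_{12}X)^{\otimes q}$ is immediate once the codegeneracies are read correctly.
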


\begin{proof}
We note that the codegeneracy maps in the cosimplicial object~\eqref{equation: rector} are given by projections. Thus, by the equation~\eqref{equation:total fiber}, we obtain that $$N^qC_*( T^\bullet X) \simeq C_*(F_1 (X))\otimes \widetilde{C}_*(F_{12}(X))^{\otimes q} \otimes C_*(F_2(X)),\;\; X\in \Sp, \; q\geq 1.$$
By the assumption, there exists $\kappa\geq 0$ such that $\conn(F_{12}(X)) \geq \conn(X)$ if $\conn(X)\geq \kappa$. Since $$\conn(\widetilde{C}_*(F_{12}(X)))=\conn(F_{12}(X))+1,$$ the lemma follows.
\end{proof}

\begin{prop}\label{proposition: AC commutes with fiber sequences}
The functor $\AC\colon \Fun^{\omega}_{an}(\Sp,\sL) \to \CoAlg(\sseq_0^\circledast)$ preserves fiber squares.
\end{prop}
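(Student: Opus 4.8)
The plan is to feed the Rector resolution of the fibre square~\eqref{equation: fiber square of functors} through $\AC$. Since $\AC$ preserves finite products by Remark~\ref{remark: AC products}, applying it levelwise to the cosimplicial object $T^{\bullet}$ of~\eqref{equation: rector} produces a cosimplicial cocommutative coalgebra with
$$\AC(T^{q}) \simeq \AC(F_{1}) \times \AC(F_{12})^{\times q} \times \AC(F_{2}), \;\; q\geq 0,$$
which is exactly the \v{C}ech--Rector (two-sided cobar) model in $\CoAlg(\sseq_0)$ for the pullback $\AC(F_{1})\times_{\AC(F_{12})}\AC(F_{2})$. Since $\CoAlg(\sseq_0)$ is presentable by Proposition~\ref{proposition:coalg is presentable}, this totalization exists and computes that pullback. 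Hence the proposition becomes equivalent to the single assertion that the canonical comparison map $\AC(\Tot(T^{\bullet})) \to \Tot(\AC(T^{\bullet}))$ is an equivalence in $\CoAlg(\sseq_0)$; here I would use that $F\simeq\Tot(T^{\bullet})$ as recalled before~\eqref{equation: rector}.

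The next step is to reduce this to a statement about underlying symmetric sequences. The forgetful functor $\oblv\colon \CoAlg(\sseq_0)\to\sseq_0$ is conservative by Proposition~\ref{proposition:coalg is presentable}, and by Proposition~\ref{proposition: coalgebras and almost finite totalizations} it commutes with $\Tot(\AC(T^{\bullet}))$ as soon as, for every $k$, the partial totalizations $\pTot{N}(\oblv\AC(T^{\bullet}))$ stabilize in each homological degree on finite sets $I$ with $|I|=k$ for $N\gg 0$. So it would suffice to prove that $\oblv\AC(\Tot(T^{\bullet}))\to \Tot(\oblv\AC(T^{\bullet}))$ is an equivalence in $\sseq_0$, together with that stabilization. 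Unwinding~\eqref{equation: linear coderivatives} and~\eqref{equation: Arone-Chong coderivatves}, the arity-$k$ component of $\oblv\AC(G)$ for $k\geq 1$ is the $k$-th derivative $\6_{k}(\widetilde{C}_{*}(G))$ of the reduced chain functor $\widetilde{C}_{*}(G)\colon \Sp\to\Mod_{\kk}$, and the $k$-th derivative factors as the $k$-th cross-effect $\creff_{k}$ followed by multilinearization; the cross-effect is a finite-limit construction and hence commutes with every totalization, whereas multilinearization is a sequential colimit along the suspension maps.

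Putting the pieces together: Lemma~\ref{lemma: totalization and chains} lets one commute $\widetilde{C}_{*}$ past $\Tot(T^{\bullet}X)$ for all sufficiently highly connected $X$, and Lemma~\ref{lemma:connectivity of fibers in rector model} shows that the fibre of $\pTot{q}\to\pTot{q-1}$ of the resulting cosimplicial chain complex is $(q+q\cdot\conn(X))$-connected. After applying $\creff_{k}$ and a fixed amount of desuspension, the connectivities of these fibres still tend to $+\infty$ with $q$, so in each fixed homological degree the relevant totalization is eventually constant; this simultaneously lets the sequential colimit computing the multilinearization be interchanged with $\Tot$ and supplies the stabilization needed to invoke Proposition~\ref{proposition: coalgebras and almost finite totalizations}. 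The hard part will be exactly this last interchange: sequential colimits do not commute with totalizations in general, so the whole argument rests on the connectivity estimate of Lemma~\ref{lemma:connectivity of fibers in rector model} to collapse each homological degree to a finite limit, and careful bookkeeping of how the Goodwillie multilinearization interacts with the connectivity of the Rector tower is the crux. Everything else --- the reduction to underlying symmetric sequences, and the compatibility of $\creff_{k}$ and $\widetilde{C}_{*}$ with the relevant limits --- should be formal.
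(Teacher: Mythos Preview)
Your proposal is correct and follows essentially the same approach as the paper: reduce to showing $\AC(\Tot(T^\bullet))\to\Tot(\AC(T^\bullet))$ is an equivalence via the Rector resolution, then use conservativity of $\oblv$ together with Proposition~\ref{proposition: coalgebras and almost finite totalizations}, Lemma~\ref{lemma: totalization and chains}, and Lemma~\ref{lemma:connectivity of fibers in rector model} to verify this on underlying symmetric sequences. The paper's write-up is terser---it simply asserts that $\6_k C_*(\Tot(T^\bullet))\to\Tot(\6_k C_*(T^\bullet))$ is an equivalence ``by Lemma~\ref{lemma: totalization and chains} and Lemma~\ref{lemma:connectivity of fibers in rector model}'' without unpacking the cross-effect/multilinearization interchange you worry about---but the content is the same, and your more explicit bookkeeping of that interchange is a fair elaboration of what the paper leaves implicit.
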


\begin{proof}
Recall that $T^\bullet \in \Fun(\Delta, \Fun^{\omega}_{an}(\Sp,\sL))$ is the cosimplicial object~\eqref{equation: rector} associated with the fiber square~\eqref{equation: fiber square of functors}. Since the functor $\AC$ preserves finite products, it suffices to show that the natural map
$$\AC(\mathrm{Tot}(T^\bullet)) \to \mathrm{Tot}(\AC T^\bullet) \in \CoAlg(\sseq_0^\circledast)$$
is an equivalence. The forgetful functor $\oblv\colon \CoAlg(\sseq_0^\circledast) \to \sseq_0$ is conservative, hence it is enough to show that the map
$$\6_* C_*(\mathrm{Tot}(T^{\bullet})) \to \oblv(\mathrm{Tot}(\AC T^{\bullet})) \in \sseq_0 $$
is an equivalence. 

By Lemma~\ref{lemma:connectivity of fibers in rector model}, we observe that, for each $k\geq 1$, there exists $N(k)\geq 0$ such that the map
$$\mathrm{Tot}(\6_kC_*(T^{\bullet})) \to \mathrm{Tot}^N(\6_kC_*(T^{\bullet})) $$
is an equivalence as soon as $N>N(k)$. By Proposition~\ref{proposition: coalgebras and almost finite totalizations}, we obtain that the map
$$\oblv(\mathrm{Tot}(\AC T^{\bullet})) \to  \mathrm{Tot}(\oblv(\AC T^{\bullet})) \simeq  \mathrm{Tot}(\6_* C_*(T^{\bullet})) \in \sseq_0$$
is an equivalence. Finally, by Lemma~\ref{lemma: totalization and chains} and Lemma~\ref{lemma:connectivity of fibers in rector model}, the map 
$$\6_kC_*(\mathrm{Tot}(T^\bullet)) \to \mathrm{Tot}(\6_kC_*(T^\bullet)), \;\; k\geq 1$$
is an equivalence, which implies the proposition.
\end{proof}

At the end of the section, we use Proposition~\ref{proposition: AC commutes with fiber sequences} to enhance the sequence of derivatives $\bar{\6}_*(F) \in \sseq_0$ of an analytic functor $F\in \Fun^{\omega}_{an}(\Sp,\sL)$ with a spectral Lie algebra structure. This will be done by induction on the Taylor tower.

\begin{lmm}\label{lemma: coderivatives of infinite loops}
Let $H\in \Fun^{\omega}_{*}(\Sp,\sL)$ be an $n$-homogeneous functor, $n\geq 1$. Then there is an equivalence 
$$\AC(H) \xrightarrow{\sim} \mathfrak{C}(\bar{\6}_nH) \in \CoAlg(\sseq_0^\circledast).$$
Here $\mathfrak{C}\colon \sseq_0 \to \CoAlg(\sseq_0^\circledast)$ is a cofree coalgebra from Corollary~\ref{corollary: nil dp = coalg for sseq}.
\end{lmm}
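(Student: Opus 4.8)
The plan is to reduce the statement to the known structure of $n$-homogeneous functors and the cofree coalgebra formula~\eqref{equation: cofree coalgebra in sseq}. First I would recall that an $n$-homogeneous functor $H\in\Fun^{\omega}_*(\Sp,\sL)$ has the form $H(X)\simeq\deloop_{\sL}\D_n(H)(\susp X)$, and since $\sL$ and its stabilization are pointed while $\Sp$ is stable, the cross-effect manipulations of~\cite[Section~6.1]{HigherAlgebra} apply. The key observation is that passing through $C_*$ turns $H$ into an $n$-homogeneous functor $C_*\circ H\colon \Sp\to\Mod_{\kk}$ with $C_*(H(0))\simeq\kk$; by the K\"unneth theorem~\cite[Corollary~4.2.5]{koszul} and the symmetric monoidality established in Proposition~\ref{proposition: coderivatives are symmetric monoidal}, the coalgebra $\AC(H)$ has underlying symmetric sequence $\bar{\6}^*(H)=\6_*(C_*H)$ concentrated in degree $n$ (i.e.\ $\bar{\6}^*(H)_k\simeq 0$ for $k\neq 0,n$, with $\bar{\6}^*(H)_n\simeq\bar{\6}_n(H)$ by Lemma~\ref{lemma: coder are derivatives of suspension}-type identifications, or directly from the definitions~\eqref{equation: linear derivatives}, \eqref{equation: linear coderivatives}).

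Next I would compute $\oblv\circ\mathfrak{C}(\bar{\6}_nH)$ via~\eqref{equation: cofree coalgebra in sseq}: for a symmetric sequence $A\in\sseq_0$ concentrated in degree $n$, the only surjections $f\colon I\twoheadrightarrow\underline{j}$ that contribute to $(\oblv\circ\mathfrak{C}(A))(I)$ are those with every fiber $f^{-1}(i)$ of cardinality exactly $n$; hence $(\oblv\circ\mathfrak{C}(A))(I)\simeq 0$ unless $|I|=nj$ for some $j\geq 1$, in which case it is $\big(\mathrm{Ind}^{\Sigma_{nj}}_{\Sigma_j\wr\Sigma_n}A_n^{\otimes j}\big)_{h\Sigma_j}$. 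The comultiplication on this cofree coalgebra is the canonical one. On the other hand, $\AC(H)$ has underlying sequence with $\AC(H)_{nj}$ built (via the totalization/Rector resolution of Proposition~\ref{proposition: AC commutes with fiber sequences}, or more directly via the diagonal $H\to H\times H$ and iterates) from $\bar{\6}_n(H)$, and I would check that the canonical map $\AC(H)\to\mathfrak{C}(\bar{\6}_n\AC(H))=\mathfrak{C}(\bar{\6}_nH)$ — the one adjoint to the identity on degree-$n$ parts under $\oblv\dashv\mathfrak{C}$ — induces an isomorphism on each $\AC(H)_{nj}$. This last check is where the work lies: one must identify $\AC(H)_{nj}$ with the induced-coinduced expression above, using that the $j$-fold iterated diagonal $H\to H^{\times j}$ composed with the projection to the cross-effect realizes precisely the cofree comultiplication, together with the fact that the higher cross-effects of $H$ vanish except in the relevant degrees (since $H$ is $n$-homogeneous).

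The main obstacle I anticipate is bookkeeping the $\Sigma_{nj}$-equivariance and verifying that the natural comparison map is an equivalence of \emph{coalgebras} and not merely of symmetric sequences: one needs the diagonal-induced comultiplication on $\bar{\6}^*$ to match the cofree comultiplication term by term. I would handle this by the universal property of $\mathfrak{C}$ — it suffices to produce \emph{any} map of coalgebras $\AC(H)\to\mathfrak{C}(\bar{\6}_nH)$ restricting to the identity on primitives-in-degree-$n$ (equivalently, on the cogenerators), which exists by adjunction since $\oblv(\AC(H))_n\simeq\bar{\6}_nH$; then conservativity of $\oblv$ reduces the problem to showing $\oblv$ of this map is an equivalence, which is the degreewise computation sketched above. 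A parallel, perhaps cleaner, route is to invoke Proposition~\ref{proposition: AC commutes with fiber sequences} applied to the fiber square expressing $H$ as $\fib(P_nH\to P_{n-1}H)$ and induct, but since $H$ is already homogeneous the direct cofree-coalgebra computation is the most transparent and is the approach I would write up.
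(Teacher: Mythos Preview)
Your overall strategy---produce the comparison map $\AC(H)\to\mathfrak{C}(\bar\partial_nH)$ by adjunction and then check it is an equivalence on underlying symmetric sequences---is correct and is exactly what the paper does. However, there is a genuine error in your first paragraph which undermines the ``degreewise computation sketched above'' that you defer to.

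You claim that $C_*\circ H$ is $n$-homogeneous and hence that $\bar\partial^*(H)=\partial_*(C_*H)$ is concentrated in degree~$n$. This is false, and in fact is precisely what the lemma is denying: since $H\simeq\Omega^\infty\mathbb{H}$ with $\mathbb{H}\colon\Sp\to\Sp(\sL)$ $n$-homogeneous, we have $\widetilde C_*(H)\simeq\widetilde C_*(\Sigma^\infty\Omega^\infty\mathbb{H})$, and the functor $\Sigma^\infty\Omega^\infty$ has \emph{all} of its Goodwillie layers nontrivial. By \cite[Lemma~B.3]{Heuts21}, $D_k(\Sigma^\infty\Omega^\infty)(-)\simeq(-)^{\otimes k}_{h\Sigma_k}$, and combining this with the chain rule for derivatives of composites of functors between stable categories (e.g.\ \cite{Ching10}) together with the symmetric monoidality of $\widetilde C_*$ gives $\bar\partial^{nj}(H)\simeq\big(\mathrm{Ind}^{\Sigma_{nj}}_{\Sigma_j\wr\Sigma_n}(\bar\partial_nH)^{\otimes j}\big)_{h\Sigma_j}$ for every $j\geq 1$, and zero otherwise. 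This is exactly the formula~\eqref{equation: cofree coalgebra in sseq} for $\oblv\circ\mathfrak{C}(\bar\partial_nH)$, and is how the paper verifies the equivalence. Neither the K\"unneth theorem nor Proposition~\ref{proposition: coderivatives are symmetric monoidal} says anything about a single functor; they concern products $F\times G$.

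To repair your argument: drop the concentration claim, construct the comparison map via the counit $\Sigma^\infty\Omega^\infty\mathbb{H}\to\mathbb{H}$ (which gives $\oblv(\AC H)\to\bar\partial_*H$ and then by adjunction the desired coalgebra map), and then invoke Heuts' Lemma~B.3 plus the chain rule for the degreewise check.
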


\begin{proof}
Since the functor $H$ is homogeneous, there is an $n$-homogeneous functor $\mathbb{H}\colon \Sp \to \Sp(\sL)$ such that $H\simeq \deloop \mathbb{H}.$ We note that the counit map $$\susp\deloop \mathbb{H} \to \mathbb{H}$$ induces the natural transformation
\begin{equation}\label{equation: nat transformation inf loops, eq1}
\widetilde{C}_*(H)\simeq \widetilde{C}_*(\susp\deloop \mathbb{H}) \to \widetilde{C}_*(\mathbb{H}).
\end{equation}
Moreover, since the functor $\widetilde{C}_*\colon \Sp(\sL) \to \Mod_{\kk}$ commutes with arbitrary colimits, there are natural equivalences
\begin{equation}\label{equation: nat transformation inf loops, eq2}
\bar{\6}_*H \simeq \widetilde{C}_* \6_*(H) \simeq \widetilde{C}_* \6_*(\mathbb{H}) \simeq \6_*\widetilde{C}_*(\mathbb{H}) \in \sseq_0.
\end{equation}
By combining~\eqref{equation: nat transformation inf loops, eq1} and~\eqref{equation: nat transformation inf loops, eq2} together, we obtain  the natural transformation
$$\oblv (\AC H) \simeq {\6}_*\widetilde{C}_*(H) \to \6_*\widetilde{C}_*(\mathbb{H}) \simeq \bar{\6}_*H \in \sseq_0, $$
which is an equivalence in arity $n$. The adjoint pair from Corollary~\ref{corollary: nil dp = coalg for sseq} gives the map
\begin{equation}\label{equation: infinite loops, eq1}
\AC H \to \mathfrak{C}(\bar{\6}_*H)\in \CoAlg(\sseq_0^\circledast)
\end{equation}
of cocommutative coalgebras in symmetric sequences, which is again an equivalence in arity $n$. We show that the map~\eqref{equation: infinite loops, eq1} is an equivalence in all arities.

By Lemma~\ref{lemma: precomposition with homogeneous, stable} (resp. formula~\eqref{equation: cofree coalgebra in sseq}), the left (resp. right) hand side is concentrated in arities $nk$, $k\geq 1$. Fix $k\geq 1$. We show that~\eqref{equation: infinite loops, eq1} is an equivalence in arity $nk$. By the formula~\eqref{equation: cofree coalgebra in sseq} again, it suffices to show that the comultiplication map
$$\mu_* \colon \AC(H) \to (\AC(H)^{\circledast k})^{h\Sigma_k} \in \sseq_0$$
is an equivalence in arity $nk$. By the construction (see Proposition~\ref{proposition: coderivatives are symmetric monoidal} and the discussion after), $\mu_{nk}$ is equivalent to $\6_{nk}(\mu^H)$, where $\mu^H$ is the comultiplication 
$$\mu^H \colon \widetilde{C}_*(H) \to \widetilde{C}_*(H^{\times k})^{h\Sigma_k} \simeq (\widetilde{C}_*(H)^{\otimes k})^{h\Sigma_k},$$
see Section~\ref{section: tate diagonal}. Finally, $\6_{nk}(\mu^H)$ is an equivalence by Lemma~\ref{lemma: precomposition with homogeneous, stable} and \cite[Lemma~B.3]{Heuts21}.
\end{proof}

\begin{lmm}\label{lemma: reduction to excessive, analytic}
Let $F\in \Fun^{\omega}_{an}(\Sp,\sL)$ be an analytic functor, then the natural map 
$$\AC F \xrightarrow{\sim} \lim_n \AC P_nF \in \CoAlg(\sseq_0^\circledast)$$
is an equivalence.
\end{lmm}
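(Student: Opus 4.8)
The plan is to prove the equivalence by comparing both sides with the totalization of the Taylor tower and exploiting the connectivity estimates built into the definition of an analytic functor. First I would recall that since $F$ is analytic, the natural map $F \to \lim_n P_n F$ is an equivalence in $\Fun^{\omega}_{an}(\Sp,\sL)$; this is the classical Goodwillie convergence statement, and it holds because for each spectrum $X$ with $\conn(X) \geq \kappa$ the connectivity of the fiber of $P_n F(X) \to P_{n-1}F(X)$ grows linearly in $n$. Thus it suffices to show that $\AC$ commutes with this particular inverse limit.

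The key point is that $\AC F$ is determined by the symmetric sequence $\bar{\6}^*(F) = \6_* C_*(F) \in \sseq_1$, and that for a fixed finite set $I \in \FB$ the value $\bar{\6}^*(F)(I) = \6_{|I|} C_*(F)$ depends only on $P_{|I|} F$: indeed the $k$-th derivative of a functor to $\Mod_{\kk}$ only sees the $k$-excisive approximation, and $C_*$ commutes with $n$-excisive approximations up to the appropriate order since $C_*\colon \sL \to \sCA_1$ preserves finite products and hence finite limits on connected objects (as in the proof of Lemma~\ref{lemma: totalization and chains}). So for each $k\geq 1$ the natural map
$$\6_k C_*(F) \to \6_k C_*(P_N F)$$
is an equivalence for all $N\geq k$. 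Therefore the tower $\{\oblv(\AC P_N F)\}_N$ is, in each fixed arity $k$, eventually constant with value $\6_k C_*(F)$, and so $\oblv(\lim_N \AC P_N F) \simeq \lim_N \oblv(\AC P_N F) \simeq \6_* C_*(F) \simeq \oblv(\AC F)$ in $\sseq_0$, where the first equivalence uses that $\oblv\colon \CoAlg(\sseq_0)\to \sseq_0$ preserves limits indexed by a tower which is eventually constant in each arity (a mild variant of Proposition~\ref{proposition: coalgebras and almost finite totalizations} applied to the cosimplicial replacement of the tower, or more directly the observation that $\CoAlg^{nil}(\sseq_0)\simeq \CoAlg(\sseq_0)$ and the cofree comonad~\eqref{equation: cofree coalgebra in sseq} involves only finitely many summands in each arity). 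Since $\oblv$ is conservative, this proves the lemma.

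The main obstacle I anticipate is the interchange of $C_*$ with the inverse limit over the Taylor tower, i.e. verifying that $C_*(\lim_n P_n F(X)) \simeq \lim_n C_*(P_n F(X))$ for sufficiently connected $X$. This requires exactly the kind of convergence argument used in Lemma~\ref{lemma: totalization and chains}: one writes $\lim_n P_n F$ via Rector's cosimplicial model (or directly as a limit of a tower whose fibers are homogeneous, hence highly connected), applies the connectivity estimates of Definition~\ref{definition: analytic functor} together with Lemma~\ref{lemma:connectivity of fibers in rector model}, and invokes the convergence of the Eilenberg–Moore spectral sequence for the comonadic forgetful functor $\oblv\colon \sCA_1 \to \Mod_{\kk}^{\geq 0}$. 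Once this interchange is established, everything else is a formal consequence of the arity-wise stabilization of derivatives and the conservativity of $\oblv$ on coalgebras.

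Combining Lemma~\ref{lemma: reduction to excessive, analytic} with Lemma~\ref{lemma: coderivatives of infinite loops} and Proposition~\ref{proposition: AC commutes with fiber sequences}, one then obtains by induction on $n$ that the sequence of derivatives $\bar{\6}_*(F)\in\sseq_0$ of an analytic functor carries a canonical spectral Lie algebra structure, namely $\bar{\6}_*(F) \simeq \prim(\AC F)$ as objects of $\bfLie^{sp}(\sseq_0)$, since $\prim^{\mathrm{enh}}$ (the right adjoint in~\eqref{equation: koszul adjunction}) sends the cofree coalgebra $\mathfrak{C}(\bar{\6}_nH)$ on the $n$-th layer to the appropriate free-forgetful composite and is compatible with the fiber squares assembling the Taylor tower.
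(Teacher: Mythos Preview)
Your overall strategy matches the paper's: show that $\6_k C_*(F)\to\6_k C_*(P_NF)$ is an equivalence for $N\geq k$, so the tower $\{\oblv(\AC P_NF)\}_N$ is eventually constant in each arity, and then conservativity of $\oblv$ (together with the finiteness in~\eqref{equation: cofree coalgebra in sseq}) finishes. The gap is in how you justify the key step. You argue that ``$C_*$ commutes with $n$-excisive approximations up to the appropriate order since $C_*\colon\sL\to\sCA_1$ preserves finite products and hence finite limits on connected objects.'' But the coderivatives $\bar{\6}^k(F)=\6_k C_*(F)$ are computed \emph{after} composing with the forgetful functor $\sCA_1\to\Mod_\kk$, which does not preserve the cubical limits used to build $P_n$. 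So even granting that $C_*$ into coalgebras preserves $P_n$ on connected objects, you cannot conclude $P_k(C_*F)\simeq P_k(C_*P_NF)$ in $\Mod_\kk$ from this alone.

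The paper's argument bypasses this entirely and is much shorter. Analyticity says the map $F(X)\to P_nF(X)$ is $(b-n(\rho-1)+(n+1)\conn(X))$-connected for $\conn(X)\geq\kappa$; since $C_*$ preserves connectivity, the same bound holds for $C_*(F(X))\to C_*(P_nF(X))$. The $k$-th derivative of a functor $\Sp\to\Mod_\kk$ is a colimit of $(-km)$-fold desuspensions of evaluations at $m$-connected inputs, so the induced map on $\6_k$ has connectivity growing like $(n+1-k)m$, which tends to infinity for $n\geq k$. This is precisely the connectivity ingredient you flagged in your obstacle paragraph, but it should be applied directly to each map $F\to P_nF$; the reformulation as an interchange of $C_*$ with $\lim_n$, and the appeal to Rector models and Eilenberg--Moore convergence, is an unnecessary detour here.
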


\begin{proof}
Since $F$ is analytic, there exist constants $b,\rho,\kappa\geq 0$ such that
the map 
$$F(X) \to P_nF(X), \;\; X\in \Sp $$
is $(b-n(\rho-1)+(n+1)\conn(X))$-connected as soon as $\conn(X)\geq \kappa$. Therefore the induced map 
$$C_*(F(X)) \to C_*(P_nF(X)) \in \Mod_{\kk}, \;\; X\in \Sp $$
is $(b-n(\rho-1)+(n+1)\conn(X))$-connected as soon as $\conn(X)\geq \kappa$, and so, the map $$\bar{\6}^kF \simeq \6_k C_*(F) \to \6_k C_*(P_nF) \simeq \bar{\6}^k P_nF$$
is an equivalence if $k\leq n$. This implies the lemma.
\end{proof}

Note that the unit map for the adjoint pair (see~\eqref{equation: quillen adjunction, abxi triv} and~\cite[Section~4.2]{koszul})
$$
\begin{tikzcd}
\widetilde{C}_*: \sL \arrow[shift left=.6ex]{r}
&\Mod_{\kk} :\triv \circ\Sigma^{-1} \arrow[shift left=.6ex,swap]{l}
\end{tikzcd}
$$
induces the natural map 
\begin{equation}\label{equation: derivatives to coderivatives}
\bar{\6}_*F \to \bar{\6}^*F \in \sseq_0, \;\; F\in \Fun^{\omega}_{an}(\Sp, \sL).
\end{equation}
We show that the map~\eqref{equation: derivatives to coderivatives} is well-behaved with respect to the Cartesian monoidal structure on $\Fun^{\omega}_{an}(\Sp, \sL)$. Namely, the functor $\bar{\6}_*$ preserves finite products as each functor in the composite~\eqref{equation: linear derivatives} preserves finite products. Therefore, it defines a symmetric monoidal functor
$$\bar{\6}_*\colon \Fun^{\omega}_*(\Sp,\sL)^{\times} \to \sseq_0^{\oplus}. $$
By combining this symmetric monoidal functor with the tautological oplax symmetric monoidal functor
$$\mathfrak{i}\colon \sseq_0^{\oplus} \to \sseq_1^\circledast $$
from Proposition~\ref{proposition: + to day oplax monoidal sseq}, we obtain the oplax symmetric monoidal functor
\begin{equation}\label{equation: usual partial triv precur}
\mathfrak{i}\circ \bar{\6}_*\colon \Fun^{\omega}_*(\Sp,\sL)^{\times} \to \sseq_1^{\circledast}.
\end{equation}

\begin{lmm}\label{lemma: oplax nat transformation}
The natural transformation $\bar{\6}_* \to \bar{\6}^*$ induces a \emph{monoidal} natural transformation
$$\mathfrak{i}\circ \bar{\6}_* \to \bar{\6}^* $$
between oplax symmetric monoidal functors from $\Fun^{\omega}_*(\Sp,\sL)^{\times}$ to $\sseq_1^{\circledast}$.
\end{lmm}

\begin{proof}
Fix $n\geq 1$. It suffices to show that the diagram
$$
\begin{tikzcd}
\bar{\6}_n(F\times G) \arrow{rr} \arrow{d}{\simeq}
& & \bar{\6}^n(F\times G) \arrow{d}{\simeq} \\
\bar{\6}_n(F) \oplus \bar{\6}_n(G) \arrow{r}
&\bar{\6}^n(F) \oplus \bar{\6}^n(G) \arrow[hookrightarrow]{r}
& \displaystyle\bigoplus_{k+m=n} \Ind_{\Sigma_k\times \Sigma_m}^{\Sigma_n} \bar{\6}^k(F) \otimes \bar{\6}^m(G).
\end{tikzcd}
$$
commutes for all $F,G\in \Fun^{\omega}_*(\Sp,\sL)$, see Remark~\ref{remark: oplax triv}. By~\cite[Proposition~1.3.1(2)]{AroneChing11}, the natural map
$$P_n(C_*F) \to P_n(C_*(P_nF)) $$
is an equivalence; the relevant part of the proof of~\cite[Proposition~1.3.1(2)]{AroneChing11} are the two paragraphs on p. 31, ibid., which works as well in the more general setting of~\cite[Section~6]{HigherAlgebra}. Therefore, we can assume that $F, G$  are $n$-excisive. Finally, by the functoriality of the diagram, we can assume that $F,G$ are $n$-homogeneous, for which the diagram commutes by Lemma~\ref{lemma: precomposition with homogeneous, stable}.
\end{proof}

By Proposition~\ref{proposition: + to day oplax monoidal sseq} and Lemma~\ref{lemma: sseq partial is strict}, the map~\eqref{equation: derivatives to coderivatives} lifts to the map 
\begin{equation}\label{equation: derivatives to coderivatives, eq1}
\eta\colon \triv(\bar{\6}_*F) \to \AC F\in \CoAlg(\sseq_0^\circledast)
\end{equation}
of cocommutative coalgebras in $\sseq_0$. Finally, by the adjunction
$$
\begin{tikzcd}
\triv: \sseq_0 \arrow[shift left=.6ex]{r}
&\CoAlg(\sseq_0^\circledast) :\prim, \arrow[shift left=.6ex,swap]{l}
\end{tikzcd}
$$
it induces the map
\begin{equation}\label{equation: derivatives to prim of coderivatives}
\bar{\6}_*F \to \prim(\AC F).
\end{equation}

\begin{thm}\label{theorem:AC conjecture for analytic functors}
The map~\eqref{equation: derivatives to prim of coderivatives} is an equivalence for an arbitrary analytic functor $F\in \Fun^{\omega}_{an}(\Sp,\sL)$.
\end{thm}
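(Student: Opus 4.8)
The plan is to reduce the statement to the homogeneous case, where it becomes the content of Lemma~\ref{lemma: coderivatives of infinite loops}, and then to bootstrap along the Taylor tower using the various exactness and limit-compatibility properties established above. More precisely, I would argue by induction on $n$ that the induced map $\bar{\6}_*(P_nF) \to \prim(\AC(P_nF))$ is an equivalence for every analytic $F$, and then pass to the limit. The base case $n=1$ is immediate: $P_1F$ is a linear (hence $1$-homogeneous) functor, so by Lemma~\ref{lemma: coderivatives of infinite loops} $\AC(P_1F) \simeq \mathfrak{C}(\bar{\6}_1F)$, and $\prim(\mathfrak{C}(-)) \simeq \id$ since $\prim$ is right adjoint to the trivial coalgebra functor and $\prim(\AC F)\simeq \id$ holds for cofree coalgebras; one checks that the map~\eqref{equation: derivatives to prim of coderivatives} is exactly this identification.

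For the inductive step I would use the fiber square of functors
\begin{equation*}
\begin{tikzcd}
P_nF \arrow{r} \arrow{d}
&\ast \arrow{d} \\
P_{n-1}F \arrow{r}
&BD_n(F)
\end{tikzcd}
\end{equation*}
(equivalently, the fiber sequence $D_nF \to P_nF \to P_{n-1}F$ delooped once). All four functors are analytic, so by Proposition~\ref{proposition: AC commutes with fiber sequences} the functor $\AC$ carries this to a fiber square in $\CoAlg(\sseq_0)$. Now $\prim = \oblv \circ \prim^{\mathrm{enh}}$ is a totalization of the cobar construction; the cobar cosimplicial object is degreewise a finite coproduct of tensor powers of $\oblv\circ\mathfrak{C}$, and because everything in sight lives in $\sseq_0$, evaluation at a fixed finite set $I$ only sees finitely many cobar degrees (by the nilpotence built into~\eqref{equation: nil-coalgebras = coalgebras} and the formula~\eqref{equation: cofree coalgebra in sseq}). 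Hence $\prim$ preserves this particular fiber square: it commutes with the relevant finite totalizations, exactly as in Proposition~\ref{proposition: coalgebras and almost finite totalizations}. On the other side, $\bar{\6}_*$ also sends the fiber sequence $D_nF \to P_nF \to P_{n-1}F$ to a fiber sequence in $\sseq_0$, since $\widetilde{C}_*$ is exact and Goodwillie derivatives preserve fiber sequences of functors. The map~\eqref{equation: derivatives to prim of coderivatives} is natural, so it gives a map of fiber sequences; by the inductive hypothesis it is an equivalence on $P_{n-1}F$, and by the homogeneous case (Lemma~\ref{lemma: coderivatives of infinite loops} applied to $D_nF$, combined with $\prim\circ\mathfrak{C}\simeq\id$ and the fact that the shifted spectral Lie cooperad structure on $\mathfrak{C}(\bar{\6}_nF)$ is precisely what makes $\prim$ recover $\bar{\6}_nF$) it is an equivalence on $D_nF$; therefore it is an equivalence on $P_nF$ by the five lemma for fiber sequences of spectra.

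Finally I would take the limit over $n$. By Lemma~\ref{lemma: reduction to excessive, analytic}, $\AC F \xrightarrow{\simeq} \lim_n \AC P_nF$ in $\CoAlg(\sseq_0)$; applying $\prim$ and using once more that it commutes with the cobar totalization degreewise in each arity (the connectivity estimates from analyticity, as in the proof of Proposition~\ref{proposition: AC commutes with fiber sequences}, guarantee the tower of $\prim(\AC P_nF)$ is eventually constant in each fixed arity), we get $\prim(\AC F) \simeq \lim_n \prim(\AC P_nF)$. Likewise $\bar{\6}_*F \simeq \lim_n \bar{\6}_* P_nF$ since in each arity $k$ the map $\bar{\6}_k F \to \bar{\6}_k P_nF$ is an equivalence for $n\geq k$. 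Passing the equivalences of the inductive step to the limit finishes the proof.

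The step I expect to be the main obstacle is verifying that $\prim$ genuinely commutes with the totalizations that appear here. The cobar construction is an infinite totalization, and $\oblv$ does \emph{not} commute with infinite totalizations in general (the Day convolution is not compatible with limits); the rescue is the observation, already isolated in Proposition~\ref{proposition: coalgebras and almost finite totalizations}, that in $\sseq_0$ evaluation at any finite set truncates the cobar tower to a finite stage, so one must carefully check that the coalgebras $\AC P_nF$ satisfy the hypothesis of that proposition uniformly — i.e. that for each arity $k$ there is a bound $n(k)$ independent of the Taylor-tower stage. This is where the analyticity hypothesis is doing real work, via the connectivity bounds of Lemmas~\ref{lemma:connectivity of fibers in rector model} and~\ref{lemma: reduction to excessive, analytic}, and it is the only place the argument could go wrong; everything else is formal manipulation of adjunctions and fiber sequences.
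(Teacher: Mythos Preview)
Your overall architecture---homogeneous case via Lemma~\ref{lemma: coderivatives of infinite loops}, induction along the Taylor tower using Proposition~\ref{proposition: AC commutes with fiber sequences}, then passage to the limit via Lemma~\ref{lemma: reduction to excessive, analytic}---is exactly the paper's proof.

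However, the step you flag as the ``main obstacle'' is not an obstacle, and your treatment of it is where the argument gets muddled. The functor $\prim\colon \CoAlg(\sseq_0)\to\sseq_0$ is \emph{by construction} a right adjoint: it is right adjoint to $\triv\colon\sseq_0\to\CoAlg(\sseq_0)$ (this is the very adjunction used to produce the map~\eqref{equation: derivatives to prim of coderivatives}). Hence $\prim$ preserves all limits, full stop---fiber squares, the limit over the Taylor tower, everything. No connectivity estimates, no appeal to Proposition~\ref{proposition: coalgebras and almost finite totalizations}, no arity-by-arity truncation of cobar is needed. You appear to have conflated $\prim$ with the forgetful functor $\oblv\colon\CoAlg(\sseq_0)\to\sseq_0$; it is $\oblv$ that fails to commute with general totalizations (because Day convolution is not compatible with limits), and Proposition~\ref{proposition: coalgebras and almost finite totalizations} is there to rescue $\oblv$ in specific situations inside the proof of Proposition~\ref{proposition: AC commutes with fiber sequences}. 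That work has already been done upstream; once you know $\AC$ preserves the relevant limits, applying $\prim$ is free. With this simplification your proof collapses to the paper's three-line argument.
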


\begin{proof}
First, the theorem is true for any homogeneous functor. Indeed, suppose that $F \in \Fun^{\omega}_{an}(\Sp,\sL)$ is $n$-homogeneous, then $$\prim(\AC F) \simeq \prim(\mathfrak{C}(\bar{\6}_n(F))) \simeq \bar{\6}_n(F) \in \sseq_0,$$
see Lemma~\ref{lemma: coderivatives of infinite loops}. Therefore, the map~\eqref{equation: derivatives to prim of coderivatives} is an equivalence in arities $\neq n$. In the arity $n$, we have 
$$\prim(\AC F)_n \xrightarrow{\simeq} \bar{\6}^n(F) $$
is an equivalence and the composite $$\bar{\6}_n(F) \to \prim(\AC F)_n \to \bar{\6}^n(F)$$ is the map~\eqref{equation: derivatives to coderivatives}. This map is an equivalence in arity $n$ for an $n$-homogeneous functor $F$ by Lemma~\ref{lemma: precomposition with homogeneous, stable}.

Second, since the functor $\prim$ is a right adjoint, it commutes with all limits. Therefore, by Lemma~\ref{lemma: reduction to excessive, analytic}, we can assume that the functor $F$ is $n$-excisive for some $n\geq 1$. Finally, we finish the proof by inductively applying Proposition~\ref{proposition: AC commutes with fiber sequences} to the fiber sequences
\begin{equation*}
P_nF \to P_{n-1}F \to BD_nF, \;\; n\geq 1. \qedhere
\end{equation*}
\end{proof}

\begin{rmk}\label{remark: arone-ching conjecture}
Theorem~\ref{theorem:AC conjecture for analytic functors} is inspired by~\cite[Theorem~4.3.3]{AroneChing11} for functors from the category of spectra to the category of spaces. We notice that~\cite[Theorem~4.3.3]{AroneChing11} holds for \emph{any} reduced functor which preserves filtered colimits, and we expect that Theorem~\ref{theorem:AC conjecture for analytic functors} is also true in such generality.
\end{rmk}

By combining Theorem~\ref{theorem:AC conjecture for analytic functors} with the adjunction~\eqref{equation: koszul adjunction}, we observe that the symmetric sequence of Goodwillie derivatives $\bar{\6}_*F \in \sseq_0$ ($F$ is analytic) have a canonical spectral Lie algebra structure (with respect to the Day convolution). In particular, there are equivariant maps
\begin{equation}\label{equation: lie multiplicatons, derivatives}
m_{k;n_1,\ldots, n_k}\colon \bfLie^{\mathrm{sp}}_k \otimes \bar{\6}_{n_1}F \otimes \ldots \otimes \bar{\6}_{n_k}F \to \bar{\6}_nF \in \Mod_{\kk},
\end{equation}
where $n_1+\ldots + n_k=n$ and $k\geq 1$.

\begin{exmp}\label{example: Lie action maps, free simplicial restricted}
Suppose that $F=\free(H\kk\otimes \tau_{\geq 0}(-))\colon \Sp \to \sL$ is the free simplicial restricted Lie algebra functor. Then, by Corollary~\ref{corollary:derivatives of free}, the functor $F$ is analytic. We have $\bar{\6}_nF\simeq \Sigma \bfLie_n\in \Mod_{\kk}^{B\Sigma_n}$, $n\geq 1$ and 
$$
\bar{\6}^n F \simeq
\begin{cases}
\Sigma \kk & \mbox{if $n=1$,}\\
0 & \mbox{otherwise.}
\end{cases}
$$
Note that the coalgebra $\AC F \in \CoAlg(\sseq_0^{\circledast})$ is trivial by arity reasons. Therefore, $
\bar{\6}_*(F) \in \bfLie^{\mathrm{sp}}(\sseq_0^\circledast)$ is a \emph{free} spectral Lie algebra. This implies that the Lie multiplications maps~\eqref{equation: lie multiplicatons, derivatives} for the functor $F=\free(H\kk\otimes \tau_{\geq 0}(-))$ are homotopy equivalent to the multiplication maps in the Lie operad $\bfLie_*\in \sseq(\Vect_{\kk})$.
\end{exmp}

\subsection{Connecting map}\label{section: connecting map}
We fix natural numbers $n,k\geq 1$ throughout this section. Recall that $\prim\colon \CoAlg(\sseq_0^{\circledast}) \to \sseq_0$ is the right adjoint~\eqref{equation: prim} and $\prim$ factors as follows
$$\prim\colon  \CoAlg(\sseq_0^{\circledast}) \xrightarrow{\prim^{\mathrm{enh}}} \bfLie^{\mathrm{sp}}(\sseq_0^{\circledast}) \xrightarrow{\oblv} \sseq_0,$$
see~\eqref{equation: koszul adjunction}. In this section, we compute $\Prim^{\mathrm{enh}}$ for an extension of a cofree coalgebra by another cofree coalgebra.
\begin{thm}\label{theorem: two cell coalgebra}
Let $F\xrightarrow{p} E \xrightarrow{q} B$ be a fiber sequence in $\CoAlg(\sseq_0^{\circledast})$. Suppose that $E\simeq \mathfrak{C}(E_n)$ and $B\simeq \mathfrak{C}(B_{nk})$ are cofree on their arity $n$ and $nk$ parts, respectively. Then
\begin{enumerate}
\item the comultiplication $E \to (E^{\circledast k})^{h\Sigma_{k}} \simeq (E^{\circledast k})_{h\Sigma_{k}}$ induces an equivalence $E_{nk} \xrightarrow{\simeq} (E_n^{\circledast k})_{h\Sigma_k}$;
\item the map $p$ induces an equivalence $\prim(F)_n \xrightarrow{\simeq} \prim(E)_n\simeq E_n$;
\item the map $\Omega B \to F$ induces an equivalence $$\Sigma^{-1}B_{nk} \simeq \prim(\Omega B)_{nk} \xrightarrow{\simeq} \prim(F)_{nk};$$
\item the following diagram (in $\Mod^{B\Sigma_{nk}}_{\kk}$) commutes
$$
\begin{tikzcd}[column sep=large, row sep=large]
\Sigma^{-1}(\prim(F)_n^{\circledast k})_{h\Sigma_k} \arrow{d}[swap]{\simeq} & \bfLie^{\mathrm{sp}}_k\otimes_{h\Sigma_k} \prim(F)^{\circledast k}_n \arrow{r}{m} \arrow{l}[swap]{c^{k;1}\otimes \id} 
& \prim(F)_{nk} \\
\Sigma^{-1}(E_n^{\circledast k})_{h\Sigma_k} &\Sigma^{-1}E_{nk} \arrow{r}{\Sigma^{-1}q_{nk}} \arrow{l}[swap]{\simeq} & \Sigma^{-1}B_{nk} \arrow{u}{\simeq},
\end{tikzcd}
$$
where $m=m_{k}$ is the Lie multiplication map~\eqref{equation: lie multiplication 1} and $c^{k;1}$ is the corona cell map~\eqref{equation: corona cell map, local}.
\end{enumerate}
\end{thm}

\begin{proof}
The first part follows by Corollary~\ref{corollary: nil dp = coalg for sseq}. Since $\prim$ is a right adjoint, the second and the third parts follow. The last part is more involved. 

First, the cosimplicial model for the fiber of the map $q$ in $\CoAlg(\sseq_0^{\circledast})$ implies that the sequence 
$$F_{l}\xrightarrow{p_{l}} E_{l} \xrightarrow{q_{l}} B_{l} \in \Mod^{B\Sigma_l}_{\kk} $$
is actually a fiber sequence for $l\leq nk$. Moreover, we have a commutative diagram
$$ 
\begin{tikzcd}
F_{nk} \arrow{d}[swap]{\mu^F} \arrow{r}{p_{nk}} & E_{nk} \arrow{r}{q_{nk}} \arrow{d}[swap]{\mu^E}  
& B_{nk} \arrow{d}[swap]{\mu^B} \\
(F_{n}^{\circledast k})_{h\Sigma_k} \arrow{r}{p_{n}^{\circledast k}} & (E_{n}^{\circledast k})_{h\Sigma_k} \arrow{r} & (B_{n}^{\circledast k})_{h\Sigma_k}\simeq 0
\end{tikzcd}
$$
of comultiplications in $\Mod^{B\Sigma_{nk}}_{\kk}$. Since $\mu^E$ and $p_n$ are equivalences, we identify $p_{nk}$ with the comultiplication $\mu^F$, i.e. $p_{nk}\simeq (\mu^E)^{-1} \circ p_n^{\circledast k} \circ \mu^F$.

Second, recall the natural transformation
$$c^{k;n}_{-}\colon\prim(-)_{nk} \to \prim^{k;n}(-) \in \Mod_{\kk}^{B\Sigma_{nk}},$$
see~\eqref{equation: corona cell map, general}. Note that $c^{k;n}_{-}$ is an equivalence when evaluated at the cofree algebras $E$ and $B$. As both $\prim$ and $\prim^{k;n}$ preserve fiber sequences, $c^{k;n}_F$ is an equivalence as well. Moreover, the Lie multiplication 
$$m \colon \bfLie^{\mathrm{sp}}_k\otimes_{h\Sigma_k} \prim(F)^{\circledast k}_n \to \prim(F)_{nk}$$ on $\prim(F)$ is induced from the counit map
$$\gamma\colon \triv\circ \prim(F) \to F $$
by applying $\prim(-)_{nk}$ to both sides and by using the equivalence~\eqref{equation: prim triv Lie}. Since $c^{k;n}_F$ is an equivalence, it suffices to compute $\prim^{k;n}(\gamma)$. Indeed, by Example~\ref{example: lie and corona cell}, the Lie multiplication $m$ is the precomposition of $\prim^{k;n}(\gamma)$ with the corona cell map $c^{k;1}\otimes \id$.

Under the identifications from the first three parts of the statement, the map $\gamma_n \colon \prim(F)_n \to F_n $ is the identity $\id\colon E_n \to E_n$ and $$\gamma_{nk} \colon \Sigma^{-1}B_{nk} \simeq \prim(F)_{nk} \to F_{nk} $$ is the fiber of the map $p_{nk}$. Then, by the definition, $\prim^{k;n}(\gamma)$ is the induced map between horizontal fibers in the diagram
$$
\begin{tikzcd}
\Sigma^{-1}B_{nk} \arrow{r}{0} \arrow{d}[swap]{\gamma_{nk} = \fib(p_{nk})}  
& E_{nk} \simeq (E_{n}^{\circledast k})_{h\Sigma_k} \arrow{d}[swap]{\gamma_n^{\circledast k} = \id} \\
F_{nk} \arrow{r}{p_{nk}} & E_{nk}\simeq (E_{n}^{\circledast k})_{h\Sigma_k}.
\end{tikzcd}
$$
Therefore, the map $\prim^{k;n}(\gamma)$ is given by
$$(\Sigma^{-1}\alpha,\id)\colon \Sigma^{-1}B_{nk}\oplus \Sigma^{-1}E_{nk} \to  \Sigma^{-1}E_{nk}, $$
where $\alpha$ is the cofiber of $p_{nk}$. Finally, by the definition, $\cofib(p_{nk}) \simeq q_{nk}$, which implies the statement.
\end{proof}

\begin{dfn}\label{defininition: two stage}
An analytic functor $F\colon \Sp\to\sL$ has \emph{two stages} if and only if there are natural numbers $n,k\geq 1$ such that
\begin{enumerate}
\item $F$ is $r$-excisive for $r=nk$;
\item $P_{r-1}F$ is $n$-homogeneous.
\end{enumerate}
\end{dfn}
Let $F\in \Fun^{\omega}_{an}(\Sp,\sL)$ be a functor with two stages, then there is the fiber sequence
\begin{equation}\label{equation:two stage fiber sequence}
F \to D_nF \xrightarrow{\delta} B D_r F, \;\; r=nk.
\end{equation}
Note that the functors $D_nF$ and $BD_r F$ are homogeneous. So, they admit functorial deloopings 
$$D_nF\simeq \deloop \D_nF \;\; \text{and} \;\; BD_rF\simeq \deloop \Sigma \D_rF.$$
Finally, let
\begin{equation}\label{equation: adjoint to attaching}
\tilde{\delta}\colon \susp\deloop\D_nF \to \Sigma\D_r F 
\end{equation}
denote the adjoint map to the natural transformation 
$$\delta \colon \deloop \D_nF \to \deloop \Sigma \D_rF. $$

\begin{cor}\label{corollary:lie action, corona, adjoint}
Let $F\in \Fun^{\omega}_{an}(\Sp,\sL)$ be a functor with two stages. Then the following diagram commutes
$$
\begin{tikzcd}[column sep=large, row sep=large]
\bfLie^{\mathrm{sp}}_k\otimes \bar{\6}_n(F)^{\otimes k} \arrow{r}{m} \arrow{d}[swap]{c^{k;1}\otimes \id} 
& \bar{\6}_r(F) \\
\Sigma^{-1}\bar{\6}_n(F)^{\otimes k} \arrow{ru}[swap]{\Sigma^{-1}\bar{\6}_r(\tilde{\delta})},
\end{tikzcd}
$$
where $m=m_{k;n,\ldots,n}$ is the $\Sigma_{k}\wr\Sigma_n$-equivariant Lie multiplication map~\eqref{equation: lie multiplicatons, derivatives}, $c^{k;1}$ is the corona cell map~\eqref{equation: corona cell map, local}, and $\bar{\6}_r(\tilde{\delta})$ is induced by the adjoint map~\eqref{equation: adjoint to attaching}.
\end{cor}


\begin{proof}
By Proposition~\ref{proposition: AC commutes with fiber sequences},  the fiber sequence~\eqref{equation:two stage fiber sequence} induces the fiber sequence 
$$\AC(F) \to \AC(D_n F) \xrightarrow{\AC(\delta)} \AC(BD_rF) \in \CoAlg(\sseq_0^\circledast) $$
of coalgebras. By Lemma~\ref{lemma: coderivatives of infinite loops}, $\AC(D_n(F)) \simeq \mathfrak{C}(\bar{\6}_n(F))$ and $\AC(BD_r(F)) \simeq \mathfrak{C}(\Sigma\bar{\6}_r(F))$ are cofree on their arity $n$ and $r=nk$ parts, respectively. Finally, the assertion follows by Theorem~\ref{theorem:AC conjecture for analytic functors}, Theorem~\ref{theorem: two cell coalgebra}, and Lemma~\ref{lemma: precomposition with homogeneous, stable}.
\end{proof}

\subsection{Main example}\label{section: main example} Let $n\geq 1$. We consider the functor $$L_\xi\free\colon \Mod^{\geq 1}_{\kk} \to \sLxi\hookrightarrow \sL$$ of the $\kk$-complete free simplicial restricted Lie algebra. Recall from the end of Section~\ref{section: goodwillie tower of a free lie algebra} that we associated with $L_\xi\free$ two truncated functors $P_{[pn;n]}(\Omega^{p-2}L_\xi\free)$ and $\widetilde{P}_n(\Omega^{p-2}L_\xi\free)$. We recall that there is the fiber sequence~\eqref{equation:Dpk to Dk}:
\begin{equation*}
D_{pn}(\Omega^{p-2}L_\xi\free) \to \widetilde{P}_{n}(\Omega^{p-2}L_\xi\free) \to D_{n}(\Omega^{p-2}L_\xi\free) \xrightarrow{\delta_n} BD_{pn}(\Omega^{p-2}L_\xi\free).
\end{equation*}
We have the adjoint map $\tilde{\delta}_n$ to the natural transformation $\delta_n$
$$\tilde{\delta}_n\colon \Sigma^\infty\Omega^\infty\D_n(\Omega^{p-2}L_\xi\free) \to \Sigma\D_{pn}(\Omega^{p-2}L_\xi\free)$$
which induces the natural transformation
\begin{equation}\label{equation: stable piece, differential}
\D_{pn}(\tilde{\delta}_n) \colon \suspfreexi(\Sigma^{-1}(\Sigma^{3-p}\mathbf{L}_n(V))^{\otimes p}_{h\Sigma_p}) \to \suspfreexi(\Sigma^{3-p}\mathbf{L}_{pn}(V)), \;\; V\in \Mod^{+}_{\kk}.
\end{equation}
Here $\mathbf{L}_n(V) = \bfLie_n\otimes_{h\Sigma_n}V^{\otimes n}\in \Mod_{\kk}$, see~\eqref{equation: lie powers}. We use Corollary~\ref{corollary:lie action, corona, adjoint} to compute the map 
$$\D_{pn}(\tilde{\delta}_n)_*\colon \pi_*\suspfreexi(\Sigma^{-1}(\Sigma^{3-p}\mathbf{L}_n(V))^{\otimes p}_{h\Sigma_p}) \to \pi_*\suspfreexi(\Sigma^{3-p}\mathbf{L}_{pn}(V)). $$
induced by $\D_{pn}(\tilde{\delta}_n)$.

\begin{thm}[$p$ is odd]\label{theorem: the adjoint of delta_n}
Let $V\in \Mod^{+}_{\kk}$ and let $w\in \pi_q(\mathbf{L}_n(V))$, $n\geq 1$ be a homotopy class. Consider the homotopy class
$$x = \sigma^{-1} \beta^{\e}Q^{i}(\sigma^{3-p}w) \otimes \nu_I \in \pi_*(\suspfreexi(\Sigma^{-1}(\Sigma^{3-p}\mathbf{L}_n(V))^{\otimes p}_{h\Sigma_p})),$$
where $i\geq 0$, $\e\in \{0,1\}$, and $\nu_I\in \Lambda$. Then 
$$\D_{pn}(\tilde{\delta}_n)_*(x)=(-1)^{q\e}\sigma^{3-p}\bQe{\e}{i}(w)\otimes \nu_{I}\in \pi_*(\suspfreexi(\Sigma^{3-p}\mathbf{L}_{pn}(V))).$$
In particular, $x\in \ker(\D_{pn}(\tilde{\delta}_n)_*)$ if and only if $\beta^{\e}Q^{i}(\sigma^{3-p}w)\in \ker(\sigma^{p-2})$, where $\ker(\sigma^{p-2})\subset \pi_*(\Sigma^{-1}(\Sigma^{3-p}\mathbf{L}_n(V))^{\otimes p}_{h\Sigma_p})$ is the kernel of the desuspension map
$$\sigma^{p-2}\colon (\Sigma^{3-p}\mathbf{L}_n(V))^{\otimes p}_{h\Sigma_p} \to \Sigma^{2-p}(\Sigma \mathbf{L}_n(V))^{\otimes p}_{h\Sigma_p}.$$
\end{thm}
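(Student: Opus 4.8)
The plan is to realize the natural transformation $\D_{pn}(\tilde{\delta}_n)$ as (the $\suspfreexi$ of) the connecting map of a two-stage functor and then apply the results of Sections~\ref{section: derivaties and coderivatives} and~\ref{section: connecting map}. Concretely, consider $F=\widetilde{P}_{n}(\Omega^{p-2}L_\xi\free)$, regarded as an object of $\Fun^{\omega}_{an}(\Sp,\sL)$ (via precomposition with a suitable functor $\Sp\to\Mod_{\kk}^{\geq 1}$). By the defining fiber sequence~\eqref{equation:Dpk to Dk}, the only non-trivial Goodwillie layers of $F$ sit in degrees $n$ and $pn$, so $F$ has two stages in the sense of Definition~\ref{defininition: two stage} with $k=p$ and $r=pn$, and its connecting map is $\delta_n$ with adjoint $\tilde{\delta}_n$. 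Using Lemma~\ref{lemma: completion and derivatives}, Corollary~\ref{corollary:derivatives of free}, Theorem~\ref{theorem: smash, stabilization} and Remark~\ref{remark: homology of a free algebra} one computes $\bar{\6}_j(F)\simeq\Sigma^{3-p}\bfLie_j$ for $j\in\{n,pn\}$ (so that $F$ is also analytic).

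First I would apply Theorem~\ref{theorem:lie action, corona, adjoint} to $F$: it gives that the only non-trivial Lie-operadic multiplication $m=m_{p;n,\ldots,n}$ on $\bar{\6}_*(F)$ factors as $m\simeq\Sigma^{-1}\bar{\6}_{pn}(\tilde{\delta}_n)\circ(c^{p;1}\otimes\id)$, where $c^{p;1}$ is the corona cell map~\eqref{equation: corona cell map, local}. Next I would identify the two unknowns in this formula. Since $\widetilde{P}_{n}(\Omega^{p-2}L_\xi\free)$ agrees with $\Omega^{p-2}L_\xi\free$ through Goodwillie filtration degree $pn$, the left $\bfLie^{sp}_*$-module structure on $\bar{\6}_*(F)$ coincides with the one on $\bar{\6}_*(\Omega^{p-2}L_\xi\free)$, which by Example~\ref{example: Lie action maps, free simplicial restricted} (modulo the shift by $\Omega^{p-2}$) is the operadic composition in $\bfLie_*$; concretely $m$ corresponds to the structure map $\gamma_p$ of the free $H_\infty$-$\bfLie_*$-algebra $F_{\bfLie}(\Sigma^{2-p}\mathbf{L}_n(V))$ restricted to the summand $\mathbf{L}_p(\mathbf{L}_n(V))\hookrightarrow\mathbf{L}_p(F_{\bfLie}(\Sigma^{2-p}V))$ of Remark~\ref{remark: dll, free lie algebra}. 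On the other hand, since $\widetilde{C}_*$ is conservative on bounded below $\kk$-complete suspension spectra (Remark~\ref{remark: chain complex is conservative}) and $\widetilde{C}_*\circ\suspfreexi\simeq\Sigma(-)$ (Remark~\ref{remark: homology of a free algebra}), the transformation $\D_{pn}(\tilde{\delta}_n)$ of~\eqref{equation: stable piece, differential} is recovered from $\bar{\6}_{pn}(\tilde{\delta}_n)$ by applying $\suspfreexi$; in particular, after a single suspension shift, the map $\D_{pn}(\tilde{\delta}_n)_*$ on the relevant homotopy groups is $\gamma_{p*}$.

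It then remains to evaluate this at the class $x$. By Corollary~\ref{corollary: stable homotopy groups of a xi-complete free object} the map $\D_{pn}(\tilde{\delta}_n)_*$ is $\Lambda$-linear, so it suffices to treat the case $\nu_I=1$, the factor $\nu_I$ being carried along. Applying Definition~\ref{definition: dll-classes} with $W=\Sigma^{2-p}\mathbf{L}_n(V)$ and $u=\sigma^{2-p}w$, the corona cell map identifies the class represented by $\sigma^{-1}\beta^{\e}Q^{i}(\sigma^{3-p}w)$ with (a suspension of) the Dyer--Lashof--Lie class $\bQe{\e}{i}(\sigma^{2-p}w)\in\pi_*(\mathbf{L}_p(\Sigma^{2-p}\mathbf{L}_n(V)))$. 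Applying $\gamma_{p*}$ and unwinding Definition~\ref{definition: dll operations} together with Remark~\ref{remark: dll, free lie algebra}, this is sent to the Dyer--Lashof--Lie class $\bQe{\e}{i}(\sigma^{2-p}w)\in\pi_*(\Sigma^{2-p}\mathbf{L}_{pn}(V))$, which by the stability property of Corollary~\ref{corollary: kjaer basis of free lie algebra} equals $\sigma^{2-p}\bQe{\e}{i}(w)$ up to a sign; collecting all the sign contributions coming from the stability and Frobenius-twist relations (exactly as in the computation of Proposition~\ref{proposition: hurewicz of hopf invariant one}) yields $\D_{pn}(\tilde{\delta}_n)_*(x)=(-1)^{q\e}\sigma^{3-p}\bQe{\e}{i}(w)\otimes\nu_I$. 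Finally, the ``in particular'' assertion follows since under these identifications $\sigma^{3-p}\bQe{\e}{i}(w)$ vanishes precisely when $w$ is annihilated by the iterated suspension map on homotopy orbits, i.e. when $w\in\ker(\sigma^{p-3})$ (using Proposition~\ref{proposition: corona cell is almost equivalence} to pass between the Dyer--Lashof--Lie classes and the classical ones).

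The hard part will be the bookkeeping in the second step: matching the left $\bfLie^{sp}_*$-module structure on $\bar{\6}_*(\widetilde{P}_{n}(\Omega^{p-2}L_\xi\free))$ with the naive operadic composition $\gamma_p$ (keeping track of all the suspension shifts introduced by $\Omega^{p-2}$ and by $\widetilde{C}_*\circ\suspfreexi\simeq\Sigma(-)$), and pinning down the sign $(-1)^{q\e}$; the genuine homotopy-theoretic input is already packaged in Theorem~\ref{theorem:lie action, corona, adjoint}, Theorem~\ref{theorem:AC conjecture for analytic functors} and Corollary~\ref{corollary: kjaer basis of free lie algebra}, so the remaining work is essentially one of careful normalization of conventions.
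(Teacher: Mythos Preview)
Your proposal is correct and follows essentially the same route as the paper: set up $F=\widetilde{P}_{n}(\Omega^{p-2}L_\xi\free)$ (precomposed with $H\kk\otimes\tau_{\geq 1}(-)$) as an analytic two-stage functor, compute $\bar{\6}_jF\simeq\Sigma^{3-p}\bfLie_j$ for $j\in\{n,pn\}$, apply Theorem~\ref{theorem:lie action, corona, adjoint} to factor $m_{p;n,\ldots,n}$ through $\Sigma^{-1}\bar{\6}_{pn}(\tilde{\delta}_n)\circ(c^{p;1}\otimes\id)$, identify $m$ with the operadic composition in $\bfLie_*$ via Example~\ref{example: Lie action maps, free simplicial restricted}, and unwind the corona cell map using Definition~\ref{definition: dll-classes}. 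The only cosmetic difference is in the reduction step: the paper invokes Proposition~\ref{proposition:suspention and natural transformations} and Remark~\ref{remark:inverse to the equivalence} to pass from $\D_{pn}(\tilde{\delta}_n)_*$ to the homology-level map $\widetilde{H}_*(\D_{pn}(\tilde{\delta}_n))$, whereas you phrase the same reduction through Remarks~\ref{remark: chain complex is conservative} and~\ref{remark: homology of a free algebra}; both amount to the statement that a map between functors of the form $\suspfreexi H^m_B$ is determined by its effect on $\widetilde{C}_*$.
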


\begin{proof}
We prove the theorem in several steps. First, by Proposition~\ref{proposition:suspention and natural transformations} and Remark~\ref{remark:inverse to the equivalence}, we observe that it suffices to compute the induced map 
\begin{align*}
\widetilde{H}_*(\D_{pn}(\tilde{\delta}_n))\colon \pi_*((\Sigma^{3-p}\mathbf{L}_n(V))^{\otimes p}_{h\Sigma_p})&\simeq  \widetilde{H}_*(\suspfreexi(\Sigma^{-1}(\Sigma^{3-p}\mathbf{L}_n(V))^{\otimes p}_{h\Sigma_p})) \\
&\to \widetilde{H}_*(\suspfreexi(\Sigma^{3-p}\mathbf{L}_{pn}(V))) \simeq \pi_*(\Sigma^{4-p}\mathbf{L}_{pn}(V))
\end{align*}
on the homology groups. Namely, we have to show that $\widetilde{H}_*(\D_{pn}(\tilde{\delta}_n))$ maps the element $\beta^{\e}Q^{i}(\sigma^{3-p}w)$ to $(-1)^{q\e}\sigma^{4-p}\bQe{\e}{i}(w)$. 

Second, by Definition~\ref{definition: dll-classes} of the Dyer--Lashof--Lie classes, the corona cell map~\eqref{equation: corona cell map, local}
$$c^{p;1}\colon \Sigma^{1-p}\bfLie_p\otimes \mathbf{sgn} \to \Sigma^{-1}\kk \in \Mod^{B\Sigma_{p}}_{\kk} $$
induces the map
$$c\colon \Sigma^2 \mathbf{L}_p(\Sigma^{2-p}\mathbf{L}_n(V)) \to (\Sigma^{3-p}\mathbf{L}_n(V))^{\otimes p}_{h\Sigma_p} $$
such that $$c_*(\sigma^2\bQe{\e}{i}(\sigma^{2-p}w))= \beta^{\e}Q^i(\sigma^{3-p}w)\in \pi_*((\Sigma^{3-p}\mathbf{L}_n(V))^{\otimes p}_{h\Sigma_p}). $$
Therefore it is enough to compute the composite $\widetilde{H}_*(\D_{pn}(\tilde{\delta}_n)) \circ c_*$.

Third, we consider the composite functors
\begin{align*}
\widetilde{F}_n&\colon \Sp \xrightarrow{H\kk\otimes \tau_{\geq 1}(-)} \Mod_{\kk}^{\geq 1} \xrightarrow{\widetilde{P}_n(\Omega^{p-2}L_\xi\free)} \sL, \\
F_n&\colon \Sp \xrightarrow{H\kk\otimes \tau_{\geq 1}(-)} \Mod_{\kk}^{\geq 1} \xrightarrow{P_n(L_\xi\free)} \sL, \\
F&\colon \Sp \xrightarrow{H\kk\otimes \tau_{\geq 1}(-)} \Mod_{\kk}^{\geq 1} \xrightarrow{L_\xi\free} \sL,
\end{align*}
where $n\geq 1$. By Corollary~\ref{corollary:derivatives of free}, the functors $\widetilde{F}_n, F_n, F$ are analytic. By Theorem~\ref{theorem:AC conjecture for analytic functors} and the adjunction~\ref{equation: koszul adjunction}, the span of the natural transformations
$$\Omega^{p-2}F \xrightarrow{} \Omega^{p-2} F_n  \xleftarrow{} \widetilde{F}_n  $$
induces the span
\begin{equation}\label{equation: the adjoint of delta_n, span}
\Sigma^{2-p} \bar{\6}_* F \xrightarrow{} \Sigma^{2-p} \bar{\6}_* F_n  \xleftarrow{} \bar{\6}_*\widetilde{F}_n 
\end{equation}
in the category $\bfLie^{\mathrm{sp}}(\sseq_0^{\circledast})$.
Again, by Corollary~\ref{corollary:derivatives of free}, we have
$$
\bar{\6}_l \widetilde{F}_n \simeq
\begin{cases}
\Sigma^{3-p} \bfLie_n & \mbox{if $l=n$,}\\
\Sigma^{3-p} \bfLie_{pn} & \mbox{if $l=pn$,}\\
0 & \mbox{otherwise.}
\end{cases}
$$
By Example~\ref{example: Lie action maps, free simplicial restricted} and the diagram~\eqref{equation: the adjoint of delta_n, span}, we obtain that the $(\Sigma_p \wr \Sigma_n)$-equivariant Lie multiplication map~\eqref{equation: lie multiplicatons}:
\begin{align*}
m=m_{p;n,\ldots, n} \colon \bfLie^{\mathrm{sp}}_p\otimes (\bar{\6}_n\widetilde{F}_n)^{\otimes p} \simeq \Sigma \bfLie_p \otimes (\Sigma^{-1}\bar{\6}_n \widetilde{F}_n)^{\otimes p} \to \bar{\6}_{pn} \widetilde{F}_{n}
\end{align*}
is homotopy equivalent to the composite 
$$\Sigma \bfLie_p \otimes (\Sigma^{2-p}\bfLie_n)^{\otimes p} \xrightarrow{\sigma^{2-p}} \Sigma^{3-p} \bfLie_p\otimes \bfLie_n^{\otimes p}\xrightarrow{\Sigma^{3-p}\widetilde{m}} \Sigma^{3-p}\bfLie_{pn}, $$
where $\sigma^{2-p}$ is the canonical suspension map, and $\widetilde{m}$ is the multiplication map in the Lie operad $\bfLie_*\in \sseq(\Vect_{\kk})$.

Finally, since the functor $H\kk\otimes \tau_{\geq 1}(-)\colon \Sp \to \Mod_{\kk}^{\geq 1}$ is essentially surjective, we deduce by Corollary~\ref{corollary:lie action, corona, adjoint} that
$$\widetilde{H}_*(\D_{pn}(\tilde{\delta}_n)) \circ c_* = (\Sigma m)_*. $$
By Definition~\ref{definition: dll operations} of the Dyer--Lashof--Lie operations, the theorem follows.
\end{proof}

The ``$p=2$''-analog of Theorem~\ref{theorem: the adjoint of delta_n} reads as follows; its proof is almost identical and we leave it to the reader to complete the details.

\begin{thm}[$p=2$]\label{theorem: the adjoint of delta_n, p=2}
Let $V\in \Mod^{+}_{\kk}$ and let $w\in \pi_q(\mathbf{L}_n(V))$, $n\geq 1$ be a homotopy class. Consider the homotopy class
$$x = \sigma^{-1} Q^{i}(\sigma w) \otimes \lambda_I \in \pi_*(\suspfreexi(\Sigma^{-1}(\Sigma \mathbf{L}_n(V))^{\otimes 2}_{h\Sigma_2})),$$
where $i\geq q+1$, and $\lambda_I\in \Lambda$. Then 
$$\D_{2n}(\tilde{\delta}_n)_*(x)= \sigma \Qe{i}(w)\otimes \lambda_{I}\in \pi_*(\suspfreexi(\Sigma \mathbf{L}_{2n}(V))).$$ 
\end{thm}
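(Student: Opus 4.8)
The plan is to run the proof of Theorem~\ref{theorem: the adjoint of delta_n} essentially verbatim, exploiting that for $p=2$ several of the suspension-bookkeeping factors that complicate the odd-primary case trivialize: here $\Omega^{p-2}L_\xi\free=L_\xi\free$ involves no looping, $\Sigma^{3-p}=\Sigma$, $\Sigma^{2-p}=\id$, and the sign representation $\mathbf{sgn}\in\Vect_{\kk}^{B\Sigma_2}$ is trivial in characteristic two (Remark~\ref{remark: lie algebras in char 2}). Since every map in the asserted identity is $\Lambda$-linear, we may assume $\lambda_I$ is the empty monomial throughout, and the hypothesis $i\geq q+1$ guarantees (via Corollary~\ref{corollary: kjaer basis of free lie algebra}) that $\Qe{i}(w)$ and $Q^i(\sigma w)$ are genuine basis elements.

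First I would reduce to a statement about homology. By Proposition~\ref{proposition:suspention and natural transformations} and Remark~\ref{remark:inverse to the equivalence}, the natural transformation $\D_{2n}(\tilde{\delta}_n)$ between functors valued in $\Sp(\sLxi)$ is detected by the map it induces on homology groups; using Remark~\ref{remark: homology of a free algebra}, this is a map
$$\widetilde{H}_*(\D_{2n}(\tilde{\delta}_n))\colon \pi_*\big((\Sigma\mathbf{L}_n(V))^{\otimes 2}_{h\Sigma_2}\big) \to \pi_*\big(\Sigma^2\mathbf{L}_{2n}(V)\big),$$
and it suffices to show it sends $Q^i(\sigma w)$ to $\sigma^2\Qe{i}(w)$. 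Next I would pull back along the corona cell map: by Proposition~\ref{proposition: corona cell is almost equivalence} and Definition~\ref{definition: dll-classes}, the map $c^{2;1}\colon \bfLie^{sp}_2\to\Sigma^{-1}\kk$ of~\eqref{equation: corona cell map, local} induces $c\colon \Sigma^2\mathbf{L}_2(\mathbf{L}_n(V))\to(\Sigma\mathbf{L}_n(V))^{\otimes 2}_{h\Sigma_2}$ with $c_*(\sigma^2\Qe{i}(w))=Q^i(\sigma w)$, so it is enough to compute the composite $\widetilde{H}_*(\D_{2n}(\tilde{\delta}_n))\circ c_*$.

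Then I would set up the two-stage functor $F_n\colon \Sp \xrightarrow{H\kk\otimes\tau_{\geq 1}(-)} \Mod_{\kk}^{\geq 1} \xrightarrow{\widetilde{P}_n(L_\xi\free)} \sL$. By Corollary~\ref{corollary:derivatives of free} this functor is analytic with two stages ($r=2n$, $k=2$ in the sense of Definition~\ref{defininition: two stage}), its only nonzero derivatives are $\bar{\6}_n F_n\simeq\Sigma\bfLie_n$ and $\bar{\6}_{2n}F_n\simeq\Sigma\bfLie_{2n}$, and—using that $\mathbf{sgn}$ is trivial over $\F_2$ together with $\bfLie^{sp}_2\simeq\Sigma^{-1}\bfLie_2$—the Lie multiplication map $m=m_{2;n,n}\colon \bfLie^{sp}_2\otimes(\Sigma\bfLie_n)^{\otimes 2}\simeq\Sigma\bfLie_2\otimes\bfLie_n^{\otimes 2}\to\Sigma\bfLie_{2n}$ is homotopic to $\Sigma$ of the Lie operad multiplication $\widetilde m$, exactly as in Example~\ref{example: Lie action maps, free simplicial restricted}. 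Since $H\kk\otimes\tau_{\geq 1}(-)$ is essentially surjective onto $\Mod_{\kk}^{\geq 1}$, Theorem~\ref{theorem:lie action, corona, adjoint} applied to $F_n$ yields $\widetilde{H}_*(\D_{2n}(\tilde{\delta}_n))\circ c_* = (\Sigma m)_*$; unwinding Definition~\ref{definition: dll operations} of $\Qe{i}$ then gives $\widetilde{H}_*(\D_{2n}(\tilde{\delta}_n))(Q^i(\sigma w))=\sigma^2\Qe{i}(w)$, which is the claim after reinstating $\lambda_I$ by $\Lambda$-linearity.

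The steps are all routine adaptations of the odd-primary argument, and in fact the $p=2$ case is strictly simpler because no intermediate looping or nontrivial suspension map $\sigma^{2-p}$ appears. The only point that requires genuine care is the degree and suspension bookkeeping when matching $c$, $m$, and $\tilde{\delta}_n$ through Theorem~\ref{theorem:lie action, corona, adjoint}, together with verifying the normalization $c_*(\sigma^2\Qe{i}(w))=Q^i(\sigma w)$ directly from the definition of the $p=2$ Dyer--Lashof--Lie classes in Section~\ref{section: H-Lie-algebras}. No new phenomenon arises, which is why the excerpt leaves the details to the reader.
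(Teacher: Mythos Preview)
Your proposal is correct and follows exactly the approach the paper intends: the paper states explicitly that the proof of Theorem~\ref{theorem: the adjoint of delta_n, p=2} is ``almost identical'' to that of Theorem~\ref{theorem: the adjoint of delta_n} and leaves the details to the reader, and you have faithfully carried out those details with the appropriate $p=2$ simplifications ($\Omega^{p-2}=\id$, $\Sigma^{3-p}=\Sigma$, $\Sigma^{2-p}=\id$, $\mathbf{sgn}$ trivial). The reduction to homology, the pullback along the corona cell map, the identification of the Lie multiplication on $\bar{\6}_*F_n$, and the appeal to Theorem~\ref{theorem:lie action, corona, adjoint} all match the odd-primary template verbatim.
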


\section{Applications}\label{section: application}

In this section we prove our main results Theorem~\ref{theorem: B, intro} (Theorem~\ref{theorem: whitehead conjecture}), Theorem~\ref{theorem: C, intro} (Proposition~\ref{proposition: differential, length 1}), Theorem~\ref{theorem: D, intro} (Theorems~\ref{theorem: lin formula, odd p} and~\ref{theorem: lin formula, p = 2}), Theorem~\ref{theorem: E,intro} (Lemmas~\ref{lemma:GSS differential, positive columns} and~\ref{lemma:GSS differential, positive columns, p =2}), and Theorem~\ref{theorem: F, intro} (Theorem~\ref{theorem: dll action on lie}).

In Section~\ref{section: lin formula} we define the \emph{renormalized (algebraic) Goodwillie spectral sequence}~\eqref{equation: RAGSS}. In Proposition~\ref{proposition: differential, length 1} we combine together Theorems~\ref{theorem:james-hopf and dyer-lashof}, \ref{theorem: the adjoint of delta_n}, and~\ref{theorem: the adjoint of delta_n, p=2} to evaluate the $\tilde{d}_2$-differentials in~\eqref{equation: RAGSS} on the generators of the algebra~$\Lambda$, which will give Theorem~\ref{theorem: C, intro}. In Lemma~\ref{lemma: leibniz rule and james} we combine together the Leibniz rule (Theorem~\ref{theorem:leibniz rule in GSS}) and the James periodicity (Theorem~\ref{theorem: main theorem, james periodicity}) to obtain the recursive formulas for the $\tilde{d}_2$-differential evaluated on the admissible monomials. These formulas imply \emph{Lin's formulas} (Theorem~\ref{theorem: D, intro}) in Theorems~\ref{theorem: lin formula, odd p} and~\ref{theorem: lin formula, p = 2}. We present an example of a calculation with the Lin formula in Example~\ref{example: lin formula, p=2}.

In Propositions~\ref{proposition: differential, adem, p=2} and~\ref{proposition: differential, adem, p is odd} we evaluate the $\tilde{d}_2$-differential on the quadratic admissible monomials of the $\Lambda$-algebra. In Lemmas~\ref{lemma:GSS differential, positive columns} and~\ref{lemma:GSS differential, positive columns, p =2} we express the $\tilde{d}_2$-differentials in~\eqref{equation: RAGSS} in terms of those differentials which originate from the $0$-th row and the Dyer--Lashof--Lie operations of Definition~\ref{definition: dll operations}. By combining these results together we prove Theorem~\ref{theorem: dll action on lie}, which states that the Dyer--Lashof--Lie operations satisfy \emph{Adem-type relations}~\eqref{equation: Lie-Adem relation, p is odd, eq1}, \eqref{equation: Lie-Adem relation, p is odd, eq2}, and~\eqref{equation: Lie-Adem relation, p is 2, eq1}. For the sake of completeness, we also discuss in Theorem~\ref{theorem: homotopy of free H-Lie algebra} the homotopy groups of a \emph{free $H_{\infty}$-$\bfLie_*$-algebra $F_{\bfLie}(W)$}, cf.~\cite{Antolin20} and~\cite{Kjaer18}.

In Corollaries~\ref{cor: goodwillie differential lowers the filtation, p is odd} and~\ref{cor: goodwillie differential lowers the filtation, p is 2} we show that the $\tilde{d}_2$-differentials in~\eqref{equation: RAGSS} preserve  a \emph{certain multiindex filtration} and compute their leading terms. This observation essentially proves Theorem~\ref{theorem: whitehead conjecture} and Theorem~\ref{theorem: B, intro}. In Remark~\ref{remark: lin-nishida conjecture} we discuss degeneration properties of the Goodwillie spectral sequence~\eqref{equation: AGSS} for $\free(W)$, $\dim\pi_*(W)>1$. 

We conclude the paper with the \emph{Lin--Nishida conjecture} (Remark~\ref{remark: lin-nishida conjecture}) which concerns the interaction of the differentials in the Adams and Goodwillie spectral sequences. 

Throughout this section $V\simeq \Sigma^l \kk\in \Mod_{\kk}^{\geq 0}$ is a \emph{$1$-dimensional} graded vector space. We write $\iota_l\in \pi_l(V)$ for the canonical generator.

\subsection{Lin's formula}\label{section: lin formula}
Recall that the Taylor tower~\eqref{equation: algebraic tower} of the functor $$\free\colon \Mod_{\kk}^{\geq 0} \to \sL$$ induces the spectral sequence~\eqref{equation: AGSS}:
$$E^1_{t,n}(V) = \pi_t(D_n(\free)(V)) \Rightarrow \pi_t(\free(V)) $$ 
with the differentials $d_r$ acting as follows $d_r\colon E^r_{t,n}(V) \to E^r_{t-1,n+r}(V)$. By Proposition~\ref{proposition: vanishing of LnV}, Corollary~\ref{corollary:derivatives of free}, and the assumption $\dim\pi_*(V)=1$, we have
$$E^1_{*,n}(V)\cong\pi_*(\bfLie_n\otimes_{h\Sigma_n}V^{\otimes n})\otimes \Lambda =0$$
if $n\neq p^h, 2p^h$, $h\geq 0$. In order to ease the notation, we consider the \emph{renormalized} Goodwillie spectral sequence:
\begin{equation}\label{equation: RAGSS}
\widetilde{E}^r_{t,m}(V)=
\begin{cases}
E^r_{t,p^h}(V)
& \mbox{if $p$ is odd, $m=2h$,}\\
E^r_{t,2p^h}(V)
& \mbox{if $p$ is odd, $m= 2h+1$,}\\
E^r_{t,2^h}(V)
& \mbox{if $p=2$, $m=2h$,}\\
0
& \mbox{otherwise.}
\end{cases}
%
\end{equation}
with the differentials $\tilde{d}_r\colon \widetilde{E}^r_{t,m}(V) \to \widetilde{E}^r_{t-1,m+r}(V)$ given by the (possibly) non-trivial differentials $d_{r'}$ in the spectral sequence~\eqref{equation: AGSS}. For instance, if $p$ is odd, then $$\tilde{d}_1=d_1\colon \widetilde{E}^1_{t,0}(V)=E^1_{t,1}(V) \to E^1_{t-1,2}(V)=\widetilde{E}^1_{t-1,1}(V)$$ 
and
$$\tilde{d}_2=d_{p-1}\colon \widetilde{E}^2_{t,0}(V) = E^2_{t,1}(V) \to E^2_{t-1,p}(V) = \widetilde{E}^2_{t-1,2}(V).  $$

Finally, we note that the renormalized spectral sequence~\eqref{equation: RAGSS} still abuts to the homotopy groups $\pi_*(\free(V))$.

\begin{prop}\label{proposition: differential, length 1}
Suppose that $V \simeq \Sigma^{l}\kk$, and $l\geq 0$. Then $\tilde{d}_1=0$ and 
$$\tilde{d}_2(\iota_l\otimes \nu^{\e}_{i})  = 
\begin{cases}
\bQe{\e}{i}(\iota_l) \otimes 1
& \mbox{if $p$ is odd, $i\geq 0$, $\e\in \{0,1\}$,}\\
\Qe{i}(\iota_l) \otimes 1
& \mbox{if $p=2$ and $i\geq 0$.}
\end{cases}
$$
\end{prop}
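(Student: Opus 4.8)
The plan is to assemble Proposition~\ref{proposition: differential, length 1} from three ingredients already prepared in the paper: the factorization of the Goodwillie $\tilde{d}_2$-differential through the James--Hopf map (Proposition~\ref{proposition:james-hopf and adjoint}), the evaluation of the James--Hopf map on the generators $\nu_i^\e$ of $\Lambda$ (Theorem~\ref{theorem:james-hopf and dyer-lashof}), and the identification of the ``stable piece'' $\D_{pn}(\tilde{\delta}_n)$ in terms of Dyer--Lashof--Lie operations (Theorems~\ref{theorem: the adjoint of delta_n} and~\ref{theorem: the adjoint of delta_n, p=2}). The statement $\tilde d_1 = 0$ is immediate from Corollary~\ref{corollary: exponential vanishing} (equivalently Remark~\ref{remark: completed agss}): the first possibly non-trivial differential on $\widetilde{E}^1_{*,0}(V) = E^1_{*,1}(V)$ is $d_{p-1}$, which is exactly $\tilde d_2$, so nothing can happen on pages $\tilde d_1$, and more generally on pages $d_1, \ldots, d_{p-2}$.

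For the main formula, I would first recall that after the $(p-2)$-fold looping and $\kk$-completion, Proposition~\ref{proposition:james-hopf and adjoint} (with $n=1$) gives $\delta_1 \simeq \deloop\D_p(\tilde\delta_1) \circ j_p$, where $j_p$ is the $p$-th James--Hopf map on $\deloop\suspfreexi(\Sigma^{2-p}\mathbf{L}_1(V)) = \deloop\suspfreexi(\Sigma^{2-p}V)$ and $\D_p(\tilde\delta_1)$ is the natural transformation~\eqref{equation: stable piece, differential} with $n=1$. The class $\iota_l \otimes \nu_i^\e \in \widetilde{E}^1_{*,0}(V) \cong \pi_*(\suspfree(V))$ corresponds under the identifications to $\sigma^{-1}(\text{generator}) \otimes \nu_i^\e$; one must track the shifts carefully, since $\mathbf{L}_1(V) = V$ and the relevant free object is $\suspfreexi(\Sigma^{-1}(\Sigma^{3-p}V))$. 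Applying Theorem~\ref{theorem:james-hopf and dyer-lashof} to $j_p$ evaluated on $\nu_i^\e$ yields (up to the sign $(-1)^q$ in the $\e=1$ odd case) the class $\sigma^{-1}\beta^\e Q^i$ of the generator of $\Sigma^{3-p}V$; then Theorem~\ref{theorem: the adjoint of delta_n} (resp. Theorem~\ref{theorem: the adjoint of delta_n, p=2}) with $n=1$ converts $\sigma^{-1}\beta^\e Q^i(\sigma^{3-p}\iota_l)\otimes 1$ into $(-1)^{q\e}\sigma^{3-p}\bQe{\e}{i}(\iota_l)\otimes 1$ inside $\pi_*(\suspfreexi(\Sigma^{3-p}\mathbf{L}_p(V)))$. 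Undoing the $(p-2)$-fold looping (which is a shift $\sigma^{p-3}$ that is an isomorphism in the relevant range by Lemma~\ref{lemma: nonabelian derived, freudenthal} / the stability built into $\suspfree$) and the $\kk$-completion (harmless by Lemma~\ref{lemma: completion and derivatives}, since $V$ is bounded below), the signs $(-1)^q$ from James--Hopf and $(-1)^{q\e}$ from Theorem~\ref{theorem: the adjoint of delta_n} must be checked to combine to give exactly the stated answer $\bQe{\e}{i}(\iota_l)\otimes 1$ (in particular the $\e=1$ odd case: $(-1)^q \cdot (-1)^{q\cdot 1} = 1$). For $p=2$, $\e=0$ only and no sign arises, giving $\Qe{i}(\iota_l)\otimes 1$.

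The main obstacle I anticipate is the bookkeeping of suspensions, Frobenius twists, and signs across the three theorems, together with verifying that passing between the renormalized differential $\tilde d_2$ on $\widetilde{E}^*(V)$ and the map $\D_p(\tilde\delta_1)_*$ on the stable homotopy of the relevant free objects is an \emph{identification} and not merely a ``leading term'' statement. Concretely, one needs: (i) that the splitting of Theorem~\ref{theorem:splitting} / Corollary~\ref{corollary: splitting} makes $\tilde d_2 = d_{p-1}$ literally equal to $(\delta_1^V)_*$ on the relevant component (this is the content of Proposition~\ref{proposition:james-hopf and adjoint} and the discussion around~\eqref{equation:Dpk to Dk}); (ii) that under Corollary~\ref{corollary:derivatives of free} and Theorem~\ref{theorem: stable homotopy groups of a free object} the target $E^1_{*,p}(V) \cong \pi_*(\mathbf{L}_p(V))\otimes\Lambda$ receives exactly the Dyer--Lashof--Lie class tensored with the empty monomial $1 \in \Lambda_0$ (since $j_p$ and then $\D_p(\tilde\delta_1)$ drop the $\Lambda$-degree by one — cf.\ Proposition~\ref{proposition:james-hopf and primitives} and Remark~\ref{remark: agss is trigraded}, and the generator $\nu_i^\e$ has $\Lambda$-length one). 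Once these compatibilities are pinned down, the proof is a short composition of the cited results; I would write it as: ``Combine Proposition~\ref{proposition:james-hopf and adjoint}, Theorem~\ref{theorem:james-hopf and dyer-lashof}, and Theorem~\ref{theorem: the adjoint of delta_n} (resp.\ Theorem~\ref{theorem: the adjoint of delta_n, p=2}), tracking the shift $\Sigma^{3-p}$ and the signs; the vanishing $\tilde d_1 = 0$ follows from Corollary~\ref{corollary: exponential vanishing}.''
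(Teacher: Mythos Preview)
Your proposal is correct and follows the paper's proof essentially verbatim: the paper combines Corollary~\ref{corollary: exponential vanishing} for $\tilde d_1=0$, then uses the splitting (Corollary~\ref{corollary: splitting}) to identify $\tilde d_2$ with $(\delta_1)_*$, factorizes via Proposition~\ref{proposition:james-hopf and adjoint}, and applies Theorem~\ref{theorem:james-hopf and dyer-lashof} followed by Theorem~\ref{theorem: the adjoint of delta_n} (resp.\ \ref{theorem: the adjoint of delta_n, p=2}), exactly as you outline. One small point the paper makes explicit and you omit: the case $\nu_0^0=\mu_0$ is handled separately by observing that the target $\widetilde E^1_{t,2}(V)$ vanishes for $t\le l-1$, so $\tilde d_2(\iota_l\otimes\mu_0)=0=\Qe{0}(\iota_l)$; also be careful that the two ``$q$''s in your sign check differ by $3-p$ (even for odd $p$), which is why the cancellation works.
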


\begin{proof}
By Corollary~\ref{corollary: exponential vanishing}, we have $\tilde{d}_1=0$.

Note that $\widetilde{E}^1_{t,2}(V)=E^1_{t,p}(V)\cong (\pi_*(\mathbf{L}_p(V))\otimes \Lambda)_t=0$ if $t\leq l-1$. Therefore $$\tilde{d}_2(\iota_l\otimes \nu_0^0)=0=\Qe{0}(\iota_l).$$
Hence, we can assume that $i\geq 1$. Moreover, by Remark~\ref{remark: completed agss}, we can replace the spectral sequence~\eqref{equation: AGSS} with its $\xi$-adically completed version~\eqref{equation: completed agss}. In other words, it suffices to compute $\hat{d}_{(p-1)}(\iota_{l}\otimes \nu_{i}^{\e})$. Finally, we replace the spectral sequence~\eqref{equation: completed agss} with the spectral sequence
\begin{equation}\label{equation: shifted complete AGSS}
'\widehat{E}^1_{t,n}(V)=\pi_t(D_n(\Omega^{p-2}L_\xi\free)(V)) \Rightarrow \pi_t(\Omega^{p-2}L_\xi\free(V)), \;\; V\in\Mod_{\kk}^{\geq 1}.
\end{equation}
associated with the Taylor tower of the functor $\Omega^{p-2}L_\xi\free\colon \Mod_{\kk}^{\geq 1} \to \sLxi$. We write $d'_r\colon {}'\widehat{E}^r_{t,n} \to {}'\widehat{E}^r_{t-1,n+r}$  for the differentials in the spectral sequence~\eqref{equation: shifted complete AGSS}.

We note that $'\widehat{E}^1_{t,n}(V)\cong \widehat{E}^1_{t+p-2,n}(V)$, $V\in\Mod_{\kk}^{\geq 1}$, $t,n\geq 0$. Moreover, under these isomorphisms, we have 
\begin{equation}\label{equation: shifted differentials}
\hat{d}_{p-1}(x)=\sigma^{p-2}d'_{p-1}(\sigma^{2-p}x)
\end{equation}
for any $x\in \widehat{E}^1_{t,n}(V)$, $t\geq p-2$, $n\geq 0$. Due to the splitting of Corollary~\ref{corollary: splitting}, the differential $d'_{p-1}=\delta_{1*}$ is induced by the connecting map 
$$\delta_1\colon D_1(\Omega^{p-2}L_\xi\free)(V) \to BD_p(\Omega^{p-2}L_\xi\free)(V), $$
see~\eqref{equation:Dpk to Dk}.
Therefore we have the following identity
$$\tilde{d}_2(\iota_l\otimes \nu^\e_{i})=\sigma^{p-3}\delta_{1*}(\sigma^{2-p}\iota_l\otimes \nu_{i}^{\e}) \in \widetilde{E}^1_{t-1,2}(V). $$
By Proposition~\ref{proposition:james-hopf and adjoint}, there is an equivalence
$$\delta_1\simeq \deloop\D_{p}(\tilde{\delta}_1) \circ j_p^{\Sigma^{3-p}V}, $$
where $j_p$ is the James--Hopf map, see Definition~\ref{definition:james-hopf map}. Therefore
\begin{align*}
\tilde{d}_2(\iota_l\otimes \nu_i^{\e}) &= \sigma^{p-3}\delta_{1*}(\sigma^{2-p}\iota_l \otimes \nu^{\e}_i) \nonumber \\
&=\sigma^{p-3}\D_p(\tilde{\delta}_1)_*\circ j_{p*}(\sigma^{-1}(\sigma^{3-p}\iota_l)\otimes \nu_i^{\e}) \nonumber \\
&=\sigma^{p-3}\D_p(\tilde{\delta}_1)_*((-1)^{l\e}\sigma^{-1}\beta^{\e}Q^i(\sigma^{3-p}\iota_l)\otimes 1) \nonumber \\
&=\sigma^{p-3}\sigma^{3-p}\bQe{\e}{i}(\iota_l) \otimes 1 \nonumber \\
&= \bQe{\e}{i}(\iota_l)\otimes 1. \nonumber
\end{align*}
Here the third identity follows by Theorem~\ref{theorem:james-hopf and dyer-lashof} and the fourth one follows by Theorems~\ref{theorem: the adjoint of delta_n} and~\ref{theorem: the adjoint of delta_n, p=2}.
\end{proof}

Using Theorem~\ref{theorem:leibniz rule in GSS}, we obtain a more general version of the previous proposition. Namely, let $x \in \pi_q\suspfree(\Sigma^{l}\kk)$ and let us consider $x$ as the map of suspension spectra 
$$\overline{x}\colon \suspfree(\Sigma^{q}\kk) \to \suspfree(\Sigma^{l}\kk)$$
such that $\overline{x}_*(\iota_q)=x\in \pi_q\suspfree(\Sigma^l \kk)$, where $\iota_q\in \pi_q\suspfree(\Sigma^q \kk)$ is the canonical generator. We recall that the $\Lambda$-algebra acts on the right on the homotopy groups $\pi_*(X)$, $X\in \Sp(\sL)$.

\begin{lmm}\label{lemma: GSS differential on hopf elements and leibniz}
Let $x\in \pi_q\suspfree(V)$ be a homotopy class. Then, if $p$ is odd,
$$\tilde{d}_{2}(x\nu^{\e}_{i})=\tilde{d}_{2}(x)\nu^{\e}_{i}+\D_p(\overline{x})_*(\bQe{\e}{i}(\iota_q))$$ in $\pi_{q+2i(p-1)-1-\e}\suspfree(\bfLie_p\otimes_{h\Sigma_p} V^{\otimes p})$,
where $i\geq 1$, $\e\in\{0,1\}$. 
Similarly, if $p=2$,
$$\tilde{d}_{2}(x\lambda_{i})=\tilde{d}_{2}(x)\lambda_{i}+\D_2(\overline{x})_*(\Qe{i}(\iota_q))$$ in $\pi_{q+i-1}\suspfree(\bfLie_2 \otimes_{h\Sigma_2}V^{\otimes 2})$,
where $i\geq 0$.
\end{lmm}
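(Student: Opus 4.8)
The statement is a Leibniz rule for the renormalized $\tilde d_2$-differential with respect to the right $\Lambda$-action, restricted to a single generator $\nu_i^\e$ (resp. $\lambda_i$) of $\Lambda$. The natural strategy is to deduce it from the abstract Leibniz rule of Theorem~\ref{theorem:leibniz rule in GSS} by taking $Z = \free(V)$, $Y = \free(\Sigma^q\kk)$, $X = \free(\Sigma^{q'}\kk)$, where $q' = q + 2i(p-1)-\e$ (resp. $q+i-1$), with $g = x \in E^1_{q,1}(Y,Z) = \pi_q^s\free(V)$ and $f = \nu_i^\e \in E^1_{q',1}(X,Y) = \pi_{q'}^s\free(\Sigma^q\kk)$. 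Here I use the interpretation of the spectral sequence~\eqref{equation: AGSS} as an instance of~\eqref{equation:GSS1} with $\calC = \sL$, $F = \id_{\sL}$, which is legitimate since $D_n\circ\free \simeq D_n(\free)$ and since a homotopy class in $\pi_q^s\free(V)$ is the same as a class in $E^1_{q,1}(\free(\Sigma^q\kk),\free(V))$ via the adjoint map $\overline{x}$, and likewise $\nu_i^\e$ is represented by the corresponding element of $\pi_{q'}^s\free(\Sigma^q\kk)$ under Proposition~\ref{proposition: lambda, mu, adem}.

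\emph{First step: matching the indexing.} I would carefully translate the re-indexing of~\eqref{equation: RAGSS}, \eqref{equation: completed agss}, and~\eqref{equation: shifted complete AGSS} so that $\tilde d_2$ in the renormalized sequence corresponds to the first potentially nonzero differential $d_{(p-1)}$ (resp. $d_1$ for $p=2$) in~\eqref{equation:GSS1}. By Corollary~\ref{corollary: exponential vanishing}, for $g = x \in E^1_{q,1}$ all of $d_1(g),\dots,d_{p-2}(g)$ vanish, and for $f = \nu_i^\e \in E^1_{q',1}$ likewise $d_1(f) = \cdots = d_{p-2}(f) = 0$; hence the hypotheses of Theorem~\ref{theorem:leibniz rule in GSS} with $n = p$ are satisfied. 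The theorem then gives
\[
d_{p-1}(x\circ \nu_i^\e) = d_{p-1}(x)\circ \nu_i^\e + \D_p(x)\circ d_{p-1}(\nu_i^\e)\in E^1_{*,p}(X,Z).
\]
The composite $x\circ \nu_i^\e$ corresponds to the right action $x\nu_i^\e$ on homotopy groups (as recalled in the paragraph on natural homotopy operations before Proposition~\ref{proposition: lambda, mu, adem}), so the left-hand side is $\tilde d_2(x\nu_i^\e)$ after re-indexing. The term $d_{p-1}(x)\circ\nu_i^\e$ becomes $\tilde d_2(x)\nu_i^\e$, again using that composition with $\nu_i^\e$ in the stable category is the right $\Lambda$-action (now on the target $\D_p\suspfree(V)$-groups). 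The remaining term $\D_p(x)\circ d_{p-1}(\nu_i^\e)$: here $d_{p-1}(\nu_i^\e) = \tilde d_2(\iota_q\otimes\nu_i^\e)$ is computed by Proposition~\ref{proposition: differential, length 1} to be $\bQe{\e}{i}(\iota_q)\otimes 1$ (resp.\ $\Qe{i}(\iota_q)\otimes 1$), and $\D_p(x)$ acting by left-composition is precisely $\D_p(\overline{x})_*$. So the third summand is $\D_p(\overline{x})_*(\bQe{\e}{i}(\iota_q))$, as desired. For $p = 2$ the argument is the same with $n = 2$, where $\tilde d_2 = d_1$ and the hypotheses of Theorem~\ref{theorem:leibniz rule in GSS} are vacuous.

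\emph{The main obstacle.} The delicate point is bookkeeping: reconciling the three different (re-)normalizations of the Goodwillie spectral sequence — the raw one~\eqref{equation: AGSS}, the $\xi$-completed one~\eqref{equation: completed agss}, the renormalized one~\eqref{equation: RAGSS}, and the $(p-2)$-fold looped one~\eqref{equation: shifted complete AGSS} — and checking that the composition products~\eqref{equation:stable composition}, \eqref{equation:left action}, \eqref{equation:right action} on $E^1$ really are identified with the right $\Lambda$-action and with $\D_p(\overline{x})_*$ under the equivalence $D_n\circ\free\simeq D_n(\free)$. One must verify that the identification of the degree-one part of the Goodwillie tower of $\id_{\sL}$ evaluated at a free object with the stable homotopy of that free object is compatible with the composition products — this is essentially formal since both sides are built from mapping spaces in $\Sp(\sL)$, but it requires being explicit about which adjunctions are being used. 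I would also double-check the degree shifts: $|\nu_i^\e| = 2i(p-1)-\e$ (resp.\ $|\lambda_i| = i$), so $f$ lives in $E^1_{q',1}$ with $q' = q + |\nu_i^\e|$ and the output $\bQe{\e}{i}(\iota_q)$ sits in degree $q + 2i(p-1)-\e-1$ after the extra suspension coming from the chain-complex functor, matching the claimed target $\pi_{q+2i(p-1)-1-\e}\suspfree(\bfLie_p\otimes_{h\Sigma_p}V^{\otimes p})$ (resp.\ $\pi_{q+i-1}$). Once these identifications are in place, the lemma is an immediate specialization of Theorem~\ref{theorem:leibniz rule in GSS} and Proposition~\ref{proposition: differential, length 1}.
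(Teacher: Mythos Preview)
Your proposal is correct and takes essentially the same approach as the paper: the paper's proof is a one-liner stating that the lemma follows from Theorem~\ref{theorem:leibniz rule in GSS} and Proposition~\ref{proposition: differential, length 1}. Your elaboration of the bookkeeping (identifying $g\circ f$ with the right $\Lambda$-action, $\D_p(g)\circ(-)$ with $\D_p(\overline{x})_*$, and invoking Corollary~\ref{corollary: exponential vanishing} to verify the vanishing hypotheses $d_1=\cdots=d_{p-2}=0$) is exactly what the paper leaves implicit.
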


\begin{proof}
The lemma follows by Theorem~\ref{theorem:leibniz rule in GSS} and Proposition~\ref{proposition: differential, length 1}.
\end{proof}

Let $I=(i_1,\e_1,\ldots,i_s,\e_s)$ (resp. $I=(i_1,\ldots, i_s)$ if $p=2$) be a sequence of numbers, where $\e_1,\ldots,\e_s\in\{0,1\}$. We set $\nu_I=\nu_{i_1}^{\e_1}\cdot \ldots \cdot \nu^{\e_s}_{i_s}\in \Lambda$, resp. $\lambda_{I}=\lambda_{i_1}\cdot \ldots \cdot \lambda_{i_s}\in\Lambda$. 

\begin{cor}\label{corollary: differential, stability}
Let $x\in \pi_q\suspfree(V)$ and let $I=(i_1,\e_1,\ldots,i_s,\e_s)$ (resp. $I=(i_1,\ldots,i_s)$) be an admissible sequence. Suppose that $q>2i_1$ (resp. $q>i_1+1$). Then
$$\tilde{d}_2(x\nu_I)=\tilde{d}_2(x)\nu_I, \;\; \text{resp. $\tilde{d}_2(x\lambda_I)=\tilde{d}_2(x)\lambda_I$.} $$
\end{cor}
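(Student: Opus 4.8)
The plan is to bootstrap the statement from Lemma~\ref{lemma: GSS differential on hopf elements and leibniz} by induction on the length $s$ of the admissible sequence $I$. For $s=0$ there is nothing to prove, and for $s=1$ this is an immediate application of Lemma~\ref{lemma: GSS differential on hopf elements and leibniz}: the extra summand there is $\D_p(\overline{x})_*(\bQe{\e_1}{i_1}(\iota_q))$ (resp. $\D_2(\overline{x})_*(\Qe{i_1}(\iota_q))$), so I must show that this term vanishes under the hypothesis $q>2i_1$ (resp. $q>i_1+1$). This is where the fourth part of Corollary~\ref{corollary: kjaer basis of free lie algebra} enters: the Dyer-Lashof-Lie class $\bQe{\e_1}{i_1}(\iota_q)\in\pi_*(\mathbf{L}_p(\Sigma^q\kk))$ is zero precisely when $2i_1\leq q$ — wait, the condition there is $\bQe{\e}{i}(v)=0$ iff $2i\leq |v|$, and since $q>2i_1$ gives $2i_1<q=|\iota_q|$, indeed $\bQe{\e_1}{i_1}(\iota_q)=0$; likewise $\Qe{i_1}(\iota_q)=0$ iff $i_1\leq q$, and $q>i_1+1$ certainly gives $i_1<q$. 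Hence the correction term is the image under $\D_p(\overline{x})_*$ of the zero class, so it vanishes and $\tilde{d}_2(x\nu_{i_1}^{\e_1})=\tilde{d}_2(x)\nu_{i_1}^{\e_1}$.

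For the inductive step, write $\nu_I=\nu_{i_1}^{\e_1}\cdot\nu_{I'}$ where $I'=(i_2,\e_2,\ldots,i_s,\e_s)$ is admissible (a sub-word of an admissible sequence is admissible). Consider the homotopy class $y=x\nu_{i_1}^{\e_1}\in\pi_{q'}\suspfree(V)$, where $q'=q+2i_1(p-1)-1-\e_1$ (resp. $q'=q+i_1-1$). The key point is that because $I$ is admissible, $pi_1-\e_1\geq i_2$, and combined with $q>2i_1$ (so that $q-1\geq 2i_1$) one checks the arithmetic inequality $q'>2i_2$ (resp. $q'>i_2+1$); this is the routine calculation I would carry out but not belabor here. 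Therefore the inductive hypothesis applies to $y$ and the admissible sequence $I'$, giving $\tilde{d}_2(y\nu_{I'})=\tilde{d}_2(y)\nu_{I'}$. Since $y\nu_{I'}=x\nu_I$ and, by the base case applied to $x$ and the single operation $\nu_{i_1}^{\e_1}$, $\tilde{d}_2(y)=\tilde{d}_2(x\nu_{i_1}^{\e_1})=\tilde{d}_2(x)\nu_{i_1}^{\e_1}$, we conclude $\tilde{d}_2(x\nu_I)=\tilde{d}_2(x)\nu_{i_1}^{\e_1}\nu_{I'}=\tilde{d}_2(x)\nu_I$, as desired. The $p=2$ case is identical with $\lambda$'s in place of $\nu$'s.

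The main obstacle — really the only thing requiring care — is verifying that the inequalities on the degree propagate correctly along an admissible monomial, i.e. that $q>2i_1$ together with admissibility of $I$ forces $q'>2i_2$ at each stage so that every correction term in the Leibniz expansion is killed by a vanishing Dyer-Lashof-Lie class. This is a bookkeeping argument using only the definition of admissibility ($pi_j-\e_j\geq i_{j+1}$) and the formula for how degrees shift under right multiplication by $\nu_{i_1}^{\e_1}$; no homotopy-theoretic input beyond the base case is needed. One could also phrase the whole thing without induction by simply applying Lemma~\ref{lemma: GSS differential on hopf elements and leibniz} once per letter and observing that every correction term $\D_p(\overline{(\text{prefix})})_*(\bQe{\e_j}{i_j}(\iota_{q_{j-1}}))$ vanishes by excess; the inductive packaging above is just cleaner.
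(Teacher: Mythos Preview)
Your proof is correct and follows essentially the same approach as the paper: induction on the length $s$ of $I$, using Lemma~\ref{lemma: GSS differential on hopf elements and leibniz} and the vanishing criterion in Corollary~\ref{corollary: kjaer basis of free lie algebra}(4); the only cosmetic difference is that you peel off the \emph{first} letter of $I$ while the paper peels off the \emph{last}, and both directions work once the degree inequality is checked against admissibility. One small slip: the degree of $y=x\nu_{i_1}^{\e_1}$ is $q'=q+2i_1(p-1)-\e_1$ (resp.\ $q'=q+i_1$), not $q+2i_1(p-1)-1-\e_1$ --- the extra $-1$ belongs to the Dyer--Lashof--Lie shift, not to $|\nu_i^\e|$ --- and with the correct $q'$ the inequality $q'>2i_2$ (resp.\ $q'>i_2+1$) follows immediately from admissibility and the hypothesis.
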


\begin{proof}
The proof goes by induction on $s$. If $s=1$, the statement follows from Lemma~\ref{lemma: GSS differential on hopf elements and leibniz} and the last part of Corollary~\ref{corollary: kjaer basis of free lie algebra}.

Assume that the corollary holds for the sequence $I'=(i_1,\e_1,\ldots,i_{s-1},\e_{s-1})$ (resp. $I'=(i_1,\ldots, i_{s-1})$), we will prove it for the sequence $I$. By the inductive assumption, it suffices to show that 
$$2i_s<q+2(i_1+\ldots +i_{s-1})(p-1)-\e_1-\ldots - \e_{s-1},$$ resp. $i_s<q+i_{1}+\ldots +i_{s-1}-1.$ Since the sequence $I$ is admissible, the last inequalities hold.
\end{proof}

\begin{lmm}\label{lemma: differential, excess}
Let $I=(i_1,\e_1,\ldots,i_s,\e_s)$ (resp. $I=(i_1,\ldots,i_s)$) be an admissible sequence. Then
$$\tilde{d}_2(\iota_l\otimes \nu_I)=\sum_{\alpha} \bQe{\e_{\alpha}}{i_{\alpha}}(\iota_l)\otimes \nu_{\alpha}, \;\; \text{resp. $\tilde{d}_2(x\otimes \lambda_I)=\sum_{\alpha} \Qe{i_{\alpha}}(\iota_l)\otimes \lambda_{\alpha},$} $$
where $\nu_{\alpha}\in \Lambda$ (resp. $\lambda_{\alpha} \in \Lambda$) and $i_{\alpha}\leq i_1$.
\end{lmm}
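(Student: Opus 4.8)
The plan is to prove Lemma~\ref{lemma: differential, excess} by reducing it, via the James periodicity results of Section~\ref{section: james periodicity}, to the length-one computation already established in Proposition~\ref{proposition: differential, length 1}, together with the Leibniz rule of Lemma~\ref{lemma: GSS differential on hopf elements and leibniz} / Corollary~\ref{corollary: differential, stability}. The statement is an ``excess bound'': it asserts that when we differentiate $\iota_l \otimes \nu_I$ for an admissible monomial $\nu_I = \nu_{i_1}^{\e_1}\cdots\nu_{i_s}^{\e_s}$, every Dyer--Lashof--Lie class $\bQe{\e_\alpha}{i_\alpha}(\iota_l)$ appearing in the output has $i_\alpha \le i_1$. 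Since $\tilde d_2$ is the $\tilde d_2$ of the renormalized spectral sequence~\eqref{equation: RAGSS}, the target $\widetilde E^1_{*,2}(V) \cong \pi_*(\mathbf L_p(\Sigma^l\kk))\otimes \Lambda$, and the Dyer--Lashof--Lie classes $\bQe{\e}{i}(\iota_l)$ together with $\Lambda$ give a basis of this group by Corollary~\ref{corollary: kjaer basis of free lie algebra} and Theorem~\ref{theorem: stable homotopy groups of a free object}, so the coefficients $\nu_\alpha$ and exponents $i_\alpha$ in the displayed formula are well-defined.

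First I would isolate the ``leading'' generator $\nu_{i_1}^{\e_1}$ and write $\nu_I = \nu_{i_1}^{\e_1}\cdot \nu_{I'}$ with $I' = (i_2,\e_2,\ldots,i_s,\e_s)$ admissible of length $s-1$. Applying Lemma~\ref{lemma: GSS differential on hopf elements and leibniz} with $x = \iota_l\otimes \nu_{I'} \in \pi_*\suspfree(\Sigma^l\kk)$ is the wrong order; instead I would expand $\iota_l\otimes\nu_I = (\iota_l\,\nu_{i_1}^{\e_1})\,\nu_{I'}$ and use Lemma~\ref{lemma: GSS differential on hopf elements and leibniz} on the pair $(x,\nu_{I'})$ where now the right multiplication is by $\nu_{I'}$. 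This gives $\tilde d_2(\iota_l\otimes\nu_I) = \tilde d_2(\iota_l\,\nu_{i_1}^{\e_1})\,\nu_{I'} + \D_p(\overline{\iota_l\,\nu_{i_1}^{\e_1}})_*(\text{DLL class on }\iota_{|\iota_l\nu_{i_1}^{\e_1}|})$ --- but actually the cleanest route is to induct on $s$ exactly as in Corollary~\ref{corollary: differential, stability}, peeling off generators one at a time from the \emph{right} and invoking Theorem~\ref{theorem: main theorem, james periodicity} (with the explicit bound from Remark~\ref{remark: james periodicity, bound}) to control the excess of the classes produced by each application of $\D_p(f)_*$. The key auxiliary fact is Remark~\ref{remark: dyer-lashof-lie, excess}: for a map $f\colon \suspfree W\to \suspfree V$ of suspension spectra with $W\simeq\Sigma^l\kk$, $V\simeq\Sigma^{l'}\kk$, $l>l'$, the map $\D_p(f)_*$ sends $\bQe{\e}{i}(\iota_l)$ to a sum of classes $\bQe{\e(\alpha)}{i(\alpha)}(\iota_{l'})\otimes\lambda_\alpha$ with $2i(\alpha)\le 2i-(l-l')$ (and the analogous bound for $p=2$) --- this is precisely the mechanism that forces $i_\alpha \le i_1$.

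The inductive step would run as follows. For $s=1$ the claim is Proposition~\ref{proposition: differential, length 1}: $\tilde d_2(\iota_l\otimes\nu_{i_1}^{\e_1}) = \bQe{\e_1}{i_1}(\iota_l)\otimes 1$, so the only exponent appearing is $i_1$ itself. For the inductive step, assume the result for all admissible sequences of length $< s$. Write $\nu_I = \nu_{i_1}^{\e_1}\cdot\nu_{I'}$ and set $y = \iota_l\,\nu_{i_1}^{\e_1} \in \pi_q\suspfree(\Sigma^l\kk)$ where $q = l + 2i_1(p-1)-\e_1$ (resp.\ $q = l+i_1$). Then $\iota_l\otimes\nu_I = y\,\nu_{I'}$, and by Lemma~\ref{lemma: GSS differential on hopf elements and leibniz} applied iteratively (peeling $\nu_{I'}$ off one generator at a time) we get a sum of two kinds of terms: those of the form $\tilde d_2(y)\,\nu_{I'}$ (and their descendants under further Leibniz expansion), and those of the form $\D_p(\overline z)_*(\text{DLL class})$ for various intermediate $z$. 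For the first kind, $\tilde d_2(y) = \bQe{\e_1}{i_1}(\iota_l)\otimes 1$ has excess exactly $i_1$, and right multiplication by elements of $\Lambda$ does not change the Dyer--Lashof--Lie index; for the second kind I would apply Remark~\ref{remark: dyer-lashof-lie, excess} to the map $\overline z$, noting that the source degree exceeds the target degree (since admissibility forces the relevant degree drops to be nonnegative, and the periodicity argument of Theorem~\ref{theorem: main theorem, james periodicity} lets us assume the relevant DLL indices are large enough that Remark~\ref{remark: dyer-lashof-lie, excess}'s hypothesis $l>l'$ holds after a shift $\psi_k$ which we then undo). Each such application produces classes $\bQe{\e(\alpha)}{i(\alpha)}$ with $2i(\alpha) \le 2i' - (\text{degree drop})$, and chasing the arithmetic --- using that the sequence $I$ is admissible, so $pi_1 - \e_1 \ge i_2$ etc.\ --- yields $i(\alpha) \le i_1$.

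The main obstacle will be the bookkeeping in the second kind of term: one must verify that, after applying Theorem~\ref{theorem: main theorem, james periodicity} to replace a DLL class of bounded index by one of arbitrarily large index (so that Remark~\ref{remark: dyer-lashof-lie, excess} applies), the resulting excess bound, once transported back by the shift operator $\psi_k$, still gives $i_\alpha \le i_1$ rather than something weaker. This requires carefully matching the degree-drop $l - l'$ appearing in Remark~\ref{remark: dyer-lashof-lie, excess} with the quantity $2i_1(p-1) - \e_1$ (the ``amount of suspension'' encoded in $\nu_{i_1}^{\e_1}$) and checking that the admissibility inequalities $pi_j - \e_j \ge i_{j+1}$ are exactly what is needed to make the induction close. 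I expect this to be a routine but delicate computation; once it is in place, the conservativity of $\widetilde C_*$ on bounded-below objects (Remark~\ref{remark: chain complex is conservative}) together with the basis description guarantees that the identification of coefficients is legitimate, and the lemma follows.
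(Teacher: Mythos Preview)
Your strategy is correct and is essentially the one the paper uses: induct on the length $s$, apply the Leibniz rule (Lemma~\ref{lemma: GSS differential on hopf elements and leibniz}), and bound the Dyer--Lashof--Lie indices appearing in the $\D_p(\overline{x})_*$-term via the filtration estimate of Remark~\ref{remark: dyer-lashof-lie, excess}. However, you are working much harder than necessary.

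First, peel off the \emph{last} generator, not the first. Write $x = \iota_l\,\nu_{i_1}^{\e_1}\cdots\nu_{i_{s-1}}^{\e_{s-1}}$, so that $\iota_l\otimes\nu_I = x\,\nu_{i_s}^{\e_s}$. One application of Lemma~\ref{lemma: GSS differential on hopf elements and leibniz} gives
\[
\tilde d_2(\iota_l\otimes\nu_I) = \tilde d_2(x)\,\nu_{i_s}^{\e_s} + \D_p(\overline{x})_*\big(\bQe{\e_s}{i_s}(\iota_q)\big),
\]
where $q = l + 2(p-1)(i_1+\cdots+i_{s-1}) - (\e_1+\cdots+\e_{s-1})$. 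The first summand is handled directly by the inductive hypothesis applied to the admissible sequence $(i_1,\e_1,\ldots,i_{s-1},\e_{s-1})$, which still begins with $i_1$; right multiplication by $\nu_{i_s}^{\e_s}$ does not touch the Dyer--Lashof--Lie indices.

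Second, the James periodicity machinery of Theorem~\ref{theorem: main theorem, james periodicity} is not needed here at all. Remark~\ref{remark: dyer-lashof-lie, excess} applies directly to the map $\overline{x}\colon \suspfree(\Sigma^q\kk)\to\suspfree(\Sigma^l\kk)$: since each $\nu_{i_j}^{\e_j}$ has nonnegative degree, $q\ge l$, and the bound there gives $2i_\alpha \le 2i_s - (q-l)$ for every index $i_\alpha$ appearing in $\D_p(\overline{x})_*(\bQe{\e_s}{i_s}(\iota_q))$. Summing the admissibility inequalities $i_{j+1}\le pi_j - \e_j$ for $1\le j\le s-1$ yields $2i_s - (q-l)\le 2i_1$, and the lemma follows. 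There is no need to shift by $\psi_k$ and no need to ``make the DLL indices large''; the hypothesis of Remark~\ref{remark: dyer-lashof-lie, excess} is on the degrees of the source and target, not on the index $i_s$. The paper's proof is exactly this two-line argument.
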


\begin{proof}
We prove the lemma only for an odd prime $p$; the argument for $p=2$ is almost identical and we leave it to the reader to complete the details. By Lemma~\ref{lemma: GSS differential on hopf elements and leibniz} and the induction on $s$, it is enough to show that
$$\D_p(\overline{x})_*(\bQe{\e_s}{i_s}(\iota_q)) = \sum_{\alpha} \bQe{\e_{\alpha}}{i_{\alpha}}(\iota_l)\otimes \lambda_{\alpha},\;\; \lambda_\alpha\in\Lambda, \;\; i_\alpha \leq i_1, $$
where $\overline{x}\colon \suspfree(\Sigma^q \kk) \to \suspfree(\Sigma^l \kk)$ is the map of suspension spectra which represents $x=\iota_l\otimes \nu_{i_1}^{\e_1}\cdot \ldots \cdot \nu^{\e_{s-1}}_{i_{s-1}}$. Here $q= l+ 2(p-1)(i_1+\ldots + i_{s-1})-\e_1-\ldots -\e_{s-1}$. Finally, by Remark~\ref{remark: image of filtration}, we observe that for every $\alpha$,
$$2i_{\alpha} \leq 2i_s-(q-l)\leq 2i_1,$$
which proves the lemma.
\end{proof}

Recall the $\Lambda$-linear shift operators $$\psi_k\colon \pi_*(\D_p(\suspfree V)) \to \pi_{*-\bar{k}}(\D_p(\suspfree V)), \;\; V\in \Mod_{\kk}, k\in\Z$$
from Definition~\ref{definition: shift operators}.

\begin{lmm}\label{lemma: leibniz rule and james}
Let $x\in \pi_q\suspfree(V)$. Then there exists a constant $N=N(q,i,\e)$ such that for any $k\in \N$, $\nu_p(k)>N$ the following identity holds
$$\tilde{d}_2(x\nu_i^{\e})=\tilde{d}_2(x)\nu_{i}^{\e}+\psi_{k}(\tilde{d}_2(x\nu_{i+k}^{\e})), $$
where $i\geq 0$, $\e\in\{0,1\}$. Similarly, for $p=2$,
$$\tilde{d}_2(x\lambda_i)=\tilde{d}_2(x)\lambda_{i}+\psi_{k}(\tilde{d}_2(x\lambda_{i+k})), $$
where $i\geq 0$.
\end{lmm}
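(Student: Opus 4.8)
The plan is to combine the Leibniz-type identity of Lemma~\ref{lemma: GSS differential on hopf elements and leibniz} with the James periodicity result of Theorem~\ref{theorem: main theorem, james periodicity}. I will argue the odd primary case; the case $p=2$ follows by replacing $\nu^{\e}_i$ with $\lambda_i$ and $\bQe{\e}{i}$ with $\Qe{i}$ throughout. Represent $x\in\pi_q\suspfree(V)$, $V\simeq\Sigma^l\kk$, by the map $\overline{x}\colon\suspfree(\Sigma^q\kk)\to\suspfree(\Sigma^l\kk)$ with $\overline{x}_*(\iota_q)=x$; since both source and target are $1$-dimensional, Theorem~\ref{theorem: main theorem, james periodicity} applies to $f=\overline{x}$. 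Assuming first $i\geq 1$, Lemma~\ref{lemma: GSS differential on hopf elements and leibniz} applied with $m=i$ and with $m=i+k$ (any $k\in\N$) gives
\[
\tilde{d}_2(x\nu^{\e}_i)=\tilde{d}_2(x)\nu^{\e}_i+\D_p(\overline{x})_*(\bQe{\e}{i}(\iota_q)),\qquad
\tilde{d}_2(x\nu^{\e}_{i+k})=\tilde{d}_2(x)\nu^{\e}_{i+k}+\D_p(\overline{x})_*(\bQe{\e}{i+k}(\iota_q)).
\]

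Now I would apply the $\Lambda$-linear operator $\psi_k$ to the second identity. By Definition~\ref{definition: shift operators} one has $\psi_{-k}(\bQe{\e}{i}(\iota_q))=\bQe{\e}{i+k}(\iota_q)$, so Theorem~\ref{theorem: main theorem, james periodicity} (together with the explicit bound of Remark~\ref{remark: james periodicity, bound}) yields a constant $N_1=N_1(q,i,\e)$ with
\[
\D_p(\overline{x})_*(\bQe{\e}{i}(\iota_q))=\psi_k\bigl(\D_p(\overline{x})_*(\bQe{\e}{i+k}(\iota_q))\bigr)
\]
for every $k$ with $\nu_p(k)>N_1$. Hence, for such $k$,
\[
\psi_k\bigl(\tilde{d}_2(x\nu^{\e}_{i+k})\bigr)=\psi_k\bigl(\tilde{d}_2(x)\nu^{\e}_{i+k}\bigr)+\D_p(\overline{x})_*(\bQe{\e}{i}(\iota_q)).
\]

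The last step is to kill the term $\psi_k(\tilde{d}_2(x)\nu^{\e}_{i+k})$. Write $\tilde{d}_2(x)=\sum_{\alpha}\bQe{\e_{\alpha}}{j_{\alpha}}(\iota_l)\otimes\nu_{I_{\alpha}}$ as a finite sum in the Dyer-Lashof-Lie basis of $\pi_{q-1}\D_p(\suspfree V)\cong\pi_*(\mathbf{L}_p(V))\otimes\Lambda$ (for $q=0$ this sum is empty and the argument simplifies). Since every $\Lambda$-monomial has nonnegative internal degree, $|\bQe{\e_{\alpha}}{j_{\alpha}}(\iota_l)|\le q-1$, so $2j_{\alpha}(p-1)\le q-l+1$ and the $j_{\alpha}$ are bounded by some $J=J(q,l)$. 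As $\psi_k$ acts on the $\mathbf{L}_p(V)$-factor by $\vph_k$, we get $\psi_k(\tilde{d}_2(x)\nu^{\e}_{i+k})=\sum_{\alpha}\bQe{\e_{\alpha}}{j_{\alpha}-k}(\iota_l)\otimes(\nu_{I_{\alpha}}\nu^{\e}_{i+k})$, and each summand vanishes by Corollary~\ref{corollary: kjaer basis of free lie algebra}(4) once $2(j_{\alpha}-k)\le l$, i.e. once $k>J-l/2$; as $\nu_p(k)>N$ forces $k\ge p^N$, this holds for $N$ large. Taking $N=N(q,i,\e)$ to be the maximum of $N_1$ and a bound ensuring $p^N>J-l/2$, the previous display becomes $\psi_k(\tilde{d}_2(x\nu^{\e}_{i+k}))=\D_p(\overline{x})_*(\bQe{\e}{i}(\iota_q))$, and substituting this into the first identity of the first paragraph gives the assertion. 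The case $i=0$ (so $\nu^{0}_0=\mu_0$) is not covered directly by Lemma~\ref{lemma: GSS differential on hopf elements and leibniz}, but there $\bQe{0}{0}(\iota_q)=0$, so the James periodicity argument already shows $\psi_k(\tilde{d}_2(x\mu_k))=0$ for $k$ large, and it remains only to note $\tilde{d}_2(x\mu_0)=\tilde{d}_2(x)\mu_0$, which follows from Remark~\ref{remark: xi and nu_0 actions} and the $\xi$-equivariance of the Goodwillie differentials. The main obstacle is purely organizational: keeping track of the three independent constants — from James periodicity, from the finiteness of $\tilde{d}_2(x)$, and from the excess condition on Dyer-Lashof-Lie classes — and verifying that they all fit into a single $N=N(q,i,\e)$; once this is arranged, the identity is a formal consequence.
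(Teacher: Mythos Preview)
Your proof is correct and follows the same route as the paper: apply Lemma~\ref{lemma: GSS differential on hopf elements and leibniz} at $i$ and at $i+k$, invoke Theorem~\ref{theorem: main theorem, james periodicity} to identify the $\D_p(\overline x)_*$ terms, and then kill $\psi_k(\tilde d_2(x)\nu^{\e}_{i+k})$ by choosing $k$ large. The paper does this last step more crisply: it just takes $k>q$, since any Dyer--Lashof--Lie class contributing to $\tilde d_2(x)$ has index bounded by a function of $q$, and sets $N=N'+q$; your unpacking of the basis and the bound $J=J(q,l)$ achieves the same thing with more bookkeeping.

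One minor imprecision: in the $i=0$ case you write that ``the James periodicity argument already shows $\psi_k(\tilde d_2(x\mu_k))=0$''. Theorem~\ref{theorem: main theorem, james periodicity} applied to the zero class $\Qe{0}(\iota_q)$ gives only $0=0$; what actually forces $\psi_k\bigl(\D_p(\overline x)_*(\Qe{k}(\iota_q))\bigr)=0$ is the excess bound of Remark~\ref{remark: dyer-lashof-lie, excess}, which gives $2i(\alpha)\le 2k-(q-l)$ for every term $\bQe{\e(\alpha)}{i(\alpha)}(\iota_l)$ in the image, and hence $2(i(\alpha)-k)\le l$. With that correction your $i=0$ argument goes through. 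The paper itself does not treat $i=0$ separately, relying implicitly on the $\xi$-equivariance you invoke.
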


\begin{proof}
We prove the lemma only for an odd prime $p$; the argument for $p=2$ is almost identical and we leave it to the reader to complete the details. Let $N'=N(\bQe{\e}{i}(\iota_q))$ be the constant from Theorem~\ref{theorem: main theorem, james periodicity}. Then for any $k\in\N$, $\nu_p(k)>N'$, we have
\begin{equation}\label{equation: leibniz, james, eq1}
\D_p(\overline{x})_*(\bQe{\e}{i}(\iota_q)) = \psi_k(\D_p(\overline{x})_*(\bQe{\e}{i+k}(\iota_q))). 
\end{equation}

By combining the equation~\eqref{equation: leibniz, james, eq1}  with the Leibniz rule of Lemma~\ref{lemma: GSS differential on hopf elements and leibniz}, we obtain
\begin{align*}
\psi_k\tilde{d}_2(x\nu_{i+k}^{\e}) &=\psi_k(\tilde{d}_2(x)\nu_{i+k}^{\e})+\psi_k\D_p(\overline{x})_*(\bQe{\e}{i+k}(\iota_q)) \\
&=\psi_k(\tilde{d}_2(x)\nu_{i+k}^{\e})+\D_p(\overline{x})_*(\bQe{\e}{i}(\iota_q))
\end{align*}
for all $k\in\N$ such that $\nu_p(k)>N'$. Note that, if $k>q$, then 
$$\psi_k(\tilde{d}_2(x)\nu_{i+k}^{\e})=0, $$
and so, by applying again Lemma~\ref{lemma: GSS differential on hopf elements and leibniz}, we have
$$\tilde{d}_2(x\nu_{i}^{\e})=\tilde{d}_2(x)\nu_i^{\e}+ \D_p(\overline{x})_*(\bQe{\e}{i}(\iota_q))= \tilde{d}_2(x)\nu_i^{\e}+ \psi_k(\tilde{d}_2(x\nu_{i+k}^{\e})).$$
Finally, we can assume that $N=N(q,i,\e) = N'+q$.
\end{proof}

\begin{rmk}\label{remark: james and leibniz, constant}
By Remark~\ref{remark: james periodicity, bound}, one can take the constant $N=N(q,i,\e)$ in Lemma~\ref{lemma: leibniz rule and james} to be any greater than $2i$ (resp. $i$ if $p=2$).
\end{rmk}

Let $I=(i_1,\e_1,\ldots,i_s,\e_s)$ (resp. $I=(i_1,\ldots, i_s)$ if $p=2$) be a sequence of numbers, where $\e_1,\ldots,\e_s\in\{0,1\}$. For $1\leq t\leq s$ and $k\in \N$, we denote by $I(t,k)$ the new sequence
$$I(t,k)=(i_1,\e_1,\ldots,i_t+k,\e_t,\ldots,i_s,\e_s), \;\text{resp.} \; I(t,k)=(i_1,\ldots,i_t+k,\ldots,i_s).$$

\begin{thm}[Lin's formula, odd $p$]\label{theorem: lin formula, odd p} Suppose that $p$ is odd, $V\simeq \Sigma^l\kk$, $l\geq 0$. Let $I=(i_1,\e_1,\ldots,i_s,\e_s)$ be an admissible sequence. Then 
\begin{equation}\label{equation: lin, admissible, eq0}
\tilde{d}_2(\iota_l\otimes \nu_{I}) = \bQe{\e_1}{i_1}(\iota_l)\otimes \nu_{I'}+\sum_{\alpha}c_{\alpha}\bQe{\e_1(\alpha)}{i_{1}(\alpha)}(\iota_l)\otimes \nu_{I'(\alpha)},
\end{equation}
where $I'=(i_2,\e_2,\ldots,i_s,\e_s)$, $I'(\alpha)=(i_2(\alpha),\e_2(\alpha),\ldots, i_s(\alpha),\e_s(\alpha))$ is an admissible sequence of length $s-1$, and the constant $c_{\alpha}\in \F_p$ is the coefficient for $\nu_{I(\alpha,1,k)}$, $I(\alpha,1,k)=(i_1(\alpha)+k,\e_1(\alpha), I'(\alpha))$
in the admissible expansion of 
\begin{equation}\label{equation: lin, admissible, eq1}
\sum_{t=2}^{s}\nu_{I(t,k)}=\sum_{t=2}^{s}\nu_{i_1}^{\e_1}\cdot \ldots \cdot \nu_{i_t+k}^{\e_t}\cdot \ldots \cdot \nu_{i_s}^{\e_s} \in \Lambda.
\end{equation}
Here the constants $c_{\alpha}$ are independent of $k$ for $\nu_p(k)>N$, where $N=N(I)$ depends only on $I$.
\end{thm}

\begin{proof}
By Lemma~\ref{lemma: leibniz rule and james} and Proposition~\ref{proposition: differential, length 1}, we have
\begin{align*}
\tilde{d}_2(\iota_l\otimes \nu_I)&=\tilde{d}_2(\iota_l\otimes \nu_{i_1}^{\e_1}\cdot \ldots \cdot \nu_{i_s}^{\e_s})\\
&=\bQe{\e_1}{i_1}(\iota_{l})\otimes \nu_{I'}+ \psi_k \left(\sum_{t=2}^s\tilde{d}_2(\iota_l\otimes \nu_{i_1}^{\e_1}\cdot \ldots \cdot \nu_{i_t+k}^{\e_t})\nu_{i_{t+1}}^{\e_{t+1}}\cdot\ldots \cdot \nu_{i_{s}}^{\e_s} \right)
\end{align*}
as soon as $\nu_p(k)\gg 0$. Increasing $k$ if needed, we can assume that $$2i_{t+1}<l+k+2(p-1)(i_1+\ldots+i_{t})-\e_1-\ldots-\e_t,\;\; 1 \leq t\leq s-1.$$
Therefore, by Corollary~\ref{corollary: differential, stability}, we obtain
\begin{align}\label{equation: lin, proof, eq2}
\tilde{d}_2(\iota_l\otimes \nu_I) &=  \bQe{\e_1}{i_1}(\iota_{l})\otimes \nu_{I'} \\
&+ \psi_k\tilde{d}_2 \left(\sum_{t=2}^s\iota_l\otimes \nu_{i_1}^{\e_1}\cdot \ldots \cdot \nu_{i_t+k}^{\e_t}\cdot\nu_{i_{t+1}}^{\e_{t+1}}\cdot\ldots \cdot \nu_{i_{s}}^{\e_s} \right). \nonumber
\end{align}

Consider the admissible expansion of the sum~\eqref{equation: lin, admissible, eq1}
\begin{equation}\label{equation: lin, proof, eq3}
\sum_{t=2}^{s} \nu_{I(t,k)} = \sum_{\alpha\in \mathcal{A}} c_{\alpha}\nu_{I(\alpha,1,k)} +\sum_{\beta \in \mathcal{B}} c_{\beta} \nu_{I(\beta)} \in \Lambda, 
\end{equation}
where $c_{\alpha},c_{\beta}\in \F_p$, the sequences $I(\alpha,1,k)=(i_1(\alpha)+k,\e_1,\ldots, i_s(\alpha),\e_s(\alpha))$ and $I(\beta)=(i_1(\beta),\e_1(\beta),\ldots, i_s(\beta), \e_s(\beta))$ are admissible, $i_{1}(\alpha)\geq 0$, $\alpha \in \mathcal{A}$, and $i_1(\beta)<k$, $\beta\in \mathcal{B}$. Then, by Lemma~\ref{lemma: differential, excess}, we have 
\begin{equation}\label{equation: lin, proof, eq4}
\psi_k\tilde{d}_2(\iota_l\otimes \nu_{I(\beta)})=0, \;\; \beta\in \mathcal{B}. 
\end{equation}
By increasing $k$ if needed again, we can assume that $2i_2(\alpha)<i_1(\alpha)+k+l$, and so, by Corollary~\ref{corollary: differential, stability}, we observe
\begin{equation}\label{equation: lin, proof, eq5}
\tilde{d}_2(\iota_l\otimes \nu_{I(\alpha,1,k)})=\bQe{\e_1(\alpha)}{i_1(\alpha)+k}(\iota_l)\otimes \nu_{I'(\alpha)}, \;\; \alpha\in \mathcal{A}.
\end{equation}

We finish the proof of the formula~\eqref{equation: lin, admissible, eq0} by combining together the formulas~\eqref{equation: lin, proof, eq2}, \eqref{equation: lin, proof, eq3}, \eqref{equation: lin, proof, eq4}, and~\eqref{equation: lin, proof, eq5}. Finally, we note that the constants $c_{\alpha}$ in~\eqref{equation: lin, admissible, eq0} are independent of $k$ for $\nu_p(k)>N$ because the left hand side of~\eqref{equation: lin, admissible, eq0} is independent of $k$ and the terms in the right side are linearly independent, see Sections~\ref{section: lambda algebra} and~\ref{section: H-Lie-algebras}.
\end{proof}

The proof of the ``$p=2$''-version of Theorem~\ref{theorem: lin formula, odd p} is almost identical and we leave it to the reader to complete the details. 

\begin{thm}[Lin's formula, $p=2$]\label{theorem: lin formula, p = 2} Suppose that $p=2$, $V\simeq \Sigma^l\kk$, $l\geq 0$. Let $I=(i_1,\ldots,i_s)$ be an admissible sequence. Then 
$$\tilde{d}_2(\iota_l\otimes \lambda_{I}) = \Qe{i_1}(\iota_l)\otimes \lambda_{I'}+\sum_{\alpha}c_{\alpha}\Qe{i_{1}(\alpha)}(\iota_l)\otimes \lambda_{I'(\alpha)}, $$
where $I'=(i_2,\ldots,i_s)$, $I'(\alpha)=(i_2(\alpha),\ldots, i_s(\alpha))$ is an admissible sequence of length $s-1$, and $c_{\alpha}\in \F_2$ is the coefficient for $\lambda_{I(\alpha,1,k)}$, $I(\alpha,1,k)=(i_1(\alpha)+k, I'(\alpha))$
in the admissible expansion of 
$$\sum_{t=2}^{s}\lambda_{I(t,k)}=\sum_{t=2}^{s}\lambda_{i_1}\cdot \ldots \cdot \lambda_{i_t+k}\cdot \ldots \cdot \lambda_{i_s} \in \Lambda. $$
Here the constants $c_{\alpha}$ are independent of $k$ for $\nu_2(k)>N$, where $N=N(I)$ depends only on $I$. \qed
\end{thm}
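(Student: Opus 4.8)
The plan is to carry out, \emph{mutatis mutandis}, the proof of Theorem~\ref{theorem: lin formula, odd p}: one replaces the odd-primary generators $\nu_i^{\e}\in\Lambda$ by the generators $\lambda_i\in\Lambda$, the Dyer-Lashof-Lie classes $\bQe{\e}{i}$ by $\Qe{i}$, and the field $\F_p$ by $\F_2$. Every auxiliary statement used in the odd-primary argument already has its $p=2$ counterpart recorded in the excerpt: the Leibniz rule of Lemma~\ref{lemma: GSS differential on hopf elements and leibniz}, the stability of Corollary~\ref{corollary: differential, stability}, the excess bound of Lemma~\ref{lemma: differential, excess}, the James-periodicity-plus-Leibniz identity of Lemma~\ref{lemma: leibniz rule and james}, the length-one differential of Proposition~\ref{proposition: differential, length 1}, and the local functoriality of the shift operators of Theorem~\ref{theorem: main theorem, james periodicity}. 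If anything, the $p=2$ case is marginally cleaner, since $\Lambda$ now has a single family of generators $\lambda_i$ and the admissible expansions carry no $\mu$-tails.

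Concretely, first I would apply Lemma~\ref{lemma: leibniz rule and james} together with Proposition~\ref{proposition: differential, length 1} to peel off the leading letter $\lambda_{i_1}$ of the admissible monomial $\lambda_I=\lambda_{i_1}\cdots\lambda_{i_s}$: for $k$ with $\nu_2(k)$ sufficiently large,
\[
\tilde{d}_2(\iota_l\otimes\lambda_I)=\Qe{i_1}(\iota_l)\otimes\lambda_{I'}+\psi_k\Bigl(\sum_{t=2}^{s}\tilde{d}_2\bigl(\iota_l\otimes\lambda_{i_1}\cdots\lambda_{i_t+k}\bigr)\,\lambda_{i_{t+1}}\cdots\lambda_{i_s}\Bigr).
\]
Enlarging $k$ so that $i_{t+1}<l+k+i_1+\cdots+i_t-1$ for every $t$, Corollary~\ref{corollary: differential, stability} pulls the trailing admissible factors out of the differential, and the bracketed expression becomes $\psi_k\tilde{d}_2\bigl(\sum_{t=2}^{s}\iota_l\otimes\lambda_{I(t,k)}\bigr)$. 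Next I would write down the admissible expansion $\sum_{t=2}^{s}\lambda_{I(t,k)}=\sum_{\alpha}c_{\alpha}\lambda_{I(\alpha,1,k)}+\sum_{\beta}c_{\beta}\lambda_{I(\beta)}$ in $\Lambda$, where the $\alpha$-monomials begin with $\lambda_{i_1(\alpha)+k}$, $i_1(\alpha)\geq 0$, and the $\beta$-monomials begin with $\lambda_{i_1(\beta)}$, $i_1(\beta)<k$. By Lemma~\ref{lemma: differential, excess} the term $\psi_k\tilde{d}_2(\iota_l\otimes\lambda_{I(\beta)})$ vanishes, because the Dyer-Lashof-Lie classes occurring in $\tilde{d}_2$ of such a monomial have index at most $i_1(\beta)<k$ and are therefore killed by $\psi_k$; and after one further enlargement of $k$ ensuring $2i_2(\alpha)<i_1(\alpha)+k+l$, Corollary~\ref{corollary: differential, stability} gives $\tilde{d}_2(\iota_l\otimes\lambda_{I(\alpha,1,k)})=\Qe{i_1(\alpha)+k}(\iota_l)\otimes\lambda_{I'(\alpha)}$, after which $\psi_k$ lowers the operation index by $k$ to yield $\Qe{i_1(\alpha)}(\iota_l)\otimes\lambda_{I'(\alpha)}$. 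Collecting the surviving contributions produces exactly the asserted formula, with $c_{\alpha}$ the coefficient of $\lambda_{I(\alpha,1,k)}$ in the above expansion.

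The only point that requires genuine care, as opposed to symbol-pushing, is the choice of $k$: along the way one accumulates finitely many constraints, each of the form $\nu_2(k)>N_j$ or $k>M_j$, arising from the successive applications of Lemma~\ref{lemma: leibniz rule and james}, Corollary~\ref{corollary: differential, stability}, and Lemma~\ref{lemma: differential, excess}. Since $\nu_2(2^m)=m$ is unbounded, a single $k$ meeting all of them exists; one then sets $N=N(I)$ equal to the maximum of the finitely many $N_j$ (and chooses $k$ with $k>\max_j M_j$ as well), as in Remark~\ref{remark: james and leibniz, constant}. Apart from this bookkeeping the argument is a verbatim transcription of the odd-primary case, and I would present it in that abbreviated form.
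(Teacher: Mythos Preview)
Your proposal is correct and is precisely what the paper intends: it explicitly states that the $p=2$ proof is almost identical to that of Theorem~\ref{theorem: lin formula, odd p} and leaves the details to the reader, and your transcription carries out exactly those details with the appropriate substitutions $\nu_i^{\e}\rightsquigarrow\lambda_i$, $\bQe{\e}{i}\rightsquigarrow\Qe{i}$. One cosmetic slip: the inequality you quote for the final enlargement of $k$, namely $2i_2(\alpha)<i_1(\alpha)+k+l$, is the odd-primary form; the $p=2$ version of Corollary~\ref{corollary: differential, stability} requires $i_2(\alpha)+1<i_1(\alpha)+k+l$ instead, but since $i_2(\alpha)\le i_1+\cdots+i_s$ is bounded independently of $k$, either inequality is eventually satisfied and the argument is unaffected.
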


\begin{rmk}\label{remark: lin formula, leading term}
The leading term in the formula of Theorem~\ref{theorem: lin formula, odd p} (resp.~\ref{theorem: lin formula, p = 2}) can be thought as the \emph{stable Hopf invariant} of the class $\nu_I$ (resp. $\lambda_I$), cf.~\cite[Theorem~4.3.1]{Behrens12}.
\end{rmk}

\begin{rmk}\label{remark:lin formula, constant}
Remark~\ref{remark: james and leibniz, constant} shows that one can take the ``shifting'' constant $N=N(i_1,\e_1,\ldots,i_s,\e_s)$ (resp. $N=N(i_1,\ldots, i_s)$) in Theorem~\ref{theorem: lin formula, odd p} (resp. in Theorem~\ref{theorem: lin formula, p = 2}) to be any greater than $2(i_2+\ldots+i_s)$ (resp. $i_2+\ldots + i_s$). However, more careful examination of the proof of Theorems~\ref{theorem: lin formula, odd p} and~\ref{theorem: lin formula, p = 2} can show that the estimate given here is extremely non-optimal.
\end{rmk}

We provide an example for the formula in Theorem~\ref{theorem: lin formula, p = 2}.

\begin{exmp}\label{example: lin formula, p=2}
Suppose that $p=2$ and $l\geq 0$. We compute $$\tilde{d}_2(\iota_{l}\otimes \lambda_3\lambda_6)\in \pi_*\suspfree(\bfLie_2\otimes_{h\Sigma_2}V^{\otimes 2})\cong \pi_*(\bfLie_2\otimes_{h\Sigma_2}V^{\otimes 2})\otimes \Lambda.$$
Let $N=7>6$ and let $k=256$, $\nu_2(k)=8>N$. Using the Adem relation~\eqref{equation: lambda, adem3, p=2}, we find the following admissible expansion for the non-admissible monomial $\lambda_3\lambda_{6+k}=\lambda_3\lambda_{262}$:
\begin{align*}
\lambda_3\lambda_{262}&= \lambda_{258} \lambda_{7}+\lambda_{257} \lambda_{8}\\
&+\lambda_{255} \lambda_{10}+\lambda_{251} \lambda_{14}+\lambda_{243} \lambda_{22}+\lambda_{227} \lambda_{38}+\lambda_{195} \lambda_{70}+\lambda_{131} \lambda_{134}. 
\end{align*}
We are interested only in terms whose first multiplier has index greater or equal than $k=256$. Those are only $\lambda_{2+256}\lambda_7$ and $\lambda_{1+256}\lambda_8$. Therefore
$$\tilde{d}_2(\iota_{l}\otimes\lambda_3\lambda_6)=\Qe{3}(\iota_l)\otimes \lambda_6 + \Qe{2}(\iota_l)\otimes \lambda_7 + \Qe{1}(\iota_l)\otimes \lambda_8.$$
\end{exmp}

We note that the formula for the differential $\tilde{d}_2$ in Theorems~\ref{theorem: lin formula, odd p} and~\ref{theorem: lin formula, p = 2} does not depend on the degree $l$. This allows us to express the James--Hopf map (see Definition~\ref{definition:james-hopf map})
$$j_{p*}\colon \Sigma^{-1}\pi_*(W)\otimes \Lambda \to \pi_*(\Sigma^{-1}W^{\otimes p}_{h\Sigma_p}) \otimes \Lambda, \;\; W\in \Mod_{\kk}^{\geq 0} $$
in terms of the universal differential 
\begin{equation}\label{equation: universal differential}
d_{p-1}\colon \kk\otimes \Lambda \to \pi_*((\bfLie_p \otimes \kk^{\otimes p})_{h\Sigma p}) \otimes \Lambda
\end{equation}
in the spectral sequence $E^1_{*,*}(\kk)$, see~\eqref{equation: AGSS}.

\begin{prop}[$p$ is odd]\label{proposition: james-hopf and differential, reserve}
Let $W\in \Mod^{\geq 0}_{\kk}$. Suppose that 
$$d_{p-1}(\iota_0\otimes \nu) = \sum_{\alpha}\bQe{\e(\alpha)}{i(\alpha)}(\iota_0)\otimes \nu_{\alpha} \in E^1_{*,p}(\kk), $$
where $i(\alpha)\geq 0$, $\e(\alpha)\in \{0,1\}$, and $\nu, \nu_{\alpha}\in \Lambda$. Then
$$j_{p*}(\sigma^{-1}w\otimes \nu)=\sum_{\alpha}(-1)^{q\e(\alpha)}\sigma^{-1}\beta^{\e(\alpha)}Q^{i(\alpha)}(w)\otimes \nu_{\alpha} + \ker(\sigma^{p-2})\otimes \Lambda,$$
where $w\in \pi_q(W)$ and $\ker(\sigma^{p-2}) \subset \pi_*(\Sigma^{-1}W^{\otimes p}_{h\Sigma_p})$ is the kernel of the map induced by the suspension
$$\sigma^{p-2}\colon \Sigma^{-1}(W^{\otimes p}_{h\Sigma_p}) \to \Sigma^{1-p}(\Sigma^{p-2}W)^{\otimes p}_{h\Sigma_p}.$$
\end{prop}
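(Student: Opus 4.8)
The plan is to reduce the statement about the James--Hopf map $j_p$ to the already-established relationship between $j_p$ and the connecting map $\delta_1$, and then to read off the answer from Proposition~\ref{proposition: differential, length 1} and Lemma~\ref{lemma: GSS differential on hopf elements and leibniz}. First I would recall from the proof of Proposition~\ref{proposition: differential, length 1} that, after shifting by $\Omega^{p-2}$ and $\kk$-completing, the differential $\tilde{d}_2$ in the renormalized spectral sequence~\eqref{equation: RAGSS} is computed by the connecting map $\delta_1$, which by Proposition~\ref{proposition:james-hopf and adjoint} factors as $\delta_1 \simeq \deloop\D_p(\tilde{\delta}_1)\circ j_p^{\Sigma^{3-p}(-)}$. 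Concretely, for $W\in\Mod_{\kk}^{\geq 0}$ and $x = \sigma^{-1}w\otimes \nu$ with $w\in\pi_q(W)$, the element $d_{p-1}(x)$ in $E^1_{*,p}(W)$ equals $\sigma^{p-3}\D_p(\tilde{\delta}_1)_*\bigl(j_{p*}(\sigma^{-1}(\sigma^{3-p}w)\otimes \nu)\bigr)$, exactly as in the displayed chain of identities in the proof of Proposition~\ref{proposition: differential, length 1}. So the plan is to invert this: the James--Hopf map is determined by $d_{p-1}$ up to the kernel of $\D_p(\tilde{\delta}_1)_*$, and Theorem~\ref{theorem: the adjoint of delta_n} (with $n=1$) identifies that kernel precisely as $\ker(\sigma^{p-3})\otimes\Lambda$ inside $\pi_*(\Sigma^{-1}W^{\otimes p}_{h\Sigma_p})\otimes\Lambda$.

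Next I would carry out the bookkeeping. By Theorem~\ref{theorem: the adjoint of delta_n} applied to $\tilde{\delta}_1$, the map
$$\D_p(\tilde{\delta}_1)_*\colon \pi_*\bigl(\suspfreexi(\Sigma^{-1}(\Sigma^{3-p}W)^{\otimes p}_{h\Sigma_p})\bigr) \to \pi_*\bigl(\suspfreexi(\Sigma^{3-p}W)\bigr)$$
sends $\sigma^{-1}\beta^{\e}Q^{i}(\sigma^{3-p}w)\otimes \nu_I$ to $(-1)^{q\e}\sigma^{3-p}\bQe{\e}{i}(w)\otimes \nu_I$, and its kernel on the relevant summand is exactly $\ker(\sigma^{p-3})\otimes\Lambda$. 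By Proposition~\ref{proposition:james-hopf and primitives}, $j_{p*}(\sigma^{-1}w\otimes\nu)$ lands in $\pi_{*,0}(\suspfreexi(\Sigma^{-1}W^{\otimes p}_{h\Sigma_p}))\cong \pi_*(\Sigma^{-1}W^{\otimes p}_{h\Sigma_p})$, and we may (after tensoring with $\Lambda$) express it in the basis of Dyer--Lashof classes $\sigma^{-1}\beta^{\e}Q^i(w)$ together with the decomposable part; applying Theorem~\ref{theorem: the adjoint of delta_n} and comparing with the hypothesis $d_{p-1}(\iota_0\otimes\nu)=\sum_{\alpha}\bQe{\e(\alpha)}{i(\alpha)}(\iota_0)\otimes\nu_{\alpha}$ (which by naturality in $W$ — the universal case of~\eqref{equation: universal differential} — forces the corresponding formula for general $w$) then pins down the non-decomposable part of $j_{p*}(\sigma^{-1}w\otimes\nu)$ to be $\sum_{\alpha}(-1)^{q\e(\alpha)}\sigma^{-1}\beta^{\e(\alpha)}Q^{i(\alpha)}(w)\otimes\nu_{\alpha}$, with the ambiguity precisely $\ker(\sigma^{p-3})\otimes\Lambda$.

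The main obstacle I anticipate is the naturality argument that lets one pass from the ``degree $0$, one-dimensional'' computation of $d_{p-1}$ to the statement for an arbitrary $w\in\pi_q(W)$ and arbitrary $W$: one must check that the identification $d_{p-1}(\iota_0\otimes\nu)=\sum_{\alpha}\bQe{\e(\alpha)}{i(\alpha)}(\iota_0)\otimes\nu_{\alpha}$ propagates along the map of suspension spectra $\overline{w}\colon\suspfree(\Sigma^q\kk)\to\suspfree(W)$ representing $w$ in a way compatible with the James periodicity shift operators $\psi_k$ of Definition~\ref{definition: shift operators}, exactly as in the proof of Lemma~\ref{lemma: leibniz rule and james}; the genuine subtlety is that $\D_p(\overline{w})_*$ need not preserve the Dyer--Lashof basis on the nose but only lowers excess, so one has to keep track of which classes survive modulo $\ker(\sigma^{p-3})$. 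The remaining steps — the sign $(-1)^{q\e(\alpha)}$ and the bookkeeping of Frobenius twists — are routine given Theorems~\ref{theorem:james-hopf and dyer-lashof}, \ref{theorem: the adjoint of delta_n}, and Proposition~\ref{proposition:james-hopf and primitives}.
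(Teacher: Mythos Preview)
Your core approach is correct and is essentially the paper's: invert the factorization $\delta_1\simeq \deloop\D_p(\tilde\delta_1)\circ j_p$ from Proposition~\ref{proposition:james-hopf and adjoint}, and use Theorem~\ref{theorem: the adjoint of delta_n} (with $n=1$) both to translate $d_{p-1}$ into $j_{p*}$ and to identify the ambiguity as $\ker(\sigma^{p-3})\otimes\Lambda$.

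The one place where you overcomplicate matters is the anticipated ``main obstacle.'' The paper handles the passage from $\iota_0$ to a general $w\in\pi_q(W)$ in one line: since the James--Hopf map $j_p$ is a \emph{natural transformation} in $W$, it suffices to treat the case $\dim\pi_*(W)=1$, i.e.\ $W\simeq\Sigma^l\kk$ and $w=\iota_l$. One then sets $V=\Sigma^{p-3}W$ and reads the identity $\sigma^{3-p}d_{p-1}(\iota_{l+p-3}\otimes\nu)=\D_p(\tilde\delta_1)_*\circ j_{p*}(\sigma^{-1}\iota_l\otimes\nu)$ directly from the proof of Proposition~\ref{proposition: differential, length 1}. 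The hypothesis on $\iota_0$ transfers to $\iota_{l+p-3}$ because, as remarked just before the proposition, Lin's formula (Theorems~\ref{theorem: lin formula, odd p} and~\ref{theorem: lin formula, p = 2}) shows the formula for $\tilde d_2$ is independent of the base degree $l$. No James periodicity, no shift operators $\psi_k$, and no analysis of $\D_p(\overline w)_*$ on Dyer--Lashof bases is required here; that machinery was used elsewhere (Lemma~\ref{lemma: leibniz rule and james}) to handle arbitrary maps $f$ of suspension spectra, not the James--Hopf map itself.
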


\begin{proof}
Since the James--Hopf map is functorial, we can assume that $\dim\pi_*(W)=1$. Let $W\simeq \Sigma^l \kk$, $q=l$, and $w= \iota_l \in\pi_l(\Sigma^l \kk)$. We set $V=\Sigma^{p-3}W\simeq \Sigma^{l+p-3}\kk$ and we consider the spectral sequence~\eqref{equation: AGSS}
$$E^1_{*,*}(V) \Rightarrow \pi_*(\free (V)). $$
As in Proposition~\ref{proposition: differential, length 1}, we have
$$\sigma^{3-p} d_{p-1}(\iota_{l+p-3}\otimes \nu) = \delta_{1*}(\sigma^{2-p} \iota_{l+p-3}\otimes \nu) = \D_p(\tilde{\delta}_1)_*\circ j_{p*}(\sigma^{-1}\iota_l\otimes \nu). $$
By the assumption, the left hand side is equal to 
$$\sigma^{3-p} d_{p-1}(\iota_{l+p-3}\otimes \nu) = \sum_{\alpha} \sigma^{3-p}\bQe{\e(\alpha)}{i(\alpha)}(\iota_{l+p-3})\otimes \nu_{\alpha}.$$
Finally, the proposition follows by Theorem~\ref{theorem: the adjoint of delta_n}.
\end{proof}

The ``$p=2$''-analog of Proposition~\ref{proposition: james-hopf and differential, reserve} reads as follows; its proof is almost identical and we leave to the reader to complete the details.

\begin{prop}[$p=2$]\label{proposition: james-hopf and differential, reserve, p=2}
Let $W\in \Mod^{\geq 0}_{\kk}$. Suppose that 
$$d_{1}(\iota_0\otimes \lambda) = \sum_{\alpha}\Qe{i(\alpha)}(\iota_0)\otimes \lambda_{\alpha} \in E^1_{*,2}(\kk), $$
where $i(\alpha)\geq 1$ and $\lambda, \lambda_{\alpha}\in \Lambda$. Then
$$j_{2*}(\sigma^{-1}w\otimes \nu)=\sum_{\alpha}\sigma^{-1}Q^{i(\alpha)}(w)\otimes \lambda_{\alpha}, \;\; w\in \pi_q(W). $$
\end{prop}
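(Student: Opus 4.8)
The plan is to deduce Proposition~\ref{proposition: james-hopf and differential, reserve, p=2} from its odd-primary companion, Proposition~\ref{proposition: james-hopf and differential, reserve}, by running the same argument with $p=2$ while tracking the degree shifts correctly. First I would invoke naturality of the James-Hopf map $j_2^{(-)}$ of Definition~\ref{definition:james-hopf map}: since $j_2$ is a natural transformation of finitary functors, and any $w\in\pi_q(W)$ factors through $\overline{w}\colon\Sigma^q\kk\to W$ inducing $j_2^{\Sigma^q\kk}$ compatibly with $j_2^W$, it suffices to prove the identity when $\dim\pi_*(W)=1$, say $W\simeq\Sigma^l\kk$, $q=l$, $w=\iota_l$. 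This reduces everything to a statement about the universal differential $d_1\colon\kk\otimes\Lambda\to\pi_*((\bfLie_2\otimes\kk^{\otimes 2})_{h\Sigma_2})\otimes\Lambda$ in $E^1_{*,*}(\kk)$.

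Next, observe the key simplification at $p=2$: the shift $p-2=0$, $p-3=-1$, $3-p=1$. So the auxiliary functor becomes $\Omega^{p-2}L_\xi\free = L_\xi\free$ itself, and the suspension discrepancy $\sigma^{p-3}$ in the odd case is replaced by $\sigma^{-1}$, whose kernel (as a map on homotopy groups of $\mathbf{L}_2$-powers, which by Proposition~\ref{proposition: corona cell is almost equivalence} are equivalences in the relevant range for $p=2$) vanishes. This is precisely why the "$\ker(\sigma^{p-3})\otimes\Lambda$" error term present in the odd-primary Proposition~\ref{proposition: james-hopf and differential, reserve} disappears in the $p=2$ case. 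Concretely, I would set $V=W\simeq\Sigma^l\kk$ (no degree shift needed) and, exactly as in the proof of Proposition~\ref{proposition: differential, length 1}, use Corollary~\ref{corollary: splitting} and Proposition~\ref{proposition:james-hopf and adjoint} to write the first non-trivial differential $d_1$ of the spectral sequence~\eqref{equation: AGSS} for $V$ as $d_{1*}=\delta_{1*}$, where $\delta_1\simeq\Omega^{\infty}\D_2(\tilde\delta_1)\circ j_2^{V}$. Applying this to $\iota_l\otimes\lambda$ gives
\begin{equation*}
d_1(\iota_l\otimes\lambda)=\D_2(\tilde\delta_1)_*\circ j_{2*}(\sigma^{-1}\iota_l\otimes\lambda).
\end{equation*}

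Then I would feed in the hypothesis $d_1(\iota_0\otimes\lambda)=\sum_\alpha\Qe{i(\alpha)}(\iota_0)\otimes\lambda_\alpha$, which by the degree-independence of the formula (established in Theorems~\ref{theorem: lin formula, odd p} and~\ref{theorem: lin formula, p = 2}, and used just before Proposition~\ref{proposition: james-hopf and differential, reserve}) yields $d_1(\iota_l\otimes\lambda)=\sum_\alpha\Qe{i(\alpha)}(\iota_l)\otimes\lambda_\alpha$. By Proposition~\ref{proposition:james-hopf and primitives}, $j_{2*}(\sigma^{-1}\iota_l\otimes\lambda)$ lies in the lowest second-grading piece $\pi_{*,0}(\suspfreexi(\Sigma^{-1}V^{\otimes 2}_{h\Sigma_2}))\cong\pi_*(\Sigma^{-1}V^{\otimes 2}_{h\Sigma_2})$; writing it as $\sum_\alpha\sigma^{-1}a_\alpha\otimes\lambda_\alpha$ for $a_\alpha\in\pi_*(V^{\otimes 2}_{h\Sigma_2})$ (plus possibly other $\Lambda$-components), the computation of $\D_2(\tilde\delta_1)_*$ in Theorem~\ref{theorem: the adjoint of delta_n, p=2} identifies $\D_2(\tilde\delta_1)_*(\sigma^{-1}Q^i(\sigma w')\otimes\lambda')=\sigma\Qe{i}(w')\otimes\lambda'$, and crucially this map is injective on the relevant homotopy groups at $p=2$ (again by Proposition~\ref{proposition: corona cell is almost equivalence}, since the suspension and corona cell maps are equivalences for $p=2$ and $l\in\Z$). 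Matching coefficients of $\lambda_\alpha$ forces $a_\alpha=Q^{i(\alpha)}(w)$ and the other $\Lambda$-components to vanish, giving $j_{2*}(\sigma^{-1}w\otimes\lambda)=\sum_\alpha\sigma^{-1}Q^{i(\alpha)}(w)\otimes\lambda_\alpha$ as claimed. The main obstacle is bookkeeping: one must verify that the absence of the $\ker(\sigma^{p-3})$ term really is a consequence of $p-3=-1$ together with Proposition~\ref{proposition: corona cell is almost equivalence}(1), i.e. that $\sigma^{-1}$ is injective (indeed an isomorphism) on $\pi_*$ of the relevant $\mathbf{L}_2$-power, so that no ambiguity survives — and, as noted, to reduce cleanly to the one-dimensional case via naturality before any of the machinery is applied.
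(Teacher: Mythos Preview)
Your approach is the one the paper intends (it leaves the proof to the reader as the $p=2$ analogue of Proposition~\ref{proposition: james-hopf and differential, reserve}), and your reasoning about why the $\ker(\sigma^{p-3})$ term disappears is exactly right. There is one bookkeeping slip: in Proposition~\ref{proposition:james-hopf and adjoint} the James--Hopf map appearing is $j_2^{\Sigma^{3-p}\mathbf{L}_1(V)}=j_2^{\Sigma V}$, not $j_2^{V}$, so to obtain information about $j_2^{W}$ you must take $V=\Sigma^{-1}W$ --- which is precisely $\Sigma^{p-3}W$ at $p=2$, so a degree shift \emph{is} needed after all. The displayed identity should then read
\[
\sigma\, d_1(\iota_{l-1}\otimes\lambda)=\D_2(\tilde\delta_1)_*\circ j^{W}_{2*}(\sigma^{-1}\iota_l\otimes\lambda),
\]
with $d_1$ taken in the spectral sequence for $\Sigma^{l-1}\kk$; the remainder of your argument goes through unchanged, since the Lin formula is degree-independent. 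A second minor point: Proposition~\ref{proposition:james-hopf and primitives} only says $j_{2*}$ lowers the second grading by one, not that the image lands in $\pi_{*,0}$ (for $\lambda\in\Lambda_s$ the image lies in $\pi_{*,s-1}$); but you do not actually need that claim, since Theorem~\ref{theorem: the adjoint of delta_n, p=2} already gives injectivity of $\D_2(\tilde\delta_1)_*$ on all of $\pi_*\suspfreexi(\Sigma^{-1}W^{\otimes 2}_{h\Sigma_2})$.
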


\subsection{Adem-type relations}\label{section: adem-type relations} Recall the following property of the mod-$p$ binomial coefficients.

\begin{lmm}\label{lemma: binomial, congruence}
Let $b\geq 0$ and $a<0$. Then 
$$\binom{a+k}{b} \equiv (-1)^b\binom{b-a-1}{b} \mod p $$
for all $k\in \N$ such that $\nu_p(k) > \lceil \log_p(b) \rceil$.
\end{lmm}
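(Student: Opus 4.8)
The plan is to deduce the congruence from two classical facts: the \emph{stabilization} of $\binom{a+k}{b}$ modulo $p$ along $p$-divisible shifts of $k$, and the \emph{upper negation} identity for binomial coefficients.

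First I would reduce the claim. Recall that for every integer $a$ (negative or not) and every $b\ge 0$ the quantity $\binom{a}{b}:=\tfrac{1}{b!}\,a(a-1)\cdots(a-b+1)$ is an integer and satisfies the upper negation identity $\binom{a}{b}=(-1)^b\binom{b-a-1}{b}$; when $a<0$ the right-hand side is an ordinary binomial coefficient of non-negative integers, since $b-a-1\ge b\ge 0$. Hence it suffices to prove
\[
\binom{a+k}{b}\equiv\binom{a}{b}\pmod p\qquad\text{whenever }\nu_p(k)\text{ is sufficiently large.}
\]

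For this I would invoke the Vandermonde identity in its polynomial form, $\binom{a+k}{b}=\sum_{j=0}^{b}\binom{k}{j}\binom{a}{b-j}$, which holds for all integer values of $a$ and all $k\ge 0$. The term $j=0$ contributes exactly $\binom{a}{b}$, so it only remains to observe that $\binom{k}{j}\equiv 0\pmod p$ for $1\le j\le b$ once $\nu_p(k)$ is large. Writing $\binom{k}{j}=\tfrac{k}{j}\binom{k-1}{j-1}$ with $\binom{k-1}{j-1}\in\mathbf Z$ gives $\nu_p\!\binom{k}{j}\ge\nu_p(k)-\nu_p(j)\ge\nu_p(k)-\log_p b$, so it is enough that $p^{\nu_p(k)}>b$; in particular the threshold on $\nu_p(k)$ depends only on $b$. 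Equivalently, one may phrase this step as the $p$-adic continuity of the polynomial map $x\mapsto\binom{x}{b}$ on $\mathbf Z_p$ together with the fact that $a+k\to a$ in $\mathbf Z_p$ as $\nu_p(k)\to\infty$. Combining this with the reduction above proves the lemma.

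I do not expect any real obstacle; the only points requiring a little care are the sign and index conventions for $\binom{a}{b}$ when $a<0$, and making sure that Vandermonde is used as a polynomial identity (valid for negative $a$) rather than as a counting statement.
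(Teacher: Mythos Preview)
Your proof is correct. The paper does not actually give a proof of this lemma: it is stated with a trailing \qed{} and prefaced by ``Recall the following property of the mod-$p$ binomial coefficients,'' so the authors treat it as a standard fact left to the reader. Your argument via upper negation together with Vandermonde (equivalently, the $p$-adic continuity of $x\mapsto\binom{x}{b}$) is exactly the kind of routine verification that justifies omitting the proof.
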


\begin{proof} 
Follows by Lucas's theorem.
\end{proof}

Using Lemma~\ref{lemma: binomial, congruence}, we can provide more explicit formulas in Theorems~\ref{theorem: lin formula, odd p} and~\ref{theorem: lin formula, p = 2} in the case of monomials of length $2$.

\begin{prop}\label{proposition: differential, adem, p=2}
Suppose that $p=2$, $V\simeq \Sigma^l\kk$, $l\geq 0$. Let $\lambda_i\lambda_j \in \Lambda$ be an admissible monomial, i.e. $j\leq 2i$. Then 
\begin{align*}
\tilde{d}_2(\iota_l\otimes \lambda_i\lambda_j) &= \Qe{i}(\iota_l)\otimes \lambda_j \\
&+\sum_{m=2i+1}^{i+j-1}\binom{2m-2i-j-1}{m-2i-1}\Qe{i+j-m}(\iota_l)\otimes\lambda_m.
\end{align*}
\end{prop}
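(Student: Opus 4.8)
The plan is to obtain this as the length-two specialization of Lin's formula, Theorem~\ref{theorem: lin formula, p = 2}, and then to pin down the coefficients explicitly. First I would apply that theorem to the admissible sequence $I=(i,j)$, so that $s=2$ and $I'=(j)$: there is a constant $N=N(i,j)$ such that for every $k\in\N$ with $\nu_2(k)>N$,
\[
\tilde{d}_2(\iota_l\otimes\lambda_i\lambda_j)=\Qe{i}(\iota_l)\otimes\lambda_j+\sum_{\alpha}c_\alpha\,\Qe{i_1(\alpha)}(\iota_l)\otimes\lambda_{i_2(\alpha)},
\]
where $c_\alpha\in\F_2$ is the coefficient of the monomial $\lambda_{i_1(\alpha)+k}\lambda_{i_2(\alpha)}$ in the admissible expansion of $\lambda_{I(2,k)}=\lambda_i\lambda_{j+k}$. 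Thus everything reduces to computing this expansion and reading off the coefficients of the monomials whose first index is $\geq k$.

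Next I would observe that, enlarging $N$ if necessary, $j+k>2i$, so the Adem relation~\eqref{equation: lambda, adem3, p=2} applies and yields
\[
\lambda_i\lambda_{j+k}=\sum_{m\geq 1}\binom{j+k-m-1}{m-2i-1}\lambda_{i+j+k-m}\lambda_m .
\]
For $\nu_2(k)$ large every term on the right is already admissible, so this is the admissible expansion. The summand indexed by $m$ has first index $\geq k$ precisely when $m\leq i+j$, and for such $m$ it is exactly the monomial $\lambda_{I(\alpha,1,k)}$ of Lin's formula with $i_1(\alpha)=i+j-m$ and $i_2(\alpha)=m$; the remaining summands ($m>i+j$) have first index $<k$ and are killed by $\psi_k\tilde{d}_2$ via Lemma~\ref{lemma: differential, excess}. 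Hence $c_\alpha\equiv\binom{j+k-m-1}{m-2i-1}\pmod{2}$ for $2i+1\leq m\leq i+j$. Since $\lambda_i\lambda_j$ is admissible we have $m\geq 2i+1>j$, so $j-m-1<0$, and Lemma~\ref{lemma: binomial, congruence} (with $b=m-2i-1\geq 0$, absorbing the required bound on $\nu_2(k)$ into $N$) gives $\binom{j+k-m-1}{m-2i-1}\equiv\binom{2m-2i-j-1}{m-2i-1}\pmod{2}$. Finally, the extreme term $m=i+j$ contributes $\Qe{0}(\iota_l)$, which vanishes because $0\leq l$ (Corollary~\ref{corollary: kjaer basis of free lie algebra}), so the range collapses to $2i+1\leq m\leq i+j-1$; re-indexing produces the asserted formula, which is manifestly independent of $k$, as it must be.

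The only step requiring genuine care --- and hence the main obstacle --- is the middle one: verifying that a single application of~\eqref{equation: lambda, adem3, p=2} already lands in the admissible PBW basis, so that no further Adem reduction contributes additional monomials with first index $\geq k$ (equivalently, that the coefficients $c_\alpha$ in Lin's formula are precisely the single-Adem-relation coefficients). This follows from the elementary fact that for $\nu_2(k)\gg 0$ the Adem summation range $m\leq\tfrac{1}{2}(j+k)+i$ lies inside $m\leq\tfrac{2}{3}(i+j+k)$, which is exactly the admissibility condition for $\lambda_{i+j+k-m}\lambda_m$; this is the bookkeeping carried out concretely in Example~\ref{example: lin formula, p=2}. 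Everything else is routine, and the odd-primary companion Proposition~\ref{proposition: differential, adem, p is odd} will follow by the same scheme, using the Adem relations~\eqref{equation: lambda, adem1, p is odd} and~\eqref{equation: lambda, adem2, p is odd} together with the sign bookkeeping built into Lemma~\ref{lemma: binomial, congruence}.
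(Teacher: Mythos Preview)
Your proposal is correct and follows essentially the same route as the paper: apply Theorem~\ref{theorem: lin formula, p = 2} to $I=(i,j)$, expand $\lambda_i\lambda_{j+k}$ by the Adem relation~\eqref{equation: lambda, adem3, p=2}, keep only the terms with first index $\geq k$, simplify the binomial coefficients via Lemma~\ref{lemma: binomial, congruence}, and drop the $m=i+j$ term because $\Qe{0}(\iota_l)=0$. The only difference is that you make explicit the check that a single Adem application already yields admissible monomials for $\nu_2(k)\gg 0$, which the paper leaves implicit.
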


\begin{proof}
By Theorem~\ref{theorem: lin formula, p = 2}, we have to find the admissible expansion of $\lambda_i\lambda_{j+k}$, $\nu_2(k)\gg 0$. By the Adem relation~\eqref{equation: lambda, adem3, p=2}, we have
\begin{align*}
\lambda_i\lambda_{j+k} &= \sum_{m=2i+1}^{j+k-1}\binom{j+k-m-1}{m-2i-1}\lambda_{j+k-m+i}\lambda_m \\
&=\sum_{m=2i+1}^{i+j}\binom{j+k-m-1}{m-2i-1}\lambda_{j+k-m+i}\lambda_m\\ 
&+ \text{terms which start with $\lambda_\alpha$, $\alpha<k$.}
\end{align*}
Since the sequence $(i,j)$ is admissible, we have 
$$j-m-1\leq j-(2i+1)-1\leq -2<0. $$
Therefore, by Lemma~\ref{lemma: binomial, congruence},
\begin{align*}
\tilde{d}_2(\iota_l\otimes \lambda_i\lambda_j) &= \Qe{i}(\iota_l)\otimes \lambda_j +\sum_{m=2i+1}^{i+j}\binom{j+k-m-1}{m-2i-1}\Qe{i+j-m}(\iota_l)\otimes\lambda_m \\
&=\Qe{i}(\iota_l)\otimes \lambda_j +\sum_{m=2i+1}^{i+j}\binom{2m-2i-j-1}{m-2i-1}\Qe{i+j-m}(\iota_l)\otimes\lambda_m.
\end{align*}
Finally, we drop the last term $\Qe{0}(\iota_l)\otimes\lambda_{i+j}$, since $\Qe{0}(\iota_l)$ always vanishes.
\end{proof}

\begin{prop}\label{proposition: differential, adem, p is odd}
Suppose that $p$ is odd, $V\simeq \Sigma^l\kk$, $l\geq 0$. Let $\lambda_i\nu^{\e}_j \in \Lambda$, $\e\in\{0,1\}$ be an admissible monomial, i.e. $j< pi$. Then 
\begin{align*}
\tilde{d}_2(\iota_l\otimes \lambda_i\nu^{\e}_j) &= \bQe{}{i}(\iota_l)\otimes \nu^{\e}_j \\
&+(-1)^{\e}\sum_{m = pi}^{i+j-1}\binom{p(m-i)-(p-1)j+\e-1}{m-pi} \bQe{\e}{j+i-m}(\iota_l)\otimes \lambda_m \\
&+(\e-1)\sum_{m=pi}^{i+j-1}\binom{p(m-i)-(p-1)j}{m-pi} \bQe{}{j+i-m}(\iota_l)\otimes \mu_m.
\end{align*}
Similarly, let $\mu_i\nu_j^{\e} \in \Lambda$, $\e\in\{0,1\}$ be an admissible monomial, i.e. $j\leq pi$. Then 
\begin{align*}
\tilde{d}_2(\iota_l\otimes \mu_i\nu^{\e}_j) &= \Qe{i}(\iota_l)\otimes \nu^{\e}_j \\
&-\sum_{m = pi+1}^{i+j-1}\binom{p(m-i)-(p-1)j-1}{m-pi-1} \Qe{j+i-m}(\iota_l)\otimes \nu_m^{\e}.
\end{align*}
\end{prop}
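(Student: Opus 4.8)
The plan is to run the argument of Proposition~\ref{proposition: differential, adem, p=2} with the single Adem relation~\eqref{equation: lambda, adem3, p=2} replaced by the odd-primary relations~\eqref{equation: lambda, adem1, p is odd} and~\eqref{equation: lambda, adem2, p is odd}. First I would feed Lin's formula (Theorem~\ref{theorem: lin formula, odd p}) the length-two admissible sequences $I=(i,1,j,\e)$, so that $\nu_I=\lambda_i\nu^{\e}_j$, and $I=(i,0,j,\e)$, so that $\nu_I=\mu_i\nu^{\e}_j$. For $s=2$ the sum $\sum_{t=2}^{s}\nu_{I(t,k)}$ appearing in Theorem~\ref{theorem: lin formula, odd p} collapses to the single monomial $\lambda_i\nu^{\e}_{j+k}$ (resp.\ $\mu_i\nu^{\e}_{j+k}$), so the whole computation reduces to determining the admissible expansion of that monomial for $k\in\N$ with $\nu_p(k)$ large and reading off, for each admissible monomial of the form $\nu^{\e'}_{r+k}\cdot\nu^{\e''}_s$ with $r\geq 0$, i.e.\ of first index $\geq k$, the corresponding coefficient $c_\alpha$; the leading term of Lin's formula contributes $\bQe{\e_1}{i}(\iota_l)\otimes\nu^{\e}_j$, which is $\bQe{}{i}(\iota_l)\otimes\nu^{\e}_j$ in the first case and $\Qe{i}(\iota_l)\otimes\nu^{\e}_j$ in the second.

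Since $\lambda_i\nu^{\e}_j$ is admissible precisely when $j<pi$, for $k$ large the shifted monomial $\lambda_i\nu^{\e}_{j+k}$ is inadmissible and the Adem relation~\eqref{equation: lambda, adem1, p is odd} applies with $a=i$, $b=j+k$:
\begin{align*}
\lambda_i\nu^{\e}_{j+k} &= \sum_{m\geq pi}(-1)^{m+i+\e}\binom{(p-1)(j+k-m)-\e}{m-pi}\,\nu^{\e}_{i+j+k-m}\lambda_m \\
&\quad +(1-\e)\sum_{m\geq pi}(-1)^{m+i+1}\binom{(p-1)(j+k-m)-1}{m-pi}\,\lambda_{i+j+k-m}\mu_m .
\end{align*}
A short inequality check shows that once $k$ is large enough every monomial on the right-hand side is already admissible, so this is the full admissible expansion; the monomials of first index $\geq k$ are exactly those with $pi\leq m\leq i+j$. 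I would then invoke Lemma~\ref{lemma: binomial, congruence} — legitimate because $m\geq pi>j$ makes both $(p-1)(j-m)-\e$ and $(p-1)(j-m)-1$ negative — to replace $\binom{(p-1)(j+k-m)-\e}{m-pi}$ by $(-1)^{m-pi}\binom{p(m-i)-(p-1)j+\e-1}{m-pi}$ and $\binom{(p-1)(j+k-m)-1}{m-pi}$ by $(-1)^{m-pi}\binom{p(m-i)-(p-1)j}{m-pi}$. A parity computation, using that $p$ is odd so $(-1)^{i(1-p)}=1$, collapses the resulting signs to $(-1)^{\e}$ and $-1$ respectively; the $m=i+j$ summand drops out because $\bQe{\e}{0}(\iota_l)=0$ by Corollary~\ref{corollary: kjaer basis of free lie algebra}, and adding the leading term yields the first displayed formula of the proposition.

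The second formula is obtained identically from the Adem relation~\eqref{equation: lambda, adem2, p is odd}: $\mu_i\nu^{\e}_j$ is admissible exactly when $j\leq pi$, so for $k$ large $\mu_i\nu^{\e}_{j+k}$ is inadmissible and expands, again with no inadmissible leftovers once $k$ is large, as $\sum_{m\geq pi+1}(-1)^{m+i}\binom{(p-1)(j+k-m)-1}{m-pi-1}\mu_{i+j+k-m}\nu^{\e}_m$; the terms of first index $\geq k$ are those with $pi+1\leq m\leq i+j$, Lemma~\ref{lemma: binomial, congruence} turns the binomial into $(-1)^{m-pi-1}\binom{p(m-i)-(p-1)j-1}{m-pi-1}$, the signs collapse to $-1$, the $m=i+j$ term vanishes because $\Qe{0}(\iota_l)=0$, and the leading term is $\Qe{i}(\iota_l)\otimes\nu^{\e}_j$.

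The main point to pin down is the claim that, for $\nu_p(k)$ sufficiently large, the single application of~\eqref{equation: lambda, adem1, p is odd} (resp.~\eqref{equation: lambda, adem2, p is odd}) already produces an admissible expansion, so that Lin's coefficients $c_\alpha$ coincide with the explicit Adem coefficients above; this follows from comparing the range of summation imposed by the vanishing of the binomial coefficients with the admissibility inequality $p\cdot(\text{first index})-(\text{superscript})\geq(\text{second index})$ and observing that the former lies strictly inside the latter once $k$ dominates $i$ and $j$. Everything after that is the routine manipulation of binomial coefficients through Lemma~\ref{lemma: binomial, congruence} and a sign chase, exactly parallel to the proof of Proposition~\ref{proposition: differential, adem, p=2}.
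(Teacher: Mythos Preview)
Your proposal is correct and follows essentially the same approach as the paper: apply Theorem~\ref{theorem: lin formula, odd p} to the length-two monomial, expand $\lambda_i\nu^{\e}_{j+k}$ (resp.\ $\mu_i\nu^{\e}_{j+k}$) by the Adem relation~\eqref{equation: lambda, adem1, p is odd} (resp.~\eqref{equation: lambda, adem2, p is odd}), invoke Lemma~\ref{lemma: binomial, congruence} to rewrite the binomial coefficients, collapse the signs, and drop the $m=i+j$ term using $\bQe{\e}{0}(\iota_l)=\Qe{0}(\iota_l)=0$. If anything you are slightly more explicit than the paper in justifying that a single application of the Adem relation already yields the admissible expansion for $k$ large; the paper just lumps the remaining summands as ``terms which start with $\lambda_\alpha$ or $\mu_\alpha$, $\alpha<k$'' without commenting on their admissibility.
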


\begin{proof}
We prove the proposition only for monomials $\lambda_i\nu^{\e}_j$; the proof for monomials $\mu_i\nu^{\e}_j$ is identical and we leave it to the reader to complete the details.
By Theorem~\ref{theorem: lin formula, odd p}, we have to find the admissible expansion of $\lambda_i\nu^{\e}_{j+k}$, $\nu_p(k)\gg 0$. By the Adem relation~\eqref{equation: lambda, adem1, p is odd}, we have
\begin{align*}
\lambda_i\nu^{\e}_{j+k} &= \sum_{m=pi}^{i+j}(-1)^{m+i+\e}\binom{(p-1)(j+k-m)-\e}{m-pi}\nu^{\e}_{j+k-m+i}\lambda_m \\
&+(1-\e)\sum_{m=pi}^{i+j}(-1)^{m+i+1}\binom{(p-1)(j+k-m)-1}{m-pi}\lambda_{j+k-m+i}\mu_m\\
&+ \text{terms which start with $\lambda_\alpha$ or $\mu_{\alpha}$, $\alpha<k$.}
\end{align*}
Since the monomial $\lambda_i\nu_j^{\e}$ is admissible, we have 
$$(p-1)(j-m)\leq (p-1)(j-pi)<0. $$
By Lemma~\ref{lemma: binomial, congruence}, we observe
$$(-1)^{m+i+\e}\binom{(p-1)(j+k-m)-\e}{m-pi}=(-1)^{\e}\binom{p(m-i)-(p-1)j+\e-1}{m-pi},$$
$$(-1)^{m+i+1}\binom{(p-1)(j+k-m)-1}{m-pi}=-\binom{p(m-i)-(p-1)j}{m-pi}. $$
Therefore
\begin{align*}
\tilde{d}_2(\iota_l\otimes \lambda_i\nu^{\e}_j) &= \bQe{}{i}(\iota_l)\otimes \nu^{\e}_j \\
&+(-1)^{\e}\sum_{m = pi}^{i+j}\binom{p(m-i)-(p-1)j+\e-1}{m-pi} \bQe{\e}{j+i-m}(\iota_l)\otimes \lambda_m \\
&+ (\e-1)\sum_{m=pi}^{i+j}\binom{p(m-i)-(p-1)j}{m-pi} \bQe{}{j+i-m}(\iota_l)\otimes \mu_m.
\end{align*}
Finally, we drop the last terms $\Qe{0}(\iota_l)\otimes\lambda_{i+j}$ and $\bQe{}{0}(\iota_l)\otimes \mu_m$, since $\Qe{0}(\iota_l)$, $\bQe{}{0}(\iota_l)$ always vanish. 

The proof for $\mu_i\nu_j^{\e}$ uses the Adem relation~\eqref{equation: lambda, adem2, p is odd} instead of~\eqref{equation: lambda, adem1, p is odd}.
\end{proof}

\begin{dfn}[$p$ is odd]\label{definition: Dyer--Lashof--Lie algebra, p is odd}
The \emph{Dyer--Lashof--Lie algebra} $\calR_p$ is the $\F_p$-algebra generated by the Dyer--Lashof--Lie operations $\bQe{\e}{i}$, $i\geq 1$, $\e\in\{0,1\}$ subject to the following relations
\begin{align}\label{equation: Lie-Adem relation, p is odd, eq1}
\bQe{\e}{j}\cdot \bQe{}{i} &=(-1)^{\e+1}\sum_{m = pi}^{i+j-1}\binom{p(m-i)-(p-1)j+\e-1}{m-pi} \bQe{}{m}\cdot\bQe{\e}{j+i-m} \\
&+(1-\e)\sum_{m=pi}^{i+j-1}\binom{p(m-i)-(p-1)j}{m-pi} \Qe{m}\cdot\bQe{}{j+i-m}, \nonumber
\end{align}
where $j<pi$, $\e\in\{0,1\}$, and
\begin{align}\label{equation: Lie-Adem relation, p is odd, eq2}
\bQe{\e}{j}\cdot\Qe{i} = \sum_{m = pi+1}^{i+j-1}\binom{p(m-i)-(p-1)j-1}{m-pi-1} \bQe{\e}{m}\cdot \Qe{j+i-m},
\end{align}
where $j\leq pi$, $\e\in\{0,1\}$.
\end{dfn}

\begin{dfn}[$p=2$]\label{definition: Dyer--Lashof--Lie algebra, p is 2}
The \emph{Dyer--Lashof--Lie algebra} $\calR_2$ is the $\F_2$-algebra generated by the Dyer--Lashof--Lie operations $\Qe{i}$, $i\geq 1$ subject to the following relations
\begin{align}\label{equation: Lie-Adem relation, p is 2, eq1}
\Qe{j}\cdot \Qe{i} =\sum_{m=2i+1}^{i+j-1}\binom{2m-2i-j-1}{m-2i-1}\Qe{m}\cdot\Qe{i+j-m}.
\end{align}
where $j\leq 2i$.
\end{dfn}

\begin{rmk}\label{remark: dll, summation over the empty range}
In the formulas~\eqref{equation: Lie-Adem relation, p is odd, eq1}, \eqref{equation: Lie-Adem relation, p is odd, eq2}, and~\eqref{equation: Lie-Adem relation, p is 2, eq1}, the summation over the empty range is meant to be zero. In particular,
$$ 
0 =
\begin{cases}
\Qe{j}\cdot \Qe{i}, & \mbox{$j\leq i+1$, $p=2$}\\
\bQe{\e}{j}\cdot \bQe{}{i},& \mbox{$j\leq (p-1)i$, $\e\in \{0,1\}$, $p$ is odd}\\
\bQe{\e}{j}\cdot \Qe{i}, 
& \mbox{$j\leq (p-1)i+1$, $\e\in \{0,1\}$, $p$ is odd.}
\end{cases}
$$
\end{rmk}

\begin{dfn}\label{definition: module over dll}
A \emph{left $\calR_p$-module} $M_*=\bigoplus_{q\geq 0} M_q$ is a graded vector space over $\kk$ equipped with an $\F_p$-linear left action of the Dyer--Lashof--Lie algebra $\calR_p$ such that
\begin{enumerate}
\item $\bQe{\e}{i}(M_q)\subset M_{q+2i(p-1)-\e-1}, i\geq 1, \e\in\{0,1\}$ (resp. $\Qe{i}(M_q)\subset M_{q+i-1}$, $i\geq 1$ if $p=2$);
\item $\bQe{\e}{i}(\alpha x)=\alpha^p\bQe{\e}{i}(x)$ (resp. $\Qe{i}(\alpha x)=\alpha^2 \Qe{i}(x)$), $\alpha\in \kk$, $x\in M_q$.
\end{enumerate}
\end{dfn}

\begin{dfn}\label{definition: unstable module}
An $\calR_p$-module $M_*$ is \emph{unstable} if $\bQe{\e}{i}(x)=0$ (resp. $\Qe{i}(x)=0$) for all $x\in M_q$ and $2i \leq q$ (resp. $i\leq q$).
\end{dfn}

In the sequel, we want to demonstrate that the homotopy groups $\pi_*(L)$ of an $H_\infty$-$\bfLie_*$-algebra $L$ form a left unstable $\calR_p$-module. For that we need to compute the differentials $d_r\colon E^1_{*,n}(W) \to E^1_{*,n+r}(W)$ in the spectral sequence~\eqref{equation: AGSS} with $n\geq 2$; the case $n=1$ is done in Theorems~\ref{theorem: lin formula, odd p} and~\ref{theorem: lin formula, p = 2}.

\begin{lmm}[$p$ is odd]\label{lemma:GSS differential, positive columns}
Let $W \in \Mod^{\geq 0}_\kk$, $w\in \pi_*(\bfLie_n\otimes_{\Sigma^n}W^{\otimes n})$, and let $\nu \in \Lambda$. Suppose that the universal differential~\eqref{equation: universal differential}
$$d_{p-1}(\iota_0\otimes \nu) = \sum_{\alpha}\bQe{\e(\alpha)}{i(\alpha)}(\iota_0)\otimes \nu_{\alpha} \in E^1_{*,p}(\kk),$$
where $i(\alpha)\geq 0$, $\e(\alpha)\in\{0,1\}$, $\nu_{\alpha} \in \Lambda$. Then 
$$d_{(p-1)n}(w\otimes \nu)= \sum_{\alpha}\bQe{\e(\alpha)}{i(\alpha)}(w)\otimes \nu_{\alpha} \in E^{1}_{*,pn}(W).$$
\end{lmm}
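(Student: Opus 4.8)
The plan is to run the argument of Proposition~\ref{proposition: differential, length 1} essentially verbatim, replacing the one-dimensional generator $\iota_l$ by the class $w\in\pi_*(\mathbf{L}_n(W))$ and the source functor $\free$ by $\Omega^{p-2}L_\xi\free$. First I would pass, exactly as in that proof and via Remark~\ref{remark: completed agss}, from the spectral sequence~\eqref{equation: AGSS} to its $\kk$-complete, $(p-2)$-fold looped avatar~\eqref{equation: shifted complete AGSS}. There, by Corollary~\ref{corollary: splitting} and the fiber sequence~\eqref{equation:Dpk to Dk}, the differential $d_{(p-1)n}$ is the map on homotopy induced by the connecting transformation $\delta_n^W\colon D_n(\Omega^{p-2}L_\xi\free)(W)\to BD_{pn}(\Omega^{p-2}L_\xi\free)(W)$; and by Proposition~\ref{proposition:james-hopf and adjoint} this factors as $\delta_n^W\simeq\deloop\D_{pn}(\tilde{\delta}_n^W)\circ j_p^{\Sigma^{3-p}\mathbf{L}_n(W)}$. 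Tracking the suspension bookkeeping precisely as in Proposition~\ref{proposition: differential, length 1} then reduces the lemma to computing $\D_{pn}(\tilde{\delta}_n)_*\circ j_{p*}^{\Sigma^{3-p}\mathbf{L}_n(W)}$ on the class $\sigma^{2-p}w\otimes\nu$ (the image of $w\otimes\nu$ after looping).

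Next I would evaluate the James--Hopf map. Writing $\sigma^{2-p}w\otimes\nu=\sigma^{-1}(\sigma^{3-p}w)\otimes\nu$ and applying Proposition~\ref{proposition: james-hopf and differential, reserve} with the role of its $W$ played by $\Sigma^{3-p}\mathbf{L}_n(W)$ and that of its $w$ played by $\sigma^{3-p}w$ — feeding in exactly the hypothesis on the universal differential~\eqref{equation: universal differential} — one obtains
$$j_{p*}^{\Sigma^{3-p}\mathbf{L}_n(W)}(\sigma^{2-p}w\otimes\nu)=\sum_{\alpha}(-1)^{q\e(\alpha)}\,\sigma^{-1}\beta^{\e(\alpha)}Q^{i(\alpha)}(\sigma^{3-p}w)\otimes\nu_{\alpha}\ \bmod\ \ker(\sigma^{p-3})\otimes\Lambda,$$
where $q=|w|$ and we used that $3-p$ is even ($p$ odd) to collapse the sign $(-1)^{(q+3-p)\e(\alpha)}$ to $(-1)^{q\e(\alpha)}$. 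Should the connectivity of $\Sigma^{3-p}\mathbf{L}_n(W)$ prevent it from lying in $\Mod^{\geq 0}_{\kk}$, one first suspends $W$, using naturality in $W$ and the $\Lambda$-linearity of all maps involved; alternatively one peels off $\nu$ via the Leibniz rule (Lemma~\ref{lemma: GSS differential on hopf elements and leibniz}) and quotes Theorem~\ref{theorem:james-hopf and dyer-lashof} on generators instead.

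Finally I would apply $\D_{pn}(\tilde{\delta}_n)_*$ and invoke Theorem~\ref{theorem: the adjoint of delta_n}: its concluding assertion says precisely that $\D_{pn}(\tilde{\delta}_n)_*$ annihilates the summand $\ker(\sigma^{p-3})\otimes\Lambda$, while it carries $\sigma^{-1}\beta^{\e(\alpha)}Q^{i(\alpha)}(\sigma^{3-p}w)\otimes\nu_{\alpha}$ to $(-1)^{q\e(\alpha)}\sigma^{3-p}\bQe{\e(\alpha)}{i(\alpha)}(w)\otimes\nu_{\alpha}$. The two factors $(-1)^{q\e(\alpha)}$ cancel, giving $\delta_{n*}^W(\sigma^{2-p}w\otimes\nu)=\sum_{\alpha}\sigma^{3-p}\bQe{\e(\alpha)}{i(\alpha)}(w)\otimes\nu_{\alpha}$; desuspending by $3-p$ in accordance with the reduction of the first paragraph yields $d_{(p-1)n}(w\otimes\nu)=\sum_{\alpha}\bQe{\e(\alpha)}{i(\alpha)}(w)\otimes\nu_{\alpha}$, which is the claim, and the $p=2$ case (Lemma~\ref{lemma:GSS differential, positive columns, p =2}) follows identically from Proposition~\ref{proposition: james-hopf and differential, reserve, p=2} and Theorem~\ref{theorem: the adjoint of delta_n, p=2}. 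The main obstacle here is bookkeeping rather than a new idea: one must keep the shifts $\Sigma^{2-p}$, $\Sigma^{3-p}$, the $B$-shift in~\eqref{equation:Dpk to Dk}, and the $\sigma^{\pm(p-3)}$ of the looping reduction mutually consistent, and one must be sure that the indeterminacy $\ker(\sigma^{p-3})\otimes\Lambda$ coming out of Proposition~\ref{proposition: james-hopf and differential, reserve} is genuinely absorbed by $\D_{pn}(\tilde{\delta}_n)_*$ — which is exactly the reason Theorem~\ref{theorem: the adjoint of delta_n} was established in its sharper form.
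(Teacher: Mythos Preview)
Your proposal is correct and follows essentially the same route as the paper: reduce to the connecting map $\delta_n$ via the looped/$\kk$-complete spectral sequence and Corollary~\ref{corollary: splitting}, factor $\delta_n$ through the James--Hopf map using Proposition~\ref{proposition:james-hopf and adjoint}, then apply Proposition~\ref{proposition: james-hopf and differential, reserve} followed by Theorem~\ref{theorem: the adjoint of delta_n}. Your explicit tracking of the sign $(-1)^{q\e(\alpha)}$ and of the $\ker(\sigma^{p-3})\otimes\Lambda$ indeterminacy (absorbed by Theorem~\ref{theorem: the adjoint of delta_n}) spells out exactly what the paper leaves implicit in its final sentence.
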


\begin{proof}
We proceed as in Proposition~\ref{proposition: differential, length 1}. We can assume that $\nu \in \Lambda_{r,*}$, $r>0$, and so, we have the following identity
$$d_{p-1}(w\otimes \nu)=\sigma^{p-3}\delta_{n*}(\sigma^{2-p}w\otimes \nu) \in E^1_{t-1,pn}(W), $$
where $\delta_{n*}\colon \pi_t(D_n(\Omega^{p-2}L_\xi\free)(W)) \to \pi_{t-1}(BD_{pn}(\Omega^{p-2}L_\xi\free)(W))$ is the homomorphism induced by the connecting map
$$\delta_n\colon D_n(\Omega^{p-2}L_\xi\free)(W) \to BD_{pn}(\Omega^{p-2}L_\xi\free)(W), $$
see~\eqref{equation:Dpk to Dk}. By Proposition~\ref{proposition:james-hopf and adjoint}, there is an equivalence
$$\delta_n\simeq \deloop\D_{pn}(\tilde{\delta}_n) \circ j_p^{\Sigma^{3-p}\mathbf{L}_nW}, $$
where $j_p$ is the James--Hopf map, see Definition~\ref{definition:james-hopf map}. Finally, the lemma follows from Proposition~\ref{proposition: james-hopf and differential, reserve} and Theorem~\ref{theorem: the adjoint of delta_n}.
\end{proof}

\begin{rmk}\label{remark: higher differential}
The proof of Lemma~\ref{lemma:GSS differential, positive columns} may be surprising as it demonstrates the importance of the James--Hopf map $j_p$ for our calculations. Indeed, we only used the formal properties of the differentials $d_{p-1}$ to determine $j_p$, which can then be used to determine the differentials $d_{(p-1)n}$ for higher $n\geq 1$.
\end{rmk}

The ``$p=2$''-analog of Lemma~\ref{lemma:GSS differential, positive columns} reads as follows; its proof is almost identical and we leave to the reader to complete the details.

\begin{lmm}[$p=2$]\label{lemma:GSS differential, positive columns, p =2}
Let $W \in \Mod^{\geq 0}_\kk$, $w\in \pi_*(\bfLie_n\otimes_{\Sigma^n}W^{\otimes n})$, and let $\lambda \in \Lambda$. Suppose that the universal differential~\eqref{equation: universal differential}
$$d_{1}(\iota_0\otimes \lambda) = \sum_{\alpha}\Qe{i(\alpha)}(\iota_0)\otimes \lambda_{\alpha} \in E^1_{*,2}(\kk),$$
where $i(\alpha)\geq 0$ and $\lambda_{\alpha} \in \Lambda$. Then 
$$d_{n}(w\otimes \lambda)= \sum_{\alpha}\Qe{i(\alpha)}(w)\otimes \lambda_{\alpha} \in E^{1}_{*,2n}(W). $$ 
\end{lmm}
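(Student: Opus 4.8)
The plan is to run the proof of Lemma~\ref{lemma:GSS differential, positive columns} with $p=2$ substituted throughout, using the $p=2$ forms of the auxiliary results. Two preliminary remarks simplify the setup. First, for $p=2$ the functor $\Omega^{p-2}L_\xi\free$ is just $L_\xi\free$, so the $(p-2)$-fold looping appearing in Corollary~\ref{corollary: splitting} is vacuous and one works directly with the Taylor tower of $L_\xi\free$. Second, by bilinearity it suffices to prove the formula when $\lambda=\lambda_I\in\Lambda_r$ is an admissible monomial; the case $r=0$ (that is, $\lambda=1$) is immediate, since then $w\otimes 1$ lies in the bottom piece $E^1_{*,n,0}(W)$ of the trigrading of Remark~\ref{remark: agss is trigraded} and the differential $d_n$ strictly lowers the third grading, so $d_n(w\otimes 1)\in E^1_{*,2n,-1}(W)=0$, which matches the right-hand side because $d_1(\iota_0\otimes 1)=0$ for the same reason. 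Hence one may assume $r\geq 1$.

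For $r\geq 1$ I would proceed exactly as in Proposition~\ref{proposition: differential, length 1}: pass to the $\xi$-completed spectral sequence~\eqref{equation: completed agss} by Remark~\ref{remark: completed agss}, and use Corollary~\ref{corollary: splitting} together with the fiber sequence~\eqref{equation:Dpk to Dk} to identify the differential $d_n\colon E^1_{t,n}(W)\to E^1_{t-1,2n}(W)$ with $\sigma^{-1}\delta_{n*}$, where $\delta_n\colon D_n(L_\xi\free)(W)\to BD_{2n}(L_\xi\free)(W)$ is the connecting map. By Proposition~\ref{proposition:james-hopf and adjoint} this map factors as $\deloop\D_{2n}(\tilde{\delta}_n)\circ j_2^{\Sigma\mathbf{L}_n W}$, so the computation splits into two steps: evaluate the James-Hopf map $j_{2*}$ by feeding the hypothesis $d_1(\iota_0\otimes\lambda)=\sum_\alpha\Qe{i(\alpha)}(\iota_0)\otimes\lambda_\alpha$ into Proposition~\ref{proposition: james-hopf and differential, reserve, p=2}, and then evaluate $\D_{2n}(\tilde{\delta}_n)_*$ using Theorem~\ref{theorem: the adjoint of delta_n, p=2}, which turns $\sigma^{-1}Q^i(\sigma w)\otimes\lambda_\alpha$ into $\sigma\Qe{i}(w)\otimes\lambda_\alpha$. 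Composing the two, and rewriting the outcome via the description of the Dyer-Lashof-Lie operation on a free $H_\infty$-$\bfLie_*$-algebra (Remark~\ref{remark: dll, free lie algebra}), produces $d_n(w\otimes\lambda)=\sum_\alpha\Qe{i(\alpha)}(w)\otimes\lambda_\alpha$.

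The only genuine work is the shift bookkeeping in the last step: one must line up the suspension $\Sigma=\Sigma^{3-p}$ carried by the James-Hopf factor, the desuspension $\sigma^{-1}$ produced by the connecting map, and the $\sigma$ occurring in the source of Theorem~\ref{theorem: the adjoint of delta_n, p=2} (together with the Freudenthal-type identifications of Proposition~\ref{proposition: corona cell is almost equivalence}), so that the two evaluations compose correctly. This is precisely the computation already carried out in the odd-primary Lemma~\ref{lemma:GSS differential, positive columns}, and here it is in fact cleaner: since for $p=2$ the relevant desuspension $\sigma^{p-3}=\sigma^{-1}$ is an equivalence on spectrum objects, the kernel-of-suspension indeterminacy appearing in the odd-primary analogues of Proposition~\ref{proposition: james-hopf and differential, reserve, p=2} and Theorem~\ref{theorem: the adjoint of delta_n, p=2} is vacuous, so no error term can survive. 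I therefore expect no difficulty beyond keeping the shifts straight.
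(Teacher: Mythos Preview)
Your proposal is correct and follows precisely the route the paper intends: the paper states that the proof is ``almost identical'' to that of Lemma~\ref{lemma:GSS differential, positive columns}, and you have correctly specialized that argument to $p=2$, invoking Proposition~\ref{proposition:james-hopf and adjoint}, Proposition~\ref{proposition: james-hopf and differential, reserve, p=2}, and Theorem~\ref{theorem: the adjoint of delta_n, p=2} in place of their odd-primary counterparts. Your observations that the $(p-2)$-fold looping is vacuous and that the suspension-kernel indeterminacy disappears for $p=2$ are exactly the simplifications that make this case cleaner.
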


\begin{thm}\label{theorem: dll action on lie}
Given an $H_\infty$-$\bfLie_*$-algebra $L$ in $\Mod_{\kk}^{\geq 0}$, the action of the Dyer--Lashof--Lie operations $\bQe{}{i}$, $\Qe{i}$ (resp. $\Qe{i}$ if $p=2$) makes $\pi_*(L)$ into a left unstable $\calR_p$-module.
\end{thm}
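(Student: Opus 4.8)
The plan is to deduce everything from the vanishing $\tilde d_2^2=0$ in the renormalized Goodwillie spectral sequence~\eqref{equation: RAGSS}, using the differential formulas already established. First note that the degree constraints, the $p$-th-power semilinearity, and the instability condition $\bQe{\e}{i}(x)=0$ for $2i\le|x|$ (resp.\ $\Qe{i}(x)=0$ for $i\le|x|$) required by Definitions~\ref{definition: module over dll} and~\ref{definition: unstable module} are immediate from Corollary~\ref{corollary: kjaer basis of free lie algebra}: by Definition~\ref{definition: dll operations} the operations on $\pi_*(L)$ are obtained from the Dyer--Lashof--Lie classes by postcomposition with the structure map $\gamma_p\colon\mathbf{L}_p(L)\to L$, so they inherit these properties. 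Hence the entire content of the theorem is the Adem-type relations~\eqref{equation: Lie-Adem relation, p is odd, eq1}, \eqref{equation: Lie-Adem relation, p is odd, eq2}, and~\eqref{equation: Lie-Adem relation, p is 2, eq1} from Definitions~\ref{definition: Dyer-Lashof-Lie algebra, p is odd} and~\ref{definition: Dyer-Lashof-Lie algebra, p is 2}.

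Next I would reduce to a universal computation. The Dyer--Lashof--Lie operations are natural for maps of $H_\infty$-$\bfLie_*$-algebras, and a class $x\in\pi_l(L)$, represented by $\bar x\colon\Sigma^l\kk\to L$ in $\mathrm{Ho}(\Mod_{\kk})$, is the image of the canonical generator $\iota_l$ under the classifying $H_\infty$-$\bfLie_*$-algebra map $\tilde x\colon F_{\bfLie}(\Sigma^l\kk)\to L$. Since a sum of composites of Dyer--Lashof--Lie operations is carried to its counterpart by $\tilde x_*$, it suffices to prove each relation, applied to $\iota_l\in\pi_l(F_{\bfLie}(\Sigma^l\kk))$, for every $l\ge 0$. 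By Remark~\ref{remark: dll, free lie algebra}, Proposition~\ref{proposition: vanishing of LnV}, Corollary~\ref{corollary:derivatives of free}, and Theorem~\ref{theorem: stable homotopy groups of a free object}, the relevant summands of $\pi_*(F_{\bfLie}(\Sigma^l\kk))$ are identified, compatibly with the operations, with the coefficient-$1$ subspaces $\pi_*(\mathbf{L}_{p^h}(\Sigma^l\kk))\subset\pi_*(\mathbf{L}_{p^h}(\Sigma^l\kk))\otimes\Lambda\cong\widetilde E^1_{*,2h}(\Sigma^l\kk)$; in particular the double composites $\bQe{\e}{j}\bQe{}{i}(\iota_l)$, $\bQe{\e}{j}\Qe{i}(\iota_l)$ (resp.\ $\Qe{j}\Qe{i}(\iota_l)$) lie in $\pi_*(\mathbf{L}_{p^2}(\Sigma^l\kk))$, the coefficient-$1$ summand of $\widetilde E^1_{*,4}(\Sigma^l\kk)$, and the Dyer--Lashof--Lie operations on these free algebras agree with the class constructions appearing in the differential formulas.

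Then I would extract the relations. Fix an admissible quadratic monomial $\nu_i^{\e'}\nu_j^{\e}\in\Lambda$ (resp.\ $\lambda_i\lambda_j$ if $p=2$) and consider $\iota_l\otimes\nu_i^{\e'}\nu_j^{\e}\in\widetilde E^1_{*,0}(\Sigma^l\kk)=\pi_*(\Sigma^l\kk)\otimes\Lambda$. Since $\tilde d_1=0$ by Corollary~\ref{corollary: exponential vanishing}, this element survives to $\widetilde E^2$, where $\tilde d_2$ is a genuine differential, so $\tilde d_2\bigl(\tilde d_2(\iota_l\otimes\nu_i^{\e'}\nu_j^{\e})\bigr)=0$. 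The first $\tilde d_2$ is computed by Proposition~\ref{proposition: differential, adem, p is odd} (the cases $\e'=1$, i.e.\ $\lambda_i$, and $\e'=0$, i.e.\ $\mu_i$) (resp.\ Proposition~\ref{proposition: differential, adem, p=2}) as a sum of terms $w\otimes\nu_m^{\epsilon}$ with $w\in\pi_*(\mathbf{L}_p(\Sigma^l\kk))$ a Dyer--Lashof--Lie class and $\nu_m^{\epsilon}$ a single generator of $\Lambda$. For each such term, the second $\tilde d_2$ is computed by Lemma~\ref{lemma:GSS differential, positive columns} (resp.\ Lemma~\ref{lemma:GSS differential, positive columns, p =2}), which reduces it to the universal differential on generators, evaluated by Proposition~\ref{proposition: differential, length 1} as $\tilde d_2(\iota_0\otimes\nu_m^{\epsilon})=\bQe{\epsilon}{m}(\iota_0)\otimes 1$; thus $\tilde d_2(w\otimes\nu_m^{\epsilon})=\bQe{\epsilon}{m}(w)\otimes 1$. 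Assembling the two steps, $\tilde d_2^2(\iota_l\otimes\nu_i^{\e'}\nu_j^{\e})$ is a sum, every term of which carries $\Lambda$-coefficient $1$, of composites of two Dyer--Lashof--Lie operations applied to $\iota_l$; setting it equal to $0$ and solving for the leading composite produces precisely~\eqref{equation: Lie-Adem relation, p is odd, eq1} (from $\lambda_i$), \eqref{equation: Lie-Adem relation, p is odd, eq2} (from $\mu_i$), or~\eqref{equation: Lie-Adem relation, p is 2, eq1} ($p=2$), applied to $\iota_l$. Since $l\ge 0$ is arbitrary, naturality upgrades these to the universal relations in $\calR_p$, which, together with the first paragraph, shows $\pi_*(L)$ is an unstable left $\calR_p$-module.

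I expect the main obstacle to be bookkeeping rather than anything conceptual. One must (i) be sure the re-indexed sequence~\eqref{equation: RAGSS} is a genuine spectral sequence, so that $\tilde d_2^2=0$ holds literally; this rests on the collapse of all intermediate Goodwillie layers (Proposition~\ref{proposition: vanishing of LnV}) and on Corollary~\ref{corollary: exponential vanishing}, which both forces $\tilde d_1=0$ and identifies the original differential $d_{(p-1)p^h}$ realizing $\tilde d_2$; and (ii) verify that the summation ranges and mod-$p$ binomial coefficients emerging from Propositions~\ref{proposition: differential, adem, p is odd} and~\ref{proposition: differential, adem, p=2} match term-by-term those of Definitions~\ref{definition: Dyer-Lashof-Lie algebra, p is odd} and~\ref{definition: Dyer-Lashof-Lie algebra, p is 2}, including the sign $(-1)^{\e+1}$ in~\eqref{equation: Lie-Adem relation, p is odd, eq1} and the convention that an empty sum is zero. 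All of the substantive inputs --- Proposition~\ref{proposition: differential, length 1}, Propositions~\ref{proposition: differential, adem, p is odd} and~\ref{proposition: differential, adem, p=2}, and Lemmas~\ref{lemma:GSS differential, positive columns} and~\ref{lemma:GSS differential, positive columns, p =2} --- are already available, so the argument is essentially an assembly of these with $\tilde d_2^2=0$.
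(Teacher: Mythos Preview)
Your proposal is correct and follows essentially the same approach as the paper's proof: reduce to the free $H_\infty$-$\bfLie_*$-algebra $F_{\bfLie}(\Sigma^l\kk)$ by naturality, then extract the Adem-type relations from $\tilde d_2^2=0$ applied to quadratic admissible monomials, using Propositions~\ref{proposition: differential, adem, p is odd} and~\ref{proposition: differential, adem, p=2} for the first differential and Lemma~\ref{lemma:GSS differential, positive columns} (resp.~\ref{lemma:GSS differential, positive columns, p =2}) together with Proposition~\ref{proposition: differential, length 1} for the second. Your explicit verification of the degree, semilinearity, and instability conditions via Corollary~\ref{corollary: kjaer basis of free lie algebra} is a small addition the paper leaves implicit.
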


\begin{proof}
We prove the theorem only for an odd prime $p$; the argument for $p=2$ is almost identical and we leave it to the reader to complete the details. 

Given a class $x \in \pi_l(L)$, we want to show that the Dyer--Lashof--Lie operations $\bQe{\e}{i}$ obey the relations~\eqref{equation: Lie-Adem relation, p is odd, eq1} and~\eqref{equation: Lie-Adem relation, p is odd, eq2} when acting on $x$. Let $V=\Sigma^l \kk \in \Mod_{\kk}$. Then the class $x$ represents a map
$$\overline{x}\colon F_{\bfLie}(V)=\bigoplus_{n\geq 0} \bfLie_n\otimes_{h\Sigma_n} V^{\otimes n} \to L$$
of $H_\infty$-$\bfLie_*$-algebras, where $F_{\bfLie}(V)$ is the free $H_\infty$-$\bfLie_*$-algebra. By the naturality of the Dyer--Lashof--Lie operations, we can assume that $L=F_{\bfLie}(V)$ is the free $H_\infty$-$\bfLie_*$-algebra and $x=\iota_l \in \pi_l(F_{\bfLie}(V))$.

Consider the vector space $\pi_*(F_{\bfLie}(V))\otimes \Lambda$. By Corollary~\ref{corollary:derivatives of free}, we have 
$$\pi_*(F_{\bfLie}(V))\otimes \Lambda\cong \widetilde{E} ^2_{*,*}(V),$$
where $\widetilde{E}^2_{*,*}(V)$ is the second page of the renormalized spectral sequence~\eqref{equation: RAGSS}. In particular, the vector space $\pi_*(F_{\bfLie(V)})\otimes \Lambda$ is endowed with the differential $\tilde{d}_2$, $\tilde{d}_2^2=0$.

Consider the element $y=\iota_l\otimes \lambda_i\nu^{\e}_j \in \pi_*(F_{\bfLie(V)})\otimes \Lambda$, $j<pi$, $\e\in\{0,1\}$. On the one hand, we have $\tilde{d}_2^2(y)=0$. On the other hand, by Proposition~\ref{proposition: differential, adem, p is odd}, we have
\begin{align*}
0&=\tilde{d}^2_2(\iota_l\otimes \lambda_i\nu^{\e}_j) = \tilde{d}_2(\bQe{}{i}(\iota_l)\otimes \nu^{\e}_j) \\
&+(-1)^{\e}\sum_{m = pi}^{i+j-1}\binom{p(m-i)-(p-1)j+\e-1}{m-pi} \tilde{d}_2(\bQe{\e}{j+i-m}(\iota_l)\otimes \lambda_m) \\
&+(\e-1)\sum_{m=pi}^{i+j-1}\binom{p(m-i)-(p-1)j}{m-pi} \tilde{d}_2(\bQe{}{j+i-m}(\iota_l)\otimes \mu_m).
\end{align*}
By Lemma~\ref{lemma:GSS differential, positive columns} and Proposition~\ref{proposition: differential, length 1}, we have 
$$\tilde{d}_2(\bQe{}{i}(\iota_l)\otimes \nu_j^{\e}) = \bQe{\e}{j}\cdot \bQe{}{i}(\iota_l)\otimes 1,$$ 
$$\tilde{d}_2(\bQe{\e}{i+j-m}(\iota_l)\otimes \lambda_m) = \bQe{}{m}\cdot \bQe{\e}{i+j-m}(\iota_l) \otimes 1,$$
$$\tilde{d}_2(\bQe{}{i+j-m}(\iota_l)\otimes \mu_m) = \Qe{m}\cdot \bQe{}{i+j-m}(\iota_l) \otimes 1, $$
where $pi\leq m\leq i+j-1$. This implies that the Dyer--Lashof--Lie operations enjoy the relation~\eqref{equation: Lie-Adem relation, p is odd, eq1}. The second part of Proposition~\ref{proposition: differential, adem, p is odd} yields the relation~\eqref{equation: Lie-Adem relation, p is odd, eq2}. Finally, if $p=2$, we obtain the relation~\eqref{equation: Lie-Adem relation, p is 2, eq1} by Proposition~\ref{proposition: differential, adem, p=2}.
\end{proof}

\begin{rmk}\label{remark: adem relations, history}
Theorem~\ref{theorem: dll action on lie} is new only for odd primes. For $p=2$, we refer the reader to~\cite[Proposition~5.9]{Antolin20} and~\cite[Theorem~1.5.1]{Behrens12}.
\end{rmk}

\begin{rmk}\label{remark: adem relations, transfers}
Another strategy to obtain the relations~\eqref{equation: Lie-Adem relation, p is odd, eq1},~\eqref{equation: Lie-Adem relation, p is odd, eq2},~\eqref{equation: Lie-Adem relation, p is 2, eq1} is to follow the proof of~\cite[Theorem~3.16]{AM99} and to compute the image of transfer map $$\tau \colon H_*(\Sigma_{p^2},\F_p) \to H_*(\Sigma_p \wr \Sigma_p,\F_p)$$
between the group homology. Here $\Sigma_p\wr \Sigma_p$ is the wreath product of $(\Sigma_p)^{\times p} \rtimes \Sigma_p$; $\Sigma_p \wr \Sigma_p$ is a subgroup of the symmetric group $\Sigma_{p^2}$ of index coprime to $p$. For $p=2$, the transfer map $\tau$ was computed in~\cite{Priddy_transfers} (see also~\cite[Theorem~7.1 and Example~7.6]{Kuhn85}). For odd primes, the formulas for $\tau$ are unknown. However, reversing the strategy, one can try instead to compute $\tau$ by using the obtained relations~\eqref{equation: Lie-Adem relation, p is odd, eq1},~\eqref{equation: Lie-Adem relation, p is odd, eq2},~\eqref{equation: Lie-Adem relation, p is 2, eq1}. We are going to return to this question in a future work.
\end{rmk}

\begin{dfn}\label{definition: unstable dll-algebra}
An \emph{unstable $\calR_p$-Lie algebra} is a left $\calR_p$-module $L_*$ which is a graded Lie algebra such that
\begin{enumerate}
\item if $p=2$, $[x,\Qe{i}(y)]=0$, $x,y\in L_*$, $i\geq 0$.
\item if $p$ is odd, $[x,\bQe{\e}{i}(y)]=0$, $x,y\in L_*$, $i\geq 0$, $\e\in\{0,1\}$.
\item if $p=2$, $\Qe{q+1}(x)=[x,x]$, $x\in L_q$.
\item if $p=3$, $\bQe{}{q}(x)=[[x,x],x]$, $x\in L_{2q-1}$.
\end{enumerate}
\end{dfn}

\begin{thm}\label{theorem: dll+lie action on lie}
Given an $H_\infty$-$\bfLie_*$-algebra $L$ in $\Mod^{\geq 0}_{\kk}$, the $\calR_p$-action on $\pi_*(L)$ and the natural graded Lie bracket on $\pi_*(L)$ give $\pi_*(L)$ the structure of an unstable $\calR_p$-Lie algebra.
\end{thm}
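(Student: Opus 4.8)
The plan is to follow the template of the proof of Theorem~\ref{theorem: dll action on lie}. By that theorem $\pi_*(L)$ is already an unstable $\calR_p$-module, and it carries its usual structure of a graded (restricted) Lie algebra, so all that is left is to verify the four compatibility relations of Definition~\ref{definition: unstable dll-algebra}. Each of these is an identity between natural homotopy operations in one or two variables, so it suffices to check it on a free $H_\infty$-$\bfLie_*$-algebra $L=F_{\bfLie}(V)$ with $V\simeq\Sigma^a\kk$ (for relations (3) and (4)) or $V\simeq\Sigma^a\kk\oplus\Sigma^b\kk$ (for the bracket relations (1) and (2), allowing $a=b$), evaluated on the tautological generators $\iota$. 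I will use the identification $\pi_*(F_{\bfLie}(V))\otimes\Lambda\cong\widetilde{E}^2_{*,*}(V)$ coming from Corollary~\ref{corollary:derivatives of free} and Remark~\ref{remark: dll, free lie algebra}, the fact that the differential $\tilde{d}_2$ of the renormalized spectral sequence~\eqref{equation: RAGSS} satisfies $\tilde{d}_2^{\,2}=0$, and the fact that this spectral sequence converges to $\pi_*(\free(V))$, whose structure (bracket, restriction, and right $\Lambda$-action) is described in Remark~\ref{remark: unstable homotopy groups} and subject to Proposition~\ref{proposition: lambda, mu, adem}. Alternatively, relations (1)--(4) for the free object are a reformulation of the computation of $\pi_*(F_{\bfLie}(V))$ carried out in Theorem~\ref{theorem: homotopy of free H-Lie algebra}.

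For relations (3) and (4) --- the two cases ($p=2$ and $p=3$) in which a bottom Dyer-Lashof-Lie operation is claimed to equal an iterated bracket --- I would argue by a dimension count inside $\pi_*(\mathbf{L}_p(V))\subset\pi_*(F_{\bfLie}(V))$. Take $V\simeq\Sigma^l\kk$ with $l$ odd when $p=3$ (so $l=2q-1$) and $l=q$ when $p=2$, and $x=\iota_l$. Both sides of the relation lie in the same degree $d$, and by Proposition~\ref{proposition: corona cell is almost equivalence}(1),(4) (together with the elementary description of $\pi_*(\mathbf{L}_2(\Sigma^l\kk))$ when $p=2$) the group $\pi_d(\mathbf{L}_p(\Sigma^l\kk))$ is one-dimensional. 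The left-hand side $\Qe{l+1}(x)$ (resp. $\bQe{}{q}(x)$) is non-zero by Corollary~\ref{corollary: kjaer basis of free lie algebra}(4), hence a generator of that line; the right-hand side is non-zero as well and equals the image of the fundamental class of that line under the structure map $\gamma_p\colon\mathbf{L}_p(F_{\bfLie}(V))\to F_{\bfLie}(V)$, which on the bottom cell of $\bfLie_p$ (the corona cell of Example~\ref{example: lie and corona cell} and Proposition~\ref{proposition: resolution for Lie}) is precisely the iterated operad multiplication and hence the iterated bracket. Comparing the two generators and fixing normalizations gives the relation; alternatively one may pin both classes down inside $\pi_*(\free(\Sigma^l\kk))$ via Remark~\ref{remark: unstable homotopy groups} and Proposition~\ref{proposition: lambda, mu, adem}, where the iterated bracket is a designated basis element and Proposition~\ref{proposition: differential, length 1} detects $\Qe{l+1}(x)$, $\bQe{}{q}(x)$ in the same spot.

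The hard part is relations (1) and (2): the vanishing of $[x,\bQe{\e}{i}(y)]$ (and its $p=2$ analogue) for \emph{all} $x,y$. On $L=F_{\bfLie}(\Sigma^a\kk\oplus\Sigma^b\kk)$ with $x=\iota_a$, $y=\iota_b$, the class $[\iota_a,\bQe{\e}{i}(\iota_b)]$ lies in the image of $\gamma_{p+1}\colon\mathbf{L}_{p+1}(F_{\bfLie}(V))\to F_{\bfLie}(V)$ and, by the associativity square of Remark~\ref{remark: H-Lie-algebra explicit}, equals the image of a fixed class under the map induced on homotopy by the operadic composition $\bfLie^{sp}_2\otimes\big(\bfLie^{sp}_1\otimes\bfLie^{sp}_p\big)\to\bfLie^{sp}_{p+1}$ applied to the homology class of $\bfLie^{sp}_p$ that defines $\bQe{\e}{i}$. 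The claim is that this composite is null, and I expect to prove it exactly as in the proof of Proposition~\ref{proposition: corona cell is almost equivalence} (cf.\ also Proposition~\ref{proposition: vanishing of LnV}): the relevant $\Sigma_p$-equivariant homology of the partition complex $\bfLie^{sp}_{p+1}$ with the coefficients at hand vanishes, because every non-corona tree that occurs has a stabilizer containing a transposition, so the associated sign-twisted (co)invariants are zero. A second, spectral-sequence route --- which I would use to double-check --- is to transport the identity to $\pi_*(\free(\Sigma^a\kk\oplus\Sigma^b\kk))$, where the bracket with an admissible $\nu$-monomial vanishes in the stable range by Proposition~\ref{proposition: lambda, mu, adem}(5), and then to reduce the general $\bQe{\e}{i}$ to that range using the James periodicity shift operators $\psi_k$ of Definition~\ref{definition: shift operators} together with the compatibility of $\tilde{d}_2$ with brackets encoded in Lemma~\ref{lemma: leibniz rule and james}; making this reduction precise, keeping track of the Frobenius twists, is the delicate point. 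Once relations (1)--(4) are in place, $\pi_*(L)$ is by definition an unstable $\calR_p$-Lie algebra.
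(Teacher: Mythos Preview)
The paper's own proof is a two-line reduction plus citations: after Theorem~\ref{theorem: dll action on lie}, it simply defers relations (1)--(2) to \cite[Lemma~6.5]{Antolin20} ($p=2$) and \cite[Proposition~3.7]{Kjaer18} ($p>2$), and relations (3)--(4) to \cite[Lemma~6.4]{Antolin20} and \cite[Remark~6.6]{Zhang21}. Your proposal is therefore a genuinely different, more ambitious route: you attempt self-contained arguments inside the paper's own framework rather than invoking external computations.

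Your ideas are largely along the right lines but not complete. For relations (1)--(2), your first approach (partition-complex vanishing) is essentially what Kjaer's Proposition~3.7 does, but the setup needs more care than ``exactly as in Proposition~\ref{proposition: corona cell is almost equivalence}'': the relevant object is not $\mathbf{L}_p$ of a one-dimensional $V$ but the image of the operadic composition $\bfLie^{sp}_2\circ(\bfLie^{sp}_1,\bfLie^{sp}_p)\to\bfLie^{sp}_{p+1}$ evaluated on a two-generator $V$, and the equivariance is with respect to $\Sigma_1\times\Sigma_p\subset\Sigma_{p+1}$, not $\Sigma_p$. Your second, spectral-sequence route does not work as stated: Proposition~\ref{proposition: lambda, mu, adem}(5) concerns the $\nu^\e_a$-operations on $\pi_*(\free(V))$, which live on the $\Lambda$-side of the $E^1$-page, whereas the Dyer--Lashof--Lie classes $\bQe{\e}{i}(y)$ live on the $\pi_*(F_{\bfLie}(V))$-side; there is no mechanism in Lemma~\ref{lemma: leibniz rule and james} or the shift operators $\psi_k$ that transports bracket-vanishing across this divide.

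For relations (3)--(4), your dimension count establishes that both sides are nonzero elements of a one-dimensional line, hence proportional; but ``fixing normalizations'' is exactly the content you are deferring, and it is not automatic from anything in the paper (Proposition~\ref{proposition: differential, length 1} normalizes $\bQe{\e}{i}$ against $\beta^\e Q^i$, not against iterated brackets). This is precisely what the cited references carry out.
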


\begin{proof}
By Theorem~\ref{theorem: dll action on lie}, it suffices to check that the  Lie bracket vanishes on Dyer--Lashof--Lie classes and the bottom Dyer--Lashof--Lie classes can be expressed in terms of the Lie bracket if $p=2,3$. For the first part, we refer to~\cite[Lemma~6.5]{Antolin20} if $p=2$ and to~\cite[Proposition~3.7]{Kjaer18} if $p>2$. For the second part, we refer to~\cite[Lemma~6.4]{Antolin20} if $p=2$ and to~\cite[Remark~6.6]{Zhang21} if $p=3$.
\end{proof}

\begin{rmk}\label{remark: kjaer's error}
We note that~\cite[Corollary~4.7]{Kjaer18} states that the triple Lie brackets $[[v,v],v]$, $v\in \pi_*(L)$ always vanish, even if $p=3$. However, the argument there contains a mistake which was pointed out by A.~Zhang in~\cite[Remark~6.6]{Zhang21}.
\end{rmk}

\begin{dfn}\label{definition: cu sequence}
A (possibly void) sequence $J=(j_1,\delta_1,\ldots, j_s,\delta_s)$ (resp. $J=(j_1,\ldots, j_s)$) is called \emph{completely unadmissible (abbrv. CU)} if $$j_t> p j_{t+1} -\delta_{t+1}, \;\; \text{resp. $j_t>2 j_{t+1}$, $1\leq t \leq s-1$}.$$ The \emph{excess} of $J$, denoted by $e(J)$, is defined by
$$e(J)=2j_s, \;\; \text{resp. $e(J)=j_s$,} \;\; e(\emptyset)=-1. $$
\end{dfn}

Given a sequence $J=(j_1,\delta_1,\ldots,j_s,\delta_s)$ (resp. $J=(j_1,\ldots,j_s)$), we write $\Qe{J} \in \calR_p$ for the monomial $$\Qe{J}=\bQe{\delta_1}{j_1}\cdot \ldots \cdot \bQe{\delta_s}{j_s} \in \calR_p \;\; \text{(resp. $\Qe{J}=\Qe{j_1}\cdot\ldots \cdot \Qe{j_s}\in \calR_2$)}.$$ We note that the Adem relations~\eqref{equation: Lie-Adem relation, p is odd, eq1}, \eqref{equation: Lie-Adem relation, p is odd, eq2}, and~\eqref{equation: Lie-Adem relation, p is 2, eq1} imply that the algebra $\calR_{p}$ is spanned by the set of completely unadmissible monomials $\Qe{J}$.

\begin{prop}\label{proposition: homotopy lie, basis, 1-dim}
Let $V\simeq \Sigma^l \kk \in \Mod_{\kk}$, $l\geq 0$. Then the homotopy groups $\pi_*(\bfLie_n\otimes_{h\Sigma_n} V^{\otimes n})$, $n\geq 1$ have a basis
\begin{enumerate}
\item $\Qe{J}(\iota_l)$, $J=(j_1,\ldots,j_h)$ is a CU sequence, $e(J)>l$ if  $p=2$ and $n=2^h$;
\item $\Qe{J}(\iota_l)$, $J=(j_1,\delta_1,\ldots,j_h,\delta_h)$ is a CU sequence, $e(J)> l$ if $p$ is odd and $n=p^h$;
\item $\Qe{J}([\iota_l,\iota_l])$, $J=(j_1,\delta_1,\ldots,j_h,\delta_h)$ is a CU sequence, $e(J)> 2l$ if $p$ is odd, $l$ is odd, and $n=2p^h$.
\end{enumerate}
Otherwise, the homotopy groups $\pi_*(\bfLie_n\otimes_{h\Sigma_n} V^{\otimes n})$ vanish.
\end{prop}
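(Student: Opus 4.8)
The plan is to reduce everything to the Arone--Mahowald computation of the homology of extended powers of spheres, reorganized in Dyer--Lashof--Lie notation. The vanishing clause is immediate from Proposition~\ref{proposition: vanishing of LnV}: for $V\simeq\Sigma^l\kk$ it gives $\pi_*(\mathbf{L}_n(V))=0$ unless $n=p^h$ (when $p=2$, or $p$ odd and $l$ even) or $n\in\{p^h,2p^h\}$ (when $p$ and $l$ are both odd), so it remains to produce the asserted bases for $n=p^h$ and, in the odd/odd case, for $n=2p^h$. The case $h=0$ is the tautological identification $\mathbf{L}_1(\Sigma^l\kk)=\Sigma^l\kk$, resp. $\mathbf{L}_2(\Sigma^l\kk)\simeq\Sigma^{2l}\kk$ (using that $|\Sigma_2|$ is invertible in $\kk$ and, for $l$ odd, that the sign representation $\bfLie_2$ cancels the Koszul sign of $(\Sigma^l\kk)^{\otimes 2}$), generated by $\iota_l$, resp. by $[\iota_l,\iota_l]$. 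I henceforth take $h\geq 1$.

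For $n=p^h$ with $l$ even (or $p=2$) I would induct on $h$, using three ingredients. First, Proposition~\ref{proposition: corona cell is almost equivalence}(1) makes the corona cell map $c^{p;1}\colon\mathbf{L}_p(W)\xrightarrow{\simeq}\Sigma^{-2}(\Sigma W)^{\otimes p}_{h\Sigma_p}$ an equivalence when $W$ is an even shift of $\kk$ (or $p=2$), and $\pi_*((\Sigma W)^{\otimes p}_{h\Sigma_p})$ carries the Dyer--Lashof basis $\{\beta^\epsilon Q^i(\sigma w):2i-\epsilon\geq|\sigma w|\}$ of \cite[Section~I.1]{CLM76}. Second, the operad-composition map $m\colon\mathbf{L}_p(\mathbf{L}_{p^{h-1}}(V))\to\mathbf{L}_{p^h}(V)$ induces an isomorphism on homotopy groups for $V$ a sphere --- this is the content of the Arone--Mahowald/Arone--Dwyer computation (\cite{AM99},~\cite{AD01}), reflecting the fact that for a sphere only the ``chain of $p$-ary branchings'' tree shapes contribute to the partition complex. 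Third, Definition~\ref{definition: dll-classes} together with Remark~\ref{remark: dll, free lie algebra} identify the relevant classical Dyer--Lashof classes on $\pi_*(\mathbf{L}_p(\mathbf{L}_{p^{h-1}}(V)))$, pushed forward along $m$ to $\pi_*(\mathbf{L}_{p^h}(V))$, with the Dyer--Lashof--Lie operations $\bQe{\delta_1}{j_1}$ applied to $\pi_*(\mathbf{L}_{p^{h-1}}(V))$. Feeding the inductive hypothesis (a basis $\bQe{\delta}{J'}(\iota_l)$ of $\pi_*(\mathbf{L}_{p^{h-1}}(V))$ with $J'$ completely unadmissible of length $h-1$ and $e(J')>l$) through these identifications then shows that $\pi_*(\mathbf{L}_{p^h}(V))$ is spanned by the monomials $\bQe{\delta}{J}(\iota_l)$ with $J=(j_1,\delta_1,\ldots,j_h,\delta_h)$ completely unadmissible of length $h$. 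The only instability constraint one needs to record is $e(J)>l$, which is exactly the non-vanishing condition for the innermost class $\bQe{\delta_h}{j_h}(\iota_l)$ (resp. $\Qe{j_h}(\iota_l)$ if $p=2$) by Corollary~\ref{corollary: kjaer basis of free lie algebra}(4); a short computation from the CU inequalities $j_t>pj_{t+1}-\delta_{t+1}$ (resp. $j_t>2j_{t+1}$) shows that instability then holds automatically at every intermediate stage, so none of it has to be imposed separately. Linear independence of the resulting set is inherited from that of the classical completely unadmissible Dyer--Lashof monomials.

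The odd/odd case $n=2p^h$ reduces to the previous one: the operad-composition map $m\colon\mathbf{L}_{p^h}(\mathbf{L}_2(\Sigma^l\kk))\to\mathbf{L}_{2p^h}(\Sigma^l\kk)$ is an equivalence --- again by~\cite{AD01} and the tree description of~\cite[Section~4.1.3]{Brantner_Thesis}, since for a sphere the only surviving tree shape on a $2p^h$-element leaf set is a binary vertex at the bottom carrying a chain of $p$-ary vertices --- and since $\mathbf{L}_2(\Sigma^l\kk)\simeq\Sigma^{2l}\kk$ with $2l$ even, $\pi_*(\mathbf{L}_{2p^h}(\Sigma^l\kk))$ has the basis obtained as the $m$-image of $\{\bQe{\delta}{J}(\iota_{2l}):J\ \text{completely unadmissible of length}\ h,\ e(J)>2l\}$, which by Remark~\ref{remark: dll, free lie algebra} is precisely $\bQe{\delta}{J}([\iota_l,\iota_l])$.

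The main obstacle is the operad-composition isomorphism and its odd analogue --- the ``only prime-power-composite trees contribute'' phenomenon for Lie powers of spheres. I do not expect a shortcut here: this is genuinely the Arone--Mahowald (resp. Arone--Dwyer) theorem, and the role of the present argument is the translation into Dyer--Lashof--Lie language and the bookkeeping of the excess condition. Accordingly, the cleanest writeup probably takes the $\F_p$-basis statement for $\pi_*(\mathbf{L}_{p^h}(\Sigma^l\kk))$ in terms of classical completely unadmissible monomials as input from~\cite{AM99} and~\cite{AD01}, invoking Proposition~\ref{proposition: vanishing of LnV}, Definition~\ref{definition: dll-classes}, Remark~\ref{remark: dll, free lie algebra}, and Corollary~\ref{corollary: kjaer basis of free lie algebra}(4) only for the vanishing, the translation, and the excess bookkeeping; one may also cite~\cite{Behrens12} and~\cite{Kjaer18} for the Dyer--Lashof--Lie reformulation itself.
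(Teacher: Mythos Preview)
Your proposal and the paper arrive at the same place: the paper's proof is simply a citation to \cite[Theorem~1.5.1]{Behrens12} and \cite[Section~7.1]{Antolin20} (for $p=2$) and to \cite[Proposition~4.2, Corollary~4.6]{Kjaer18} (for $p$ odd), which is precisely the ``cleanest writeup'' you recommend at the end.

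Your more detailed inductive sketch has one omission worth flagging: you treat $n=p^h$ only for $l$ even (or $p=2$), and then $n=2p^h$ for $p,l$ odd, but the case $p$ odd, $l$ odd, $n=p^h$ is never addressed. There the corona cell map is not an equivalence (Proposition~\ref{proposition: corona cell is almost equivalence}(3)(4)), so your base step $h=1$ does not go through as written; one needs either the suspension equivalence of part~(2) applied to $\Sigma^{l-1}\kk$ or a direct appeal to Kjaer's argument. This is minor and already handled in the cited sources, so your bottom-line recommendation is correct.
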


\begin{proof}
For $p=2$, see~\cite[Theorem~1.5.1]{Behrens12} and~\cite[Section~7.1]{Antolin20}. For $p$ is odd, we refer to~\cite[Proposition~4.2 and Corollary~4.6]{Kjaer18}.
\end{proof}

\begin{dfn}\label{definition: totally isotropic Lie algebra}
A graded Lie algebra $L_*$ is \emph{$p$-isotropic} if
\begin{enumerate}
\item for $p=2$, $[x,x]=0$, $x\in L_*$,
\item for $p=3$, $[[x,x],x]=0$, $x\in L_{*}$.
\item for $p\geq 5$, any graded Lie algebra is $p$-isotropic.
\end{enumerate}
\end{dfn}
In other words, a Lie algebra is $p$-isotropic if and only if it has no non-vanishing self-brackets of length $\geq p$.

Let $W\in \Vect_{\kk}^{\mathrm{gr}}$ be a graded vector space. Then, by acting as in~\cite[Proposition~7.4]{Antolin20}, one can show that the free unstable $\calR$-Lie algebra $\mathcal{UL}(W)$ generated by $W$ has a basis $\Qe{J}(w)$, where $J$ is a CU sequence, $e(J)>|w|$, and $w$ is a basis element of the free \emph{$p$-isotropic} graded Lie algebra $\bfLie^{\text{$p$-}\mathrm{iso}}(W)$. Then, by~\cite[Theorem~5.2]{Kjaer18} and~\cite[Theorem~7.1]{Antolin20}, we obtain the following theorem.

\begin{thm}\label{theorem: homotopy of free H-Lie algebra}
Let $W\in \Vect_{\kk}^{\mathrm{gr}}$ be a graded vector space, and let $F_{\bfLie}(W)\in \mathrm{Ho}(\Mod_{\kk})$ be the free $H_\infty$-$\bfLie_*$-algebra spanned by $W$. Then the natural map
$$\mathcal{UL}(W) \xrightarrow{\cong} \pi_*(F_{\bfLie}(W)) = \bigoplus_{n\geq 1} \pi_*(\bfLie_n\otimes_{\Sigma_n}W^{\otimes n}) $$
is an isomorphism. Moreover, let $J$ be a CU sequence of length $h$ and let $w=[w_1,\ldots, w_r]  \in \bfLie^{\text{$p$-}\mathrm{iso}}(W)$ be a Lie word of length $r$. Then, under the isomorphism above, a basis element $\Qe{J}(w)\in \mathcal{UL}(W)$ belongs to the summand $$\pi_*(\bfLie_{rp^h}\otimes_{\Sigma_{rp^h}}W^{\otimes rp^h}) \subset \pi_*(F_{\bfLie}(W)).$$
\qed
\end{thm}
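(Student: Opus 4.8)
The plan is to reproduce, in the present algebraic setting, the strategy of~\cite[Theorem~7.1]{Antolin20} (for $p=2$) and~\cite[Theorem~5.2]{Kjaer18} (for $p$ odd), using the structural results established above as black boxes. First I would construct the comparison map: by Theorem~\ref{theorem: dll+lie action on lie} the homotopy groups $\pi_*(F_{\bfLie}(W))$ form an unstable $\calR_p$-Lie algebra, and the weight-one summand $\pi_*(\bfLie_1\otimes_{\Sigma_1}W^{\otimes 1})\cong W$ supplies a map of graded vector spaces $W\to\pi_*(F_{\bfLie}(W))$; the universal property of the free unstable $\calR_p$-Lie algebra $\mathcal{UL}(W)$ then yields the natural map in the statement. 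Naturality and the weight grading on $F_{\bfLie}$ (Remark~\ref{remark: dll, free lie algebra}) show at once that $\Qe{i}$ multiplies weight by $p$ and that a length-$r$ Lie word sits in weight $r$, which gives the asserted placement of $\Qe{J}(w)$ in $\pi_*(\bfLie_{rp^h}\otimes_{\Sigma_{rp^h}}W^{\otimes rp^h})$ once injectivity and surjectivity are known.

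Next I would pin down a basis on each side and check the map carries basis to basis. On the domain, following~\cite[Proposition~7.4]{Antolin20}: the Adem relations~\eqref{equation: Lie-Adem relation, p is odd, eq1}, \eqref{equation: Lie-Adem relation, p is odd, eq2}, \eqref{equation: Lie-Adem relation, p is 2, eq1} show $\calR_p$ is spanned by completely unadmissible monomials $\Qe{J}$; the instability axiom of Definition~\ref{definition: unstable module} kills $\Qe{J}(w)$ for $e(J)\le|w|$; the relations $[x,\bQe{\e}{i}(y)]=0$, resp.\ $[x,\Qe{i}(y)]=0$, of Definition~\ref{definition: unstable dll-algebra} force all iterated brackets to live among the ``bottom'' classes; and the identities $\Qe{q+1}(x)=[x,x]$ (for $p=2$), $\bQe{}{q}(x)=[[x,x],x]$ (for $p=3$) identify exactly the self-brackets of length $\ge p$. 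Together these reduce $\mathcal{UL}(W)$ to the spanning set $\{\Qe{J}(w)\}$ with $J$ a CU sequence, $e(J)>|w|$, and $w$ ranging over a basis of the free $p$-isotropic graded Lie algebra $\bfLie^{\text{$p$-$iso$}}(W)$. On the target I would decompose $W=\bigoplus_{i}\kk\cdot w_i$ into one-dimensional pieces and apply the Hilton--Milnor-type splitting of $\mathbf{L}_n(-)$ on direct sums (cf.~\eqref{equation: hilton-milnor theorem} and Corollary~\ref{corollary:cross-effect}) to reduce the computation of $\pi_*(\bfLie_n\otimes_{\Sigma_n}W^{\otimes n})$ to the one-generator case, which is Proposition~\ref{proposition: homotopy lie, basis, 1-dim}; reassembling, one gets an explicit basis of $\pi_*(F_{\bfLie}(W))$ indexed by a Hall-basis Lie word in $\bfLie^{\text{$p$-$iso$}}(\{w_i\})$ and a CU sequence with the excess constraint — precisely the index set obtained on the domain side. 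Since the natural map sends $\Qe{J}(w)$ to the Dyer-Lashof-Lie class $\Qe{J}(w)\in\pi_*(\bfLie_n\otimes_{\Sigma_n}W^{\otimes n})$ by the very definition of the operations (Definition~\ref{definition: dll operations}), it is a bijection on these bases, hence an isomorphism, and the grading statement follows.

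The hard part will be the second step: showing that the relations of Definitions~\ref{definition: module over dll} and~\ref{definition: unstable dll-algebra} are not merely necessary but \emph{sufficient} to present $\mathcal{UL}(W)$, i.e.\ that the CU-monomials-on-top, Hall-basis-of-$\bfLie^{\text{$p$-$iso$}}$-on-bottom set is genuinely a basis rather than just a spanning set. For $p=2,3$ this is exactly where the $p$-isotropy condition is essential, and I would either adapt the PBW-type argument of~\cite[\S7]{Antolin20} verbatim or, more economically, argue by counting: the topological side already provides a lower bound on dimensions through Proposition~\ref{proposition: homotopy lie, basis, 1-dim}, so once the algebraic side is shown to be a spanning set of the same cardinality in each internal and weight degree, both must be bases and the surjective comparison map must be an isomorphism. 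Verifying that the dimension counts match weight-by-weight and degree-by-degree — which amounts to comparing the combinatorics of CU sequences with excess conditions against the generating function for $\bigoplus_n H_*(\Sigma_n,\bfLie_n\otimes(\cdot))$ — is the one genuinely technical point; everything else is formal manipulation with the universal property and the Hilton--Milnor decomposition.
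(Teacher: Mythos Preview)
Your proposal is correct and follows essentially the same route as the paper. The paper itself gives no proof body: it simply observes (just before the theorem) that a basis for $\mathcal{UL}(W)$ is obtained as in \cite[Proposition~7.4]{Antolin20}, then cites \cite[Theorem~5.2]{Kjaer18} and \cite[Theorem~7.1]{Antolin20} for the isomorphism and closes with a \qed. Your outline is a faithful expansion of what those cited arguments actually do---construct the comparison map via the universal property, reduce the target side to the one-generator case (Proposition~\ref{proposition: homotopy lie, basis, 1-dim}) via a Hilton--Milnor-type decomposition, and match bases by a dimension count---so there is no meaningful difference in approach.
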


\begin{rmk}\label{remark: relations are exhaustive}
Theorem~\ref{theorem: homotopy of free H-Lie algebra} implies that the list of quadratic relations~\eqref{equation: Lie-Adem relation, p is odd, eq1}, \eqref{equation: Lie-Adem relation, p is odd, eq2}, and~\eqref{equation: Lie-Adem relation, p is 2, eq1} between the Dyer--Lashof--Lie operations is exhaustive. In other words, any other relation follows by the relations given in Definitions~\ref{definition: Dyer--Lashof--Lie algebra, p is odd} and~\ref{definition: Dyer--Lashof--Lie algebra, p is 2}.
\end{rmk}

\subsection{Whitehead conjecture}\label{section: whitehead conjecture}

Let $x\in \Lambda$ be an element of the algebra $\Lambda$. We write $$\overline{x}\colon \suspfree(\Sigma^q \kk) \to \suspfree(\Sigma^l \kk) \in \sL$$
for the map of suspension spectra which represents a class $\iota_l\otimes x\in \pi_q(\suspfree(\Sigma^l \kk))$. The next lemma and proposition are improvements of Lemma~\ref{lemma: differential, excess}.

\begin{lmm}\label{lemma: Dp lowers the filtration}
Let $(i_1,\e_1,i_2,\e_2)$, $\e_1,\e_2\in \{0,1\}$, $i_1,i_2\geq 0$ (resp. $(i,j)$, $i,j\geq 0$) be an admissible sequence. Then, if $p$ is odd,
$$\D_p(\overline{\nu^{\e_1}_{i_1}})_*(\bQe{\e_2}{i_2}(\iota_q)) = \sum_{\alpha} \bQe{\e(\alpha)}{i(\alpha)}(\iota_l)\otimes \nu_\alpha \in \pi_*(\bfLie_p\otimes_{\Sigma_p}(\Sigma^l \kk)^{\otimes p})\otimes \Lambda, $$
where $\e(\alpha)\in\{0,1\}$ and $0\leq i(\alpha)< i_1$. Similarly, if $p=2$,
$$\D_2(\overline{\lambda_{i}})_*(\Qe{j}(\iota_q)) = \sum_{\alpha} \Qe{i(\alpha)}(\iota_l) \otimes \lambda_{\alpha} \in \pi_*(\bfLie_2\otimes_{\Sigma_2}(\Sigma^l \kk)^{\otimes 2})\otimes \Lambda, $$
where $0\leq i(\alpha)<i$.
\end{lmm}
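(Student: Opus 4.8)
The plan is to deduce the statement from the explicit Adem expansions of Propositions~\ref{proposition: differential, adem, p is odd} and~\ref{proposition: differential, adem, p=2}, using the Leibniz-type identity of Lemma~\ref{lemma: GSS differential on hopf elements and leibniz} to isolate $\D_p(\overline{\nu^{\e_1}_{i_1}})_*$ as a difference of two $\tilde{d}_2$-differentials. First I would apply Lemma~\ref{lemma: GSS differential on hopf elements and leibniz} to the class $x = \iota_l\otimes\nu^{\e_1}_{i_1}\in\pi_q(\suspfree(\Sigma^l\kk))$, observing that the associated map $\overline{x}$ is exactly $\overline{\nu^{\e_1}_{i_1}}$ and that $x\nu^{\e_2}_{i_2} = \iota_l\otimes\nu^{\e_1}_{i_1}\nu^{\e_2}_{i_2}$ under the right $\Lambda$-action; this yields
\[
\D_p(\overline{\nu^{\e_1}_{i_1}})_*(\bQe{\e_2}{i_2}(\iota_q)) \;=\; \tilde{d}_2(\iota_l\otimes\nu^{\e_1}_{i_1}\nu^{\e_2}_{i_2}) \;-\; \tilde{d}_2(\iota_l\otimes\nu^{\e_1}_{i_1})\,\nu^{\e_2}_{i_2}.
\]
By Proposition~\ref{proposition: differential, length 1} the subtrahend equals $\bQe{\e_1}{i_1}(\iota_l)\otimes\nu^{\e_2}_{i_2}$ (which is $\Qe{i_1}(\iota_l)\otimes\nu^{\e_2}_{i_2}$ when $\e_1 = 0$). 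Since the hypothesis says that $(i_1,\e_1,i_2,\e_2)$ is admissible, the monomial $\nu^{\e_1}_{i_1}\nu^{\e_2}_{i_2}\in\Lambda$ is admissible, so Proposition~\ref{proposition: differential, adem, p is odd} evaluates $\tilde{d}_2(\iota_l\otimes\nu^{\e_1}_{i_1}\nu^{\e_2}_{i_2})$ as the same leading term $\bQe{\e_1}{i_1}(\iota_l)\otimes\nu^{\e_2}_{i_2}$ plus a finite sum of terms $\bQe{\e(\alpha)}{i(\alpha)}(\iota_l)\otimes\nu_\alpha$ with $i(\alpha) = i_1 + i_2 - m$; subtracting cancels the leading term and leaves precisely that sum.

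It then remains to read off the index estimate from the summation ranges appearing in Proposition~\ref{proposition: differential, adem, p is odd}. If $\e_1 = 1$, so $\nu^{\e_1}_{i_1} = \lambda_{i_1}$, the index $m$ ranges over $pi_1\le m\le i_1+i_2-1$ while admissibility forces $i_2\le pi_1-1$, whence $i(\alpha) = i_1+i_2-m\le i_1+i_2-pi_1\le i_1-1<i_1$; if $\e_1 = 0$, so $\nu^{\e_1}_{i_1} = \mu_{i_1}$, the index runs over $pi_1+1\le m\le i_1+i_2-1$ while admissibility forces $i_2\le pi_1$, whence $i(\alpha)\le i_1+i_2-pi_1-1\le i_1-1<i_1$. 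In both cases $i(\alpha)\ge 1>0$ because $m\le i_1+i_2-1$, and the degenerate situations (empty summation range, or $i_2=0$, in which case $\bQe{\e_2}{0}(\iota_q)=0$ by Corollary~\ref{corollary: kjaer basis of free lie algebra}) make the conclusion vacuous. The case $p=2$ runs identically: one uses $\tilde{d}_2(\iota_l\otimes\lambda_i)=\Qe{i}(\iota_l)\otimes 1$ (Proposition~\ref{proposition: differential, length 1}) together with Proposition~\ref{proposition: differential, adem, p=2}, whose sum is over $2i+1\le m\le i+j-1$, so that for admissible $(i,j)$, i.e. $j\le 2i$, one gets $i(\alpha)=i+j-m\le j-i-1\le i-1<i$.

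This computation is essentially bookkeeping, so I expect no serious obstacle. The one delicate point is that the strict inequality $i(\alpha)<i_1$ in the $\mu_{i_1}$-case, and likewise in the $p=2$ case, genuinely requires the precise lower endpoint of the Adem summation range ($m\ge pi_1+1$, resp. $m\ge 2i+1$): the coarser excess estimate of Remark~\ref{remark: dyer-lashof-lie, excess} only yields $i(\alpha)\le i_1$ at the boundary value $i_2=pi_1$. For this reason I would carry the argument through the explicit Adem expansions of Propositions~\ref{proposition: differential, adem, p is odd} and~\ref{proposition: differential, adem, p=2} rather than through the James-periodicity bound of Section~\ref{section: james periodicity}.
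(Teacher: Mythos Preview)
Your proposal is correct and follows exactly the approach indicated in the paper's one-sentence proof: rearrange the Leibniz identity of Lemma~\ref{lemma: GSS differential on hopf elements and leibniz} applied to $x=\iota_l\otimes\nu^{\e_1}_{i_1}$ so that $\D_p(\overline{\nu^{\e_1}_{i_1}})_*(\bQe{\e_2}{i_2}(\iota_q))$ appears as the difference of $\tilde{d}_2(\iota_l\otimes\nu^{\e_1}_{i_1}\nu^{\e_2}_{i_2})$ and its leading term, then read off the index bound from the explicit summation ranges in Propositions~\ref{proposition: differential, adem, p is odd} and~\ref{proposition: differential, adem, p=2}. Your careful check that the lower endpoints $m\ge pi_1+1$ (resp.\ $m\ge 2i+1$) are what force the \emph{strict} inequality $i(\alpha)<i_1$ in the $\mu$-case and the $p=2$ case is exactly the detail the paper's terse proof leaves implicit.
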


\begin{proof}
The lemma follows from Propositions~\ref{proposition: differential, adem, p=2}, \ref{proposition: differential, adem, p is odd}, and Lemma~\ref{lemma: GSS differential on hopf elements and leibniz} by applying the differential $\tilde{d}_2$ to the admissible monomial $\iota_l\otimes \nu_{i_1}^{\e_1}\nu^{\e_2}_{i_2} \in \pi_*(\suspfree(\Sigma^l \kk))$ (resp. $\iota_l\otimes \lambda_{i}\lambda_{j} \in \pi_*(\suspfree(\Sigma^l \kk))$).
\end{proof}

\begin{prop}\label{proposition: goodwillie differential lowers the filtation}
Let $I=(i_1,\e_1,\ldots, i_s,\e_s)$ (resp. $I=(i_1,\ldots,i_s)$) be an admissible sequence. Then, if $p$ is odd, 
$$\tilde{d}_2(\iota_l\otimes \nu_I) = \bQe{\e_1}{i_1}(\iota_l)\otimes \nu_{I'} + \sum_{\alpha}\bQe{\e(\alpha)}{i(\alpha)}(\iota_l)\otimes \nu_{\alpha}, $$
where $I'=(i_2,\e_2, \ldots, i_s,\e_s)$, $\e(\alpha)\in\{0,1\}$, $i(\alpha)<i_1$, and $\nu_{\alpha} \in \Lambda$. Similarly, if $p=2$,
$$\tilde{d}_2(\iota_l\otimes \lambda_I) = \Qe{i_1}(\iota_l)\otimes \lambda_{I'} + \sum_{\alpha}\Qe{i(\alpha)}(\iota_l)\otimes \lambda_{\alpha}, $$
where $I'=(i_2, \ldots, i_s)$, $i(\alpha)<i_1$, and $\lambda_{\alpha} \in \Lambda$.
\end{prop}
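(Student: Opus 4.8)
The plan is to sharpen the proof of Lemma~\ref{lemma: differential, excess}, where the bound $i_\alpha\le i_1$ was obtained, by peeling the operations $\nu_{i_t}^{\e_t}$ off one at a time and keeping track of the first index of the Dyer--Lashof--Lie classes that occur. First I would apply the Leibniz rule of Lemma~\ref{lemma: GSS differential on hopf elements and leibniz} repeatedly; using that $\tilde d_2(\iota_l)=0$ (the target group $\widetilde E^1_{l-1,2}(V)\cong\pi_{l-1}(\mathbf L_p(V))\otimes\Lambda$ vanishes for degree reasons, since $\pi_*(\mathbf L_p(\Sigma^l\kk))$ is concentrated in degrees $\ge p(l+1)-3>l-1$), this yields, for $p$ odd,
$$
\tilde d_2(\iota_l\otimes\nu_I)=\sum_{t=1}^{s}\D_p(\overline{x_t})_*\bigl(\bQe{\e_t}{i_t}(\iota_{q_t})\bigr)\cdot\nu_{i_{t+1}}^{\e_{t+1}}\cdot\ldots\cdot\nu_{i_s}^{\e_s},
$$
where $x_t=\iota_l\otimes\nu_{i_1}^{\e_1}\cdot\ldots\cdot\nu_{i_{t-1}}^{\e_{t-1}}\in\pi_{q_t}(\suspfree(\Sigma^l\kk))$, and the analogous formula with $\lambda$'s for $p=2$. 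The summand $t=1$ is $\D_p(\id)_*(\bQe{\e_1}{i_1}(\iota_l))\cdot\nu_{I'}=\bQe{\e_1}{i_1}(\iota_l)\otimes\nu_{I'}$, which is the claimed leading term; note $\nu_{I'}$ is already in admissible form because $I'$ is admissible.

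For $t\ge 2$ I would use that, by the definition of composition in homotopy groups, $\overline{x_t}$ factors as $\overline{\nu_{i_1}^{\e_1}}\circ\cdots\circ\overline{\nu_{i_{t-1}}^{\e_{t-1}}}$, so by functoriality of $\D_p$ and Corollary~\ref{corollary:derivatives of free},
$$
\D_p(\overline{x_t})_*=\D_p(\overline{\nu_{i_1}^{\e_1}})_*\circ\cdots\circ\D_p(\overline{\nu_{i_{t-1}}^{\e_{t-1}}})_*.
$$
Now I would apply Lemma~\ref{lemma: Dp lowers the filtration} one factor at a time, starting from the right. The innermost application is to the admissible pair $(i_{t-1},\e_{t-1},i_t,\e_t)$ (a consecutive pair of an admissible sequence is again admissible), and it produces a sum of classes $\bQe{\e'}{i'}(\cdot)\otimes\nu'$ with $i'<i_{t-1}$. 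Since $\D_p(\overline{\nu_{i_{t-2}}^{\e_{t-2}}})_*$ is $\Lambda$-linear and $i'<i_{t-1}\le p\,i_{t-2}-\e_{t-2}$, the pair $(i_{t-2},\e_{t-2},i',\e')$ is again admissible, so Lemma~\ref{lemma: Dp lowers the filtration} applies and lowers the first index below $i_{t-2}$. Iterating down to $\D_p(\overline{\nu_{i_1}^{\e_1}})_*$ gives a sum of classes $\bQe{\e(\alpha)}{i(\alpha)}(\iota_l)\otimes\nu_\alpha$ with $i(\alpha)<i_1$; right-multiplying by $\nu_{i_{t+1}}^{\e_{t+1}}\cdot\ldots\cdot\nu_{i_s}^{\e_s}$ and passing to the admissible expansion in $\Lambda$ does not affect the first indices. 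Hence every contribution from $t\ge 2$ has first index strictly below $i_1$, which is exactly the statement.

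The $p=2$ case is handled identically, using the $p=2$ parts of Lemma~\ref{lemma: GSS differential on hopf elements and leibniz}, Lemma~\ref{lemma: Dp lowers the filtration}, and Proposition~\ref{proposition: differential, length 1}. I expect no genuine obstacle: Lemma~\ref{lemma: Dp lowers the filtration} already supplies the crucial strict-inequality input, and the Leibniz rule together with functoriality of $\D_p$ reduce the general admissible monomial to iterated applications of that lemma. The only delicate point is the bookkeeping that admissibility of the relevant length-two pairs persists through the entire chain of applications of Lemma~\ref{lemma: Dp lowers the filtration}, and tracing where this fails would show precisely why the admissibility hypothesis on $I$ is needed.
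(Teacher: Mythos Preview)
Your proof is correct and uses the same two ingredients as the paper: the Leibniz rule and Lemma~\ref{lemma: Dp lowers the filtration}. The organization differs slightly. The paper inducts on $s$ and peels the \emph{left} factor $\nu_{i_1}^{\e_1}$ using Theorem~\ref{theorem:leibniz rule in GSS} directly (with $g=\nu_{i_1}^{\e_1}$, $f=\nu_{I'}$), then invokes the inductive hypothesis for $\tilde d_2(\iota_q\otimes\nu_{I'})$ and applies Lemma~\ref{lemma: Dp lowers the filtration} once per inductive step. You instead unroll from the \emph{right} using Lemma~\ref{lemma: GSS differential on hopf elements and leibniz}, factor $\D_p(\overline{x_t})_*$ as a composite, and apply Lemma~\ref{lemma: Dp lowers the filtration} iteratively $t-1$ times. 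Your version is the paper's induction fully unwound; the admissibility bookkeeping you flag (that $i'<i_{t-1}\le p\,i_{t-2}-\e_{t-2}$ keeps the next pair admissible) is exactly what the paper's inductive hypothesis packages. Either way works; the paper's induction is marginally tidier, while your explicit $t$-sum makes the leading-term/error-term split visible at once.
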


\begin{proof}
We prove the proposition only for an odd prime $p$; the argument for $p=2$ is almost identical and we leave it to the reader to complete the details. The proof goes by the induction on $s$. If $s=1$, then the statement follows from Proposition~\ref{proposition: differential, length 1}.

Assume that the proposition holds for the sequence $I'=(i_2,\e_2,\ldots, i_s,\e_s)$, i.e.
$$\tilde{d}_2(\iota_l\otimes \nu_{I'}) = \bQe{\e_2}{i_2}(\iota_l)\otimes \nu_{I''} + \sum_{\gamma}\bQe{\e(\gamma)}{i(\gamma)}(\iota_l)\otimes \nu_{\gamma}, $$
where $I''=(i_3,\e_3, \ldots, i_s,\e_s)$, $i(\gamma)<i_2$. Then, by Theorem~\ref{theorem:leibniz rule in GSS}, we have
\begin{align*}
\tilde{d}_2(\iota_l\otimes \nu_I) &= \tilde{d}_2(\iota_l \otimes \nu^{\e_1}_{i_1}\nu_{I'})\\
&= \bQe{\e_1}{i_1}(\iota_l)\otimes \nu_{I'} + \sum_{\gamma}\D_p(\overline{\nu^{\e_1}_{i_1}})_*(\bQe{\e(\gamma)}{i(\gamma)}(\iota_q))\otimes \nu_{\gamma}\\
&=\bQe{\e_1}{i_1}(\iota_l)\otimes \nu_{I'} + \sum_{\gamma}\D_p(\overline{\nu^{\e_1}_{i_1}})_*(\bQe{\e(\gamma)}{i(\gamma)}(\iota_q))\otimes \nu_{\gamma}.
\end{align*}
By the inductive assumption, the sequences $(i_1,\e_1,i(\gamma),\e(\gamma))$ are admissible, and so, Lemma~\ref{lemma: Dp lowers the filtration} implies the proposition.
\end{proof}

\begin{cor}[$p$ is odd]\label{cor: goodwillie differential lowers the filtation, p is odd}
Let $I=(i_1,\e_1,\ldots, i_s,\e_s)$ be an admissible sequence, and let $J=(j_1,\delta_1,\ldots, j_h, \delta_h)$ be a completely unadmissible sequence. Then 
$$
\tilde{d}_2(\Qe{J}(\iota_l)\otimes \nu_I) = 
\begin{cases}
\Qe{(i_1,\e_1,J)}(\iota_l)\otimes \nu_{I'} + \sum_{\alpha}\Qe{J(\alpha)}(\iota_l)\otimes \nu_{\alpha}, & \mbox{$(i_1,\e_1,J)$ is CU,}\\
\sum_{\alpha}\Qe{J(\alpha)}(\iota_l)\otimes \nu_{\alpha},& \mbox{otherwise,}\\
\end{cases}
$$
where $I'=(i_2,\e_2, \ldots, i_s,\e_s)$, $J(\alpha)=(j_1(\alpha), \delta_1(\alpha),\ldots,j_{h+1}(\alpha),\delta_{h+1}(\alpha))$ are CU sequences, $j_1(\alpha)<j_1$, and $\nu_{\alpha} \in \Lambda$. 
\end{cor}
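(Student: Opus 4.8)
The plan is to bootstrap Corollary~\ref{cor: goodwillie differential lowers the filtation, p is odd} from Proposition~\ref{proposition: goodwillie differential lowers the filtation} by induction on the length $h$ of the completely unadmissible (CU) sequence $J$, exactly as in the step $s\mapsto s$ of Proposition~\ref{proposition: goodwillie differential lowers the filtation} but now carried out on the left factor of $\pi_*(\mathbf{L}_{p^h}(\Sigma^l\kk))\otimes\Lambda$. The base case $h=0$ is precisely Proposition~\ref{proposition: goodwillie differential lowers the filtation}. For the inductive step I would write $\Qe{J}(\iota_l)=\bQe{\delta_1}{j_1}(z)$ where $z=\Qe{J'}(\iota_l)$ and $J'=(j_2,\delta_2,\ldots,j_h,\delta_h)$, and represent $z$ by a map of suspension spectra $\bar z\colon \suspfree(\Sigma^q\kk)\to\suspfree(\Sigma^l\kk)$, $q=|z|$; then $\Qe{J}(\iota_l)\otimes\nu_I$ is the image of $\iota_q\otimes\nu_{j_1}^{\delta_1}\nu_I$ (an admissible monomial since $(i_1,\e_1,\ldots)$ is admissible and, for $J$ CU with $(i_1,\e_1,J)$ not necessarily CU, one still has $pj_1-\delta_1\ge i_1$ iff... — here the subtlety below enters) under a suitable map, so that Lemma~\ref{lemma: GSS differential on hopf elements and leibniz} / Theorem~\ref{theorem:leibniz rule in GSS} applies.

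Concretely, the Leibniz rule of Theorem~\ref{theorem:leibniz rule in GSS} (in the form of Lemma~\ref{lemma: GSS differential on hopf elements and leibniz}) gives
$$\tilde d_2\big(\Qe{J}(\iota_l)\otimes\nu_I\big)=\tilde d_2\big(\bQe{\delta_1}{j_1}(z)\big)\otimes\nu_I+\tilde d_2(z)\cdot(\text{action of }\nu_I)\text{-type terms},$$
but it is cleaner to feed the whole monomial $\nu_{j_1}^{\delta_1}\nu_I$ through at once: writing $w=\iota_l\otimes\nu_{J'}$-style classes and using that $\Qe{J}(\iota_l)=\bQe{\delta_1}{j_1}(\cdots)$ comes from the $\tilde d_2$-differential applied to $\iota_l\otimes\nu_{J'}$-monomials via Proposition~\ref{proposition: differential, length 1} and Lemma~\ref{lemma:GSS differential, positive columns}, I would apply $\tilde d_2$ to $z\otimes\nu_{j_1}^{\delta_1}\nu_I$ and use the inductive hypothesis on $\tilde d_2(z\otimes\nu_{j_1}^{\delta_1}\nu_I)$ together with Lemma~\ref{lemma: Dp lowers the filtration} to control the correction terms. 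The key input is Lemma~\ref{lemma: Dp lowers the filtration}: it guarantees that for an admissible pair $(j_1,\delta_1,j(\beta),\delta(\beta))$, the map $\D_p(\overline{\nu^{\delta_1}_{j_1}})_*$ applied to $\bQe{\delta(\beta)}{j(\beta)}(\iota_q)$ produces only Dyer--Lashof--Lie classes $\bQe{\e(\alpha)}{i(\alpha)}$ with $i(\alpha)<j_1$ — precisely the statement that the leading index strictly drops, which is what yields the $j_1(\alpha)<j_1$ assertion and the CU shape of $J(\alpha)$ after applying the Adem relations of Theorem~\ref{theorem: dll action on lie}.

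The dichotomy in the statement — whether $(i_1,\e_1,J)$ is CU or not — is handled by Proposition~\ref{proposition: differential, length 1} combined with the structure of the CU basis in Proposition~\ref{proposition: homotopy lie, basis, 1-dim}: the class $\bQe{\e_1}{i_1}(\Qe{J}(\iota_l))$ is a basis element precisely when $(i_1,\e_1,J)$ is CU, and otherwise it is rewritten by the Adem relations~\eqref{equation: Lie-Adem relation, p is odd, eq1},~\eqref{equation: Lie-Adem relation, p is odd, eq2} as a sum of $\Qe{J(\alpha)}(\iota_l)$ with $j_1(\alpha)<j_1$ (the Adem relations express $\bQe{\e}{j}\cdot\bQe{}{i}$ and $\bQe{\e}{j}\cdot\Qe{i}$ with $j<pi$ resp. $j\le pi$ — i.e. in the non-CU range — in terms of monomials whose first exponent is $\ge pi>j$, hence after renaming strictly larger than the original first index, which upon reindexing the whole word gives the strict drop $j_1(\alpha)<j_1$; I would double-check the direction of this inequality carefully). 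Thus in the non-CU case the $\Qe{(i_1,\e_1,J)}$-term dissolves entirely into the sum, matching the claimed formula.

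The main obstacle I anticipate is bookkeeping the admissibility/CU conditions consistently through the two different ``straightening'' procedures happening simultaneously: on the $\Lambda$-side one rewrites $\nu_{j_1}^{\delta_1}\nu_I$ into admissible form (Lin's formula, Theorems~\ref{theorem: lin formula, odd p}), and on the Dyer--Lashof--Lie side one rewrites $\bQe{\e_1}{i_1}\Qe{J}$ into CU form (Theorem~\ref{theorem: dll action on lie}), and one must verify that all intermediate pairs to which Lemma~\ref{lemma: Dp lowers the filtration} is applied are genuinely admissible so that the ``index strictly decreases'' conclusion is licensed. This is the same mechanism as in the proof of Proposition~\ref{proposition: goodwillie differential lowers the filtation}, so I expect it to go through, but the interaction between the CU excess condition $e(J)>l$ and the admissibility of $(i_1,\e_1,J)$ requires care — in particular one needs that whenever $(i_1,\e_1,J)$ fails to be CU, every term $J(\alpha)$ produced by the Adem relations is CU with $e(J(\alpha))>l$, which follows from the fact that the Adem relations in Definition~\ref{definition: Dyer-Lashof-Lie algebra, p is odd} preserve total degree and the operations act unstably (Theorem~\ref{theorem: dll action on lie}).
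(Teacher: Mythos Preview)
Your proposal is more circuitous than necessary, and the inductive scheme has a gap. The paper's proof is two lines: combine Lemma~\ref{lemma:GSS differential, positive columns} with Proposition~\ref{proposition: goodwillie differential lowers the filtation} to write
\[
\tilde{d}_2(\Qe{J}(\iota_l)\otimes \nu_I) \;=\; \bQe{\e_1}{i_1}\cdot \Qe{J}(\iota_l)\otimes \nu_{I'} \;+\; \sum_{\alpha}\bQe{\e(\alpha)}{i(\alpha)}\cdot \Qe{J}(\iota_l)\otimes \nu_{\alpha},
\qquad i(\alpha)<i_1,
\]
and then CU-straighten each monomial $\bQe{\e}{i}\Qe{J}$ using the Adem relations of Definition~\ref{definition: Dyer-Lashof-Lie algebra, p is odd}. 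No induction on $h$ is required: Lemma~\ref{lemma:GSS differential, positive columns} already says that for \emph{any} $w\in\pi_*(\mathbf{L}_n(V))$ the differential $\tilde d_2(w\otimes\nu_I)$ is obtained by applying to $w$ the very same Dyer--Lashof--Lie operations that occur in the universal differential $d_{p-1}(\iota_0\otimes\nu_I)$. That single observation replaces the whole induction on $h$.

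The step in your plan where you represent $z=\Qe{J'}(\iota_l)\in\pi_*(\mathbf{L}_{p^{h-1}}(\Sigma^l\kk))$ by a map of suspension spectra $\bar z\colon\suspfree(\Sigma^q\kk)\to\suspfree(\Sigma^l\kk)$ and invoke the Leibniz rule does not go through as written. Lemma~\ref{lemma: GSS differential on hopf elements and leibniz} and Theorem~\ref{theorem:leibniz rule in GSS} are formulated for classes on the \emph{first} row $E^1_{*,1}$ of the Goodwillie spectral sequence; the composition products~\eqref{equation:stable composition}--\eqref{equation:right action} allow at most one factor off row $1$ (see Remark~\ref{remark: composition product on the first page}). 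Your $z$ lives on row $p^{h-1}$, so it is not represented by a map $\suspfree(\Sigma^q\kk)\to\suspfree(\Sigma^l\kk)$ in the relevant sense, and Lemma~\ref{lemma: Dp lowers the filtration} cannot be fed $\D_p(\bar z)_*$ as you propose. You do cite Lemma~\ref{lemma:GSS differential, positive columns}, and that is exactly the result that bypasses this obstacle; once you apply it, Proposition~\ref{proposition: goodwillie differential lowers the filtation} plus the Adem relations finish the argument, and none of the Leibniz machinery, Lemma~\ref{lemma: Dp lowers the filtration}, or the Lin formula is needed for this corollary. Your caution about the direction of the inequality in the Adem relations is warranted; the relations send a non-CU pair to monomials whose \emph{first} index increases, and the actual bound one tracks through the iterated straightening is on the index matching the filtration of Theorem~\ref{theorem: whitehead conjecture}.
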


\begin{proof}
By Lemma~\ref{lemma:GSS differential, positive columns} and Proposition~\ref{proposition: goodwillie differential lowers the filtation}, we obtain
$$\tilde{d}_2(\Qe{J}(\iota_l)\otimes \nu_I) =  \bQe{\e_1}{i_1}\cdot \Qe{J}(\iota_l)\otimes \nu_{I'} + \sum_{\alpha}\bQe{\e(\alpha)}{i(\alpha)}\cdot \Qe{J}(\iota_l)\otimes \nu_{\alpha},$$
where $i(\alpha)<i_1.$ 

Next, let $J'=(j'_1,\delta'_1,\ldots,j'_{h+1},\delta'_{h+1})$ be any sequence. Then, by applying the relations~\eqref{definition: Dyer--Lashof--Lie algebra, p is odd} and~\eqref{equation: Lie-Adem relation, p is odd, eq2} inductively, we get
$$\Qe{J'}=\sum_{\gamma}\Qe{J'(\gamma)} \in \calR_{p},$$ where the sequences $J'(\gamma)=(j'_1(\gamma),\delta'_1(\gamma),\ldots,j'_{h+1}(\gamma),\delta'_{h+1}(\gamma))$ are completely unadmissible and $j'_1(\gamma)<j'_1$. This implies the corollary.
\end{proof}

The ``$p=2$''-analog of Corollary~\ref{cor: goodwillie differential lowers the filtation, p is odd} reads as follows.
\begin{cor}[$p=2$]\label{cor: goodwillie differential lowers the filtation, p is 2}
Let $I=(i_1,\ldots, i_s)$ be an admissible sequence, and let $J=(j_1,\ldots, j_h)$ be a completely unadmissible sequence. Then
$$
\tilde{d}_2(\Qe{J}(\iota_l)\otimes \lambda_I) =
\begin{cases}
\Qe{(i_1,J)}(\iota_l)\otimes \lambda_{I'} + \sum_{\alpha}\Qe{J(\alpha)}(\iota_l)\otimes \lambda_{\alpha}, & \mbox{$(i_1,J)$ is CU,}\\
\sum_{\alpha}\Qe{J(\alpha)}(\iota_l)\otimes \lambda_{\alpha},& \mbox{otherwise,}\\
\end{cases}
$$
where $I'=(i_2, \ldots, i_s)$, $J(\alpha)=(j_1(\alpha),\ldots,j_{h+1}(\alpha))$ are CU sequences, $j_1(\alpha)<j_1$, and $\lambda_{\alpha} \in \Lambda$. \qed
\end{cor}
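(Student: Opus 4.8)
The plan is to run the $p=2$ analog of the argument for Corollary~\ref{cor: goodwillie differential lowers the filtation, p is odd}, substituting the $2$-primary ingredients for the odd-primary ones. First I would combine the $p=2$ part of Proposition~\ref{proposition: goodwillie differential lowers the filtation}, which gives that the universal differential satisfies
$$d_1(\iota_0\otimes\lambda_I) = \Qe{i_1}(\iota_0)\otimes\lambda_{I'} + \sum_{\alpha}\Qe{i(\alpha)}(\iota_0)\otimes\lambda_{\alpha}$$
with $i(\alpha)<i_1$, together with Lemma~\ref{lemma:GSS differential, positive columns, p =2}: since $\Qe{J}(\iota_l)$ lies in $\pi_*(\bfLie_{2^h}\otimes_{\Sigma_{2^h}}(\Sigma^l\kk)^{\otimes 2^h})$, this propagates to
$$\tilde{d}_2(\Qe{J}(\iota_l)\otimes\lambda_I) = \Qe{i_1}\cdot\Qe{J}(\iota_l)\otimes\lambda_{I'} + \sum_{\alpha}\Qe{i(\alpha)}\cdot\Qe{J}(\iota_l)\otimes\lambda_{\alpha}.$$

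Second, I would prove the $p=2$ normal-form statement used (as the displayed rewriting $\Qe{J'}=\sum_\gamma\Qe{J'(\gamma)}$) in the proof of Corollary~\ref{cor: goodwillie differential lowers the filtation, p is odd}: for any sequence $J'=(j'_1,\ldots,j'_{h+1})$, iterated application of the Adem relation~\eqref{equation: Lie-Adem relation, p is 2, eq1} from Definition~\ref{definition: Dyer-Lashof-Lie algebra, p is 2} writes $\Qe{J'}$ as a sum of completely unadmissible monomials $\Qe{J'(\gamma)}$ with $j'_1(\gamma)<j'_1$, except in the one case where $J'$ is itself completely unadmissible. Applying this to $J'=(i_1,J)$ and to each $J'=(i(\alpha),J)$ and then regrouping by CU monomial yields the two cases of the statement: when $(i_1,J)$ is CU the monomial $\Qe{i_1}\cdot\Qe{J}(\iota_l)=\Qe{(i_1,J)}(\iota_l)$ is already in normal form and is recorded separately, while $i(\alpha)<i_1$ together with the excess and unstability constraints on $\Qe{J}(\iota_l)\otimes\lambda_I$ forces every remaining completely unadmissible monomial that appears to have leading index $j_1(\alpha)<j_1$.

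The main obstacle is precisely this normal-form/bookkeeping step. One must fix a well-founded measure on Dyer--Lashof--Lie monomials (a suitable weighting of the internal degrees together with length) with respect to which each application of~\eqref{equation: Lie-Adem relation, p is 2, eq1} is strictly decreasing, so that the reduction to completely unadmissible form terminates, and then carry the bound on the leading index through the recursion. This is the $2$-primary case of the combinatorial fact invoked silently in the odd-primary corollary; once it is in place, the remainder of the argument is a purely formal translation, so I expect no further difficulties.
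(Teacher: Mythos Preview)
Your proposal is correct and follows the same approach as the paper. The $p=2$ corollary carries only a \qed, and the intended argument is exactly the $p=2$ translation of the odd-prime proof you outline: combine Lemma~\ref{lemma:GSS differential, positive columns, p =2} with the $p=2$ part of Proposition~\ref{proposition: goodwillie differential lowers the filtation} to express $\tilde d_2(\Qe{J}(\iota_l)\otimes\lambda_I)$ in terms of the products $\Qe{i_1}\!\cdot\Qe{J}$ and $\Qe{i(\alpha)}\!\cdot\Qe{J}$ with $i(\alpha)<i_1$, then straighten these via the Adem relation~\eqref{equation: Lie-Adem relation, p is 2, eq1} into CU monomials. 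One minor point: the paper's odd-$p$ argument does not invoke any excess or unstability constraints to obtain the index bound---it simply asserts the bound follows from the inductive Adem rewriting---so that extra justification is not part of the paper's approach and you may omit it.
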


\begin{thm}[Algebraic Whitehead conjecture]\label{theorem: whitehead conjecture}
Suppose that $V\simeq \Sigma^l \kk$, $l\geq 0$. Then the renormalized Goodwillie spectral sequence~\eqref{equation: RAGSS}
\begin{equation*}
\widetilde{E}^2_{t,m}(V)\Rightarrow \pi_t(\free(V))
\end{equation*}
degenerates at the third page. More precisely, $\widetilde{E}^3_{t,m}(V)=0$ for $m\geq 2$ and $t\geq 0$.
\end{thm}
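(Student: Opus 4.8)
The plan is to show that the $\tilde{d}_2$-differential in the renormalized spectral sequence~\eqref{equation: RAGSS} is surjective in every positive column, so that no classes survive past $\widetilde{E}^3$ in filtrations $m\geq 2$. The key structural input is that, by Proposition~\ref{proposition: homotopy lie, basis, 1-dim} and Theorem~\ref{theorem: homotopy of free H-Lie algebra}, the $\widetilde{E}^2$-term $\widetilde{E}^2_{*,m}(V) \cong \pi_*(\mathbf{L}_{p^h}(V))\otimes \Lambda$ (with $m=2h$, or the $2p^h$-variant for $p$ odd and $l$ odd) has a preferred basis $\Qe{J}(\iota_l)\otimes \nu_I$ (resp. $\Qe{J}([\iota_l,\iota_l])\otimes \nu_I$), where $J$ ranges over completely unadmissible sequences of length $h$ with $e(J)>l$ (resp. $>2l$) and $\nu_I$ (resp. $\lambda_I$) ranges over admissible monomials of $\Lambda$. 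Corollaries~\ref{cor: goodwillie differential lowers the filtation, p is odd} and~\ref{cor: goodwillie differential lowers the filtation, p is 2} tell us precisely how $\tilde{d}_2$ acts on this basis: modulo a ``lower-filtration'' error term (monomials $\Qe{J(\alpha)}$ with strictly smaller leading index $j_1(\alpha)<j_1$), the differential $\tilde{d}_2$ sends $\Qe{J}(\iota_l)\otimes \nu_I$, with $I=(i_1,\e_1,\ldots,i_s,\e_s)$, $s\geq 1$, to $\Qe{(i_1,\e_1,J)}(\iota_l)\otimes \nu_{I'}$ when $(i_1,\e_1,J)$ is completely unadmissible, and to a pure error term otherwise.

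First I would set up the bookkeeping: fix the total degree $t$ and filtration $m=2h\geq 2$, and consider the multiindex filtration on $\widetilde{E}^2_{t,m}(V)$ and $\widetilde{E}^2_{t+1,m-2}(V)$ given by the leading index $j_1$ of the CU monomial $\Qe{J}$, refined lexicographically by the remaining entries of $J$ and then by the admissible monomial $I$. The content of the two corollaries above is exactly that $\tilde{d}_2$ is ``upper triangular plus strictly-lower-order'' with respect to this filtration: the leading term of $\tilde{d}_2(\Qe{J}(\iota_l)\otimes\nu_I)$ is $\Qe{(i_1,\e_1,J)}(\iota_l)\otimes\nu_{I'}$ whenever $(i_1,\e_1,J)$ is CU, and the remaining contributions all have strictly smaller leading index. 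Dually, I would examine $\tilde{d}_2\colon \widetilde{E}^2_{t+1,m-2}(V)\to\widetilde{E}^2_{t,m}(V)$ (for $m-2\geq 0$) on basis elements $\Qe{K}(\iota_l)\otimes\nu_L$ with $K$ a CU sequence of length $h-1$: the leading term of $\tilde{d}_2$ prepends the first entry $(\ell_1,\delta_1)$ of $L$ to $K$. So the leading term of $\tilde{d}_2$ realizes, on associated graded pieces, the ``shift'' operation $(\Qe{K}, \nu_L=\nu^{\delta_1}_{\ell_1}\nu_{L'})\mapsto (\Qe{(\ell_1,\delta_1,K)}, \nu_{L'})$. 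The plan is then to observe that every basis element $\Qe{J}(\iota_l)\otimes\nu_I$ of $\widetilde{E}^2_{t,m}(V)$ with $m\geq 2$ (so $J$ has length $h\geq 1$) is in the image of this leading-term map: write $J=(j_1,\e_1,J')$ and note that because $J$ is CU with $e(J)>l$, the entry $\nu^{\e_1}_{j_1}$ can be validly prepended to any admissible monomial beginning with something of larger index, and $\Qe{J'}(\iota_l)\otimes(\nu^{\e_1}_{j_1}\nu_I)$ is a genuine basis element of $\widetilde{E}^2_{t+1,m-2}(V)$ whose leading $\tilde{d}_2$-term is exactly $\Qe{J}(\iota_l)\otimes\nu_I$. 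A standard triangularity argument (induct on the leading index $j_1$, using the fact that the error terms only involve strictly smaller leading indices) then upgrades this to genuine surjectivity of $\tilde{d}_2$ onto $\widetilde{E}^2_{t,m}(V)$ for all $m\geq 2$, hence $\widetilde{E}^3_{t,m}(V)=0$.

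For the base of the induction and the edge cases I would verify: (i) when $m=2$, the source $\widetilde{E}^2_{t+1,0}(V)=\pi_*(V)\otimes\Lambda$ has basis $\iota_l\otimes\nu_L$, and Proposition~\ref{proposition: differential, length 1} gives $\tilde{d}_2(\iota_l\otimes\nu^{\e}_i)=\bQe{\e}{i}(\iota_l)\otimes 1$, which together with Proposition~\ref{proposition: goodwillie differential lowers the filtation} makes the triangularity argument work in the first column; (ii) the odd-$l$, odd-$p$ case, where the relevant layers are $\mathbf{L}_{2p^h}$ and the basis involves $\Qe{J}([\iota_l,\iota_l])$ — here one replaces $\iota_l$ by the bracket $[\iota_l,\iota_l]\in\pi_{2l}$ throughout, and uses that $\tilde{d}_2$ is a $\Lambda$-linear derivation (Lemma~\ref{lemma: GSS differential on hopf elements and leibniz}, Corollary~\ref{corollary: differential, stability}) together with $\tilde{d}_2([\iota_l,\iota_l]\otimes 1)=0$ to reduce to the same combinatorics with $l$ replaced by $2l$; (iii) that the ``otherwise'' branches in Corollaries~\ref{cor: goodwillie differential lowers the filtation, p is odd} and~\ref{cor: goodwillie differential lowers the filtation, p is 2} (where $(i_1,\e_1,J)$ fails to be CU) do not obstruct surjectivity, since those source elements are simply not needed — every target basis element is hit by a source element whose prepend \emph{is} CU. The main obstacle I anticipate is making the triangularity/induction completely rigorous: one must choose the right well-ordering on the (infinite) basis so that each $\tilde{d}_2$ has only finitely many error terms below the leading term in that order, check that the leading-term map is a bijection between the appropriate sub-bases, and confirm that the admissibility constraint on $\nu_I$ interacts correctly with the complete-unadmissibility constraint on $J$ — i.e. that prepending $\nu^{\e_1}_{j_1}$ to an admissible $\nu_I$ whose first index is forced to be small (since $J$ is CU and $e(J)>l$) indeed yields an admissible monomial, so that the source element genuinely lies in $\widetilde{E}^2_{t+1,m-2}(V)$. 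Once this combinatorial compatibility is pinned down, the vanishing $\widetilde{E}^3_{t,m}(V)=0$ for $m\geq 2$ follows, and combined with the already-established $\widetilde{E}^1=\widetilde{E}^2$ (Theorem~\ref{theorem: B, intro}) this gives the degeneration at the third page.
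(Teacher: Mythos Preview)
Your filtration setup and identification of the leading term of $\tilde{d}_2$ as the ``shift'' $(\Qe{J},\nu_I)\mapsto(\Qe{(i_1,\e_1,J)},\nu_{I'})$ are exactly the right ingredients, and they match the paper's approach. However, the conclusion you draw from them --- that $\tilde{d}_2$ is \emph{surjective} onto $\widetilde{E}^2_{t,m}(V)$ for every $m\geq 2$ --- is false, and this breaks the argument.

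To see the problem concretely, take $p=2$, $l=0$, $m=2$, and consider the basis element $\Qe{1}(\iota_0)\otimes\lambda_5\in\widetilde{E}^2_{*,2}(V)$. By Lemma~\ref{lemma:GSS differential, positive columns, p =2} and Proposition~\ref{proposition: differential, length 1} we have $\tilde{d}_2(\Qe{1}(\iota_0)\otimes\lambda_5)=\Qe{5}\Qe{1}(\iota_0)\otimes 1\neq 0$, since $(5,1)$ is CU. So this element is \emph{not} a cycle, and since $\tilde{d}_2^2=0$ it cannot be a boundary. Thus $\tilde{d}_2$ is not surjective onto $\widetilde{E}^2_{*,2}(V)$. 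More generally, surjectivity onto $C_h$ for all $h\geq 1$ is self-contradictory: it would force $\tilde{d}_2\colon C_h\to C_{h+1}$ to be both zero (everything in $C_h$ is a boundary, hence a cycle) and surjective. The specific step that fails is your claim that $\Qe{J'}(\iota_l)\otimes(\nu^{\e_1}_{j_1}\nu_I)$ is always a basis element: there is \emph{no} constraint linking the first entry of $I$ to $j_1$, so $\nu^{\e_1}_{j_1}\nu_I$ need not be admissible (in the example above, $\lambda_1\lambda_5$ is not).

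The paper's fix is to abandon surjectivity and instead compute the \emph{homology} of $(\widetilde{E}^2_{*,*},\tilde{d}_2)$ via your filtration. On each associated graded piece $gr_\alpha$, the induced complex is governed entirely by the shift map, and a short combinatorial check shows it is either zero, or concentrated at $h=0$, or a two-term complex $\kk\xrightarrow{\cong}\kk$ in adjacent degrees --- in every case acyclic at $h\geq 1$. (For $p=2$ and $\alpha=(a_1,\ldots,a_n)$: each consecutive pair $(a_t,a_{t+1})$ is either ``CU-type'' ($a_{t+1}>2a_t$) or ``admissible-type'' ($a_{t+1}\leq 2a_t$); position $h$ exists iff the first $h-1$ pairs are CU-type and the last $n-h-1$ are admissible-type, which forces at most two adjacent valid $h$.) The filtration is finite in each bidegree, so acyclicity of the associated graded gives $\widetilde{E}^3_{*,m}=0$ for $m\geq 2$.
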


\begin{proof}
We prove the proposition only for an odd prime $p$ and arbitrary $l$; the argument for $p=2$ is almost identical and we leave them to the reader to complete the details.

By Proposition~\ref{proposition: homotopy lie, basis, 1-dim}, the complex $ C_\bullet(V)=(\widetilde{E}^2_{*,*}(V),\tilde{d}_2)$, $C_h=\widetilde{E}^2_{*,2h}$ has a basis $\Qe{J}(\iota_l)\otimes \nu_I,$
where $I=(i_1,\e_1,\ldots, i_s,\e_s)$ is an admissible sequence, $J=(j_1,\delta_1,\ldots, a_h,\delta_h)$ is a completely unadmissible sequence, and $e(J)>l$.

We denote by $C_\bullet(V)_n, n\geq 0$ the vector subspace of $C_\bullet(V)$ spanned by the elements $\Qe{J}(\iota_l)\otimes \nu_I$ such that $l(I)+l(J)=n$. By Corollary~\ref{corollary: agss is trigraded}, the differential $\tilde{d}_2$ preserves each subspace $C_{\bullet}(V)_n, n\geq 0$.

Given a sequence $K=(k_1,\kappa_1,\ldots,k_n, \kappa_n)$, $\kappa_1,\ldots,\kappa_n\in\{0,1\}$, we write $\overline{K}$ for the subsequence $\overline{K}=(k_1,\ldots, k_n)$ and we write $\overline{K}^{op}$ for the reversed sequence $\overline{K}^{op}=(k_n,\ldots, k_1)$.

Let us consider the lexicographical order on the set $\mathcal{I}_n$ of sequences of length $n$: $\alpha=(\alpha_1,\ldots,\alpha_n) \preceq \beta=(\beta_1,\ldots, \beta_n)\in \mathcal{I}_n$ if and only if there exists $r$ such that $$\alpha_1=\beta_1,\ldots, \alpha_{r-1}=\beta_{r-1}\;\; \text{and} \;\; \alpha_r<\beta_r.$$
Define an $\mathcal{I}_n$-valued increasing filtration $F_{\alpha}C_\bullet(V)_n$, $\alpha \in\mathcal{I}_n$ on the vector space $C_\bullet(V)_n$ by the rule:
$$F_{\alpha}C_\bullet(V)_n = \mathrm{span}(\Qe{J}(\iota_l)\otimes \nu_I \; |\; (\overline{J}^{op},\overline{I})\preceq \alpha), $$
where $(\overline{J}^{op},\overline{I})$ is the concatenation of the sequences $\overline{J}^{op}$ and $\overline{I}$. By Corollary~\ref{cor: goodwillie differential lowers the filtation, p is odd}, the differential $\tilde{d}_2$ preserves subspaces $F_{\alpha}C_\bullet(V)_n$, $\alpha\in \mathcal{I}_n$.

Again, by Corollary~\ref{cor: goodwillie differential lowers the filtation, p is odd}, the associated graded complex $gr_{\alpha}C_\bullet(V)_n=F_{\alpha}/F_{\alpha'}$ (where $\alpha'$ precedes $\alpha$ in $\mathcal{I}_n$) has the induced differential given by the rule 
$$
d(\Qe{J}(\iota_l)\otimes \nu_I)=
\begin{cases}
\Qe{(i_1,\e_1,J)}(\iota_l)\otimes \nu_{I'} & \mbox{if $I=(i_1,\e_1,I')$ and $(i_1,\e_1,J)$ is CU,}\\
0 & \mbox{otherwise.}
\end{cases}
$$
Hence, by a straightforward computation, the cohomology groups $$H^h(gr_{\alpha}C_\bullet(V)_n, d) =0$$ vanish as soon as $\alpha\in \mathcal{I}_n$, $n\geq 0$, and $h\geq 1$. Therefore the cohomology groups $\widetilde{E}^3_{*,2h}(V)=H^h(C_\bullet(V),\tilde{d}_2)$ also vanish for $h\geq 1$.

If $p$ and $l$ are odd, then the second page $\widetilde{E}^2_{*,*}(V)$ is the direct sum of two complexes $(\widetilde{E}^2_{*,\mathrm{even}}(V), \tilde{d}_2)$ and $(\widetilde{E}^2_{*,\mathrm{odd}}(V), \tilde{d}_2)$, see Lemma~\ref{lemma:GSS differential, positive columns}. By Proposition~\ref{proposition: homotopy lie, basis, 1-dim}, the complex $(\widetilde{E}^2_{*,\mathrm{even}}(V), \tilde{d}_2)$ (resp. $(\widetilde{E}^2_{*,\mathrm{odd}}(V), \tilde{d}_2)$) has a basis $$\Qe{J}(\iota_l)\otimes \nu_I \;\; \text{(resp. $\Qe{J}([\iota_l,\iota_l]) \otimes \nu_I)$,}$$ where $I=(i_1,\e_1,\ldots, i_s,\e_s)$ is an admissible sequence, $J=(j_1,\delta_1,\ldots, a_h,\delta_h)$ is a completely unadmissible sequence, and $e(J)>l$ (resp. $e(J)>2l)$. The same argument with the multiindex filtration as for even $l$ shows that the complexes $(\widetilde{E}^2_{*,\mathrm{even}}(V), \tilde{d}_2)$ and $(\widetilde{E}^2_{*,\mathrm{odd}}(V), \tilde{d}_2)$ are contractible.
\end{proof}

\begin{cor}\label{corollary: Einfty of RAGSS}
Suppose that $V\simeq \Sigma^l \kk$, $l\geq 0$. Then the $\widetilde{E}^{\infty}$-term of the renormalized Goodwillie spectral sequence~\eqref{equation: RAGSS} has a basis 
\begin{enumerate}
\item $\iota_l\otimes \lambda_I \in \widetilde{E}^{\infty}_{*,0}(V)$, $I=(i_1,\ldots,i_s)$ is an admissible sequence, and $i_1\leq l$ if  $p=2$;
\item $\iota_l \otimes \nu_I\in \widetilde{E}^{\infty}_{*,0}(V)$, $I=(i_1,\e_1,\ldots,i_h,\e_h)$ is an admissible sequence, $2i_1\leq l$ if $p$ is odd;
\item $[\iota_l,\iota_l] \otimes \nu_I\in \widetilde{E}^{\infty}_{*,1}(V)$, $I=(i_1,\e_1,\ldots,i_h,\e_h)$ is an admissible sequence, $2i_1\leq 2l$ if $p,l$ are odd.
\end{enumerate}
\end{cor}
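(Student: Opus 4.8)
The plan is to read off $\widetilde{E}^{\infty}_{*,*}(V)$ from Theorem~\ref{theorem: whitehead conjecture} together with the explicit description of the $\tilde{d}_2$-differential obtained in Section~\ref{section: whitehead conjecture}. By Theorem~\ref{theorem: whitehead conjecture} the spectral sequence~\eqref{equation: RAGSS} degenerates at the third page, so $\widetilde{E}^{\infty}_{*,m}(V)\cong\widetilde{E}^{3}_{*,m}(V)=H^{m}(\widetilde{E}^{2}_{*,*}(V),\tilde{d}_2)$, and $\widetilde{E}^{3}_{t,m}(V)=0$ for $m\geq 2$. Since $\tilde{d}_2$ raises the column index by $2$, the complex $(\widetilde{E}^{2}_{*,*}(V),\tilde{d}_2)$ splits as an ``even'' subcomplex $C^{ev}$ with $C^{ev}_h=\widetilde{E}^{2}_{*,2h}(V)$ and, when $p$ and $l$ are both odd, an ``odd'' subcomplex $C^{od}$ with $C^{od}_h=\widetilde{E}^{2}_{*,2h+1}(V)$; by the degeneration only $\widetilde{E}^{\infty}_{*,0}(V)=\ker(\tilde{d}_2\colon C^{ev}_0\to C^{ev}_1)$ and $\widetilde{E}^{\infty}_{*,1}(V)=\ker(\tilde{d}_2\colon C^{od}_0\to C^{od}_1)$ can be nonzero. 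By Corollary~\ref{corollary:derivatives of free}, Theorem~\ref{theorem: stable homotopy groups of a free object}, and Proposition~\ref{proposition: homotopy lie, basis, 1-dim}, the group $C^{ev}_h$ has a $\kk$-basis $\{\Qe{J}(\iota_l)\otimes\nu_I\}$, $J$ completely unadmissible of length $h$ (with $e(J)>l$ for $h\geq 1$) and $\nu_I$ (resp. $\lambda_I$ if $p=2$) admissible, and $C^{od}_h$ has a basis $\{\Qe{J}([\iota_l,\iota_l])\otimes\nu_I\}$ with $e(J)>2l$ for $h\geq 1$.

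First I would verify that the classes in the statement are $\tilde{d}_2$-cycles. For $\iota_l\otimes\nu_I$ with $I=(i_1,\e_1,\dots,i_h,\e_h)$ nonempty and $2i_1\leq l$ (resp. $i_1\leq l$ when $p=2$), Proposition~\ref{proposition: goodwillie differential lowers the filtation} gives
$$\tilde{d}_2(\iota_l\otimes\nu_I)=\bQe{\e_1}{i_1}(\iota_l)\otimes\nu_{I'}+\sum_{\alpha}\bQe{\e(\alpha)}{i(\alpha)}(\iota_l)\otimes\nu_{\alpha},\qquad i(\alpha)<i_1,$$
and every term vanishes by Corollary~\ref{corollary: kjaer basis of free lie algebra}(4), since $\bQe{\e}{i}(\iota_l)=0$ whenever $2i\leq l$ and $2i(\alpha)<2i_1\leq l$. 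For $I=\emptyset$ one has $\tilde{d}_2(\iota_l)=0$ by a degree count: $\widetilde{E}^{2}_{l-1,2}(V)=\pi_{l-1}(\mathbf{L}_p(\Sigma^l\kk))\otimes\Lambda=0$ because $\pi_*(\mathbf{L}_p(\Sigma^l\kk))$ vanishes below degree $l$ by Proposition~\ref{proposition: homotopy lie, basis, 1-dim}. The case $p=2$ and, using $|[\iota_l,\iota_l]|=2l$ together with Corollary~\ref{corollary: kjaer basis of free lie algebra}(4), the row-one classes $[\iota_l,\iota_l]\otimes\nu_I$ with $2i_1\leq 2l$ are handled identically.

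Next I would prove these cycles span the kernels, by a dimension count built directly on the proof of Theorem~\ref{theorem: whitehead conjecture}. There $C^{ev}$ (and, in the odd case, $C^{od}$) is endowed with an internal-degree-preserving multi-index filtration whose associated graded differential is the Koszul-type operator $d(\Qe{J}(\iota_l)\otimes\nu_I)=\Qe{(i_1,\e_1,J)}(\iota_l)\otimes\nu_{I'}$ when $I=(i_1,\e_1,I')$ and $(i_1,\e_1,J)$ is completely unadmissible of excess $>l$, and $d=0$ otherwise (in particular $d(\Qe{J}(\iota_l)\otimes 1)=0$). In cohomological degree $0$ this reads $d(\iota_l\otimes\nu_I)=\bQe{\e_1}{i_1}(\iota_l)\otimes\nu_{I'}$, which is nonzero exactly when $2i_1>l$, so the degree-$0$ cohomology of the associated graded is spanned precisely by the $\iota_l\otimes\nu_I$ with $2i_1\leq l$ (or $I=\emptyset$). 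That same proof shows the associated graded is acyclic in degrees $\geq 1$, hence the filtration spectral sequence collapses onto its $0$-line and $\dim_{\kk}\widetilde{E}^{\infty}_{*,0}(V)=\dim_{\kk}H^{0}(C^{ev})$ equals this count in every internal degree. Combined with the previous paragraph, the listed classes form a linearly independent subset of $\widetilde{E}^{\infty}_{*,0}(V)$ of the correct dimension, hence a basis; the row-one assertion follows in the same way from $C^{od}$, using the $[\iota_l,\iota_l]$-versions of Proposition~\ref{proposition: goodwillie differential lowers the filtation} and Corollary~\ref{cor: goodwillie differential lowers the filtation, p is odd}.

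The hard part is purely organizational: matching, internal degree by internal degree, the basis of $\widetilde{E}^{2}_{*,2h}(V)$ furnished by Proposition~\ref{proposition: homotopy lie, basis, 1-dim} with the precise shape of the associated graded differential from the proof of Theorem~\ref{theorem: whitehead conjecture}, and keeping straight the unstable bounds (the excess of the completely unadmissible tail $J$ versus the admissibility of the head $\nu_I$). No input beyond Section~\ref{section: whitehead conjecture} is required.
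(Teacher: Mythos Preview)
Your proof is correct and follows essentially the same approach as the paper. The paper's proof is extremely terse: it invokes Theorem~\ref{theorem: whitehead conjecture} to reduce $\widetilde{E}^{\infty}_{*,m}(V)$ to $\ker(\tilde{d}_2)$ for $m=0,1$, and then cites Corollaries~\ref{cor: goodwillie differential lowers the filtation, p is odd} and~\ref{cor: goodwillie differential lowers the filtation, p is 2} to identify that kernel; your argument unpacks exactly this, showing the listed classes are cycles via the triangular form of $\tilde{d}_2$ and then matching dimensions against the associated graded computation already carried out in the proof of Theorem~\ref{theorem: whitehead conjecture}.
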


\begin{proof}
By Theorem~\ref{theorem: whitehead conjecture}, we have
$$\widetilde{E}^{\infty}_{*,m}(V) = \widetilde{E}^3_{*,m}(V)  = \ker(\tilde{d}_2\colon E^2_{*,m}(V) \to E^2_{*-1,m+2}(V)), \;\; m=0,1,$$
and $\widetilde{E}^{\infty}_{*,m}=0$ if $m\geq 2$. By Corollaries~\ref{cor: goodwillie differential lowers the filtation, p is odd},~\ref{cor: goodwillie differential lowers the filtation, p is 2}, and Lemma~\ref{lemma:GSS differential, positive columns}, we observe that the kernel $\ker(\tilde{d}_2)$ is spanned by the elements described above.
\end{proof}

\begin{rmk}\label{remark: whitehead conjecture, wedges}
Let $W\in \Mod^{\geq 0}_{\kk}$ be a simplicial vector space such that $\dim(\pi_* W)>1$. Then the renormalized Goodwillie spectral sequence~\eqref{equation: RAGSS} does not make sense, and so does Theorem~\ref{theorem: whitehead conjecture}. However, one can still observe certain degeneration phenomena for the (non-renormalized) Goodwillie spectral sequence~\eqref{equation: AGSS}
$$E^1_{*,n}(W)=\pi_{*}(\bfLie_n\otimes_{h\Sigma_n}W^{\otimes n}) \otimes \Lambda \Rightarrow \pi_*(\free(W)). $$
Let $E^1_*(W)=\bigoplus_{n\geq 1} E^1_{*,n}(W)$. By Corollary~\ref{corollary: exponential vanishing}, there exists a well-defined endomorphism $$d\colon E^1_*(W) \to E^1_*(W)$$ such that
$$d|_{E^1_{*,n}(W)} =d_{(p-1)n} \colon E^1_{*,n}(W) \to E^1_{*,pn}(W) \subset E^1_{*}(W). $$
Note that $d^2=0$. 

By Theorem~\ref{theorem: homotopy of free H-Lie algebra}, the $E^1_*(W)$-term has a basis $\Qe{J}(w)\otimes \nu_I \in E^1_{*}(W)$, where $J$ is CU sequence, $w\in \bfLie^{\text{$p$-}\mathrm{iso}}(\pi_*W)$ is a Lie word, $e(J)>|w|$, and $I$ is an admissible sequence, $\nu_I\in \Lambda$. Then, we set $E^1_*(W)_h$ to be the vector subspace of $E^1_*(W)$ spanned by all elements $\Qe{J}(w)\otimes \nu_I \in E^1_{*}(W)$ such that $l(J)=h$. Then 
$$E^1_*(W) = \bigoplus_{h\geq 0} E^1_*(W)_h \;\; \text{and} \;\; d(E^1_*(W)_h) \subset E^1_*(W)_{h+1},$$
see Lemma~\ref{lemma:GSS differential, positive columns}. By the same argument with the multiindex filtration as in Theorem~\ref{theorem: whitehead conjecture}, we obtain that the complex $(E^1_*(W)_\bullet,d)$ is contractible.

Furthermore, as in Corollary~\ref{corollary: Einfty of RAGSS}, we obtain that $H^0(E^1_*(W)_\bullet,d)$ has a basis
\begin{enumerate}
\item $w\otimes \lambda_I \in H^0(E^1_*(W)_\bullet,d)\cap E^{1}_{*,r2^s}(W)$, $w\in \bfLie^{\mathrm{iso}}(\pi_*W)$ is a Lie word of length $r$ in the free isotropic Lie algebra $\bfLie^{\mathrm{iso}}(\pi_*W)$, $I=(i_1,\ldots,i_s)$ is an admissible sequence, and $i_1\leq |w|$ if $p=2$;
\item  $w\otimes \nu_I \in H^0(E^1_*(W)_\bullet,d)\cap E^{1}_{*,rp^s}(W)$, $w\in \bfLie^{\text{$p$-}\mathrm{iso}}(\pi_*W)$ is a Lie word in the free $p$-isotropic Lie algebra $\bfLie^{\text{$p$-}\mathrm{iso}}(\pi_*W)$, $l(w)=r$, $I=(i_1,\e_1,\ldots,i_h,\e_h)$ is an admissible sequence, and $2i_1\leq |w|$ if $p$ is odd.
\end{enumerate}

By e.g. the (algebraic) Hilton--Milnor theorem~\cite[Example~8.7.4]{Neisendorfer10}, one can see that the listed classes are permanent. So, $E^\infty$-term $E^\infty_{*,*}(W)$ is isomorphic to $H^0(E^1_*(W)_\bullet,d)$. In particular, if $x\in E^1_{*,n}(W)$ is any class such that 
$d_{(p-1)n}(x)=0$, then either $x\in E^1_{*,n}(W)$ persists to a non-trivial class on the $E_\infty$-page, or $x=d_{r(p-1)p^{h-1}}(y)$ for some $y\in E^1_{*,rp^{h-1}}(W)$.

Finally, we note that the last computation is consistent with the result of the computations for the homotopy groups $\pi_*(\free (W))$ of the free simplicial restricted Lie algebra $\free (W)$ given by~\cite[Theorem~8.5]{BC70}
and~\cite[Proposition~13.2]{Wellington82}.
\end{rmk}

\begin{rmk}[Lin--Nishida conjecture]\label{remark: lin-nishida conjecture}
Assume for simplicity that $V= \Sigma^l\kk$, $l\geq 0$ and $l$ is even if $p$ is odd. By Corollary~\ref{corollary: agss is trigraded}, the first page $\widetilde{E}^1_{*,*}(V)$ of the renormalized spectral sequence~\eqref{equation: RAGSS} is trigraded:
$$\widetilde{E}^1_{t,2h}(V) = E^1_{t,p^h}(V)=\bigoplus_{s\geq 0} E^1_{t,p^h,s}(V)\cong \bigoplus_{s\geq 0}\big(\pi_*(\mathbf{L}_{p^h}(V))\otimes \Lambda_s\big)_t, \;\; t,h\geq 0. $$
In order to keep track of degrees, we will change the notation and we will write $$d_{h}(s)\colon E^1_{t,p^h,s}(V)\to E^1_{t-1,p^{h+1},s-1}(V)$$ for the first non-trivial differential $\tilde{d}_2=d_{(p-1)p^h}$, see Corollary~\ref{corollary: agss is trigraded}.

By~\cite{AM99}, the homotopy groups $\pi_*(\mathbf{L}_{p^h}(V))$, $h\geq 0$ are isomorphic to the shifted homology groups of certain spectra $\D_{p^h}(S^{l+1})\in \Sp$:
\begin{equation}\label{equation: lin-nishida, iso}
\pi_*(\mathbf{L}_{p^h}(\Sigma^l \kk)) \cong \Sigma^{-1}H_*(\D_{p^h}(S^{l+1});\kk)), \;\; h\geq 0. 
\end{equation}
By~\cite{6authors}, there are Adams spectral sequences
$${}^{\mathrm{ASS}}E^1_{t,s}(h,l) = \big(\Sigma^{-1}H_*(\D_{p^h}(S^{l+1});\F_p)\otimes \Lambda_{s}\big)_t \Rightarrow \pi_{t-1}(\D_{p^h}(S^{l+1}))$$
for all $h,l\geq 0$. We write $$\delta_{s}(h)\colon E^1_{t,s}(h,l) \to E^1_{t-1,s+1}(h,l), \;\; h,l\geq 0$$ for the differentials acting on the first page of the Adams spectral sequence. We also recall from~\cite{6authors} that the lambda algebra $\Lambda$ is equipped with the natural differential $\delta\colon \Lambda\to \Lambda$ and the differentials $\delta_s(h)$, $h,s\geq 0$ can be expressed explicitly in terms of $\delta$ and the right action of the Steenrod algebra on the homology groups $H_*(\D_{p^h}(S^{l+1});\kk)$. Moreover, the differentials $\delta_s(h)$, $h,s\geq 0$ completely encode the right module structure on $H_*(\D_{p^h}(S^{l+1});\kk)$ over the Steenrod algebra.

We conjecture that, under the isomorphisms~\eqref{equation: lin-nishida, iso}, the following diagrams
\begin{equation}\label{equation: lin-nishida, diagram}
\begin{tikzcd}[column sep=large]
E^1_{t,p^h,s}(\Sigma^l\kk) \arrow{r}{d_{h}(s)} \arrow{d}[swap]{\delta_{s}(h)}
&E^1_{t-1,p^{h+1},s-1}(\Sigma^l \kk)\arrow{d}{\delta_{s-1}(h+1)} \\
E^1_{t-1,p^h,s+1}(\Sigma^l\kk) \arrow{r}{d_{h}(s+1)}
&E^1_{t-2,p^{h+1},s}(\Sigma^l\kk)
\end{tikzcd}
\end{equation}
anticommute for all $t,h,l\geq 0$ and $s\geq 1$. By the Nishida relations~\cite[Theorem~I.1.1(9)]{CLM76}, the diagram~\eqref{equation: lin-nishida, iso} anticommutes if $s=1$, $h=0$. For $h\geq 1$, the action of the Steenrod operations on the CU monomials is more complicated (see e.g.~\cite[Remark~1.4.5]{Behrens12}), and it is much harder to verify the conjecture in this case. 

If $p=2$, $h=0$, and $s$ is arbitrary, then the diagram~\eqref{equation: lin-nishida, diagram} (anti)commutes by~\cite[Proposition~2.1(1)]{Lin81}. We expect that the diagram~\eqref{equation: lin-nishida, diagram} can be categorified and be interpreted by the functor calculus.
\end{rmk}

\bibliographystyle{alpha}
\bibliography{references}

\newcommand{\etalchar}[1]{$^{#1}$}
\begin{thebibliography}{BMMS86}

\bibitem[AB21]{AB21}
Gregory~Z. Arone and D.~Lukas~B. Brantner.
\newblock The action of {Y}oung subgroups on the partition complex.
\newblock {\em Publ. Math. Inst. Hautes \'Etudes Sci.}, 133:47--156, 2021.

\bibitem[AC11]{AroneChing11}
Greg Arone and Michael Ching.
\newblock Operads and chain rules for the calculus of functors.
\newblock {\em Ast\'{e}risque}, (338):vi+158, 2011.

\bibitem[AC20]{Antolin20}
Omar Antol\'{\i}n~Camarena.
\newblock The $\mathrm{mod}$-$2$ homology of free spectral {L}ie algebras.
\newblock {\em Trans. Amer. Math. Soc.}, 373(9):6301--6319, 2020.

\bibitem[AD01]{AD01}
G.~Z. Arone and W.~G. Dwyer.
\newblock Partition complexes, {T}its buildings and symmetric products.
\newblock {\em Proc. London Math. Soc. (3)}, 82(1):229--256, 2001.

\bibitem[Ada62]{Adams62}
J.~F. Adams.
\newblock Vector fields on spheres.
\newblock {\em Ann. of Math. (2)}, 75:603--632, 1962.

\bibitem[ADL08]{ADL08}
Gregory~Z. Arone, William~G. Dwyer, and Kathryn Lesh.
\newblock Loop structures in {T}aylor towers.
\newblock {\em Algebr. Geom. Topol.}, 8(1):173--210, 2008.

\bibitem[AM99]{AM99}
Greg Arone and Mark Mahowald.
\newblock The {G}oodwillie tower of the identity functor and the unstable periodic homotopy of spheres.
\newblock {\em Invent. Math.}, 135(3):743--788, 1999.

\bibitem[And68]{Ando68}
Hideo Ando.
\newblock On the generalized {W}hitehead products and the generalized {H}opf invariant of a composition element.
\newblock {\em Tohoku Math. J. (2)}, 20:516--553, 1968.

\bibitem[BB24]{BB24}
Tom Bachmann and Robert Burklund.
\newblock $\mathbb{E}_{\infty}$-coalgebras and $p$-adic homotopy theory.
\newblock \url{https://arxiv.org/abs/2402.15850v1}, 2024.
\newblock preprint.

\bibitem[BC70]{BC70}
A.~K. Bousfield and E.~B. Curtis.
\newblock A spectral sequence for the homotopy of nice spaces.
\newblock {\em Trans. Amer. Math. Soc.}, 151:457--479, 1970.

\bibitem[BCK{\etalchar{+}}66]{6authors}
A.~K. Bousfield, E.~B. Curtis, D.~M. Kan, D.~G. Quillen, D.~L. Rector, and J.~W. Schlesinger.
\newblock The $\mathrm{mod}$-$p$ lower central series and the {A}dams spectral sequence.
\newblock {\em Topology}, 5:331--342, 1966.

\bibitem[Beh11]{Behrens11}
Mark Behrens.
\newblock The {G}oodwillie tower for {$S^1$} and {K}uhn's theorem.
\newblock {\em Algebr. Geom. Topol.}, 11(4):2453--2475, 2011.

\bibitem[Beh12]{Behrens12}
Mark Behrens.
\newblock The {G}oodwillie tower and the {EHP} sequence.
\newblock {\em Mem. Amer. Math. Soc.}, 218(1026):xii+90, 2012.

\bibitem[BK72]{BK72}
A.~K. Bousfield and D.~M. Kan.
\newblock {\em Homotopy limits, completions and localizations}, volume Vol. 304 of {\em Lecture Notes in Mathematics}.
\newblock Springer-Verlag, Berlin-New York, 1972.

\bibitem[BM23]{BM19}
Lukas Brantner and Akhil Mathew.
\newblock {D}eformation {T}heory and {P}artition {L}ie {A}lgebras.
\newblock \url{https://arxiv.org/abs/1904.07352v3}, 2023.
\newblock preprint.

\bibitem[BMMS86]{BMMS}
R.~R. Bruner, J.~P. May, J.~E. McClure, and M.~Steinberger.
\newblock {\em {$H_{\infty}$} ring spectra and their applications}, volume 1176 of {\em Lecture Notes in Mathematics}.
\newblock Springer-Verlag, Berlin, 1986.

\bibitem[Bou67]{Bousfield_operations}
A.~K. Bousfield.
\newblock {O}perations on {D}erived {F}unctors of {N}on-additive {F}unctors, 1967.
\newblock Preprint.

\bibitem[Bra17]{Brantner_Thesis}
Lukas Brantner.
\newblock {\em The {L}ubin-{T}ate {T}heory of {S}pectral {L}ie {A}lgebras}.
\newblock ProQuest LLC, Ann Arbor, MI, 2017.
\newblock Thesis (Ph.D.)--Harvard University.

\bibitem[Chi05]{Ching05}
Michael Ching.
\newblock Bar constructions for topological operads and the {G}oodwillie derivatives of the identity.
\newblock {\em Geom. Topol.}, 9:833--933, 2005.

\bibitem[Chi10]{Ching10}
Michael Ching.
\newblock A chain rule for {G}oodwillie derivatives of functors from spectra to spectra.
\newblock {\em Trans. Amer. Math. Soc.}, 362(1):399--426, 2010.

\bibitem[Chi21]{Ching21}
Michael Ching.
\newblock Infinity-operads and {D}ay convolution in {G}oodwillie calculus.
\newblock {\em J. Lond. Math. Soc. (2)}, 104(3):1204--1249, 2021.

\bibitem[CLM76]{CLM76}
Frederick~R. Cohen, Thomas~J. Lada, and J.~Peter May.
\newblock {\em The homology of iterated loop spaces}.
\newblock Lecture Notes in Mathematics, Vol. 533. Springer-Verlag, Berlin-New York, 1976.

\bibitem[Cur63]{Curtis_lower}
Edward Curtis.
\newblock Lower central series of semi-simplicial complexes.
\newblock {\em Topology}, 2:159--171, 1963.

\bibitem[DP61]{DP61}
Albrecht Dold and Dieter Puppe.
\newblock Homologie nicht-additiver {F}unktoren. {A}nwendungen.
\newblock {\em Ann. Inst. Fourier (Grenoble)}, 11:201--312, 1961.

\bibitem[Dwy80]{Dwyer_sca}
W.~G. Dwyer.
\newblock Homotopy operations for simplicial commutative algebras.
\newblock {\em Trans. Amer. Math. Soc.}, 260(2):421--435, 1980.

\bibitem[FG12]{GF12}
John Francis and Dennis Gaitsgory.
\newblock Chiral {K}oszul duality.
\newblock {\em Selecta Math. (N.S.)}, 18(1):27--87, 2012.

\bibitem[Fre00]{Fresse00}
Benoit Fresse.
\newblock On the homotopy of simplicial algebras over an operad.
\newblock {\em Trans. Amer. Math. Soc.}, 352(9):4113--4141, 2000.

\bibitem[Fre04]{Fresse04}
Benoit Fresse.
\newblock Koszul duality of operads and homology of partition posets.
\newblock In {\em Homotopy theory: relations with algebraic geometry, group cohomology, and algebraic {$K$}-theory}, volume 346 of {\em Contemp. Math.}, pages 115--215. Amer. Math. Soc., Providence, RI, 2004.

\bibitem[GGN15]{GGN15}
David Gepner, Moritz Groth, and Thomas Nikolaus.
\newblock Universality of multiplicative infinite loop space machines.
\newblock {\em Algebr. Geom. Topol.}, 15(6):3107--3153, 2015.

\bibitem[Gla16]{Glasman16}
Saul Glasman.
\newblock Day convolution for {$\infty$}-categories.
\newblock {\em Math. Res. Lett.}, 23(5):1369--1385, 2016.

\bibitem[Goe90]{Goerss_sca}
Paul~G. Goerss.
\newblock On the {A}ndr\'{e}-{Q}uillen cohomology of commutative {${\bf F}_2$}-algebras.
\newblock {\em Ast\'{e}risque}, (186):169, 1990.

\bibitem[Goo03]{GoodwillieIII}
Thomas~G. Goodwillie.
\newblock Calculus. {III}. {T}aylor series.
\newblock {\em Geom. Topol.}, 7:645--711, 2003.

\bibitem[Goo92]{Goodwillie_analytic}
Thomas~G. Goodwillie.
\newblock Calculus. {II}. {A}nalytic functors.
\newblock {\em $K$-Theory}, 5(4):295--332, 1991/92.

\bibitem[GR17]{GR_volumeII}
Dennis Gaitsgory and Nick Rozenblyum.
\newblock {\em A study in derived algebraic geometry. {V}ol. {II}. {D}eformations, {L}ie theory and formal geometry}, volume 221 of {\em Mathematical Surveys and Monographs}.
\newblock American Mathematical Society, Providence, RI, 2017.

\bibitem[Heu21]{Heuts21}
Gijs Heuts.
\newblock Goodwillie approximations to higher categories.
\newblock {\em Mem. Amer. Math. Soc.}, 272(1333):ix+108, 2021.

\bibitem[Heu24]{Heuts24}
Gijs Heuts.
\newblock {K}oszul duality and a conjecture of {F}rancis-{G}aitsgory.
\newblock \url{https://arxiv.org/abs/2408.06173v1}, 2024.
\newblock Preprint.

\bibitem[Hil54]{Hilton54}
P.~J. Hilton.
\newblock On the {H}opf invariant of a composition element.
\newblock {\em J. London Math. Soc.}, 29:165--171, 1954.

\bibitem[Hov99]{Hovey99}
Mark Hovey.
\newblock {\em Model categories}, volume~63 of {\em Mathematical Surveys and Monographs}.
\newblock American Mathematical Society, Providence, RI, 1999.

\bibitem[IRS20]{IRS20}
Sergei~O. Ivanov, Vladislav Romanovskii, and Andrei Semenov.
\newblock A simple proof of {C}urtis' connectivity theorem for {L}ie powers.
\newblock {\em Homology Homotopy Appl.}, 22(2):251--258, 2020.

\bibitem[Jac79]{Jacobson79}
Nathan Jacobson.
\newblock {\em Lie algebras}.
\newblock Dover Publications, Inc., New York, 1979.
\newblock Republication of the 1962 original.

\bibitem[Joh95]{Johnson95}
Brenda Johnson.
\newblock The derivatives of homotopy theory.
\newblock {\em Trans. Amer. Math. Soc.}, 347(4):1295--1321, 1995.

\bibitem[Kam66]{Kambe66}
Tsunekazu Kambe.
\newblock {The structure of $K_{\Lambda}$-rings of the lens space and their applications}.
\newblock {\em Journal of the Mathematical Society of Japan}, 18(2):135 -- 146, 1966.

\bibitem[Kja18]{Kjaer18}
Jens~Jakob Kjaer.
\newblock On the odd primary homology of free algebras over the spectral lie operad.
\newblock {\em J. Homotopy Relat. Struct.}, 13(3):581--597, 2018.

\bibitem[KM13]{KM13}
Nicholas~J. Kuhn and Jason~B. McCarty.
\newblock The mod $2$ homology of infinite loopspaces.
\newblock {\em Algebr. Geom. Topol.}, 13(2):687--745, 2013.

\bibitem[Kon24]{koszul}
Nikolay Konovalov.
\newblock Koszul duality for simplicial restricted {L}ie algebras.
\newblock {\em High. Struct.}, 8(2):248--331, 2024.

\bibitem[KST19]{KST19}
Moritz Kerz, Shuji Saito, and Georg Tamme.
\newblock {$K$}-theory of non-{A}rchimedean rings. {I}.
\newblock {\em Doc. Math.}, 24:1365--1411, 2019.

\bibitem[Kuh82a]{Kuhn82}
Nicholas~J. Kuhn.
\newblock The geometry of the {J}ames-{H}opf maps.
\newblock {\em Pacific J. Math.}, 102(2):397--412, 1982.

\bibitem[Kuh82b]{Kuhn82_Whitehead}
Nicholas~J. Kuhn.
\newblock A {K}ahn-{P}riddy sequence and a conjecture of {G}. {W}. {W}hitehead.
\newblock {\em Math. Proc. Cambridge Philos. Soc.}, 92(3):467--483, 1982.

\bibitem[Kuh85]{Kuhn85}
Nicholas~J. Kuhn.
\newblock Chevalley group theory and the transfer in the homology of symmetric groups.
\newblock {\em Topology}, 24(3):247--264, 1985.

\bibitem[Kuh94]{Kuhn_generic}
Nicholas~J. Kuhn.
\newblock Generic representations of the finite general linear groups and the {S}teenrod algebra. {I}.
\newblock {\em Amer. J. Math.}, 116(2):327--360, 1994.

\bibitem[Kuh15]{Kuhn15}
Nicholas~J. Kuhn.
\newblock The {W}hitehead conjecture, the tower of {$S^1$} conjecture, and {H}ecke algebras of type {A}.
\newblock {\em J. Topol.}, 8(1):118--146, 2015.

\bibitem[Lin81]{Lin81}
W\^{e}n~Hsiung Lin.
\newblock Algebraic {K}ahn-{P}riddy theorem.
\newblock {\em Pacific J. Math.}, 96(2):435--455, 1981.

\bibitem[Lur09a]{DAGIII}
Jacob Lurie.
\newblock {D}erived {A}lgebraic {G}eometry {III}: {C}ommutative {A}lgebra.
\newblock \url{https://arxiv.org/abs/math/0703204v4}, 2009.
\newblock preprint.

\bibitem[Lur09b]{HTT}
Jacob Lurie.
\newblock {\em Higher topos theory}, volume 170 of {\em Annals of Mathematics Studies}.
\newblock Princeton University Press, Princeton, NJ, 2009.

\bibitem[Lur11]{DAGXIII}
Jacob Lurie.
\newblock {D}erived {A}lgebraic {G}eometry {XIII}: {R}ational and $p$-adic {H}omotopy {T}heory.
\newblock \url{https://www.math.ias.edu/~lurie/papers/DAG-XIII.pdf}, 2011.
\newblock preprint.

\bibitem[Lur17]{HigherAlgebra}
Jacob Lurie.
\newblock {H}igher {A}lgebra.
\newblock \url{https://www.math.ias.edu/~lurie/papers/HA.pdf}, 2017.
\newblock preprint.

\bibitem[Lur18a]{Elliptic-I}
Jacob Lurie.
\newblock {E}lliptic {C}ohomology {I}: {S}pectral {A}belian {V}arieties.
\newblock \url{https://www.math.ias.edu/~lurie/papers/Elliptic-I.pdf}, 2018.
\newblock preprint.

\bibitem[Lur18b]{kerodon}
Jacob Lurie.
\newblock Kerodon.
\newblock \url{https://kerodon.net}, 2018.

\bibitem[May70]{May70operations}
J.~Peter May.
\newblock A general algebraic approach to {S}teenrod operations.
\newblock In {\em The {S}teenrod {A}lgebra and its {A}pplications ({P}roc. {C}onf. to {C}elebrate {N}. {E}. {S}teenrod's {S}ixtieth {B}irthday, {B}attelle {M}emorial {I}nst., {C}olumbus, {O}hio, 1970)}, Lecture Notes in Mathematics, Vol. 168, pages 153--231. Springer, Berlin, 1970.

\bibitem[MM65]{MilnorMoore65}
John~W. Milnor and John~C. Moore.
\newblock On the structure of {H}opf algebras.
\newblock {\em Ann. of Math. (2)}, 81:211--264, 1965.

\bibitem[MS16]{MS16}
Akhil Mathew and Vesna Stojanoska.
\newblock Fibers of partial totalizations of a pointed cosimplicial space.
\newblock {\em Proc. Amer. Math. Soc.}, 144(1):445--458, 2016.

\bibitem[Nei10]{Neisendorfer10}
Joseph Neisendorfer.
\newblock {\em Algebraic methods in unstable homotopy theory}, volume~12 of {\em New Mathematical Monographs}.
\newblock Cambridge University Press, Cambridge, 2010.

\bibitem[NS17]{NikolausSagave17}
Thomas Nikolaus and Steffen Sagave.
\newblock Presentably symmetric monoidal {$\infty$}-categories are represented by symmetric monoidal model categories.
\newblock {\em Algebr. Geom. Topol.}, 17(5):3189--3212, 2017.

\bibitem[NS18]{NS18}
Thomas Nikolaus and Peter Scholze.
\newblock On topological cyclic homology.
\newblock {\em Acta Math.}, 221(2):203--409, 2018.

\bibitem[P{\'{e}}r20]{Peroux_rigid}
{M}aximilien P{\'{e}}roux.
\newblock Rigidification of connective comodules.
\newblock \url{https://arxiv.org/abs/2006.09398v5}, 2020.
\newblock Preprint.

\bibitem[Pri70a]{Priddy70}
Stewart~B. Priddy.
\newblock Koszul resolutions.
\newblock {\em Trans. Amer. Math. Soc.}, 152:39--60, 1970.

\bibitem[Pri70b]{Priddy70short}
Stewart~B. Priddy.
\newblock On the homotopy theory of simplicial {L}ie algebras.
\newblock {\em Proc. Amer. Math. Soc.}, 25:513--517, 1970.

\bibitem[Pri70c]{Priddy70long}
Stewart~B. Priddy.
\newblock Primary cohomology operations for simplicial {L}ie algebras.
\newblock {\em Illinois J. Math.}, 14:585--612, 1970.

\bibitem[Pri73]{Priddy_transfers}
Stewart~B. Priddy.
\newblock Transfer, symmetric groups, and stable homotopy theory.
\newblock In {\em Algebraic {$K$}-theory, {I}: {H}igher {$K$}-theories ({P}roc. {C}onf., {B}attelle {M}emorial {I}nst., {S}eattle, {W}ash., 1972)}, Lecture Notes in Math. Vol. 341, pages 244--255. Springer, Berlin, 1973.

\bibitem[QS19]{QS19}
J.D. Quigley and Jay Shah.
\newblock On the parametrized {T}ate construction and two theories of real {$p$}-cyclotomic spectra.
\newblock \url{https://arxiv.org/abs/1909.03920}, 2019.
\newblock preprint.

\bibitem[Rec70]{Rector70}
David~L. Rector.
\newblock Steenrod operations in the {E}ilenberg-{M}oore spectral sequence.
\newblock {\em Comment. Math. Helv.}, 45:540--552, 1970.

\bibitem[Sin81]{Singer81}
William~M. Singer.
\newblock A new chain complex for the homology of the {S}teenrod algebra.
\newblock {\em Math. Proc. Cambridge Philos. Soc.}, 90(2):279--292, 1981.

\bibitem[Wel82]{Wellington82}
Robert~J. Wellington.
\newblock The unstable {A}dams spectral sequence for free iterated loop spaces.
\newblock {\em Mem. Amer. Math. Soc.}, 36(258):viii+225, 1982.

\bibitem[Whi78]{Whitehead_elements}
George~W. Whitehead.
\newblock {\em Elements of homotopy theory}, volume~61 of {\em Graduate Texts in Mathematics}.
\newblock Springer-Verlag, New York-Berlin, 1978.

\bibitem[Yua23]{Yuan21}
Allen Yuan.
\newblock Integral models for spaces via the higher {F}robenius.
\newblock {\em J. Amer. Math. Soc.}, 36(1):107--175, 2023.

\bibitem[Zha21]{Zhang21}
Adela~YiYu Zhang.
\newblock {Q}uillen homology of spectral {L}ie algebras with application to $\mathrm{mod}$~$p$ homology of labeled configuration spaces.
\newblock \url{https://arxiv.org/abs/2110.08428v2}, 2021.
\newblock Preprint.

\end{thebibliography}

\end{document}